\newcommand{\si}{\index}
\newcommand{\ai}{\index[authors]}
\renewcommand*{\backref}[1]{}
\renewcommand*{\backrefalt}[4]{%
    \ifcase #1 (Not cited.)%
    \or       $\ \ \ $(Cited on p.~#2.)%
    \else      $\ \ \ $(Cited on pp.~#2.)%
    \fi}
 \newtheorem{thm}[equation]{Theorem}
 \newtheorem{prop}[equation]{Proposition}
 \newtheorem{lem}[equation]{Lemma}
 \newtheorem{coro}[equation]{Corollary}
 \newtheorem{conj}[equation]{Conjecture}
\theoremstyle{definition}
 \newtheorem{de}[equation]{Definition}
 \newtheorem{rem}[equation]{Remark}
  \newtheorem{nota}[equation]{Notation}
 \newtheorem{ex}[equation]{Example}
  \newtheorem{exer}[equation]{Exercise}
 \newtheorem{prob}[equation]{Problem}
\theoremstyle{definition}
\newcommand{\apr}{(Ap. \ref}
\newcommand{\dist}{\textnormal{dist}}
\newcommand{\Z}{\mathbb Z}
\newcommand{\Q}{\mathbb Q}
\newcommand{\R}{\mathbb R}
\newcommand{\C}{\mathbb C}
\newcommand{\N}{\mathbb N}
\newcommand{\ZG}{\mathbb Z G}
\newcommand{\GL}{\textnormal{GL}}
\newcommand{\cok}{\textnormal{cok}}
\newcommand{\overd}{\overset{\bullet}}
\newcommand{\tr}{\textnormal{trace}}
\newcommand{\calR}{\mathcal{R}}
\newcommand{\calS}{\mathcal{S}}
\newcommand\reallywidesim[1]{\mathrel{\ThisStyle{%
  \setbox0=\hbox{$\SavedStyle\mkern3mu_{\text{#1}}\mkern3mu$}%
  \mkern1mu\stackengine{1\LMpt}{%
    \stretchto{\scaleto{\SavedStyle\mkern-1mu\sim\mkern-1mu}{.28\wd0}}{1\ht0}%
  }{$\SavedStyle_{\text{#1}}$}{O}{c}{F}{T}{S}\mkern1mu%
}}}
\newcommand{\aut}{\textnormal{Aut}}
\newcommand{\esse}[2]{{#1} \begin{tiny} \reallywidesim{\hspace{.03in}  esse-$\mathcal{R}$ } #2 \end{tiny}}
\newcommand{\sse}[2]{{#1} \begin{tiny} \reallywidesim{\hspace{.03in}  sse-$\mathcal{R}$ } #2 \end{tiny}}
\newcommand{\se}[2]{{#1} \begin{tiny} \reallywidesim{\hspace{.03in}  se-$\mathcal{R}$ } #2 \end{tiny}}
\newcommand{\eleq}[2]{{#1} \begin{tiny} \reallywidesim{\hspace{.03in}  El-$\mathcal{R}[t]$ } #2 \end{tiny}}
\newcommand{\sym}{\textnormal{Sym}}
\newcommand{\simp}[1]{\textnormal{Simp}(\sigma_{#1})}
\newcommand{\inert}[1]{\textnormal{Inert}(\sigma_{#1})}
\newcommand{\autinf}[1]{\textnormal{Aut}^{(\infty)}(\sigma_{#1})}
\newcommand{\inertinf}[1]{{\textnormal{Inert}}^{(\infty)}(\sigma_{#1})}
\newcommand*\circled[1]{\tikz[baseline=(char.base)]{
    \node[shape=circle, draw, inner sep=1pt,
        minimum height=12pt] (char) {#1};}}
\newcommand{\deter}{\textnormal{det}}
\newcommand{\sta}{\textnormal{St}}
\newcommand{\sgn}{\textnormal{sign}}
\newcommand{\image}{\textnormal{Image}}
\numberwithin{equation}{subsection}
\begin{document}
% (electric-indent-mode 0) how to stop autoindent?

\title[Symbolic dynamics and stable algebra]{Symbolic dynamics and the stable algebra of
  matrices}

%    Information for first author

\author[M. Boyle]{Mike Boyle}

%  \thanks{... }

\address{Department of Mathematics, University of Maryland, College
Park, MD 20742-4015, USA
}
\email{mmb@math.umd.edu}

\author[S. Schmieding]{Scott Schmieding}
\address{Department of Mathematics, 
Pennsylvania State University, 
State College, PA 16802, USA}

\email{sks7247@psu.edu}

%\author[S. Schmieding]{Scott Schmieding}
%\address{Department of Mathematics, University of Denver, Denver,
%  CO 80210, USA}
%\email{scott.schmieding@du.edu}

\subjclass[2020]{Primary 37B10; Secondary 19-01, 15B48, 15-B36, 06-01}

%%%% The MSC cateegories considered for this work:

%%%%   06-01 Introductory exposition pertaining to ordered structures
%%%%  11C Polynomials and matrices
%%%%   19-01 Introductory exposition (textbooks, tutorial papers, etc.)
%%%%         pertaining to K-theory
%%%%  19M Miscellaneous applications of K-theory
%%%%    15B36 Matrices of integers
%%%%    15B48 Positive matrices and their generalizations; cones of matrices
%%%%    37B10 Symbolic dynamics

%\date{\today.}

%\dedicatory{This paper is dedicated to our advisors.}

%\keywords{}

\begin{abstract} We give an introduction to a topic in
  the ``stable algebra\si{stable algebra} of matrices'',  as related to
  certain problems in symbolic dynamics. We introduce enough
  symbolic dynamics to explain these connections, but,
    the algebra
  is of independent interest and can be followed with little attention
  to the symbolic dynamics. This
``stable algebra of matrices''\si{stable algebra}
involves the study of
  properties and relations of square matrices over a semiring $\mathcal S$
  which are invariant under two fundamental equivalence relations:
  shift equivalence and strong shift equivalence.
  When $\mathcal S$ is a field, these relations are the same, and
  matrices over $\mathcal S$ are shift equivalent if and only if
  the nonnilpotent parts of their canonical forms are
  similar. We give a detailed account of these relations over
  other rings and semirings, especially $\Z$, $\Z_+ $ and $\R_+$.
  When $\mathcal S$ is a ring,
  this involves  module theory and algebraic $K$-theory.
  %
%    We are fundamentally interested
%  We consider this stable algebra\si{stable algebra} (especially, shift equivalence
%  and strong shift
%  equivalence) for matrices over general rings as
    %  well as   various specific rings.
    %    For nonegative real matrices,
    We discuss in detail and contrast the problems
    of characterizing the possible spectra, and the possible nonzero
    spectra, of nonnegative real matrices.
%We are especially interested in the stable algebra\si{stable algebra} arising
%from nonnegative
%matrices; we discuss the inverse spectral problem
%for
%nonnegative real matrices from
%we give an exposition of the inverse spectral problem
%  include a thorough exposition of the problem
%  of characterizing the nonzero spectrum of a nonnegative real matrix,
%  and discuss constraints
%  ,    %55
%     what can be   the  nonzero\si{nonzero spectrum} spectrum of a nonnegative real matrix?
%, and include a thorough exposition of the problem "?}
  We also review key features of the automorphism group of a shift
  of finite type;  the recently introduced stabilized automorphism
  group;  and the work of Kim\ai{Kim, K.H.}, Roush\ai{Roush, F.W.} and Wagoner\ai{Wagoner, J.B.} giving
  counterexamples to Williams' Shift Equivalence\si{shift equivalence} Conjecture.\ai{Williams, R.F.}
\end{abstract}

\maketitle

%\secion*{This is an unnumbered first-level section head}

%This is an example of an unnumbered first-level heading.

%% The correct journal style for \specialsection is all uppercase; a known bug
%% in amsart.cls prevents this, so input must be uppercase until it is fixed.
%\specialsection*{This is a Special Section Head}
%\specialsection*{THIS IS A SPECIAL SECTION HEAD}
%This is an example of a special section head%
%%%%%%%%%%%%%%%%%%%%%%%%%%%%%%%%%%%%%%%%%%%%%%%%%%%%%%%%%%%%%%%%%%%%%%%%
%\footnote{Here is an example of a footnote. Notice that this footnote
%text is running on so that it can stand as an example of how a footnote
%with separate paragraphs should be written.
%\par
%And here is the beginning of the second paragraph.}%
%%%%%%%%%%%%%%%%%%%%%%%%%%%%%%%%%%%%%%%%%%%%%%%%%%%%%%%%%%%%%%%%%%%%%%%%

\tableofcontents

\addtocounter{section}{-1}
\section{Introduction}

%% \renewcommand{\thesection}{\arabic{section}} % Kick out the chapter number

%% \index{first level}
%% \index{first!second level}

The bulk of this work is devoted to
 an exposition  of  algebraic properties
and relations of square
matrices which are invariant under
similarity and certain
``stabilizing'' relations,
with  connections to symbolic dynamics, linear algebra and
algebraic K-theory.
For matrices over a field, this amounts simply to neglecting
the nilpotent part of its action; for matrices over other rings,
even $\Z$,
the analogous stabilization is  more subtle. For matrices over
$\Z_+$, the analogous stabilization
become very subtle indeed.
The  algebra of these relations
arises naturally in problems of
symbolic dynamics.
However, quite apart from symbolic dynamics, we believe
this algebra is an important topic in the theory of matrices.
We aim to give a presentation useful for both students and
experts.

The slice of the
``stable algebra of matrices\si{stable algebra}''
we study involves
matrix relations and
properties invariant under two fundamental
equivalence relations:
shift equivalence  and  strong shift
equivalence.
%%%%   \begin{footnote}{We chose the term
%%%%       ``stable algebra of matrices''
%%%%   on account of  Theorem \ref{thm:endorelations} and
%%%%   connections to algebraic $K$-theory.}\end{footnote}
% (For matrices over a ring,
% these  are certain coarsenings of the relation of
% similarity.)
We begin with   definitions of these
relations.
Let $\mathcal C$ be a category,
with $\mathcal M$ its class of morphisms
and  $\mathcal E$ its class of
   endomorphisms\si{endomorphism} (morphisms with domain = codomain).
%   The relation of elementary strong shift equivalence
%   on $\mathcal E$ is defined as follows.
   Elements  $A,B$ of  $\mathcal E$ are
   {\it elementary strong shift
     equivalent\si{elementary strong shift equivalent}} if there are morphisms
   $U,V$    in  $\mathcal M$
    such that
    $A=UV$ and $ B=VU$. Here, $UV$ is the
    composition\begin{footnote}{Unless otherwise specified,
      we adopt the convention
      that    morphisms\si{morphism, domain and codomain}
      are arrows from a domain object on the left to
   a codomain object on the right. So, for the composition $UV$
   of morphisms $U,V$ to be defined, the codomain of $U$
   must equal the domain of $V$.}\end{footnote}\begin{footnote}{Note,
          when the morphisms $U,V$ are functions,
     our domain/codomain convention means that
     we are  reading composition  from
     left to right: for an input $x$, the output  $(UV)(x)$ is
     $V(U(x))$. The chosen convention is irrelevant (and safely
     ignored) until Section 7,
     as explained  there.}\end{footnote}
    of
    the morphisms $U,V$. In general, $U$ and $V$ do {\it not}
    have to be endomorphisms, although the equations force
    $A$ and $B$ to be endomorphisms\begin{footnote}{E.g.,
    $\text{domain}(A) =\text{domain}(U) =
    \text{codomain}(V) =\text{codomain}(A) $.}\end{footnote}.
    We refer to $(U,V)$ as an elementary
    strong shift equivalence  from $A$ to $B$, or between $A$ and $B$.
   We use ESSE\si{ESSE} as a multipurpose abbreviation whose meaning
   should be clear in context.
    The relation    ESSE is only defined for endomorphisms, but depends on
    $\mathcal M$.
     We may refer to ESSE  in $\mathcal C$, or
     in $\mathcal M$.

     We say endomorphisms $A,B$ are
     {\it similar}\si{similarity of morphisms} (isomorphic as endomorphisms)
    if there is an invertible morphism\begin{footnote}{A morphism $A$
      from object $D$ to object $C$ is
      invertible if there are morphisms $U,V$ such that
      $UA= 1_{\text{codom}(A)}$
      and $AV=1_{\text{dom}(A)}$. In this case, $U$ must equal $V$, as
      $U=U(AV)=(UA)V=V$; $U$ is the inverse of $A$, denoted
      $A^{-1}$.}\end{footnote}
    $U$ such that
     $B=U^{-1}AU$.
Now suppose  that $A$ and $B$ are automorphisms
   (invertible endomorphisms) in a category $\mathcal C$,
   with an ESSE $A=UV$ and $B=VU$. Then
   $U$ and $V$ must be
   invertible\begin{footnote}{E.g.,
     $U$ is invertible because $U(VA^{-1})=1_{\text{dom}(U)}$ and
     $(B^{-1}V)U=1_{\text{codom}(U)}$.}\end{footnote},
     with
   $B=U^{-1}AU$. So,  for automorphisms ESSE is similarity.
   We see ESSE for endomorphisms is some kind of endomorphism generalization
   of similarity.

   We will be primarily interested in the case that
   $\mathcal C$ is a category whose morphisms are
   $\mathcal M(\mathcal S)$, the finite matrices over
   a semiring\begin{footnote}{A matrix ``over'' a set $\mathcal S$ is simply a
   matrix all of whose entries are in $\mathcal S$.
By a semiring\si{semiring} $\mathcal S$, we mean a set with addition
   and multiplication satisfying all the ring axioms except possibly
   existence of additive inverses.
   By definition, we require a semiring (in particular, a ring)
   to contain a multiplicative unit.}\end{footnote}
 $\mathcal S $,
%%  We  $\mathcal S$ may be $\Z$ or $\R$;
%%     or the nonnegative elements of an ordered ring, especially,
%%     $\Z_+$ or  $\R_+$.
%%  $\mathcal S$ might also be
%%     the integral group ring $\Z G$ of a  group $G$,
%%     or its ``nonnegative''  semiring
%%     $\Z_+G$, or the Boolean semiring $\{ 0, 1 \}$, in which
   %%     $1+1=1$
   %    (especially, $\mathcal S = \R, \Z, \R_+$ or $\Z_+$),
   with composition of morphisms given by matrix multiplication,
   so $\mathcal E$ is the set of square matrices  over $\mathcal S$.
   The objects of $\mathcal C$ are the sets $\mathcal S^n$ ($\mathcal S^n$
   is the set of   $n$-tuples with entries in $\mathcal S$).
 With our default domain convention
 for morphisms, we take $\mathcal S^n$ to be a set of row vectors.
 If $\mathcal S$ is a ring $\mathcal R$, then
 by matrix multiplication
 an $m\times n$ matrix over $\mathcal R$ defines
    an $\mathcal R$-module homomorphism from
    $\mathcal S^m$ to $\mathcal S^n$,
 and  $\mathcal C$ can (of course) be identified with
    the category of $\mathcal R$-module homomorphisms of
    free, finitely generated left $\mathcal R$-modules.
    In this category, ESSE is not an equivalence relation.
    %, as we will see.

Returning to a general category $\mathcal C$,
we define strong shift equivalence to be the equivalence
relation which is the transitive closure of ESSE.
That is, endomorphisms $A,B$ are {\it strong shift equivalent} if there
there are endomorphisms  $A_0, \dots , A_{\ell}$
and morphisms $U_i, V_i$,
such that
$A=A_0$, $B=A_{\ell}$,   and for $1 \leq  i  \leq \ell$
we have
$A_{i-1}=U_iV_i, A_{i}=
V_iU_i$. We refer to the
finite sequence $(U_i, V_i), 1\leq i \leq \ell,$ as  a
strong shift equivalence from $A$ to $B$.
The relation SSE, like ESSE,
is only defined for endomorphisms, and depends on
$\mathcal M$.
We use SSE\si{SSE}  as
a multipurpose abbreviation.
For a semiring $\mathcal S$, SSE-$\mathcal S$ denotes
SSE in
$\mathcal M = \mathcal M(\mathcal S)$, as described above.
SE and SSE arise in   various   settings
within
and outside
symbolic dynamics
\begin{footnote}
  {E.g., SE arises in ergodic theory
 \cite{LinRudolph}
    and Conley index theory
\cite{MischaikowWeibel2023}. 
    For SSE-$\Z_+$ in knot theory,
  see \cite{Gilmer1999,SilverWilliams1999}.
  For SSE related to the nonzero spectrum (and other invariants)
  of primitive real matrices, see
  \cite{BH91,BKR2013}.
For  a connection to category theory,
see \cite{jeandel}.
 For SE and SSE in other settings in symbolic dynamics, see
 Section \ref{subsec:otherrings}.}\end{footnote}.

The simplicity of the  SSE definition
is utterly deceptive.
For example,
SSE-$\mathcal \Z_+$
was introduced by Williams\ai{Williams, R.F.}
% for matrices over $\Z_+$,
to classify shifts of finite type
% (the most fundamental systems
%in symbolic dynamics)
up to topological conjugacy.
A half century later, we do not
know if the relation SSE-$\Z_+$
is even decidable.

   To study strong shift equivalence,
   one is naturally led (by Williams) to the  more tractable
   relation of shift equivalence.
   Given  $(U_i,V_i)$, $1 \leq i \leq \ell$, an SSE in
   a category $\mathcal C$ from
   $A=A_0$ to $B=A_{\ell}$, set  $U=U_1 \cdots U_{\ell }$
   and $V= V_{\ell }\cdots V_1 $. Then we have
     \begin{alignat*}{2}
       A^{\ell} &= UV\ ,  & \ \ \quad  B^{\ell} &= VU \\
       AU &=UB\ ,      \ \  \quad  &\quad    VA &= BV  \ \ .
     \end{alignat*}
     So,  endomorphisms $A,B$ are defined to be {\it shift equivalent}
     in $\mathcal C$
     if there are morphisms $U,V$ and a positive integer $\ell$
     satisfying the four displayed equations.
     One can check that SE is an equivalence relation on the endomorphisms
     in $\mathcal C$.
     Clearly SSE implies SE.
     The converse holds for $\mathcal M(\mathcal S)$ for many
     rings $\mathcal S$, but fails even for some integral domains,
     and fails for the semiring $\mathcal S=\Z_+$.

Let $\mathcal R$ be a ring.
Then,  SSE-$\mathcal R$ and SE-$\mathcal R$
can be described by other generating relations.
We say square matrices
$A,B$ in $\mathcal M(\mathcal R)$ are similar over $\mathcal R$
(SIM-$\mathcal R$)
if there exists a
matrix $U$ invertible over
$\mathcal \mathcal R$
such that
$B=U^{-1}AU$\begin{footnote}{A matrix is invertible over
    $\mathcal R$ if it has an inverse matrix all of whose entries
    are in $\mathcal R$. If a ring $\mathcal R$ has the invariant basis
    property (IBP), then an invertible matrix over $\mathcal R$
    must be square. If a $\mathcal R$ is a commutative ring, or
    an integral group ring, then it has the
    IBP \cite{WeibelBook}.
    For a ring without the IBP, ``similar'' square matrices can have
different sizes. }\end{footnote}.
Given a square matrix $A$, we define a
 nilpotent extension of $A$ to be
a square matrix with a block form
$\left( \begin{smallmatrix} A&X\\0 & N \end{smallmatrix} \right)$
or
$\left( \begin{smallmatrix} A&0\\X & N \end{smallmatrix} \right)$,
with $N$ a nilpotent matrix. A zero extension of $A$ is a nilpotent
extension in which $N$ is a zero matrix.
Then, the following hold  (by Theorem \ref{thm:endorelations}).

\begin{enumerate}
\item
  SSE-$\mathcal R$  is generated by similarity and
 zero extensions\begin{footnote}{I.e.,  SSE-$\mathcal R$ is the smallest
   equivalence relation on square matrices over $\mathcal R$
   containing SIM-$\mathcal R$
and closed under taking nilpotent extensions.}\end{footnote}.

\item
  SE-$\mathcal R$  is generated by similarity and
nilpotent extensions.
\end{enumerate}

\iffalse

There is also a general, manageable desciption of SE of matrices over a
ring $\mathcal R$ in terms of induced automorphismms of direct
limit modules (REF).
 SSE in general is something else.
However, it turns out that for many rings (including
fields and $\Z$), the SE and SSE relations are the same.

We will see that the SE relation is reasonably (or easily)
manageable, depending on the ring.

For example, two nonnilpotent real matrices are shift equivalent
if and only if the nonsingular parts
of their Jordan forms are similar.
SE and SSE for matrices over other rings can viewed as
analogues of ``neglecting the nilpotent part'', and
SE is in general reasonably accessible.
          For many rings, incuding fields and
 $\mathcal S=\Z$,           it turns out that SSE = SE.

          \fi

          So, we study invariants
of similarity which persist under zero or nilpotent extensions.
We will see that this ``stable'' viewpoint
 can  be useful even for studying
 properties of real nonnegative matrices.

 Now, we turn to the organization of the sequel, and
supplement
 the detailed outline in
 the table of contents with some broader remarks.

%%%   For one example, we may consider a stabilized version of a
%%%   square matrix $A$ in some class $\mathcal C$
%%%   to be a square matrix with block form
%%%   $A\sim \begin{smallmatrix}( A & X\\0 & M )\end{smallmatrix}$,
%%%   where $M$ is a square matrix of in some class $\mathcal D$.
%%%   Then, we might
%%%   consider the equivalence relation on square matrices generated
%%%   where $N$ is a nilpotent matrix, or perhaps a square zero matrix.
%%%   Then we could investigate the equivalence relation on square matrices
%%%   A bit more precisely: one can begin with some relation on finite
%%%   or square matrices, such as siilarity of matrices
%%%   over a ring, or with something computed from the matrix, such as
%%%   the spectrum of a square matrix. Then one can ask what of the invariant
%%%   remains under some coarser relation, generated perhaps by
%%%
%%%
%%%   \red{For matrices over a ring $\mathcal R$, we will be
%%%     especially interested in
%%%     properties which of square matrices over $\mathcal R$ which
%%%     are invariant under similarity and persist under some

%%%   Here, ``stable algebra\si{stable algebra}'' refers loosely to
%%%    matrix relations and properties which are
%%%    invariant under some notion of stabilization \red{(we
%%%    will say more about what this means below, and in Section \ref{ssesubsec}).}

In Section 1,
 we introduce shifts of finite type,
 the most fundamental   symbolic dynamical systems.
A square nonnilpotent matrix  $A$ over $\Z_+$
defines a shift of finite type (SFT), $\sigma_A$; we will see how
dynamical properties and relations correspond to
``stable algebra'' invariants of the matrix $A$.
By no means do we give a full introduction to SFTs.
We  say enough to explain
how the stable algebra invariants arise in symbolic dynamics --
to inform those interested in the dynamics, and provide
motivation and a concrete model system for those interested
in the algebra.

In Section 2, after  brief general remarks about
strong shift equivalence\si{strong shift equivalence},
we give an extensive discussion of
shift equivalence\si{shift equivalence}, including several example cases.
For a general ring $\mathcal R$, SE-$\mathcal R$ is characterized
by isomorphism of certain associated $\mathcal R[t]$-modules.

The theory of SE and SSE can be entirely recast in terms
of polynomial matrices; we do this in Section 3. This is
essential for later K-theory connections. One key is the presentation of
a directed graph by a polynomial matrix, a construction of interest for anyone
using directed graphs.

% Altogether, the first three sections (with appendices) include almost
% complete proofs.

A classical problem of linear algebra, the
nonnegative inverse eigenvalue problem (NIEP),  asks which
multisets of complex numbers can be the spectrum of a
nonnegative matrix.  A stable version of the NIEP
asks which multisets of nonzero complex numbers can be the
nonzero\si{nonzero spectrum} part of the spectum of a nonnegative matrix.
In contrast to the notoriously difficult NIEP, the stable version has a
transparent solution (not a transparent proof).
We
review results and conjectures related to this
stable approach in Section 4, including new commentary on a
realization theorem of Tom Laffey.\ai{Laffey, Thomas J.}
%The presentation of Section 4 is essentially self contained, with references
%given for background and proofs.

The simple definition of SSE takes us
 to the higher mathematics of algebraic $K$-theory.
In section 5, we give
an introduction to enough algebraic $K$-theory to make the later statements
understandable, with a little context\begin{footnote}{A reader
  attracted by this glimpse  can look into
    various algebraic $K$-theory introductions, with their
    different scopes, prerequisites and publication dates
    (e.g., in order of publication,
    \cite{BassBook, MilnorBook, Silvester1981,RosenbergBook, WeibelBook}).
  The short exposition \cite{zakharevich2019} gives a lovely
  perspective on $K$-theory as a ``theory of assembly''.}\end{footnote}.

Let $\mathcal R$ be a ring. SE-$\mathcal R$ is a topic in
the theory of $\mathcal R$-modules;  the refinement of SE-$\mathcal R$
by SSE-$\mathcal R$ is understood (incompletely,
to date) with
algebraic $K$-theory.
In Section 6, we give the algebraic $K$-theoretic characterization
of the refinement of
SE-$\mathcal R$ by
SSE-$\mathcal R$. Here the class group of the category of nilpotent endomorphisms over $\calR$ plays a key role.

In Section 7, we give an overview of results on the automorphism
group of a shift of finite type: its actions and representations,
and related
  problems. We also (briefly)  introduce the mapping class group and
  the stabilized automorphism group of a shift of finite type.
We review two crucial representations (dimension and SGCC)
of the automorphism group
which
become key ingredients to the Kim-Roush\ai{Roush, F.W.}/Wagoner\ai{Wagoner, J.B.}\ai{Kim, K.H.} work giving
counterexamples to the Williams\ai{Williams, R.F.} Conjecture that
SE-$\Z_+$ implies SSE-$\Z_+$.

In Section 8, we give an
overview of the counterexample work showing
SE-$\Z_+$ does not imply SSE-$\Z_+$, even for primitive matrices.
This work takes place within
the machinery of Wagoner\ai{Wagoner, J.B.}'s CW complexes for SSE,
which give  a  different, homotopy based approach to SSE.
%and
%to automorphisms.
%(For example, the automoprhism group of an SFT beomes the fundamental group
%of a point in a suitable SSE space.)
The counterexample
invariant of Kim\ai{Kim, K.H.} and Roush\ai{Roush, F.W.}
is  a relative sign-gyration number.
Wagoner\ai{Wagoner, J.B.} later formulated a different counterexample invariant,
with values in a certain group coming out of algebraic
$K$-theory.

Our exposition
is an elaboration of the course  we gave at the
2019 Yichang
G2D2 school,
accessible to graduate students,  but  of broader interest.
 The first four sections are by Boyle;
 the last four are by Schmieding.
 Each  section has the form of  a lecture, together  with an
 appendix including further details, proof and remarks.
 A numerical reference beginning
with Ap is a reference to such an appendix (e.g.,
\apr{sePrimitive})  is a reference to
an appendix item in Section \ref{sec:sesse}).

 The appendices are intended to enhance the value of this work
 both as exposition and reference.
 The format is intended to  keep the concise overview of the lectures, and
 (along with the detailed table of contents)
 facilitate  sampling by readers with disparate interests.

%%  (basic algebra, dynamics, nonnegative matrices, K-theory).
 %  %%  In particular, outside Section 7, the dynamics can be ignored;
 %  %%  but even a reader
 %  %%  only interested in the algebra may  find some value in the
 %  %%  correlation of algebraic structure to properties and relations of
 %  %%  the very concretely defined shifts of finite type.

%%  Let us be clear: these eight lectures are by no
%%    means a thorough introduction to
%%    shifts of finite type
%%  However, in our chosen focus  we go well beyond the
%%     standard references, and
%%     we do present from scratch a full model class of SFTs,
%%  with central algebraic and dynamical invariants.
%%

Fundamental problems remain open in the  theory of
strong shift equivalence\si{strong shift equivalence},
and the related theory of shifts of finite type and
their automorphism groups.
  We
hope our exposition may
encourage some contribution to their solution.

{\it Acknowledgements.} Each bibliography item includes the pages on which
it is cited; additional citations by name are in the index of authors.
For feedback and corrections on our manuscript, we thank Peter Cameron,
  Tullio Ceccherini-Silberstein, Sompong Chuysurichay,
  Ricardo Gomez, Emmanuel Jeandel, Charles Johnson,
  Johan Kopra, Wolfgang Krieger, Michael Maller,
 Alexander Mednykh,
Akihiro Munemasa, Michael Shub, Yaokun Wu and Yinfeng Zhu.
We thank especially Yaokun Wu, without whose vision and organization
this work would not exist.\ai{Cameron, Peter}\ai{Ceccherini-Silberstein, Tullio}\ai{Johnson, Charles}
\ai{Mednykh, Alexander}\ai{Munemasa, Akihiro}\ai{Wu, Yaokun}\ai{Zhu, Yinfeng}
\ai{Chuysurichay, Sompong}\ai{Gomez, Ricardo}\ai{Jeandel, Emmanuel}
\ai{Kopra, Johan}\ai{Krieger, Wolfgang}
\ai{Maller, Michael}\ai{Shub, Michael}

%   the value of studying such stable properties is indicated
%   for example by the
%   deep and wide impact of algebraic K-theory over several
%   decades.

% I added the next line to begin a new page.
\newpage

%%%% I comment out this part from ch1.tex .
%%%% \section{Background }
%%%%
%%%% An overview of relevant concepts .
%%%%
%%%% \subsection{Basic symbols, notations, and definitions}
%%%%
%%%% We use the following symbols, notations, and definitions throughout
%%%% the book.
%%%%
%%%% \begin{thm}
%%%% [Theorem]
%%%% This is a theorem
%%%% \end{thm}
%%%%
%%%% \begin{de}
%%%% This is a definition
%%%% \end{de}
%%%%
%%%% \begin{ex}
%%%% This is a Example
%%%% \end{ex}
%%%%
%%%% \begin{exer}
%%%% This is a Exercise
%%%% \end{exer}
%%%%  \clearpage{\pagestyle{empty}\cleardoublepage}

%\footnote{Here is an example of a footnote. Notice that this footnote
%text is running on so that it can stand as an example of how a footnote
%with separate paragraphs should be written.
%\par
%And here is the beginning of the second paragraph}%

%\address{
%Department of Mathematics, University of Maryland, College
%Park, MD 20742-4015, USA
%}
%\email{mmb@math.umd.edu}
%
%\date{\today}
%

\section{Basics}\label{sec:basics}

In this first section, we review some fundamentals of shifts of
finite type and the algebraic invariants of the matrices which
present them.

\subsection{Topological dynamics}
    By a topological dynamical system (or system), we will mean a
    homeomorphism of a  compact metric space to itself.  This is one setting for
    considering how points can/must/typically behave over time,
    i.e., how points move under iteration of the homeomorphism.
    A system/homeomorphism $S: X\to X$ is often written as a pair,
     $(X,S)$.
    Formally: we have a category, in which
\begin{itemize}
\item   an object is a topological dynamical system,
    \item
a morphism  $\phi: (X,S) \to (Y,T)$ is a continuous map
$\phi: X\to Y$ such that
$\phi T =  S\phi $, i.e. the
    following diagram commutes.
\[
\xymatrix{
X \ar[r]^{S} \ar[d]_{\phi } & X \ar[d]^{\phi} \\
Y \ar[r]^{T} & Y
}
\]
\end{itemize}
    The morphism is a topological conjugacy
    (an isomorphism in our category)
    if $\phi$ is a homeomorphism.

    Now, let $\phi: X\to Y$ be a topological conjugacy.
    We can think of  $\phi$ as follows:
    $\phi$  renames points without changing the mathematical
    structure of the system.
    \begin{itemize}
 \item        Because $\phi$ is a homeomorphism,  it gives new names to points
in essentially the same topological space.

\item Because $\phi T= S\phi $, the renamed points move as they did
with their original names. E.g., with $y=\phi x$,
\[
\xymatrix{
  \cdots \ \ar[r]^{S}  &  x \ar[r]^{S} \ar[d]_{\phi } &Sx \ar[r]^{S} \ar[d]_{\phi }
  & S^2x \ar[r]^{S}\ar[d]_{\phi }  & \ \cdots\\
  \cdots \ \ar[r]^{T}  &  y \ar[r]^{T}              &Ty \ar[r]^{T}
  & T^2y \ar[r]^{T}  & \ \cdots
}
\]
\end{itemize}
As $\phi$ respects the mathematical structure
     under consideration,  we see that the systems $(X,S)$ and $(Y,T)$
     are essentially  the same, just described in a different language.
         (Perhaps, points of $X$  are described in English, and
points of  $Y$ are described in Chinese, and  $\phi$ gives a
translation.)

We are  interested in ``dynamical'' properties/invariants  of
     a topological dynamical system --
     those which are  respected by topological conjugacy.
     For example, suppose
     $\phi :(X,S) \to (Y,T)$ is a topological conjugacy,
     and $x$ is a fixed point of $S$: i.e., $S(x)=x$.
     Then $\phi (x) $ is  a fixed point of $T$.
     The proof is trivial:
     $T(\phi (x)) =
     \phi (Sx)  = \phi (x) $. (We almost don't need a proof:
     however named, a fixed point is a fixed point.)
So, the cardinality of the fixed point set, $\text{card}(\text{Fix}(S))$,
     is a dynamical invariant.

\begin{nota}For $k\in \N$,  $S^k$ is $S$ iterated $k$ times.
     E.g., $S^2: x\mapsto S(S(x))$.
\end{nota}

If $\phi :(X,S) \to (Y,T)$ is a topological conjugacy, then
$\phi $ is also  is a topological conjugacy $(X,S^k) \to (Y,T^k)$, for all $k$
(because $\phi T= S\phi  \implies \phi T^k =  S^k\phi$).
So, the sequence
     $(|\text{card}(\text{Fix}(S^k)|)_{k\in \N}  $ is a dynamical invariant
     of the system $(X,S)$.

     \subsection{Symbolic dynamics}\label{subsec:symbolicdynamics}

Let $\mathcal A$ be a finite set.
Then $\mathbf{\mathcal A^{\mathbf{\Z}}}$ is the set of functions
     from $\Z$ to $\mathcal A$. We write an element $x$ of $\mathcal A^{\Z}$
     as  a doubly infinite sequence,  $x= \dots x_{-1}x_0x_1\dots$, with each
     $x_k$ an
 element of $\mathcal A$.
 (The bisequence defines the
 function $k\mapsto x_k$.) Often  $\mathcal A$ is called the alphabet, and its elements
 are called symbols.

     Let $\mathcal A$ have the discrete topology and let $\mathcal A^{\Z}$ have the product
     topology. Then $\mathcal A^{\Z}$ is a
{\it compact metrizable
space} \apr{zerodimetc}). For one metric compatible with
     the topology, given $x\neq y$ set
     $\dist (x,y) = 1/(M+1)$, where
     $M = \min \{|k|: x_k \neq y_k\}$. Points $x,y$ are
     close when they have the same central word,
     $x_{-M}\dots x_M = y_{-M}\dots y_M$, for large $M$.

The   {\it shift map} $\sigma: \mathcal A^{\Z}
   \to \mathcal A^{\Z}$ is defined by
     $(\sigma x)_n = x_{n+1}$.
     (This is the ``left shift'': visually, a symbol in box $n+1$ moves left into box $n$.)
     The  shift map $\sigma: \mathcal A^{\Z} \to \mathcal A^{\Z}$ is
     easily checked to be a homeomorphism.
     The system $(\mathcal A^{\Z}, \sigma )$ is called the
     full shift on $n$ symbols, if $n= |\mathcal A| $.
     %(If two alphabets have the same
     %     cardinality, then their full shift systems are topologically conjugate.)
One notation: a dot over a symbol indicates it occurs in the zero coordinate.
Then
\begin{align*}
  \sigma: \ x &\mapsto\  \sigma(x) \\
\sigma :\ \dots x_{-2}x_{-1}\overset{\bullet}{x_0}x_1x_2\dots \ &\mapsto\
\dots x_{-2}x_{-1}x_0\overset{\bullet}{x_1}x_2\dots \ \ .
\end{align*}

     A {\it subshift} is a subsystem $(X,\sigma|_X)$ of some full shift
$(\mathcal A^{\Z}, \sigma)$
(i.e. $X$ is a closed subset of $\mathcal A^{\Z}$, and $\sigma (X)=X$).
For notational
     simplicity, we generally write $(X,\sigma|_X)$ as $(X,\sigma)$.
     Among the subshifts, the  subshifts of finite type
(also called {\it shifts of finite type},
     or {\it SFTs}) are a fundamental class, with varied
     applications \apr{introtosfts}).

\begin{de} \label{sftdefn} By definition, a subshift $(X, \sigma )$ is
  a subshift of finite type (SFT)
if if there is a finite set $\mathcal F$ of words on the alphabet $\mathcal A$
of $X$ such that $X$ is the subset of points $x$ in $\mathcal A^{\Z}$
such that no subword $x_m\cdots x_n$ is in $\mathcal F$.
\end{de}

     \subsection{Edge SFTs}\label{sec:edgesfts}

     We are interested in relating dynamical properties and relations
     of SFTs
     to matrix algebra. A key to this is given by a particular class
     of SFTs, the edge SFTs, which are presented by matrices.

     \begin{nota}  For us, always,
       ``{\it graph}'' means ``directed graph''.
Given an ordering of the vertices, $v_1, \dots v_n$, the
{\it adjacency matrix} $A$ of the graph is defined by setting
 $A(i,j)$ to be the number of
edges from vertex $v_i$ to vertex $v_j$. For simplicity we often
just refer to vertices $1, \dots , n$.
 \end{nota}

\begin{de} (Edge SFT)
          Given a square matrix $A$ over $\Z_+$,
          we let $ \Gamma_A$ denote a
     graph with adjacency matrix $A$.
     Let $\mathcal E$ be the set of edges of $ \Gamma_A$.
 $X_A$ is the set of
doubly infinite sequences $x = \dots x_{-2}x_{-1}x_{0}x_{1}x_{2} \dots$
such that each $x_n$ is in $\mathcal E$,
and for all $n$  the terminal vertex of $x_n$ equals the inital
vertex of $x_{n+1}$. (So, the points in $X_A$ correspond
  to doubly
infinite walks  through $ \Gamma_A$.)
The system $(X_A, \sigma)$,
is the edge shift, or edge SFT, defined by $A$
 \apr{edgesftnote}). We may also use the notation $\sigma_A$
 to denote the map $\sigma: X_A\to X_A$.
\end{de}
\begin{ex}
$(X_A,\sigma)$ is
the full shift on the two symbols $a,b$. The graph has a single
vertex,  denoted as 1.
     \[
A=\begin{pmatrix} 2  \end{pmatrix} \ , \qquad \quad \quad
\Gamma_A \quad =   \quad
\xymatrix{ *+[F-:<3pt>]{1} \ar@(lu,ld)_{a} \ar@(ru,rd)^{b}
  }
  \quad =   \quad
  \xymatrix{ \cdot \ar@(lu,ld)_{a}\ar@(ru,rd)^{b}
}
\]
\end{ex}
\begin{ex} The edge set $\mathcal E$ is $\{a,b,c,d\}$, and the
  vertex set is $\{1,2\}$:
     \[
A=\begin{pmatrix} 1& 2 \\ 1 & 0 \end{pmatrix} \ , \qquad \quad \quad
\Gamma_A \quad =   \quad
\xymatrix{  *+[F-:<3pt>]{1} \ar@(lu,ld)_{a}
   \ar@/^/[r]^{b}
  \ar@/^2pc/[r]^{c}
  & *+[F-:<3pt>]{2}     \ar@/^/[l]^{d} }
  \quad =
  \quad
\xymatrix{\cdot   \ar@(lu,ld)_{a}
   \ar@/^/[r]^{b}
  \ar@/^2pc/[r]^{c}
    &\cdot      \ar@/^/[l]^{d}
}
\]
Here $ ... aabdc ... $ can occur in a point of $X_A$,
but not $ ... bc ... $ .
\end{ex}
     \subsection{The continuous shift-commuting maps}\label{subsec:ctsshiftcommmaps}

\begin{nota}    For a subshift $(X,\sigma)$ and $n\in \N$,
   $\mathcal W_n(X)$ denotes the set of  $X$-words of length $n$:
\begin{align*}
\mathcal W_n(X) &=\{x_{0}\dots x_{n-1} : x\in X\}  \\
    &=
\{x_{i+n}\dots x_{i+n-1} : x\in X\},\quad \text{for every }i\in \Z \ .
\end{align*}
\end{nota}
\subsubsection{Block codes}

   Suppose $(X,\sigma)$
   and $(Y,\sigma)$ are subshifts.
   Suppose $\Phi : \mathcal W_N(X) \to \mathcal W_1(Y)$,
   and
   $j,k$ are integers, with $j+N-1= k$.
   Then for $x\in X$, we can define a bisequence $y= \phi (x)$
   by the rule $y_n = \Phi (x_{n +j} \dots x_{n+k})$, for all $n$.

   For example, with $N=4, j=-1$ and $k=2$:
   \begin{alignat*}{4}
  \cdots \ \ &x_{-1} x_0 x_1 x_2  &&\ \ \cdots\ \ && x_{n-1}x_nx_{n+1}x_{n+2}&&  \ \ \cdots \\
       &\quad   \ \   \downarrow &&  &&   \quad\quad \  \downarrow  \ &&\\
 \cdots \ \     & \quad\ \    y_0 &&\ \  \cdots\ \  && \quad \quad \  y_n && \ \ \cdots
   \end{alignat*}
   where  $y_0 = \Phi(x_{-1}x_0x_1x_2)$ and
   $y_n = \Phi(x_{n-1}x_nx_{n+1}x_{n+2})$. The point $y$ is defined by ``sliding'' the
   rule $\Phi$ along $x$. For some rules $\Phi$, the image of $\phi$ is contained
   in the subshift $(Y, \sigma)$.
\begin{de}
     The rule $\Phi$ above is called a {\it block code}.
     The map $\phi$ defined by $j$ and $\Phi$,
     is called a sliding block
     code (or a block code, or just a code).  The map $\phi$
     has range $n$ if $\Phi$ above can be chosen with
     $(j,k)= (-n,n)$ (i.e., $x_{-n}\cdots x_n$ determines
     $ (\phi x)_0$).
     \end{de}
The following result is
fundamental  for symbolic dynamics,
though it is  easy to prove \apr{chl}).

\begin{thm} (Curtis-Hedlund-Lyndon\si{Curtis-Hedlund-Lyndon Theorem})\label{thm:chlthm}\ai{Hedlund, G.A.}
\cite{Hedlund69}   Suppose $(X,\sigma)$ and $(Y,\sigma)$ are subshifts, and
$\phi : X \to Y$. The following are equivalent.
\begin{enumerate}
\item $\phi $ is continuous and $\sigma \phi = \phi \sigma$.

\item  $\phi$ is a  block code.
\end{enumerate}
\end{thm}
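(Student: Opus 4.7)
The plan is to prove the two directions separately; (2) $\Rightarrow$ (1) is routine, so the real work lies in (1) $\Rightarrow$ (2), where continuity plus shift commuting is upgraded to a finitary local rule via compactness.

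For the direction (2) $\Rightarrow$ (1): suppose $\phi$ is the sliding block code defined by $\Phi : \mathcal{W}_N(X) \to \mathcal{W}_1(Y)$ and an offset $j$. Shift commutativity is immediate because applying $\Phi$ to the window around position $n+1$ of $x$ gives the same symbol as applying $\Phi$ to the window around position $n$ of $\sigma x$. Continuity follows directly from the metric: if $x,x'$ agree on the central block $[-M, M]$, then the windows of length $N$ centered (with offset $j$) near positions in $[-M+|j|+N, M-|j|-N]$ coincide, so $\phi(x)$ and $\phi(x')$ agree on a central block whose length grows with $M$.

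The essential direction is (1) $\Rightarrow$ (2). Fix the alphabet $\mathcal{B}$ of $Y$. For each $b \in \mathcal{B}$ the cylinder
\[
C_b = \{ y \in Y : y_0 = b \}
\]
is clopen in $Y$, so by continuity $\phi^{-1}(C_b)$ is clopen in $X$. The cylinders
\[
[a_{-n}\cdots a_n] = \{ x \in X : x_{-n}=a_{-n}, \dots, x_n=a_n\}
\]
form a basis of clopen sets for the topology of $X$, so $\phi^{-1}(C_b)$ is a union of such cylinders. Here is the key step: since $X$ is compact and the sets $\phi^{-1}(C_b)$ for $b \in \mathcal{B}$ partition $X$ into finitely many clopen pieces, each $\phi^{-1}(C_b)$ is covered by finitely many cylinders, and we may take a single $n=N$ large enough to work for all $b\in \mathcal{B}$ simultaneously. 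Consequently, the value of $(\phi x)_0$ depends only on $x_{-N}\cdots x_N$; define
\[
\Phi(x_{-N}\cdots x_N) = (\phi x)_0
\]
on $\mathcal{W}_{2N+1}(X)$. Then shift commutativity gives
\[
(\phi x)_n = (\phi \sigma^n x)_0 = \Phi\bigl(x_{n-N}\cdots x_{n+N}\bigr),
\]
exhibiting $\phi$ as a sliding block code of range $N$.

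The main obstacle is the compactness-to-uniform-window step: a priori, different symbols $b$ or different cylinders in the cover might force arbitrarily wide windows. Compactness of $X$, together with finiteness of $\mathcal{B}$, removes this obstruction by letting us extract finite subcovers and take the maximum radius. Everything else is bookkeeping with cylinder sets and the definition of the metric on $\mathcal{A}^{\mathbb{Z}}$.
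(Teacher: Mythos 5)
Your proof is correct. The direction (2) $\Rightarrow$ (1) is routine in both accounts, so the comparison is about (1) $\Rightarrow$ (2). The paper argues metrically: continuity on a compact metric space gives \emph{uniform} continuity (Heine--Cantor), and one unpacks the definition of uniform continuity with the explicit metric on $\mathcal{A}^{\mathbb{Z}}$ to get a single radius $m$ such that agreement on $x_{-m}\cdots x_m$ determines $(\phi x)_0$. You instead argue topologically: the sets $\phi^{-1}(C_b)$, $b$ a $Y$-symbol, form a finite clopen partition of $X$; each is a union of basic cylinders, and compactness of each clopen piece yields a finite subcover; the finiteness of the $Y$-alphabet then lets you take a single maximum radius $N$. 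These are two ways of packaging the same use of compactness. The paper's route is shorter once Heine--Cantor is taken for granted; yours is more explicitly combinatorial, avoids invoking the metric at all once continuity is given, and makes it transparent that the argument really lives in the zero-dimensional topology of the cylinder basis. One very small point you glossed over: after extracting finite subcovers you should (harmlessly) refine all covering cylinders to have the \emph{same} radius $N$, so that the radius-$N$ cylinders partition $X$ and each lies entirely inside exactly one $\phi^{-1}(C_b)$; this is what makes the rule $\Phi(x_{-N}\cdots x_N) = (\phi x)_0$ well-defined. You assert this in passing, and it is correct, but it is worth making explicit.
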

     The CHL Theorem tells us  the morphisms between subshifts are given by
     block codes.

We'll define some examples  by stating
the rule
     $(\phi x)_0 = \Phi (x_i \dots x_{i+N-1})$.
\begin{ex}
The shift map $\sigma$  and its powers are
sliding block codes. E.g.,
$(\sigma x)_0 = x_1$,
$(\sigma^2 x)_0 = x_2$  and $(\sigma^{-1} x)_0 = x_{-1}$ .
\end{ex}
\begin{ex}
Let $(X,\sigma)$ be the full shift on the two symbols $0,1$.
\\ Define  $\phi:(X,\sigma)\to (X,\sigma)$ by
$(\phi x)_0 = x_0 +x_1 $ (mod 2).
E.g. if $\ \  x = \dots 11\overset{\bullet}{0}\, 111\, 000\, 1 \dots $,
then $\phi (x) = \dots 01\overset{\bullet}{1}\, 001\, 001 \,
\dots $ .
\end{ex}
\subsubsection{Higher block  presentations}

Given a subshift $(X,\sigma)$ and $k\in \N$,
we define $X^{[k]}$ to be the image of $X$ under the block code
$\phi: x\mapsto y$, where for each $n$, the symbol $y_n $ is
$x_n\dots x_{n+k-1} $, the $X$-word (block) of length $k$
beginning at $x_n$.
We might put
parentheses around this word  for visual
clarity. E.g., with $k=2$,
\begin{align*}
x & = \dots x_{-1} \overd{x_0} x_1 x_2 x_3 x_4 \dots  \\
% $y  = \dots y_{-1} \overd{y_0} y_2 y_3  \dots $
\phi (x) & = \dots (x_{-1}  x_0 )
\overd{(x_{0}  x_1 )}
(x_{1}  x_2 ) (x_{2}  x_3 )
(x_{3}  x_4)   \dots  \end{align*}
      For each $k$,  the map
      $\phi : X\to X^{[k]}$ is easily checked to be a topological conjugacy,
      $(X,\sigma) \to (X^{[k]}, \sigma)$. \\
\begin{de} The subshift      $(X^{[k]}, \sigma)$ is  the $k$-block presentation
      of $(X,\sigma)$.
\end{de}

\begin{prop}\label{conjugatetoedgesft} For a subshift $(X, \sigma)$, the following are equivalent.
  \begin{enumerate}
  \item
    $(X, \sigma)$ is a shift of finite type.
  \item
    $(X, \sigma)$  is topologically conjugate to an edge SFT.
    \end{enumerate}
\end{prop}

  We leave the (not difficult) proof of Proposition \ref{conjugatetoedgesft}
  to \apr{proofconjugatetoedgesft}).

By Proposition \ref{conjugatetoedgesft},
in order to relate the dynamical relations and properties of
arbitrary SFTs
to matrix algebra, it suffices to relate
the dynamical relations and properties of edge SFTs to their
defining matrices. So, we will be concerned from here almost
exclusively with SFTs which are edge SFTs.
For example, to classify SFTs up to topological conjugacy,
it suffices to determine when matrices $A,B$ over $\Z_+$
define edge SFTs $\sigma_A, \sigma_B $ which
are topologically conjugate\si{classification problem}.

\subsection{Powers of an edge SFT}

\begin{prop}                Let $n$ be a positive integer.
                Then the $n$th power
                system $(X_A, \sigma^n)$ is topologically conjugate to
the
     edge SFT
                $(X_{A^n}, \sigma)$
                 defined by $A^n$.
\end{prop}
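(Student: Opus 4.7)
The plan is to construct an explicit topological conjugacy $\phi\colon(X_A,\sigma^n)\to (X_{A^n},\sigma)$ obtained by grouping the symbols of a point in $X_A$ into non-overlapping blocks of length $n$. The motivating observation is that $(A^n)(i,j)$ counts the number of length-$n$ walks in $\Gamma_A$ from vertex $i$ to vertex $j$. Hence, up to a choice of graph, we may identify the edge set of $\Gamma_{A^n}$ with the set of length-$n$ walks in $\Gamma_A$: each edge of $\Gamma_{A^n}$ from $i$ to $j$ corresponds bijectively to one walk $e_1e_2\cdots e_n$ in $\Gamma_A$ starting at $i$ and ending at $j$. Fixing such an identification once and for all is the only mild bookkeeping step, and after it the definition of $\phi$ is forced.

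Next, I would define $\phi\colon X_A\to X_{A^n}$ by
\[
(\phi x)_k \;=\; x_{kn}\,x_{kn+1}\,\cdots\,x_{kn+n-1}\qquad(k\in\Z),
\]
interpreted via the above identification as an edge of $\Gamma_{A^n}$. One then checks three routine things. (1) $\phi(x)\in X_{A^n}$: the terminal vertex of the walk $(\phi x)_k$ in $\Gamma_A$ is the initial vertex of $x_{(k+1)n}$, which is the initial vertex of $(\phi x)_{k+1}$, giving the required compatibility in $X_{A^n}$. (2) Intertwining: $(\phi\sigma^n x)_k$ is the length-$n$ walk in $x$ starting at coordinate $(k+1)n$, which is exactly $(\sigma\phi x)_k$. (3) Continuity: $(\phi x)_k$ depends only on the finite window $x_{kn},\ldots,x_{kn+n-1}$, so $\phi$ is continuous (equivalently, a locally defined rule).

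For bijectivity, injectivity is immediate since the coordinates $\{kn+i:k\in\Z,\ 0\le i<n\}$ exhaust $\Z$. For surjectivity, given $y\in X_{A^n}$, unpack each $y_k$ into its constituent walk $e_{k,0}\cdots e_{k,n-1}$ in $\Gamma_A$; the defining compatibility of $y$ (terminal vertex of $y_k$ equals initial vertex of $y_{k+1}$) is precisely what is needed so that the concatenation $x$ with $x_{kn+i}=e_{k,i}$ lies in $X_A$, and by construction $\phi(x)=y$.

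The argument concludes by invoking the fact that a continuous bijection between compact Hausdorff spaces is a homeomorphism (both $X_A$ and $X_{A^n}$ are compact metrizable, cf.\ the discussion in Subsection \ref{subsec:symbolicdynamics}), so $\phi$ is a topological conjugacy. I do not expect any genuine obstacle: the only substantive point is recognizing that the entries of $A^n$ count length-$n$ walks and arranging the identification of edges of $\Gamma_{A^n}$ with these walks, after which every verification is a one-line unpacking of the definitions.
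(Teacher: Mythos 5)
Your proof is correct and follows essentially the same route as the paper: identify the edges of $\Gamma_{A^n}$ with length-$n$ paths in $\Gamma_A$ via the fact that $A^n(i,j)$ counts such paths, and define the conjugacy by grouping coordinates into non-overlapping blocks of length $n$ (the paper writes this out explicitly for $n=2$ and notes the general case is analogous). Your additional verifications of injectivity, surjectivity, and continuity are the routine details the paper leaves implicit.
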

The proposition  holds because in a graph with adjacency matrix $A$,
the number of paths of length $n$ from vertex
$i$ to vertex $j$ is $A^n(i,j)$ \apr{pathsandpowers}).

E.g. let $n=2$, and let $\mathcal V$ be the vertex set of
$\Gamma_A$. Let $\mathcal G$ be the graph with vertex set
$\mathcal V$ for which an edge from $i$ to $j$ is a two-edge path $(ab)$
from $i$ to $j$ in $\mathcal G$.
Since  $A^2$ is an adjacency matrix for $\mathcal G$,
 we may take for $(X_{A^2}, \sigma)$
the edge SFT on edge paths in $\mathcal G$.
We have \apr{notblock}) a topological   conjugacy
$\phi : (X_A, \sigma^2 ) \to (X_{A^2}, \sigma )$,
defined  by $(\phi x)_n = x_{2n}x_{2n+1}$ , for $n\in \Z$ :

\[
\xymatrix{
\dots x_{-2}x_{-1} \overd{x_0} x_1 x_2 x_3  \dots   \
\ar[r]^{\sigma^2 } \ar[d]_{\phi } & \
\dots x_{-2}x_{-1} x_0 x_1 \overd{x_2} x_3  \dots \ar[d]_{\phi }  \\
     \dots (x_{-2}x_{-1 }) \overd{(x_0 x_1)} (x_2 x_3)   \dots \
     \ar[r]^{\sigma } &
\ \dots (x_{-2}x_{-1 }) (x_0 x_1) \overd{(x_2 x_3)}   \dots
}
\]
The inverse system
$(X_A,\sigma^{-1})$ is conjugate to $(X_{A^T}, \sigma)$, the edge SFT defined by the
transpose of $A$ \apr{TransposeAndInverse}).

\subsection{Periodic points and nonzero\si{nonzero spectrum} spectrum}\label{perptssection}

  Given a subshift, let
  $\text{Fix}(\sigma^k)  = \{x\in X: \sigma^kx=x\}$.
  We can regard the sequence
  $(|\text{Fix} (\sigma^k)|)_{k \in \N}$
    as the {\it periodic data}\si{periodic data} of the system \apr{periodicdata}).
For an edge SFT $(X_A, \sigma_A)$, we will derive from $A$
 a complete  invariant for the periodic data\si{periodic data}.

\subsubsection{Periodic data\si{periodic data} $\ \leftrightarrow\ $ trace sequence\si{trace sequence} of A}

$x$ is a fixed point for $\sigma$
          iff          $x = ... aa\overd{a}aaa .... $
          for some edge $a$ with terminal vertex  =initial vertex.
The number of edges from vertex $i$ to vertex $i$ is $A(i,i)$.
So, in $X_A$,
\[
 |\text{Fix} (\sigma)| = \sum_i A(i,i) \ = \ \tr (A) \ .
\]
Likewise, a length $k$ path with initial vertex = terminal vertex
gives a fixed point of $\sigma^k$, and
\[
|\text{Fix}  (\sigma)^k| = \text{trace}(A^k)\  .
\]
Thus  $(|\text{Fix} (\sigma^k)|)_{k \in \N}\ =\
(\tr (A^k))_{k\in \N}$.
\subsubsection{Trace sequence\si{trace sequence} of A $\ \leftrightarrow\ $
  det(I-tA)}

There is a standard equation \apr{zetaeq})
\[
\frac 1{\det(I-tA)} = \exp \sum_{n=1}^{\infty} \frac 1n \tr (A^n) t^n \ .
\]
From this,  one sees the trace sequence\si{trace sequence} and $\det(I-tA)$ determine
each other \apr{proofwithzeta}). (This mutual determination holds for a matrix over
any torsion-free commutative ring \apr{NewtonIdentities})).

\subsubsection{det(I-tA)  $\ \leftrightarrow\ $ nonzero\si{nonzero spectrum} spectrum of A}

\begin{de} If a matrix $A$
has characteristic polynomial
$t^k \prod_{i=1}^m (t-\lambda_i)$, with the $\lambda_i$ nonzero\si{nonzero spectrum},
then the {\it nonzero\si{nonzero spectrum} spectrum} of $A$
is  $(\lambda_1, \dots , \lambda_m)$.
Here -- by abuse of notation \apr{abuse})) --
the $m$-tuple is used as notation for a multiset:
 the multiplicity     of entries of $(\lambda_1, \dots , \lambda_m)$
          matters, but not their  order.
For example,
          $(2,1,1)$ and $(1,2,1)$ denote the same nonzero\si{nonzero spectrum} spectrum,
          but $(2,1)$ is different.
\end{de}
          If $A$ has nonzero\si{nonzero spectrum} spectrum
$\Lambda = (\lambda_1, \dots , \lambda_m)$,
          then
          \[ \det (I-tA) = \prod_{i=1}^m (1-\lambda_i t) \ . \]
 For example,
\begin{alignat*}{2}
A &= \begin{pmatrix} 3&0&0&0 \\ 0&3&0&0 \\ 0&0&5&0\\ 0&0&0&0
\end{pmatrix}\ , \qquad &
I-t A&= \begin{pmatrix} 1-3t&0&0&0 \\ 0&1-3t&0&0 \\ 0&0&1-5t&0\\ 0&0&0&1
\end{pmatrix} \\
&&& \\
\Lambda &= (3,3,5) \ , &
\det (I-tA) &= (1-3t)^2(1-5t)\ .
\end{alignat*}
The nonzero\si{nonzero spectrum} spectrum and the polynomial $\det(I-tA)$ determine each other.
\subsection{Classification of SFTs}

\begin{prob}[Classification Problem\si{classification problem}]\label{prob:classificationprob} Given
square matrices $A,B$  over $\Z_+$,
determine whether they present SFTs which are topologically
          conjugate.
          \end{prob}
There are trivial ways to produce infinitely many
distinct matrices which define
the same SFT. E.g.,
\[
\begin{pmatrix} 2  \end{pmatrix}\ ,
\begin{pmatrix} 2&0\\0&0 \end{pmatrix}\ ,
\begin{pmatrix} 2&0\\1&0 \end{pmatrix}\ ,
\begin{pmatrix} 2&1\\0&0 \end{pmatrix}\ ,
\begin{pmatrix} 2&0&0\\1&0&0\\1&0&0 \end{pmatrix}\ ,
\begin{pmatrix} 2&1&1\\0&0&1\\0&0&0 \end{pmatrix}\ , \ \dots
% \begin{pmatrix} 0& 1& 1 & 1\\ 0 & 2 & 1 & 1 \\
%  0&0&0& 1\\  0&0&0&0
%\end{pmatrix} \ , \qquad
\]
Every SFT $(X_A, \sigma)$ equals one which is defined by a matrix
which is {\it nondegenerate\si{nondegenerate}} (has no zero row and no zero column)
\apr{nondegenerate}).
We can avoid the trivial problem by considering only nondegenerate\si{nondegenerate} matrices.
Still,
in the nontrivial
case (the case that $X_A$ contains infinitely many points),
there are nondegenerate\si{nondegenerate} matrices of unbounded size
which define  SFTs topologically
conjugate to $(X_A, \sigma)$ \apr{higherblockedgesfts}).

\subsection{Strong shift equivalence\si{strong shift equivalence} of matrices, classification
of SFTs}\label{subsec:sseandclassification}

\begin{de} A {\it semiring}\si{semiring} is a set with  operations
        addition and multiplication
      satisfying all the ring axioms, except that an element is not required
      to have an additive inverse. In these lectures, the  semiring
is always assumed to       contain a multiplicative identity, 1.
\end{de}
      Below, $\mathcal S$ is a subset of a semiring \apr{boolean}) containing 0 and 1.
For $\mathcal S$ a subset of $\R$,
      $\mathcal S_+ $ denotes $\mathcal S \cap  \{x\in \R: x\geq 0\}$.
       We are especially interested in
 $\mathcal S =
     \Z, \Z_+ , \R , \R_+$.

     Let $A$ and $B$
     be square matrices over $\mathcal S$ (not necessarily
     of the same size).

\begin{de} $A$ and $B$ are {\it elementary strong shift equivalent}
     over $\mathcal S$ (ESSE-$\mathcal S$) if there exist matrices
     $R,S$ over $\mathcal S$ such that
     $A=RS$ and $B=SR$.
\end{de}
     Note, if a matrix $R$ is $m\times n$,
     and $S$ is a matrix such that $RS$ and $SR$ are well defined,
     then $S$ must be $n\times m$, and the matrices $RS$ and $SR$
     must be square.
 \begin{de}  $A$ and $B$ are  {\it strong shift equivalent}
     over $\mathcal S$ (SSE-$\mathcal S$) if there are matrices
     $A=A_0, A_1, \dots , A_{\ell} =B$
     over $\mathcal S$ such that $A_i$ and $A_{i+1}$ are ESSE-$\mathcal S$,
     $0\leq i < \ell $.

The number $\ell $ above is called the lag\si{lag} of the strong shift equivalence\si{strong shift equivalence}.
\end{de}
The relation ESSE-$\mathcal S$ is reflexive and symmetric. Easy
examples \apr{smallestlag}) show ESSE-$\mathcal S$ is
      not transitive.
     SSE-$\mathcal S$, the transitive closure of ESSE-$\mathcal S$,
is an     equivalence relation.
Williams\ai{Williams, R.F.} introduction of strong shift equivalence\si{strong shift equivalence} in
\cite{Williams73} -- the foundation for all later work on
the classification of shifts of finite type -- is explained
by the following theorem.

 \begin{thm}[Williams\ai{Williams, R.F.} 1973]\label{rfwtheorem} \apr{liberties}) Suppose
%        $\mathcal S=\Z_+$ or $\mathcal S=\{0,1\}$.
 $A$ and $B$ are square matrices over $\mathcal \Z_+$. The following
are equivalent.
\begin{enumerate}
\item  $A$ and $B$ are SSE-$\mathcal \Z_+$.

\item The SFTs defined by $A$ and $B$ are topologically conjugate.
\end{enumerate}
\end{thm}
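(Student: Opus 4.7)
The plan is to handle the two directions of the equivalence separately. The forward direction reduces to showing ESSE-$\Z_+$ implies topological conjugacy, and admits an elegant bipartite construction. The reverse direction is substantially harder, requiring a combinatorial decomposition of an arbitrary conjugacy into elementary state-splitting moves.

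For $(1)\Rightarrow(2)$, since SSE-$\Z_+$ is the transitive closure of ESSE-$\Z_+$ and topological conjugacy is transitive, I reduce to the case $A=RS$, $B=SR$, with $R$ an $m\times n$ and $S$ an $n\times m$ matrix over $\Z_+$. I would form
\[
M=\begin{pmatrix} 0 & R \\ S & 0 \end{pmatrix}, \qquad\text{so}\qquad M^{2}=\begin{pmatrix} A & 0 \\ 0 & B \end{pmatrix}.
\]
Then $\Gamma_M$ is bipartite on vertex sets $V_1,V_2$, with $R$-edges $V_1\to V_2$ and $S$-edges $V_2\to V_1$. Points of $X_M$ alternate edge types, so $X_M=Z_0\sqcup Z_1$ clopenly, where $Z_i$ consists of points whose zero-coordinate edge is of type $R$ (resp.\ $S$). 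The shift $\sigma_M$ restricts to a homeomorphism $Z_0\to Z_1$. By the power-of-an-edge-SFT proposition of Subsection~1.5, the pairing $(z_{2k},z_{2k+1})$ produces a conjugacy $(Z_0,\sigma_M^2)\cong(X_A,\sigma_A)$, since each such pair is a length-$2$ walk starting at $V_1$ and hence an $A$-edge; the same pairing on $Z_1$ gives $(Z_1,\sigma_M^2)\cong(X_B,\sigma_B)$. The map $\sigma_M\colon Z_0\to Z_1$ intertwines $\sigma_M^2|_{Z_0}$ with $\sigma_M^2|_{Z_1}$, and hence transports to a topological conjugacy $X_A\to X_B$.

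For $(2)\Rightarrow(1)$, I would start from a topological conjugacy $\phi\colon X_A\to X_B$. The Curtis--Hedlund--Lyndon theorem presents $\phi$ as a sliding block code. Passing to higher block presentations $A^{[k]}$ and $B^{[k]}$ reduces to the case where both $\phi$ and $\phi^{-1}$ are $1$-block codes; each such recoding is itself SSE-$\Z_+$ to the original, since letting $S(v,e)=1$ iff $v=\mathrm{init}(e)$ and $T(e,v)=1$ iff $\mathrm{term}(e)=v$ gives $A=ST$ while $TS$ is the adjacency matrix of $A^{[2]}$, and iteration handles larger $k$. At this point the strategy is Williams' \emph{state-splitting decomposition}: show that a $1$-block conjugacy between edge SFTs can be factored into a finite sequence of elementary in-splittings and out-splittings of vertices. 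Each individual splitting is manifestly an ESSE-$\Z_+$, because the split specifies how to write the adjacency matrix as $RS$, and the post-split matrix is $SR$ for the same $R,S$, where $R$ and $S$ are the incidence matrices between old vertices and new partition classes.

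The main obstacle is this state-splitting decomposition in the reverse direction. The forward direction is essentially a single clever choice (the bipartite graph $\Gamma_M$). The reverse direction, by contrast, requires careful tracking of the memory and anticipation of the block code $\phi$ and its inverse: at each vertex one must choose a partition of outgoing (or incoming) edges that refines the distinctions made by $\phi$ one step further into the future (or past), and one must prove that iterating this refinement terminates in a conjugacy which is a bijection of edges, at which point the two graphs are isomorphic. Organizing these splittings so that each step is literally of the form $A_i=R_iS_i$, $A_{i+1}=S_iR_i$ over $\Z_+$, and ensuring compatibility between the splittings derived from $\phi$ and those derived from $\phi^{-1}$, is the technical heart of \cite{Williams73}.
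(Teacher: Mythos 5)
Your forward direction is the same bipartite construction the paper uses (form $M=\left(\begin{smallmatrix}0&R\\S&0\end{smallmatrix}\right)$, note $M^2=\left(\begin{smallmatrix}A&0\\0&B\end{smallmatrix}\right)$, and transport the shift map between the two clopen pieces of $(X_M,\sigma^2)$), and your reverse direction defers to Williams' state-splitting decomposition, which is exactly what the paper does by citing the Decomposition Theorem without proof. The one small point to make explicit in the forward direction is that when entries of $A$ exceed $1$, the identification of $\Gamma_A$-edges with $R,S$-paths in $\Gamma_M$ is a non-canonical choice of bijection respecting initial and terminal vertices, not an automatic pairing.
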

\begin{proof}
The  difficult implication $(2) \implies (1)$ follows from
the Decomposition Theorem\si{Decomposition Theorem} \apr{decomp}).
We will prove the easy direction,
$(1) \implies (2)$.

It suffices to consider
 an ESSE over $\Z_+$,
   $A=RS, B=SR$. Define a square matrix $M$ with block form
   $\begin{pmatrix} 0&R \\ S&0\end{pmatrix}$,
     and  edge SFT $(X_M, \sigma)$.
Then $M^2 =
     \begin{pmatrix} RS&0 \\ 0&SR\end{pmatrix}
=    \begin{pmatrix} A&0 \\ 0&B\end{pmatrix}
  $.
  The system $(X_M, \sigma^2)$ is a disjoint union of two systems,
  $(X_1, \sigma^2|{X_1})$ and   $(X_2, \sigma^2|{X_2})$.
The shift map
  $\sigma: X_1 \to X_2$ gives a
  topological conjugacy between these subsystems.

For all $i,j$, we have $A(i,j)= \sum_k R(i,k)S(k,j)$.
Therefore, we may choose a bijection $\alpha: a\mapsto rs$ from the set of
$\Gamma_A$ edges to the set of $R,S $ paths in $\Gamma_M$
(an $R,S$ path is an $R$ edge followed
by an $S$ edge) which respects initial and terminal vertex.
Similarly we choose a bijection
$\beta: b\mapsto sr$
from $\Gamma_B$ edges to $S,R$ paths in $\Gamma_M$.
  We define a  conjugacy
$\phi_{\alpha}: (X_A, \sigma ) \to (X_1, \sigma^2|{X_1})$ ,
$\
\phi_{\alpha} :
  \dots x_{-1}x_0x_1 \dots  \mapsto
  \dots (r_{-1}s_{-1})(r_0s_0)(r_1s_1) \dots \ ,
 $
  by replacing
  each $x_n$ with $\alpha (x_n)$.
    We define a  conjugacy
  $\phi_{\beta}: (X_B, \sigma ) \to (X_2, \sigma^2|{X_2})$ in
  the same way.

We now have a conjugacy $c(R,S):X_A \to X_B$
as the composition, $c(R,S) = \phi_{\alpha} \sigma \phi_{\beta}^{-1}$,
\begin{align*}
c(R,S): \   \dots x_{-1}x_0x_1 \dots & \mapsto
  \dots (r_{-1}s_{-1})(r_0s_0)(r_1s_1) \dots \\
  & \mapsto
  \dots (s_{-1}r_0)(s_0r_1)(s_1r_2) \dots \mapsto
   \dots y_{-1}y_0y_1 \dots \ .
\end{align*}
\end{proof}

The technical statements of the next remark are not
needed at all before
Sections \ref{sectionAut} and \ref{sectionWagoner}\ai{Wagoner, J.B.}.

%\begin{definition} \label{definitionOfCRS}
%
%
%\end{definition}

\begin{rem}\label{rem:crsconj}
Let $(R,S)$ be an ESSE-$\Z_+$, with $A=RS$ and $B=SR$.
Let $c(R,S)$  be a topological conjugacy from
$(X_A,\sigma)$ to $(X_B,\sigma)$ defined as in the proof above.
The conjugacy $c(R,S)$ is uniquely determined by $(R,S)$ when
all entries of $A$ and $B$ are in $\{0, 1\}$ (then, the
bijections $\alpha, \beta$ are unique).
But in general, the conjugacy depends on
the choice of those bijections.
With appropriate choice of
those bijections, we have the following:
\begin{enumerate}
\item
$c(R,S) c(S,R) = \sigma_A$, the shift map on $X_A$ .
\item
$(c(R,S))^{-1}=  c(S,R) \sigma_A^{-1}
  =   \sigma_B^{-1}c(S,R)  $.
\item
$c(I,A) = \text{Id}$,   and $c(A,I) = \sigma_A$.
\end{enumerate}
Also: with  $c(R,S)$, $\ x_0x_1$ determines $y_0$;
with $(c(R,S))^{-1}$, $\ y_{-1}y_0$ determines $x_0$.
\end{rem}

\subsection{Shift equivalence\si{shift equivalence}}

                Despite the seeming simplicity of its definition,
                SSE over $\Z_+$ is a very difficult relation to
                fully understand.
                Consequently, Williams\ai{Williams, R.F.} introduced shift equivalence\si{shift equivalence}.
\begin{de}
     Let $A,B$ be square matrices over a semiring $\mathcal S$.
     Then $A,B$ are shift equivalent over $\mathcal S$ (SE-$\mathcal S$)
     if there exist matrices $R,S$ over $\mathcal S$ and a positive
     integer $\ell$ such that the following hold:
     \[
     A^{\ell}=RS\ , \quad B^{\ell}= SR\ , \quad
     AR=RB\ ,\quad SA=BS\  .
     \]
                Here, $(R,S)$ is a shift equivalence\si{shift equivalence}
of lag\si{lag}  $\ell $
                from $A$ to $B$.
\end{de}
The next proposition is an easy exercise \apr{easyfacts}).
\begin{prop}  Let $\mathcal S$ be a  semiring.
\begin{enumerate}
\item SE over $\mathcal S$ is an equivalence relation.

\item  SSE over $\mathcal S$ implies SE over $\mathcal S$.
\end{enumerate}
\end{prop}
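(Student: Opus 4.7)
The plan is to verify the three equivalence axioms directly from the definition and then deduce (2) from (1) by checking that each ESSE-$\mathcal S$ step is already an SE-$\mathcal S$ of lag $1$.

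For reflexivity, I would exhibit $(R,S)=(I,A)$ as a lag-$1$ SE from $A$ to itself: the four defining equations $A=IA$, $A=AI$, $AI=IA$, $AA=AA$ are immediate. For symmetry, if $(R,S)$ realizes a lag-$\ell$ SE from $A$ to $B$, then $(S,R)$ realizes a lag-$\ell$ SE from $B$ to $A$, simply by rereading the four conditions $A^{\ell}=RS,\ B^{\ell}=SR,\ AR=RB,\ SA=BS$ with the roles of $A,B$ and $R,S$ swapped.

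Transitivity is the only step requiring a small computation. Suppose $(R_1,S_1)$ is a lag-$\ell_1$ SE from $A$ to $B$ and $(R_2,S_2)$ is a lag-$\ell_2$ SE from $B$ to $C$. I would set $R=R_1R_2$ and $S=S_2S_1$ and claim $(R,S)$ is a lag-$(\ell_1+\ell_2)$ SE from $A$ to $C$. The key trick is to iterate the intertwining relations: from $AR_1=R_1B$ one gets $A^kR_1=R_1B^k$ for all $k$, and similarly $S_1A^k=B^kS_1$, $BR_2=R_2C$ gives $B^kR_2=R_2C^k$, and $S_2B=CS_2$ gives $S_2B^k=C^kS_2$. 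Using these,
\begin{align*}
RS &= R_1(R_2S_2)S_1 = R_1 B^{\ell_2} S_1 = A^{\ell_2} R_1 S_1 = A^{\ell_2+\ell_1},\\
SR &= S_2(S_1R_1)R_2 = S_2 B^{\ell_1} R_2 = S_2 R_2 C^{\ell_1} = C^{\ell_2+\ell_1},\\
AR &= AR_1R_2 = R_1 B R_2 = R_1 R_2 C = RC,\\
SA &= S_2S_1A = S_2 B S_1 = C S_2 S_1 = CS.
\end{align*}
This gives the required lag-$(\ell_1+\ell_2)$ SE.

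For (2), I would observe that if $A,B$ are ESSE-$\mathcal S$ via $A=RS$, $B=SR$, then $(R,S)$ is itself a lag-$1$ shift equivalence: $A^1=RS$, $B^1=SR$, $AR=RSR=RB$, $SA=SRS=BS$. Hence every elementary step in a strong shift equivalence is an SE-$\mathcal S$, and since by (1) SE-$\mathcal S$ is transitive, the chain $A=A_0,A_1,\ldots,A_\ell=B$ witnessing SSE-$\mathcal S$ collapses into a single SE-$\mathcal S$ between $A$ and $B$. No step is genuinely an obstacle here; the only mildly delicate point is keeping the intertwining identities applied in the correct order in the transitivity computation, which the associativity of matrix multiplication over $\mathcal S$ readily accommodates.
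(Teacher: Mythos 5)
Your proof is correct and follows essentially the same route as the paper: transitivity via the composite pair $(R_1R_2,\,S_2S_1)$ of lag $\ell_1+\ell_2$, and part (2) by noting each ESSE is a lag-$1$ SE and composing, which reproduces the paper's $R=R_1\cdots R_\ell$, $S=S_\ell\cdots S_1$. Your explicit checks of reflexivity, symmetry, and the intertwining identities are all fine.
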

\subsection{Williams' Shift Equivalence\si{shift equivalence} Conjecture\si{Williams' Conjecture}}

\begin{conj} (Williams\ai{Williams, R.F.}, 1974)
  \cite{Williams73}\label{conj:williams}
Suppose $A,B$ are two square matrices
                which are SE-$\Z_+$.
                Then they are SSE-$\Z_+$.
\end{conj}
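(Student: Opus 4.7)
The plan is to try to build an explicit SSE-$\Z_+$ chain directly from the SE data, and, when the obvious approach stalls, to hunt for an invariant of SSE-$\Z_+$ that is not an invariant of SE-$\Z_+$. Given an SE-$\Z_+$ pair $(R,S)$ of lag $\ell$ with $A^{\ell}=RS$, $B^{\ell}=SR$, $AR=RB$ and $SA=BS$, the most natural first attempt is a bootstrap: the factorization $(R,S)$ immediately gives an ESSE-$\Z_+$ between $A^{\ell}$ and $B^{\ell}$, and one would then try to patch $A$ to $A^{\ell}$ (and $B$ to $B^{\ell}$) by a canonical chain of ESSE-$\Z_+$'s. A natural tool is the block matrix $\bigl(\begin{smallmatrix} 0 & R \\ S & 0 \end{smallmatrix}\bigr)$ used in the proof of Theorem \ref{rfwtheorem}, whose square has $A$ and $B$ on the diagonal; one would hope to refine such block constructions, using the intertwinings $AR=RB$ and $SA=BS$ to slide factors and reassemble.

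Over $\Z$, where one has access to similarity and signed row/column operations, this approach is much more plausible, and the polynomial matrix formulation promised in Section 3 should let one reduce the problem to manipulation of presentations of the associated $\Z[t]$-module. My working assumption would be that SE-$\Z$ is attackable this way. The crux would then be a \emph{positivization} step: lifting a $\Z$-chain of elementary strong shift equivalences to a $\Z_+$-chain. The main obstacle will sit here. Positivity is not preserved by the elementary operations that let one manipulate integer factorizations, and there is no cheap way to eliminate a stray negative entry in an intermediate matrix without destroying the ESSE relations on either side. One would essentially have to find, inside the poset of positive factorizations, a path that mirrors the signed one; it is not at all clear such a path exists.

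If the bridging strategy stalls, I would pivot to searching for a finer invariant of SSE-$\Z_+$ that detects the positivity of factorizations, not just the algebraic data visible to SE. A sensible place to look is in the dynamics of the induced conjugacies: every ESSE-$\Z_+$ pair $(R,S)$ produces an honest conjugacy $c(R,S)$ of SFTs, and relations like $c(S,R)\circ c(R,S)=\sigma_A$ (Remark~\ref{rem:crsconj}) encode the \emph{ordered} positive factorization, not merely its existence over $\Z$. I would expect an obstruction valued in a group built from such relations—plausibly with a sign/parity component tracking orientation of permutations on periodic orbits—to separate SSE-$\Z_+$ from SE-$\Z_+$. The hardest step, on this side of the problem, will be exhibiting concrete $A$ and $B$ which are demonstrably SE-$\Z_+$ yet on which such an invariant takes distinct values: producing a genuine obstruction rather than just a candidate invariant. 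My prior on the conjecture, given how tightly the positivity constraint couples to dynamics, is that the latter program succeeds and the conjecture is false.
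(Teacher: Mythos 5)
First, a point of framing: the statement you were asked about is a \emph{conjecture}, not a theorem, and the paper offers no proof of it --- indeed it is false, as the paper itself records (the Kim--Roush counterexamples of 1992, and of 1999 for primitive matrices, surveyed in Section \ref{sectionWagoner}). So there is no proof to compare yours against; the right question is whether your proposal correctly diagnoses the situation, and it largely does. Your first program --- bootstrap an SSE from the SE data, work over $\Z$ via module/polynomial manipulations, then try to positivize --- is exactly how the tractable half goes: SE-$\Z$ does imply SSE-$\Z$ (the Williams/Effros argument for PIDs), and for primitive matrices SE-$\Z$ is equivalent to SE-$\Z_+$, so the entire difficulty concentrates in the step ``SSE-$\Z$ implies SSE-$\Z_+$,'' just as you say. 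Your pivot is also on target: the obstruction that works is built from the elementary conjugacies $c(R,S)$ and carries precisely the sign/parity data you guessed --- it is the relative sign-gyration number $sgc_{2}$ of Kim and Roush (Section \ref{subsec:kimroushrelsg}), assembled from signs of permutations of finite orbits together with gyration numbers.

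What your proposal does not supply, and what constitutes the actual content of the disproof, is threefold. (1) \emph{Well-definedness}: the candidate quantity is defined on individual factorizations $(R,S)$, and one must show it descends to homotopy classes of paths in Wagoner's SSE complexes; this is the cocycle lemma, checked against the Triangle Identities. (2) \emph{Vanishing on positive paths}: the dynamical definition only forces the invariant to vanish along a path in $SSE(\Z_+)$ when the SFTs have \emph{no} periodic points of the relevant low periods, so the counterexample matrices must satisfy $\tr(A)=\tr(A^2)=0$ --- a restriction you did not anticipate, and the reason the method can never refine an SE-$\Z_+$ class more than finitely, leaving (for instance) the full-shift case of the conjecture open. (3) \emph{Realization}: one must exhibit explicit primitive matrices ($7\times 7$ in \cite{S11}) that are SSE-$\Z$ with a connecting path on which the invariant is nonzero. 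Without these, your text is a (correct) research program rather than a disproof --- but it is the right program.
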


                Despite the seeming complexity of its definition,
         shift equivalence\si{shift equivalence} is  much easier to understand  than
                strong shift equivalence\si{strong shift equivalence}, as we'll see.  A positive
                solution to Williams\si{Williams' Conjecture}\ai{Williams, R.F.}' Conjecture would have been a very
                satisfactory solution to the classification problem\si{classification problem}
                for SFTs.
                                Alas ... there are  counterexamples to the
                                conjecture,
                due to Kim\ai{Kim, K.H.} and Roush\ai{Roush, F.W.} (building on work of Wagoner\ai{Wagoner, J.B.},
                and  Kim-Roush\ai{Roush, F.W.}-Wagoner\ai{Wagoner, J.B.}).
                The first Kim-Roush\ai{Roush, F.W.} counterexample was in 1992.
                We recall now a definition fundamental for the theory of nonnegative
matrices (as we will review in Lecture 4).

\begin{de}\label{primitiveDefinition}
A primitive\si{primitive matrix} matrix is a square matrix such that every entry
is a nonnegative real number and  for
some positive integer $k$, every entry of $A^k$ is positive.
\end{de}
                By far the most important
                case of Williams\si{Williams' Conjecture}\ai{Williams, R.F.}' Conjecture is the case that the
                matrices $A,B$ are primitive.
An edge  SFT defined from a nondegenerate\si{nondegenerate} matrix
                $A$ is {\it mixing}\begin{footnote}{See
    \cite{LindMarcus2021}\ai{Lind, Douglas}\ai{Marcus, Brian}
    for the definition of the
                dynamical property  ``mixing'', which we do not
                need.}\end{footnote}
 if and only if $A$ is primitive.
                The mixing SFTs play a role among SFTs very much analogous
                to the role played by primitive matrices in the
                theory of nonnegative matrices.

The Kim-Roush\ai{Roush, F.W.}\ai{Kim, K.H.} counterexample for primitive
                matrices came in 1999.

                Over twenty years later, we have no
                new theorem or counterexample for primitive matrices
                over $\Z_+$. The Kim-Roush\ai{Roush, F.W.}\ai{Kim, K.H.} counterexamples require quite special constructions
(reviewed in Section \ref{sectionWagoner}\ai{Wagoner, J.B.}).
                The proof method can  work only in special SE-$\Z$ classes,
and can never
%                in which for some $n$ the associated SFT contains no periodic
%                orbit of cardinality $n$. The method cannot
show that there is
                an infinitely family of primitive  matrices which are
                SE-$\Z_+$ but are pairwise not SSE-$\Z_+$
                (see Sec. \ref{WagonerSubSecRemarks}).\ai{Wagoner, J.B.}

\subsubsection{The gap between SE-$\Z_+$ and
                     SSE-$\Z_+$?}

How big is the gap between
                   SE-$\Z_+$ and
                     SSE-$\Z_+$?                    We really don't know.

                   Suppose $A$ is ANY  square
 matrix over $\Z_+$ such that $A$  is primitive (for some $n$, every entry
 of $A^n$ is positive), and  $A\neq (1)$.
 (The case $A=(1)$ is trivial.)
 As we approach a half century following
 Williams\si{Williams' Conjecture}\ai{Williams, R.F.}' conjecture,
                 we cannot verify or rule out either
                 of the following statements.
\begin{enumerate}
\item There is an
                algorithm which takes as input any square matrix
                $B$ over $\Z_+$ and decides whether $A$ and $B$ are SSE-$\Z_+$.

\item There are infinitely many matrices which are
                SE-$\Z_+$ to $A$ and  which are pairwise
                not SSE-$\Z_+$.
\end{enumerate}

Regarding the first item above:
we
do not know upper bounds on the lag\si{lag} of a possible SSE or the sizes of
                the matrices in its chain of ESSEs.
(See
\apr{smallestlag}) - \apr{badArithmeticLag})
for more on  lag\si{lag} issues.)
Also, for example,  the ``$1 \times 1$ case'', in which $A$ in Conjecture \ref{conj:williams} is assumed
  to be $1\times 1$, is completely open.
This is called the ``Little Shift Equivalence Conjecture''
in \cite[Problem 3]{Bopen}.)
We will see \apr{1by1}) that a
square matrix over $\Z_+$ is SE-$\Z_+$ to $(k)$ $\iff$
its nonzero\si{nonzero spectrum} spectrum is $(k)$.
 But,
for every  positive integer $k>1$,  we do not know whether
a matrix SE over $\Z_+$ to $(k)$ must be
SSE over $\Z_+$ to $(k)$ \apr{ashley}).
Remarkably, even for two
$2\times 2$ matrices over $\Z_+$, we do not know whether
SE-$\Z_+$ implies SSE-$\Z_+$ (although, here there are significant
partial results, e.g. \cite{baker1983,baker1987,
CuntzKriegerDicyclic,Williams1992}).\ai{Williams, R.F.}

Nevertheless ... perhaps the situation is not hopeless.
\begin{enumerate}
\item If $A$ is a  matrix over $\R_+$ with $\det(I-tA) =1-\lambda t$,
then $A$ is SSE over $\R_+$ to $(\lambda )$.
(Over $\R_+$,
the ``$1 \times 1$ case'' is solved!)

Despite limited progress, I think the  proof framework for
this result of Kim\ai{Kim, K.H.} and Roush\ai{Roush, F.W.} is promising
for  proving SE-$\R_+$ implies
SSE-$\R_+$ for positive matrices \apr{RealSSEandBKR}).
\item In recent years we have (at last) gained a much better
(not complete) understanding of
strong shift equivalence\si{strong shift equivalence} over a ring, as discussed in Lecture 6.
This gives  more motivation for investigation, and
new ideas to explore.
\end{enumerate}

\subsection{Appendix 1} \label{a1}
This
subsection contains various remarks, proofs and comments referenced
in earlier parts of Section \ref{sec:basics}.

\begin{rem}\label{introtosfts} By way of Markov partitions,
  shifts of finite type are a fundamental tool in the theory of
  smooth dynamical systems. SFTs have application to
  coding theory, general topological dynamics, ergodic theory,
  $C^*$-algebras, cellular automata and   geometric group theory.
  Our focused introduction to the stable algebra related
  to SFTs will avoid all of this.
    For a comprehensive
    introduction to the theory of shifts of finite type and some
    related topics,
    including a supplement reviewing recent developments,
see
 the    2021 edition
\cite{LindMarcus2021}\ai{Lind, Douglas}\ai{Marcus, Brian} of the
 classic 1995 text   \cite{LindMarcus1995} of Lind and
Marcus\ai{Lind, Douglas}\ai{Marcus, Brian}.
This     crystal-clear  book is intended to be widely accessible;
a math graduate student can  easily  read it without guidance.
The 2021 edition includes a long supplement reviewing recent developments.
The lucid 1998 book
    \cite{Kitchens1998} of Kitchens\ai{Kitchens, Bruce P.} is also
    valuable, providing
    additional depth on various topics and
  developing the basic theory of
    countable state Markov shifts (a topic not covered by
    Lind and Marcus).
\end{rem}

\begin{rem} \label{zerodimetc}
Let the finite set  $\mathcal A$ have the discrete topology.
Then compactness of $\mathcal A^{\Z}$ follows from
a  diagonal  argument  from the chosen metric, or
from Tychonoff's Theorem (the product topology is the same
as the topology coming from the chosen metric).

Suppose $X$ is a closed nonempty subset of $\mathcal A^{\Z}$.
     A ``cylinder set'' is a set $C$ in $X$ of the following form:
there is a point      $x\in X$, and $i\leq j$ in $\Z$,  such that
     $C = \{y \in X: y_n = x_n \text{ if } i\leq n \leq j\}$ .
The cylinder sets form a basis for the topology on $X$.

The cylinder sets are closed open. A  subset of $X$ is closed open if and only if
     it is the union of finitely many cyinders.
By definition, a metric space is zero dimensional if there is a base for
the topology consisting of closed open sets.
Therefore $X$ is zero dimensional.
\end{rem}

\begin{rem} \label{edgesftnote}
To be careful, we'll be a little pedantic.

Two different but isomorphic graphs define different but isomorphic
SFTs. The topological conjugacy of SFTs in this case  is
rather trivial. If the graph isomorphism gives a map on edges
$e\mapsto \overline e$, then the topological conjugacy $\phi$
is defined by $(\phi x)_n = \overline{x_n}$, for all $n$.

In the other direction, given
just the matrix $A$, a graph $\mathcal G$ with adjacency matrix $A$ is only defined
up to graph isomorphism.  If $A$ is $n\times n$,
then there is an ordering of the  vertices, $\ \nu_1, \nu_2, \dots ,\nu_n$ ,
such that $A(i,j)$ is the number of edges from $\nu_i$ to $ \nu_j$.
For simplicity, we often just regard the vertex set as
$\{ 1,2, \dots , n \}$, with $\nu_i =i$.
\end{rem}

\begin{thm} [Curtis-Hedlund-Lyndon\si{Curtis-Hedlund-Lyndon Theorem}] \label{chl}
     Suppose $(X,\sigma)$ and $(Y,\sigma)$ are subshifts, and
$\phi : X \to Y$. TFAE.

     (1) $\phi $ is continuous and shift-commuting.

     (2) There are integers $j,k$ with $j\leq k$,
such that for $N=k-j+1$  there is
 a function
     $\Phi : \mathcal W_N(X) \to \mathcal W_1(Y)$,  such that
    for all $n$ in $\Z$ and $x$ in $X$,
     $(\phi x)_n = \Phi (x_{n +j} \dots x_{n+k})$ .
\end{thm}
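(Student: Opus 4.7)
The plan is to prove the two directions separately, with (2)$\Rightarrow$(1) being essentially formal and (1)$\Rightarrow$(2) being the substantive content.

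For (2)$\Rightarrow$(1), I would simply observe that both properties follow from the local nature of the defining rule. Shift-commutation is a direct index shift: $(\sigma\phi x)_n = (\phi x)_{n+1} = \Phi(x_{n+1+j}\cdots x_{n+1+k}) = \Phi((\sigma x)_{n+j}\cdots (\sigma x)_{n+k}) = (\phi\sigma x)_n$. Continuity (in fact uniform continuity) follows because if $x,x'$ agree on coordinates $[-M-\max(|j|,|k|),\ M+\max(|j|,|k|)]$ then $(\phi x)_n = (\phi x')_n$ for $|n|\le M$, so $\phi$ sends close points to close points in the metric from Section~\ref{subsec:symbolicdynamics}.

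For (1)$\Rightarrow$(2), the strategy is to convert continuity into a uniform finite-window dependency, exploiting the fact that $X$ is compact and zero-dimensional (Remark~\ref{zerodimetc}). Let $\mathcal B$ denote the alphabet of $Y$. For each $b\in\mathcal B$, the set $C_b=\{y\in Y:y_0=b\}$ is a cylinder, hence clopen in $Y$. By continuity, each $U_b:=\phi^{-1}(C_b)$ is clopen in $X$, and $\{U_b:b\in\mathcal B\}$ is a finite partition of $X$. Because $X$ is compact with cylinders as a basis, each $U_b$ is a finite union of $X$-cylinders. Each such cylinder constrains only finitely many coordinates; taking the union of all of these (finitely many) coordinate ranges yields integers $j\le k$ such that membership in each $U_b$ depends only on coordinates $x_j,\ldots,x_k$. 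Thus $(\phi x)_0$ is determined by $x_j\cdots x_k$, which defines $\Phi:\mathcal W_N(X)\to\mathcal W_1(Y)$ with $N=k-j+1$. To reach arbitrary coordinates $n$, I then use shift-commutation: $(\phi x)_n=(\phi\sigma^n x)_0=\Phi((\sigma^n x)_j\cdots(\sigma^n x)_k)=\Phi(x_{n+j}\cdots x_{n+k})$.

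The main obstacle, and the only nontrivial step, is the reduction from a clopen set in $X$ to a finite union of cylinders. This is where compactness of $X$ enters essentially: a clopen set is covered by basic (cylinder) open sets, and finiteness of a subcover gives the needed uniform window. Everything else is bookkeeping around the indices $j,k$ and the fact that the partition $\{U_b\}$ is indexed by the finite alphabet $\mathcal B$.
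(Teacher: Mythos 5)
Your proof is correct. For the direction (2)$\Rightarrow$(1), which the paper leaves as an exercise, your argument is the standard one and is fine. For (1)$\Rightarrow$(2) you take a genuinely different, purely topological route: the paper works metrically, using uniform continuity of $\phi$ on the compact space $X$ together with the fact that distinct symbols of $Y$ are uniformly separated, to produce a single radius $m$ so that the central window $x_{-m}\cdots x_m$ determines $(\phi x)_0$; you instead pull back the symbol-cylinders $C_b\subset Y$ to a clopen partition $\{U_b\}$ of $X$ and invoke the fact (recorded in Remark~\ref{zerodimetc}) that a clopen subset of a compact subshift is a finite union of cylinders, extracting the window $[j,k]$ from the finitely many constrained coordinates. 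Both arguments are compactness arguments at heart, but yours never mentions the metric and so transfers verbatim to any setting where one has the product topology and the clopen-equals-finite-union-of-cylinders fact (e.g.\ shifts over more general index groups), whereas the paper's is a few lines shorter if one takes uniform continuity as given. One cosmetic difference: your window $[j,k]$ need not be centered at $0$, but the statement of (2) permits arbitrary $j\le k$, so nothing is lost; and the degenerate case where some $U_b$ imposes no coordinate constraints is handled by choosing any window, so there is no gap there either.
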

\begin{proof}
(1) $\implies $ (2) Suppose $\phi$ is continuous, hence
uniformly continuous, on $X$. There is an $\epsilon >0$ such that
for $y, y'$ in $Y$, $y_0 \neq (y')_0 \implies \dist (y_0, (y')_0) > \epsilon $.
By the uniform continuity, there is $m\in \N $ such that for $x,w$ in $X$,
\[
x_{-m}\dots x_m = w_{-m}\dots w_m
   \implies
(\phi x)_0 = (\phi w)_0\ .
\]
 This gives a rule
$\Phi : \mathcal W_{2m+1} (X) \to \mathcal W_1 (Y) $
such that for all $x$ in  $X$,
$(\phi x)_0 = \Phi (x_{-m} \dots x_m)$. Because $\phi$ is shift commuting,
we then get for all $n$ that
\[
\Phi (x_{n-m} \dots x_{n+m})
= \Phi ((\sigma^n x)_{-m} \dots (\sigma^n x)_m)
= (\phi (\sigma^n x) )_0
 = ( \sigma^n(\phi x) )_0
= (\phi x)_n \ .
\]

We leave the proof of (2) $\implies $ (1) as an exercise.
\end{proof}
There are other, equivalent  ways to state the CHL Theorem.
(I didn't copy the original statement.)

\begin{prop}  \label{pathsandpowers}
Let a graph have adjacency matrix $A$.
Then the number of paths of length $n$ from vertex $i$ to
vertex $j$ is $A^n(i,j)$.
\end{prop}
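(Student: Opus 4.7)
The plan is to proceed by induction on $n$, with the matrix multiplication identity $A^{n+1}(i,j) = \sum_k A^n(i,k)A(k,j)$ doing all the real work. This is the standard combinatorial interpretation of matrix powers of an adjacency matrix, and no subtlety particular to the SFT setting is needed.

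For the base case $n=1$, the statement is essentially the definition of the adjacency matrix: $A(i,j)$ counts the edges from vertex $i$ to vertex $j$, which are precisely the paths of length $1$ from $i$ to $j$. For $n=0$, if one wants to include this case, the identity matrix $A^0 = I$ correctly records that the only path of length zero from $i$ to $j$ is the empty path, which exists exactly when $i=j$.

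For the inductive step, assume the claim holds for $n$. A path of length $n+1$ from $i$ to $j$ is uniquely specified by the choice of an intermediate vertex $k$ reached after $n$ steps together with a path of length $n$ from $i$ to $k$ and an edge from $k$ to $j$. Summing over $k$, the total count is
\[
\sum_k (\text{paths of length } n \text{ from } i \text{ to } k)\,(\text{edges from } k \text{ to } j).
\]
By the inductive hypothesis and the base case, this equals $\sum_k A^n(i,k)A(k,j) = A^{n+1}(i,j)$, completing the induction.

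I do not expect any real obstacle. The only mildly delicate point is being careful that paths are counted with multiplicity (each distinct sequence of edges yields a distinct path, even between the same pair of endpoints), which is automatically enforced by the bijection between paths of length $n+1$ and pairs consisting of a length-$n$ path followed by a single edge. This matches exactly how the sum $\sum_k A^n(i,k)A(k,j)$ counts, so the identification is clean.
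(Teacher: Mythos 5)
Your proof is correct and follows essentially the same route as the paper: induction on $n$, decomposing a path of length $n+1$ into a length-$n$ path followed by a single edge and invoking the identity $A^{n+1}(i,j)=\sum_k A^n(i,k)A(k,j)$. Your added remarks on the $n=0$ case and on counting with multiplicity are fine but not needed.
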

\begin{proof}
A length 2 path from $i$ to $j$ is, for some vertex $k$,
 an edge from $i$ to $k$ followed by an edge from $k$ to $j$.
The number of such paths is $\sum_k A(i,k) A(k,j) = A^2 (i,j)$.
The claim for paths of length $n$ follows by induction,
considering paths of length $n-1$ followed by path of length 1.
\end{proof}

\begin{rem} \label{TransposeAndInverse}
Suppose $A$ is a square matrix over $\Z_+$, with transpose $A^T$.
From a graph $\mathcal G$ with adjacency matrix $A$,
let $\mathcal G^{\text{reversed}} $ be the graph with the same vertex set as $G$,
and edges with the same names but with reversed direction
(an edge $e$ from $i$ to $j$ in $\mathcal G$ becomes an edge
$e$ from $j$ to $i$ in  $\mathcal G^{\text{reversed}} $.
 Then
 $A^T$ is an adjacency matrix for $\mathcal G^{\text{reversed}} $.

Now, there is a topological conjugacy
 $\phi: (X_A, \sigma^{-1})\to (X_{A^T} , \sigma)$, defined by the
rule $(\phi x)_n = x_{-n}$, for $n\in \Z$.
\end{rem}

\begin{rem} \label{notblock}
The topological   conjugacy
$\phi : (X_A, \sigma^2 ) \to (X_{A^2}, \sigma )$
is not a block code.
 This does not contradict the
CHL Theorem, because $(X_A, \sigma^2)$ is not a subshift.
\end{rem}

\begin{rem}\label{periodicdata}
Formally, the ``periodic data\si{periodic data}'' for a system $(X,S)$ is
the isomorphism class of the system $(\text{Per}(S), S)$, with
the periodic points,  $\text{Per}(S)$, given the discrete topology
(i.e., ignore topology). (Here ``system'' relaxes our terminology
in these lectures that the domain must be compact.)

A complete invariant for the periodic data\si{periodic data} is one such that
two systems agree on the invariant if and only if they have
the same periodic data\si{periodic data}.

Now, one  complete invariant of the periodic data\si{periodic data} of a system
is simply the function which
assigns to $n$ the  cardinality of the set
of  points of least period $n$.  (A point has  least period $n$ if its
orbit is finite with cardinality $n$.)
For a subshift $(X,\sigma)$,
there is a finite number $q_n$ of points of least period $n$,
and the sequence $(q_n)$ is a complete invariant of the periodic data\si{periodic data}.
Let $\tau_n = |\text{Fix}(\sigma^n)|$.
The sequence $(q_n)$ determines the sequence
$(\tau_n)_{n=1}^{\infty}$. For our systems, each $\tau_n$ is a nonnegative
integer, and in this case the converse holds: the
sequence $(\tau_n)$ determines the sequence $(q_n)$. E.g.,
$q_1=\tau_1,\  q_2 = \tau_2 - \tau_1, \ \dots \ ,  q_6 = \tau_6
-\tau_3 -\tau_2 + \tau_1 , \ \dots $. (The formal device for producing
a systematic formula for this inclusion-exclusion pattern is
Mobius inversion.) So, ``we may regard'' $(\tau_n)$ as the periodic
data in the sense that it is a complete invariant for the periodic data\si{periodic data}.
\end{rem}

\begin{prop} \label{zetaeq}
Suppose $A$ is a matrix with entries in $\C$. Then
\begin{equation} \label{zetaseriesequation}
\frac 1{\det(I-tA)} \ = \ \exp \sum_{n=1}^{\infty} \frac 1n \tr (A^n) t^n \ .
\end{equation}
\end{prop}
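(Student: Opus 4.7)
The plan is to reduce to a diagonal (or upper triangular) situation via similarity invariance and then match the two sides as formal power series by taking logarithms. Both sides of \eqref{zetaseriesequation} are invariant under conjugation $A \mapsto PAP^{-1}$: the left side because $\det(I - tPAP^{-1}) = \det(P(I-tA)P^{-1}) = \det(I - tA)$, and the right side because $\tr((PAP^{-1})^n) = \tr(PA^n P^{-1}) = \tr(A^n)$. So I may assume, by Schur triangularization over $\C$, that $A$ is upper triangular with diagonal entries $\lambda_1, \dots, \lambda_k$ (the eigenvalues of $A$, listed with algebraic multiplicity).

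With $A$ upper triangular, $I - tA$ is also upper triangular with diagonal entries $1 - \lambda_i t$, so
\[
\det(I - tA) \; = \; \prod_{i=1}^{k} (1 - \lambda_i t),
\]
and $A^n$ is upper triangular with diagonal entries $\lambda_i^n$, giving $\tr(A^n) = \sum_i \lambda_i^n$. I then work in the ring $\C[[t]]$ of formal power series. Since $\det(I - tA)$ has constant term $1$, its reciprocal is a well-defined element of $\C[[t]]$ with constant term $1$, so $\log$ and $\exp$ are defined on the appropriate subset of $\C[[t]]$ and are mutual inverses. It suffices therefore to verify the identity after applying $\log$ to both sides.

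The left side becomes
\[
\log \frac{1}{\det(I-tA)} \; = \; -\sum_{i=1}^{k} \log(1 - \lambda_i t) \; = \; \sum_{i=1}^{k} \sum_{n=1}^{\infty} \frac{\lambda_i^n t^n}{n},
\]
using the formal power series expansion $-\log(1 - u) = \sum_{n \geq 1} u^n/n$. Swapping the order of summation (legitimate as a manipulation of formal series in $t$) gives
\[
\sum_{n=1}^{\infty} \frac{t^n}{n} \sum_{i=1}^{k} \lambda_i^n \; = \; \sum_{n=1}^{\infty} \frac{\tr(A^n)}{n} t^n,
\]
which is exactly the log of the right side of \eqref{zetaseriesequation}. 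Exponentiating yields the claim.

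The only real obstacle is keeping the formalism honest: one must interpret $\exp$ and $\log$ as operations on formal power series (not as analytic functions), and justify the interchange of the two sums, but both points are standard once one observes that $\det(I - tA)$ has constant term $1$. No hypothesis on the eigenvalues (zero, distinct, etc.) is needed, since we only ever manipulate power series coefficients.
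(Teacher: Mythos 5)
Your proposal is correct and is essentially the paper's own argument: both proofs rest on $\det(I-tA)=\prod_i(1-\lambda_i t)$, $\tr(A^n)=\sum_i\lambda_i^n$, the expansion $-\log(1-u)=\sum_{n\ge 1}u^n/n$, and an interchange of summation. The only cosmetic differences are that you pass through $\log$ while the paper exponentiates directly, and you justify the eigenvalue identities via Schur triangularization where the paper simply invokes the (nonzero) spectrum.
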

\begin{proof}
Recall, $- \log (1-x) = x + \frac{x^2}2+ \frac{x^3}3 + \cdots $ .
Let   $(\lambda_1, \dots , \lambda_n)$
be the nonzero\si{nonzero spectrum} spectrum of $A$.
Then
\begin{align*}
& \ \exp \Big(\sum_{n=1}^{\infty} \frac 1n \tr (A^n) t^n\Big)
\ = \ \exp \Big(\sum_{n=1}^{\infty} \frac 1n \big(\sum_i \lambda_i^n\big) t^n \Big) \\
 =& \  \exp \Big(  \sum_i \big(\sum_{n=1}^{\infty} \frac 1n  (\lambda_it)^n\big) \Big)
\  = \ \prod_i \exp \Big(\sum_{n=1}^{\infty} \frac{(\lambda_i t)^n}n \Big) \\
 =& \ \prod_i \exp \big(- \log (1-\lambda_i t)\big)
\ = \ \prod_i \frac 1{(1- \lambda_i t)}
\ = \ \frac 1{\det (I-tA)} \ .
\end{align*}
\end{proof}
\noindent (The last proposition remains true as an equation in formal power series
if $\C$ is replaced by a torsion-free commutative
ring $\mathcal R$. In this case,
 $\N$ is a multiplicative subset of  $\mathcal R$ containing no zero divisor,
 and all the power series coefficients make sense in the
localization $\mathcal R[\N^{-1}]$.)

\begin{rem} (The zeta function\si{zeta function}) \label{zetafunction}
Suppose $(X,S)$ is a dynamical system such that for all $n$ in $\N$,
$|\text{Fix}(S^n)| < \infty $. Then the (Artin-Mazur) zeta function\si{zeta function} of
the system is defined to be
\[
\zeta (t) = \exp \Big(\sum_{n=1}^{\infty} \frac 1n |\text{Fix}(S^n)|
t^n \Big) \ .
\]
This is defined at least as a formal power series; it's defined as
an anaytic function inside the radius of convergence. The zeta
function\si{zeta function} (where it is defined) is the premier complete invariant of
the periodic data\si{periodic data}. For an edge SFT defined from a matrix $A$,
we see $\zeta (t) = 1/\det(I-tA)$.
\end{rem}

\begin{coro}\label{proofwithzeta}
Suppose $A$ is a square matrix over $\C$.
Then $\det (I-tA)$ and the sequence $(\tr (A^n))$ determine each other.
\end{coro}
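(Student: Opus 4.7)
The plan is to extract both directions from equation \eqref{zetaseriesequation} of Proposition \ref{zetaeq}, working entirely in the ring $\C[[t]]$ of formal power series. The key observations are that $\det(I-tA)$ is a polynomial in $t$ with constant term $1$, hence invertible in $\C[[t]]$; and that the formal series $\exp(f)$ and $\log(1+g)$ are mutually inverse bijections between the ideal of series with zero constant term and the multiplicative group of series with constant term $1$. So \eqref{zetaseriesequation} can be rewritten as
\[
-\log\bigl(\det(I-tA)\bigr) \;=\; \sum_{n=1}^{\infty} \frac{1}{n}\,\tr(A^n)\, t^n,
\]
with both sides being well-defined elements of $t\,\C[[t]]$.

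For the forward direction, given the sequence $(\tr(A^n))_{n\geq 1}$, I would form the right-hand side of \eqref{zetaseriesequation} as a formal power series, exponentiate, and take the formal multiplicative inverse. This produces $\det(I-tA)$ uniquely (noting that as $A$ is finite-dimensional, $\det(I-tA)$ is a polynomial, so only finitely many coefficients are nonzero, but uniqueness as a power series is what matters).

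For the reverse direction, given the polynomial $\det(I-tA)$, I would invert it in $\C[[t]]$, apply the formal logarithm (defined by the usual series $\log(1+u) = u - u^2/2 + u^3/3 - \cdots$, valid since the constant term of the inverse is $1$), and read off the coefficient of $t^n$ on the left-hand side of \eqref{zetaseriesequation}: this coefficient equals $\tr(A^n)/n$, so multiplying by $n$ recovers $\tr(A^n)$.

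There is no real obstacle here, since Proposition \ref{zetaeq} has already done the substantive work; the only thing to be careful about is justifying that $\exp$ and $\log$ are well-defined formal operations and mutually inverse in the appropriate subsets of $\C[[t]]$, which is standard. The corollary is essentially the observation that the equality in \eqref{zetaseriesequation} is an equality between two objects each of which can be computed from the other by purely formal manipulations in $\C[[t]]$.
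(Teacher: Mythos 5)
Your proposal is correct and follows essentially the same route as the paper: both directions are extracted from equation \eqref{zetaseriesequation}, using the fact that formal $\exp$ and $\log$ (and multiplicative inversion of series with constant term $1$) are well-defined and mutually inverse in $\C[[t]]$. The paper additionally remarks that the reverse direction can be seen even more directly, since $\det(I-tA)$ determines the nonzero spectrum $(\lambda_1,\dots,\lambda_n)$ and hence $\tr(A^k)=\sum_i \lambda_i^k$, but your formal-power-series argument for that direction is exactly the alternative the paper itself endorses.
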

\begin{proof}
The nontrivial implication, that the trace sequence\si{trace sequence} determines
 $\det (I-tA)$, follows from
 the  proposition.
The proposition also is easily used to
prove the reverse implication;  but we may also simply notice
that
 $\det(I-tA)$ determines the nonzero\si{nonzero spectrum} spectrum
$(\lambda_1, \dots , \lambda_n)$  of $A$, which determines
$\tr (A^k) = \sum_i (\lambda_i)^k$.
\end{proof}

\begin{rem} \label{NewtonIdentities}
For any square matrix $A$  over any commutative ring, the polynomial $\det(I-tA)$
determines the trace sequence\si{trace sequence} $(\tr (A^k))_{k=1}^{\infty}$; if the ring is
torsion-free, then conversely $(\tr (A^k))_{k=1}^{\infty}$ must determine
$\det(I-tA)$.
 To see this, let us write
$\det (I-tA)$ as $1-f(t) = 1 -f_1t -f_2t^2 \dots -f_Nt^N$,
and let $\tau_k$ denote $\tr (A^k)$.
Then the claimed determinations are easily proved by induction
from Newton's identities,\begin{footnote}{As $\det (I-tA)$ is the reversed
characteristic polynomial, Newton's identities
can alternately  be (and usually are)
stated in terms of coefficients of the characteristic polynomial.}
\end{footnote} valid over any commutative ring:
\begin{alignat*}{2}
\tau_k \ & = \ kf_k +\sum_{i=1}^{k-1}f_i\tau_{k-i}\ , \quad && \text{if } 1\leq k \leq N\ , \\
& =\  \sum_{i=1}^{N}f_i\tau_{k-i}\ , \quad && \text{if }  k > N\ .
\end{alignat*}
To see the torsion-free assumption is not extraneous,
let $\mathcal R$ be the ring $\Z_2 \times \Z_2$, and consider the matrices
\[
A = \begin{pmatrix} (0,1)
\end{pmatrix} \ , \qquad
B
= \begin{pmatrix} (0,0) & (1,1) \\ (1,0) & (0,1)
\end{pmatrix}
\]
Here,  $\det(I-tA) = 1 -t (0,1) \neq 1 -t (0,1) -t^2(1,0) = \det(I-tB) $,
but $\tr (A^n) = \tr (B^n) = (0,1)$ for every positive integer $n$.

One of the ways to prove Newton's identities is to take the derivative of the
log of both sides of \eqref{zetaseriesequation}, and equate coefficients in the resulting
equation of power series. This makes sense at the level of formal
power series when the ring is torsion free, in particular for a polynomial ring
$\Z [\{x_{ij}\}]$, where $\{ x_{ij} : 1\leq i,j \leq N\}$ is a set of $N^2$
commuting variables. Then, given $A$ over any commutative ring $\mathcal R$,
using the ring homomorphism $\Z [\{x_{ij}\}] \to \mathcal R$ induced by
$x_{ij} \mapsto A(i,j)$, from the Newton identities over
$\Z [\{x_{ij}\}]$ we obtain the Newton identities for $A$.
\end{rem}

\begin{rem} \label{nondegenerate}
A matrix is
{\it degenerate\si{degenerate}} if it has a zero row or a zero column.
 The {\it nondegenerate\si{nondegenerate} core} of a square matrix is the largest principal
submatrix $C$ which is nondegenerate.
If row $i$ or column $i$ of  $A$ is zero, then remove
row $i$ and column $i$. Continue until a nondegenerate\si{nondegenerate} matrix $C$ is reached.
This matrix is the nondegenerate\si{nondegenerate} core of $A$.

When the matrices have all entries in  $\Z_+$,
 $X_C=X_A$, because  if an edge occurs as $x_n$ for some  point of $X_A$,
then  the edge must
be followed and preceded by arbitrarily long paths
 in $\Gamma_A$.
 \end{rem}

\begin{rem} \label{proofconjugatetoedgesft}
   Proposition
   \ref{conjugatetoedgesft} states that every SFT is topologically
   conjugate to an edge SFT.
  Because a subshift is topologically conjugate to each of its
  higher block presentations, in order to prove
     Proposition
   \ref{conjugatetoedgesft}  it suffices to prove the next result.
We use $\mathcal W_k(X)$ to denote the set of $X$-words of length $k$.

  \begin{prop} \label{HigherBlockPresOfSFTisEdgeSFT}
    Suppose
 $(X, \sigma)$ is a subshift of finite type
  on alphabet $\mathcal A$. Let
  $\mathcal F$ be a finite set of words
  such that $X$ equals the set of points $x$ on alphabet $\mathcal A$
  such that no word
  $\mathcal F$ occurs in $x$. Suppose $N>1$ and
  $N \geq \max\{ \text{length of }W: W \in \mathcal F \}$.

  Then the $N$-block presentation of $(X,\sigma)$ is an edge SFT,
  with edge set $\mathcal W_N$.

  \begin{proof}
    We define a directed graph $G$. The vertex set is $\mathcal W_{N-1}(X)$.
    The edge set is $\mathcal W_{N}(X)$.
An edge  $W_1\dots W_N$ is an edge
  from vertex $W_1\dots W_{N-1}$ to vertex $W_2\dots W_{N}$.
 Clearly,
  the edge SFT $(X_A, \sigma)$
  defined from $G$ contains the $N$-block presentation
  $(X^{[N]}, \sigma )$.   Conversely, the condition on $\mathcal F$
  implies that every point of $(X_A, \sigma)$ is  a point
  in $X^{[N]}$.
  \end{proof}
\end{prop}

\end{rem}

\begin{rem} \label{higherblockedgesfts}
  For an edge SFT $X_A$, let $A^{[k]}$ be a transition
  matrix for its $k$-block presentation. Note, $A^{[k]}$ is not degenerate.
For example, let
$     A=A^{[1]}= (2)$ ,
with edge set $\mathcal E_1=\{a,b\}$.
The vertex sets $\mathcal W_2$ and $ \mathcal W_3$ for
2 and 3 block presentations are
$\{ a,b\}$ and
$\{aa,ab,ba,bb\}$. With the lexicographic orderings on these
sets (ordered as written),
we get the corresponding
adjacency matrices
\[
     A^{[2]} = \begin{pmatrix} 1&1\\1&1\end{pmatrix} \ , \qquad
 A^{[3]} = \begin{pmatrix} 1&1&0&0 \\0&0&1&1
\\ 1&1&0&0
         \\0&0&1&1
       \end{pmatrix}\  .
 \]
 In general, if  $X_A$ is infinite, then the size of $A^{[k]}$ must go
 to infinity with $k$.
\end{rem}

\begin{rem} \label{boolean}
The use of SSE over semirings goes beyond the study of
SSE over the positive set of an ordered ring.
 SSE over the Boolean semiring $\{ 0,1 \}$, in which $1+1=1$,
ends up being  quite relevant to some constructions over
$\R_+$ \cite{BKR2013}, and
 to relating topological conjugacy and flow equivalence\si{flow equivalence} of
SFTs \cite{B02posk}. The Boolean semiring cannot be embedded in a ring,
as  $1+1=1$ would then force $1=0$.
\end{rem}

%%%%% No longer needed:
%%%%% \begin{rem} \label{sse2to1}
%%%%% See \cite{LindMarcus2021}
%%%%% for a proof that topological conjugacy of the  SFTs defined by
%%%%% $A$,$B$ imples SSE-$\Z_+$ of $A,B$.
%%%%% \end{rem}

\begin{rem} \label{liberties}
  For simplicity, I  take some liberties with the statement of
  the theorem. ``Edge SFTs'' don't appear in Williams' paper;
  he used a more abstract approach to
 associate SFTs to   matrices over $\Z_+$.

  Williams\ai{Williams, R.F.} 1973 paper \cite{Williams73}
   contained a ``proof'' (erroneous) of
  his conjecture. The 1974 Conjecture appeared in the erratum.
  One of the most important papers in symbolic dynamics also included
  perhaps its most famous mistake.
\end{rem}

\begin{rem} \label{decomp}
We say a little about the Decomposition Theorem\si{Decomposition Theorem}, even though we won't have
space to explain it well, because it is a very important feature of SSE.
Lind and Marcus give a nice presentation of the Decomposition Theorem\si{Decomposition Theorem}
\cite{LindMarcus2021}.\ai{Lind, Douglas}\ai{Marcus, Brian}

The Decomposition Theorem\si{Decomposition Theorem} tells us that when there is a conjugacy of
edge SFTs
 $\phi: (X_A, \sigma) \to (X_B, \sigma)$, there is another matrix $C$,
an SSE-$\Z_+$ from $C$ to $A$ given by a string of column amalgamations,
and an SSE -$\Z_+$ from $C$ to $B$
given by a string of row amalgamations, such that the associated  conjugacies
 $\alpha: (X_C, \sigma)\to (X_A, \sigma)$ and
$\beta: (X_C, \sigma)\to (X_A, \sigma)$
give $\phi =  \alpha^{-1}\beta$.

For $x$ in $X_C$: $(\alpha x)_0$ and $(\beta x)_0$ depend only on $x_0$.

A column amalgamation $C\to D$ is an ESSE
$C=RS $, $D=SR$,  such that   $S$ is a zero-one matrix
with each column containing exactly one nonzero entry.
For example,
\begin{align*}
C = \begin{pmatrix}1& 1&  5 \\ 2&2&3 \\1&1&2 \end{pmatrix} =
& \begin{pmatrix} 1&  5 \\ 2&3 \\1&2 \end{pmatrix}
\begin{pmatrix} 1& 1& 0 \\ 0&0& 1  \end{pmatrix} =RS \ , \\
D =\begin{pmatrix}      3&8 \\ 1&2 \end{pmatrix} =
& \begin{pmatrix} 1& 1& 0 \\ 0&0& 1  \end{pmatrix}
\begin{pmatrix} 1&  5 \\ 2&3 \\1&2 \end{pmatrix}=SR \ .
\end{align*}
Row amalgamations are correspondingly given by amalgamating rows
rather than columns.

The Decomposition Theorem\si{Decomposition Theorem}, or a relative,  is a tool for the
characterization of nonzero\si{nonzero spectrum} spectra of primitive real matrices
\cite{BH91}; for
Parry's\ai{Parry, William}
cohomological characterization of SSE-$\Z_+G$ \cite{BS05}; and for
studying SSE over dense subrings of $\R$ \cite{BKR2013}.
\end{rem}

\begin{prop} \label{easyfacts} Let $\mathcal S$ be a semiring.
\begin{enumerate}
\item
 SE over $\mathcal S$ is indeed an equivalence relation.
\item
 SSE over $\mathcal S$ implies SE over $\mathcal S$.
\end{enumerate}
\end{prop}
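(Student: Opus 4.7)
For part (1), I would verify the three axioms of an equivalence relation directly from the definition. Reflexivity follows by taking lag $\ell=1$ with $R=I$ and $S=A$: then $A=IA=AI$, $AI=IA$, and $AI=IA$ trivially. Symmetry is immediate: if $(R,S)$ witnesses SE from $A$ to $B$ with lag $\ell$, then the four defining equations $A^\ell=RS$, $B^\ell=SR$, $AR=RB$, $SA=BS$ read, with the pair swapped to $(S,R)$, as the defining equations of a lag-$\ell$ SE from $B$ to $A$. So the only nontrivial step is transitivity.

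For transitivity, suppose $(R,S)$ is an SE-$\mathcal S$ from $A$ to $B$ of lag $\ell$ and $(U,V)$ is an SE-$\mathcal S$ from $B$ to $C$ of lag $m$. I propose the composite pair $R'=RU$ and $S'=VS$, claimed to give a lag-$(\ell+m)$ SE from $A$ to $C$. The checks are routine: $R'S'=R(UV)S=RB^mS=A^m(RS)=A^{\ell+m}$ using the intertwining $A^mR=RB^m$ (which is $AR=RB$ iterated), and symmetrically $S'R'=V(SR)U=VB^\ell U=C^\ell(VU)=C^{\ell+m}$ using $VB^\ell=C^\ell V$. The two intertwining equations $AR'=R'C$ and $S'A=CS'$ likewise follow by collapsing $AR=RB$ with $BU=UC$, and $SA=BS$ with $VB=CV$. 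No obstacle here beyond keeping track of associativity; this is essentially the same composition law that makes SE into a groupoid up to lag.

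For part (2), since SSE-$\mathcal S$ is by definition the transitive closure of ESSE-$\mathcal S$ and SE-$\mathcal S$ is already an equivalence relation by (1), it suffices to show that a single ESSE-$\mathcal S$ is a lag-$1$ SE-$\mathcal S$. Given $A=RS$ and $B=SR$, the pair $(R,S)$ with $\ell=1$ visibly satisfies the first two SE equations, while $AR=(RS)R=R(SR)=RB$ and $SA=S(RS)=(SR)S=BS$ supply the intertwining relations. This is the entire argument.

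The only place where there is any content is the choice of the composite witness $(RU,VS)$ for transitivity; everything else is formal manipulation. I expect no serious obstacle, and no hypothesis on $\mathcal S$ beyond the semiring axioms is used.
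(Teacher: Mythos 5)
Your proposal is correct and follows essentially the same route as the paper: the transitivity witness $(RU,VS)$ is exactly the composite pair $(R_1R_2,S_2S_1)$ the paper uses, and for part (2) the paper simply writes out the iterated product $R=R_1\cdots R_\ell$, $S=S_\ell\cdots S_1$ directly, which is what your "single ESSE is a lag-1 SE, then chain via transitivity" argument produces. The explicit reflexivity and symmetry checks you include are fine (and left implicit in the paper).
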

\begin{proof}
(1) If $(R_1,S_1)$ is a shift equivalence\si{shift equivalence}
of lag\si{lag}  $\ell_1 $
from $A$ to $B$, and
$(R_2,S_2)$ is a shift equivance
of lag\si{lag}  $\ell_2 $
from $B$ to $C$,
then
$(R_1R_2,S_2S_1)$ satisfies the equations to be
 a shift equivalence\si{shift equivalence}
of lag\si{lag}  $\ell_1+\ell_2 $
from $A$ to $C$.
(For example, $R_1R_2S_2S_1 = R_1B^{\ell_1}S_1 = R_1S_1A^{\ell_1} =
A^{\ell_2}A^{\ell_1} = A^{\ell_1+\ell_2}$.)

(2)
Suppose we are given a lag\si{lag} $\ell $ SSE from $A$ to $B$: \\
$A=A_0, A_1, \dots , A_{\ell}=B$;
$\quad \quad A_i=R_iS_i$ and $A_{i+1} =S_iR_i$, $\ \ $ for $0\leq i < \ell$ . \\
Set $R=R_1R_2\dots R_{\ell}$ , $\ \ S= S_{\ell}\dots S_2 S_1$. \\
Then $(R,S)$ is a shift equivalence\si{shift equivalence} of lag\si{lag} $\ell$ from $A$ to $B$.
$\qed$
\end{proof}

Next we state  one of the interesting partial results on Williams'
Conjecture\si{Williams' Conjecture}, which we will use later.

\begin{thm}\label{thm:baker}
\cite[K.Baker]{baker1983}
Suppose $A,B$ are positive $2\times 2$ integral matrices
with nonnegative determinant which are
similar over the integers. Then $A,B$ are strong shift equivalent
over $\Z_+$.
\end{thm}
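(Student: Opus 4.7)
The plan is to reduce both $A$ and $B$, via chains of ESSE-$\Z_+$ moves, to a common representative in their shared $\Z$-similarity class. Since $A$ and $B$ are similar over $\Z$ they share trace $\tau$ and determinant $d\geq 0$; the argument splits on whether $d=0$ or $d>0$.

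Suppose first $d=0$. Then the positive matrix $A$ has rank one, so it can be written as $A=vw^{T}$ with $v\in\Z_+^{2}$ a positive column and $w^{T}\in\Z_+^{2}$ a positive row. The pair $(R,S)=(v,w^{T})$ gives an ESSE-$\Z_+$ from $A$ to the $1\times 1$ matrix $(w^{T}v)=(\tau)$. The same construction reduces $B$ to $(\tau)$, and transitivity of SSE-$\Z_+$ completes this case.

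Suppose now $d>0$. Perron--Frobenius applied to the positive matrix $A$ gives real eigenvalues $\lambda_{1}>\lambda_{2}$, and since $\lambda_{1}\lambda_{2}=d>0$ and $\lambda_{1}>0$ we have $0<\lambda_{2}<\lambda_{1}$; the same eigenvalues appear for $B$. The key idea is to realize certain generators of $\GL_{2}(\Z)$ acting on the $\Z$-similarity class as ESSE-$\Z_+$ moves whenever positivity permits: if $E\in\GL_{2}(\Z)$ and $E^{-1}A$ both lie in $M_{2}(\Z_{+})$, then $(R,S)=(E,E^{-1}A)$ is an ESSE-$\Z_+$ realizing the conjugacy $A\mapsto E^{-1}AE$. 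For $E$ a unipotent elementary matrix $\begin{pmatrix}1&0\\k&1\end{pmatrix}$ or $\begin{pmatrix}1&k\\0&1\end{pmatrix}$ with $k\in\Z_{+}$, the positivity of $E^{-1}A$ is precisely a Euclidean-algorithm-style inequality between the rows (respectively columns) of $A$. I would use such moves, together with the permutation swap $\begin{pmatrix}0&1\\1&0\end{pmatrix}$, to reduce $A$ and $B$ separately to a canonical positive representative $C$ of the similarity class; SSE-$\Z_+$ between $A$ and $B$ then follows by transitivity.

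The main obstacle is controlling positivity throughout the reduction and arranging termination at a canonical form. The intertwining $U\in \GL_{2}(\Z)$ with $B=U^{-1}AU$ need not itself have nonnegative entries, so the similarity must be factored into many small elementary steps, each of which must pass through a positive matrix; at stages where no reducing elementary move preserves $M_{2}(\Z_{+})$, one has to show that some preparatory positivity-preserving move creates one. The hypothesis $\det\geq 0$ is essential here: it keeps both eigenvalues in $[0,\lambda_{1}]$ and aligns the natural invariant of the similarity class (an ideal class in the order $\Z[\lambda_{1}]$, or equivalently a class of positive-semidefinite binary quadratic forms) with a reduction theory having a \emph{finite} fundamental domain of positive representatives --- the kind of finiteness needed for termination. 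In the complementary regime $\det<0$ one eigenvalue is negative, the orbit structure is indefinite with infinite chains of inequivalent reduced forms, and positivity can fail globally in the orbit, consistent with the theorem's hypothesis excluding that case.
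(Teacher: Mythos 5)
The paper does not prove this theorem; it is quoted from Baker's paper \cite{baker1983} and used as a black box, so your proposal must stand on its own. Your $\det=0$ case does: a rank-one positive integer matrix factors as $vw^{T}$ with $v,w$ positive integer vectors (factor the gcd out of the first row; integrality of the second row then forces an integer multiplier), the pair $(R,S)=(v,w^{T})$ is an ESSE-$\Z_+$ from $A$ to the $1\times 1$ matrix $(\tr(A))$, and similarity gives $\tr(A)=\tr(B)$, so both matrices are SSE-$\Z_+$ to the same $1\times1$ matrix.

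The $\det>0$ case, however, has a genuine gap, and it sits exactly where the content of the theorem lies. You correctly identify the right moves --- $(R,S)=(E,E^{-1}A)$ with $E$ and $E^{-1}A$ both nonnegative realizes the conjugation $A\mapsto E^{-1}AE$ as an ESSE-$\Z_+$ --- but you then only assert that these moves reduce $A$ and $B$ to a common canonical positive representative. You neither define the reduction, nor prove it terminates at a form depending only on the $\Z$-conjugacy class, nor show that at every stage some positivity-preserving move makes progress; that descent argument is the whole theorem, not a detail to be arranged. Moreover, the mechanism you offer for why $\det\ge 0$ is needed is not correct: a positive $2\times2$ integer matrix always has discriminant $(a-d)^2+4bc>0$, so the associated quadratic-form/ideal-class picture is always the indefinite (real quadratic) one and never positive-semidefinite; and the set of positive integer matrices with prescribed trace and determinant is finite regardless of the sign of the determinant (the trace bounds the diagonal entries, and then the determinant bounds the product of the off-diagonal entries), so the finiteness you invoke does not distinguish the two regimes. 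What $\det\ge 0$ actually buys is that both eigenvalues are nonnegative, which is what the positivity bookkeeping in the descent requires. Finally, your closing suggestion that the conclusion should fail for $\det<0$ is unwarranted: as noted elsewhere in the paper, whether SE-$\Z_+$ implies SSE-$\Z_+$ for $2\times2$ matrices is open in general, with further partial results in \cite{baker1987}; the hypothesis $\det\ge0$ marks the limit of the method, not a known counterexample regime.
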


\begin{rem}   \label{smallestlag}({\it Nilpotence and lag\si{lag}})
Let $A=RS,B=SR$ be an ESSE over a semiring $\mathcal S$.
Suppose $m\geq 2$ is the smallest positive integer such that
$A^m=0$. Then $B$ is also nilpotent (because $B^{m+1}=SA^mR=0$), but
$B^{m-2}\neq 0$ (because $B^{m-2}= 0$ would force $A^{m-1}=RB^{m-2}S=0$).
Thus if $\ell$ is the lag\si{lag} of an SSE-$\mathcal S$
from $A$ to a zero matrix,
then $\ell \geq m-1$.
For example, there is a lag\si{lag} 2 SSE-$\R$ from
$\left( \begin{smallmatrix} 0&1&0 \\ 0&0&1 \\ 0&0&0
\end{smallmatrix} \right)$ to $(0)$, but there is no
ESSE-$\R$ from
$\left( \begin{smallmatrix} 0&1&0 \\ 0&0&1 \\ 0&0&0
\end{smallmatrix} \right)$ to $(0)$.

  For an example involving primitive matrices,
  consider the matrix $A=(2)$ and its 3-block presentation
  matrix $B=A^{[3]}$ in
  Remark \ref{higherblockedgesfts}. There is a lag\si{lag} 2  SSE-$\Z_+$
  between $(2)$ and $B$. But there  cannot be an ESSE-$\R$
  of $B$ and $(2)$:
  if $RS= (2)$ and $SR=B$, then $R$ and $S$ have rank 1, so
  $SR$ has rank at most 1, contradicting $B$ having rank 2.
\end{rem}

%%%%%  I CUT THE NEXT REMARK, BECAUSE I DON'T WANT TO GIVE A PROOF, AND IT
%%%%%  IS A SIDELINE.
%%%%%
%%%%%  \begin{rem} \label{smallestlagMore}
%%%%%  Another exercise ...
%%%%%
%%%%%    For $A$ square over $\Z_+$, there is always an ESSE-$\Z_+$ between
%%%%%    $  A^{[k]}$ and $A^{[k+1]}$. The nilpotent blocks of the Jordan form of
%%%%%    $A^{[k+1]}$ arise as follows. Each nilpotent Jordan block of $A^{[k]}$
%%%%%    gives rise to a nilpotent block of $A^{[k+1]}$ with size increased
%%%%%   by 1. The dimension of the   kernel of $A^{[k+1]}$ is then forced
%%%%%   by this constraint, preservation of the size of the nonsingular
%%%%%   part, and
%%%%%    the
%%%%%    size of $A^{[k+1]}$.
%%%%%  \end{rem}

The next example (extracted from Norbert Riedel's\ai{Riedel, Norbert}
paper \cite{riedel1983},
which has more)
shows that the lag\si{lag} of an SSE-$\Z_+$ is not just a matter of nilpotence.

\begin{ex}  \label{riedelexample} ({\it Bad lag\si{lag} at size 2 from geometry.})\ai{Riedel, Norbert}
For each positive integer $k$, set
$A_k= \left( \begin{smallmatrix} k&2\\1&k \end{smallmatrix} \right)$ and
$B_k= \left( \begin{smallmatrix} k-1&1\\1&k+1 \end{smallmatrix} \right)$.
For each $k$, the matrices $A_k$, $B_k$ are SSE over $\Z_+$.
However, the minimum lag\si{lag} of an SE-$\Z_+$ between
$A_k,B_k$ (and therefore the minimum lag\si{lag} of
an  SSE-$\Z_+$ between
$A_k,B_k$)  goes to infinity as
$k\to \infty$.
\end{ex}
\begin{proof}[Proof sketch]
First, $A_k$ and $B_k$ have the same nonzero\si{nonzero spectrum} spectrum
$(k+ \sqrt 2, k-\sqrt 2)$, and $\Z [k+\sqrt 2]=\Z [\sqrt 2]$,
and $\Z [k+\sqrt 2]=\Z [\sqrt 2]$.
$\Z [\sqrt 2]$ is the ring of algebraic integers in
$\Q [\sqrt 2]$, and this ring is well known to have
class number 1. By Theorem \ref{thm:taussky},
$A_k$ and $B_k$ are similar over $\Z$. Then, by Theorem \ref{thm:baker},
$A_k$ and $B_k$ are SSE over $\Z_+$. By induction one checks
that for each $n$, there are polynomials $P^{(n)}_1, P^{(n)}_2$
with positive integral coefficients
such that $\text{deg}( P^{(n)}_1) = \text{deg}( P^{(n)}_2) +1$
and for all $k,n$
\[
(A_k)^n =
\begin{pmatrix}
P^{(n)}_1(k) & 2P^{(n)}_2(k) \\
P^{(n)}_2(k) & P^{(n)}_1(k)
\end{pmatrix} \ .
\]

Now suppose $R,S$ are matrices over $\Z_+$ and $\ell \in \N$
such that $AR=RB, SA=BS, RS=A^{\ell}$.  The first two equations
force $R,S$ to have the forms
\begin{align*}
R &= \begin{pmatrix}
b-a & a+b \\ a & b
\end{pmatrix} ;
\ \ \ \quad
a,b,b-a \in \Z_+
 \\
S &=\begin{pmatrix}
b-a & 2a-b \\ a & b
\end{pmatrix}; \ \quad
a,b,b-a, 2a-b \in \Z_+
\end{align*}
and from this  one can check that $RS$ has the form
\[
RS = \begin{pmatrix}
a&2b \\ b & a
\end{pmatrix},
\ \ \ \quad
a,b,2b-a \in \Z_+ \ .
\]
For fixed $n$, $\lim_k 2P^{(n)}_2(k)/P^{(n)}_1(k)  = \infty $ .
Thus given $\ell_0 \in \N$, for all sufficiently large $k$
we have for $n\leq \ell_0$ that
$P^{(n)}_1(k) > 2P^{(n)}_2(k)$. Thus, for such $k$ the lag\si{lag} of
an SE-$\Z_+$ between $A_k$ and $B_k$ is greater than $\ell_0$.
\end{proof}

It is worth noting that Riedel's\ai{Riedel, Norbert}  argument showing the
smallest lag\si{lag} of an SE-$\Z_+$ goes to infinity with $k$ works
just as well with $\Q_+$ or $\R_+$ in place of $\Z_+$: bad lags\si{lag}
can happen for ``geometric'' reasons,
without nilpotence or arithmetic issues.
On the other hand, bad lags\si{lag} can happen for strictly
arithmetic reasons, as the next example shows.

\begin{ex}\label{badArithmeticLag}
({\it Bad  lag\si{lag} at size 2 from arithmetic.})
Given $\ell \in \N$, there are $2\times 2 $ positive
integral matrices $A,B$ such that
(i) $A,B$ are SE-$\Z_+$, with minimum lag\si{lag} at least $\ell$, and
(ii) $A,B$ are SE-$\Q_+$ with lag\si{lag} 2.\begin{footnote}
{In Example \ref{badArithmeticLag}, I don't know any obstruction
to existence of an example for which  condition (ii) is replaced
by \lq\lq $A,B$ are ESSE-$\Q_+$ and
SSE-$\Z_+$\rq\rq .}\end{footnote}
\end{ex}
\begin{proof}[Proof sketch]
We list  steps to check.
Given   a prime $q$, and positive integer $x$, set
$A_x=
\left( \begin{smallmatrix} q & x \\ 0 & 1
\end{smallmatrix}\right)  $.

{\it Step 1.}
Suppose $(R,S)$ gives an SE-$\Z$ from
$A_x$ to $A_y$: $A_xR=RA_y$, etc. Then (perhaps after replacing
$R,S$ with $-R,-S$)
$R$ has the form
$\left( \begin{smallmatrix}\pm q^k & z \\ 0 & 1
\end{smallmatrix}\right)  $,
where $k$ is a nonnegative  integer.
It follows that $\pm x \equiv q^ky  \mod (q-1)$.

{\it Step 2.}
Suppose there is a smallest positive integer
$k$ such that $q^kx \equiv \pm y \mod (q-1)$.
%but any such $k$ is larger than $\ell_0$.
Then $A_x$, $A_y$ are SE-$\Z$, but any
such shift equivalence\si{shift equivalence} has lag\si{lag} at least $k$.

{\it Step 3.}
Choose $p$ prime such that $p-1> 2(2\ell +5)$.
Then choose $q$ prime such
that $p$ divides $q-1$ (this is possible
by Dirichlet's Theorem
 \cite{marcusDaniel}).
 Because $p-1 \geq 2\ell +5$,
by the Pigeonhole Principle
we may choose  $j$ a positive integer such that
$1\leq j \leq 2\ell +5$  and also for $1\leq k \leq \ell +2$
we have
$j\not\equiv \pm q^k \mod p$ .
 Define $x=(q-1)/p$ and
 $y=j(q-1)/p$. Then $A_x$ and $A_y$ are SE-$\Z$
 with minimum lag\si{lag} at least $\ell +2$. Also,
 $0< x< y < (1/2)q$ and $y< qx$.

{\it Step 4.}
For $z\in \{x,y \}$, define the positive integral matrix
\[
M_z = \begin{pmatrix} 1&0\\1&1 \end{pmatrix}
\begin{pmatrix} q&z\\0&1 \end{pmatrix}
\begin{pmatrix} 1&0\\-1&1 \end{pmatrix}
= \begin{pmatrix} q-z&z \\ \ q-z-1&1+z \end{pmatrix} \ ,
\]
This SIM-$\Z$ gives a lag\si{lag} 1 SE-$\Z$ between $A_z$ and $M_z$.
If follows that there can be no
SE-$\Z$ from $M_x$ to $M_y$ with lag\si{lag}
smaller than $\ell$.

{\it Step 5.}
It remains to produce the lag\si{lag} 2 SE-$\Q_+$ between $M_x$ and $M_y$.
For the eigenvalues $q$ and $1$,
$M_z$ has
 right eigenvectors
 $v=(1,1)^{\text tr}$ and $w_z= (-z,q-z-1)^{\text{tr}} $. Let $U$
be the $2\times 2$ matrix such that
$Uv=v$ and $Uw_x=w_y$. Then $R=M_yU, S=M_xU^{-1}$ gives a lag\si{lag} 2
SE-$\Q$ between $M_x$ and $M_y$. It remains to check $R,S$
are nonnegative. We have
\begin{align*}
R=M_yU &=
\begin{pmatrix} q-y&y \\ \ \ q-y-q & y+1
\end{pmatrix}
\frac 1{q-1}
\begin{pmatrix} q-x-1+y & x-y \\
-x+y & \  \ q+x-y-1
\end{pmatrix}
\\
&= \frac 1{q-1}
\begin{pmatrix}
q^2 - q(x+1) -xy
&
qx-y
\\
q^2 -q(x+2) -xy+1
&
\ \ q(x+1) - (y+1)
\end{pmatrix}
\end{align*}
From the last sentence of Step 3,
we see the entries of $M_yU$ are positive.
The matrix $S$ is obtained from $R$ by interchanging the
roles of $x$ and $y$, and $S$ is likewise positive.
\end{proof}

\begin{rem} \label{ashley}\ai{Ashley, J.J.}
  By the way, here is an example due to Jonathan Ashley (``Ashley's
eight by eight\si{Ashley's eight by eight}'',
from
\cite[Example 2.2.7]{Kitchens1998})\ai{Kitchens, Bruce P.}
 of
 a primitive matrix $A$   SE-$\Z$  to $(2)$, but
 not known to
be SSE-$\Z_+$  to $(2)$.
$A$ is the $8\times 8$ matrix which is the sum of the
permutation matrices for the permutations
(12345678) and (8)(1)(263754).
\end{rem}

  \begin{rem}  \label{RealSSEandBKR}
    For more on the problem of SSE over $\R$, focused on the
    case of positive matrices, see \cite{BKR2013}.
    (Kim\ai{Kim, K.H.} and Roush\ai{Roush, F.W.} proved that
    primitive matrices over $\R_+$
    are SSE-$\R_+$ to positive matrices. So, the case of SSE-$\R_+$ of
    positive  matrices  handles the primitive positive trace case.)
    The method here,
    due to Kim\ai{Kim, K.H.} and Roush\ai{Roush, F.W.},
        is to derive from a path of similar positive matrices an
        SSE-$\R_+$ between the endpoints.
        Kim\ai{Kim, K.H.} and Roush\ai{Roush, F.W.} were able to reduce to considering positive
        matrices of equal size, similar over $\R$;
        and in the ``$1\times 1$ case'',
        to produce such a path.

        However, even when both $A$ and $B$
        are $2\times 2$ positive real        matrices, the problem of
        when they are SSE-$\R_+$ is open. It is embarassing that we
        are not more clever.
  \end{rem}

\begin{rem} \label{ssez2ssezplus}
To understand  when
SE-$\Z_+$ matrices $A,B$ are
 SSE-$\Z_+$, it is best to focus on
 the fundamental case that
 $A$ and $B$ are primitive. (Then consider irreducible matrices, then
 general matrices, modulo a solution of the primitive case.)
 For primitive matrices over  $\Z_+$,
 SE-$\Z_+$ is equivalent to  SSE-$\Z$ (Proposition \ref{primiiveSE}).
For primitive matrices over $\Z_+$, it
has been important to study a reformulation of the problem:
  when does SSE-$\Z$ imply SSE-$\Z_+$?
 This  formulation  was essential for the Wagoner\ai{Wagoner, J.B.} complex
 setting for the Kim-Roush\ai{Roush, F.W.} counterexamples \cite{S13}\ai{Kim, K.H.}
 to Williams' Conjecture\si{Williams' Conjecture}, and for some arguments for
a general subring $R$ of  $\R$ (see \cite{BKR2013})\ai{Kim, K.H.}.
For  some subrings  $R$ of $\R$,
SE-$R_+$ does not even imply SSE-$\R$, as we will see.
 \end{rem}

\section{Shift equivalence and
    strong shift equivalence over a ring } \label{sec:sesse}

In this section, we present basic facts about shift
equivalence\si{shift equivalence}
and strong shift equivalence\si{strong shift equivalence} over rings, with
various example classes.

%%% We recall definitions.
%%% Let $A,B$ be square matrices over a semiring $\mathcal S$.
%%% They are SE-$\mathcal S$ if there are  $\ell >0$
%%% and   $R,S$ over $\mathcal S$  such that
%%%      \[
%%%      A^{\ell}=RS\ , \quad B^{\ell}= SR\ , \quad
%%%      AR=RB\ ,\quad SA=BS\  .
%%%      \]
%%% \\
%%% Matrices $A,B$ are ESSE-$\mathcal S$
%%% if there are matrices $R,S$ over $\mathcal S$ such that
%%% $A=RS$ and $ B=SR$.
%%% %A lag $\ell$ SSE-$\mathcal S$ from $A$ to $B$
%%% The transitive closure of ESSE-$\mathcal S$ is
%%% the equivalence relation SSE-$\mathcal S$.
%%% \\ \\

%%% Organization of  Lecture II:
%%% \\
%%%
%%% {\bf 1} SE-$\Z_+$: dynamical meaning and reduction to SE-$\Z$.
%%%
%%%
%%% {\bf 2} SSE over
%%% a ring $\mathcal R$.
%%%
%%%
%%% {\bf 3} SE over a field, PID or $\Z$.
%%% \\ \\
%%% Thereafter we consider only SE over  $\Z$.
%%% \\
%%%
%%% {\bf 4} SE-$\Z$: some example classes.
%%%
%%% {\bf 5} Direct limits.
%%%
%%%   {\bf 6} Polynomials.

\subsection{SE-$\Z_+$: dynamical meaning and reduction to SE-$\Z$}

First we give the dynamical meaning of SE-$\Z_+$.

\begin{de}  Homeomorphisms $S$ and $T$ are eventually conjugate\si{eventually conjugate} if $S^n, T^n$
                are conjugate for all but finitely many positive integers $n$.
\end{de}

\begin{thm} Let $A,B$ be square matrices over $\Z_+$.
The following are equivalent  \apr{seeventual}).
\begin{enumerate}
   \item $A,B$ are shift equivalent over $\Z_+$.
\item
      The SFTs $(X_A,\sigma)$, $(X_B,\sigma)$
                are eventually conjugate\si{eventually conjugate}.
\end{enumerate}
\end{thm}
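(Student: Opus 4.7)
The plan is to use Williams' Theorem (topological conjugacy of SFTs iff SSE-$\Z_+$) as the bridge between dynamics and algebra, together with the identification $(X_A, \sigma^n) \cong (X_{A^n}, \sigma)$ from the earlier proposition on powers of edge SFTs.

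For $(1) \implies (2)$, I would start from a shift equivalence $(R,S)$ of lag $\ell$ from $A$ to $B$ and manufacture an ESSE-$\Z_+$ between $A^n$ and $B^n$ for every $n \geq \ell$. Set $R' = A^{n-\ell} R$ and $S' = S$. Then $R'S' = A^{n-\ell}(RS) = A^{n-\ell}A^\ell = A^n$, and using $SA^{n-\ell} = B^{n-\ell}S$ (obtained by iterating $SA = BS$), one gets $S'R' = (SA^{n-\ell})R = B^{n-\ell}(SR) = B^{n-\ell}B^\ell = B^n$. Williams' Theorem then gives $(X_{A^n}, \sigma) \cong (X_{B^n}, \sigma)$, and via the power identification this becomes $(X_A, \sigma^n) \cong (X_B, \sigma^n)$ for every $n \geq \ell$, which is eventual conjugacy.

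For $(2) \implies (1)$, eventual conjugacy combined with the power identification and Williams' Theorem yields $A^n$ SSE-$\Z_+$ $B^n$ for all sufficiently large $n$. The task is to promote this to an honest SE-$\Z_+$ between $A$ and $B$. The approach I would take is to work with a fixed topological conjugacy $\phi \colon X_A \to X_B$ intertwining $\sigma^n$ on each side, rather than just with the abstract SSE of the $n$-th powers. By the Decomposition Theorem, $\phi$ factors through a chain of row and column amalgamations at the level of $A^n$ and $B^n$. Careful bookkeeping of how these amalgamations act on length-$n$ paths of $\Gamma_A$ and $\Gamma_B$ (whose decomposition into length-$1$ edges is remembered by the native $\sigma$-actions on $X_A$ and $X_B$) should yield matrices $R, S$ over $\Z_+$ satisfying simultaneously $RS = A^n$, $SR = B^n$, $AR = RB$, and $SA = BS$, giving an SE-$\Z_+$ of lag $n$.

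The main obstacle is this second direction: the SSE-$\Z_+$ of $A^n$ and $B^n$ treated as abstract matrices only yields the weaker intertwining $A^n R = R B^n$ and $S A^n = B^n S$. The single-step relations $AR = RB$ and $SA = BS$ require more than this abstract SSE, namely the geometric structure of the conjugacy $\phi$ together with the native $\sigma$-actions on the underlying SFTs. This is where the dynamical hypothesis (eventual conjugacy of SFTs) enters in an essential way, beyond the purely algebraic statement about $n$-th powers.
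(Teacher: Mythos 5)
Your direction $(1)\implies(2)$ is correct and is essentially the paper's argument: from a lag-$\ell$ shift equivalence $(R,S)$ one forms the ESSE $A^n=(A^{n-\ell}R)S$, $B^n=S(A^{n-\ell}R)$ for every $n\geq\ell$, then invokes Williams' Theorem and the power identification $(X_A,\sigma^n)\cong(X_{A^n},\sigma)$.

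The direction $(2)\implies(1)$ has a genuine gap, and it sits exactly where you located it but is not repaired by what you propose. You fix a single $n$ and a single conjugacy $\phi\colon(X_A,\sigma^n)\to(X_B,\sigma^n)$ and hope that "careful bookkeeping" of the Decomposition Theorem amalgamations, together with the native $\sigma$-actions, will produce $R,S$ satisfying the one-step intertwinings $AR=RB$ and $SA=BS$. But $\phi$ is only required to commute with $\sigma^n$, not with $\sigma$; the native shift actions on $X_A$ and $X_B$ are simply not intertwined by $\phi$, so nothing in the amalgamation data forces the one-step relations. More decisively, any argument that extracts an SE-$\Z_+$ from a single conjugacy of $n$-th powers proves too much: conjugacy of $\sigma_A^n$ and $\sigma_B^n$ for one fixed $n$ does not imply shift equivalence of $A$ and $B$ (already at the level of SE-$\Z$, $A^k$ and $B^k$ being shift equivalent does not force $A$ and $B$ to be, because of distinct choices of $k$-th roots of eigenvalues --- compare $A=(3)$, $B=(-3)$, $k=2$; the same root ambiguity occurs for nonnegative primitive matrices). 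So the hypothesis that the powers are conjugate for \emph{all but finitely many} $n$ must be used in an essential, global way, and your argument never uses more than one value of $n$.

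The paper's route for this direction is genuinely different and substantially harder: it passes through the intermediate statement that $A^k$ and $B^k$ are SE-$\Z_+$ for all but finitely many $k$, and then applies a theorem of Kim and Roush which shows that if one takes $k$ to be a prime that is very large relative to the number fields generated by the eigenvalues of $A$ and $B$, the root ambiguity disappears and shift equivalence of the $k$-th powers can be descended to shift equivalence of $A$ and $B$ themselves. That arithmetic input (or some substitute for it) is what your proposal is missing.
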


Next we consider how SE-$\Z$ and SE-$\Z_+$ are related.
Recall Definition \ref{primitiveDefinition}:
a primitive matrix is a square nonnegative real matrix such
that some power is positive.

\begin{ex}  The matrices $\begin{pmatrix} 1 \end{pmatrix} $ and
   $\begin{pmatrix} 1&1\\ 1&0 \end{pmatrix}$ are  primitive. \\
   The  matrices
$\begin{pmatrix} 1&1\\ 0&0 \end{pmatrix}$,
   $\begin{pmatrix} 1&1\\ 0&1 \end{pmatrix}$ and
   $\begin{pmatrix} 0&1\\ 1&0 \end{pmatrix}$ are not primitive.
\end{ex}
%   The  SFTs defined from primitive matrices
%   are the fundamental and most important  class of SFTs
%   \apr{primitivefundamental}).
\begin{prop}\label{primiiveSE} \apr{sePrimitive})
 Suppose two primitive matrices over a subring $\mathcal R$ of the reals are
 SE over $\mathcal R$. Then they are SE over $\mathcal R_+$.
(Recall, $\mathcal R_+ = \mathcal R \cap \{x\in \R : x\geq 0\}$.)
\end{prop}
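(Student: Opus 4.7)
My plan is to take a given lag-$\ell$ shift equivalence $(R,S)$ from $A$ to $B$ over $\mathcal R$ and, for $m$ large, replace it by $(R_m, S_m) := (A^m R,\, S A^m)$. Using the intertwiners $AR = RB$ (so $A^m R = R B^m$) and $SA = BS$ (so $S A^m = B^m S$), one verifies routinely that $(R_m, S_m)$ is a shift equivalence from $A$ to $B$ of lag $2m+\ell$ over $\mathcal R$: the intertwining equations $AR_m = R_m B$ and $S_m A = B S_m$ are immediate, and $R_m S_m = A^m(RS)A^m = A^{2m+\ell}$, while $S_m R_m = S A^{2m} R = SR B^{2m} = B^{2m+\ell}$. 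Both $R_m$ and $S_m$ lie over $\mathcal R$ automatically; the whole game is to force their entries to be nonnegative for some $m$.

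For this I invoke Perron--Frobenius. Since $A, B$ are primitive and SE-$\mathcal R$, they share a common Perron eigenvalue $\lambda > 0$; fix strictly positive left Perron row vectors $p_A, p_B$ and strictly positive right Perron column vectors $u_A, u_B$, and recall that $\lambda^{-m} A^m \to (p_A u_A)^{-1}\, u_A p_A$, a strictly positive rank-one matrix, and similarly for $B$. From $p_A (AR) = \lambda (p_A R) = (p_A R) B$, the row $p_A R$ is a left $\lambda$-eigenvector of $B$, hence $p_A R = c\, p_B$ for some $c \in \R$; likewise $R u_B = d\, u_A$, $S u_A = c'\, u_B$, and $p_B S = d'\, p_A$ for scalars $d, c', d' \in \R$. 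Evaluating $RS = A^\ell$ between $p_A$ and $u_A$, and $SR = B^\ell$ between $p_B$ and $u_B$, gives $c c' > 0$ and $d d' > 0$, while the two computations $p_A R u_B = c(p_B u_B) = d(p_A u_A)$ and $p_B S u_A = c'(p_B u_B) = d'(p_A u_A)$ show that $c, d$ agree in sign and $c', d'$ agree in sign. Thus all four of $c, c', d, d'$ share a common sign; replacing $(R,S)$ by $(-R,-S)$ if necessary -- which preserves all SE relations and negates each of the four scalars -- we may assume $c, c', d, d' > 0$.

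Then $\lambda^{-m} R_m = (\lambda^{-m} A^m) R \to (p_A u_A)^{-1}\, u_A (p_A R) = (p_A u_A)^{-1} c\, u_A p_B$, a strictly positive real matrix; likewise $\lambda^{-m} S_m = S(\lambda^{-m} A^m) \to (p_A u_A)^{-1} c'\, u_B p_A$, also strictly positive. So for all sufficiently large $m$, every entry of $R_m$ and every entry of $S_m$ is a strictly positive real number that already lies in $\mathcal R$, and therefore in $\mathcal R_+$. For any such $m$, $(R_m, S_m)$ is the desired shift equivalence of $A$ and $B$ over $\mathcal R_+$, of lag $2m + \ell$.

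The step I expect to be the main obstacle is the sign-consistency. The individual Perron-projector asymptotics for $A^m R$ and $S A^m$ are formal once one knows $p_A R$ and $S u_A$ are Perron eigenvectors of $B$ up to scalar, but the single reflection $(R,S) \mapsto (-R,-S)$ must simultaneously repair the signs of all four Perron scalars $c, c', d, d'$; coordinating this uses the full package of SE identities ($RS = A^\ell$ \emph{and} $SR = B^\ell$, plus the two ``mixed'' evaluations $p_A R u_B$ and $p_B S u_A$) and is what rules out a situation where $A^m R$ eventually becomes positive but $S A^m$ eventually becomes negative.
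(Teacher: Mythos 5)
Your proof is correct and follows essentially the same route as the paper, which sketches exactly this argument (pass to $(A^nR, SA^n)$, use the Perron projection to force positivity for large $n$, after possibly replacing $(R,S)$ by $(-R,-S)$) and defers the details to Lind--Marcus. Your sign-coordination of the four Perron scalars via the evaluations of $RS=A^{\ell}$, $SR=B^{\ell}$, $p_ARu_B$ and $p_BSu_A$ correctly supplies the step the paper leaves implicit.
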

For primitive matrices, the  classification up to  SE-$\Z_+$ reduces
 to the tractable problem of classifying up to  SE-$\Z$.
 The Proposition becomes false if the hypothesis of primitivity is
 removed \apr{bkaplansky}).

 \subsection{Strong shift equivalence\si{strong shift equivalence}
   of matrices over a ring} \label{ssesubsec}

   Let $\mathcal R$ be a ring.
   Recall, $\GL (n, \mathcal R)$ is the group of $n\times n$ matrices invertible
   over $\mathcal R$;
   $U\in \GL (n,\mathcal R)$ if there is a matrix $V$ over $\mathcal R$
   with $UV=VU=I$.  This matrix $V$ is denoted $U^{-1}$.
      If $\mathcal R$ is commutative,  then $U\in \GL(n,\mathcal R)$
   iff $\det U$ is a unit in $\mathcal R$.

   Square matrices $A,B$ are similar over $\mathcal R$
   (SIM-$\mathcal R$) if there exists
   $U$ in $\GL(n,\mathcal R)$  such that $B =   U^{-1}AU$.

Our viewpoint:
SE and SSE of matrices over a ring $\mathcal R$
are  {\it stable versions of similarity}
of matrices over $\mathcal R$.

By  a  ``stable version of similarity''  we mean
 an equivalence relation on square matrices which
coarsens  the relation of  similiarity, and is
obtained by  allowing some kind of neglect of the nilpotent part of
the matrix multiplication\begin{footnote}{The term ``stable'' has
  had diverse use. We think of
  ``stable algebra of matrices''\si{stable algebra}
  as a large subject in which we
  consider one  meaningful topic.}\end{footnote}. (This will be less vague soon.)
\begin{prop}[Maller-Shub]\label{prop:MallerShub}\ai{Maller, Michael}\ai{Shub, Michael}
  SSE over
   a ring $\mathcal R$ is the equivalence
   relation on square matrices over $\mathcal R$ generated by
   the following relations on square matrices $A,B$ over $\mathcal R$.
 \begin{enumerate}
     \item
{\it (Similarity over  $\mathcal R$)}
For some $n$,    $A$ and $B$ are $n\times n$ and
there is a matrix $U$ in $\text{GL}(n,\mathcal R)$
   such that $A=U^{-1}BU$ .\\

\item {\it (Zero extension)\si{zero extension}} There exists
   a matrix $X$ over $\mathcal R$ such that in block form,
   $B= \begin{pmatrix} A&X\\ 0&0 \end{pmatrix} $ or
   $B= \begin{pmatrix} A&0\\ X&0 \end{pmatrix} $ .
\end{enumerate}
\end{prop}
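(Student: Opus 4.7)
The plan is to show that the two relations---call them $\sim$ (SSE-$\mathcal{R}$) and $\approx$ (the equivalence relation generated by similarity and zero extensions)---agree, by checking each inclusion directly.

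For $\approx \Rightarrow \sim$, I only need to exhibit each generator as an ESSE-$\mathcal{R}$. If $B=U^{-1}AU$ with $U\in \GL(n,\mathcal{R})$, then take $R=U^{-1}$ and $S=AU$: these give $RS=U^{-1}AU=B$ and $SR=AUU^{-1}=A$, so similarity is ESSE. For the zero extension $B=\left(\begin{smallmatrix} A & X \\ 0 & 0\end{smallmatrix}\right)$, choose $R=\left(\begin{smallmatrix} A & X \end{smallmatrix}\right)$ and $S=\left(\begin{smallmatrix} I \\ 0 \end{smallmatrix}\right)$, so that $RS=A$ and $SR=B$; the other block form is handled symmetrically by transposing the roles of $R$ and $S$. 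This direction is routine.

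The substantive direction is $\sim \Rightarrow \approx$, which reduces to showing that any single ESSE $A=RS$, $B=SR$ (with $R$ of size $m\times n$, $S$ of size $n\times m$) yields $A \approx B$ through a short chain. The key trick I would use is to start with the zero extension
\[
A \ \approx \ M := \begin{pmatrix} A & 0 \\ S & 0 \end{pmatrix} = \begin{pmatrix} RS & 0 \\ S & 0 \end{pmatrix}
\]
and then conjugate by the unipotent (hence $\GL(m+n,\mathcal{R})$) matrix $U=\left(\begin{smallmatrix} I_m & R \\ 0 & I_n \end{smallmatrix}\right)$, whose inverse is $\left(\begin{smallmatrix} I_m & -R \\ 0 & I_n \end{smallmatrix}\right)$. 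A direct computation gives
\[
U^{-1}MU \ = \ \begin{pmatrix} 0 & 0 \\ S & SR \end{pmatrix}\ =\ \begin{pmatrix} 0 & 0 \\ S & B \end{pmatrix}\ ,
\]
the crucial cancellation being $RS-RS=0$ in the upper-left block. A further similarity by the block-permutation matrix that swaps the two block rows/columns converts this to $\left(\begin{smallmatrix} B & S \\ 0 & 0\end{smallmatrix}\right)$, which is a zero extension of $B$. Thus
\[
A \ \approx\ M \ \approx\ \begin{pmatrix} 0 & 0 \\ S & B \end{pmatrix}\ \approx\ \begin{pmatrix} B & S \\ 0 & 0 \end{pmatrix}\ \approx\ B\ ,
\]
and iterating over the chain of ESSEs in a given SSE finishes the argument.

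The only place where care is needed is the unipotent conjugation: I want to emphasize that $U$ is invertible over any ring (regardless of commutativity or of whether $\det$ is defined), since $UU^{-1}=U^{-1}U=I$ by block multiplication, so the construction really works for SSE over a general ring as claimed. Everything else is bookkeeping with block matrices; no further obstacle is expected.
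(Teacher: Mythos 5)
Your proof is correct and is essentially the paper's argument: the easy direction exhibits each generator as an ESSE by explicit factorizations, and the converse is the Maller--Shub similarity-of-zero-extensions trick, where your conjugation of $\left(\begin{smallmatrix} A & 0 \\ S & 0 \end{smallmatrix}\right)$ by $\left(\begin{smallmatrix} I & R \\ 0 & I \end{smallmatrix}\right)$ is just the transposed twin of the paper's identity $\left(\begin{smallmatrix} I&0\\ S&I\end{smallmatrix}\right)\left(\begin{smallmatrix} A&R\\ 0&0 \end{smallmatrix}\right) = \left(\begin{smallmatrix} 0&R\\ 0&B \end{smallmatrix}\right)\left(\begin{smallmatrix} I&0\\ S&I \end{smallmatrix}\right)$. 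Your remark that the unipotent matrix is invertible over an arbitrary ring is exactly the right point to emphasize.
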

\begin{proof}  A  similarity or a zero extension\si{zero extension}
   produces an ESSE:
   %$A=RS$ and $B=SR$:
   \\ \\
   If   $A=U^{-1}BU$, then
   $A= (U^{-1}B)\, U$ and $B = U\,(U^{-1}B)$.
   \\ \\
   If    $B= \begin{pmatrix} A&X\\ 0&0 \end{pmatrix} $,
   then $B=  \begin{pmatrix} I\\ 0 \end{pmatrix}
   \begin{pmatrix} A&X \end{pmatrix} $ and
   $A=
   \begin{pmatrix} A&X \end{pmatrix}
    \begin{pmatrix} I\\ 0 \end{pmatrix}$ .
   \\ \\
   If    $B= \begin{pmatrix} A&0\\ X&0 \end{pmatrix} $,
   then $B=  \begin{pmatrix} A\\ X \end{pmatrix}
   \begin{pmatrix} I&0 \end{pmatrix} $ and
   $A=
   \begin{pmatrix} I&0 \end{pmatrix}
    \begin{pmatrix} A\\ X \end{pmatrix}$ .

Conversely, given $A=RS$ and $B=SR$,
  Maller\ai{Maller, Michael}  and
  Shub\ai{Shub, Michael} constructed     in \cite{MallerShub1985}     a similarity
    of   zero  extensions\begin{footnote}{The paper \cite{MallerShub1985}
        did not consider general rings, or state Proposition
        \ref{prop:MallerShub} explicitly even for $\Z$. However,  Boyle heard
           Maller\ai{Maller, Michael}, in a
        talk in the 1980s, state and prove the content of
        Proposition \ref{prop:MallerShub} in full
        generality.}\end{footnote}:
\[
    \begin{pmatrix} I&0\\ S&I\end{pmatrix}
     \begin{pmatrix} A&R\\ 0&0 \end{pmatrix} =
     \begin{pmatrix} 0&R\\ 0&B \end{pmatrix}
     \begin{pmatrix} I&0\\ S&I \end{pmatrix} \ .
     \]
\end{proof}

    SSE-$\mathcal R$ coarsens         SIM-$\mathcal R$ by
    allowing ``zero extensions\si{zero extension}''.
\iffalse
    What coarsening could be more
    mild than this?
    We might allow only a very restricted class of zero extensions\si{zero extension},
      those of the form
%    \sout{$A \to     \begin{pmatrix} A&0\\ 0&0 \end{pmatrix}$}
    $A \to \left(\begin{smallmatrix} A&0\\ 0&0 \end{smallmatrix}\right)$.
        Under this stabilization, matrices would be equivalent if
    they are similar, modulo
enlarging the kernel
by a direct summand isomorphic to $\mathcal R^k $, for some $k$.
For example, all square zero matrices would be equivalent,
    but would not be equivalent to
   $\left( \begin{smallmatrix} 0&1\\ 0&0 \end{smallmatrix}\right)$.
    Heuristically, then, SSE-$\mathcal R$ might be viewed as
 a second mildest ``natural''  % nontrivial
     stabilization of
     SIM-$\mathcal R$. % (Here ``natural'' is intuitive, not rigorous.)
 %    The mildest stabilization is by no means without subtlety for a
     %     general ring; but its coarsening to
     \fi
     The analogue  of Proposition \ref{prop:MallerShub} for
     SE, Theorem \ref{thm:endorelations},
     will replace zero extensions with nilpotent extensions.

The relation SSE-$\mathcal R$ can be very subtle indeed, as we will
see.
Fortunately, if   $\mathcal R$ is $\Z$, or a field,
then      SSE-$\mathcal R$ = SE-$\mathcal R$.

%     {\bf Tractability (redundant? drop this bit?)}
%      \\
%      Suppose $\mathcal R$ is $\Z$ or $\Q$ or $\R$.
%      Let's summarize: \\
%      two primitive matrices $A,B$
%      are SE-$\mathcal R_+$ iff they      are SE-$\mathcal R$
%      \\
%      they are SE-$\mathcal R$ to nonsingular matrices
%      (algorithmically).
%      \\ \\
%      So  we understand when primitive matrices over $\mathcal R_+$
%      are SE-$\mathcal R_+$ by classifying nonsingular matrices over
%      $\mathcal R$ up to SE-$\mathcal R$.
%
%   Corollary
%   Suppose $A$ is a square matrix over a ring $\mathcal R$, and
%   $C$ is its nondegenerate core.
%   Then $A$ is SSE-$\mathcal R$ to $C$.
%                Proof The general idea is apparent given an SSE
%                with lag $\ell =3$. This gives a commuting diagram:
%
%                Set $R=R_1R_2R_3$ and  $S=S_3S_2S_1$.
%                One can check that
%
%                $A^3 = RS, B^3=SR,  AR=RB, SA=BS$ .
%

\subsection{SE, SSE and det(I-tA)}\label{detI-tAsubsection}

Let $\mathcal R$ be a commutative ring, and $A$ a square
matrix over $\mathcal R$. As explained in Remark
\ref{NewtonIdentities}, the polynomial $\det(I-tA)$ determines
the trace sequence\si{trace sequence} $(\tr (A^n))_{n=1}^{\infty}$, and
that sequence determines $\det(I-tA)$ if $\mathcal R$
is torsion-free.

If $A$ and $B$ are SSE over $\mathcal R$, then one easily sees
$(\tr (A^n))_{n=1}^{\infty}=(\tr (B^n))_{n=1}^{\infty}$, simply because
$\tr (RS)= \tr (SR)$. To see that in addition
$\det(I-tA) = \det(I-tB)$, apply the
Maller-Shub\ai{Maller, Michael}\ai{Shub, Michael} characterization
Proposition \ref{prop:MallerShub}.

If there is a lag\si{lag} $\ell$ shift equivalence\si{shift equivalence} over $\mathcal R$  between
$A$ and $B$, then $(\tr (A^k))_{k=\ell}^{\infty} = (\tr (B^k))_{k=\ell}^{\infty}$.
We shall see below that if  $\mathcal R$ is an integral domain,
then $\det (I-tA)$ is also an invariant of SE-$\mathcal R$ (because
it is an invariant of shift equivalence\si{shift equivalence} over the field of fractions
of $\mathcal R$).

But in some cases,  the  trace of a matrix need not be an invariant of
SE-$\mathcal R$. Suppose $\mathcal R$ is a ring with a nilpotent element $a$
(i.e., $a\neq 0$ and $a^k=0$ for some positive integer $k$).
For example, let $\mathcal R = \Z[t]/(t^2)$ and $a=t$.
Consider the
$1\times 1$ matrices $A=(a)$ and $B=(0)$. Then $A$ and $B$ are SE-$\mathcal R$
but $\tr (A) \neq \tr (B)$.
This by the way gives an easy example of a ring $\mathcal R$ for which
SE-$\mathcal R$ and SSE-$\mathcal R$ are not the same
relation.\begin{footnote}{Somehow this easy example was missed for many years,
    perhaps because the rings arising in symbolic dynamics
    are generally without
nilpotents.}\end{footnote}

If a commutative ring $\mathcal R$ has no nilpotent element, then $\det(I-tA)  $
will be an invariant of the SE-$\mathcal R$ class of $A$
\apr{reducedrings}).

\subsection{Shift equivalence\si{shift equivalence} over a ring $\mathcal R$}

We consider shift equivalence\si{shift equivalence} over a ring $\mathcal R$, by cases.

\subsubsection{$\mathcal R$ is a field.}

     Suppose
     $A$ is a square matrix over $\mathcal R$.
     There is an invertible $U$ over $\mathcal R$ such that
     $U^{-1}AU$ has the form
     $\begin{pmatrix} A' & 0 \\ 0 &N \end{pmatrix}$,
     where $A'$ is invertible and $N$ is triangular with zero
     diagonal.
     (For $\mathcal R=\C$, use the Jordan form.)
\begin{ex}
        $\mathcal R=\R$,  $\ \ \ U^{-1}AU =
        \begin{pmatrix} \sqrt 2 &0&0&0 \\ 0&0&1&0 \\ 0&0&0&1 \\
          0&0&0&0 \end{pmatrix}$, $\ \ \ A'=(\sqrt 2)$.
       \end{ex}
      $A'$ (as a vector space endomorphism) is
     isomorphic to  the restriction of $A$ to the largest
     invariant subspace on
     which $A$ acts invertibly. Abusing notation, we
     call $A'$ {\bf the nonsingular
     part} of $A$ (keeping in mind that $A'$ is only well defined
     up to similarity over the field $\mathcal R$).
\begin{prop} A square matrix $A$ over a field $\mathcal R$
   is SSE over $\mathcal R$ to its
   nonsingular part, $A'$.
\end{prop}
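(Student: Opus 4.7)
The plan is to combine two reductions already implicit in the excerpt. First, reduce $A$ by similarity to a block-diagonal form separating the nonsingular part from a nilpotent part; then strip off the nilpotent part by iterated zero extensions, using the Maller--Shub characterization (Proposition \ref{prop:MallerShub}).

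For Step 1, over the field $\mathcal R$ the Fitting decomposition (or, over $\C$, the Jordan form) provides $U \in \GL(n,\mathcal R)$ with $U^{-1} A U = \begin{pmatrix} A' & 0 \\ 0 & N \end{pmatrix}$, with $A'$ invertible and $N$ nilpotent, and a further similarity may be used to assume $N$ is strictly upper triangular. Since similarity is one of the two generators of SSE-$\mathcal R$ in Proposition \ref{prop:MallerShub}, it suffices to prove $M := \begin{pmatrix} A' & 0 \\ 0 & N \end{pmatrix}$ is SSE-$\mathcal R$ to $A'$.

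For Step 2, induct on the size $k$ of $N$. The base case $k=0$ is trivial; alternatively for $k=1$ we have $N=(0)$, and $\begin{pmatrix} A' & 0 \\ 0 & 0 \end{pmatrix}$ is the zero extension of $A'$ with $X=0$, so it is SSE-$\mathcal R$ to $A'$ by Proposition \ref{prop:MallerShub}. For $k \geq 2$, the strictly upper triangular $N$ may be written $N = \begin{pmatrix} N_1 & v \\ 0 & 0 \end{pmatrix}$ with $N_1$ of size $k-1$ (itself strictly upper triangular, hence nilpotent) and $v$ a column. Then
\[
M \;=\; \begin{pmatrix} A' & 0 & 0 \\ 0 & N_1 & v \\ 0 & 0 & 0 \end{pmatrix}
\;=\; \begin{pmatrix} B & X \\ 0 & 0 \end{pmatrix}, \qquad B = \begin{pmatrix} A' & 0 \\ 0 & N_1 \end{pmatrix}, \quad X = \begin{pmatrix} 0 \\ v \end{pmatrix},
\]
which is a zero extension of $B$. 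So $M$ is SSE-$\mathcal R$ to $B$ by Maller--Shub, and $B$ is SSE-$\mathcal R$ to $A'$ by the inductive hypothesis; transitivity closes the induction.

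There is no real obstacle: the block-diagonalization in Step 1 is standard linear algebra over a field, and the inductive peeling in Step 2 rides directly on the generating relations given by Proposition \ref{prop:MallerShub}. The only point requiring a convention is the degenerate case in which $A$ is itself nilpotent, so that $A'$ has size zero; this is handled by running the induction one extra step to discard a final $1\times 1$ zero block (or by adopting the convention that the empty matrix represents $A'$ when $A$ acts invertibly only on $\{0\}$).
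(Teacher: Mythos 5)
Your proof is correct and is essentially the paper's argument spelled out in full: the paper's one-line proof (``$A'$ reaches $U^{-1}AU$ by a string of zero extensions'') is exactly your Step 2 induction, combined with the similarity move from Proposition \ref{prop:MallerShub}. No issues.
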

\begin{proof}
$A'$ reaches $U^{-1}AU$ by a string of zero
     extensions.
\end{proof}
\begin{exer} Suppose $\det A =0$, and $U^{-1}AU
=\begin{pmatrix} A' & 0 \\ 0 &N \end{pmatrix}$, and
$\ell$ is the smallest positive integer such that $N^{\ell}=0$.
Then the smallest lag\si{lag} of an SSE over $\mathcal R$
from $A$ to $A'$ is $\ell $ \apr{smallestlag}).
\end{exer}

From the Proposition, square matrices  $A,B$ are
    SSE-$\mathcal R$
    if and only if their  nonsingular parts $A',B'$ are SSE-$\mathcal R$.
    Likewise for SE-$\mathcal R$.
\begin{prop}
Suppose $A,B$ are square nonsingular matrices over
        the field $\mathcal R$. The following are equivalent.
\begin{enumerate}
\item $A$ and $B$ have the same size and are similar over $\mathcal R$.

\item $A$ and $B$ are SE-$\mathcal R$.

\item $A$ and $B$ are SSE-$\mathcal R$.
\end{enumerate}
\end{prop}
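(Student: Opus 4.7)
The plan is to prove (1)$\Rightarrow$(3)$\Rightarrow$(2)$\Rightarrow$(1). The implication (3)$\Rightarrow$(2) is already established in general by the earlier Proposition ``SSE over $\mathcal{S}$ implies SE over $\mathcal{S}$.'' For (1)$\Rightarrow$(3), if $B = U^{-1}AU$ with $U \in \GL(n,\mathcal{R})$, then the factorization $A = U(U^{-1}A)$ and $B = (U^{-1}A)U$ exhibits $A$ and $B$ as elementary strong shift equivalent, hence SSE-$\mathcal{R}$. (This is just the similarity case of the Maller--Shub characterization; no use of nonsingularity is needed here.)

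The substantive direction is (2)$\Rightarrow$(1). Suppose $(R,S)$ is a shift equivalence over $\mathcal{R}$ of lag $\ell$ from $A$ to $B$, so that
\[
A^{\ell} = RS, \qquad B^{\ell} = SR, \qquad AR = RB, \qquad SA = BS.
\]
Since $A$ is invertible and $A^\ell = RS$ is invertible, the rectangular matrices $R$ (of some shape $m \times n$) and $S$ (of shape $n \times m$) must in fact be square and invertible: $RS$ invertible forces $\mathrm{rank}(R) = m$, hence $m \le n$, and symmetrically $SR$ invertible forces $n \le m$, so $m = n$ and both $R,S \in \GL(n,\mathcal{R})$. In particular $A$ and $B$ have the same size. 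Now the intertwining $AR = RB$ rearranges to $B = R^{-1}AR$, so $A$ and $B$ are similar over $\mathcal{R}$.

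There is no real obstacle here; the only point requiring care is the observation that nonsingularity of the matrices together with the lag-$\ell$ equations $A^\ell = RS$ and $B^\ell = SR$ forces $R$ and $S$ themselves to be invertible square matrices of the same size, which is what allows the intertwining relation to be promoted to a genuine similarity. This is exactly where the hypothesis ``$A,B$ nonsingular'' is used: without it, $R$ and $S$ need not even be square, as illustrated by the zero-extension examples behind the preceding proposition.
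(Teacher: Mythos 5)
Your proof is correct and follows essentially the same route as the paper: the easy cycle (1)$\Rightarrow$(3)$\Rightarrow$(2) via the Maller--Shub similarity factorization and the general implication SSE$\Rightarrow$SE, and then for (2)$\Rightarrow$(1) the same rank argument forcing $R$ and $S$ to be square of equal size, followed by invertibility of $R$ (the paper deduces this from $\det(A^\ell)=(\det R)(\det S)\neq 0$, you from full rank over a field --- the same thing) and the rearrangement $B=R^{-1}AR$.
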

\begin{proof}
        $(1) \implies (3) \implies (2)$. Clear.

          $(2) \implies (1)$.
  Let $(R,S)$ be  a lag\si{lag} $\ell $ SE over $\mathcal R$ from $A$ to $B$:
  \[
  A^{\ell} =RS\ ,\quad  B^{\ell} =SR\ ,\quad  AR=RB\ ,\quad  BS=SA\ .
  \]
%  $  A^{\ell} =RS, B^{\ell} =SR, AR=RB, BS=SA$. \\
  Suppose  $A$ is $m\times m$ and  $B$ is $n\times n$.
  Then $R$ is $m\times n$.
  Hence $m=n$, because
  \[
  m \ \ = \ \  \text{rank}(RS) \ \  \leq  \ \ \text{rank}(R)  \ \
  \leq  \ \ \min \{ m,n \}  \ \ \leq n
  \]
and likewise $n \leq m$.
Now $\det(A^{\ell}) =(\det R)( \det S)$, hence
$\det R \neq 0$.
Then $AR=RB$ gives $B=R^{-1}AR$ .
\end{proof}

\begin{coro}
Suppose matrices $A,B$ are SE over a field $\mathcal R$.
Then
 $\det(I-tA) = \det (I-tB)$ .
\end{coro}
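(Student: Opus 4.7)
The plan is to reduce the corollary to the preceding proposition by stripping off the nilpotent parts and invoking similarity invariance of $\det(I-tX)$.

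First I would apply the preceding proposition that $A$ is SSE-$\mathcal R$ to its nonsingular part $A'$, and likewise $B$ is SSE-$\mathcal R$ to $B'$. Since SSE-$\mathcal R$ implies SE-$\mathcal R$ and SE-$\mathcal R$ is an equivalence relation, the assumed SE-$\mathcal R$ between $A$ and $B$ yields an SE-$\mathcal R$ between $A'$ and $B'$. The proposition just proved then forces $A'$ and $B'$ to have the same size and to be similar over $\mathcal R$.

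Next I would show that $\det(I-tA) = \det(I-tA')$, and similarly on the $B$ side. Choose $U\in\GL(n,\mathcal R)$ with $U^{-1}AU = \begin{pmatrix} A' & 0 \\ 0 & N \end{pmatrix}$ where $N$ is strictly upper triangular. Then
\[
\det(I-tA) \ = \ \det\bigl(U^{-1}(I-tA)U\bigr) \ = \ \det(I-tA')\cdot\det(I-tN) \ = \ \det(I-tA'),
\]
since $I-tN$ is triangular with $1$'s on the diagonal. Combined with the similarity of $A'$ and $B'$ (which preserves $\det(I-t\,\cdot\,)$ by the same computation), this gives $\det(I-tA) = \det(I-tA') = \det(I-tB') = \det(I-tB)$.

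There isn't really a hard step here; the main point is simply noticing that both reductions (passing to the nonsingular part, and then to similarity) preserve the polynomial $\det(I-tX)$, so the preceding proposition immediately delivers the conclusion. The only thing one must be slightly careful about is that $A$ and $B$ need not have the same size, but after reducing to $A'$ and $B'$ the proposition supplies equal size automatically.
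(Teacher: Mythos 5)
Your proof is correct and follows essentially the same route as the paper: both reduce to the similarity of the nonsingular parts $A',B'$ via the preceding propositions, and then observe that $\det(I-tX)$ depends only on the nonsingular part. The only (cosmetic) difference is that the paper passes through the spectrum while you compute $\det(I-tA)=\det(I-tA')$ directly from the block-triangular form, which is if anything slightly cleaner over an abstract field.
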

\begin{proof}
By the proposition, the nonsingular parts
      $A',B'$ of $A,B$ are similar over $\mathcal R$.
      Therefore they have the same spectrum, which is the nonzero\si{nonzero spectrum}
      spectrum of $A$ and $B$. Therefore
            $\det (I-tA)= \det(I-tB)$.
\end{proof}

     When matrices
     $A,B$ have entries in a field $\mathcal R$  contained in $\C$,
     similarity over $\mathcal R$ is equivalent to
     similarity over $\C$, and the Jordan form of
the nonsingular part is a complete invariant for similarity of
     $A,B$ over $\mathcal R$.

\subsubsection{$\mathcal R$ is a Principal Ideal Domain}
The principal ideal domain of greatest interest to us is
$\mathcal R = \Z$, the integers.
 The PID  case  is like  the field case, but with
     more arithmetic structure. In place of the Jordan form, we use
     a  classical fact. Recall, an {\it upper triangular} matrix
     is a square matrix with only zero entries below the diagonal
     (i.e., $i > j \implies A(i,j) = 0$). A lower triangular
     matrix is a square matrix with only zero entries above the diagonal.
     A  matrix is triangular if it is upper or lower triangular.
Block triangular matrices are defined similarly,
for block structures on square matrices which use the same
index sets for rows and columns.

\begin{thm} [PID Block Triangular Form] \cite{Newman}\label{thm:PIDblocktri}
   Suppose $\mathcal R$ is a principal ideal domain (e.g.,
   $\mathcal R=\Z$ or a field).
   Suppose $A$ is a square matrix over $\mathcal R$ and
   $p_1, \dots , p_k$ are monic polynomials with coefficients in
   $\mathcal R$ such that the characteristic polynomial of $A$ is
   $\chi_A(t) = \prod_i p_i(t)$.

   Then $A$ is similar over $\mathcal R$ to a block triangular matrix,
   with diagonal blocks $A_i$, $1\leq i \leq k$ ,
   such that $p_i$ is the characteristic polynomial of $A_i$.
   \end{thm}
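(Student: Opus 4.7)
The plan is to reduce the theorem to the case $k=2$ by induction on $k$, and then prove the $k=2$ case by descending from the fraction field $K$ of $\mathcal R$. The case $k=1$ is trivial. For $k \geq 2$, assuming the $k=2$ case yields a decomposition $\mathcal R^n = V_1 \oplus C$ into free $\mathcal R$-modules with $V_1$ an $A$-invariant direct summand of rank $\deg p_1$ and $\chi_{A|_{V_1}} = p_1$, in the corresponding basis $A$ becomes $\left(\begin{smallmatrix} A_1 & * \\ 0 & B \end{smallmatrix}\right)$ with $\chi_B = p_2\cdots p_k$. Applying the inductive hypothesis to $B$ with the factorization $p_2, \ldots, p_k$ produces the remaining block structure, and a change of basis within $C$ preserves the overall upper triangular shape.

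For the case $k=2$, the first step is a field-level existence result: there is an $A$-invariant $K$-subspace $W \subseteq K^n$ with $\chi_{A|_W} = p_1$. I would prove this by peeling off irreducible factors one at a time and concatenating. If $p_1 = q_1^{a_1}\cdots q_r^{a_r}$ is the irreducible factorization over $K$, then for any irreducible $q$ dividing the current characteristic polynomial, $q$ also divides the minimal polynomial (the two share the same irreducible factors), so $\ker q(A) \neq 0$; picking $v \neq 0$ in $\ker q(A)$, the cyclic subspace $K[A]v$ has annihilator $q$ and characteristic polynomial $q$. Iterating this inside successive quotients and pulling back produces a flag $0 \subset W_1 \subset \cdots \subset W_d = W$ realizing $a_j$ copies of $q_j$ in order. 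At each stage the multiplicity of the requested $q_j$ in the current characteristic polynomial is still at least as large as the number of remaining peels, so the process succeeds; by multiplicativity of characteristic polynomials along short exact sequences, $\chi_{A|_W} = p_1$.

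The descent to $\mathcal R$ goes through $V_1 := W \cap \mathcal R^n$. This is an $A$-invariant $\mathcal R$-submodule (if $v \in W \cap \mathcal R^n$ then $Av \in W \cap \mathcal R^n$), and it is free over the PID as a submodule of the free module $\mathcal R^n$, of rank equal to $\dim_K W = \deg p_1$ (the $K$-span of $V_1$ in $K^n$ is contained in $W$ and has the same dimension). Crucially, $\mathcal R^n / V_1$ is torsion-free: if $\alpha \in \mathcal R \setminus \{0\}$ and $\alpha v \in V_1 \subseteq W$, then $v \in W$ since $W$ is a $K$-subspace, whence $v \in W \cap \mathcal R^n = V_1$. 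A finitely generated torsion-free module over a PID is free, so $\mathcal R^n = V_1 \oplus C$ for some free complement $C$. Taking a basis of $V_1$ followed by a basis of $C$ puts $A$ in block upper triangular form with diagonal blocks $A_1$ and $A_2$ acting on $V_1$ and on $\mathcal R^n/V_1 \cong C$; since $V_1 \otimes_{\mathcal R} K = W$ and the characteristic polynomial is preserved under tensoring with $K$, we conclude $\chi_{A_1} = p_1$ and hence $\chi_{A_2} = p_2$.

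The main obstacle I anticipate is the field-level existence of an invariant subspace with a prescribed monic divisor of $\chi_A$ as its characteristic polynomial; the peeling-and-concatenation argument is the cleanest route but requires careful bookkeeping as the flag is built up in successive quotients. (Alternatively, one could work primary-component by primary-component inside the rational canonical decomposition of $K^n$, greedily selecting invariant submodules from the largest summand in each primary block to realize the desired $q$-part of $p_1$.) Once $W$ has been produced, the saturation $V_1 = W \cap \mathcal R^n$ together with the structure theorem for finitely generated modules over a PID makes the descent to $\mathcal R$ entirely routine.
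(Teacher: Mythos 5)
The paper does not prove this theorem; it is stated as a classical fact with a citation to Newman's \emph{Integral Matrices}, so there is no in-paper argument to compare against. Your proof is correct and complete, and it is essentially the classical argument found in that reference: reduce to $k=2$, produce an $A$-invariant subspace $W$ of $K^n$ with $\chi_{A|_W}=p_1$ over the fraction field $K$, and then descend by saturating, i.e.\ setting $V_1=W\cap\mathcal R^n$, observing that $\mathcal R^n/V_1$ is finitely generated and torsion-free hence free, so that $V_1$ is a free direct summand of rank $\deg p_1$. The two points that genuinely need care are both handled: the field-level existence of $W$ (your peeling of irreducible factors through successive quotients, with the observation that the unpeeled part of $p_1$ always still divides the current characteristic polynomial, together with multiplicativity of $\chi$ along short exact sequences, is sound; the primary-decomposition route you mention in parentheses works equally well), and the fact that the $K$-span of $V_1$ is all of $W$ (clearing denominators), which is what guarantees $\chi_{A_1}=p_1$ after the base change. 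The induction step via a block-diagonal conjugation $\mathrm{diag}(I,U)$ is routine and correctly stated. No gaps.
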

\begin{ex}
   Suppose $A$ is $6\times 6$
and $\chi_A(t)=(t-3)(t^2+5)(t+1)(t)(t)$.
   Then  there is some $U$ in $\text{GL}(6, \Z)$ such that
   $U^{-1}AU$ has the form
\[
\left( \begin{smallmatrix}
     3&*&*   &*&*&* \\
     0&a&b   &*&*&* \\
     0&c&d   &*&*&* \\
     0&0&0   &-1&*&* \\
     0&0&0   &0&0&* \\
     0&0&0   &0&0&0
   \end{smallmatrix}
   \right)
   \]
in which  $\begin{pmatrix} a&b\\c&d \end{pmatrix}$
   has characteristic polynomial $t^2 +5$ .
   \end{ex}
\begin{coro}[Corollary of PID Block Triangular Form]
 For $A$ square over the PID $\mathcal R$:
      $A$ is similar over $\mathcal R$ to a matrix
   with block form
$\begin{pmatrix} A' & X \\ 0 & N \end{pmatrix}$,
   where $\det (A') \neq 0$ and $N$ is upper triangular with
   zero diagonal.
\end{coro}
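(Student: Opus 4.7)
The plan is to deduce the corollary from Theorem \ref{thm:PIDblocktri} by choosing a particularly simple factorization of the characteristic polynomial and then reading off the resulting block shape.

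First, I would use that $\mathcal R[t]$ is a UFD (since $\mathcal R$ is a PID) to write
\[
\chi_A(t) \ = \ g(t)\, t^k
\]
where $g(t)\in \mathcal R[t]$ is monic, $g(0)\neq 0$, and $k\geq 0$ is the multiplicity of $0$ as a root of $\chi_A$. Then I would apply Theorem \ref{thm:PIDblocktri} to this factorization, but using $k+1$ polynomials: take $p_1 = g$ and $p_2 = p_3 = \cdots = p_{k+1} = t$. The theorem then produces $U\in \GL(n,\mathcal R)$ such that $U^{-1}AU$ is block upper triangular with diagonal blocks $A_1, A_2, \dots, A_{k+1}$, where $A_1$ has characteristic polynomial $g$ and each of $A_2, \dots, A_{k+1}$ is a $1\times 1$ block with characteristic polynomial $t$, i.e.\ equals $(0)$.

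Next, set $A' := A_1$, so that the size of $A'$ equals $\deg g$. Since the characteristic polynomial of $A'$ is $g$, we have $\det(A') = (-1)^{\deg g} g(0) \neq 0$, giving the first required condition. Grouping the last $k$ rows and columns into a single lower-right block $N$, the block-triangular structure of $U^{-1}AU$ becomes $\left(\begin{smallmatrix} A' & X \\ 0 & N \end{smallmatrix}\right)$ for some rectangular matrix $X$. Moreover, the block-triangular structure restricted to the lower-right $k\times k$ submatrix is itself upper triangular (each of the $k$ blocks on its diagonal is $1\times 1$), and each of its diagonal entries is $0$. Hence $N$ is upper triangular with zero diagonal, which is the second required condition.

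The argument is essentially immediate from the theorem, so I do not foresee a real obstacle; the only point needing a moment's care is the ordering of the $p_i$, since one has to check that Theorem \ref{thm:PIDblocktri} permits us to specify which diagonal block appears in the top-left and which blocks appear in the lower-right (a standard feature of PID block-triangular reduction and consistent with the convention used in the preceding example in the text). Once that is granted, pulling off all factors of $t$ from $\chi_A(t)$ and packaging the $k$ trailing $t$-blocks as $N$ delivers the stated form.
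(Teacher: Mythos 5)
Your argument is correct and is exactly the intended derivation: the paper gives no explicit proof, but the corollary is meant to follow from Theorem \ref{thm:PIDblocktri} precisely by writing $\chi_A(t)=g(t)\,t^k$ with $g(0)\neq 0$, taking $p_1=g$ and $p_2=\cdots=p_{k+1}=t$, and grouping the trailing $1\times 1$ zero blocks into $N$ (compare the example with $\chi_A(t)=(t-3)(t^2+5)(t+1)(t)(t)$ preceding the corollary). The only cosmetic remark is that no UFD property is needed to extract $t^k$ from a monic polynomial over $\mathcal R$ — one just peels off the vanishing low-order coefficients.
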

   As in the field case, we  call
   $A'$ {\it the nonsingular part} of $A$ ($A'$ is defined up to similarity
   over $\mathcal R$).
\begin{coro}[Nonsingularity]
For $\mathcal R$ a principal ideal domain,
   any nonnilpotent square matrix over $\mathcal R$ is
   SSE-$\mathcal R$  to its nonsingular part
   (hence, SE-$\mathcal R$ to its nonsingular part).
\end{coro}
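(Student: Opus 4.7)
The plan is to combine the preceding Corollary of PID Block Triangular Form with the Maller--Shub characterization of SSE-$\mathcal R$ (Proposition \ref{prop:MallerShub}), then peel the nilpotent block off via a sequence of zero extensions.

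First, I would invoke the Corollary of PID Block Triangular Form to produce $U \in \GL(m+n,\mathcal R)$ with
\[
U^{-1}AU \ = \ M \ := \ \begin{pmatrix} A' & X \\ 0 & N \end{pmatrix},
\]
where $A'$ is $m\times m$ with $\det(A') \neq 0$ and $N$ is $n\times n$ upper triangular with zero diagonal. Since similarity over $\mathcal R$ is one of the two moves generating SSE-$\mathcal R$, this reduces the problem to showing that $M$ is SSE-$\mathcal R$ to $A'$.

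The key observation is that because $N$ is upper triangular with zero diagonal, its last row is zero, and hence so is the last row of $M$. This makes $M$ itself a zero extension of a slightly smaller matrix of the same shape. Explicitly, partition $N$ and $X$ by splitting off their last columns, $N = \begin{pmatrix} N_1 & w \\ 0 & 0 \end{pmatrix}$ and $X = \begin{pmatrix} X' & x \end{pmatrix}$. Then
\[
M \ = \ \begin{pmatrix} B & Y \\ 0 & 0 \end{pmatrix}, \qquad B \ := \ \begin{pmatrix} A' & X' \\ 0 & N_1 \end{pmatrix}, \quad Y \ := \ \begin{pmatrix} x \\ w \end{pmatrix},
\]
which is a zero extension of $B$ in exactly the sense of Proposition \ref{prop:MallerShub}(2), so $M$ is SSE-$\mathcal R$ to $B$. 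Since $N_1$ is again upper triangular with zero diagonal, of size $n-1$, the matrix $B$ has the same structural shape as $M$, and the same peel-off can be applied again.

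Iterating $n$ times reduces the nilpotent block to empty, leaving $A'$. Chaining these zero extensions with the initial similarity yields an explicit SSE-$\mathcal R$ chain from $A$ to $A'$; the parenthetical SE-$\mathcal R$ statement then follows from Proposition \ref{easyfacts}. I see no substantial obstacle here: the only point requiring attention is verifying that each peel-off has exactly the block shape required by the Maller--Shub zero extension move, which the partition above makes manifest. The nonnilpotence of $A$ ensures $m \geq 1$, so $A'$ is a genuine nonempty matrix and we never have to contemplate a $0 \times 0$ target for the chain.
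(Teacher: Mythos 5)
Your proof is correct and is exactly the paper's argument, just written out in full: the paper's proof is the one-line remark that ``$A'$ reaches $U^{-1}AU$ by a string of zero extensions,'' and your peel-off of the last (zero) row of the upper-triangular nilpotent block is precisely that string made explicit.
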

\begin{proof}
 $A'$ reaches $U^{-1}AU$ by a string of zero
     extensions.
     \end{proof}

     The Corollary can easily fail even for a Dedekind domain,
     such as the algebraic integers in a number field \cite{BH93}.

\subsubsection{$\mathcal R$ is  $\Z$.}
\begin{exer}
\apr{1by1})
   Suppose $A$ is square over $\Z_+$
   and $\det (I-tA) = 1-nt$, where $n$ is a positive integer.
   Then $A$ is SE over $\Z_+$ to the $1\times 1$ matrix $(n)$.
   \end{exer}

               The classification of matrices over $\Z$ up to SE-$\Z$
   reduces to the classification of nonsingular matrices
   over $\Z$ up to SE-$\Z$.
   If $A,B$  are SE-$\Z$, then $A,B$  are SE-$\R$, so
   their nonsingular parts
are similar over  $\R$; in particular $A,B$ have the same
nonzero\si{nonzero spectrum} spectrum.

     It is NOT true that a square matrix over $\Z_+$ must be SE-$\Z_+$
     to a nonsingular matrix.

\begin{exer}  \apr{nonplussed})
The  primitive matrix
$A= \left(\begin{smallmatrix} 1&0&0&1 \\ 0&1&0&1 \\ 0&1&1&0 \\ 1&0&1&0
 \end{smallmatrix}\right) $  has
nonzero\si{nonzero spectrum} spectrum $(2,1)$. If $A$ is SE-$\Z_+$ to a nonsingular matrix $B$,
then $B$ must be a primitive $2\times 2$ with
nonzero\si{nonzero spectrum} spectrum $(2,1)$.
Prove that no such  $B$ exists.
\end{exer}

\begin{prop}Suppose $A$, $B$ are nonsingular matrices over
   $\Z$ with $|\det (A)|=1$. The following are equivalent.

   (1) $A,B$ are SE-$\Z$.

   (2) $A,B$ are SIM-$\Z$.
\end{prop}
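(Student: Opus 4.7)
The plan is to follow the template of the earlier field-case proposition, upgrading it using the determinant hypothesis. The direction (2) $\Rightarrow$ (1) is immediate: a similarity $B = U^{-1}AU$ is a lag 1 shift equivalence via $(R,S) = (U, U^{-1}A)$, since $RS = UU^{-1}A = A$, $SR = U^{-1}AU = B$, $AR = AU = UB = RB$, and $SA = U^{-1}A^2 = BU^{-1}A = BS$.

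For (1) $\Rightarrow$ (2), suppose $(R,S)$ is a shift equivalence of lag $\ell$ over $\Z$ from $A$ to $B$. First I would pass to $\Q$: since SE-$\Z$ implies SE-$\Q$, the previous proposition (the field case, applied over $\Q$) tells us $A$ and $B$ are nonsingular matrices of the same size $n$, and in particular $R$ and $S$ are $n\times n$.

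Next, take determinants of $RS = A^\ell$ to get $\det(R)\det(S) = \det(A)^\ell$. The hypothesis $|\det A| = 1$ forces $\det(R)\det(S) = \pm 1$ in $\Z$, so both $\det R$ and $\det S$ are units in $\Z$; that is, $R \in \GL(n,\Z)$. Then the defining relation $AR = RB$ yields $B = R^{-1}AR$, a similarity over $\Z$.

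There is essentially no obstacle here — the proof is a direct corollary of the field case plus the arithmetic fact that a determinant divisor of a $\Z$-unit is itself a $\Z$-unit. The only subtlety worth flagging is that the hypothesis $|\det A| = 1$ is needed precisely at the step where we upgrade rational invertibility of $R$ to integral invertibility; without it, $R$ would lie only in $\GL(n,\Q)$ and one would recover merely similarity over $\Q$, which is strictly weaker than SIM-$\Z$ (e.g.\ companion matrices of polynomials whose roots generate a nontrivial ideal class).
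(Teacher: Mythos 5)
Your proof is correct and follows essentially the same route as the paper's: pass to $\Q$ to get equal sizes via the field case, take determinants in $A^{\ell}=RS$ to conclude $R\in\GL(n,\Z)$, and then read off the similarity from $AR=RB$. The only difference is that you spell out the (2)$\implies$(1) direction explicitly, which the paper dismisses as clear.
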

\begin{proof} $(2) \implies (1)$ Clear.

 $(1) \implies (2)$
  An SE $(R,S)$ over $\Z$ from
   $A$ to $B$ is also an SE $(R,S)$ over the field $\Q$.
   Therefore    $A,B$ have the same size, $n\times n$.  Now
   $A^{\ell} = RS$ forces $\det A$ to divide $\det A^{\ell}$,
   so $|\det R| =1$. This implies $R\in \GL ( n, \Z)$.
   Then $AR=RB$ gives $B=R^{-1}AR$.
   \end{proof}
\begin{ex}[Nonsingular $A,B$ which are SIM-$\Q$, but not SE-$\Z$.]

   Let $A= \begin{pmatrix} 3&4\\1&1 \end{pmatrix}$
   and $B= \begin{pmatrix} 3&2\\2&1 \end{pmatrix}$. $A$ and $B$
   have the same characteristic polynomial, $p(t)= t^2 -4t-1$;
   as $p$ has
 no repeated root,
   $A$ and $B$  are similar over $\Q$.

   Now suppose $A,B$ are SE-$\Z$. Because
   $\det A = -1$, they are SIM-$\Z$: there is $R$ in
   $\text{GL}(2,\Z)$ such that $B=R^{-1}AR$. Therefore
   $(B-I)=R^{-1}(A-I)R$.
This is a contradiction, because
$B-I= \begin{pmatrix} 2&2\\2&0 \end{pmatrix}$ is zero mod 2, but
$A-I= \begin{pmatrix} 2&4\\1&0 \end{pmatrix}$ is not.
\end{ex}

What else?
Here is a quick overview.
\begin{thm}
Let $p$ be a monic polynomial in $\Z [t]$ with no zero root.
Let $\mathcal M(p)$ be the set of
matrices over $\Z$ with characteristic polynomial $p$.

If $p$ has no repeated root, then the following hold.
\begin{enumerate}
\item All matrices in $\mathcal M(p)$ are  SIM-$\Q$ (and therefore SE-$\Q$).

\item $\mathcal M(p) $ is the union of finitely many  SIM-$\Z$ classes
(hence finitely many SE-$\Z$ classes).

\item  It is can happen (depending on $p$) that in
 $\mathcal M(p)$, SIM-$\Z$  properly refines SE-$\Z$.
\end{enumerate}
If $p$ has a repeated root, then $\mathcal M(p) $
contains
infinitely many SE-$\Z$ classes, but only
finitely many SE-$\Q$ classes.
\end{thm}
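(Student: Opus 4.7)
The plan is to treat the four assertions separately, dispatching the squarefree case (1)--(3) first and then handling the repeated-root statement.

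For (1), the observation is that if $p$ is squarefree, then the minimal polynomial of any $A \in \mathcal M(p)$ is divisible by each irreducible factor of $p$ and divides $p$, so it equals $p$. The rational canonical form of $A$ is then the single companion block $C_p$, independent of $A$, so every pair in $\mathcal M(p)$ is similar over $\Q$, and SE-$\Q$ follows trivially. For (2), I would invoke the Jordan--Zassenhaus theorem (a generalization of Latimer--MacDuffee from the irreducible to the squarefree case): SIM-$\Z$ classes in $\mathcal M(p)$ correspond bijectively with isomorphism classes of full $\Z[t]/(p)$-lattices in the \'etale $\Q$-algebra $\Q[t]/(p)$, and only finitely many such classes exist. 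Coarsening to SE-$\Z$ preserves finiteness.

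For (3), the goal is to display a squarefree $p$ and a pair $A,B \in \mathcal M(p)$ which is SE-$\Z$ but not SIM-$\Z$. The natural source is an order $\Z[\alpha] = \Z[t]/(p)$ admitting lattices that are stably isomorphic but not isomorphic; from the stable isomorphism one reads off an explicit intertwining pair $(R,S)$ realising the shift equivalence equations for the corresponding matrices, while failure of SIM-$\Z$ is certified by a reduction-modulo-$n$ obstruction of the kind already used in the $(t^2 - 4t - 1)$ example in the text.

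For the repeated-root case, the plan is to produce an explicit infinite family and reduce the general case to this model. In the model $p(t) = (t-1)^2$, take
\[
A_n \ =\ \begin{pmatrix} 1 & n \\ 0 & 1 \end{pmatrix}\ , \qquad n = 0, 1, 2, \ldots\ .
\]
The cokernels $\cok(A_n - I) \cong \Z \oplus \Z/n\Z$ are pairwise non-isomorphic as abelian groups. The key step is to show that these cokernels are SE-$\Z$ invariants: a lag-$\ell$ shift equivalence $(R,S)$ from $A_n$ to $A_m$ intertwines $A-I$ with $B-I$, so induces maps $\overline R, \overline S$ between the two cokernels whose compositions are $1^\ell \cdot \mathrm{Id} = \mathrm{Id}$ on each side (because $A$ acts as $I$ on $\cok(A-I)$), forcing $\overline R, \overline S$ to be mutually inverse. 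Hence the $A_n$ fall into infinitely many SE-$\Z$ classes. For an arbitrary $p$ with a repeated irreducible factor $f$, the analogous invariant is $\cok(f(A))$ viewed as a $\Z[t]/(f)$-module, varied by the same Jordan-block construction. Finiteness of SE-$\Q$ classes in $\mathcal M(p)$ follows from the earlier field-case proposition: over $\Q$, SE reduces to SIM of the nonsingular part, and the $\Q[t]/(p)$-module structures of $\Q$-dimension $\deg p$ have only finitely many isomorphism types.

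The principal obstacle I expect is Part (3): locating an order with a genuine stable-but-not-unstable lattice isomorphism and producing an explicit matrix certificate requires careful arithmetic input. A secondary concern is verifying that $\cok(f(A))$ really descends through SE (not only through SIM) when $f$ has degree greater than one, where the clean scalar-multiplication trick above must be replaced by an argument about $t^\ell$ in the order $\Z[t]/(f)$.
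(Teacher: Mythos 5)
Parts (1) and (2) of your argument are sound: a squarefree characteristic polynomial forces the minimal polynomial to equal $p$, so the rational canonical form is the single companion block of $p$, and finiteness of the set of SIM-$\Z$ classes is indeed a Jordan--Zassenhaus statement for the order $\Z[t]/(p)$ in the \'etale algebra $\Q[t]/(p)$ (this is the route the text indicates via Proposition \ref{finiteclass} and the citation of Newman). Your repeated-root model family $\left(\begin{smallmatrix}1&n\\0&1\end{smallmatrix}\right)$ is also the one the text has in mind, and your observation that $\hat A$ acts as the identity on $\cok(A-I)$, so that a lag-$\ell$ shift equivalence induces mutually inverse maps of cokernels, is a correct and self-contained way to see SE-$\Z$ invariance; the text reaches the same conclusion even faster from the proposition that SE-$\Z$ coincides with SIM-$\Z$ for integer matrices of determinant $\pm 1$. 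The finiteness of SE-$\Q$ classes via reduction to similarity over the field is fine.

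The genuine gap is Part (3), where the concept you reach for is the wrong one. A lag-$\ell$ shift equivalence between nonsingular $A$ and $B$ is exactly a pair of $\Z[t]$-linear maps between the modules $\Z^m$ (with $t$ acting by $A$) and $\Z^n$ (with $t$ acting by $B$) whose two composites are multiplication by $t^\ell$; in other words, SE-$\Z$ is isomorphism of these modules after inverting $t$, not stable isomorphism in the sense of adding free summands. A stably-isomorphic-but-not-isomorphic pair of lattices yields no intertwining pair $(R,S)$ whatsoever, so your proposed "explicit matrix certificate" cannot be read off from it; moreover, for ideals of a Dedekind domain stable isomorphism already implies isomorphism, while SIM-$\Z\neq$ SE-$\Z$ examples exist there. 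The correct dichotomy is the one in Theorem \ref{thm:taussky}: ideal classes of $\Z[\lambda]$ versus ideal classes of $\Z[1/\lambda]$. Concretely, Theorem \ref{thm:axb} with $a=6$, $b=1$ already gives $p=(t-6)(t-1)$ squarefree with three SIM-$\Z$ classes and two SE-$\Z$ classes, which settles (3) with no further arithmetic. Finally, for general $p$ with a repeated irreducible factor $f$, the worry you flag about $t^\ell$ is real, since $t$ need not act invertibly on $\cok f(A)$; the repair is to use $G_A/f(\hat A)G_A$ (equivalently, $\cok f(A)$ localized at $t$), which is manifestly determined by the complete SE-$\Z$ invariant $(G_A,\hat A)$ -- but you then still owe the construction of matrices in $\mathcal M(p)$ on which this localized invariant takes infinitely many values.
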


\begin{ex} (Easily checked.) For $n\in \N$, the matrices
   $\begin{pmatrix} 1&n\\0&1 \end{pmatrix} $
   are similar over $\Q$, but pairwise not similar over $\Z$.
\end{ex}

Lastly, we will report on some decidablity issues for shift equivalence\si{shift equivalence}
over $\Z$.

%%% DECIDING SE AND SIM OVER Q IS A LINEAR ALGEBRA EXERCISE, MODULO
%%% HAVING A FACTORIZATION OF THE CHARACTERISTIC POLYNOMIAL INTO
%%% IRREDUCIBLES, BUT IT IS KIND OF COMPLICATED AND ELEMENTARY TO
%%% STATE. BELOW WAS A START ON THAT STATEMENT.

%%% By way of context, there is the following linear algebra
%%% exercise.
%%% \begin{exer} Suppose $A,B$ are square matrices over $\Z $
%%% with characteristic polynomial $p=\prod_i (p_i)^{n_i}$, with the $p_i$
%%% irreducible monic polynomials in $\Z [x]$ and each $n_i\geq 1$.
%%% Let $q_i = \prod_{j, j\neq i}
%%%  Let $
%%% , with roots $\lambda_1, \dots,
%%% \lambda_n$.
%%% Then $A,B$ are SIM-$\Q$ if and only if for each $\lambda_i$
%%% and each positive integer $k$,
%%% $(A-\lambda_II
%%% \end{exer}

\begin{thm}\ai{Grunewald, Fritz J.}
   Suppose $A,B$ are  square matrices over $\Z$.
   \begin{enumerate}
   \item (Grunewald \cite{grunewaldZDecide}; see also \cite{grunewaldSegalI}.)
     There is an algorithm to decide whether $A,B$ are SIM-$\Z$.
     The  general algorithm is not practical.
   \item (Kim\ai{Kim, K.H.} and Roush\ai{Roush, F.W.} \cite{S31})
     There is an algorithm to decide whether $A,B$ are SE-$\Z$.
     The general algorithm is not practical.
   \end{enumerate}
\end{thm}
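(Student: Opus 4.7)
The plan is to reduce both decision problems to effective computations in orders inside finite-dimensional semisimple $\Q$-algebras, where algorithmic algebraic number theory (computing class groups, unit groups, and isomorphism of lattices) provides the necessary machinery. Neither step gives a practical algorithm, but effective bounds are available in principle.

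For part (1), I would first reduce to the case that $A,B$ are $n\times n$ with the same characteristic polynomial; this is a necessary condition easily checked, and similarity over $\Q$ can then be decided via the rational canonical form (Smith normal form of $tI-A$ and $tI-B$ over $\Q[t]$). Assuming $A,B$ are similar over $\Q$, pick any $U_0\in\GL(n,\Q)$ with $U_0 B = A U_0$ (obtained from the rational canonical form computation). The set $\Lambda = \{U\in M_n(\Z) : UB = AU\}$ is a full-rank $\Z$-lattice, and $\Lambda \cdot U_0^{-1}$ sits inside the centralizer $C(A)\subset M_n(\Q)$ as a $\Z$-order-module. Deciding SIM-$\Z$ thus becomes: does $\Lambda$ contain an element of determinant $\pm 1$? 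Grunewald's contribution is to turn this into a finite computation: decompose $C(A)$ into its simple components using the primary decomposition of $A$, recognize the relevant lattices as ideals in orders of number fields (and matrix rings over them), and use effective bounds on class numbers and unit groups of these orders to enumerate finitely many candidate $U$ up to equivalence.

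For part (2), I would first use the reduction from Section 3 (polynomial matrices) to reformulate SE-$\Z$ as isomorphism of the associated $\Z[t]$-modules after inverting $t$, or equivalently as isomorphism of the localized modules at the nonzero spectrum. Using the nonsingular part reduction above, and the fact that SE-$\Z$ respects the characteristic polynomial $p(t)$ (which may be computed and checked first), the problem reduces to comparing two modules over the order $\Z[t]/(p(t))$ embedded in the semisimple $\Q$-algebra $\Q[t]/(p(t))$. This is again a lattice-isomorphism problem in a finite-dimensional $\Q$-algebra. Kim and Roush's argument, like Grunewald's, invokes effective finiteness results for class groups and unit groups of orders in number fields to show that one only needs to test finitely many candidate isomorphisms, each of which can be verified.

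The main obstacle in both parts is not the structural setup but the effective lattice-unit search inside a non-maximal order: one must pass between $\Lambda$ and its image in the maximal order (computable up to bounded index), control the conductor, and traverse representatives of the class group and unit group. It is here that the algorithms become impractical, since the bounds depend on invariants like discriminants and class numbers of number fields defined by the characteristic polynomial, which can be astronomical. The qualitative existence of the algorithm, however, follows cleanly from the fact that all the relevant invariants (discriminant, class number, fundamental units, conductor) of a number field given by an explicit polynomial are computable, together with the explicit finiteness bounds for lattice classes in an order.
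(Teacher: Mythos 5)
First, a point of order: the paper does not prove this theorem at all --- it is a pure citation of Grunewald (and Grunewald--Segal) and of Kim--Roush, so there is no ``paper's proof'' to compare against. Your sketch is therefore judged on its own merits as an outline of how those results go.

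The outline is right in spirit but has a genuine gap: the reduction to ``orders inside finite-dimensional \emph{semisimple} $\Q$-algebras'' only covers the case where the characteristic polynomial is squarefree. In that case your argument essentially recovers what the paper already records via Latimer--MacDuffee/Taussky-Todd and its SE-$\Z$ analogue (Theorem \ref{thm:taussky}): SIM-$\Z$ and SE-$\Z$ classes correspond to ideal classes of $\Z[\lambda]$ and $\Z[1/\lambda]$, and finiteness of class numbers plus computability of units gives decidability. But when $A$ has repeated eigenvalues, $\Q[t]/(p(t))$ and the centralizer algebra $C(A)\otimes\Q$ are \emph{not} semisimple --- they have a nontrivial nilpotent radical --- and the question ``does the intertwiner lattice $\Lambda$ contain an element of determinant $\pm 1$'' becomes an orbit problem for a general arithmetic group (with unipotent part), not a class-group-and-unit-group enumeration. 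This is exactly the hard case: the paper itself notes that when $p$ has a repeated root, $\mathcal M(p)$ contains \emph{infinitely many} SE-$\Z$ classes, so no finite list of ideal-class representatives can parametrize the answer. The actual content of Grunewald's theorem (and of the Grunewald--Segal algorithmic theory of arithmetic groups it rests on) is precisely the treatment of this non-semisimple situation, and the Kim--Roush decidability proof for SE-$\Z$ likewise cannot be reduced to lattice isomorphism in $\Z[t]/(p(t))$ when $p$ is not squarefree. Your proposal as written would decide similarity and shift equivalence only for matrices with squarefree characteristic polynomial; to claim the full theorem you must either invoke the Grunewald--Segal machinery for arithmetic-group orbit problems or supply a substitute for it.
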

\subsection{SIM-$\Z$ and SE-$\Z$:  some example classes}

The proof of the next result, from \cite{BH93},
 is an exercise.

\begin{thm} \label{thm:axb}
Suppose $a,b$ are integers and $a> |b| > 0$.
Let     $\mathcal M$ be the  set of $2\times 2$ matrices over $\Z$ with
     eigenvalues $a,b$. Then the following hold.
\begin{enumerate}
\item
   Every matrix in $\mathcal M$ is SIM-$\Z$ to a triangular matrix
   $M_x=\begin{pmatrix}a & x\\ 0 &b\end{pmatrix}$.
\item
  $M_x$ and $M_y$ are SIM-$\Z$ iff
   $x = \pm y$ mod ($a-b$).
\item     $M_x$ and $M_y$ are SE-$\Z$ iff $x\sim y$, where $\sim$
   is the equivalence relation generated by $x\sim y$ if
   $x = \pm qy$ mod ($a-b$) for a prime $q$ dividing $a$ or $b$.
\end{enumerate}
\end{thm}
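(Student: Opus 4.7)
For part (1), use that $a\neq b$ makes the $a$-eigenspace of $A$ over $\Q$ one-dimensional; clearing denominators and the gcd yields a primitive integer $a$-eigenvector, which by Bezout extends to a $\Z$-basis of $\Z^2$. In that basis $A$ appears upper triangular with diagonal $(a,b)$ (the trace $a+b$ pins down the bottom-right entry). For (2), write $U=\bigl(\begin{smallmatrix}p&q'\\r&s\end{smallmatrix}\bigr)\in\GL(2,\Z)$ and equate $M_x U=UM_y$ entrywise: the $(2,1)$ equation reduces to $r(a-b)=0$, forcing $r=0$; then $\det U=ps=\pm 1$ forces $p,s\in\{\pm 1\}$, and the $(1,2)$ equation becomes $q'(a-b)=py-xs$, which rearranges to $x\equiv\pm y\pmod{a-b}$. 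Conversely, any such congruence is realized by an explicit upper triangular $U$.

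The heart is the forward direction of (3). Given a shift equivalence $(R,S)$ of lag $\ell$ from $M_x$ to $M_y$, rerun the argument of (2) on both intertwining relations $M_x R=RM_y$ and $SM_x=M_y S$: the $(2,1)$ entries again give $r(a-b)=0=r'(a-b)$, so both $R$ and $S$ are upper triangular, say $R=\bigl(\begin{smallmatrix}p&*\\0&s\end{smallmatrix}\bigr)$ and $S=\bigl(\begin{smallmatrix}p'&*\\0&s'\end{smallmatrix}\bigr)$. The diagonal of $RS=M_x^{\ell}$ gives $pp'=a^\ell$ and $ss'=b^\ell$, so every prime factor of $p$ divides $a$ and every prime factor of $s$ divides $b$; the $(1,2)$ entry of $M_x R=RM_y$ gives $py\equiv xs\pmod{a-b}$. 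Factoring $|p|=q_1\cdots q_k$ and $|s|=r_1\cdots r_m$, each pair $(q_i u,u)$ lies in the defining generator (with the $+$ sign), so iterating gives $y\sim py$ and $x\sim sx$. Signs are handled using the auxiliary fact $u\sim -u$, which follows from the short chain $u\sim qu\sim -u$ for any prime $q\mid ab$ (first step with the $+$ sign, second with the $-$ sign). The relation $py\equiv sx\pmod{a-b}$ places $M_{py}$ and $M_{sx}$ in the same SIM-$\Z$ class by part (2), and SIM-$\Z$ is itself contained in $\sim$ (by a similar chain lemma), so transitivity closes $y\sim py\sim sx\sim x$.

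For the reverse direction, each generator step is realized by an explicit ESSE. If $x\equiv qy\pmod{a-b}$ with $q$ a prime dividing $a$, write $a=qa'$ and $qy-x=q'(a-b)$; then $R=\bigl(\begin{smallmatrix}q&q'\\0&1\end{smallmatrix}\bigr)$ and $S=\bigl(\begin{smallmatrix}a'&y-a'q'\\0&b\end{smallmatrix}\bigr)$ satisfy $RS=M_x$ and $SR=M_y$, giving a lag-$1$ shift equivalence. The case $q\mid b$ is symmetric (swap the roles of the two blocks, using $b=qb'$). The $-$ sign in $x\equiv\pm qy$ is absorbed by post-composing with the SIM-$\Z$ conjugation $\mathrm{diag}(1,-1)$ from part (2), which sends $M_y$ to $M_{-y}$.

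The main obstacle is the chain construction of the second paragraph, not the matrix bookkeeping. The defining generator relation on $\Z$ is not symmetric when $q$ and $a-b$ share a common factor (passing from $y$ to $qy$ is an immediate generator step, but the reverse need not be), so one must first establish as auxiliary lemmas that $u\sim -u$ always holds and that SIM-$\Z$ is contained in $\sim$, before the prime factorizations of $p$ and $s$ can be reassembled into a $\sim$-chain with signs tracked correctly.
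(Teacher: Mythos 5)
Your proof is correct. The paper does not actually prove this theorem — it is left as an exercise with a citation to \cite{BH93} — but your argument is exactly the intended one: the intertwining equations force $R$ and $S$ upper triangular (the $(2,1)$ entries give $r(a-b)=0$), the diagonal of $RS=M_x^{\ell}$ yields $pp'=a^{\ell}$ and $ss'=b^{\ell}$ so the diagonal entries factor into primes dividing $a$ or $b$, and the converse is realized by explicit lag-one ESSEs; this is the same style of computation the paper itself carries out in Example \ref{riedelexample} and Proposition \ref{transposeexerciseproof}, and your care with the asymmetry of the generating relation (the auxiliary facts $u\sim -u$ and $\mathrm{SIM}\text{-}\Z\subset\sim$) is exactly what is needed.
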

\begin{ex}
   Suppose $a=6,b=1$.\\
   Then $\mathcal M$
is the union of three SIM-$\Z$ classes and two SE-$\Z$ classes.
\end{ex}
\begin{ex}
Suppose $a=6, b=2$. \\
  Then $\mathcal M$
is the union of two SIM-$\Z$ classes and one SE-$\Z$ class.
\end{ex}
\begin{exer} \label{transposeexercise}  \apr{transposeexerciseproof})
Use Theorem \ref{thm:axb} to prove the following: the matrix
$\begin{pmatrix} 256 & 7\\ 0 &1 \end{pmatrix}$ is not SE-$\Z$ to its
transpose. Then show $\begin{pmatrix} 256 & 7\\ 0 &1 \end{pmatrix}$
is SE-$\Z$ to a primitive matrix, which cannot be
SE-$\Z$ to its transpose.
\end{exer}

The next theorem states a result
relating a matrix similarity problem to algebraic number theory,
and the analagous result for shift equivalence\si{shift equivalence}.
The similarity result is a special case of a theorem of
Latimer and MacDuffee \cite{latimerMacDuffee};
Olga Taussky-Todd\ai{Taussky-Todd, Olga}
provided a simple
proof in this special case, which generalizes nicely to the SE-$\Z$
situation \apr{tausskytoddcase}).
In the next theorem, for $\mathcal R=\Z[\lambda]$ or
$\mathcal R=\Z[1/\lambda]$,
$\mathcal R$-ideals $I,I'$ are equivalent if they are equivalent as
$\mathcal R$-modules,
which in this case means there is a nonzero $c$ in $\Q [\lambda ]$
such that $cI=I'$. By an ideal class of $\mathcal R$
we mean an equivalence class
of nonzero $\mathcal R$-ideals.

\begin{thm}\label{thm:taussky} Suppose $p$ is monic irreducible in $\Z [t]$,
     and $p(\lambda)=0$, where
     $0\neq \lambda \in \C$.
   Let $\mathcal M$ be the set of matrices over $\Z$ with
   characteristic polynomial $p$.
   There are bijections:
\begin{enumerate}
\item    $\mathcal M/(\text{SIM}-\Z)$  $\leftrightarrow $
   Ideal classes  of $\Z[\lambda ]$ \
\cite{latimerMacDuffee,tausskyOnLatimerTheorem}\ai{Taussky-Todd, Olga}

\item
   $\mathcal M/(\text{SE}-\Z) \ \ \leftrightarrow $
    Ideal classes of $\Z[1/\lambda ]$ \  \cite{BMT1987}.
\end{enumerate}
\end{thm}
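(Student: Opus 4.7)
The plan is to prove both bijections in parallel using the classical Latimer--MacDuffee module construction, with part (2) obtained by localizing the picture at $\lambda$. For each $A\in\mathcal M$, I would put a $\Z[t]$-module structure on $\Z^n$ by letting $t$ act as $A$; since $p$ is irreducible and $p(A)=0$, the action factors through $\Z[t]/(p)=\Z[\lambda]$, giving a $\Z[\lambda]$-module $M_A$ whose underlying $\Z$-module is $\Z^n$. Because $\lambda\neq 0$ the constant term $p(0)=\pm\det A$ is nonzero, so $\lambda$ is invertible in $\Q[\lambda]=\Q(\lambda)$; rearranging $p(\lambda)=0$ moreover shows $\lambda\in\Z[1/\lambda]$, so $\Z[\lambda]\subset\Z[1/\lambda]$ and one can form the localization $M_A[1/\lambda]:=M_A\otimes_{\Z[\lambda]}\Z[1/\lambda]$.

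For part (1), I would note that $M_A\otimes_\Z\Q$ is a $1$-dimensional vector space over the $n$-dimensional $\Q$-algebra $\Q[\lambda]$ (by a $\Q$-dimension count), so picking any nonzero element yields a $\Z[\lambda]$-embedding $M_A\hookrightarrow \Q[\lambda]$ whose image is a fractional ideal of $\Z[\lambda]$; changing the basepoint rescales by a nonzero element of $\Q[\lambda]$, so the ideal class is well defined. A similarity $B=U^{-1}AU$ with $U\in\GL(n,\Z)$ is exactly a $\Z$-basis change on $M_A$, so SIM-$\Z$ classes correspond to $\Z[\lambda]$-isomorphism classes of the $M_A$, hence to ideal classes of $\Z[\lambda]$. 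For surjectivity, any nonzero ideal $I$ of $\Z[\lambda]$ is $\Z$-free of rank $n$, and multiplication by $\lambda$ in any $\Z$-basis of $I$ gives a matrix in $\mathcal M$ realizing $I$.

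For part (2), I would show that SE-$\Z$ classes in $\mathcal M$ correspond to $\Z[1/\lambda]$-isomorphism classes of the localized modules $M_A[1/\lambda]$, which then embed as fractional ideals of $\Z[1/\lambda]$ by exactly the argument of (1). In the forward direction, a shift equivalence $(R,S)$ of lag $\ell$ yields $\Z[\lambda]$-homomorphisms $R\colon M_B\to M_A$ and $S\colon M_A\to M_B$ (from $AR=RB$ and $BS=SA$) whose composites $RS$ and $SR$ act as $\lambda^\ell$; after inverting $\lambda$ these become mutually inverse, producing $M_A[1/\lambda]\cong M_B[1/\lambda]$. In the reverse direction, given an isomorphism $\phi\colon M_A[1/\lambda]\to M_B[1/\lambda]$, I choose $\ell$ large enough that $\lambda^\ell \phi$ and $\lambda^\ell \phi^{-1}$ restrict to $\Z[\lambda]$-maps $S\colon M_A\to M_B$ and $R\colon M_B\to M_A$; reading these in the standard bases gives $\Z$-matrices satisfying $AR=RB$, $SA=BS$, $RS=A^{2\ell}$, $SR=B^{2\ell}$, i.e., a shift equivalence of lag $2\ell$.

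I expect the reverse direction of the SE step to be the main obstacle: clearing denominators uniformly, ensuring the cleared maps are genuine $\Z[\lambda]$-homomorphisms, and verifying all four shift-equivalence identities simultaneously requires careful bookkeeping. A secondary subtlety is matching module-isomorphism with the stated equivalence relation on ideals: two fractional $\Z[1/\lambda]$-ideals (or $\Z[\lambda]$-ideals) in $\Q[\lambda]$ are module-isomorphic iff they differ by a nonzero scalar in $\Q[\lambda]$, which uses that a rank-$1$ torsion-free module over a domain has only the scalar automorphisms coming from the field of fractions. With those points handled, assembling the two halves gives the claimed bijections.
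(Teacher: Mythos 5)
Your proof is correct, but it takes a genuinely different (though closely related) route from the paper's. You use the module-theoretic formulation of the Latimer--MacDuffee correspondence: regard $\Z^n$ as a $\Z[\lambda]$-module $M_A$ by letting $\lambda$ act as $A$, observe that $M_A\otimes\Q$ is one-dimensional over $\Q(\lambda)$ so $M_A$ embeds as a fractional ideal, and then track SIM-$\Z$ (resp.\ SE-$\Z$) as isomorphism of $M_A$ over $\Z[\lambda]$ (resp.\ of the localization $M_A[1/\lambda]$ over $\Z[1/\lambda]$). The paper instead follows Taussky-Todd's eigenvector formulation: fix a right eigenvector $r_A$ of $A$ for $\lambda$ with entries in $\Z[\lambda]$, and assign to $A$ the ideal class of the ideal generated by the entries of $r_A$, then redo this with $\Z[1/\lambda]$-ideals for part (2), where the lumping of SIM-$\Z$ classes into SE-$\Z$ classes becomes the passage from $\Z[\lambda]$-ideal classes to $\Z[1/\lambda]$-ideal classes. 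The two constructions are well-known duals of one another and yield the same bijections (up to a possible passage to the ``contragredient'' ideal class, depending on row/column and left/right conventions). Your module version is cleaner for part (2) in that the SE-$\Z$ relation is seen directly as isomorphism after inverting $\lambda$ --- the clearing-of-denominators argument you flag as the main obstacle does go through exactly as you sketch it (choose $\ell$ so that $\lambda^\ell\phi$ and $\lambda^\ell\phi^{-1}$ carry the integral lattices into each other, giving a lag-$2\ell$ SE), and the remark about rank-one torsion-free modules over a domain having only scalar automorphisms is exactly the equivalence of fractional ideals the paper uses. The eigenvector version is more concrete and is what is needed to interface with the paper's later use of primitive matrices, but your proof is a complete and valid alternative. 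One small omission: for surjectivity in part (2) you should note that every $\Z[1/\lambda]$-ideal class contains the localization of some $\Z[\lambda]$-ideal (clear denominators of a finite generating set), so that it is in the image of the forward map from part (1) composed with localization.
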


\begin{exer} \apr{finiteclass})
Let $\lambda$ be a nonzero algebraic integer, and
let $\mathcal O_{\lambda}$ be the ring of algebraic integers in the
number field $\Q [\lambda ]$.
It is a basic (and ``surprisingly easy to establish''
\cite[Ch.5]{marcusDaniel})
fact of algebraic number theory that
the class number of  $\mathcal O_{\lambda}$ is finite.
Use this fact to show that  $\Z[\lambda ]$ also has
finite class number.
\end{exer}

The number theory connection is useful. For example,
it follows from the exercise
that $\mathcal M$ in the theorem contains only finitely
many SIM-$Z$ classes (\cite[Theorem III.14]{Newman}).
%because
%from algebraic number theory we  know that
%$\Z[\lambda]$ (hence also $\Z[1/\lambda ]$) contains only finitely many
%ideal classes  \cite{BMT1987}.
In the case that  $\Z[\lambda ]$ is a full ring of
quadratic integers,  one can often simply look up the
class number of $\Z[\lambda ]$ in a table.

\subsection{SE-$\Z$ via direct limits}\label{sec:SEZdirectlimits}
   Let $A$ be an $n\times n$ matrix over
   $\Z$. We choose to let $A$ act on row vectors.
   From the action $A: \Z^n \to\Z^n $ one can form
   the direct limit group, on which there is a
   group automorphism $\hat A: G_A \to G_A$ induced by $A$.

   We will take a very concrete presentation, $\hat A: G_A \to G_A$,
   for the induced automorphism of the direct limit group
   \apr{directlimits}).
\subsubsection{The eventual image $V_A$}
      Define
   rational vector spaces
   $W_k = \{ vA^k: v \in \Q^n\}$ and
   $V_A = \cap_{k\in \N}  W_k$ .
   Then
\begin{align*}
& \Q^n \supset W_1 \supset W_{2}\supset W_3 \supset \cdots  \\
&\dim (W_{k+1})=\dim (W_k)\implies W_{k+1}=W_k \\
&   V_A = W_n \ .
\end{align*}
    $V_A$  is  the
   ``eventual image'' of $A$ as an endomorphism of the rational
   vector space $\Q^n$.
   $V_A$ is the largest invariant subspace of $\Q^n$ on which $A$ acts as
   a vector space isomorphism.

\subsubsection{The pair $(G_A, \hat A)$}

 $G_A$ is the subset of $V_A$ eventually mapped by $A$ into
      the integer lattice:
 $G_A := \{ v\in V_A: \exists k\in \N , vA^k \in \Z^n\}$ .
      The automorphism $\hat A$ of $G_A$ is defined by
restriction,
$\ \    \hat A : v\mapsto vA $ .

\begin{ex}  Suppose $|\det A| =1$. Then $V_A= \Q^n$, $G_A = \Z^n$.
\end{ex}
\begin{ex}
$A = (2)$.
Then
$V_A$ = $\Q$, and \\
$G_A$ is the group of dyadic rationals:
$\ G_A = \Z[1/2] = \{ n/2^{k}: n\in \Z, k \in \Z_+\}$.
\end{ex}
\begin{ex} Similarly, for a positive integer $k$, if $A=(k)$ then
$G_A = \Z [1/k]$.
For positive integers $k$ and $ m$, TFAE:

$\bullet \ $ $ \Z [1/k] = \Z [1/m]$ .

$\bullet \ $ $ k$ and $m$ are divisible by the same primes.

$\bullet \ $ $ \Z [1/k]$ and $ \Z [1/m]$ are isomorphic groups.
\end{ex}

\begin{ex}
   $A= \begin{pmatrix} 2 & 0\\ 0& 5 \end{pmatrix}$.
   $\ G_A = \{ (x,y): \ x\in \Z[1/2], \ y\in \Z[1/5] \} $.
   \end{ex}
\begin{ex}
   $B= \begin{pmatrix} 2 & 1\\ 0 &5 \end{pmatrix}$.  \\
The groups    $G_B$ and $G_A$ are not isomorphic.
($G_B$ is not the sum of a 2-divisible subgroup and a 5-divisible subgroup
 \apr{nonisomorphicpair}).) \end{ex}
\begin{de}  Two pairs $(G_A,\hat A), (G_B, \hat B)$ are isomorphic
if there is a group isomorphism $\phi : G_A \to G_B$ such that
$ \phi \hat B (x) =  \hat A \phi (x)$.
(In other words, $\hat A$ and $\hat B$ are isomorphic, in the
category of group automorphisms; or, equivalently, in the
category of group endomorphisms.)
\end{de}
\begin{prop}\label{prop:sezdirectlimit}  Let $A,B$ be square matrices over $\Z$.
   The following are equivalent \apr{seandpairiso}).
\begin{enumerate}
\item  $A$ and $B$ are SE-$\Z$.
\item  There is an isomorphism of direct limit pairs $(G_A, \hat A)$
   and $(G_B, \hat B)$.
   \end{enumerate}
\end{prop}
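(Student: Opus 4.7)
The plan is to prove each implication by an explicit construction relating matrix equations to maps on direct limits, with most of the work in the reverse direction $(2) \implies (1)$ done after reducing to the case where $A$ and $B$ are nonsingular integer matrices.

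For $(1) \implies (2)$: suppose $(R,S)$ is a shift equivalence of lag $\ell$ from $A$ to $B$, so $AR = RB$, $SA = BS$, $RS = A^{\ell}$, $SR = B^{\ell}$. Viewing $R$ as the $\Q$-linear map $v \mapsto vR$, the intertwining $A^jR = RB^j$ (by induction on $j$) shows $R$ sends $V_A$ into $V_B$: if $v = wA^k \in V_A$, then $vR = wRB^k$ lies in the image of $B^k$ for every $k$. Similarly $R$ sends $G_A$ into $G_B$: if $vA^j \in \Z^n$, then $(vR)B^j = vA^jR \in \Z^m$. Let $\phi: G_A \to G_B$ and $\psi: G_B \to G_A$ denote the restrictions of $R$ and $S$; both are group homomorphisms commuting with the hat operators, since $\phi(\hat A v) = vAR = vRB = \hat B\phi(v)$. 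The equations $RS = A^\ell$ and $SR = B^\ell$ translate to $\psi\phi = \hat A^\ell$ and $\phi\psi = \hat B^\ell$; these are automorphisms, so $\phi$ is an isomorphism of pairs.

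For $(2) \implies (1)$, first reduce to the case of nonsingular $A, B$. By Theorem \ref{thm:PIDblocktri}, $A$ is similar over $\Z$ to a block triangular matrix whose nonsingular part is $A'$ and whose other diagonal block $N$ is nilpotent; by Theorem \ref{thm:endorelations}, $A$ is then SE-$\Z$ to $A'$. A direct inspection of this block form (using $N^k = 0$ for large $k$) shows that passing to the nonsingular part does not change the isomorphism class of $(G_A, \hat A)$, and similarly for $B$. So we may assume $A$ is a nonsingular $n \times n$ matrix and $B$ is a nonsingular $m \times m$ matrix, in which case $V_A = \Q^n$, $G_A = \bigcup_k \Z^n A^{-k}$, and $\Z^n \subset G_A$.

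Now, given an isomorphism $\phi: G_A \to G_B$ with $\phi\hat A = \hat B\phi$, clear denominators. Choose $k \geq 0$ so that $\phi(e_i)B^k \in \Z^m$ for every standard basis vector $e_i$ of $\Z^n$, and let $R$ be the $n \times m$ integer matrix whose $i$th row is $\phi(e_i)B^k$; then $vR = \phi(v)B^k$ for every $v \in \Z^n$. The intertwining $\phi\hat A = \hat B\phi$ gives $vAR = \phi(vA)B^k = \phi(v)B^{k+1} = vRB$, so $AR = RB$. Symmetrically, choose $l \geq 0$ and an $m \times n$ integer matrix $S$ with $wS = \phi^{-1}(w)A^l$ for $w \in \Z^m$, giving $BS = SA$. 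Setting $\ell = k + l$: for $v \in \Z^n$, $vRS = \phi^{-1}(\phi(v)B^k)A^l = vA^k A^l = vA^\ell$, and for $w \in \Z^m$, $wSR = \phi(\phi^{-1}(w)A^l)B^k = wB^l B^k = wB^\ell$ (using $\phi^{-1}\hat B = \hat A \phi^{-1}$). Hence $RS = A^\ell$ and $SR = B^\ell$, and $(R, S)$ is an SE-$\Z$ of lag $\ell$ from $A$ to $B$.

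The main obstacle is the reduction to the nonsingular case: once there, $R$ and $S$ are essentially forced by $\phi$ and $\phi^{-1}$, and the SE equations drop out of the intertwining relations. The reduction itself hinges on Theorem \ref{thm:endorelations} (SE absorbs nilpotent extensions) together with the elementary but essential observation that the pair $(G_A, \hat A)$ sees only the nonsingular part of $A$.
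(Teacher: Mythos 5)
Your proof is correct. It realizes the same underlying idea as the paper's argument --- an SE pair $(R,S)$ induces mutually inverse (up to powers of $\hat A$, $\hat B$) intertwining maps, and conversely one recovers $R$ and $S$ from $\phi$ and $\phi^{-1}$ by evaluating on standard generators and clearing denominators --- but you execute it in the concrete eventual-image model, whereas the paper deliberately switches to the abstract direct limit $\varinjlim$ of Remark \ref{directlimits}, remarking that ``the general proof is easier.'' The difference matters exactly where you felt it: in the abstract model the generators $[(e_i,0)]$ always lie in $G_A$, so no reduction is needed; in the concrete model $e_i$ need not lie in $V_A$, which forces your preliminary reduction to nonsingular $A,B$ via Theorem \ref{thm:PIDblocktri} and Theorem \ref{thm:endorelations}. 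That reduction is sound (and the invariance of $(G_A,\hat A)$ under passing to the nonsingular part follows most cleanly from the already-proved direction $(1)\implies(2)$ applied to the nilpotent extension, rather than from direct inspection), but note two small loose ends: the nilpotent case has no nonsingular part and must be disposed of separately (there $G_A=0$ and everything is trivial), and your lag $\ell=k+l$ can be $0$ when $\phi$ happens to preserve the integer lattices, in which case one replaces $(R,S)$ by $(AR,S)$ to get lag $1$. Neither affects correctness; your route simply trades the abstraction of the general direct limit for an extra linear-algebra reduction.
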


There is a natural way to make
$G_A$ above an ordered group
(in an important class of ordered groups,
the {\it dimension groups}\si{dimension group} \apr{dimensiongroupsremark})).
Then,
analogous to Proposotion \ref{prop:sezdirectlimit},
there an ordered group characterization
of SE-$\Z_+$ \apr{SEZ+andgroupsremark}).

\begin{ex}
      Let $A = (2)$ and $B=\begin{pmatrix} 1&1 \\ 1&1 \end{pmatrix}$. \\
These matrices are  SE-$\Z$ (and even ESSE-$\Z_+$).
\begin{itemize}
\item
$V_A$ = $\Q$ and
$G_A= \Z[1/2]$.
\item
$V_B = \{ (x,x): x\in \Q\}$, the eigenline for eigenvalue 2, and
$G_B  = \{ (x,x): x\in \Z[1/2] \}$.
\item
$\phi \colon x\mapsto (x,x)$ defines a group isomorphism $G_A \to G_B$
such that $\phi\hat B  (x) =  \hat A \phi(x)$ .
\end{itemize}
\end{ex}

\subsection{SE-$\Z$ via polynomials}

   It will be  important for us to put everything we've done
with shift equivalence   into a polynomial setting.
   (To our knowledge, the polynomial-shift equivalece
   connection was first explicitly pointed out
by Wagoner  \apr{WagonerSEPoly}).)
We use  $\Z[t]$, the ring of polynomials in one variable
   with integer coefficients.
   \\ \\
   Let $A$ be an $n\times n$ matrix over $\Z$. Recall
\begin{itemize}
\item    $V_A = \cap_{k\in \N}W_k = \cap_{k\in \N}\{ vA^k: v \in \Q^n\}$,
\item
   $G_A := \{ v\in V_A: \exists k\in \N , vA^k \in \Z^n\}$,
\item
   $\hat A$ is the  automorphism of $G_A$ given by
   $\hat A: x\mapsto xA$.
   \end{itemize}
We regard the direct limit group $G_A$  as a $\Z [t] $-module,
by letting $t$ act by $(\hat A)^{-1}$. (This
choice of $t$ action will match $\cok (I-tA)$ below.)
Isomorphism of the pairs $(G_A,\hat A), (G_B, \hat B)$ is
equivalent to isomorphism of
$G_A, G_B$ as $\Z [t]$-modules.
So, we sometimes simply refer to a
pair $(G_A, \hat A)$ as a $\Z[t]$-module.
To summarize, we have the following.
\begin{prop}
Suppose $A,B$ are square matrices over $\Z$. The
following are equivalent.
\begin{enumerate}
\item  $A$, $B$ are SE over $\Z$.
\item  $(G_A,\hat A)$ and $ (G_B,\hat B)$ are isomorphic $\Z [t]$-modules.
\end{enumerate}
\end{prop}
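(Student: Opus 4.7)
The plan is to invoke Proposition \ref{prop:sezdirectlimit} to reduce the claim to a purely formal translation between two notions of isomorphism. That proposition already gives the equivalence: $A$ and $B$ are SE-$\Z$ if and only if the pairs $(G_A,\hat A)$ and $(G_B,\hat B)$ are isomorphic as group automorphisms (i.e., there is a group isomorphism $\phi\colon G_A \to G_B$ with $\hat B\phi = \phi\hat A$). So all that remains is to verify that such a pair isomorphism is the same thing as a $\Z[t]$-module isomorphism for the $t$-action specified, namely $t$ acting by $\hat A^{-1}$ on $G_A$ and $\hat B^{-1}$ on $G_B$.

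First I would note that the $t$-action is legitimate: by construction $\hat A$ is an automorphism of $G_A$ (and similarly for $B$), so $\hat A^{-1}$ makes sense and makes $G_A$ a $\Z[t]$-module. Next I would unpack the two conditions on a group homomorphism $\phi\colon G_A \to G_B$. Being a $\Z[t]$-module map means $\phi(t\cdot x) = t\cdot \phi(x)$ for all $x$, which by the chosen actions is $\phi(\hat A^{-1}x) = \hat B^{-1}\phi(x)$, i.e.\ $\phi\hat A^{-1} = \hat B^{-1}\phi$. Composing with $\hat B$ on the left and $\hat A$ on the right immediately turns this into $\hat B\phi = \phi\hat A$, and the implication reverses. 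Since $\Z[t]$ is generated as a ring by $\Z$ together with $t$, commuting with the $t$-action is all that is needed beyond being a group homomorphism; so pair isomorphism and $\Z[t]$-module isomorphism are the same relation on group isomorphisms $\phi\colon G_A \to G_B$.

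The main obstacle, as far as there is one, is only a bookkeeping point: one must be sure $t$ acts by $\hat A^{-1}$ and not $\hat A$, because $\hat A$ itself need not be surjective when viewed on an arbitrary submodule, whereas $\hat A^{-1}$ is well-defined on the eventual image $V_A$. Once that choice is fixed consistently on both sides, the argument above is a one-line definitional check. Combining this equivalence with Proposition \ref{prop:sezdirectlimit} yields the proposition.
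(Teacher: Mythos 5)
Your proof is correct and matches the paper's approach exactly: the paper also presents this proposition as a direct summary of Proposition \ref{prop:sezdirectlimit} combined with the observation that isomorphism of the pairs $(G_A,\hat A)$, $(G_B,\hat B)$ is by definition the same as $\Z[t]$-module isomorphism once $t$ is declared to act by $\hat A^{-1}$ and $\hat B^{-1}$. Your explicit check that intertwining $\hat A$ with $\hat B$ is equivalent to intertwining their inverses is the right (and only) point needing verification.
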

   Next we get another presentation of these $\Z[t]$-modules.

\subsection{Cokernel (I-tA), a $\Z [t]$-module}

   Given $A$ $n\times n$  over $\Z$,
   let $I$ be the $n\times n$ identity matrix.
   View $\Z [t]^n $ as a  $\Z [t]$-module: for $v$
   in $\Z [t]^n $ and $c\in \Z [t]$,
   the action  of $c$ is to send $v$ to $cv$, where
   $cv= c(v_1, \dots , v_n)=(cv_1, \dots , cv_n)$.
      The map
   $(I-tA): \mathcal R^n \to \mathcal R^n$, by
   $\ v\mapsto v(I-tA)\ $,
 is a $\Z [t]$-module homomorphism,
   as $(cv)(I-tA) = c(v(I-tA))$.

Now define
$   \text{cokernel}(I-tA) := \Z [t]^n / \text{Image} (I-tA) $,
where
$\text{Image} (I-tA) =
\{v(I-tA)\in \Z [t]^n:
   v\in \Z [t]^n\}  $ .
   An element of $\text{cokernel}(I-tA)$ is a coset,
   $v+ \text{Image} (I-tA)$, denoted $[v]$.
   $   \text{Cokernel}(I-tA)$ is a  $\Z [t]$-module, with
$   c: [v]  \mapsto [cv]$ .

   NOTE: we use row vectors to define the module.

\begin{prop}\label{cokanddirlimiso} Let $A$ be a square matrix over $\Z$.
      The $\Z [t]$-modules $\text{cok}(I-tA)$ and
$(G_A, \hat A)$ are isomorphic.
\end{prop}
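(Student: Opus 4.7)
The plan is to write down an explicit $\Z[t]$-module homomorphism $\overline{\phi}\colon \cok(I-tA) \to G_A$ and verify it is a bijection. The key ingredient is the Fitting decomposition of $\Q^n$ as a $\Q[A]$-module: $\Q^n = V_A \oplus N_A$, where $N_A = \bigcup_k \ker(A^k)$ is the eventual kernel and $V_A$ is the eventual image. The associated projection $\pi\colon \Q^n \to V_A$ commutes with $A$, and $A$ restricts to a $\Q$-automorphism of $V_A$ whose inverse I will write as $A^{-1}$. For $v \in \Z^n$, using $vA^n \in V_A$ (since $V_A = W_n$) one has $\pi(v)A^n = \pi(vA^n) = vA^n \in \Z^n$, so $\pi(\Z^n) \subset G_A$. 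Define $\phi\colon \Z[t]^n \to G_A$ by $\phi(\sum_i v_i t^i) = \sum_i \pi(v_i) A^{-i}$.

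Two routine checks show that $\phi$ factors through the cokernel. First, $\phi$ is $\Z[t]$-linear: the $t$-action on $G_A$ is $\hat{A}^{-1}$, and the substitution $v(t) \mapsto tv(t)$ translates through $\phi$ into the shift $A^{-i} \mapsto A^{-i-1}$, exactly as needed. Second, $\phi$ vanishes on the image of $I - tA$: for any $w \in \Z[t]^n$, writing $w = \sum w_i t^i$, one computes $\phi(wtA) = \sum \pi(w_i A) A^{-(i+1)} = \sum \pi(w_i) A \cdot A^{-(i+1)} = \phi(w)$, hence $\phi(w(I-tA)) = \phi(w) - \phi(wtA) = 0$. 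This produces the induced $\Z[t]$-module map $\overline{\phi}\colon \cok(I-tA) \to G_A$.

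For surjectivity, any $g \in G_A$ equals $wA^{-k}$ with $w := gA^k \in V_A \cap \Z^n$, and $\overline{\phi}([wt^k]) = \pi(w)A^{-k} = wA^{-k} = g$. For injectivity I would first establish a normal form. The defining cokernel relation $v \equiv vtA$ yields $[v] = t[vA]$, which iterates to $[v] = t^k[vA^k]$; applied term by term to $v(t) = \sum_{i=0}^{m} v_i t^i$, this collapses the class to $[v(t)] = t^m \bigl[\sum_i v_i A^{m-i}\bigr]$, so every class in $\cok(I-tA)$ has the form $t^m[w]$ with $w \in \Z^n$. Now if $\overline{\phi}(t^m[w]) = \pi(w)A^{-m} = 0$, then $\pi(w) = 0$, i.e., $w \in N_A \cap \Z^n$, so $wA^k = 0$ in $\Z^n$ for some $k$, and the normal form gives $[w] = t^k[wA^k] = 0$, whence $t^m[w] = 0$.

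The only substantive obstacle is identifying the Fitting projection $\pi$ as the correct tool for handling the nilpotent part of $A$: once $\pi$ is in hand, the cokernel relation $[v] = t[vA]$ automatically kills everything supported on $N_A$, matching the fact that $G_A \subset V_A$ is torsion-free and ignores $N_A$. The remainder of the argument — $\Z[t]$-linearity, the annihilation of $\image(I-tA)$, and the two directions of the bijection — is straightforward bookkeeping.
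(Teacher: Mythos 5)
Your proof is correct, and it is complete where the paper's is only a sketch. The paper defines the map in the opposite direction, $\phi\colon G_A \to \cok(I-tA)$, $x \mapsto [x(tA)^k]$ with $k$ large enough that $xA^k \in \Z^n$, and then simply instructs the reader to ``check that this $\phi$ is a well-defined isomorphism of $\Z[t]$-modules.'' You instead construct what is in effect the inverse of that map, starting from $\Z[t]^n$ and descending to $G_A$, and the key extra ingredient you import is the Fitting decomposition $\Q^n = V_A \oplus N_A$ with its projection $\pi$. The trade-off is instructive: the paper's direction needs no projection because $G_A$ already lives inside $V_A$, but it leaves hidden the issues of independence of $k$ and of surjectivity (which ultimately requires the same normal form $[v(t)] = t^m[w]$ and the same observation that the nilpotent part dies in the cokernel via $[w] = t^k[wA^k]$). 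Your direction makes the well-definedness automatic and isolates the role of the nilpotent part explicitly: $\pi$ kills $N_A$ on the module side exactly as the relation $[v]=t[vA]$ kills it on the cokernel side. Both routes verify the same isomorphism, and your normal-form argument for injectivity is exactly the computation one would need to finish the paper's sketch in the other direction.
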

\begin{proof}  Define
\begin{align*}
  \phi : G_A & \to \text{cokernel}(I-tA) \\
  x &\mapsto [x(tA)^k]
\end{align*}
where $k$ (dependng on $x$) is any nonnegative integer large enough that
$xA^k\in \Z^n$.  For a proof, check that this $\phi$ is a
well-defined isomorphism of $\Z[t]$-modules.
\end{proof}
Let us see how this works out in a concrete example.
\begin{ex}
   $A=(2)$. Here $G_A = \Z[1/2]
=   \cup_{k\geq 0}\, (\frac 12)^k\Z \ $  and
   $\ \Z[t] = \cup_{k\geq 0}\,  t^k \Z$ .
 The isomorphism
   $\ \phi :G_A \to \text{cokernel}(I-tA)\ $ is defined by
   \begin{align*}
  \phi : \Z[1/2] & \to \Z[t]/(1-2t)\Z[t] \   \\
 (1/2)^kn  & \mapsto [t^kn]\ , \qquad \qquad \ \
  \text{for }n \text{ in }\Z, \ k \in \Z_+ \ .
    \end{align*}
   The isomorphism $\phi$  takes
   $(1/2)^k \Z  $ to $[t^k\Z ]$.  The cokernel
   relation mimics the $G_A$
   relation $(1/2)^kn = (1/2)^{k+1}(2n)$.
   In more detail, to check
   that $\phi$  in this example is a $\Z[t]$-module isomorphism,
   check the following (some details are provided).
   \begin{itemize}
   \item
     $\phi$ is well defined. \\ Because
 $[x] \in \text{cokernel}(1-2t)$, we have $[x] = [2tx]$, so
\begin{align*}
 (1/2)^k n \ &\ \mapsto [t^k n]  \\
      (1/2)^{k+1}(2n)\  &\  \mapsto [t^{k+1} (2n)]
     = [(t^kn)(2t)] = [t^kn] \ .
\end{align*}
   \item $\phi$ is a group homomorphism.
   \item $\phi$ is a $\Z[t]$-module homomorphism :
\begin{align*}
t\phi ((1/2)^kn) \ &=\  t[t^kn] = [t^{k+1}n] \ ,  \\
     \phi (t((1/2)^kn))\ & =\ \phi (\hat A^{-1} ((1/2)^kn)
     = \phi ( (1/2)((1/2)^kn)) = [ t^{k+1}n] \  .
\end{align*}
   \item $\phi$ is surjective.
   \item $\phi$ is injective.
     \\
     Given $\phi ((1/2)^kn) = [t^kn]=[0]$, there exists $p$ in $\Z[t]$
     such that $t^kn = (1-2t)p$. This forces $n=0$. (Otherwise  $p\neq 0$,
and then     $(1-t)p=p-tp$
 with nonzero coefficients at different powers of $t$, contradicting
$t^kn = (1-2t)p$.)
   \end{itemize}
   \end{ex}

\begin{coro}  For square matrices $A,B$ over $\Z$, the following are equivalent.

      (1) The matrices $A,B$ are SE-$\Z$.

      (2)       $\text{cok}(I-tA), \text{cok}(I-tB)$
      are isomorphic $\Z [t]$ modules.
\end{coro}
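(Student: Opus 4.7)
The plan is to obtain the corollary as an immediate consequence of two propositions established just above it in the text. Specifically, the previous proposition asserts that $A,B$ are SE-$\Z$ if and only if the direct limit pairs $(G_A,\hat A)$ and $(G_B,\hat B)$ are isomorphic as $\Z[t]$-modules (with $t$ acting as $(\hat A)^{-1}$, respectively $(\hat B)^{-1}$). The second ingredient is the proposition identifying $\text{cok}(I-tA)$ with $(G_A,\hat A)$ as $\Z[t]$-modules, via the map $x\mapsto [x(tA)^k]$ for $k$ large enough that $xA^k\in\Z^n$.

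My proof is then just a two-line chain. First I would note that by the cokernel identification proposition applied to both $A$ and $B$, we have $\Z[t]$-module isomorphisms $\text{cok}(I-tA)\cong (G_A,\hat A)$ and $\text{cok}(I-tB)\cong (G_B,\hat B)$. Then I would observe that these isomorphisms imply $(G_A,\hat A)\cong (G_B,\hat B)$ as $\Z[t]$-modules if and only if $\text{cok}(I-tA)\cong \text{cok}(I-tB)$ as $\Z[t]$-modules. Combining this with the SE-$\Z$ characterization via direct limit pairs closes the loop: each condition is equivalent to isomorphism of the direct limit pairs.

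There is no substantial obstacle to overcome here, as both required ingredients are already proven. The only thing one might want to remark on for clarity is that the convention of $t$ acting by the inverse automorphism on $G_A$ is precisely what makes the map $x\mapsto [x(tA)^k]$ intertwine the two module structures, so that \textit{isomorphism of $\Z[t]$-modules} on the two sides is really the same notion. I would include a brief sentence confirming this compatibility, but would not redo the verification, since the preceding proposition already handles it.
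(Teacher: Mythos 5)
Your proposal is correct and is exactly the paper's (implicit) argument: the corollary is stated immediately after the proposition identifying $\text{cok}(I-tA)$ with $(G_A,\hat A)$ as $\Z[t]$-modules, and is obtained by chaining that identification with the earlier characterization of SE-$\Z$ as isomorphism of the pairs $(G_A,\hat A)$, $(G_B,\hat B)$. Nothing further is needed.
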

\begin{rem} \label{LaurentRemark}   Consider now  $\Z[t,t^{-1}]$,
the ring of Laurent polynomials
in one variable.  Given the $\Z[t]$-module $G_A$, with $t$ acting by
$\hat A^{-1}$, there is a unique way to extend the $\Z[t]$-module action
on $G_A$ to a
$\Z[t,t^{-1}]$-module action ($t^{-1}$ must act by $\hat A$).
A map $\phi: G_A\to G_B$ is a $\Z[t]$-module isomorphism if and only if
it is a $\Z[t,t^{-1}]$-module isomorphism.

Consequently, SE-$\Z$ can be (and has been)  characterized using
$\Z[t,t^{-1}]$-modules above in place of the $\Z[t]$-modules.
%Often this Laurent polynomial module characterization is used, rather
%than the polynomial module characterization.
\end{rem}

\subsection{Other rings for other systems}
\label{subsec:otherrings}

   We've looked at SFTs presented by matrices over $\Z_+$,
   and considered algebraic invariants in terms of these
   matrices (e.g.SE-$\Z$, SSE-$\Z$).
   There are cases  (Ap. \ref{othercases}, \ref{zgsse})
of SFTs with additional structure, or
   SFT-related systems, for which there is very much
   the same kind of theory, but with $\Z$ replaced by
   an integral group ring $\Z G$, and $\Z_+$
   replaced by $\Z_+G$. We will say  a
   little about one case, to indicate the pattern,
   and help motivate our interest in SSE over more
   general rings.

Let $G$ be a  finite group, and let $\Z_+G
 = \{ \sum_{g\in G} n_g g: n_g \in \Z_+ \}$,
 the
``positive'' semiring in $\Z G$.
By a  $G$-SFT we mean  an SFT together with a
free, continuous shift-commuting $G$-action.
A square matrix over $\Z_+G$ can be used to define an
SFT $T_A$ with such a  $G$-action.
Two $G$-SFTs  are isomorphic if there
is a topological conjugacy between them intertwining
the $G$-actions.
Every $G$-SFT is isomorphic to some $G$-SFT $T_A$.
%(\red{ see refs .... }) for this development, and even for
%complete definitions.
%Although we will need even give complete definitions,

\begin{rem} \label{correspremark}
We list below some correspondences
\apr{zgsse}).
\begin{enumerate}
\item \label{corresp1}SSE-$\Z_+G$ of matrices is equivalent
 to  conjugacy of their $G$-SFTs.

\item \label{corresp2}If $n$ is a positive integer, then
$(T_A)^n$ and $T_{A^n}$ are conjugate $G$-SFTs.

\item \label{corresp3} SE-$\Z_+G$ of matrices is equivalent to eventual conjugacy
of their $G$-SFTs.
\cite[Prop. B.11]{BoSc2}.

\item \label{corresp4} If $G$ is abelian,
    then   the polynomial $\det (I-tA)$ encodes the periodic data\si{periodic data}.

\item \label{corresp5} If $A$ is a square nondegenerate\si{nondegenerate} matrix over $\Z_+G$,
then the SFT $T_A$ is mixing if and only if $A$ is $G$-primitive
\cite[Prop. B.8]{BoSc2}.

\item \label{corresp6} $G$-primitive matrices are  SE-$\Z G$
if and only if they are  SE-$\Z_+ G$ \cite[Prop. B.12]{BoSc2}.
\end{enumerate}
\end{rem}

We add  comments
for some items in  Remark \ref{correspremark}.

\eqref{corresp2} The $G$-action for $(T_A)^n$  above is the
$G$-action given for $T_A$.

\eqref{corresp4}  The determinant is defined for commutative rings, and $\Z G$
is commutative iff the group $G$ is abelian.
Above, the polynomial $\det (I-tA)$ has coefficients in the ring
$\Z G$ .
For abelian $G$,
by definition  two $G$-SFTs have the same ``periodic data\si{periodic data}''
if there is a shift-commuting -- not necessarily continuous --
bijection between their periodic points which respects the $G$-action.

% (\ref{corresp2}, \ref{corresp4}) These items  are not proved in
% \cite{BS05}, but they should not be difficult to verify following
% the exposition of \cite{BS05}.

(\ref{corresp5}, \ref{corresp6})  By definition, a
 $G$-primitive matrix is a square matrix $A$
   over $\Z_+G$ such that for some positive integer $k$,
   every entry of $A^k$ has the form $\sum_{g\in G} n_g g$ with
   every $n_g$ a positive integer.

We note one feature of the $\Z$ situation which does NOT
translate to $\ZG$.
Recall, $\text{SE}-\Z \implies \text{SSE}-\Z$.
 In contrast, for many $G$,
the relationship of SE-$\Z G$  and
SSE-$\Z G$ is  highly nontrivial,
as we will see.

\subsection{The module-theoretic formulation of
  SE over a ring}\label{SEmoduletheory}

It is basically an observation that  arguments for SE-$\Z$
adapt to prove the  statements collected below in
Theorem \ref{RmodulestructureforSE}.
We will  outline how this goes.
We spell out  a few  details  in the appendix
 \apr{SEmoduletheorydetails})

Let $\mathcal R$ be a   (not necessarily commutative)
ring. By ``module'', we will mean left module.
For example,
$\mathcal R^n$ is an  $\mathcal R$-module,
with an element $r$ acting from the left by $v\mapsto rv$.

Let $A$ be an $n\times n$ matrix over $\mathcal R$.
The rule $v\mapsto vA$ 
 defines a map
$\mathcal R^n \to \mathcal R^n $. 
This is an $\mathcal R$-module endomorphism,
because $(rv)A = r(vA)$.
Considering $\mathcal R^n$
as an additive group,
let
$G$ be the direct limit group
defined as in
\apr{directlimits})
 by
 the action of $A$. We will
let
$\widehat A$ denote the group automorphism of $G$
defined, in the notation of \apr{directlimits}),
by $[(v,m)]\mapsto [(vA,m)]$.

Because $A$ is an $\mathcal R$-module endomorphism,
there is an induced $\mathcal R$-module structure on
the direct limit group
($r[v,m] = [(rv,m)]$), 
with respect to which
$\widehat A$ is an $\mathcal R$-module automorphism.
$G_A$ becomes an $\mathcal R[t,t^{-1}]$-module
  by having $t^{-1}$ act by $\widehat A$ and
 $t$ act by $\widehat A^{-1}$.
Call this the
direct limit  $\mathcal R[t,t^{-1}]$-module of $A$.
By restriction of action, it becomes the
direct limit  $\mathcal R[t]$-module of $A$.

The $n\times n$ matrix $I-tA$ acts on
the $\mathcal R[t]$-module $(\mathcal R[t])^n$ by
 $v\mapsto v(I-tA)$. The cokernel of this map,
$\cok (I-tA)$, is an $\mathcal R[t]$-module. The action of $t$
on $\cok (I-tA)$ has an inverse ($[v] \mapsto [vA]$), so
we may also consider $\cok (I-tA)$ as an $\mathcal R[t,t^{-1}]$-module.

\begin{thm} \label{RmodulestructureforSE}
  Let $\mathcal R$ be a ring,
  and $A$ a square matrix over $\mathcal R$. Then
  $\cok(I-tA)$ and the direct limit  module of $A$ are
  isomorphic, as $\mathcal R[t]$-modules and as
  $\mathcal R[t,t^{-1}]$-modules. For square matrices $A,B$
   over
  $\mathcal R$.
  The following are equivalent.
  \begin{enumerate}
  \item
    $A$ and $B$ are SE-$\mathcal R$.
  \item
    The direct limit  $\mathcal R[t]$-modules of $A$ and $B$
    are isomorphic.
  \item The $R[t]$-modules
    $\cok (I-tA)$ and $\cok(I-tB)$ are isomorphic.
  \item
    The direct limit  $\mathcal R[t,t^{-1}]$-modules of $A$ and $B$
    are isomorphic.
  \item
    The $R[t,t^{-1}]$-modules
    $\cok (I-tA)$ and $\cok(I-tB)$ are isomorphic.
      \end{enumerate}
  \end{thm}

\subsection{Appendix 2} \label{a2}

This subsection contains various remarks, proofs and comments referenced
in earlier parts of Section \ref{sec:sesse}.

\begin{prop}  \label{seeventual}
  For square matrices $A,B$ over $\Z_+$, The following are equivalent.
  \begin{enumerate}
\item $A$ and $B$ are SE-$\Z_+$
\item
  $A^k$ and $B^k$ are SSE-$\Z_+$, for all but finitely many $k$.
  \\
  (So, the SFTs defined by $A$ and $B$ are eventually conjugate\si{eventually conjugate}.)
\item   $A^k$ and $B^k$ are SE-$\Z_+$, for all but finitely many $k$.
\end{enumerate}
\end{prop}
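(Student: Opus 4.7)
The plan is to establish the cycle $(1)\Rightarrow(2)\Rightarrow(3)\Rightarrow(1)$. The first implication is a direct matrix construction, the second is immediate, and the third is the substantial content.

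For $(1)\Rightarrow(2)$: given a lag-$\ell$ SE-$\Z_+$ pair $(R,S)$ between $A$ and $B$ (so $A^\ell=RS$, $B^\ell=SR$, $AR=RB$, $SA=BS$), I set $X:=A^{k-\ell}R$ and $Y:=S$ for any $k\geq\ell$. Both $X,Y$ lie in $\Z_+$, and
\[
XY = A^{k-\ell}\cdot RS = A^k,\qquad YX = S\cdot A^{k-\ell}R = B^{k-\ell}\cdot SR = B^k,
\]
using $SA^j=B^jS$ (inductively from $SA=BS$). This is an ESSE-$\Z_+$, hence SSE-$\Z_+$, of $A^k$ and $B^k$ for every $k\geq\ell$.

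For $(2)\Rightarrow(3)$: immediate from Proposition \ref{easyfacts}, since SSE-$\mathcal S$ implies SE-$\mathcal S$.

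For $(3)\Rightarrow(1)$: my plan is to invoke the characterization of SE-$\Z_+$ via the Krieger dimension triple $(G_A,G_A^+,\hat A)$. SE-$\Z_+$ of $A^k$ and $B^k$ translates into an ordered-group isomorphism $\phi_k:G_A\to G_B$ with $\phi_k\hat A^k=\hat B^k\phi_k$. The elementary but crucial observation is that if a single $\phi$ intertwines \emph{both} $(\hat A^k,\hat B^k)$ and $(\hat A^{k+1},\hat B^{k+1})$, then $\phi$ intertwines $(\hat A,\hat B)$: one computes
\[
(\phi\hat A-\hat B\phi)\hat A^k = \phi\hat A^{k+1}-\hat B\phi\hat A^k = \hat B^{k+1}\phi-\hat B\hat B^k\phi = 0,
\]
and invertibility of $\hat A^k$ on $G_A$ forces $\phi\hat A=\hat B\phi$. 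The main obstacle is thus to produce a common $\phi$ lying in both $\textnormal{Iso}_k:=\{\phi:\phi\hat A^k=\hat B^k\phi\}$ and $\textnormal{Iso}_{k+1}$ for some $k\geq k_0$. I would approach this by studying the positive automorphism $\theta_k:=\phi_k\hat A\phi_k^{-1}\in\textnormal{Aut}^+(G_B)$, which satisfies $\theta_k^k=\hat B^k$, and invoking a rigidity principle for positive automorphisms of dimension groups: in the primitive case, Perron--Frobenius pins down the spectral action of $\theta_k$ on the Perron direction and integrality on $G_B$ eliminates other positive $k$-th roots of $\hat B^k$, forcing $\theta_k=\hat B$ for suitable $k$; the general (reducible) case is reduced to the primitive/irreducible one by the standard block decomposition of nonnegative integer matrices. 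The hardest step is verifying this uniqueness of positive roots, since in principle $\textnormal{Aut}^+(G_B)$ can contain nontrivial elements commuting with $\hat B^k$ that would obstruct the argument, and one must exploit that the hypothesis provides compatibility at \emph{all} sufficiently large $k$, not just one.
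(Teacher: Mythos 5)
Your $(1)\Rightarrow(2)$ is exactly the paper's argument (the paper writes the ESSE pair as $(A^kR,\,S)$ realizing $A^{k+\ell}=(A^kR)S$ and $B^{k+\ell}=S(A^kR)$, which is your construction reindexed), and $(2)\Rightarrow(3)$ is trivial in both treatments. The substance of the proposition is entirely in $(3)\Rightarrow(1)$, and there your proposal has a genuine gap. The interpolation observation is fine: a single order-isomorphism $\phi$ intertwining both $(\hat A^k,\hat B^k)$ and $(\hat A^{k+1},\hat B^{k+1})$ intertwines $(\hat A,\hat B)$. But everything then rests on the rigidity claim that $\theta_k=\phi_k\hat A\phi_k^{-1}$, a positive automorphism of $(G_B,G_B^+)$ with $\theta_k^k=\hat B^k$, must equal $\hat B$. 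Positivity only pins down the action on the Perron direction, where the unique positive real $k$-th root of $\lambda_B^k$ is forced; on the non-Perron eigenspaces there is no positivity constraint at all, and a $k$-th root of $\hat B^k$ may differ from $\hat B$ by roots of unity (or, for real negative eigenvalues and even $k$, by a sign) without violating either positivity or preservation of the lattice $G_B$. ``Integrality on $G_B$ eliminates other positive $k$-th roots'' is precisely the assertion that needs proof, and it is false for arbitrary $k$ --- the paper's own cautionary example $A=(3)$, $B=(-3)$, $k=2$ illustrates the root-of-unity ambiguity. The actual Kim--Roush argument (which the paper does not reproduce, deferring to Lind--Marcus) works by choosing $k$ to be a prime that is large relative to the degrees of the number fields generated by the eigenvalues of $A$ and $B$, and uses algebraic number theory to exclude the extraneous roots; nothing in your sketch supplies a substitute for that step, and ``compatibility at all large $k$'' is invoked but never actually used.

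A secondary weak point: your reduction of the reducible case to the primitive case ``by the standard block decomposition'' is not routine for SE-$\Z_+$. Shift equivalence over $\Z$ of irreducible matrices does not in general imply shift equivalence over $\Z_+$ (see the Boyle--Kaplansky example cited in Remark \ref{bkaplansky}), and the passage between SE-$\Z_+$ of a reducible matrix and SE-$\Z_+$ of its irreducible blocks requires an argument, not just the block triangular form. So the proposal correctly reproduces the two easy implications but leaves the hard implication essentially where the paper leaves it: asserted, with the decisive number-theoretic step unproved.
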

\begin{proof}
\item (1) $\implies$ (2) Suppose matrices $R,S$ give a lag\si{lag} $\ell$
   SE-$\Z_+$ from $A$ to $B$.
   Because $AR=RB$ and $SA=BS$,  we  have for $k $ in $\Z_+$  that
   \begin{align*}
   (A^kR)(S)&=A^k(RS)=A^{k+\ell} \\
    (S)(A^kR) &=S(RB^k) = (SR)B^k=B^{k+\ell} \ .
\end{align*}
\item  $(2) \implies (3)$ This is trivial.
\item $(3) \implies (1)$
  This argument,
   due to Kim\ai{Kim, K.H.} and Roush\ai{Roush, F.W.},
   is not so trivial; see \cite{LindMarcus2021}.\ai{Lind, Douglas}\ai{Marcus, Brian}
   SE-$\Z$ of $A^k$ and $B^k$ does not
   always imply SE-$\Z$ of $A$ and $B$, because there
   are different choices of $k$th roots of eigenvalues. For
   example, consider $A=(3)$, $B= (-3)$ and $k=2$.
The very rough idea of the Kim-Roush\ai{Roush, F.W.}\ai{Kim, K.H.} argument is that when
   $k$ is  a prime very large (with respect to
   every number field generated by the eigenvalues), then the implication
   does reverse.
   \end{proof}

%\begin{rem} \label{primitivefundamental}
%   The  SFTs defined from primitive matrices
%   are the fundamental class of SFTs. The situation is quite
%   analogous to the role of primitive matrices in the study
%   of nonnegative matrices (which we review in Lecture 4).
%\end{rem}

\begin{rem} \label{reducedrings}

If $A$ and $B$ are SE over a ring $\mathcal R$, then
by Theorem \ref{thm:ssesefibers} there is a nilpotent matrix $N$ over $\mathcal R$
such that $B$ is SSE over $\mathcal R$ to
$\left( \begin{smallmatrix} A & 0 \\ 0 & N \end{smallmatrix} \right)$.
For $\mathcal R$ commutative, it follows that
$\det(I-tA)$ fails to be
an invariant of SE-$\mathcal R$  if and only if there is a
nilpotent matrix $N$ over $\mathcal R$ such that $\det(I-tN) \neq 1$.
We check next that this requires $\mathcal R$ to contain a nilpotent
element.

\begin{prop} \label{nilpotentelementsprop}
Suppose $N$ is a nilpotent matrix over a commutative ring $\mathcal R$ and
$\det (I-tN) \neq 1$. Then $\mathcal R$ contains a nilpotent element.
\end{prop}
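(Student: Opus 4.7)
I will prove the contrapositive: if $\mathcal R$ is reduced (has no nonzero nilpotent element), then any nilpotent matrix $N$ over $\mathcal R$ satisfies $\det(I-tN) = 1$.

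The strategy is to exploit the fact that a reduced commutative ring is the intersection of its prime ideals, so it suffices to verify the conclusion after reduction modulo each prime. Write $p(t) := \det(I-tN) - 1 \in \mathcal R[t]$; I want to show every coefficient of $p(t)$ is zero. For a prime $\mathfrak p \subset \mathcal R$, let $\pi_{\mathfrak p} : \mathcal R \to \mathcal R/\mathfrak p$ be the quotient map, and extend it entrywise to matrices and coefficientwise to polynomials. Since the determinant is a polynomial expression in the matrix entries, $\pi_{\mathfrak p}(\det(I-tN)) = \det(I - t\,\pi_{\mathfrak p}(N))$, and $\pi_{\mathfrak p}(N)$ is nilpotent in the integral domain $\mathcal R/\mathfrak p$.

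Next I pass to the field of fractions $K_{\mathfrak p}$ of $\mathcal R/\mathfrak p$, and further to an algebraic closure. Over a field, a nilpotent matrix has all eigenvalues equal to $0$, hence its nonzero spectrum is empty, so $\det(I - t\,\pi_{\mathfrak p}(N)) = 1$ in $K_{\mathfrak p}[t]$. The embedding $\mathcal R/\mathfrak p \hookrightarrow K_{\mathfrak p}$ extends to an injection $(\mathcal R/\mathfrak p)[t] \hookrightarrow K_{\mathfrak p}[t]$, so the same identity holds in $(\mathcal R/\mathfrak p)[t]$. Thus every coefficient of $p(t)$ lies in $\mathfrak p$.

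Since this holds for every prime $\mathfrak p$, each coefficient of $p(t)$ lies in $\bigcap_{\mathfrak p} \mathfrak p$, which equals the nilradical of $\mathcal R$. Because $\mathcal R$ is reduced, the nilradical is $0$, so $p(t) = 0$ and $\det(I-tN) = 1$. The main thing to be careful about is the entirely routine point that the determinant commutes with ring homomorphisms (so the reductions behave as claimed); once that is granted, every step is immediate. No serious obstacle arises.
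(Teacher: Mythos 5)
Your proof is correct, but it takes a genuinely different route from the paper's. You argue the contrapositive by reducing modulo every prime ideal: over the integral domain $\mathcal R/\mathfrak p$ (and then over an algebraic closure of its fraction field) a nilpotent matrix triangularizes with zero diagonal, so $\det(I-tN)\equiv 1$ there, and hence every nonconstant coefficient of $\det(I-tN)$ lies in $\bigcap_{\mathfrak p}\mathfrak p$, the nilradical, which vanishes when $\mathcal R$ is reduced. The paper instead gives a direct degree count: if $N^m=0$ and $N$ is $n\times n$, then $(I-tN)^r=\sum_{j<m}\binom{r}{j}(-t)^jN^j$ has entries of degree at most $m-1$, so $\bigl(\det(I-tN)\bigr)^r=\det\bigl((I-tN)^r\bigr)$ has degree at most $n(m-1)$; writing $\det(I-tN)=1+c_1t+\dots+c_kt^k$ with $c_k\neq 0$, the top term of the $r$th power is $(c_k)^rt^{kr}$, which forces $(c_k)^r=0$ once $kr>n(m-1)$. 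The paper's argument is more elementary and constructive: it exhibits the nilpotent element explicitly and bounds its nilpotency index, and it avoids the prime-ideal characterization of the nilradical (hence Zorn's lemma). Your argument, in exchange, is conceptually clean, yields the slightly stronger conclusion that \emph{every} nonconstant coefficient of $\det(I-tN)$ is nilpotent, and is a reusable template for verifying determinantal identities up to nilpotents. Both proofs are complete.
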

\begin{proof}
Let $\det(I-tN) = 1 + \sum_{i=1}^k c_it^i$, with $c_k \neq 0$.
Suppose $N$ is $n\times n$, and take $m$ in $\N$ such that $N^m = 0$. Then
the polynomial $\det((I-tN)^m)$ has degree at most $n(m-1)$.
For any $r$, $\det(I-tN)^m = (\det(I-tN))^m = (1 + c_1t+ \dots + c_kt^k)^r$.
This polynomial equals $(c_k)^rt^{kr}$ plus terms of lower degree.
So, for $r > n(m-1)$, we must have $(c_k)^r=0$.
\end{proof}
\end{rem}

\begin{rem}
By the way, it can happen that  matrices $A,B$
shift equivalent over a commutative ring $\mathcal R$ have $\tr (A^n)=\tr (B^n)$ for all $n$
while $\det (I-tA) \neq \det (I-tB)$. For example, let $\mathcal R$
be $\Z \cup \{a\}$, with $a^2=2a=0$. Then set
$A= \left( \begin{smallmatrix} 0 & 1 \\ a & 0 \end{smallmatrix} \right)$
and $B=(0)$.
\end{rem}

\begin{exer}  \label{nonplussed}
The  primitive matrix
$A= \left(\begin{smallmatrix} 1&0&0&1 \\ 0&1&0&1 \\ 0&1&1&0 \\ 1&0&1&0
 \end{smallmatrix}\right) $  has
nonzero\si{nonzero spectrum} spectrum $(2,1)$. Prove that $A$ is not SE-$\Z_+$ to a
nonsingular matrix.
\end{exer}
\begin{proof}
Such a matrix $B$ would be $2\times 2$ primitive with
diagonal entries  $1,2$ (a diagonal entry 3 would force $B$
to have spectral radius greater than 2).
 But, then $B$ has
spectral radius at least as large as the spectral radius of
$\left(\begin{smallmatrix} 1&1 \\ 1&2
 \end{smallmatrix} \right) $, which is greater than 2.
(A more informative obstruction, due to
Handelman,\ai{Handelman, David} shows that $A$ is not SE-$\Z_+$
to a matrix of size less than 4 \cite[Cor. 5.3]{BH93}.)
\end{proof}

In the proof above, we used the following  corollary of
Theorem \ref{PerronTheorem}:  for nonnegative square
matrices $C,B$, with $C\leq B$ and $C\neq B$ and $B$ primitive,
the spectral radius of $B$ is stricty greater than that of
$C$.

\begin{exer} \label{1by1}
   Suppose $A$ is square over $\Z$
   and $\det (I-tA) = 1-nt$, with $n$ a positive integer.
   Then $A$ is SE over $\Z$ to the $1\times 1$ matrix $(n)$.
\end{exer}
\begin{proof}
  (Here we use some basic theory of nonnegative matrices reviewed in Lecture IV.)
  There is a permutation matrix $P$ such that
  $P^{-1}AP$ is block triangular with each diagonal block either
  $(0)$ or an irreducible matrix. Because the nonzero\si{nonzero spectrum} spectrum is a singleton $(n)$,
  only one of these blocks is not zero, and this block $B$ must be primitive.
  There is an SSE-$\Z_+$ by zero extensions\si{zero extension} from $A$ to $B$. Now there is an SE-$\Z$
  from $B$ to $(n)$. Because  $B$ is primitive,
  this implies there is an SE-$\Z_+$ from $B$ to $(n)$.
  \end{proof}

\begin{prop} \label{sePrimitive}
 Suppose two primitive matrices over a subring $\mathcal R$ of the reals are
 SE over $\mathcal R$. Then they are SE over $\mathcal R_+$.
\end{prop}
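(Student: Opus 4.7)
The plan is to start with a given shift equivalence $(R,S)$ over $\mathcal{R}$ of lag $\ell$ between primitive matrices $A,B$, and to produce from it a new shift equivalence $(R',S')$ whose entries happen to be nonnegative. The natural candidate is
\[
R'=A^k R,\qquad S'=SA^k
\]
for sufficiently large $k$. A direct check using $AR=RB$, $SA=BS$, $RS=A^\ell$ and $SR=B^\ell$ shows $(R',S')$ satisfies the four shift equivalence identities with lag $2k+\ell$, and its entries lie in $\mathcal{R}$ since $A,R,S$ do. The entire problem thus reduces to showing that for all sufficiently large $k$, both $A^kR$ and $SA^k$ have nonnegative (in fact positive) real entries.

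For that I would invoke Perron--Frobenius for primitive matrices. Since $A,B$ are SE over $\mathcal{R}\subseteq\mathbb{R}$, they share the nonzero spectrum (Sec.\ \ref{detI-tAsubsection}), so their Perron eigenvalues coincide: call it $\lambda>0$. Let $v>0$, $w^T>0$ be right/left Perron eigenvectors of $A$, and $v'>0$, $(w')^T>0$ those of $B$. Perron--Frobenius gives the asymptotic
\[
A^k/\lambda^k \;\longrightarrow\; \frac{v\,w^T}{w^Tv}\qquad (k\to\infty),
\]
so $A^kR\sim \lambda^k v(w^TR)/(w^Tv)$ and $SA^k\sim \lambda^k (Sv)w^T/(w^Tv)$. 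These rank-one limits are entrywise positive precisely when $w^TR$ and $Sv$ are (coordinate-wise) positive; once this holds, both $A^kR$ and $SA^k$ are strictly positive for all large $k$.

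Next I would identify $w^TR$ and $Sv$. From $AR=RB$ one gets $(w^TR)B=\lambda(w^TR)$, so $w^TR$ is either $0$ or a left $\lambda$-eigenvector of $B$; simplicity of the Perron eigenvalue forces $w^TR=c(w')^T$ for some scalar $c$. The case $w^TR=0$ is excluded because it would give $w^TA^\ell=w^TRS=0$, contradicting $w^T>0$. Symmetrically, $Sv=dv'$ with $d\ne 0$. Finally, pairing:
\[
0<\lambda^\ell w^Tv=w^TA^\ell v=(w^TR)(Sv)=cd\,(w')^Tv',
\]
and $(w')^Tv'>0$, so $cd>0$: the nonzero scalars $c,d$ have the \emph{same} sign.

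The last move is the sign trick. If $c,d$ are both positive, $w^TR$ and $Sv$ are already positive vectors and we are done. If both are negative, replace $(R,S)$ by $(-R,-S)$; this is still a lag-$\ell$ SE over $\mathcal{R}$ (since $\mathcal{R}$ is a ring), and it flips the signs of $c,d$ to positive. In either case, choosing $k$ large enough makes $R'=A^kR$ and $S'=SA^k$ entrywise positive, giving the required SE over $\mathcal{R}_+$. The only subtle step is the two-sided eigenvector identification and the sign-consistency equation $cd=\lambda^\ell w^Tv/(w')^Tv'>0$; everything else is either a routine Perron asymptotic or the bookkeeping check that $(A^kR,SA^k)$ satisfies the SE axioms.
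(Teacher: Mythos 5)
Your proposal is correct and follows essentially the same route as the paper's proof (which is only sketched there, with details deferred to Lind--Marcus): pass to $A^kR$ and $SA^k$ for large $k$, use the Perron Theorem's rank-one asymptotics to reduce positivity to the eigenvector conditions $w^TR>0$ and $Sv>0$, and fix the sign by replacing $(R,S)$ with $(-R,-S)$ if needed. Your write-up actually supplies more of the eigenvector-identification and sign-consistency details than the paper does.
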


\begin{proof}
  See  \cite{LindMarcus2021}\ai{Lind, Douglas}\ai{Marcus, Brian} for a proof.
  With  matrices $R,S$ giving a lag\si{lag} $\ell$ SE over $\mathcal R$
  from $A$ to $B$, the basic idea is to use linear
  algebra and the Perron Theorem (Lecture 4) to show
  (possibly after replacing $(R,S)$ with $(-R, -S)$)  that for large
  $n$, the matrices $A^nR$ and $SA^n$ will be positive. Then
  the pair $RA^n, A^nS$ implements an SE over $\mathcal R_+$
  with lag\si{lag} $\ell +2n$.
  \end{proof}

\begin{rem} \label{bkaplansky}
  An example of myself and Kaplansky,\ai{Kaplansky, Irving} recorded in
\cite{BoyleJordanForm1984},
  shows that two irreducible nonnegative matrices can be SE-$\Z$ but not
  SE-$\Z_+$. The example corrects \cite[Remark 4, Sec.5]{ParryWilliams} and
  shows that \cite[Lemma 4.1]{CuntzKriegerDicyclic} should be stated for
  primitive rather than irreducible matrices (the proof is fine for
  the primitive case).

%%%%%  ON REFLECTION, MAYBE THE NEXT PARAGRAPH IS NOT APPROPRIATE.
%%%%%  SO, I'VE DROPPED IT.
%%%%%
%%%%%  Students, be paranoid!
%%%%%  Anyone can make a mistake --
%%%%%  the authors of \cite{CuntzKriegerDicyclic, ParryWilliams}
%%%%%   are superb   mathematicians.
%%%%%  The mistake in \cite{ParryWilliams} was  a claim made without
%%%%%  an accompanying  proof --  this is perhaps the most common way in
%%%%%  which a false claim appears.  The mistake in
%%%%%  \cite{CuntzKriegerDicyclic} was
%%%%%  to state a result for the irreducible case but to
%%%%%  give a proof using properties of primitive matrices.
%%%%%  In this case, the statement was wrong, but the writing was good:
%%%%%   you can easily find the mistake.
\end{rem}

\begin{prop} \label{transposeexerciseproof}
The matrix
$\begin{pmatrix} 256 & 7\\ 0 &1 \end{pmatrix}$ is not SE-$\Z$ to its
transpose.
\end{prop}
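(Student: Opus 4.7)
The strategy is to apply Theorem~\ref{thm:axb} with $a=256$ and $b=1$, so that $a-b=255$ and the only prime dividing $ab$ is $2$. In the notation of that theorem, $A = \begin{pmatrix} 256 & 7 \\ 0 & 1 \end{pmatrix}$ already has the form $M_x$ with $x=7$. The main task is to identify the value $y \pmod{255}$ for which $A^T$ is SIM-$\Z$ to $M_y$; after this is done, parts~(2) and~(3) of Theorem~\ref{thm:axb} reduce the question to a finite arithmetic check.

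To compute $y$, I would note that a primitive eigenvector of $A^T$ for eigenvalue $256$ is $v=(255,7)^T$, since $\gcd(255,7)=1$. Extending $v$ to a $\Z$-basis of $\Z^2$ via a Bezout relation $255\,q_0 - 7\,p_0 = 1$ produces a matrix $U \in \GL(2,\Z)$ whose first column is $v$, and a direct calculation of $U^{-1}A^T U$ shows that $A^T$ is SIM-$\Z$ to $M_{p_0}$, where $-7\,p_0 \equiv 1 \pmod{255}$, i.e.\ $p_0 \equiv -7^{-1} \pmod{255}$. Since $7\cdot 73 = 511 = 2\cdot 255 + 1$, we have $7^{-1} \equiv 73$, so $p_0 \equiv -73 \pmod{255}$. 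By Theorem~\ref{thm:axb}(2) this places $A^T$ in the SIM-$\Z$ class of $M_{73}$.

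By Theorem~\ref{thm:axb}(3), $A = M_7$ is SE-$\Z$ to $A^T$ if and only if $7$ and $73$ lie in the same equivalence class modulo $255$ under the relation generated by $x \sim \pm q\, x$ for primes $q$ dividing $ab=256$. The only such prime is $q=2$, so the class of $7$ is its orbit under multiplication by $\pm 2^k$ modulo $255$. Since $2^8 = 256 \equiv 1 \pmod{255}$, this orbit has at most $16$ elements; to conclude, I would list them explicitly (the eight doublings of $7$ modulo $255$, together with their negatives) and verify that neither $73$ nor $-73 \equiv 182$ appears. This gives $7 \not\sim 73 \pmod{255}$, hence $A$ is not SE-$\Z$ to $A^T$.

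The only nontrivial step is the identification of $y$ for $A^T$; everything else is a mechanical finite computation. Morally, the argument succeeds because $2$ has only order $8$ in the unit group $(\Z/255\Z)^{\times}$, which itself has order $\varphi(255)=128$, so the SE-$\Z$ classes within the family $\{M_y\}$ remain rather fine, and $73$ sits in a different orbit than $7$.
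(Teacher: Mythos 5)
Your proof is correct and follows essentially the same route as the paper's: both identify $A^{T}$ as SIM-$\Z$ to $M_{y}$ with $y\equiv 7^{-1}\equiv 73 \pmod{255}$ (you via an eigenvector basis change, the paper via an explicit similarity of $(M_x)^{\mathrm{tr}}$ with $M_{x^{-1}}$), and then invoke Theorem~\ref{thm:axb}. The only difference is the final arithmetic: you enumerate the $16$-element orbit $\{\pm 2^{k}\cdot 7 \bmod 255\}$ and observe that $\pm 73$ is absent (which is indeed the case), while the paper multiplies the relation $2^{m}x\equiv \pm 2^{j}y$ by $x$ to reduce to whether $49\equiv \pm 2^{n}\pmod{255}$ and settles this by noting $\pm 2$ are nonresidues mod $5$; both checks are routine and valid.
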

\begin{proof}
First, suppose $a,b$ are integers such that $a> |b| >0$.  Let  $M_x$
denote the matrix $\left( \begin{smallmatrix} a & x \\ 0
    &b \end{smallmatrix} \right)$. Now suppose
$x,y$ are integers such that $xy = 1 \mod (a-b)$. Then
the matrices $(M_x)^{\text{tr}}$ and $M_y$ are SIM-$\Z$:
\[
\begin{pmatrix} a&0 \\ x&b \end{pmatrix}
\begin{pmatrix} a-b&y \\ x&(1-xy)/(b-a) \end{pmatrix}
=
\begin{pmatrix} a-b&y \\ x&(1-xy)/(b-a) \end{pmatrix}
\begin{pmatrix} a&y \\ 0&b \end{pmatrix} \ .
\]
Thus $M_x$ and $(M_x)^{\text{tr}}$ are SE-$\Z$ if and only if
$M_x$ and $M_y$ are SE-$\Z$.
Fix $a=256=2^8, b=1$.  Theorem \ref{thm:axb} implies that
$M_x$ and $M_y$ are SE-$\Z$ if and only if
there are integers $j,m$ such that
$2^mx =  \pm 2^jy \mod 255$. Because $2$ is a unit in $\Z/255\Z$,
and $xy = 1 \mod 255$,
this holds if and only if there is a nonnegative integer $n$ such that
$x^2= \pm 2^n \mod 255$. Because 2 and -2 are not squares mod 5, they
are not squares mod 255. Because $2^8=1 \mod 255$, the only squares
mod 255 in $\{ \pm 2^n: n\geq 0\}$ are 1,4,16 and 64.
The square 49 is not on this list. Therefore the matrix
$\left( \begin{smallmatrix} 256 & 7 \\ 0
    &1 \end{smallmatrix} \right)$ and its transpose are not SE-$\Z$.
\end{proof}

The following fact from  \cite{BH93}
facilitates constructions of primitive matrices realizing
the algebraic invariants above:
any $2\times 2$ matrix over $\Z$ with integer eigenvalues $a,b$ with
$a>|b|$ is SE-$\Z$ to a primitive matrix.
In our example,
\[
\begin{pmatrix} 1 & 0\\ 1 &1 \end{pmatrix}
\begin{pmatrix} 256 & 7\\ 0 &1 \end{pmatrix}
\begin{pmatrix} 1 & 0\\ -1 &1 \end{pmatrix}
\ =
\begin{pmatrix} 249 & 7\\ 248 & 8 \end{pmatrix} \ := B\ .
\]
It is an easy exercise to show that when matrices
$A,B$
are shift equivalent, if one of $A,B$ is shift equivalent to its
transpose then so is the other.
Consequently, the matrix $B$ displayed above cannot be SE-$\Z$ to its
transpose.

I haven't seen the method of Proposition \ref{transposeexerciseproof}
used  to distinguish the SE-$\Z$ classes of a primitive
matrix and its transpose,
but examples of such were produced long ago.
The matrix $A=\left( \begin{smallmatrix} 19&5\\4&1 \end{smallmatrix} \right)$
is an early example, due to K\"{o}llmer,\ai{K\"{o}llmer, Heinrich} of a primitive
matrix not SIM-$\Z$ (hence not SE-$\Z$, as $|\det (A)|=1$) to its transpose
(for an elementary proof, see
\cite[Ch.V, Sec.4]{ParryTuncel1982}).
The connection  of $\text{SL}(2,\Z)$ to continued fractions
leads to a computable characterization of SIM-$\Z$ for $2\times 2$
unimodular matrices, exploited by Cuntz\ai{Cuntz, Joachim}
  and Krieger\ai{Krieger, Wolfgang}
as another method to produce
$2\times 2$  primitive integer matrices not SE-$\Z$ to their transposes
(see \cite[Corollary 2.2]{CuntzKriegerDicyclic}).\begin{footnote}{For
      a dimension group viewpoint, read
\cite[Theorem 2.1]{CuntzKriegerDicyclic}) as:
  the SFTs defined by irreducible unimodular
  $2\times 2$ matrices over $\Z_+$
  are topologically conjugate if and only
  if they have isomorphic dimension groups and equal
  entropy.}\end{footnote}
Lind and Marcus\ai{Lind, Douglas}\ai{Marcus, Brian} use another connection to
$\Z[\lambda ]$ ideal classes to give an example of a primitive integral
matrix not SE-$\Z$ to its transpose,
\cite[Example 12.3.2]{LindMarcus2021}\ai{Lind, Douglas}\ai{Marcus, Brian}.
There are much earlier papers which give many cases
in which a square integer matrix  and its transpose must correspond to inverse
ideal classes of  an associated ring
%   with finite class number
(see \cite{tausskyInverseIdeal}  and its connections in the literature),
and these ideal classes may differ.
However, this still leaves the issue of realizing the algebraic
invariants in primitive matrices.
%Crudely: many square integer matrices are SIM-$\Z$ to their transposes,
%and many are not.

Next, we restate Theorem \ref{thm:taussky} and
sketch the proof coming out  of Taussky-Todd's work\ai{Taussky-Todd, Olga}
\cite{tausskyOnLatimerTheorem}.

\begin{thm}
\label{tausskytoddcase}
Suppose $p$ is monic irreducible in $\Z [t]$,
and $p(\lambda)=0$, with
$0\neq \lambda \in \C$.
Let $\mathcal M$ be the set of matrices over $\Z$ with
characteristic polynomial $p$.
Then there are bijections
\begin{align*}
\mathcal M/(\textnormal{SIM}-\Z)
& \to
% \leftrightarrow
\textnormal{ Ideal classes  of }\Z[\lambda ]\  ,
\quad \quad  \textnormal{ and } \\
\mathcal M/(\textnormal{SE}-\Z)
& \to
%\leftrightarrow
\textnormal{ Ideal classes  of }\Z[1/\lambda ]  \ .
\end{align*}
\end{thm}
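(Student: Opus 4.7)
The plan is to construct the two bijections in parallel, leveraging the polynomial and direct-limit descriptions from the previous sections. Write $n = \deg p$, so every $A \in \mathcal M$ is $n \times n$, and note that $p$ is both the characteristic and minimal polynomial of every $A \in \mathcal M$ by irreducibility of $p$.

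For the SIM-$\Z$ bijection (Latimer--MacDuffee, via Taussky-Todd), given $A \in \mathcal M$ I turn $\Z^n$ (row vectors) into a $\Z[\lambda]$-module by letting $\lambda$ act as $v \mapsto vA$; this is well defined because the $\Z[t]$-action factors through $\Z[t]/(p(t)) \cong \Z[\lambda]$. The resulting module is finitely generated and torsion-free over $\Z[\lambda]$, and of $\Q[\lambda]$-rank one because $[\Q(\lambda):\Q] = n$. Hence it embeds as a fractional $\Z[\lambda]$-ideal $I_A \subset \Q[\lambda]$, well defined up to the equivalence $I \sim cI$. Now $A, B \in \mathcal M$ are SIM-$\Z$ iff there is a $\Z$-linear change of basis on $\Z^n$ intertwining the $A$- and $B$-actions, iff the corresponding $\Z[\lambda]$-modules are isomorphic, iff $[I_A] = [I_B]$. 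For surjectivity, any fractional $\Z[\lambda]$-ideal is free over $\Z$ of rank $n$ (finitely generated and torsion-free over $\Z$), and the matrix of multiplication by $\lambda$ in any $\Z$-basis lies in $\mathcal M$.

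For the SE-$\Z$ bijection I use the $\Z[t]$-module $(G_A, \hat A)$ from Section \ref{sec:SEZdirectlimits}, with $t$ acting as $(\hat A)^{-1}$. Since $\lambda \neq 0$ and $p$ is irreducible, $p(0) \neq 0$, so $A$ is nonsingular, $V_A = \Q^n$, and $G_A = \bigcup_{k \geq 0} \Z^n A^{-k}$. The reversed polynomial $\widetilde p(t) := t^n p(t^{-1}) \in \Z[t]$ annihilates $G_A$ (since it acts as $A^{-n} p(A) = 0$), and $t$ acts invertibly on $G_A$, so $G_A$ is a module over $\Z[t,t^{-1}]/(\widetilde p(t))$. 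Because $p(\lambda) = 0$ gives $\lambda \in \Z[\lambda^{-1}]$, the canonical map $t \mapsto \lambda^{-1}$ identifies this ring with $\Z[1/\lambda]$. As before, $G_A$ is torsion-free, finitely generated (by any $\Z$-basis of the embedded $\Z^n$), and of rank one over $\Z[1/\lambda]$, so it represents a fractional ideal class $[J_A]$ of $\Z[1/\lambda]$. By the Proposition characterizing SE-$\Z$ as $\Z[t]$-module isomorphism of the pairs $(G_A, \hat A)$, the class $[J_A]$ is a complete invariant of the SE-$\Z$ class of $A$.

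For surjectivity in the SE case, I invoke the standard localization fact that the natural map $\mathrm{Cl}(\Z[\lambda]) \to \mathrm{Cl}(\Z[1/\lambda])$ (induced by $I \mapsto \Z[1/\lambda] \cdot I$) is surjective; so every $\Z[1/\lambda]$-ideal class has a representative of the form $\Z[1/\lambda] \cdot I$ for some fractional $\Z[\lambda]$-ideal $I$. Taking the matrix $A$ produced from $I$ by the first bijection, one verifies $G_A \cong \Z[1/\lambda] \cdot I$ as $\Z[1/\lambda]$-modules. The main obstacle is purely bookkeeping: pinning down the row/column and $t$/$t^{-1}$ conventions at the outset and then confirming that the module structures attached to $A$ really are fractional ideals of $\Z[\lambda]$ or $\Z[1/\lambda]$ under those conventions. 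All of the dynamical content of the SE side is already packaged in the Proposition from Section \ref{sec:SEZdirectlimits}; once that is in hand, the theorem is a short translation.
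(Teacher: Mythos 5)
Your proof is correct, but it takes a genuinely different route from the one in the paper. The paper follows Taussky-Todd's eigenvector argument: pick a right $\lambda$-eigenvector $r_A$ of $A$ with entries in $\Z[\lambda]$, send $A$ to the class of the $\Z[\lambda]$-ideal generated by the entries of $r_A$, and obtain the second bijection by reading the \emph{same} eigenvector's entries as generators of a $\Z[1/\lambda]$-ideal, so that the lumping of SIM-$\Z$ classes into SE-$\Z$ classes is visibly the surjection from $\Z[\lambda]$-ideal classes onto $\Z[1/\lambda]$-ideal classes. You instead use the module-theoretic form of Latimer--MacDuffee ($\Z^n$ as a rank-one torsion-free $\Z[\lambda]$-module via $v\mapsto vA$) and, for the SE half, plug directly into Proposition \ref{prop:sezdirectlimit}, observing that $(G_A,\hat A)$ is a fractional $\Z[1/\lambda]$-ideal since $\Z[t,t^{-1}]/(t^np(1/t))\cong\Z[1/\lambda]$. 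The two constructions are classically dual; yours buys automatic well-definedness (no choice of eigenvector to check independence of) and makes the SE case a clean application of machinery the paper has already set up, at the cost of needing the localization fact that every fractional $\Z[1/\lambda]$-ideal is $\Z[1/\lambda]\cdot I$ for some fractional $\Z[\lambda]$-ideal $I$ (which holds since $\lambda\in\Z[1/\lambda]$, so $\Z[1/\lambda]=\Z[\lambda][1/\lambda]$ is a genuine localization). The paper's version gets surjectivity of the second map for free from the first and is more directly computable from a given matrix. Both are complete; the routine verifications you defer (the conventions on rows versus columns and on $t$ versus $t^{-1}$, and that $G_A\cong\Z[1/\lambda]\cdot I$) do go through.
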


\begin{proof}
If $A$ is in $\mathcal M$, then $A$ has a right eigenvector $r_A$
for $\lambda$. The eigenvector can be
chosen with entries in the field $\Q [ \lambda ]$
(solve $(\lambda I- A)r=0$   using
Gaussian elimination). Then, after multipying $r$ by a suitable element
of $\Z$ to clear denominators, we may assume the entries of $r_A$
are in $\Z[\lambda ]$. Let $I(r_A)$ be the ideal of the ring $\Z[\lambda ]$
generated by the entries of $r_A$. Let $\mathcal I (r_A)$ be the
ideal class of $\Z[\lambda ]$ which contains $ I (r_A)$.
Now it is routine to check that the map $A\mapsto \mathcal I (r_A)$
is well defined and   induces the first bijection.

For the second bijection, just repeat this Taussky-Todd argument,
with the ring $\Z[1/\lambda ]$ in place of $\Z [\lambda ]$,
and say $\mathcal J(r_A)$ denoting the $\Z[1/\lambda ]$ ideal
generated by the entries of $r_A$. The rule $\mathcal I(r_A)\mapsto
\mathcal J(r_A)$ induces a surjective map from the set of
ideal classes of $\Z[ \lambda ]$
to those of $\Z[ 1/\lambda ]$, which corresponds to the
lumping of SIM-$\Z$ classes to SE-$\Z$ classes.

There is more detail and comment on this in \cite{BMT1987}.
%  In particular, the number of ideal classes in $\Z[\lambda ]$ is finite.
\end{proof}

It is important to note above that the ring $\Z [\lambda]$
is not in general equal to  $\mathcal O_{\lambda}$, the full ring of
algebraic integers in $\Q [\lambda ]$. When
$\Z [\lambda]$ is a proper subset of
$\mathcal O_{\lambda}$, its class number will strictly exceed that of
$\mathcal O_{\lambda}$
(in this case, a principal $\Z[\lambda ]$ ideal cannot be an
$\mathcal O_{\lambda}$  ideal).

\begin{prop} \label{finiteclass}
Suppose $\lambda$ is a nonzero algebraic integer.
Then the class number of $\Z[\lambda ]$ is finite.
\end{prop}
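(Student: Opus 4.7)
The plan is to exploit the hypothesis about $\mathcal{O}_\lambda$ by constructing an ``extension of scalars'' map from ideal classes of $\Z[\lambda]$ to ideal classes of $\mathcal{O}_\lambda$, then bounding both the image and the fibers.

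First I would establish the key preliminary: since $\lambda$ is a nonzero algebraic integer, $\Z[\lambda]$ is an order in $K=\Q[\lambda]$, meaning it is a subring of $\mathcal{O}_\lambda$ and a free $\Z$-module of the same rank $d=[K:\Q]$ as $\mathcal{O}_\lambda$. Consequently the $\Z$-index $[\mathcal{O}_\lambda : \Z[\lambda]]$ is finite, and there exists a positive integer $N$ (the \emph{conductor index} will do) with
\[
N\,\mathcal{O}_\lambda \ \subseteq \ \Z[\lambda]\, .
\]
In particular $\mathcal{O}_\lambda / N\mathcal{O}_\lambda$ is a finite ring (finite as a $\Z$-module), a fact I will use below.

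Next I would define the map $\Phi$ on ideal classes by $\Phi([I])=[I\mathcal{O}_\lambda]$, where $I$ is a nonzero finitely generated $\Z[\lambda]$-submodule of $K$. Scaling $I$ by $c\in K^{\times}$ scales $I\mathcal{O}_\lambda$ by the same $c$, so $\Phi$ is well defined on equivalence classes, and its codomain is the (finite, by hypothesis) class monoid of $\mathcal{O}_\lambda$. Thus it remains only to check that each fiber of $\Phi$ is finite.

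For the fibers: suppose $[I_1]$ and $[I_2]$ both map to $[J]$. Scaling $I_1,I_2$ we may assume $I_1\mathcal{O}_\lambda = I_2\mathcal{O}_\lambda = J$, and further multiplying by an integer we may assume $J \subseteq \mathcal{O}_\lambda$. For any $\Z[\lambda]$-submodule $I\subseteq K$ with $I\mathcal{O}_\lambda = J$ we then have
\[
N J \ =\ I\,(N\mathcal{O}_\lambda) \ \subseteq\ I\,\Z[\lambda] \ =\ I \ \subseteq \ J\, ,
\]
so $I$ is the preimage in $J$ of some $\Z[\lambda]$-submodule of the finite quotient $J/NJ$ (finite because it is an $\mathcal{O}_\lambda/N\mathcal{O}_\lambda$-module). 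Hence only finitely many $I$ give this fixed $J$, and the fiber of $\Phi$ over $[J]$ is finite. Combining finite image with finite fibers yields finiteness of the class number of $\Z[\lambda]$.

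The main obstacle is the fiber bound: one must resist the temptation to pretend $\Z[\lambda]$ behaves like a Dedekind domain (it typically does not), and instead control $I$ sandwich-style via the conductor-type inclusion $NJ\subseteq I\subseteq J$. Once one sees that $N\mathcal{O}_\lambda\subseteq \Z[\lambda]$ forces $NJ\subseteq I$, everything else is bookkeeping.
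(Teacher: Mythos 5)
Your proof is correct and follows essentially the same route as the paper's: both use the conductor-type inclusion $N\mathcal{O}_{\lambda}\subseteq\Z[\lambda]$, pass from $I$ to $J=I\mathcal{O}_{\lambda}$, invoke finiteness of the class number of $\mathcal{O}_{\lambda}$ to control $J$ up to equivalence, and then count the finitely many intermediate groups in the sandwich $NJ\subseteq I\subseteq J$. Your ``finite image, finite fibers'' packaging is just a cleaner organization of the same argument.
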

\begin{proof}
Let $n$ be the the dimension of
$\Q [\lambda ]$ as a rational vector space.
As  free abelian groups,
$\mathcal O_{\lambda}$ and
$\Z[\lambda ] $ (and all of their nonzero ideals) have
rank $n$. For $R$ equal to
$\Z[\lambda ]$
or $\mathcal O_{\lambda}$, the following are equivalent conditions on
$R$-ideals $I,I'$.
\begin{itemize}
\item
$I,I'$ are equivalent as $R$-ideals.
\item there is a nonzero $c\in \Q [\lambda] $ such that $cI=I'$.
\item $I,I'$ are isomorphic as $R$-modules.
\end{itemize}
Because the class number of $\mathcal O_{\lambda}$ is finite,
there is a finite set $\mathcal J$
of $\mathcal O_{\lambda}$ ideals such  that every nonzero
$\mathcal O_{\lambda}$ ideal is equivalent to an element of
$\mathcal J$.
Let $N$ be
a positive integer such that $N\mathcal O_{\lambda}
\subset \Z[\lambda ] $.

Now suppose $I$ is a $\Z [\lambda ]$ ideal, with
$\{\gamma_1, \dots , \gamma_n\}$ a $\Z$-basis of $I$.
Set
$J=\{ \sum_{i=1}^n r_i\gamma_i: r_i \in \mathcal O_{\lambda}, 1\leq i \leq n \}$.
There is a nonzero  $c$ in $\Q[\lambda ]$
such that $cJ \in \mathcal J$.
The $\Z[\lambda ]$ modules
$I, cI$ are
isomorphic.
We have $NJ \subset I \subset J$, and therefore $|J/I| \leq n^N$.
There are only finitely many abelian subgroups of $J$ with
index at most $n^N$ in $J$. It follows that there are only finitely
many possibilities for $cI$ as a $\Z[\lambda ]$ module,
and this finishes the proof.
\end{proof}

Proposition \ref{finiteclass} is a (very) special case of the
Jordan-Zassenhaus Theorem (see \cite{reinerMaximalOrders}).

\begin{rem} \label{directlimits}
We'll recall the general notion of direct limit of a group
endomorphism, and see in the $\Z$ case that our concrete
presentation really is isomorphic to  the general vesion.
The concrete version has its merits, but the general version
is essential.

For a group endomorphism $\phi : G\to G$, take the union of the
disjoint sets $(G,n)$, $n \in \Z_+$. Define an equivalence relation
on $\cup_{n\in \Z_+}(G,n)$: $(g,m) \sim (h,n)$ if there exist $j,k$ in
$\Z_+$ such that $(\phi^j(g),j+m) = \phi^k(h),n+k)$.
Define
$\varinjlim_{\phi} G$
to be the quotient set
$\big( \cup_{n\in \Z_+}(G,n) \big) /\sim $. The operation on
$\varinjlim_{\phi} G$
given
by $[(g,m)] + [(h,n)] = [(\phi^n(g) + \phi^m (h),m+n)]$ is
well defined and makes
$\varinjlim_{\phi} G$
a group. The endomorphism $\phi$
induces a group  automorphism $\widehat{\phi} $  given by
$\widehat{\phi}: [(g,n)]\mapsto [(\phi(g), n)]$. The inverse of
$\widehat{\phi}$ is defined by
$   [(g,n)]\mapsto [(g, n+1)]$.

In our case,  $A : \Z^n \mapsto \Z^n$ by $x\mapsto xA$,
we may define a map $\psi: G_A \to
\varinjlim_{\phi} G$
by $x\mapsto [(xA^m,m)]$ where $m=m(x)$ is sufficiently large that
$xA^m \in \Z^n$. One can check that
$\psi$ is a well defined group automorphism,
with $\psi \circ A = \widehat A \circ \psi$.
\end{rem}

\begin{rem}   \label{nonisomorphicpair}
$A= \left(\begin{smallmatrix} 2&0 \\ 0&5
\end{smallmatrix}\right)$ and
$B =  \left(\begin{smallmatrix} 2&1 \\ 0&5
\end{smallmatrix}\right)$, we will show that
$G_A$ is the sum of a 2-divisible group and a 5-divisible group,
but $G_B$ is not.

For $M=A \text{ or }B$, and $\lambda =2\text{ or } 5 $,
let $H_{M,\lambda } =
\{v \in G_M: \lambda^{-k}v\in G_M,\text{for all } k\in \N \}$.
An isomorphism $G_B\to G_A$ must send $H_{B,\lambda }$ to $H_{A,\lambda }$,
for $\lambda =2,5$.
For $M=A $ or $M=B$,
because the eigenvalues $2,5$  are relatively prime,
we can check
$H_{M,\lambda } = G_M \cap \{ v\in \Q^2: vM= \lambda v  \}  $.
Clearly $G_A = H_{A,2 }\oplus H_{A, 5 }$.
In contrast,
$G_B \neq H_{B,2 }\oplus H_{B, 5 }$. For example, $(1,0)\in G_B$, and
$(1,0)$ is uniquely a sum of vectors on the two eigenlines,
$(1,0) = (1/3) (3,-1)   +(1/3)(0,1)  $. But $(1/3)(0,1) \notin H_{B,5}$. $\qed$
\end{rem}

\begin{prop}  \label{seandpairiso}
Let $A,B$ be square  matrices over $\Z$.
Then the following are equivalent.
\begin{enumerate}
\item
$A$ and $B$ are SE-$\Z$.
\item
There is an isomorphism of direct limit pairs $(G_A, \hat A)$
and $(G_B, \hat B)$.
\end{enumerate}
\end{prop}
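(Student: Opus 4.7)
The plan is to translate directly between shift equivalence data $(R,S)$ and isomorphism data $\phi$ for the direct-limit pairs, in both directions via right-multiplication by matrices. This gives a short, symmetric proof.

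For $(1)\Rightarrow(2)$, suppose $R,S$ over $\Z$ give an SE of lag $\ell$: $A^\ell=RS$, $B^\ell=SR$, $AR=RB$, $SA=BS$. I would define $\phi_R\colon G_A\to G_B$ by $v\mapsto vR$. First I would check that the image lies in $G_B$: if $v\in V_A$ then $v=wA^k$ for some $w\in\Q^m$ and all large $k$, and $A^kR=RB^k$ gives $vR=wRB^k\in V_B$; if in addition $vA^j\in\Z^m$, then $(vR)B^j=vA^jR\in\Z^n$, so $vR\in G_B$. Intertwining follows from $AR=RB$: $\phi_R(\hat A v)=vAR=vRB=\hat B(\phi_R v)$. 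Defining $\phi_S\colon G_B\to G_A$ analogously, the composition $\phi_S\circ\phi_R$ acts as $v\mapsto vRS=vA^\ell=\hat A^\ell(v)$, which is an automorphism of $G_A$ since $\hat A$ is; similarly $\phi_R\circ\phi_S=\hat B^\ell$. Hence $\phi_R$ is the required $\Z[t]$-module isomorphism.

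For $(2)\Rightarrow(1)$, suppose $\phi\colon G_A\to G_B$ is an isomorphism with inverse $\psi$, and write $A$ as $m\times m$, $B$ as $n\times n$. I would first choose $k_0\geq m$ so that $e_iA^{k_0}\in V_A\cap\Z^m\subset G_A$ for every standard basis vector $e_i$ of $\Z^m$, and then $k_1$ so that $\phi(e_iA^{k_0})B^{k_1}\in\Z^n$ for every $i$ (possible since $\phi(e_iA^{k_0})\in G_B$). Define $R$ to be the $m\times n$ integer matrix whose $i$th row is $\phi(e_iA^{k_0})B^{k_1}$, so that $vR=\phi(vA^{k_0})B^{k_1}$ for all $v\in\Z^m$. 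The intertwining $\phi\circ\hat A=\hat B\circ\phi$ translates row-by-row into $AR=RB$. The same construction applied to $\psi$ yields an integer $n\times m$ matrix $S$ with $SA=BS$ and $wS=\psi(wB^{\ell_0})A^{\ell_1}$ for some $\ell_0,\ell_1$. Iterating intertwining collapses the $i$th row of $RS$ to $\psi(\phi(e_iA^{k_0+k_1+\ell_0}))A^{\ell_1}=e_iA^{k_0+k_1+\ell_0+\ell_1}$, so $RS=A^\ell$ with $\ell=k_0+k_1+\ell_0+\ell_1$; symmetrically $SR=B^\ell$, yielding the desired SE-$\Z$.

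The main obstacle is the bookkeeping in $(2)\Rightarrow(1)$: coordinating the four exponents $k_0,k_1,\ell_0,\ell_1$ so that $R$ and $S$ have integer entries while the products $RS,SR$ equal \emph{exact} matrix powers of $A,B$. The subtlety is that when $A$ is singular, the standard basis vectors need not lie in $G_A$, forcing the preliminary shift by $\hat A^{k_0}$ to land inside $V_A$, followed by right-multiplication by $B^{k_1}$ to clear denominators from the direct-limit structure of $G_B$. Once these shifts are in place, the intertwining hypothesis handles the remaining identities automatically.
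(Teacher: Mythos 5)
Your proof is correct. It differs from the paper's in the choice of model for the direct limit: the paper deliberately works with the abstract presentation of $\varinjlim$ as equivalence classes $[(x,n)]$ (remarking that ``the general proof is easier''), defines $\phi:[(x,n)]\mapsto[(xR,n)]$ and $\psi:[(y,m)]\mapsto[(yS,m+\ell)]$, and checks $\psi\circ\phi=\mathrm{id}$ via $xRS=xA^{\ell}$; in the converse direction it asserts, as a ``check'', that any isomorphism of the abstract limits must have the form $[(x,0)]\mapsto[(xR,N)]$ for some integer matrix $R$, and then reads off the four SE equations. You instead work entirely inside the concrete eventual-image model $G_A\subset V_A\subset\Q^m$. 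The forward directions are the same argument in two languages. The real divergence is in $(2)\Rightarrow(1)$: the step the paper leaves as an unproved ``check'' (that the isomorphism is realized by a matrix) is exactly what your row-by-row construction $R_{i\ast}=\phi(e_iA^{k_0})B^{k_1}$ supplies explicitly, at the cost of the exponent bookkeeping with $k_0,k_1,\ell_0,\ell_1$. So your version is more self-contained on the harder implication, while the paper's abstract formulation trades that explicitness for brevity. All of your verifications (that $vR\in G_B$, that $AR=RB$ follows from intertwining, and that $RS=A^{\ell}$, $SR=B^{\ell}$ with the common lag $\ell=k_0+k_1+\ell_0+\ell_1$) go through as written.
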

\begin{proof}
We will give a proof
with the general  direct limit definition
in Remark \ref{directlimits}, rather than using
the more concrete version of the group
involving eventual images. The general proof is easier.

(1) $\implies $ (2) Suppose $R,S$ gives the lag\si{lag} $\ell$ shift equivalence\si{shift equivalence}:
$A^{\ell}=RS$, etc. First note that the rule
$[(x,n)]\to [(xR,n)]$ gives a well defined map
$\phi: G_A \to G_B$, because
$ [(xAR,n+1)] =[ (xRB, n+1)] = [(xR,n)]$.
Check $\phi$ is a group homomorphism.
Similarly, define $\psi :G_B \to G_A$ by
$[(y, m)]\mapsto [(yS,m+\ell)]$.

Now, $\psi (\phi ([(x,n)])) = \psi ([(xR,n)])
=[(xRS,n+\ell)] = [(xA^{\ell}, n+\ell)] = [(x,n)]$.
Similarly,
$\phi (\psi ([(y,n)])) = [(y,n)]$.
Thus the homomorphism $\phi$ is an isomorphism,
with $\phi^{-1}=\psi$. Finally, $\widehat A \phi = \phi \widehat B$,
because
$\phi(\widehat A([x,n])) = \phi ([xA,n]) =
[(xAR,n)] = [(xRB,n)] = \widehat B([(xR,n)]) =
\widehat B(\phi ([(x,n)]))$.

(2) $\implies $ (1)
Suppose $\phi : G_A \to G_B$ gives the isomorphism of pairs.
Check that there must be $N >0$ and a matrix $R$ such that
$\phi: [(x,0)]\to [(xR, N)]$. After postcomposing with the
automorphism
$[(y,N)]\mapsto [(y,0)]$, we may suppose $N=0$.
There must be a matrix $S$ and $\ell >0$ such that the inverse map is
$[(y,0)] \mapsto [(yS, \ell )]$.

Now, $[(v,0)]=[(vA^{\ell},\ell)]=[(vRS,\ell)]$ for every $v$.
  So, if $v$ is a standard
basis vector, then for all large $k$,
$vA^{\ell +k}= vRSA^k$. Thus for all large $k$, $RSA^k = A^{k+\ell}$.
Similarly, for all large $k$ we have
$SRB^k= B^{k+\ell}$, $(AR)B^k = (RB)B^k$ and $(BS)A^k = (SA)A^k$.
Thus for all large $k$ we get a shift equivalence with lag $\ell+2k$,
\begin{alignat*}{2}
  B(SA^k) &= (SA^k)A \ ,\qquad &
  (RB^k)(SA^k) &= R(SA^k)A^k =A^{\ell +2k} \\
  A(RB^k)  &= (RB^k)B \ , \qquad  &
  (SA^k)(RB^k) &= S(A^kB)B^k = B^{\ell +2k} \ .
 \end{alignat*}
\end{proof}

\begin{rem} (Dimension groups) \label{dimensiongroupsremark}
The dimension groups\si{dimension group} are an important class of
ordered groups arising
from functional analysis \cite{Goodearl1986book},
with important  applications
%include the Eliot classification program
in $C^*$-algebras \cite{Effros1981}
and topological dynamics
\cite{GPS95, GMPS10, DurandPerrin2022}.
We consider only countable groups.
As a group, a dimension group\si{dimension group}
is a direct limit  of the form
\[
\xymatrix{
\Z^{n_1}  \ \ar[r]^{A_1}  &
\Z^{n_2}  \ \ar[r]^{A_2}  &
\Z^{n_3}  \ \ar[r]^{A_3}   & \ \cdots
}
\]
for which nonnegative integral matrices $A_n$ defined the
bonding homomorphisms.
For $v$ in $\Z^{n_k}$ (we use row vectors),
the element $[(v,n_k)]$ of the group is
in the positive set if
$vA^{n_1}A^{n_2}\cdots A^{n_j} \in \Z_+$
for some (hence for every large)  nonnegative
integer $j$. Every torsion free countable abelian
group is isomorphic as an unordered group to a dimension group\si{dimension group}.
Effros,\ai{Effros, Edward G.}
Handelman\ai{Handelman, David} and Shen\ai{Shen, Chao Liang}
have given an elegant and important
abstract characterization of the ordered groups which
are isomorphic to dimension groups\si{dimension group} \cite{EHS1980}.

The dimension groups\si{dimension group} were introduced to the theory of SFTs
  (where they play a fundamental role) by Wolfgang Krieger\ai{Krieger, Wolfgang}
 \cite{KriegerDimMark1980}, in 1980.
\end{rem}

\begin{rem} \label{WagonerSEPoly}
I (and others) learned the  contents of Propositions 2.7.1,
2.8.1, Corollary 2.8.3 and Remark 2.8.4 from Wagoner in person, at
conferences or at MSRI, by some time in the 1980s or early 1990s.
Early references in print are perhaps somewhat scattered and implicit.
Wagoner's module viewpoint is evident (if not very quotable)
in a 1987 paper \cite[pp. 92,120]{MR908217}.
There is an  explicit statement of the correspondence of
$\mathbb Z[t,t^{-1}]$-module class and SE-$\mathbb Z$ class
in my 1993 review  \cite[Sec. 5.4]{Boyle91matrices}. 
The content of Proposition 2.7.1 is contained in
the standard  1995 Lind-Marcus text
\cite{LindMarcus1995}
(see Theorem 7.5.7, Exercise 7.5.7 and the credit to Wagoner
in the Ch. 7 notes);
 however,  there is no
expicit mention in \cite{LindMarcus1995}
of the $\Z[t]$ modules.
Ordered $\Z_+[t,t^{-1}]$-module versions of SE-$\Z_+$ are given
in  \cite[Lec. III; Secs. 2.2, 3.1, 3.2]{boyleTemuco2000}
and
\cite[Sec. 5]{BW04}.

Mischaikow and Weibel make good use of
the $\Z[t]$-module version of  shift equivalence
in their  study
\cite{MischaikowWeibel2023} of the (homological) Conley
index.% \ai{Mischaikow, Konstantin}\ai{Weibel, Charles A.}
Interesting parts of this go beyond
the contents of our Section \ref{sec:sesse}.
\end{rem}

\begin{rem} \label{SEZ+andgroupsremark}
For a square matrix $A$ over $\Z_+$, the group $G_A$ above
becomes an ordered group,
$(G_A, G_A^+)$,
by defining the positive set
$G_A^+= \{ x\in G_A: \exists k\in \N, xA^k \geq 0 \}$.
The ordered group $(G_A, G_A^+)$ is a
dimension group\si{dimension group} (set every bonding map $A_n$ equal to $A$).
Now  $(G_A, G_A^+,\hat A)$
is an ordered $\Z[t]$ module (the action of $t$ takes
$G_A^+$ to $G_A^+$), and
is sometimes called a dimension module.  (Sometimes the
unordered group $G_A$ is referred to as a dimension group\si{dimension group}.
We have tried to avoid this.)

For $A,B$ over $\Z_+$, SE-$\Z_+$ of $A,B$ is equivalent to
existence of an isomorphism $G_A\to G_B$ which intertwines
$\hat A$ and $\hat B$ and sends $G_A^+$ onto $G_B^+$.
For more on this, see Lind and Marcus\ai{Lind, Douglas}\ai{Marcus, Brian}
\cite{LindMarcus2021}.
\end{rem}

\begin{rem} \label{othercases}
Parry\ai{Parry, William} and Tuncel\ai{Tuncel, Selim} made the first
beyond-$\Z$ connection of this sort in
\cite{ParryTuncel1982stoch}, as they studied
conjugacies of SFTs taking one Markov measure
to another. The matrices they considered are
not taken explicitly from a group ring,
but the connection to an integral group
ring of a finitely generated free abelian
group emerges in  \cite{MarcusTuncelwps}.\ai{Marcus, Brian}
\end{rem}

\begin{rem}
\label{zgsse} It was Bill Parry\ai{Parry, William} who introduced the presentation of
$G$-SFTs by matrices over $\Z_+G$, and the conjugacy/SSE-$\Z_+G$  correspondence.
Parry never published a
proof (although one can see the ideas emerging
from the earlier paper with Tuncel,\ai{Tuncel, Selim}
\cite{ParryTuncel1982stoch}).
For an  exposition with proofs, see
\cite{BS05} and
\cite[Appendices A,B]{BoSc2}.
The items (\ref{corresp2}, \ref{corresp4})
the list in Remark \ref{correspremark} are not proved
explicitly in \cite{BS05},
but they should not be difficult to verify following
the exposition of \cite{BS05}.
For further development of relations between the $\Z_+G$ matrices and
their $G$-SFTs, see
\cite[Appendix]{bce:gfe}.
The exposition in \cite{BS05}
includes Parry's\ai{Parry, William}  connection between SSE-$\Z_+G$  and
cohomology of functions
\cite[Theorem  2.7.1]{BS05},
which is the heart of the matter.
When $G$ is not abelian, one needs to be careful about
left vs. right actions;
\cite[Appendix A]{BoSc2}
explains
this,  and corrects a left/right error in the presentation
in \cite{BS05}.
\end{rem}

\begin{rem}\label{SEmoduletheorydetails}
We spell out some details of the outline given in
Section  \ref{SEmoduletheory}.

By the action $v\mapsto rv$ of $\mathcal R$ on $\mathcal R^n$,
we mean $(v_1, \dots , v_n)\mapsto (rv_1, \dots , rv_n)$,
with $rv_i$ given by multiplication in $\mathcal R$.

With notation as in Remark \ref{directlimits}, an element of
the direct limit group $G=G(A)$ has the form $[(v,m)]$,
with $v\in \mathcal R^n$ and $m$ an integer.
In $G$, $[(v,m)]=[(vA,m+1)]$. Given $r\in \mathcal R$,
for the rule $r[(v,m)]= [(rv,m)]$ to give a well defined
map on $G$ we need   $[(rv,m)]=[(rvA,m+1)]$.
This holds because $[(rv,m)]=[(rv)A,m+1)]=[r(vA),m+1)]$.
Then  $\widehat A$ is an $\mathcal R$-module homomorphism,
because
$r\widehat A: [(v,m)]\mapsto [(rvA,m)] =  \widehat Ar$.
Similarly,
$r\widehat A^{-1} : [(v,m)]\mapsto [(rv,m+1)] =\widehat A^{-1}r$.

By definition,
$\cok(I-tA)= (\mathcal R[t])^n/\{ v(I-tA): v\in (\mathcal R[t])^n\}$.
This cokernel  is an $\mathcal R[t]$-module:
an element  $p$ in $\mathcal R[t]$ acts on $\cok(I-tA)$ by the
  rule $[v] \mapsto [pv]$. The rule
is well defined on  $\cok(I-tA)$ because
 $\{ pv(I-tA): v\in \mathcal R^n\}
  \subset \{ v(I-tA): v\in \mathcal R^n\}$.
     Then  $[v]\mapsto [vA]$  defines
    an $\mathcal R[t]$-module endomorphism ($f$, say) of
    $\cok(I-tA)$ which is an inverse to the action of $t$,
    because $f(t[v])= f([tv])= [tvA]
    =[tvA + v(I-tA)] = [v]$.
\end{rem}

The claimed equivalences of the items in
Theorem \ref{RmodulestructureforSE}
are proved just as for $\mathcal R = \Z$,
with the observations that the  isomorphisms constructed
(e.g., for Prop. \ref{cokanddirlimiso})
give module isomorphisms as required.

\section{Polynomial matrices}\label{sec:polynomial}

We will define SFTs, and the algebraic and classification structures
around them, using polynomial matrices. This is essential for
the K-theory connections to come.
\subsection{Background}\label{subsec:backgroundstuff}
Before we move on to the polynomial matrices,
we review background on  flow equivalence and vertex SFTs. Later,
this  will be  context  for
the polynomial approach.
\subsubsection{Flow equivalence\si{flow equivalence} of SFTs}

Two homeomorphisms are  {\it flow equivalent\si{flow equivalence}} if
there is a
homeomorphism between their mapping tori which takes orbits
onto orbits preserving the direction
of the suspension flow
\apr{floweqbackground}).
Roughly speaking: two homeomorphisms are flow equivalent\si{flow equivalence} if their
suspension flows
move in the same way, but at different speeds.
If SFTs  are topologically
conjugate, then they are flow equivalent\si{flow equivalence}, but the converse is
not true.

An $n\times n$ matrix
$C$ over $\Z$ defines a map $\Z^n \to \Z^n$, $v\mapsto vC$,
with $\text{Image} (C) = \{vC: v\in \Z^n\}$, and
cokernel group
$\cok_{\Z}(C)  =
\Z^n / \text{Image} (C)$.

\begin{thm} If SFTs  defined by $\Z_+$ matrices $A,B$ are
flow equivalent\si{flow equivalence}, then
\begin{enumerate}
\item
$\det (I-A) = \det (I-B)$ , and
\item
$\cok_{\Z}(I-A)$ and $\cok_{\Z}(I-B)$
are isomorphic abelian groups.
\end{enumerate}
\end{thm}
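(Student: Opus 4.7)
The plan is to reduce to the fundamental theorem of Parry and Sullivan: flow equivalence of SFTs is generated by two elementary moves on their defining $\Z_+$ matrices, namely (i) elementary strong shift equivalence (giving topological conjugacy, by Williams' Theorem \ref{rfwtheorem}), and (ii) \emph{symbol expansion}, in which one edge $i\to j$ in $\Gamma_A$ is subdivided by inserting a new vertex. Given this reduction, I only need to check that $\det(I-A)$ and $\cok_{\Z}(I-A)$ are preserved by each of the two moves. Both checks are simple integer matrix manipulations.

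For an ESSE with $A=RS$, $B=SR$, where $R$ is $m\times n$ and $S$ is $n\times m$, the key is the identity in $\GL(m+n,\Z)$:
\[
\begin{pmatrix} I_m & -R \\ 0 & I_n \end{pmatrix}\begin{pmatrix} I_m & R \\ S & I_n \end{pmatrix}\begin{pmatrix} I_m & 0 \\ -S & I_n \end{pmatrix} = \begin{pmatrix} I_m - RS & 0 \\ 0 & I_n \end{pmatrix},
\]
together with the analogous factorization (interchanging the roles of the two triangular matrices) that produces $\begin{pmatrix} I_m & 0 \\ 0 & I_n - SR\end{pmatrix}$ on the right. Since the conjugating matrices lie in $\GL(m+n,\Z)$, the two block-diagonal matrices are $\GL\times \GL$-equivalent over $\Z$; their determinants therefore agree, and their cokernels are isomorphic. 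As $\cok(I_k)=0$, the cokernels simplify to $\cok_{\Z}(I-A)$ and $\cok_{\Z}(I-B)$ respectively.

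For a symbol expansion, subdividing one $i\to j$ edge through a new vertex produces the $(n+1)\times(n+1)$ matrix
\[
A' \;=\; \begin{pmatrix} A - e_i e_j^T & e_i \\ e_j^T & 0 \end{pmatrix},
\]
where $e_i,e_j$ are standard basis column vectors. Adding $e_i$ times the last row of $I-A'$ to the top row block (an integer row operation with determinant $1$) yields
\[
\begin{pmatrix} I - A + e_i e_j^T & -e_i \\ -e_j^T & 1 \end{pmatrix} \;\longrightarrow\; \begin{pmatrix} I - A & 0 \\ -e_j^T & 1 \end{pmatrix}.
\]
This lower triangular block matrix has determinant $\det(I-A)\cdot 1$ and cokernel isomorphic to $\cok_{\Z}(I-A)$ (the last row's leading $1$ kills the new generator), completing the check.

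The main obstacle here is not in the routine linear algebra but in invoking the Parry--Sullivan generation theorem, which does the dynamical work by showing any flow equivalence can be decomposed into the two elementary moves above. Without that input one would have to take a heavier route, for example interpreting $\cok_{\Z}(I-A)$ as $H^1$ of the mapping torus $\mapt{\sigma_A}$ (the original Bowen--Franks approach) and extracting $\det(I-A)$ from the Lefschetz data of the suspension flow, and then arguing directly that these are flow-equivalence invariants. With Parry--Sullivan available, the entire argument is the two short computations above.
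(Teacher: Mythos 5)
Your proof is correct, and it supplies an argument where the paper gives none: in the text this theorem is stated with attributions only, part (1) to Parry--Sullivan and part (2) to Bowen--Franks, with no proof. Your route --- reduce to the Parry--Sullivan generation theorem (flow equivalence is generated by SSE-$\Z_+$ together with edge subdivision) and then verify invariance of $\det(I-\cdot)$ and $\cok_{\Z}(I-\cdot)$ under each generating move --- is the standard one, and both matrix verifications check out: the two unipotent factorizations exhibit $\left(\begin{smallmatrix} I-RS & 0\\ 0 & I\end{smallmatrix}\right)$ and $\left(\begin{smallmatrix} I & 0\\ 0 & I-SR\end{smallmatrix}\right)$ as $U M V$ for the same middle matrix $M=\left(\begin{smallmatrix} I & R\\ S & I\end{smallmatrix}\right)$ with $U,V\in \GL(m+n,\Z)$ unipotent, which preserves both determinant and cokernel; and your row reduction of $I-A'$ for the subdivision move is a determinant-one operation leaving a block-triangular matrix whose residual $-e_j^T$ entry is cleared by a column operation using the bottom-right $1$. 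It is worth noting that your argument is essentially the matrix shadow of what the paper does later in Section 3: the SSE move is handled there by the PSSE equations (positive equivalence of $I-tA$ and $I-tB$), and your edge-subdivision computation is exactly the $\sim_+$ move $t^k\mapsto t^{k'}$ specialized at $t=1$; evaluating the polynomial setting at $t=1$ recovers your two checks. The only caveat is the one you flag yourself: everything rests on the Parry--Sullivan generation theorem, whose careful proof in full generality (reducible and degenerate matrices included) is nontrivial and is deferred by the paper to the literature; granting that input, your proof is complete, whereas the alternative Bowen--Franks route through $H^1$ of the mapping torus would avoid it at the cost of more topology.
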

Above, (1) is due to Bill Parry\ai{Parry, William} and Dennis
Sullivan \cite{parrysullivan};\ai{Sullivan, Dennis}
(2)
is due to Rufus Bowen\ai{Bowen, Rufus} and John Franks\ai{Franks, John}
\cite{BowenFranks1977}. The group
$\cok_{\Z}(I-A)$ is called the Bowen-Franks\si{Bowen-Franks group} group of the SFT
defined by  $\Z_+$-matrix $A$.  The group
$\cok_{\Z}(I-A)$  determines $|\det (I-A)|$, except for
the sign of
$\det (I-A)$ in the case $\det (I-A)\neq 0$
\apr{bfgroup}).

When $A$ is irreducible  and
$A$ is not a permutation matrix, the converse
of the theorem holds (John Franks, \cite{franksFlowEq1983}).\ai{Franks, John}
So, in this case the Bowen-Franks\ai{Bowen, Rufus}
group\si{Bowen-Franks group} determines the flow equivalence\si{flow equivalence} class, up to
knowing the sign of $\det (I-A)$.

\subsubsection{Vertex SFTs}

Once upon a time, before edge SFTs, SFTs were presented only by
matrices with entries in $\{0,1\}$.
Such a matrix can be viewed as the adjacency matrix of a graph
without parallel edges (i.e., for each vertex pair $(i,j)$,
there is  at most one edge from $i$ to $j$).
We can then define a ``vertex SFT'' as we defined edge SFT,
but using bisequences of vertices rather than bisequences
of edges to describe infinite walks through the graph.

A vertex SFT is quite natural,
especially if one starts from subshifts.
A subshift $(X, \sigma)$ is a ``topological Markov shift''
if whenever points $x,y$ satisfy $x_0=y_0$,  the bisequence
$z= \dots x_{-3}x_{-2}x_{-1}x_0y_1y_2y_3 \dots $
is also a point in
$X$. (That is, the past of $x$ and the future of $y$ can be glued
together at their common present
to form a point. This is a topological analogue of the
independence property of a Markov measure.)
One can check that   a topological Markov shift
is the same object as  a vertex SFT, with the
alphabet of the subshift being the vertex set
\apr{topmarkovshift}).

\begin{rem}
Defining  SFTs (as edge SFTs) with matrices over $\Z_+$
has some significant  advantages over defining SFTs (as vertex SFTs) with
matrices over $\{0,1\}$, as follows.
\begin{itemize}
\item
{\it Functoriality.}
%The $\Z_+$ matrices have a more functorial reationship with the
%(edge) SFTs they define. have a more functorial relationship
%with the dynamical systmesthan vertex SFTs.
Recall,
$(X_A, (\sigma_A)^n)$ is conjugate to the edge SFT defined by
$A^n$,  whereas $A^n$ cannot define a vertex
SFT if $A^n$ has an entry greater than 1.

\item
{\it Conciseness. }
E.g., an edge SFT defined by the perfectly transparent
$2\times 2 $ matrix
$ A= \begin{pmatrix} 1 & 4 \\ 4& 15 \end{pmatrix}$ has
a (rather large)  alphabet of 24 symbols;
as a vertex SFT, it would be defined by a $24 \times 24$
zero-one matrix. And while $A^n$ is $2\times 2$ for all $n$,
the size of the matrix presenting
the vertex SFT $(X_A, (\sigma_A)^n)$ goes to infinity as $n\to \infty$.

\item
{\it  Proof techniques.}  Defining the SFTs directly with matrices
over $\Z_+$ allows other proof techniques \apr{prooftechniquesforedgesft}).
\end{itemize}
\end{rem}
We'll see that some  advantages of
defining
SFTs with $\Z_+$ rather than $\{0,1\}$ matrices are repeated,
as we compare defining SFTs with polynomial rather than $\Z_+$
matrices.

\subsection{Presenting SFTs with polynomial matrices}

The {\it length} of a path $e_1 \dots e_n$ of
$n$ edges in a graph is $n$.  (We also think of $n$ as the time taken at unit
speed to traverse the path.)
An $n\times n$ matrix $A$ with polynomial entries in $t\Z_+[t]$
presents a graph
$\Gamma_A$ as follows.
\begin{itemize}
\item $\{ 1, \dots , n\}$ is a subset of the vertex set of $\Gamma_A$.
\item For each monomial entry $t^k$ of $A(i,j)$, there is a distinct path of
$k$ edges from
vertex $i$ to vertex $j$.
We call such a path an {\it elementary path} in
$\Gamma_A$.
(E.g. if $A(i,j) = 2t^3$, then from $i$ to $j$ there are
two elementary paths of length 3.)
\item There are no other edges, and distinct elementary
paths do not intersect at intermediate vertices.
\end{itemize}
Above,    the vertex set  $\{ 1, \dots , n\}$  is a  {\it rome}
\apr{rome})
for the graph $\Gamma_A$:
every sufficiently long path hits the rome.
(\lq\lq All roads lead to Rome \dots\rq\rq)

% The other vertices of $\Gamma_A$
% are the intermediate vertices
% on these paths (which have  unique predecessor and follower edges).
\begin{ex} \label{Asharpgraphexample}
%$   A=   \begin{pmatrix}  2t & t^2 + t^3 \\ t& t^2 \end{pmatrix}$.
Below, the rome
vertex set is $\{1,2\}$;  the additional vertices are unnamed black
dots; and there are five elementary paths in $\Gamma_A$.
\[
A=   \begin{pmatrix}  2t & t^2 + t^3 \\  t^2&0  \end{pmatrix}\ ,
\quad \quad
\Gamma_A \ = \quad   \xymatrix{
& \bullet     \ar@/^1pc/[rr]
& \bullet \ar@/^/[rrd]
& \bullet        \ar@/^/[rd]
& \\
*+[F-:<2pt>]{1} \ar@(ul,u) \ar@(ul,l) \ar@/^/[ru]\ar@/^/[rru]
& & \bullet \ar[ll]
& & *+[F-:<2pt>]{2} \ar[ll]
}
\]
\end{ex}
%  \[
%   \xymatrix{
%&\cdot     \ar@/^2pc/[rr] & & \cdot \ar[rdd] & \\
%  & & \cdot \ar[rrd] & & \\
%     *+[F-:<2pt>]{1} \ar@(u,ul) \ar@(d,dl) \ar[rru]\ar[ruu]
%       & & & & *+[F-:<2pt>]{2} \ar[lld]\\
%     a& & \cdot \ar[llu]
%     }
%\]
%

Given $A$ over $t\Z_+[t]$, let
$A^{\sharp}$ be the  adjacency
matrix  for the graph $\Gamma_A$.
In Example \ref{Asharpgraphexample},
$A^{\sharp}$ would be $6\times 6$.
(The vertex set of the graph is the rome,
together with $k-1$ additional vertices for each monomial $t^k$.)
We can
think of $A$ as being a way to present the edge SFT defined
by the matrix         $A^{\sharp}$.

{\it  Conciseness.}
Obviously, we can present many SFTs (and, various interesting
families of SFTs) much more concisely with
polynomial matrices than with  matrices over $\Z_+$.
For example, a theorem of D. Perrin shows that
any number which can be the entropy of
an SFT is the entropy of an SFT defined by a $2\times 2$
matrix over $t\Z_+[t]$. \apr{perrin})

\begin{de} An elementary matrix is a square matrix equal to the identity
except in at most a single offdiagonal entry.
\end{de}

The polynomial presentation offers more than
conciseness. To see this, we need a little preparation.
$I_k$ denotes the $k\times k$ identity matrix.
\begin{de}  Suppose $\mathcal R$ is a ring.
{\it Stabilized elementary equivalence\si{stabilized elementary equivalence}} is the equivalence relation
$\sim$ on
square matrices $C$ over $\mathcal R$ generated by the following
two relations.
\begin{enumerate}
\item
$C \sim C\oplus I_k$ , for $k\in \N.\ \ $
(E.g., $(2)\sim \begin{pmatrix} 2&0\\0&1
\end{pmatrix}$ . )

\item
$C \sim D$ if there is an elementary matrix $E$ such that
$ D= CE$ or  $D=EC$.
\end{enumerate}
\end{de}
Above, condition (1) is the ``stabilized'' part.
A stabilized elementary equivalence\si{stabilized elementary equivalence} from $C$ to $D$
is a finite sequence of
the elementary matrix moves, taking $C$ to $D$.

Given $C\sim D$,  for either type of relation,  we have
\begin{enumerate}
\item     $\det C = \det D$, if $\mathcal R$ is commutative, and
\item  the  $\mathcal R$-modules
$\cok (C) $,  $\cok (D)$ are isomorphic
\apr{fromequivtocok}).
\end{enumerate}
%%   For example, in case (2) with $\mathcal R$ commutative,
%%   \[
%%   \det (EC) \ =\  (\det E)(\det(C)) \  =\  (1)\det(C)\ .
%%   \]
When  working in a stable setting, we often say just ``elementary
equivalence'' instead of ``stabilized elementary equivalence\si{stabilized elementary equivalence}''.

\subsection{Algebraic invariants in the polynomial setting}
\begin{ex}If all nonzero entries of $A$ have degree one,
then the relation of $A$ and $A^{\sharp} $ is obvious: for
example,
\[
A=\begin{pmatrix} t& 2t \\ t & 0 \end{pmatrix} \
= \ t         \begin{pmatrix} 1& 2 \\ 1 & 0 \end{pmatrix} \
= \ t A^{\sharp}
, \qquad \quad \quad
\Gamma_A \quad =
\quad
\quad
\xymatrix{\cdot   \ar@(lu,ld)
\ar@/^/[r]
\ar@/^2pc/[r]
&\cdot      \ar@/^/[l]
}
\]
Here, $I-A$ equals $I-tA^{\sharp}$. It follows, of course, that
\begin{align*}
\det (I-A) &= \det (I-tA^{\sharp}) \ , \quad \text{and} \\
\cok_{\Z[t]} (I-A) &\cong \cok_{\Z[t]}  (I-tA^{\sharp})  \ .
\end{align*}
\end{ex}
These two statements hold for general $A$ over $t\Z_+[t]$,
for the following reason.
\begin{prop}  There is a stabilized elementary equivalence\si{stabilized elementary equivalence} over
the ring $\Z[t]$
from $I-A$ to $I-tA^{\sharp}$.
\end{prop}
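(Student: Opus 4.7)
The plan is to eliminate the auxiliary vertices of $\Gamma_A$ (those not in the rome $\{1,\dots,n\}$) one at a time using elementary row and column operations, with destabilization at the end of each elimination. The key structural fact to exploit is that every auxiliary vertex $v$ lies on exactly one elementary path and therefore has in-degree $1$ and out-degree $1$ in $\Gamma_A$. Consequently, in any intermediate matrix that we will encounter, row $v$ will have $1$ on the diagonal and a single monomial $-t^b$ in exactly one other column (call that column $w$), and column $v$ will have $1$ on the diagonal and a single monomial $-t^a$ in exactly one other row (call it row $u$).

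I would proceed by induction on the number of auxiliary vertices, i.e.\ on $\sum_{i,j}\sum_{k\geq 1} c_{ij,k}(k-1)$ where $A(i,j)=\sum_k c_{ij,k}t^k$. The base case (no auxiliary vertices) is immediate since then every monomial in $A$ has degree $1$ and $A = tA^{\sharp}$ on the rome. For the inductive step, pick an auxiliary vertex $v$ that is the \emph{last} auxiliary vertex on its elementary path (i.e.\ its out-neighbor $w$ is a rome vertex or an auxiliary vertex already eliminated). In the current matrix $C$, let the entries be $(u,v)=-t^a$, $(v,w)=-t^b$, $(v,v)=1$, with row $v$ and column $v$ zero elsewhere. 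Perform two elementary moves: (i) add $t^a$ times row $v$ to row $u$, which clears $(u,v)$ to $0$ and modifies $(u,w)$ by $-t^{a+b}$; (ii) add $t^b$ times column $v$ to column $w$, which clears $(v,w)$ to $0$ without affecting $(u,w)$ since $(u,v)$ is already zero. After these two moves, both row $v$ and column $v$ of $C$ contain only the diagonal entry $1$, so destabilization removes $v$ and produces a strictly smaller matrix of the same form, corresponding to the graph in which the two edges $u\to v$ and $v\to w$ have been contracted into a single edge $u\to w$ of combined length $a+b$.

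Iterating this elimination along each elementary path $i\to v_1\to\cdots\to v_{k-1}\to j$, working from $v_{k-1}$ back to $v_1$, contracts the entire path into a single contribution $-t^k$ to the $(i,j)$ entry of the final rome-sized matrix. Summing the contributions of all elementary paths from $i$ to $j$ (and noting that different elementary paths touch disjoint sets of auxiliary vertices, so the eliminations for different paths do not interfere) produces exactly $-\sum_k c_{ij,k}t^k = -A(i,j)$ in each off-diagonal rome position, while the rome diagonal keeps its original $1$. The resulting matrix is $I-A$, as required.

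The main obstacle is really just the bookkeeping in the inductive step: one must verify that the intermediate matrices continue to have the \emph{one-monomial-in, one-monomial-out} structure at every not-yet-eliminated auxiliary vertex, so that the elimination move remains well defined. This is straightforward because the auxiliary vertices on distinct elementary paths remain uncoupled throughout the procedure, and within each path the elimination order (last-to-first) guarantees that the vertex being removed always has exactly one monomial in its row off-diagonal and exactly one in its column off-diagonal. Once this is checked, the induction closes and the conclusion $\det(I-A)=\det(I-tA^{\sharp})$ and $\cok_{\Z[t]}(I-A)\cong\cok_{\Z[t]}(I-tA^{\sharp})$ follow as immediate invariants of stabilized elementary equivalence.
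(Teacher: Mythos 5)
Your proof is correct and is essentially the paper's argument: the paper subdivides one labeled edge at a time (stabilizing $(I-A)\oplus 1$ and multiplying by two elementary matrices per new vertex), which is exactly your elimination step run in reverse, and the row/column operations you describe match the paper's worked example verbatim. The only difference is presentational — the paper illustrates a single step on an example and asserts the general case, while you organize the same computation as a formal induction on the number of auxiliary vertices.
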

Next we'll see the essential ideas of the proof of the proposition.
%   {\bf IIIb.3 Passing from $I-A$ to
%     $I-tA^{\sharp}$ by stabilized elementary equivalence}
Given $n\times n$ $A$ over $t\Z_+[t]$, let $\mathcal H_A$ be the
$n\times n$  labeled  graph in which a monomial $t^k$ of $A(i,j)$
gives rise to an edge from $i$ to $k$ labeled $t^k$.
\begin{ex}
\[
A\ =\    \begin{pmatrix}  2t & t + t^4 \\ t^2&0  \end{pmatrix}\ ,
\qquad \qquad
\mathcal H_A\ =\
\xymatrix{
*+[F-:<2pt>]{1}
\ar@(u,ul)_t \ar@(d,dl)^t
\ar@/^/[rr]^{t}
\ar@/^2pc/[rr]^{t^4} & &
*+[F-:<2pt>]{2}   \ar@/^/[ll]^{t^2 }
}
\]
Note:  the graph $\Gamma_A$
with adjacency matrix $A^{\sharp}$ is  obtained from $\mathcal H_A$
by replacing each path labeled $t^k$ with a path of length $k$.
The graph $\mathcal H_{tA^{\sharp}}$ is
the graph $\Gamma_A$ with each edge
labeled by $t$.

We  can decompose
the graph move $\mathcal H_A \to \mathcal H_{tA^{\sharp}}$ into  steps,
$\mathcal H_0 \to \mathcal H_1 \to \cdots \to \mathcal H_4$,
with one vertex  added at each step.
The labeled graph
$\mathcal  H_{i+1}$
is obtained from $\mathcal H_{i}$ by replacing
some  edge labeled $t^k$ with a path of two edges:
an edge labeled $t$ followed by an edge labeled $t^{k-1}$.
There will be  matrices
$A_i$ over $t\Z_+[t]$ such that
$\mathcal H_i = \mathcal H_{A_i}$, with $A_0=A$ and $A_4=tA^{\sharp}$.
Here is the data for the  step $\mathcal H_0 \to\mathcal H_1$:
%  $A$ from the previous example,
%  the matrix $A_1$ and its graph $ \mathcal H_{A_i}=\mathcal H_i$
% are given
\[
A=A_0\ =\    \begin{pmatrix}  2t & t + t^4 \\ t^2&0  \end{pmatrix}\ ,
\qquad \qquad
\mathcal H_A\ =\ \mathcal H_0 \ = \
\xymatrix{
*+[F-:<2pt>]{1}
\ar@(u,ul)_t \ar@(d,dl)^t
\ar@/^/[rr]^{t}
\ar@/^2pc/[rr]^{t^4} & &
*+[F-:<2pt>]{2}   \ar@/^/[ll]^{t^2 }
}
\]
\[
B=   A_1\ =\   \begin{pmatrix}  2t & t & t \\ t^2&0&0 \\ 0 &t^3 &0
\end{pmatrix}\ ,
\qquad \qquad
\mathcal H_{A_1}\ =\
\mathcal H_{1}\ =\
\xymatrix{
&   *+[F-:<2pt>]{3} \ar@/^/[dr]^{t^3} & \\
*+[F-:<2pt>]{1}
\ar@(u,ul)_t \ar@(d,dl)^t
\ar@/^/[rr]^{t}
\ar@/^/[ru]^{t} & &
*+[F-:<2pt>]{2}   \ar@/^/[ll]^{t^2 }
}
\]

Let us see how the move  $A \to A_1$ in the example above
is accomplished at the matrix level, by a stabilized elementary
equivalence\si{stabilized elementary equivalence} over the ring $\Z[t]$.

First, define the matrix
$A\oplus 0  = \begin{pmatrix}  2t & t + t^4 & 0\\
t^2&0& 0\\ 0&0&0  \end{pmatrix}$.
The move $A \to A\oplus 0$
is the same as the elementary stabilization move
$(I-A) \to (I-A)\oplus 1$.
Then  multiply $(I-A)\oplus 1$ by elementary matrices
to get $(I-A_1)$. This is a small computation:
\begin{align*}
(I-B)&=
\begin{pmatrix}  1 & 0 & 0 \\ 0&1&0 \\ 0 & 0 & 1
\end{pmatrix}
-    \begin{pmatrix}  2t & t & t \\ t^2&0&0 \\ 0 &t^3 & 0
\end{pmatrix} \\
(I-B)E_1 &=
\begin{pmatrix}  1-2t & -t & -t \\ -t^2&1&0 \\ 0 &-t^3 & 1
\end{pmatrix}
\begin{pmatrix} 1&0&0 \\ 0&1&0 \\ 0& t^3 & 1 \end{pmatrix} \\
& =  \begin{pmatrix}  1-2t & -t -t^4 & -t \\ -t^2&1&0 \\ 0 &0 & 1
\end{pmatrix}
= I -  \begin{pmatrix}  2t & t +t^4 & t \\ t^2&0&0 \\ 0 &0 & 0
\end{pmatrix}
:= I-C
\\
E_2 (I-C) &=
\begin{pmatrix} 1&0&t \\ 0&1&0 \\ 0& 0 & 1 \end{pmatrix}
\begin{pmatrix}  1-2t & -t -t^4 & -t \\ -t^2&1&0 \\ 0 &0 & 1
\end{pmatrix} \\
& =
\begin{pmatrix}  1-2t & -t -t^4 & 0 \\ -t^2&1&0 \\ 0 &0 & 1
\end{pmatrix}
\\
&=
\begin{pmatrix}  1 & 0 & 0 \\ 0&1&0 \\ 0 & 0 & 1  \end{pmatrix}
-
\begin{pmatrix}  2t & t +t^4 & 0 \\ t^2&0&0 \\ 0 &0 & 0
\end{pmatrix}
%= I-(A \oplus 0)
= (I-A) \oplus 1 \ .
\end{align*}
\end{ex}
The example computation above contains the ideas of the
general proof that  there is a stabilized elementary equivalence\si{stabilized elementary equivalence}
from $(I-A)$ to $(I-tA^{\sharp})$.

\begin{coro}
Let $A$ be a square matrix over $t\Z_+[t]$, with
$A^{\sharp}$ the adjacency matrix of $\Gamma_A$. Then
\begin{enumerate}
\item
$\det (I-A) = \det (I-tA^{\sharp})$.
\item  The  $\Z[t]$-modules
$\cok (I-A) \ ,\  \cok (I-tA^{\sharp} )$ are isomorphic.
\end{enumerate}
\end{coro}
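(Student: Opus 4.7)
The plan is to deduce both statements of the corollary as immediate consequences of the preceding proposition, which gives a stabilized elementary equivalence over $\Z[t]$ from $I-A$ to $I-tA^{\sharp}$. The entire work has already been done in establishing that proposition; the corollary is essentially a record of what stabilized elementary equivalence preserves.

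For step (1), I would observe that $\Z[t]$ is a commutative ring, so by the first invariance property already stated in the paper (determinant is preserved under each of the two generating moves of stabilized elementary equivalence), we have $\det(I-A) = \det(I-tA^{\sharp})$. Specifically: the stabilization move $C \mapsto C \oplus I_k$ preserves determinant because $\det(C \oplus I_k) = \det(C) \cdot \det(I_k) = \det(C)$; and multiplication on either side by an elementary matrix preserves determinant because every elementary matrix has determinant $1$.

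For step (2), I would invoke the second invariance property: both generating moves of stabilized elementary equivalence preserve the isomorphism class of the cokernel as an $\mathcal{R}$-module. For the stabilization move, $\cok(C \oplus I_k) \cong \cok(C) \oplus \cok(I_k) = \cok(C) \oplus 0 \cong \cok(C)$; for the elementary multiplication move, pre- or post-multiplication by an invertible matrix either preserves the image (if multiplied on the left, since it acts by a change of basis on the target) or induces a bijection on the domain, in either case yielding an isomorphism of cokernels. Chaining these isomorphisms along the finite sequence of moves guaranteed by the proposition gives $\cok(I-A) \cong \cok(I-tA^{\sharp})$ as $\Z[t]$-modules.

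There is no real obstacle: the corollary is simply a corollary in the literal sense, packaging the output of the proposition via two bookkeeping invariance lemmas that the paper has already flagged. If anything warrants care, it is verifying that the $\Z[t]$-module structure (rather than just the underlying abelian group) is respected by the cokernel isomorphism; but since every move is carried out by a matrix over $\Z[t]$ acting $\Z[t]$-linearly on the free module $\Z[t]^n$, this is automatic.
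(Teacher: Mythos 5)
Your proposal is correct and takes essentially the same approach as the paper: the corollary follows immediately from the preceding proposition together with the two invariance properties (of determinant and of cokernel) that the paper records for the generating moves of stabilized elementary equivalence. The paper's proof is a one-line statement of exactly this; you have simply unpacked the routine verifications.
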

\begin{proof}
The claim follows because the matrices $(I-A)$, $(I-tA^{\sharp})$ are
related by a string of the two relations $\sim$ generating
stabilized elementary equivalence\si{stabilized elementary equivalence}.
\end{proof}

Thus algebraic data of the polynomial matrix $(I-A)$
captures
\begin{enumerate}
\item the nonzero\si{nonzero spectrum} spectrum
(by $\det (I-A)$),  and
\item  the SE-$\Z$ class of $A^{\sharp}$
(by the isomorphism class of the $\Z[t]$-module
$\cok (I-A)$).
\end{enumerate}
%%%      \iffalse
%%%         The SSE-$\Z$ class of $A^{\sharp}$ can also be described
%%%         in terms of $(I-A)$.
%%%         Let $\sim$ be the relation on these matrices $I-A$ (with
%%%         $A$ over $t\Z[t]$ generated by the following moves:
%%%         \begin{enumerate}
%%%         \item For $k\in \N$. $(I-A) \sim (I-A)\oplus 1$ .
%%%           \item $(I-A) \sim E(I-A) \sim (I-A)E$ if
%%%         $E$ is a basic elementary matrix over $\Z[t]$ (i.e.,
%%%         $E=I$, except that $E$ might have a single offdiagonal
%%%         entry which is nonzero).
%%%         \end{enumerate}
%%%  Here are examples of these two moves:
%%%         \begin{enumerate}
%%%  \item Example, with $I_k=I_1=(1)$: \\
%%%               $I-A = \begin{pmatrix} 1-3t & -4t^3 \\ -t& 1 -t^2
%%%         \end{pmatrix}   \sim
%%%  \begin{pmatrix} 1-3t & -4t^3 & 0\\ -t& 1 -t^2 & 0 \\ 0&0&1
%%%  \end{pmatrix} =
%%%  I- \begin{pmatrix} A & 0 \\ 0 & 0
%%%  \end{pmatrix}  \ .
%%%  $
%%%  \item Example:
%%%  \[
%%%   \begin{pmatrix} 1-3t & -4t^3 \\ -t& 1 -t^51
%%%         \end{pmatrix}
%%%   \sim
%%%   \begin{pmatrix} 1 &  t^2 \\0& 1
%%%         \end{pmatrix}
%%%   \begin{pmatrix} 1-3t & -4t^3 \\ -t& 1 -t^51
%%%         \end{pmatrix}
%%%  =
%%%   \begin{pmatrix} 1-3t -t^3  & -4t^3 +t^2 -t^7 \\ -t& 1 -t^5
%%%         \end{pmatrix} \ .
%%%   \]
%%%   \end{enumerate}
%%%             {\bf Proposition} For $A,B$ over $t\Z_+[t]$:
%%%             $A^{\sharp}, B^{\sharp}$ are SSE-$\Z$ $\leftrightarrow$
%%%             $I-A \sim I-B$.
%%%
%%%         (The signficance of the Proposition   will
%%%             unfold gradually.) \\ \\
%%%
%%% \fi

\subsection{Polynomial matrices: from  elementary equivalence
to conjugate SFTs}
For square matrices $A,B$ over $\Z_+$,
the SFTs $(X_A, \sigma), (X_B, \sigma)$ are
topologically conjugate  if and only if
$A,B$ are SSE-$\Z_+$.
We will find a relation on polynomial matrices corresponding
to topological conjugacy of the SFTs they define.
\begin{nota} With $i\neq j$,  let $E_{ij}(x)$ denote
the  elementary  matrix
with $(i,j)$ entry defined to
be $x$, and other entries matching the identity.
The size of the square  matrix $E_{ij}(x)$ is suppressed from
the notation (but evident in context).
E.g.,
$E_{12}(t^2)$ could denote $ \begin{pmatrix} 1&t^2\\0&1 \end{pmatrix}$ or
$ \begin{pmatrix} 1&t^2&0\\0&1&0 \\ 0&0&1  \end{pmatrix}$.
\end{nota}
There is now a very pleasant surprise.
\begin{thm} \label{thm:ElEqtoConj}
Suppose  $A,B$ are square matrices over $t\Z_+[t]$,
with $E(I-A) = (I-B)$ or $(I-A)E=(I-B)$, where
$E=E_{ij}(t^k)$.

Then $A,B$
define topologically
conjugate SFTs (i.e., $B^{\sharp}$ and $A^{\sharp}$ define
topologically  conjugate edge SFTs).  \end{thm}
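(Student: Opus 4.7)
The plan is to reduce to Williams' Theorem \ref{rfwtheorem}, which identifies topological conjugacy of edge SFTs with SSE-$\Z_+$ of their defining matrices. It then suffices to build, from the elementary equivalence $E_{ij}(t^k)(I-A) = I-B$, a direct topological conjugacy $\phi \colon X_{A^{\sharp}} \to X_{B^{\sharp}}$. I will realize $\phi$ as a sliding block code and invoke the Curtis-Hedlund-Lyndon Theorem \ref{thm:chlthm}. The other case $(I-A)E_{ij}(t^k) = I-B$ is dual, obtained by swapping past and future in the spirit of Remark \ref{TransposeAndInverse}.

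First I unpack the algebra. Writing $E_{ij}(t^k) - I = t^k e_i e_j^T$, a direct computation gives $B(i,s) = A(i,s) + t^k A(j,s) - t^k \delta_{js}$ for every $s$, while $B(r,\cdot) = A(r,\cdot)$ for $r \neq i$. The requirement $B \in t\Z_+[t]$ forces the monomial $t^k$ to appear in $A(i,j)$; fix such a monomial. It picks out a specific elementary path $p$ of length $k$ from $i$ to $j$ in $\Gamma_A$. The formulas then describe $\Gamma_B$ concretely: delete $p$ (together with its $k-1$ internal vertices), and for each elementary out-path $q$ of length $\ell$ from $j$ to a vertex $j'$, adjoin a fresh elementary path $p \ast q$ of length $k+\ell$ from $i$ to $j'$, with new internal vertices. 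No other changes occur.

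Now define $\phi$. On edges belonging neither to $p$ nor to any spliced $p \ast q$, use the canonical bijection of edges shared by $\Gamma_A$ and $\Gamma_B$. Whenever coordinates $x_n, \dots, x_{n+k-1}$ of a bi-sequence in $X_{A^\sharp}$ traverse $p$ and $x_{n+k}, \dots, x_{n+k+\ell - 1}$ traverse some out-path $q$ of length $\ell$ from $j$, replace this block of $k+\ell$ consecutive edges by the $k+\ell$ edges of $p \ast q$ in $\Gamma_B$. Because total lengths are preserved, $\phi$ commutes with $\sigma$. The inverse is defined symmetrically: in $\Gamma_B$ each spliced path factors uniquely as a pair $(p,q)$, so any traversal in $X_{B^\sharp}$ of a spliced path sends back to the unique pair.

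The key verification is that $\phi$ and $\phi^{-1}$ are sliding block codes. Here the convention from Section \ref{sec:polynomial} is crucial: distinct elementary paths share only rome vertices. Consequently each coordinate $x_n$ lies on a unique elementary path of $\Gamma_A$, whose identity is read off by looking backward to the most recent rome-vertex visit and forward to the next, each within a distance bounded by the longest elementary path of $\Gamma_A$. Thus $(\phi x)_n$ depends only on a bounded window of $x$, and similarly for $\phi^{-1}$. By Theorem \ref{thm:chlthm}, $\phi$ is a topological conjugacy. The main obstacle I anticipate is the bookkeeping in the degenerate case $i=j$, where $p$ is a loop and is itself one of the out-paths from $j$, so that chains $p \ast p \ast \cdots$ can form; this is resolved by parsing maximal $p$-blocks greedily from their leftmost starts, using the unique-elementary-path property to rule out ambiguous overlaps, and applying the splice rule once per maximal chain.
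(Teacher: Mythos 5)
Your proposal is correct and takes essentially the same route as the paper's proof: you read off from the row operation that $\Gamma_B$ is obtained from $\Gamma_A$ by deleting the elementary path corresponding to the monomial $t^k$ of $A(i,j)$ and splicing in lengthened out-paths of $j$, and you define the conjugacy by the block code $pq\mapsto p\ast q$ (the paper's $\tau\nu\mapsto\widetilde{\nu}$), with the right-multiplication case handled by the transpose/time-reversal symmetry. The one superfluous step is your anticipated ``main obstacle'': the case $i=j$ cannot occur, since $E_{ij}(t^k)$ is elementary only for $i\neq j$, and that exclusion is exactly what the paper invokes (Remark \ref{tautau}) to rule out overlapping occurrences of the deleted path and make the code well defined, so no greedy parsing is needed.
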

\begin{rem}  Suppose
$E=E_{ij}(t^k)$, $A$ is square with entries in $t\Z_+[t]$, and
$(I-B)=E(I-A)$ or $(I-B)=(I-A)E$.
Then one easily checks (it will be obvious from the next example)
that the following are equivalent:
\begin{enumerate}
\item
The entries of $B$ are in $t\Z_+[t]$.
\item
$A(i,j) -t^k \in t\Z_+[t]$.
\end{enumerate}
\end{rem}
\begin{proof}[Proof ideas for Theorem \ref{thm:ElEqtoConj}]
The ideas of the proof of Theorem \ref{thm:ElEqtoConj}
should be clear from  the next example.
\begin{ex} \label{proofideasexample} Suppose $A$ is  matrix over
$t\Z_+[t]$,
$A=\begin{pmatrix} a&b+t^3 &c \\ d&e&f \\ g&h&i  \end{pmatrix} $,
with $b\in t\Z_+[t]$ (i.e., not only $A(1,2)$, but
also $A(1,2) -t^3$, is in  $t\Z_+[t]$).
Now multiply $I-A$ from the left by the elementary matrix
$E=E_{12}(t^3)$,
\begin{align*}
E(I-A) & =
\begin{pmatrix} 1&t^3&0 \\ 0&1&0\\ 0&0&1 \end{pmatrix}
\begin{pmatrix} 1-a&-b-t^3 &-c \\ -d&1-e&-f \\
-g&-h&1-i  \end{pmatrix} \\
&=       \begin{pmatrix} 1-a \mathbf{+ t^3(-d)}
&-b -t^3 \mathbf{+t^3(1-e)}
&-c \mathbf{+t^3(-f)} \\ -d&1-e&-f \\
-g&-h&1-i  \end{pmatrix}
\\
&=
\begin{pmatrix} 1&0&0 \\ 0&1&0\\ 0&0&1 \end{pmatrix} -
\begin{pmatrix} a +t^3d & b +t^3e &c +t^3f \\ d&e&f \\
g&h&i  \end{pmatrix}         \ .
\end{align*}

We then define a matrix $B$ over $t\Z_+[t]$
by setting $I-B=E(I-A)$, so,
\[
A=\begin{pmatrix} a&b+t^3 &c \\ d&e&f \\ g&h&i  \end{pmatrix} , \quad
B=\begin{pmatrix} a +t^3d &b+t^3e &c +t^3f\\ d&e&f \\ g&h&i  \end{pmatrix}\ .
\]
\end{ex}
{\it Producing $\Gamma_A$ from $\Gamma_B$.}
Suppose
$\tau=\tau_1\tau_2\tau_3$ is the elementary
path in $\Gamma_A$ from vertex 1 to
vertex 2
corresponding to the term $t^3$ above. Let $|p|$ denote the
length (number of edges) in a graph path $p$.
%For each  elementary path  $\nu$ in $\Gamma_A$
%which begins at vertex 2 and ending at a vertex $j$,
%there is an elementary path $\widetilde{\nu}$ in $\Gamma_B$
%which begins at vertex 1, ends at vertex $j$  and satisfies
%$|\widetilde{\nu}| = 3 + |\nu|$. These elementary paths
%$ |\widetilde{\nu}|$ correspond to the
%to the entries $t^3d, t^3e, t^3f$ in row 1 of $B$.
We obtain  $\Gamma_B$ from $\Gamma_A$ as follows.
\begin{enumerate}
\item
Remove the elementary path $\tau$ from $\Gamma_A$;
\item  For each elementary path $\nu$ of $\Gamma_A$ beginning
at vertex 2, put in an elementary path
$\widetilde{\nu}$ beginning at vertex 1, such that
\begin{enumerate}
\item
$|\widetilde{\nu}| = |\tau| + |\nu| = 3+ |\nu| $,
\item the terminal vertices of $\nu$ and $\widetilde{\nu}$ agree.
\end{enumerate}
\end{enumerate}
% Above, an elementary path $\widetilde{nu}$ from 1 to $j$ in $\Gamma_B$
% has  length matching  the path $\tau \nu$ from $1$ to $j$ in $\Gamma_A$.
For example,
\[
\xymatrix{
&   *+[F-:<2pt>]{2} \ar@/^/[dr]^{\nu} & \\
*+[F-:<2pt>]{1}
\ar@/^/[ru]^{\tau}
& &
*+[F-:<2pt>]{j}
}
%\quad \text{produces}
\qquad \qquad
\begin{matrix} \\ \\ \\ \text{produces} \end{matrix}
%\begin{matrix} \\ \\ \\ \rightsquigarrow \end{matrix}
\qquad \qquad
\xymatrix{
&   *+[F-:<2pt>]{2} \ar@/^/[dr]^{\nu} & \\
*+[F-:<2pt>]{1}
\ar@/^/[rr]^{\widetilde{\nu}}
& &
*+[F-:<2pt>]{j}
}
\]
with  $|\widetilde{\nu}| = |\tau| + |\nu|$ .

{\it Defining the conjugacy  $\phi: X_{A^{\sharp}} \to X_{B^{\sharp}}$.}
Wherever the elementary path  $\tau$
occurs in a point $x$ of $X_A$,
it must be followed
by an elementary path $\nu$.
Now define  $ \phi (x)$ be replacing each
path $\tau \nu$
with the elementary path $\widetilde{\nu}$:
\begin{itemize}
\item
If $x_{k+1}\dots x_{k+|\tau\nu|} = \tau \nu$ ,
$\ \ \ \text{with } \nu$ an elementary path in $\Gamma_A$, \\
then $(\phi x)_{k+1}\dots (\phi x)_{k+|\tau\nu|} = \widetilde{\nu}$.
\item Otherwise, $(\phi x)_n=x_n$ .
\end{itemize}
If we look at a succession of elementary paths $\tau$ and
$\nu_i$, the code looks like:
\[
\xymatrix{
\dots \ \nu_{-1} \ \
\tau \nu_1 \ \nu_2 \ \tau \nu_3 \ \nu_4\ \tau \nu_5 \
\ \nu_6 \ \nu_7   \ \dots
\ar[d]_{\phi } \\
\dots \ \nu_{-1} \ \ \
\widetilde{\nu_1}  \ \ \nu_2
\ \ \widetilde{\nu_3} \ \ \nu_4\ \
\widetilde{\nu_5}\ \ \ \nu_6 \ \nu_7  \ \dots
}
\]
This map $\phi$ is well defined because
an elementary path $\nu$ following $\tau$
has no edge in common  with $\tau$ (because
the initial and terminal vertices of $\tau$ are different)
\apr{tautau}).
Given that $\phi$ is well defined, it is straightforward
to check that $\phi$ defines a topological conjugacy
$(X_{A^{\sharp}}, \sigma)\to  (X_{B^{\sharp}}, \sigma) $.

Above, we considered $E(I-A) = (I-B)$.
Suppose instead we define a matrix $C$  by $(I-A)E = (I-C)$.
No surprise:   the matrix $C$ also defines an SFT
conjugate to that defined by $A$. In this case, instead of a
conjugacy $X_A\to X_B$ based on $\tau \nu\mapsto \widetilde{\nu}$ as above,
we have a conjugacy $X_A\to X_C$ based on
$\nu \tau \mapsto \widetilde{\nu}$, where $\nu$
is an elementary path in $\Gamma_A$ with {\it terminal} vertex 1.
\end{proof}

If $C$ is a square matrix, then $C\oplus 1$ is the square matrix with
block form $\begin{pmatrix} C&0 \\ 0&1 \end{pmatrix}$.
\begin{de} (Positive equivalence\si{positive equivalence}) \apr{poseqterminology})
Suppose $\mathcal P$ is a subset of a ring $\mathcal R$.
Let  $\mathcal S$ be a set of square matrices over a ring
$\mathcal R$ which is ``1-stabilized'' :
\[
C\in \mathcal S \implies (C \oplus 1) \in \mathcal S\ .
\]
Positive equivalence\si{positive equivalence} of matrices in $\mathcal S$
(with respect to $\mathcal P$)
is the equivalence relation on $\mathcal S$
generated by the following
relations
(where $C,D$ must {\it both} be in $\mathcal S$):
\begin{enumerate}
\item $C \sim C\oplus 1$ .
\item $EC=D$ or $ CE=D$ , where
$E=E_{ij}(r)$, with $i\neq j$ and $r\in \mathcal P$.
\end{enumerate}
If $\mathcal P$ is not specified, then by default we assume
$\mathcal P= \mathcal R$.
\end{de}
For $I-A$ in $\mathcal S$,
the requirement that $\mathcal S$ is closed under the move
$(I-A) \to (I-A)\oplus 1 $
is equivalent to the requirement that
the set $\{A \colon I-A \in \mathcal{S}\}$
is closed under   the
move $A\to A\oplus 0$.

Now suppose
$\mathcal R = \Z [t]$. We let
$\mathcal M(X)$ denote the set of matrices with entries in
a set $X$, and set $I- \mathcal M(X)= \{ I-A: A \in \mathcal M(X)\}$.
Suppose
$\{ A,B \} \subset \mathcal M(\Z_+[t])$;
$E=E_{ij}(f) $, with $f\in \Z [t]$; and
$(I-B)=E(I-A)$. Writing $f=c-d$ with $\{c,d\} \subset \Z_+[t]$,
  we see $A_{ij}-c \in \Z_+[t]$, and then $E_{ij}(c)(I-A)=(I-C)$ with
  $C\in \mathcal M(\Z_+[t])$. This equivalence is a composition of
  equivalences of the form $E_{ij}(t^k)(I-A_r)=(I-A_{r+1})$,  with
  $\{ A_r , A_{r+1}\}\subset \mathcal M(\Z_+[t])$ and $k\geq 0$.
  Considering likewise  $(I-C) = E_{ij}(d)$, we see that the
  positive equivalence\si{positive equivalence} of
  $I-A$, $I-B$ with respect to $\mathcal P = \mathcal R$
  gives rise to a
    positive equivalence\si{positive equivalence}
    with respect to $\mathcal P = \{ t^k: k\geq 0\}$. The same holds if
    $(I-A)E=(I-B)$. To summarize, matrices in
    $I-\mathcal M(\Z_+[t])$ are positive equivalent with respect
    to $\mathcal P= \mathcal R$ if and only if they are
    positive equivalent with respect to $\mathcal P=
    \{ t^k: k \geq 0 \} $. Given $\{A,B\} \subset
    \mathcal M(t\Z_+[t])$, the possibility $k=0$ can be excluded,
    and the positive equivalence\si{positive equivalence} for $\mathcal P=\mathcal R$
    gives rise to a positive equivalence\si{positive equivalence} over
    $\mathcal P = \{ t^k: k > 0 \} $.

    It then follows from Theorem \ref{thm:ElEqtoConj}
    that positive equivalent matrices in
    $I-\mathcal M(t\Z_+[t])$  define topologically
conjugate SFTs.
Also, if $I-A \in I-\mathcal (t\Z_+[t])$,
then so is $(I-A)\oplus 1$, and
$A$ and $A \oplus 1$ define conjugate SFTs.
Consequently we have the following.

\begin{thm} \label{tMatricesAndTopConjug}
Suppose matrices $(I-A)$ and $(I-B)$ are positive equivalent in
$I -\mathcal M(t\Z_+[t])$.
Then $A,B$ define topologically conjugate SFTs.
\end{thm}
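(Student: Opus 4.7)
The plan is to check that each of the two generating moves for positive equivalence in $I-\mathcal{M}(t\Z_+[t])$ preserves the topological conjugacy class of the edge SFT presented by $A^{\sharp}$; transitivity of topological conjugacy then chains these single-move conjugacies into the required conjugacy between the SFTs for $A^{\sharp}$ and $B^{\sharp}$. The two generators are the stabilization $(I-A) \leftrightarrow (I-A) \oplus 1$ and, after the reduction described in the paragraph immediately preceding the theorem, a single-sided elementary multiplication by some $E_{ij}(t^k)$ with $k \geq 1$, subject to the constraint that both endpoints lie in $I - \mathcal{M}(t\Z_+[t])$.

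For the stabilization move I would appeal to the observation already recorded just before the theorem: $(I-A) \oplus 1 = I - (A \oplus (0))$ corresponds to adjoining to $\Gamma_A$ an isolated vertex carrying no incident edges, so $A^{\sharp}$ and $(A \oplus 0)^{\sharp}$ present edge SFTs with identical edge sets and the conjugacy is trivial. For the elementary move, the essential input is Theorem \ref{thm:ElEqtoConj}, which handles exactly the case $E = E_{ij}(t^k)$ with both endpoints in $\mathcal{M}(t\Z_+[t])$. The reduction paragraph above the theorem statement shows that an allowed positive equivalence $E_{ij}(f)(I-A) = (I-B)$ with $f \in \Z[t]$ can be refined into a finite chain of such monomial moves $E_{ij}(t^{k_r})(I - A_r) = (I - A_{r+1})$, with every intermediate $A_r$ in $\mathcal{M}(t\Z_+[t])$; each step yields a conjugacy by Theorem \ref{thm:ElEqtoConj}, and composing them finishes the argument.

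The only piece demanding genuine care is this reduction to monomial generators: one writes $f = c - d$ with $c, d \in \Z_+[t]$, decomposes $c$ and $d$ into their monomial terms, and must verify at each stage that the running intermediate matrix stays in $\mathcal{M}(t\Z_+[t])$ and that no $t^0 = 1$ term is introduced (so that the hypothesis of Theorem \ref{thm:ElEqtoConj} applies). Both requirements follow from the assumption that $A$ and $B$ themselves lie in $\mathcal{M}(t\Z_+[t])$, exactly as argued in the paragraph preceding the theorem; there is no independent obstacle beyond this bookkeeping, and no further ideas are needed beyond Theorem \ref{thm:ElEqtoConj} and the trivial stabilization observation.
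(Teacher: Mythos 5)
Your proposal is correct and follows essentially the same route as the paper: the theorem is stated there precisely as the summary of the preceding paragraph, which reduces a general positive equivalence in $I-\mathcal M(t\Z_+[t])$ to a chain of monomial moves $E_{ij}(t^k)$ with $k\geq 1$ whose intermediate matrices stay in $\mathcal M(t\Z_+[t])$, applies Theorem \ref{thm:ElEqtoConj} to each such move, and disposes of the stabilization $(I-A)\mapsto (I-A)\oplus 1$ by the trivial observation that $A\oplus 0$ only adds an isolated vertex to $\Gamma_A$. No gaps; nothing further is needed.
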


The converse of Theorem \ref{tMatricesAndTopConjug} is  ``true
up to a technicality'' \apr{exampleforneednzc}).
For a true converse, we
expand the collection of matrices allowed to present SFTs,
from $\mathcal M(t\Z_+[t])$ to a slightly larger class,
NZC\si{NZC condition}.
(On first exposure, it is fine to pretend
$\text{NZC}=\mathcal M(t\Z_+[t])$.
But we'll give statements for NZC\si{NZC condition}, just to tell
the truth.)

\subsection{Classification of SFTs by positive equivalence\si{positive equivalence} in I-NZC}

For a matrix $M$ over $\Z[t]$,
let $M_0$ be $M$ evaluated at $t=0$.

\begin{de}  Let NZC\si{NZC condition} be the set of square matrices $A$ over
$\Z_+[t]$ such that $A_0$ is nilpotent.
\end{de}

\begin{ex}   $A$ and $B$ are in NZC\si{NZC condition}; $C$ and $D$ are not:
\begin{alignat*}{4}
A& =  \begin{pmatrix} t^3 + t & 3t^5 \\ t & 3t^5 \end{pmatrix}\ , \quad
& B& = \begin{pmatrix} t^3  & 1 \\ t & 3t^5 \end{pmatrix}\ , \quad
& C& = \begin{pmatrix} 1  \end{pmatrix} \ , \quad
&  D & = \begin{pmatrix} t^3  & 5t^2 +2 \\ 1+t^7 & 3t^5 \end{pmatrix} \ , \\
A_0  & =  \begin{pmatrix} 0 & 0 \\ 0 & 0 \end{pmatrix}\ , \quad
&   B_0 & = \begin{pmatrix} 0  & 1 \\  0 & 0 \end{pmatrix}\ , \quad
&   C_0 & = \begin{pmatrix} 1  \end{pmatrix} \ , \quad
&   D_0 & = \begin{pmatrix} 0  & 2 \\ 1 & 0 \end{pmatrix} \ .
\end{alignat*}
\end{ex}
If $A$ is in $\mathcal M(t\Z_+[t])$, then $A_0=0$. A matrix in NZC\si{NZC condition}
can have some entries with nonzero constant term, but not too many.
\\ \\
Why the term NZC?\si{NZC condition}  Here is the heuristic.
\\ \\
If e.g.  $A(i,j) =t^4$, then
in the graph with adjacency matrix $A^{\sharp}$,
there is an elementary path, from $i$ to $j$,  of 4
edges.  We consider this a path taking 4 units of time to
traverse. The time to traverse concatenations of elementary
paths is  the sum of the times for its elementary paths.
A nonzero term $1$ in $A_0$ is considered as $1=t^0$, giving
a path taking zero
time to traverse. ``NZC\si{NZC condition}'' then refers to ``No Zero Cycles'',
where a zero cycle is a  cycle
%(in the graph with adjacency matrix $A^{\sharp}$)
taking zero time to traverse.
\\ \\
In the case NZC\si{NZC condition}, one can make good sense of this heuristic,
and everything works \apr{evenmore}).
But for a matrix $A$ over $\Z_+[t]$
with zero cycles,
we can't make sense of how $A$ defines an SFT (let alone
how multiplication by elementary matrices might induce topological
conjugacies).

We do get a classification statement parallelling the
SSE-$\Z_+$ setup of Williams\ai{Williams, R.F.}.

\begin{thm}
For matrices  $A,B$
in $ \text{NZC}$\si{NZC condition}, The following are equivalent.
\begin{enumerate}
\item  $(I-A)$ and $(I-B)$ are positive equivalent in $I-\text{NZC}$\si{NZC condition}.
\item  $A,B$ define topologically conjugate SFTs. \\
\end{enumerate}
\end{thm}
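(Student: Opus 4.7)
The plan is to prove both implications by reducing to the case of matrices in $I - \mathcal{M}(t\Z_+[t])$, which was already handled by Theorem \ref{tMatricesAndTopConjug}. The bridge is the claim that for every $A \in \mathrm{NZC}$, the matrix $I - A$ is positive equivalent in $I - \mathrm{NZC}$ to $I - tA^{\sharp}$. This strengthens the stabilized elementary equivalence exhibited before Theorem \ref{tMatricesAndTopConjug}: each monomial $t^k$ of $A(i,j)$ is split into an elementary path of degree-one edges via positive multiplications by $E_{ij}(t^{k-1})$, and one extends this to degree-zero entries of $A_0$ by positive moves $E_{ij}(1)$, with the nilpotency of $A_0$ ensuring termination after finitely many steps. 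The NZC condition is preserved throughout because no zero cycle is ever introduced. Given a positive equivalence $I - A \sim I - B$ in $I - \mathrm{NZC}$, one then obtains a positive equivalence $I - tA^{\sharp} \sim I - tB^{\sharp}$ in $I - \mathcal{M}(t\Z_+[t])$, and Theorem \ref{tMatricesAndTopConjug} gives conjugacy of the SFTs defined by $A^{\sharp}$ and $B^{\sharp}$.

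For the converse direction $(2) \Rightarrow (1)$, the same reduction shows it suffices to prove: whenever $A^{\sharp}$ and $B^{\sharp}$ are SSE over $\Z_+$, the matrices $I - tA^{\sharp}$ and $I - tB^{\sharp}$ are positive equivalent in $I - \mathcal{M}(t\Z_+[t])$. Williams' Theorem \ref{rfwtheorem} supplies a finite chain of ESSE-$\Z_+$ pairs linking the two matrices, so the problem reduces to a single elementary step: given $C, D$ over $\Z_+$ with $C = RS$ and $D = SR$, show $I - tC$ is positive equivalent to $I - tD$. The strategy is to stabilize $I - tC$ by an identity block of size matching $D$, then perform positive row and column moves with entries drawn from $tR$ and $tS$ (both in $t\Z_+[t]$). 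A natural intermediate is the matrix $I - tM$ with $M = \left(\begin{smallmatrix} 0 & R \\ S & 0 \end{smallmatrix}\right)$, whose square $t^2(C \oplus D)$ encodes the doubled SFT appearing in the proof of Theorem \ref{rfwtheorem}; further positive moves then isolate the $I - tD$ block.

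The principal obstacle is that the classical identity implementing an ESSE, namely the Maller-Shub conjugation in the proof of Proposition \ref{prop:MallerShub}, inherently requires the negative elementary matrix $\left(\begin{smallmatrix} I & 0 \\ -S & I \end{smallmatrix}\right)$ and so is not a positive equivalence. The main work is to replace this two-sided conjugation with a one-sided chain of positive elementary moves, sequenced so that every intermediate matrix has nonnegative off-diagonal entries (to remain of the form $I - A$ with $A \geq 0$) and a nilpotent constant term (to remain in $I - \mathrm{NZC}$). I would organize this following the Decomposition Theorem (Remark \ref{decomp}): realize each column amalgamation and each row amalgamation in Williams' decomposition as a finite batch of positive elementary equivalences on the corresponding $I - tC_i$, and then concatenate the batches along the chain. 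Verifying the batch for a single amalgamation, with careful tracking of positivity and the NZC condition at every step, is the technical heart of the proof.
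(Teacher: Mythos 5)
Your direction $(1)\Rightarrow(2)$ is in the spirit of the paper's (terse) argument, and you have correctly located the difficulty in $(2)\Rightarrow(1)$: the Maller--Shub conjugation uses a negative elementary matrix, so the work is to realize a single ESSE by a one-sided chain of positive moves. But your proposed resolution has a genuine gap. You reduce to showing that an ESSE $C=RS$, $D=SR$ over $\Z_+$ yields a positive equivalence of $I-tC$ and $I-tD$ \emph{inside} $I-\mathcal M(t\Z_+[t])$. This is provably impossible: the paper's Remark \ref{exampleforneednzc} shows that $(1-2t)$ and $\left(\begin{smallmatrix} 1-t & -t \\ -t & 1-t \end{smallmatrix}\right)$ are not positive equivalent over $t\Z_+[t]$, yet they arise from the single column amalgamation $R=\left(\begin{smallmatrix}1\\1\end{smallmatrix}\right)$, $S=\left(\begin{smallmatrix}1&1\end{smallmatrix}\right)$. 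So even the ``batch for a single amalgamation'' in your Decomposition-Theorem scheme cannot stay in $\mathcal M(t\Z_+[t])$; organizing the chain by amalgamations does not avoid the obstruction. The whole reason the theorem is stated for NZC is that the chain realizing one ESSE must pass through matrices with nonzero (nilpotent) constant term.

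Your candidate intermediate also fails on an invariant: for $M=\left(\begin{smallmatrix} 0 & R \\ S & 0 \end{smallmatrix}\right)$ one has $\det(I-tM)=\det(I-t^2RS)$, whereas $\det(I-tC)=\det(I-tRS)$; since $\det$ is preserved by (stabilized, hence positive) elementary equivalence, $I-tM$ lies in the wrong class --- it presents the doubled SFT from the proof of Theorem \ref{rfwtheorem}, not $X_C$. What the paper does instead is write down the explicit chain $A_0,\dots,A_4$ with the pivotal term $A_2=\left(\begin{smallmatrix} 0 & R \\ tS & 0 \end{smallmatrix}\right)$ --- note the $R$ block carries \emph{no} factor of $t$, so $\det(I-A_2)=\det(I-tRS)$ and $(A_2)_0$ is nilpotent --- and verifies via the four PSSE equations that each step $(I-A_i)\sim(I-A_{i+1})$ decomposes into basic positive equivalences in $I$-NZC. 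Repairing your proof essentially requires importing this (or an equivalent) chain; the Decomposition Theorem is not needed, since the PSSE equations handle an arbitrary ESSE directly.
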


\begin{proof} $(1)\implies (2)$ We have seen this for positive equivalence\si{positive equivalence}
in $I-\mathcal M(t\Z_+[t])$. This works similarly  for matrices
in $I-\text{NZC}$\si{NZC condition} \cite {BW04,B02posk}.
Note, for matrices $A,B$ from $\mathcal M(\Z_+[t])$, and
  $E$ a basic elementry matrix with $(I-B)=E(I-A)$ or
  $(I-B)=(I-A)E$,
  we have
  $A \in NZC \iff B  \in NZC$\si{NZC condition}. One can check this by considering a
  correspondence of cycle paths, similar to the correspondence of paths
in  Example \eqref{proofideasexample}. Alternately, one can use
  that for $A\in \mathcal M(\Z_+[t])$, we have
  $A \in NZC \iff (\det(I-A))|_{t=0} =1$.\si{NZC condition}

$(2)\implies (1)$ E.g., $(I-A)$ is positive equivalent in I-NZC\si{NZC condition}
to the matrix $I-tA^{\sharp}$, likewise $(I-B)$. So it suffices
to get the positive equivalence\si{positive equivalence} for matrices $(I-tA^{\sharp})$,
$(I-tB^{\sharp})$, assuming the edge SFTS for $A^{\sharp}, B^{\sharp}$
are conjugate, i.e. the $\Z_+$ matrices $A^{\sharp}, B^{\sharp}$
are SSE over $\Z_+$. It suffices to show the positive equivalence\si{positive equivalence}
given an elementary SSE, $A^{\sharp}=RS,  B^{\sharp}=SR$.
For this, define matrices $A_0, A_1, \dots ,A_4$ in NZC\si{NZC condition} :
%               defined in block form as follows:
\[
\begin{pmatrix} tRS & 0 \\ 0 & 0 \end{pmatrix}
\ , \
\begin{pmatrix} tRS & 0 \\ tS & 0 \end{pmatrix}
\ , \
\begin{pmatrix} 0 & R \\ tS & 0 \end{pmatrix}
\ , \
\begin{pmatrix} 0 & 0 \\ tS & tSR \end{pmatrix}
\ , \
\begin{pmatrix} 0 & 0 \\ 0 & tSR \end{pmatrix} \ .
\]
(Notice, $A_2$ is in NZC\si{NZC condition}, but
is not in $\mathcal M(t\Z_+[t])$.)
%  We have corresponding matrices $(I-A_i)$:
%\\ \\
%\[
%  \begin{pmatrix} I-tRS & 0 \\ 0 & I \end{pmatrix}
%\ , \
%\begin{pmatrix} I-tRS & 0 \\ -tS & I \end{pmatrix}
%\ , \
%  \begin{pmatrix} I & -R \\ -tS & I \end{pmatrix}
%\ , \
%  \begin{pmatrix} I & 0 \\ -tS & I -tSR \end{pmatrix}
%\ , \
%  \begin{pmatrix} I & 0 \\ 0 & I -tSR \end{pmatrix}
%  \]
%\\ \\
The following Polynomial Strong
Shift Equivalence Equations (PSSE Equations), taken from
\cite{BW04},
give a positive equivalence\si{positive equivalence} in $I-NZC$\si{NZC condition} between
$(I-A_i)$ and $(I-A_{i+1})$, for $0\leq i < 4$.
\begin{alignat*}{2}
\begin{pmatrix} I-tRS & 0 \\ -tS & I \end{pmatrix}
\begin{pmatrix} I & 0 \\ tS & I \end{pmatrix}
& =
\begin{pmatrix} I-tRS & 0 \\ 0 & I \end{pmatrix}
\quad &\text{ is }\quad
(I-A_1)E_1&=(I-A_0) \ ,
\\
\begin{pmatrix} I & R \\ 0 & I \end{pmatrix}
\begin{pmatrix} I & -R \\ -tS & I \end{pmatrix}
&=
\begin{pmatrix} I-tRS & 0 \\ -tS & I \end{pmatrix}
\quad &\text{ is }\quad
E_2(I-A_2)&=(I-A_1)  \ ,
\\
\begin{pmatrix} I & -R \\ -tS & I \end{pmatrix}
\begin{pmatrix} I & R \\ 0 & I \end{pmatrix}
&=
\begin{pmatrix} I & 0 \\ -tS & I -tSR \end{pmatrix}
\quad &\text{ is }\quad
(I-A_2)E_3 &=(I-A_3)  \ ,
\\
\begin{pmatrix} I & 0 \\ tS & I \end{pmatrix}
\begin{pmatrix} I & 0 \\ -tS & I -tSR \end{pmatrix}
&=
\begin{pmatrix} I & 0 \\ 0 & I -tSR \end{pmatrix}
\quad &\text{ is }\quad
E_4(I-A_3) &=(I-A_4)  \ .
\end{alignat*}
One can  check
%\apr{pssecheck}
that each of the four equivalences given by
the PSSE equations is a composition of basic positive equivalences\si{positive equivalence}
in NZC\si{NZC condition}.
That finishes the proof.
\end{proof}

{\it   Tools for construction.}
One way to construct a conjugacy between SFTs defined
by  matrices $A,B$ over $\Z_+$ is to find an SSE over $\Z_+$
from $A$ to $B$.
The polynomial matrix setting gives another way:
find a chain of elementary positive equivalences\si{positive equivalence} from
$I-tA$ to $I-tB$. This is not  a strict advantage;
it's an alternative tool. There are results for which
the only known proof uses this tool \apr{polytool}).

\subsection{Functoriality: flow equivalence\si{flow equivalence} in the polynomial setting}

We will consider one satisfying feature of presenting SFTs
by matrices in NZC\si{NZC condition} (or, just in $\mathcal M(t\Z_+[t])$)
\apr{otherpolyfeatures}).
With the  $\Z_+$ matrix presentation,  the algebraic invariant for
conjugacy, SE-$\Z$, does not  have an obvious natural
relationship to algebraic invariants for flow equivalence\si{flow equivalence}
(e.g. Bowen-Franks\si{Bowen-Franks group} group,\ai{Franks, John}\ai{Bowen, Rufus}
$\det(I-A)$).
In the polynomial setting, we do see that natural relationship.

Let $\mathcal M$ be the set of matrices $I-A$ with $A$ in NZC\si{NZC condition}.
Say matrices $(I-A), (I-B)$ are related by changing
positive powers, $(I-A) \sim_+ (I-B)$, if they  become equal after
changing positive powers of $t$ to other positive powers.
For example,
\[
\begin{pmatrix}  1-t^2-t^5 & -t - t^3 \\ -t^2&1  \end{pmatrix}
\sim_+
\begin{pmatrix}  1-t^2-t^3 & -t^4 -t^5  \\ -t^7&1  \end{pmatrix}
\sim_+
\begin{pmatrix}  1-2t & -2t  \\ -t&1  \end{pmatrix} \ .
\]
The next result is one version for SFTs of the
Parry-Sullivan\ai{Parry, William}\ai{Sullivan, Dennis}
characterization of flow equivalence\si{flow equivalence} of subshifts.

\begin{thm}
Suppose $A,B$ are matrices in NZC\si{NZC condition}.
The following are equivalent.
\begin{enumerate}
\item $A,B$ define flow equivalent\si{flow equivalence}  SFTs.
\item $(I-A),(I-B)$ are equivalent, under the equivalence
relation generated by
(i) positive equivalence\si{positive equivalence} in $I$-NZC\si{NZC condition} and (ii) $\sim_+$ .
\end{enumerate}
\end{thm}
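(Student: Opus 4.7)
The plan is to match the two generating moves on polynomial matrices to the two generators of flow equivalence identified by Parry and Sullivan: flow equivalence of subshifts is the equivalence relation generated by topological conjugacy together with \emph{symbol expansion}, the graph operation of subdividing a single edge by inserting one new intermediate vertex.

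For $(2)\Rightarrow(1)$: Positive equivalence in $I$-NZC gives topological conjugacy by the preceding theorem, which implies flow equivalence. For $\sim_+$, replacing a monomial $t^k$ by $t^{k+1}$ in an entry $A(i,j)$ corresponds at the level of $\Gamma_A$ to subdividing the associated elementary path by inserting one extra intermediate vertex; this is a single symbol expansion on the edge SFT of $A^{\sharp}$, hence a flow equivalence. An arbitrary $\sim_+$ move is then a composition of such single-step subdivisions and their inverses.

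For $(1)\Rightarrow(2)$: Starting from flow equivalent $A,B \in$ NZC, I would first reduce to the edge-SFT presentations $tA^{\sharp}, tB^{\sharp}$; the polynomial equivalence between $I - A$ and $I - tA^{\sharp}$ is built from positive equivalences in $I$-NZC (by the proposition identifying $I-A$ with $I-tA^{\sharp}$ via stabilized elementary equivalence, whose steps are in fact positive) together with $\sim_+$ moves. By Parry--Sullivan, the flow equivalence between the SFTs for $tA^{\sharp}$ and $tB^{\sharp}$ factors as a finite chain alternating topological conjugacies and symbol expansions, passing through intermediate edge SFTs presented by $\Z_+$-matrices. Each conjugacy in the chain is realized as a positive equivalence in $I$-NZC via the preceding theorem; each symbol expansion of one edge is realized by a single $\sim_+$ move $t \mapsto t^2$ applied to the corresponding degree-one monomial. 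Concatenating these realizations yields the required equivalence of $I-A$ and $I-B$.

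The principal obstacle will be bookkeeping to ensure the NZC condition is preserved throughout: the restriction of $\sim_+$ to \emph{positive} powers (never exchanging $t^k$ with $t^0 = 1$) is precisely what prevents the appearance of zero cycles, but one must still verify this stability at each step, especially when a $\sim_+$ move is combined with positive equivalences that may temporarily produce constant terms in entries. A secondary subtlety is that the Parry--Sullivan decomposition produces intermediate \emph{subshifts}, not matrix presentations; each intermediate SFT must be realized as an edge SFT (or more generally by an NZC polynomial matrix), so that the positive-equivalence machinery of the preceding theorem is available to implement the conjugacy steps. I expect both points to be manageable after first reducing to the clean case $A, B \in \mathcal{M}(t\Z_+[t])$, where the graph/matrix dictionary is most transparent.
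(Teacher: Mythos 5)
Your outline is essentially the intended argument: the paper states this theorem without proof (deferring to \cite{B02posk}) and remarks only that ``flow equivalence arises from conjugacy and time changes, and the time changes are addressed by the $\sim_+$ relation,'' which is exactly your Parry--Sullivan decomposition into conjugacies (realized as positive equivalences in $I$-NZC via the preceding theorem) and symbol expansions (realized as single-step subdivisions $t^k\mapsto t^{k+1}$ of one elementary path). The two subtleties you flag resolve as you expect --- $\sim_+$ never touches constant terms, so NZC is preserved automatically, and the reduction from $I-A$ to $I-tA^{\sharp}$ is already a positive equivalence in $I$-NZC without any $\sim_+$ moves.
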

We won't give a proof for this theorem \apr{parrysullianetc}).
But, it is intuitive:  flow equivalence\si{flow equivalence}
arises from conjugacy and time changes, and the time changes  are
addressed by the  $\sim_+$  relation.

Given a matrix $A=A(t)$ in $\mathcal M(t\Z_+[t])$, or in NZC\si{NZC condition}, let $A(1)$
be the matrix defined entrywise by the (augmentation) homomorphism
$ \Z[t] \to \Z$ which sends $t$ to 1. For example,
\[
A=  A(t)=\begin{pmatrix} 3t \end{pmatrix} \ , \quad
B=  B(t)=\begin{pmatrix} t^2+2t^3 \end{pmatrix} \ , \quad
A(1) = \begin{pmatrix} 3 \end{pmatrix} = B(1) \ .
\]
From the Theorem, one can check for SFTs defined by $A,B$
from $\text{NZC}$\si{NZC condition}:
\begin{enumerate}
\item
Flow equivalent\si{flow equivalence} SFTs
defined by $A(t),B(t)$ from NZC\si{NZC condition}    produce isomorphic groups
$\cok_{\Z} (I-A(1)), \cok_{\Z} (I-B(1))$.
\item
  $\cok_{\Z} (I-A(1))$ is the {\it Bowen-Franks group\si{Bowen-Franks group}}\ai{Franks, John}\ai{Bowen, Rufus}
  of the SFT
defined by $A$. \apr{bfgroup})
\end{enumerate}
We sometimes use notation $\cok_{\mathcal R}$ to emphasize that a cokernel is an
$\mathcal R$-module. (A $\Z$-module is just an abelian group.)

Recall, for $A=A(t)$ in NZC\si{NZC condition},
the isomorphism class of $\cok_{\Z[t]} (I-A(t))$
is  the SE-$\Z$ class of the SFT.
There is a  functor, induced by $t\mapsto 1$:
\begin{align*}
\Z[t]\text{-modules } &\to \ \Z[1]\text{-modules} =\Z\text{-modules} =
\text{abelian groups} \\
\cok_{\Z[t]} (I-A(t)) &\mapsto \cok_{\Z} (I-A(1))\ .
\end{align*}
So, this functor gives a presentation of
\begin{align*}
  \text{SE}-\Z \text{ class } & \to \text{ Bowen-Franks group}\si{Bowen-Franks group}\ .\ai{Franks, John}
  \ai{Bowen, Rufus}
\qquad   \qquad   \qquad   \qquad \
\end{align*}
This shows us how  algebraic invariants of flow equivalence\si{flow equivalence} and topological
conjugacy are naturally related in the polynomial setting.

\begin{ex}
Let  $  A=\begin{pmatrix} 3t \end{pmatrix}$ and
$  B=\begin{pmatrix} t^2+2t^3 \end{pmatrix} $.
The $\Z[t]$-modules $\cok (I-A)$ and $ \cok(I-B)$ are not isomorphic.
(For example, $\det(I-A) \neq \det(I-B)$.)
However they do define SFTs
which are flow equivalent\si{flow equivalence},  with Bowen-Franks\si{Bowen-Franks group}\ai{Franks, John}
  \ai{Bowen, Rufus}  group
\[
\cok (I-A(1)) = \cok (I-B(1)) =
\cok \begin{pmatrix}  -2 \end{pmatrix} =
\Z /(-2)\Z = \Z /2\Z \ . \\
\]
\end{ex}
There is
a useful  analog  of positive equivalence\si{positive equivalence}
for constructing maps which give a flow equivalence\si{flow equivalence},
using multiplications by elementary matrices over $\Z$
rather than $\Z[t]$
\apr{feposeq}).
Also, the passage from  SSE-$\Z_+$  of matrices $A,B$
to positive equivalence\si{positive equivalence} of matrices $(I-tA), (I-tB)$
works with an integral group ring $\Z_+ G$
in place of $\Z_+$,
as noted in
\cite{BW04,B02posk}.

\begin{rem}[Category theory.]\label{categorytheoryremark} For an
approach to the classification of  SFTs
(and flow equivalence)\si{flow equivalence}
through category theory\si{category theory},
see the substantive recent paper \cite{jeandel}
of Jeandel. In Jeandel's work, again matrices over $\Z_+$ and $\Z_+[t]$
play roles related to strong shift equivalence and flow equivalence.
 For an earlier approach to flow equivalence
through category theory, see the paper \cite{CostaSteinberg} of
Costa and Steinberg.
\end{rem}

\subsection{Appendix 3} \label{a3}

This subsection contains various remarks, proofs and comments referenced
in earlier parts of Section \ref{sec:polynomial}.

\begin{rem}[Flow equivalence\si{flow equivalence} background] \label{floweqbackground}
It takes more space than we will spend to give a reasonably
understandable introduction to flow equivalence\si{flow equivalence};
see e.g. \cite{bce:fei,bce:fei:corr}
for background and definitions for flow equivalence\si{flow equivalence} of
subshifts.  However, the description to come
of  the Parry-Sullivan Theorem\ai{Parry, William}\ai{Sullivan, Dennis}
\cite{parrysullivan} for SFTs
will be quite adequate for our purposes, as a description of
what flow equivalence\si{flow equivalence} is equivalent to.

Unexpectedly, tools developed for flow equivalence\si{flow equivalence} of SFTs
turned out to be quite useful for certain classification
problems in $C^*$-algebras (see e.g.
\cite{errs:completeDuke2021, Restorff2006, Rordam1995} and their references).
\end{rem}

\begin{rem}  \label{bfgroup}
For a square matrix $C$ over $\Z$,
one can check that the group $\cok_{\Z}(C)$ is infinite when $\det C=0$,
and $|\cok_{\Z}(C)|= |\det C|$ when $\det C\neq 0$. The groups arising
as $\cok_{\Z}(C)$ are the finitely generated abelian groups.
The group
$\cok_{\Z}(C)$ may be determined algorithmically
by computing the Smith normal
form of $C$.

We refer to ``the'' Bowen-Franks\ai{Bowen, Rufus}\ai{Franks, John}
group\si{Bowen-Franks group}
class associated
to an SFT. Formally, the group depends on the presentation; really,
we are talking about ``the'' group up to isomorphism.
Americans of a certain age may remember Bill Clinton being mocked
for a reply, \lq\lq It depends on what you mean by the word
\lq is\rq.\rq\rq
In math, we really do need to keep track.

%We sometimes use notation $\cok_{\mathcal R}$ to emphasize that a cokernel is a%n
%$\mathcal R$-module. (A $\Z$-module is just an abelian group.)
\end{rem}

\begin{rem} \label{topmarkovshift}
Topological Markov shifts were defined as ``intrinsic Markov chains''
by Bill Parry\ai{Parry, William} in the 1964 paper \cite{Parry1964}.
Parry's paper has the independence of past and future
conditioned on the present, and this being presented by
a zero-one transition matrix, essentially as a vertex
shift.

But before Parry, there was Claude Shannon's\ai{Shannon, Claude} astonishing
monograph
\cite{ShannonWeaver1949} in the 1940s, which launched
information theory. Shannon already was looking at
something we could understand as a Markov shift, with
half of the   variational principle proved in Parry's\ai{Parry, William} paper. Shannon
even used polynomials to present those Markov shifts,
just as we describe.

A zero-one  matrix can be used to define an edge SFT
or a vertex SFT. Yes, they are topologically  conjugate SFTs.
(The two block presentation of the vertex SFT is the edge SFT.)
\end{rem}

\begin{rem}
\label{prooftechniquesforedgesft}
As a postdoc, I heard a talk of John Franks\ai{Franks, John}
  \ai{Bowen, Rufus}  on his classification
of irreducible SFTs up to flow equivalence\si{flow equivalence}. Edge SFTs were a bit
new; he announced for the suspicious that for his proofs, zero-one
matrices just weren't enough.
\end{rem}

\begin{rem} \label{rome}
The  ``rome'' term was introduced in the paper \cite{bgmy},
which also gave a proof that $\det(I-A)= \det(I-tA^{\sharp})$.
\end{rem}

\begin{rem} \label{perrin}
The entropy of an SFT defined by a matrix $B$ over $\Z_+$ is
the log of the spectral radius $\lambda$ of $B$.
Given $\lambda >1 $ the spectral radius of a primitive matrix over
$\Z$, Perrin
constructs a $2\times 2$ $A$ over $t\Z_+[t]$ such
that $A^{\sharp}$ is primitive
with spectral radius $\lambda$ \cite{Perrin1992}.
(The condition that $A^{\sharp}$ is primitive is a significant part of the result.)
\end{rem}

\begin{prop}        \label{fromequivtocok}
Suppose $U,C,V$ are matrices over a ring $\mathcal R$; $U$ and $V$
are invertible over $\mathcal R$; and $D=UCV$.
Then $\cok_{\mathcal R} C$ and $\cok_{\mathcal R} D$ are isomorphic as
$\mathcal R$-modules.
\end{prop}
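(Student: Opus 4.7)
The plan is to reduce the proposition to two independent observations: that left multiplication by an invertible matrix does not change the image, and that right multiplication by an invertible matrix induces an $\mathcal R$-module automorphism of the codomain that carries the image of $C$ onto the image of $CV$. Since the paper's convention (from Section \ref{detI-tAsubsection} and the definition of $\cok(I-tA)$) is that matrices act on row vectors, we interpret $C$ as the map $\mathcal R^m \to \mathcal R^n$, $v \mapsto vC$, and $\cok_{\mathcal R}(C) = \mathcal R^n / \text{Image}(C)$. For $D = UCV$ to be defined with $U, V$ invertible, $U$ is $m\times m$ and $V$ is $n\times n$.

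First I would handle $U$: since $v \mapsto vU$ is a bijection of $\mathcal R^m$ (with inverse $w \mapsto wU^{-1}$), one has
\[
\text{Image}(UCV) \;=\; \{(vU)(CV) : v \in \mathcal R^m\} \;=\; \{w(CV) : w \in \mathcal R^m\} \;=\; \text{Image}(CV),
\]
so $\cok(D) = \cok(CV)$ on the nose as $\mathcal R$-modules. Next I would handle $V$: right multiplication by $V$ gives an $\mathcal R$-module isomorphism $\Phi\colon \mathcal R^n \to \mathcal R^n$, $w\mapsto wV$, with inverse $w \mapsto wV^{-1}$. Because $\Phi(vC) = vCV = v(CV)$, the map $\Phi$ sends $\text{Image}(C)$ onto $\text{Image}(CV)$, so $\Phi$ descends to a well-defined $\mathcal R$-module isomorphism
\[
\overline{\Phi}\colon \mathcal R^n/\text{Image}(C) \;\longrightarrow\; \mathcal R^n/\text{Image}(CV), \qquad [w]\mapsto [wV].
\]
Combining these two steps gives $\cok_{\mathcal R}(C) \cong \cok_{\mathcal R}(CV) = \cok_{\mathcal R}(D)$.

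There is no real obstacle here; the only point requiring mild care is being consistent with the row-vector convention, so that $U$ acts on the source and $V$ acts on the target, making it transparent which of the two factors is absorbed into the image and which induces the isomorphism on the quotient.
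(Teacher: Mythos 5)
Your proof is correct and uses essentially the same device as the paper: the $\mathcal R$-module isomorphism $w\mapsto wV$ of the ambient row-vector space, which carries $\text{Image}(C)$ onto $\text{Image}(D)$. The paper absorbs the role of $U$ into a single verification ($\phi(xC) = xCV = (xU^{-1})D$), while you factor it out as a separate preliminary observation that $\text{Image}(UCV) = \text{Image}(CV)$; this is a cosmetic reorganization, not a different argument.
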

\begin{proof}
We consider $C,D$ acting by matrix multiplication
on row vectors; of course, the same fact holds for
the action on column vectors. Corresponding to the action
being on row vectors, we are considering
left $\mathcal R$-modules
($c$ in $\mathcal R$ sends $v$ to $cv$), so that matrix multiplication
gives an $\mathcal R$-module homomorphism
(e.g. $(cv)D=c(vD)$).

Let $C$ be $j\times k$, and let $D$ be $m\times n$.
Then
\begin{align*}
\cok C &=\mathcal R^k /    \text{image}(C )=
\mathcal R^k / \{ vC: v\in \mathcal R^j \} \ , \\
\cok D &=\mathcal R^n /    \text{image}(D )=
\mathcal R^n / \{ vD: v\in \mathcal R^m \} \ .
\end{align*}
Define an $\mathcal R$-module isomorphism
$\phi : \mathcal R^k \to \mathcal R^n$ by
$\phi : w \mapsto wV$. (For most rings of interest,
necessarily $j=m$ and $k=n$.)
To show $\phi$ induces the isomorphism
$\cok C \to \cok D$, it suffices to
show  $\phi: \text{image}(C) \to \text{image}(D)$  and
$\phi^{-1}: \text{image}(D) \to \text{image}(C)$ .
For  $xC \in \text{image}(C)$,
\[
\phi (xC) = xCV= (xU^{-1})(UCV) = (xU^{-1}) D \in
\text{image}(D) \ .
\]
For $yD \in \text{image}(D)$,
\[
\phi^{-1} (yD) =  yDV^{-1} = y(UCV)V^{-1} = yUC \in \text{image}(C) \ .
\]
\end{proof}

Recall, for $A=A(t)$ in NZC\si{NZC condition},
the isomorphism class of $\cok_{\Z[t]} (I-A(t))$
determines  the SE-$\Z$ class of the SFT, and conversely.

\begin{rem}  \label{tautau}
If the initial and terminal vertices of $\tau$ were the same,
then we could apply the $\phi$ ``rule''
to a point $x= \dots \tau\tau\overd{\tau}\tau\tau \dots$ (with $\tau$ beginning
at $x_0$)
in contradictory
ways, according to the two groupings
\begin{align*}
\dots\   (\tau \tau) (\tau \tau)(& \overd{\tau} \tau) (\tau \tau) (\tau \tau)
(\tau \tau) (\tau \tau) \dots \\
\dots\  (\tau \tau) (\tau \tau)    (\tau
&  \overd{\tau}) (\tau \tau) (\tau \tau)
(\tau \tau) (\tau \tau) \dots \ \ .
\end{align*}
\end{rem}

\begin{rem} \label{poseqterminology}
The move to polynomial algebraic invariants was pushed by Wagoner\ai{Wagoner, J.B.},
who wanted to exploit analogies between SFT invariants and algebraic
K-theory.
Positive equivalence\si{positive equivalence} was born in the
Kim-Roush\ai{Roush, F.W.}-Wagoner\ai{Kim, K.H.}\ai{Wagoner, J.B.}
papers \cite{KRWForumI, KRWForumII}
 as a tool for constructions, and taken further in
\cite{BW04} (see also \cite{B02posk}).
The framework developed from  considering conjugacy of SFTs
via positive equivalence\si{positive equivalence}
is called ``Positive K-theory'' (or, Nonnegative K-theory).
This reflects the  heuristic connection
to algebraic K-theory.
We will see that the  connection
is more than heuristic.

The term ``positive equivalence\si{positive equivalence}'' arises from its
genesis in  our application.
We defined positive equivalence\si{positive equivalence} rather
generally; there is nothing a priori about $\mathcal M$
which must involve positivity. Also, if
$E_{ij}(-t^k)(I-A)=(I-B)$, then
$E_{ij}(t^k)(I-B)=(I-A)$ -- so, multiplications by
elementary matrices $E_{ij}(-t^k)(I-A)$ are allowed.
If $U$ is a product of elementary matrices over
$\mathcal R [t]$,
such that  $U(I-A)=(I-B)$,
with $A,B$ in $I-\text{NZC}$,
it need not be the case $I-A$ and $I-B$ are positive
equivalent. Each elementary step must be from a matrix
in $\mathcal M$ to a matrix in $\mathcal M$.
\end{rem}

\begin{rem}
\label{exampleforneednzc}
If  $A,B$ in $\mathcal M (t\Z_+[t])$ have  polynomial entries with all
coefficients in $\{0,1\}$,  and define topologically conjugate SFTs,
then one can show that $I-A$ and $I-B$ are positive equivalent
in $I- \mathcal M(t\Z_+[t])$.
But in general, the
converse of the theorem is not true; for example,
the matrices $\begin{pmatrix} 1-2t \end{pmatrix}$ and
$\begin{pmatrix} 1-t& -t \\ -t &1-t \end{pmatrix}$ define
SFTs which are conjugate;  but, there is not a string of
elementary positive equivalences\si{positive equivalence} of square matrices over
$t\Z_+[t]$, from one to the other.
To see this, check the following claim:
if $E=E_{ij}(t^k)$ with $k\geq 0$, and $A,B$ are positive
equivalent in
$\mathcal M(t\Z_+[t])$, and $A(i,i) = 2t + \sum_{k\geq 2} a_kt^k$,
then $B(i,i) = 2t + \sum_{k\geq 2} b_kt^k$.
\end{rem}

\begin{rem} \label{evenmore} We can expand NZC\si{NZC condition} further, and
consider matrices $A$ over $\Z [t,t^{-1}]$ with no cycles
taking zero time or negative time, and  make
good sense of their presenting SFTs, and positive equivalence\si{positive equivalence}
of these matrices $I-A$ as classifying  SFTs.
This isn't necessary for classification of SFTs,
but might be convenient for some construction.
\end{rem}

%\begin{rem} \apr{pssecheck}
%  For example, for the equivalence
%\[  \begin{pmatrix} I & R \\ 0 & I \end{pmatrix}
%  \begin{pmatrix} I & -R \\ -tS & I \end{pmatrix}
%  &=
%  \begin{pmatrix} I-tRS & 0 \\ -tS & I \end{pmatrix}
%   \quad &\text{ is }\quad
% E_2(I-A_2)&=(I-A_1)  \ ,
%\]
%the matrix
%  $  \begin{pmatrix} I & 0 \\ tS & I \end{pmatrix} $ is the product
%of the elementary matrices $E_{ij}(c_{ij})$,
%with $ij$ ranging of the matrix corresonding to the block $R$,
%and
%over the entries of $R$
%I QUIT BECAUSE THE NOTATION WAS ABOUT TO GET UNPLEASANT
%\end{rem}
\begin{rem} \label{polytool}
For example,
constructions of SFTs and topological conjugacies between them,
using polynomial matrices and basic positive equivalences\si{positive equivalence},
were the proof method for the  result in
\cite{KRWForumI, KRWForumII}
of Kim, Roush\ai{Roush, F.W.} and Wagoner\ai{Wagoner, J.B.}\ai{Kim, K.H.} (a result quite important for SFTs).
The hardest step  was a construction of brutal complication.
But without their proof, we would have no proof at all.
\end{rem}

\begin{rem}
\label{otherpolyfeatures}
Edge SFTs are related
in a simple and transparent way to their defining matrices over $\Z_+$.
When using a matrix $A$ in NZC\si{NZC condition}, or even just in $\mathcal M(t\Z_+[t])$,
to define an SFT--we did it by way of the edge SFT defined from
$A^{\sharp}$. The relationship between $A$ and $A^{\sharp}$ is not
very tight -- there is some freedom about what matrix $A^{\sharp}$
is produced. That can be eliminated by precise choices, but these
in generality become complicated and rather artificial.

So, for $A$ in NZC,\si{NZC condition}
one would like to have a presentation of an SFT more simply
and transparently related to $A$, and with
an elementary positive equivalence\si{positive equivalence}
presented transparently.
There is such a presentation -- the ``path SFT''
presented by $A$ (see \cite{B02posk}).

For a square matrix $A$  over $\Z_+$, and a positive integer
$n$, the systems $(X_A, \sigma^n)$ and $(X_{A^n}, \sigma)$ are
topologically conjugate. For a polynomial matrix $A$,
$A^n$ generally does not define an SFT conjugate to the
$n$th power system of the SFT defined by $A$. But,
in the path SFT presentation, we recover a natural way to
pass to powers of the SFT, which
works equally well for negative powers (no passage to transposes needed).

{\it Caveat.}
We considered three matrix presentations of SFTs: by matrices over
$\{0,1\}$, $\Z_+$ and $t\Z_+[t]$. The polynomial presentations
have the greatest scope. But we certainly
still need edge SFTs -- usually the most convenient
choice, sometimes
the only choice, as for Wagoner's\ai{Wagoner, J.B.} SSE-$\Z_+$ complex.

We also  need vertex SFTs. Every topological Markov shift
in the sense of Parry\ai{Parry, William}
(also known as a 1-step shift of finite type)
is a vertex SFT, up to naming of symbols.
But, not every topological Markov shift is
equal to an edge SFT up to naming of symbols.
For an example, consider the vertex SFT with adjacency matrix
$\left(\begin{smallmatrix} 1&1\\ 1&0 \end{smallmatrix}\right) $. This vertex SFT
cannot be an edge SFT after renaming symbols as edges in
some directed graph, because a nondegenerate\si{nondegenerate} adjacency matrix  for a
graph with exactly two edges is either $(2)$,
$\left( \begin{smallmatrix} 1&0 \\ 0&1 \end{smallmatrix} \right)$
or $\left( \begin{smallmatrix} 0&1 \\ 1&0 \end{smallmatrix} \right)$.
\end{rem}

\begin{rem} \label{parrysullianetc}
See \cite{B02posk} for a proof of this
version of the Parry-Sullivan result \cite{parrysullivan}.\ai{Sullivan, Dennis}
For a careful discussion of flow equivalence\si{flow equivalence} for
subshifts, and related issues, see \cite{bce:fei,bce:fei:corr}, which includes
references and a detailed proof of the Parry-Sullivan
result.\ai{Parry, William}
\end{rem}

\begin{rem} \label{feposeq} For this version of positive equivalence\si{positive equivalence},
see the paper \cite{B02posk} and papers citing it.
\end{rem}

%%%  \iffalse
%%%  =================================
%%%
%%%
%%%  I STOPPED ON THE FOLLOWING> TOO TEDIOUS.
%%%  %\begin{rem}
%%%    \apr{cocycles} Suppose $G$ is an abelian group.
%%%    A $G$-valued cocyle on a system $(X,T)$ is a map
%%%    $\phi: X\times \Z \to G$ satisfing a kind of chain rule:
%%%    $\phi( (x,m+n) ) = \phi (x,m) + \phi(T^m(x),n)$.
%%%    Here the function $X\to G$ given by $x\mapsto \phi (x,1)$ determines
%%%    the cocycle; and we could begin with any function $\tau$ and have
%%%    an associated cocyle.
%%%    %
%%%    %\end{rem}
%%%    =================================
%%%    \fi

\section{Inverse problems for nonnegative matrices}
\label{sec:inverseprobs}

In this section, we study certain inverse spectral problems, and related
problems, for nonnegative matrices. We are especially interested in
inverse problems
which involve the realization of
\lq\lq stable algebra\si{stable algebra}\rq\rq\ invariants, such as the nonzero\si{nonzero spectrum} spectrum.

\subsection{The NIEP}

\begin{de} A matrix is nonnegative if every entry is in $\R_+$.
A matrix is positive if every entry is positive.
\end{de}

We recall some definitions.
If $A$ has characteristic polynomial
$\chi_A(t)=\prod_{i=1}^n(t-\lambda_i)$, then the {\it spectrum} of $A$ is
$(\lambda_1, \dots , \lambda_n)$. We refer to the spectrum as an
$n$-tuple by
abuse of notation \apr{abuse}): the ordering of the $\lambda_i$
does not matter but the multiplicity does matter. The $\lambda_i$ are in $\C$.
Similarly, if $\chi_A(t)= t^j\prod_{i=1}^k(t-\lambda_i)$, with the
$\lambda_i$ nonzero, then the {\it nonzero\si{nonzero spectrum} spectrum} of $A$ is
$(\lambda_1, \dots , \lambda_k)$.

\begin{prob}
{\it The NIEP} (nonnegative inverse eigenvalue problem):
What can be the spectrum of an $n\times n$
nonnegative matrix $A$ over $\R$?
\end{prob}

Work on the NIEP goes back to (at least)
the following result.
\begin{thm}[Suleimanova 1949]\cite{suleimanova}
Suppose $\Lambda = (\lambda_1, \dots , \lambda_n)$ is a list of
real numbers; $\sum_i \lambda_i >0$; and $i>1 \implies \lambda_i <0$.
Then $\Lambda$ is the spectrum of a nonnegative matrix.
\end{thm}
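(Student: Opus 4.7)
The plan is to construct an explicit nonnegative matrix realizing $\Lambda$ as its spectrum, using the companion matrix of the characteristic polynomial $p(t) = \prod_i (t-\lambda_i)$. The first observation is that the hypothesis forces $\lambda_1 > \sum_{i\geq 2}|\lambda_i|$. Indeed, writing $a_i = |\lambda_i|$ for $i \geq 2$, we have $\lambda_1 = \sum_i \lambda_i + \sum_{i \geq 2} a_i > \sum_{i \geq 2} a_i$. In particular $\lambda_1 > 0$, and $\lambda_1$ is the Perron root of whatever nonnegative matrix we produce.

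Next I would factor $p(t) = (t-\lambda_1)\, q(t)$, where $q(t) = \prod_{i\geq 2}(t + a_i) = \sum_{k=0}^{n-1} q_k\, t^{n-1-k}$ with $q_k = e_k(a_2,\ldots,a_n) \geq 0$ the elementary symmetric polynomials. Expanding and collecting terms gives
\[
p(t) \;=\; t^n - c_1 t^{n-1} - c_2 t^{n-2} - \cdots - c_n,
\]
where $c_k = \lambda_1 q_{k-1} - q_k$ for $1 \leq k \leq n-1$ and $c_n = \lambda_1 q_{n-1}$. The companion matrix
\[
C \;=\; \begin{pmatrix} 0 & 0 & \cdots & 0 & c_n \\ 1 & 0 & \cdots & 0 & c_{n-1} \\ 0 & 1 & \cdots & 0 & c_{n-2} \\ \vdots & & \ddots & & \vdots \\ 0 & 0 & \cdots & 1 & c_1 \end{pmatrix}
\]
has characteristic polynomial exactly $p(t)$, so the entire problem reduces to verifying $c_k \geq 0$ for all $k$.

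The key inequality is $\lambda_1 q_{k-1} \geq q_k$. Since $c_n = \lambda_1 q_{n-1} > 0$ and $c_1 = \lambda_1 - q_1 = \lambda_1 - \sum_{i\geq 2} a_i > 0$ by the first observation, I only need the intermediate cases. Here I would use the combinatorial identity
\[
q_1 \cdot q_{k-1} \;=\; \Bigl(\sum_{j\geq 2} a_j\Bigr)\!\!\sum_{\substack{S\subset\{2,\dots,n\}\\|S|=k-1}}\prod_{i\in S} a_i \;\geq\; k\, q_k,
\]
where the inequality holds because each $k$-subset $S'$ contributing to $q_k$ arises $|S'|=k$ times on the left (when $j$ ranges over $S'$ and $S = S'\setminus\{j\}$), with additional nonnegative terms when $j \in S$. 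This yields $q_k / q_{k-1} \leq q_1 < \lambda_1$, hence $c_k > 0$.

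The main (mild) obstacle is this combinatorial inequality on elementary symmetric functions; everything else is formal. Once the $c_k$ are confirmed nonnegative, $C$ is a nonnegative matrix with spectrum $\Lambda$, completing the proof. A possible alternative I would keep in reserve is to build a rank-one perturbation of a diagonal matrix: take $D = \operatorname{diag}(\lambda_2,\ldots,\lambda_n)$ and then conjugate a well-chosen $(\lambda_1)\oplus D$ by a positive similarity so that the off-diagonal entries absorb the negativity of the $\lambda_i$; but the companion matrix route above is cleaner and avoids similarity computations.
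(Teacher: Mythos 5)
Your proof is correct and takes the same route the paper indicates: the paper merely asserts (attributing the observation to Friedland) that under these hypotheses the companion matrix of $\prod_i(t-\lambda_i)$ is nonnegative, without verifying it. Your computation of the coefficients $c_k=\lambda_1 q_{k-1}-q_k$ and the elementary-symmetric-function inequality $q_1\,q_{k-1}\ge k\,q_k$ (valid since all $a_i>0$, so $\lambda_1 q_{k-1}>q_1 q_{k-1}\ge q_k$) supply exactly the verification the paper omits.
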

(In fact, under the assumptions of Suleimanova's Theorem,
the companion matrix of the polynomial $\prod_i(t-\lambda_i)$
is nonnegative  \apr{friedland}).)

There is a huge and active
literature  on the NIEP; see the  survey
\cite{NIEP2018} for an overview and extensive bibliography.
Despite a rich variety of interesting results,
a complete solution is not known at size $n$ if $n > 4$.

\begin{thm} [Johnson-Loewy-London Inequalities] \apr{jll})
Suppose $A$ is an $n\times n$  nonnegative matrix.
Then for all $k,m$ in $\N$,
\[
\tr (A^{mk})\geq \frac{\big(\tr (A^m)\big)^k}{n^{k-1}} \ .
\]
\end{thm}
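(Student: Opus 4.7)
The plan is to reduce to the matrix $B := A^m$, which is again nonnegative, so it suffices to prove
\[
\tr(B^k) \geq \frac{(\tr B)^k}{n^{k-1}}
\]
for every $n\times n$ nonnegative matrix $B$ and every $k\in \N$.

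First I would bound $\tr(B^k)$ from below by $\sum_{i=1}^n (B_{ii})^k$. The idea is purely combinatorial: $(B^k)_{ii}$ is a sum, over all length-$k$ closed walks from vertex $i$ to itself in the weighted digraph of $B$, of the products of edge weights along the walk. Because every weight is nonnegative, discarding all walks except the one that sits at $i$ for $k$ steps yields
\[
(B^k)_{ii} \ \geq\  (B_{ii})^k \ ,
\]
and summing over $i$ gives $\tr(B^k) \geq \sum_i (B_{ii})^k$.

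Next I would apply Jensen's inequality (or equivalently the power-mean inequality) to the convex function $x\mapsto x^k$ on $\R_+$ with the uniform distribution on the diagonal entries $B_{11},\dots,B_{nn}$:
\[
\frac{1}{n}\sum_{i=1}^n (B_{ii})^k \ \geq\  \left(\frac{1}{n}\sum_{i=1}^n B_{ii}\right)^{\!k} \ =\  \frac{(\tr B)^k}{n^k} \ .
\]
Multiplying by $n$ gives $\sum_i (B_{ii})^k \geq (\tr B)^k / n^{k-1}$, which combined with the previous step finishes the argument upon substituting $B=A^m$.

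There is no real obstacle here: both steps are elementary, and the only place nonnegativity of $A$ is used is (i) to ensure one may drop nonnegative terms in the walk expansion of $(B^k)_{ii}$, and (ii) to make Jensen's inequality applicable (the diagonal entries $B_{ii}$ are nonnegative, so $x\mapsto x^k$ is convex on the relevant domain). Both uses of nonnegativity are essential, which explains why this otherwise-simple inequality is a genuine constraint on the spectrum only for nonnegative matrices.
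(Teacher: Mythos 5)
Your proof is correct and follows essentially the same route as the paper's: reduce to $B=A^m$, bound $\tr(B^k)\geq\sum_i (B_{ii})^k$ by discarding nonnegative off-diagonal walk terms, and then apply the elementary inequality $\sum_i x_i^k\geq(\sum_i x_i)^k/n^{k-1}$ for nonnegative $x_i$ (the paper justifies this last step by Lagrange multipliers or H\"{o}lder rather than Jensen, a negligible difference).
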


The JLL inequalities, proved independently by Johnson\ai{Johnson, Charles} and by Loewy and
London,
give a
quantitative version of an easy compactness result:
for $n\times n$
nonnegative matrices $A$ with $\text{trace}(A)\geq \tau >0$,
there is a positive lower bound to
$\text{trace}(A^k)$ which depends only on $\tau , n, k$.
We will use the JLL inequalities later.

\subsection{Stable variants of the NIEP}
Throughout this lecture, $\mathcal R$ denotes a subring of $\R$.
\begin{prob}[Inverse problem for nonzero\si{nonzero spectrum} spectrum]\label{inversespecprob}
What can be the nonzero\si{nonzero spectrum} spectrum
of a  nonnegative matrix $A$ over $\mathcal R$?
What can be the nonzero\si{nonzero spectrum} spectrum
of an irreducible or primitive matrix over $\mathcal R$?
\end{prob}
The case  $\mathcal R = \Z$ asks,
what are the possible periodic data\si{periodic data} for shifts of finite type?
This is the connection to  ``stable algebra\si{stable algebra}'' for symbolic dynamics
(and the original impetus for the paper \cite{BH91}).
Later, we will   also consider
the realization in nonnegative matrices of more refined stable
algebra structure.

To begin we review relevant parts of the
Perron-Frobenius theory of nonnegative matrices
\apr{pfnotes}). This will let us reduce the different flavors of
Problem \ref{inversespecprob} to the primitive case.

\subsection{Primitive matrices}

Recall Definition \ref{primitiveDefinition}:
a  primitive matrix\si{primitive matrix} is a square nonnegative matrix $A$
such that for some positive integer $k$, $A^k$ is positive. (Then,
$A^n$ is positive    for all $n\geq k$.)
The next theorem is the heart of the theory of nonnegative
matrices \apr{whyperron}). Recall, the {\it spectral radius} of a
square matrix with real (or complex) entries is the maximum of
the moduli of the eigenvalues (i.e., the radius of the smallest
circle in $\C$ with center 0 which contains the spectrum).

\begin{thm}[Perron] \label{PerronTheorem}
Suppose $A$ is primitive, with spectral radius $\lambda$.
Then the following hold.
\begin{enumerate}
\item $\lambda$ is a simple root of the characteristic
polynomial $\chi_A$.
\item If $\nu$ is another root of     $\chi_A$,
then $|\nu|< \lambda$.
\item There are left and right eigenvectors $\ell , r$
of $A$ for $\lambda$ which have all entries positive.
\item The only nonnegative eigenvectors of $A$ are the eigenvectors for the
spectral radius. \\
\end{enumerate}
\end{thm}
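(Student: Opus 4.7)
The plan is to establish Perron's theorem first for strictly positive matrices and then bootstrap to primitive matrices by passing to a power $A^k$ that is strictly positive. Throughout, the key technical device will be the joint use of positive left and right eigenvectors to collapse modulus inequalities into equalities.

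First I would handle the positive case. Given a positive matrix $B$, I would consider the continuous map $T: x \mapsto Bx/(\sum_i (Bx)_i)$ on the standard simplex $\Delta = \{x \in \R_+^n : \sum_i x_i = 1\}$. Since $B$ is positive, $T(\Delta)$ lies in the interior of $\Delta$, and Brouwer's fixed point theorem supplies a fixed point $r \in \Delta$, yielding $Br = \lambda r$ with $\lambda > 0$ and $r$ positive. Applying the same argument to $B^T$ produces a positive left eigenvector $\ell$ of $B$ with eigenvalue $\lambda'$, and the identity $\lambda'(\ell \cdot r) = (\ell B) r = \ell (B r) = \lambda(\ell \cdot r)$ forces $\lambda' = \lambda$. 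For any other eigenvalue $\nu$ with eigenvector $v$, the entrywise bound $|\nu||v| \leq B|v|$ combined with pairing against $\ell$ gives $|\nu| \leq \lambda$. If $|\nu| = \lambda$ then $B|v| - \lambda|v|$ is nonnegative with $\ell$-pairing zero, hence zero, so $|v|$ is a nonnegative eigenvector; revisiting the triangle inequality componentwise (using positivity of every entry of $B$) forces all entries of $v$ to share a common phase, so $v$ is a complex multiple of a positive eigenvector and $\nu = \lambda$. Geometric simplicity follows by the standard trick: if $Bv = \lambda v$ with $v$ real and independent of $r$, slide $r - tv$ until its minimum entry first hits $0$; the result is a nonnegative nonzero eigenvector with a zero coordinate, contradicting $Bw = \lambda w$ being positive. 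Algebraic simplicity follows by pairing any generalized eigenvector equation $(B-\lambda I)v = cr$ with $\ell$ to derive $c(\ell \cdot r) = 0$, contradicting $\ell \cdot r > 0$. Finally, if $Aw = \mu w$ with $w$ nonnegative and nonzero, pairing with $\ell$ forces $\mu = \lambda$, proving (4) in the positive case.

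Next I would extend to primitive $A$ by choosing $k$ with $A^k > 0$ (in fact $A^n > 0$ for all $n \geq k$, which I will exploit). By the positive case, $A^k$ has a simple dominant eigenvalue $\mu > 0$ with one-dimensional eigenspace $\R r$, $r$ positive. Since $A$ commutes with $A^k$, the vector $Ar$ lies in this eigenspace, so $Ar = \lambda r$ for some $\lambda \in \R$, and positivity of $r$ and $Ar$ forces $\lambda > 0$ with $\lambda^k = \mu$. Apply the same reasoning to $A^T$ to obtain the positive left eigenvector $\ell$ for $A$ with eigenvalue $\lambda$. For any other eigenvalue $\nu$ of $A$ with eigenvector $v$, iterate $|Av| \leq A|v|$ to get $|\nu|^n |v| \leq A^n |v|$ componentwise; pairing with $\ell$ yields $|\nu|^n (\ell \cdot |v|) \leq \lambda^n (\ell \cdot |v|)$, hence $|\nu| \leq \lambda$. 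The main obstacle, and the only place primitivity (not just nonnegativity) is essential, is ruling out $|\nu| = \lambda$ for $\nu \neq \lambda$. If $|\nu| = \lambda$, then $\ell(A|v| - \lambda|v|) = 0$ forces $A|v| = \lambda|v|$; by the simplicity already established for the one-dimensional $\lambda$-eigenspace (transferred from $A^k$), $|v| = \alpha r$ is positive. Fix $n \geq k$ so that $A^n > 0$. The equality $|(A^n v)_i| = (A^n|v|)_i = \lambda^n \alpha r_i$ together with positivity of every entry of $A^n$ forces all components $v_j$ to share a common complex phase, so $v$ is a complex scalar multiple of $|v| = \alpha r$, giving $\nu = \lambda$. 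This proves (1), (2), (3); claim (4) is the same pairing-with-$\ell$ argument as in the positive case, since $\ell$ and $w$ nonnegative nonzero force $\ell \cdot w > 0$.
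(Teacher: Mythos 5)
The paper does not actually prove this theorem: it is stated as background, with a pointer to \cite{boylepfnotes} and the standard references, so there is no in-paper proof to compare against. Judged on its own, your argument is correct and follows one of the standard routes: Brouwer's fixed point theorem on the simplex to produce positive right and left eigenvectors for a positive matrix, the pairing-with-$\ell$ device to pin down the spectral radius, strict subinvariance plus the equality case of the triangle inequality to exclude other eigenvalues of maximal modulus, the ``slide $r-tv$ to the boundary'' trick for geometric simplicity, and the reduction of the primitive case to the positive case via $A^k>0$ together with the commutation $A(A^k)=(A^k)A$ to see that $A$ preserves the one-dimensional dominant eigenspace of $A^k$. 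All the delicate points are handled properly: $\ell\cdot |v|>0$ is what converts the componentwise inequality into $|\nu|\le\lambda$; the phase argument in the primitive case correctly uses a power $A^n>0$ rather than $A$ itself; and statement (4) follows from the pairing argument since $\ell>0$ forces $\ell\cdot w>0$ for nonnegative nonzero $w$. The one place you should add a sentence is algebraic simplicity of $\lambda$ for $A$ (as opposed to $A^k$) in the primitive case: your ``This proves (1)'' leans on simplicity ``transferred from $A^k$,'' and while the transfer is immediate --- the generalized $\lambda$-eigenspace of $A$ sits inside the one-dimensional generalized $\mu$-eigenspace of $A^k$, or alternatively your pairing argument $(A-\lambda I)v=cr\Rightarrow c(\ell\cdot r)=0$ runs verbatim for $A$ since $\ell$ and $r$ are now in hand --- it deserves to be said explicitly.
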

\begin{ex}
We list three nonprimitive nonnegative matrices for which a conclusion
of the Perron Theorem fails.
\[
A=\begin{pmatrix} 0&-1 \\ 1& 0 \end{pmatrix} \ , \qquad
B=\begin{pmatrix} 1&0 \\ 0& 1 \end{pmatrix}\ , \qquad
C = \begin{pmatrix} 0&1 \\ 1& 0 \end{pmatrix} \ .
\]
$A$  has spectrum $(i, -i)$;
the spectral radius of $A$ is 1, but  1 is not an eigenvalue of $A$.
$B$  has spectrum $(1,1)$;
the spectral radius of $B$ is 1, and 1
is a repeated root of $\chi_B$.
$C$  has spectrum $(1, -1)$;
the spectral radius  1 is an eigenvalue, but $1 = |-1|$.
\end{ex}
\begin{ex}
The matrix
$A=\left(\begin{smallmatrix} 0&3 \\ 4& 1 \end{smallmatrix}\right)$ is primitive
with spectrum  $(4 ,-3)$.
There is a positive left eigenvector for eigenvalue $4$, but not for
$3$:
\[
(1,1)\begin{pmatrix} 0&3 \\ 4& 1 \end{pmatrix} =
4(1,1) \quad \quad
\text{and}
\quad \quad
(-4,3)\begin{pmatrix} 0&3 \\ 4& 1 \end{pmatrix} = -3(-4,3) \ .
\]
\end{ex}

\subsection{Irreducible matrices}

\begin{de} An irreducible matrix is an $n\times n$
nonnegative matrix $A$ such that
\[
\{ i,j\} \subset \{1, \dots,   n\}
\ \ \implies \  \
\exists k>0 \text{ such that }
A^k(i,j) >0 \  .
\]
\end{de}
Every primitive matrix is irreducible.
\begin{ex}
\[
A = \begin{pmatrix} 1&1&1 \\ 1& 1& 1 \\ 0&0&0\end{pmatrix}  \quad \quad
B= \begin{pmatrix} 1&1 \\ 0& 1 \end{pmatrix}  \quad \quad
C=\begin{pmatrix} 0&1&0 \\ 0& 0& 1 \\ 1&0&0\end{pmatrix}  \quad \quad
D=\begin{pmatrix} 0&0&1 \\ 0& 0& 1 \\ 1&1&0\end{pmatrix} \ .
\]
For all $n\in \N$, we see sign patterns:

\[
A^n = \begin{pmatrix} +&+&+ \\ +& +& + \\ 0&0&0\end{pmatrix}  \ \
B^n= \begin{pmatrix} +&+ \\ 0& + \end{pmatrix}  \ \
C^{3n}=\begin{pmatrix} +&0&0 \\ 0& +& 0 \\ 0&0&+\end{pmatrix}  \ \
D^{2n}=\begin{pmatrix} +&+&0 \\ +& +& 0 \\ 0&0&+\end{pmatrix} \ .
\]
$A$ and $B$ are not irreducible.
$C$ and $D$ are irreducible, but not primitive.
\end{ex}

%       If a matrix $D$ is degenerate, then it cannot be irreducible.
%       (E.g. if row $i$ of $D$ is zero and $k\in \N$,
%        then row $i$ of $D^k$ is zero.)\\ \\

\subsubsection{Block permutation structure}
If $n>1$, then an $n\times n$ cyclic-permutation matrix
is irreducible but  not primitive.
%Frobenius showed
This is  representative of the
general irreducible case.

\begin{thm}  For a square nonnegative matrix $A$, the following are equivalent.
\begin{enumerate}
\item $A$ is irreducible.
\item There is a permutation matrix $Q$
and a positive integer $p$ such that $Q^{-1}AQ$
has the block structure of a cyclic permutation,
\[
Q^{-1} A Q =
\begin{pmatrix}
0 & A_1 & 0 & 0 &\dots & 0 \\
0 & 0 & A_2 & 0 &\dots & 0 \\
&   &\dots   &&& \\
0 & 0 & 0 & 0 &\dots & A_{p-1}  \\
A_{p} & 0 & 0 & 0 &\dots & 0
\end{pmatrix}
\]
such that each of the cyclic products  $\ D_1=A_1A_2\cdots A_p$,
$\ D_2= A_2A_3\cdots A_1$, $\ \ \dots \ \ $,
$\ D_p= A_pA_1\cdots A_{p-1} \ \ $
is a  primitive matrix.
\end{enumerate}
\end{thm}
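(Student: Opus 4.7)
The plan is to prove each direction with the combinatorial picture in mind: think of $A$ as the adjacency matrix (with multiplicities) of a weighted directed graph on $\{1,\dots,n\}$, where $A^k(i,j)>0$ means there is a walk of length $k$ from $i$ to $j$. Irreducibility then says the graph is strongly connected.

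For (2) $\Rightarrow$ (1), I would just compute. The block form is the adjacency matrix of a $p$-partite graph $\mathcal{G}$ on classes $C_0,\dots,C_{p-1}$ (with $A_r$ giving edges $C_{r-1}\to C_r$, cyclically). Given $i\in C_a$ and $j\in C_b$, choose $r\in\{0,\dots,p-1\}$ with $a+r\equiv b\pmod p$. A walk of length $r$ from $i$ lands in $C_b$; concatenating with a walk in $C_b$ witnessing primitivity of $D_b$ (whose entries compute $(A^p)|_{C_b}$, i.e.\ walks of length $p$ staying in $C_b$), one reaches $j$. Hence $A^k(i,j)>0$ for some $k$, so $A$ is irreducible.

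For (1) $\Rightarrow$ (2), the main work is to produce $p$ and the partition. For each vertex $i$, let $\mathcal{R}(i)=\{k>0:A^k(i,i)>0\}$; irreducibility makes this nonempty, and it is closed under addition, so $p(i):=\gcd\mathcal{R}(i)$ is well defined. I would show $p(i)$ is independent of $i$: given $i,j$, pick walks $i\to j$ of length $\alpha$ and $j\to i$ of length $\beta$; then $\alpha+\beta\in\mathcal{R}(i)\cap\mathcal{R}(j)$, and any $\gamma\in\mathcal{R}(j)$ satisfies $\alpha+\gamma+\beta\in\mathcal{R}(i)$, which together with $\alpha+\beta\in\mathcal{R}(i)$ forces $p(i)\mid\gamma$, so $p(i)\mid p(j)$; reverse the roles to conclude. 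Call the common value $p$. Fix a base vertex $v_0$ and define $C_r=\{j:\exists k\equiv r\!\!\pmod p\text{ with }A^k(v_0,j)>0\}$ for $0\le r<p$; the classes are disjoint (two walks $v_0\to j$ of lengths $k_1,k_2$ can be closed up at $v_0$ giving $k_1+\ell,k_2+\ell\in\mathcal{R}(v_0)$, hence $p\mid k_1-k_2$), exhaust $\{1,\dots,n\}$ by irreducibility, and satisfy $A(C_r,C_s)\neq 0 \Rightarrow s\equiv r+1\!\!\pmod p$. Reordering indices by the $C_r$ produces the claimed cyclic block form, with $A_r$ the block $C_{r-1}\to C_r$.

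Finally I would verify each $D_r=A_r A_{r+1}\cdots A_{r-1}$ (i.e.\ $A^p$ restricted to $C_r$) is primitive. Irreducibility of $D_r$ follows from irreducibility of $A$: for $i,j\in C_r$, any walk $i\to j$ in $\mathcal{G}$ has length $\equiv 0\!\!\pmod p$, so it decomposes into $p$-steps of $A$, giving a walk in $\mathcal{G}^{(p)}|_{C_r}$. For primitivity I need $\gcd\{k:D_r^{\,k}(i,i)>0\}=1$ for some (equivalently every) $i\in C_r$; but $\{k:D_r^{\,k}(i,i)>0\}=\{m/p:m\in\mathcal{R}(i)\}$, and by definition $\gcd\mathcal{R}(i)=p$, so the gcd of this rescaled set is $1$. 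A standard number-theoretic fact (any additively closed subset of $\N$ with gcd $1$ contains all sufficiently large integers) then gives positivity of some power of $D_r$.

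The main obstacle is the well-definedness and independence of the period $p$ from the base vertex, and carefully arguing that rescaling by $p$ turns the period of $A$ on $C_r$ into the period $1$ needed for primitivity of $D_r$; everything else is bookkeeping with the cyclic block structure.
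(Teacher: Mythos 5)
Your proof is correct, and it is the standard cyclic-decomposition argument from Perron--Frobenius theory. Note that the paper itself does not prove this theorem: it is stated as background in Section 4, with the proof deferred to standard references on nonnegative matrices (see the remark pointing to \cite{boylepfnotes}), so there is no in-paper argument to compare against. Your route --- return sets $\mathcal R(i)$ closed under addition, base-point independence of the period $p$, residue classes $C_r$ modulo $p$ along walks from a fixed vertex, and the numerical-semigroup fact to upgrade ``irreducible with gcd of return times equal to $1$'' to ``primitive'' for each $D_r$ --- is exactly the classical one. Two small points you gloss over but which are easily filled: in $(2)\Rightarrow(1)$ the existence of a length-$r$ walk out of a given $i\in C_a$ needs a word (if row $i$ of $A_a\cdots A_{a+r-1}$ were zero, then row $i$ of the primitive matrix $D_a$ would be zero, a contradiction); and in $(1)\Rightarrow(2)$ one should check each class $C_r$ is nonempty (any closed walk at $v_0$ has length a positive multiple of $p$, so its initial segment visits every residue class). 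Neither affects the correctness of the argument.
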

The integer $p$ above is called the period of
the irreducible matrix $A$. (If $p=1$,
then $A$ is primitive.) For $A$ above,  $A^p$ is block
diagonal, with diagonal blocks $D_1, \dots , D_p$.

From the block permutation structure, one can show
the following (in which  $D$ could be any of the
matrices $D_i$ above).

\begin{thm}[Irreducible to primitive reduction]
Suppose $A$ is an irreducible matrix with period
$p$. Then there is a primitive matrix $D$ such that
$\det (I-tA) =\det (I-t^pD)$.
\end{thm}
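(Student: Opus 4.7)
The plan is to invoke the block permutation structure theorem from the previous result and then compute the determinant directly via block row operations.

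First, by the block permutation structure theorem, there is a permutation matrix $Q$ such that $B := Q^{-1}AQ$ has the stated cyclic block form with off-diagonal blocks $A_1, \ldots, A_p$, and such that each cyclic product $D_i = A_i A_{i+1} \cdots A_{i-1}$ (indices mod $p$) is primitive. Since $\det(I - tA) = \det(I - tB)$ (conjugation by $Q$ preserves determinants), it suffices to compute $\det(I - tB)$ and show it equals $\det(I - t^p D)$ for some primitive $D$; I will take $D = D_p = A_p A_1 A_2 \cdots A_{p-1}$.

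Next, I would perform block row operations on the matrix
\[
I - tB \; = \; \begin{pmatrix} I & -tA_1 & 0 & \cdots & 0 \\ 0 & I & -tA_2 & \cdots & 0 \\ \vdots & & \ddots & & \vdots \\ 0 & 0 & 0 & \cdots & -tA_{p-1} \\ -tA_p & 0 & 0 & \cdots & I \end{pmatrix}
\]
to clear out the last block row. Concretely, add $tA_p$ times block row $1$ to block row $p$; this kills the $(p,1)$ entry but creates $-t^2 A_p A_1$ in position $(p,2)$. Then add $t^2 A_p A_1$ times block row $2$ to block row $p$; this clears position $(p,2)$ but creates $-t^3 A_p A_1 A_2$ in position $(p,3)$. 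Iterating this sweep for $i = 1, 2, \ldots, p-1$ (at step $i$, add $t^i A_p A_1 \cdots A_{i-1}$ times block row $i$ to block row $p$), every off-diagonal entry in the last block row becomes $0$, while the $(p,p)$ diagonal block becomes $I - t^p A_p A_1 \cdots A_{p-1} = I - t^p D_p$.

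Since these block row operations preserve the determinant and the resulting matrix is block upper triangular with diagonal blocks $I, I, \ldots, I, I - t^p D_p$, we conclude $\det(I - tA) = \det(I - tB) = \det(I - t^p D_p)$. Taking $D := D_p$, which is primitive by the block permutation structure theorem, completes the proof. There is no real obstacle here — the only thing to check carefully is that the inductive row-sweep does what I claim (a routine telescoping of block multiplications), and the argument is independent of the block sizes $n_i$.
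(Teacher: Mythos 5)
Your proof is correct and uses exactly the technique the paper intends: the paper only asserts this direction follows ``from the block permutation structure,'' but its appendix proof of the converse (Proposition \ref{AforDproof}) is the same block-triangularization by unipotent block elementary operations, producing a block triangular matrix with diagonal blocks $I,\dots,I,\,I-t^pD$. Your row-sweep telescopes correctly and the determinant-preservation and primitivity of $D_p$ are both justified, so nothing is missing.
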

It is not hard to check that the  converse of this theorem is also true
\apr{AforDproof}).

\subsubsection{Reduction in terms of nonzero\si{nonzero spectrum} spectrum}

Recall,
$A$ has nonzero\si{nonzero spectrum} spectrum $ (\lambda_1, \dots , \lambda_k)$
if and only if  $\det (I-tA) = \prod_{i=1}^k (1-\lambda_i t)$.
The statement
$\det (I-tA) =\det (I-t^pD)$ has an equivalent description
\apr{nzspecequiv}): \\
if $\Lambda$ is the nonzero\si{nonzero spectrum} spectrum of $D$, then
$\Lambda^{1/p}$ is the nonzero\si{nonzero spectrum} spectrum of $A$.
Here,
$ \Lambda^{1/p}$ is defined
by replacing
each entry of $\Lambda$ with the list of its $p$th roots in $\C$.
If $\Lambda$ is $k$ entries, then
$\Lambda^{1/p}$ has $pk$ entries.

\begin{ex}
Suppose
$  \det(I-tD) = (1-8t)(1-7t)^2$ and
$\det(I-tA) = \det (I-t^3D)$.
%       Then
%  $\det(I-tA)  = (1-8t^3)(1-7t^3)^2$.
Let $\xi = e^{2\pi i/3}$.       The
nonzero\si{nonzero spectrum} spectrum of $D$
is
$\Lambda = (8,7,7)$.  The nonzero\si{nonzero spectrum} spectrum of $A$ is
\[
\Lambda^{1/3}  =  \big(\ \, 2,\, \xi 2, \, \xi^2 2,\, \ \ \
7^{1/3} ,\    \xi 7^{1/3} ,\xi^2 7^{1/3} , \, \ \ \
7^{1/3} ,\    \xi 7^{1/3} ,\xi^2 7^{1/3} \, \ \ \big)
\ .
\]
\end{ex}
\subsubsection{Multiplicity of zero in the spectrum}
Apart from one exception: if a nonzero\si{nonzero spectrum} spectrum
is realized by an irreducible matrix over $\mathcal R$ of size $n\times n$,
then it can also be realized at any larger size, by an
irreducible matrix  over $\mathcal R$ of the same period.

The one exception: if $\mathcal R = \Z$, then
an irreducible matrix with spectral radius 1 can only
be a cyclic permutation matrix.

Also: if $n\times n$ is the smallest size primitive matrix
realizing a nonzero\si{nonzero spectrum} spectrum $\Lambda$, then
$pn\times pn$ is the smallest size irreducible matrix
realizing  $\Lambda^{1/p}$.

{\it Conclusion.}
Knowing the possible spectra of irreducible matrices
over a subring $\mathcal R$ of $\R$ reduces to
knowing the possible nonzero\si{nonzero spectrum} spectra of primitive matrices over
$\mathcal R$, and
the smallest dimension in which they can be realized.
\subsection{Nonnegative matrices}

\begin{exer} \apr{pfnotes}) Suppose $A$ is a square nonnegative matrix.
Then there is a permutation matrix $P$ such
$P^{-1}AP$ is block triangular, such that each diagonal
block is either irreducible or $(0)$.

For $A$ nonnegative as above,
let $A_i$ be the $i$th diagonal block, with
characteristic polynomial $p_i$.
Then  the characteristic polynomial of $A$ is
$\chi_A(t)=\prod_i p_i(t)$, and the nonzero\si{nonzero spectrum}
spectrum is given by
$\det (I-tA) = \prod_i \det (I-tA_i)$.
\end{exer}

So, the  spectrum of a
nonnegative matrix is an arbitrary disjoint union of
spectra of irreducible matrices, together with an arbitrary repetition of 0.

There are constructions and constraints which work best at the level
of nonnegative matrices (e.g., JLL).
Still, one approach  to the NIEP
is to focus on the primitive
case (which gives the irreducible case, and then the general case).
Obstructions might be more simply formulated in this case.
Moreover,  in applications a nonnegative matrix must often
be irreducible or primitive. (For symbolic dynamics: definitely.) A
realization statement for
nonnegative matrices does not  give a realization statement for
irreducible or
primitive matrices.
So, we focus on  primitive matrices. But even
in this restricted case,
no satisfactory general
characterization is known or conjectured.

{\it Conclusion.}
We will focus on
the nonzero\si{nonzero spectrum} spectrum of primitive
matrices. And here, at last, we find simplicity.

\subsection{The Spectral Conjecture}

Let $\Lambda= (\lambda_1, \dots , \lambda_k)$ be a $k$-tuple of
nonzero complex numbers.
We will give three simple
conditons $\Lambda$ must satisfy to be the nonzero\si{nonzero spectrum} spectrum
of a primitive matrix over  $\mathcal R$.
\\ \\
\begin{de}
For a  tuple $\Lambda= (\lambda_1, \dots , \lambda_k) $
of complex numbers:
\begin{itemize}
\item
$\lambda_i$ is a   {\it Perron value}  for $\Lambda$
if $\lambda_i$ is a positive real number and  $i\neq j \implies
\lambda_i > |\lambda_j|$ .
\item
$\tr (\Lambda) = \sum_{i=1}^k \lambda_i $ .
\item $\Lambda^n = \big( (\lambda_1)^n, \dots , (\lambda_k)^n )\ ,\quad$
if $n\in \N$.
\end{itemize}
\end{de}

\begin{prop}[Necessary conditions]
Suppose $\Lambda $ is the nonzero\si{nonzero spectrum} spectrum of a primitive matrix over a
subring $\mathcal R$ of $\R$. Then the following hold.
\begin{enumerate}
\item \textnormal{Perron Condition}:
\\
$\Lambda$ has a Perron value.

\item \textnormal{Coefficients Condition \apr{coeff})}:
\\
The  polynomial
$p(t)= \prod_{i=1}^k (t-\lambda_i)$
has all its coefficients in $\mathcal R$.

\item \textnormal{Trace Condition}: \\
If  $\mathcal R \neq \Z$, then for all positive integers $n,k$:
\begin{enumerate}
\item   (i)   $\tr (\Lambda^n) \geq 0$, and
\item   (ii)  $\tr (\Lambda^n) > 0 \implies \tr ( \Lambda^{nk}) > 0 $ .
\end{enumerate}
If $\mathcal R=\Z$, then for all positive integers $n$,
$\tr_n(\Lambda) \geq 0\ .  $
\end{enumerate}
\end{prop}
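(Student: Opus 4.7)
The plan is to treat the three conditions in turn, each following from standard facts already in the paper.

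Let $A$ be a primitive $N\times N$ matrix over $\mathcal R \subset \R$ with nonzero spectrum $\Lambda = (\lambda_1,\dots,\lambda_k)$.

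\textbf{Perron Condition.} This is essentially a restatement of Perron's Theorem (Theorem \ref{PerronTheorem}). If $\lambda$ is the spectral radius of $A$, then by parts (1)--(2) of that theorem, $\lambda$ is a simple real root of $\chi_A$ and every other complex eigenvalue has strictly smaller modulus. Since $\lambda>0$ (as $A$ is primitive and nonzero), $\lambda$ appears in $\Lambda$ and is a Perron value of $\Lambda$ in the sense defined. No new work here.

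\textbf{Coefficients Condition.} Because $A$ has entries in $\mathcal R$, the polynomial $\det(I-tA)$ lies in $\mathcal R[t]$; and as noted in Section \ref{perptssection}, $\det(I-tA)=\prod_{i=1}^{k}(1-\lambda_i t)$. The coefficients of $\prod_{i=1}^{k}(t-\lambda_i)$ are, up to sign, the same elementary symmetric functions $e_j(\lambda_1,\dots,\lambda_k)$ that appear as the coefficients of $\prod_{i=1}^{k}(1-\lambda_i t)$ (they occur in reverse order). Hence membership in $\mathcal R$ transfers from one polynomial to the other.

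\textbf{Trace Condition.} The key identity is $\tr(A^n)=\sum_{i=1}^{k}\lambda_i^n=\tr(\Lambda^n)$, since zero eigenvalues contribute nothing to the trace of any positive power. Since $A$ is nonnegative, so is $A^n$, and therefore $\tr(\Lambda^n)=\tr(A^n)\geq 0$. This already handles the $\mathcal R=\Z$ statement, and part (i) of the $\mathcal R\neq\Z$ statement. For part (ii), I would invoke the Johnson--Loewy--London inequalities directly: applied to the nonnegative matrix $A$, they give
\[
\tr(\Lambda^{nk}) \;=\; \tr(A^{nk}) \;\geq\; \frac{\bigl(\tr(A^n)\bigr)^k}{N^{k-1}} \;=\; \frac{\bigl(\tr(\Lambda^n)\bigr)^k}{N^{k-1}},
\]
so $\tr(\Lambda^n)>0$ forces $\tr(\Lambda^{nk})>0$ for every $k\in\N$.

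There is no real obstacle; the only step that is not purely formal is the implication in (ii) of the Trace Condition, where one needs a quantitative tool rather than just nonnegativity of entries, and JLL is precisely such a tool. The presentation should make clear why the $\mathcal R=\Z$ clause is stated in weaker form (namely, the implication in (ii) remains true but is not highlighted), so that the reader does not suspect a gap.
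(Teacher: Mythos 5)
Your arguments for the Perron Condition, the Coefficients Condition, and parts (i)--(ii) of the Trace Condition for $\mathcal R\neq\Z$ are correct and essentially follow the paper's route. Two minor remarks: the paper reaches the Coefficients Condition even more directly, noting that $\chi_A(t)=t^jp(t)$ already lies in $\mathcal R[t]$, so the coefficients of $p$ are just coefficients of $\chi_A$; and for (ii) the paper does not need JLL --- if $B=A^n$ is nonnegative with $\tr(B)>0$, then some diagonal entry $B(i,i)>0$, hence $B^k(i,i)\geq \bigl(B(i,i)\bigr)^k>0$ and $\tr(B^k)>0$. Your appeal to JLL is valid but uses a quantitative inequality where a one-line qualitative observation suffices.

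The genuine gap is the $\mathcal R=\Z$ clause, which you have misread. That clause asserts $\tr_n(\Lambda)\geq 0$, where $\tr_n(\Lambda)=\sum_{d\mid n}\mu(n/d)\,\tr(\Lambda^d)$ is the $n$th \emph{net} trace --- a strictly \emph{stronger} condition than $\tr(\Lambda^n)\geq 0$, not a weaker one, so your closing sentence has the logic reversed and nonnegativity of $\tr(A^n)$ does not "already handle" it. The paper's own example $\Lambda=(2,i,-i,i,-i,1)$ has $\tr(\Lambda^n)\geq 0$ for all $n$ and satisfies (i) and (ii), yet $\tr_2(\Lambda)=\tr(\Lambda^2)-\tr(\Lambda)=1-3=-2<0$, so it is excluded only by the net trace condition. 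The proof of that condition is the one genuinely new ingredient for $\mathcal R=\Z$: a primitive matrix over $\Z$ has entries in $\Z_+$ and is therefore the adjacency matrix of a graph, $\tr(A^n)$ counts loops of length $n$, and M\"obius inversion shows $\tr_n(\Lambda)$ counts the minimal loops of length $n$ (equivalently, the points of least period $n$ of the edge SFT), hence is nonnegative. This combinatorial counting argument is missing from your proposal.
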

(We define $\tr_n(\Lambda)$, the $n$th net trace of $\Lambda$, below.)
\begin{proof}
(1) By the Perron Theorem, $\Lambda$ has a Perron value.

(2) The characteristic polynomial of a matrix over a ring has
coefficients in the ring. For some $k\geq 0$,
the characteristic polynomial of $A$ is
$t^kp(t)$. So, $p$ has coefficients in the ring.

(3) (i)  $\tr (\Lambda)= \tr (A)$, and
$\tr (\Lambda^n)=\tr(A^n)$. The trace of a
nonnegative matrix is nonnegative.
(ii) Suppose $\tr (\Lambda^n)>0$. Then
$\tr(A^n)>0 $ and $A^n \geq 0$.  Therefore $\tr{(A^n)^k}>0$ .
But,
$ \tr{(A^n)^k} = \tr (A^{nk}) =\tr (\Lambda^{nk}) $ .

(3) Suppose $\mathcal R = \Z$. Conditions (i) and (ii) hold,
but a stronger condition holds.

Consider $A$ as the adjacency matrix of a graph.
A loop is a path with the same minimal and terminal vertex.
The number of loops of length $n$ is trace($A^n$).

A loop is minimal if it is not a concatenation of copies
of a shorter loop.
% I SKIPPED DOING A GRAPH EXAMPLE TO SAVE TIME.
%\red{(App: Give an example $A$.
%Also remark there is a bijection between
%minimal loops of length $n$ and points of minimal period $n$ in
%the SFT defined by $A$.)}
So, for example,
\begin{align*}
\text{number of minimal loops of length }1  & =
\text{trace}  (A) \\
\text{number of minimal loops of length }2  &
= \text{trace}(A^2) - \text{trace}  (A) \ .
\end{align*}
For example, let $\Lambda = (2,i,-i,i,-i,1)$. Then
$\tr ( \Lambda^2 )- \tr  (\Lambda) = 1 -3 = -2 <0$.
This $\Lambda$ cannot be the nonzero\si{nonzero spectrum} spectrum of a
matrix over $\Z_+$, even though $\Lambda$ satisfies
conditions 1,2,3(i) and 3(ii).

The number of minimal loops of
length $n$, $\tr_n(\Lambda)$,  can be expressed as a function
of the traces of powers of $\Lambda$
using Mobius inversion:
\[
\tr_n(\Lambda):= \sum_{d|n}
\mu (n/d)\,  \tr ( \Lambda^d) \ ,
\]
where $\mu $ is the Mobius function,
\begin{align*}
\mu : \N &\to \{-1,0,1\}  \\
:    n & \mapsto 0 \quad \text{if }n\text{ is not squarefree} \\
: n  & \mapsto (-1)^e \quad \text{if }n\text{ is the product of }e
\text{ distinct primes. }
\end{align*}
\end{proof}
\begin{conj}[Spectral Conjecture, Boyle-Handelman  1991 \cite{BH91}] \label{spectralconjecture}\ai{Handelman, David}
Let $\mathcal R$ be a  subring of $\R$. Suppose $\Lambda
= (\lambda_1, \dots , \lambda_k)$ is an $k$-tuple of complex numbers.
Then $\Lambda$ is the nonzero\si{nonzero spectrum} spectrum of some primitive matrix over
$\mathcal R$ if and only the above conditions (1), (2), (3) hold.
\end{conj}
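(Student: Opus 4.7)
The plan is to establish the sufficiency direction (necessity being already proved in the preceding proposition). I would approach this in stages, first handling the real case $\mathcal R = \R$, then descending to proper subrings of $\R$ (with $\mathcal R = \Z$ being the most delicate), and exploiting throughout the crucial stability feature that $\Lambda$ need only be realized as the \emph{nonzero} part of a spectrum; arbitrary additional zero eigenvalues may be appended, which corresponds to the freedom to enlarge the matrix.

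First, I would reduce to a ``strict Perron'' setup. By the Perron condition, $\Lambda$ has a Perron value $\lambda_1$ with $\lambda_1 > |\lambda_j|$ for $j \neq 1$. I would then rewrite the defining identity
\[
\frac{1}{\prod_{i=1}^k (1-\lambda_i t)} \;=\; \exp\!\Bigl(\sum_{n=1}^{\infty} \tfrac{1}{n}\,\tr(\Lambda^n)\, t^n\Bigr)
\]
and view realization of $\Lambda$ by a primitive $\mathcal R_+$-matrix $A$ (with zero extension) as the problem of constructing a nonnegative matrix whose zeta series equals this rational function. The trace conditions (3) translate into positivity of an appropriate Möbius transform of the trace sequence, which is precisely the kind of ``counts of minimal closed walks'' data that a weighted directed graph can realize.

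For the real case, the construction I would pursue is a block matrix of the form $A = \lambda_1 E + N + C$, where $E$ is a rank-one ``Perron block'' anchoring the strict spectral radius, $N$ is a large nilpotent companion-type block whose Jordan structure accounts for the subdominant $\lambda_j$, and $C$ is a nonnegative coupling matrix tuned so that primitivity holds and the characteristic polynomial picks up exactly $\prod_i (t-\lambda_i)$ times a power of $t$. The delicate step is balancing two competing demands: $C$ must be small enough not to disturb the subdominant spectrum determined by $N$, yet large enough to guarantee $A^m > 0$ for some $m$. Here the JLL inequalities and the strict Perron separation give room: one chooses the size of the nilpotent block large, and the coupling of order $\varepsilon$, verifying by a perturbation estimate on eigenvalues that the construction lands on the target spectrum. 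Descending to a subring $\mathcal R \subsetneq \R$ uses the coefficients condition (2) to force $\det(I-tA) \in \mathcal R[t]$, then approximates the real construction by replacing $\varepsilon$-sized couplings with entries from $\mathcal R_+$, using that $\mathcal R_+$ is dense in $\R_+$ when $\mathcal R \neq \Z$.

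The main obstacle, and the reason this is stated as a conjecture rather than a theorem, is the case $\mathcal R = \Z$. Here one cannot make couplings arbitrarily small, and the net trace condition $\tr_n(\Lambda) \ge 0$ alone must encode full combinatorial realizability as counts of minimal cycles in a directed graph. The difficulty concentrates in the boundary regime where some $\tr_n(\Lambda)$ vanish or are small relative to $n$, because the available ``room'' for tweaking the graph is exhausted. I expect the hard part of the proof to be a constructive gadget that, given an integer-valued sequence $(\tr_n(\Lambda))$ satisfying the net-trace positivity, produces an SFT whose edge-counting generating function inverts to $\prod_i(1-\lambda_i t)$; this is essentially a problem about realizing prescribed cycle statistics in a primitive graph, and would need either a careful inductive construction on the Perron value $\lambda_1$ or an amalgamation argument building the desired SFT from simpler ``building block'' SFTs whose nonzero spectra are known to be realizable.
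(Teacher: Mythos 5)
This statement is a \emph{conjecture}, not a theorem: the paper offers no proof of it, and none exists. What you have written is a plausible-sounding research program for an open problem, and the places where your sketch waves its hands are exactly the places where the known techniques break down. The paper's actual state of knowledge is: the conjecture is a theorem for $\mathcal R=\R$ (Boyle--Handelman, via the Subtuple Theorem, whose proof is a very complicated symbolic-dynamics construction producing matrices of enormous size --- nothing like your ``rank-one Perron block plus nilpotent block plus $\varepsilon$-coupling'' ansatz, which there is no reason to believe can be made to work while keeping all entries nonnegative and hitting the subdominant spectrum exactly rather than approximately); it is a theorem for $\mathcal R=\Z$ (Kim--Ormes--Roush) and hence for $\Q$; and it is a theorem for any subring $\mathcal R\neq\Z$ when $\tr(\Lambda)>0$, by precisely the density-of-$\mathcal R_+$ perturbation argument you describe in your descent step.

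Your diagnosis of where the difficulty lies is therefore inverted. You identify $\mathcal R=\Z$ as ``the main obstacle, and the reason this is stated as a conjecture rather than a theorem,'' but the $\Z$ case is settled. The genuinely open case is a proper subring $\mathcal R$ of $\R$ other than $\Z$ (and not covered by $\Q$, $\R$, or the Subtuple Theorem hypothesis) with $\tr(\Lambda^n)=0$ for some set of exponents including $n=1$: there the perturbation argument fails because it is bootstrapped off the Kim--Roush result that a primitive matrix over $\R_+$ with \emph{positive trace} is SSE-$\R_+$ to a positive matrix, and the paper states explicitly that no one knows how to run an analogous argument when the trace vanishes. So your proposal does not close the gap; it reproduces the known partial results in outline while mislocating, and not addressing, the case that actually remains open.
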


\begin{ex} (unbounded realization size) Suppose $\mathcal R=\R$. Given
$0< \epsilon < (1/2)$, set
\[ \Lambda_{\epsilon}  \ =\
\Big(\ 1\ ,\ i\sqrt{(1-\epsilon )/2}\ ,\ -i\sqrt{(1-\epsilon )/2} \
\Big) \ .
\]
This  $\Lambda_{\epsilon} $
satisfies the conditions of the Spectral Conjecture.

But,
if a nonnegative $n\times n$ matrix $A$ has nonzero\si{nonzero spectrum} spectrum
$\Lambda_{\epsilon}$, then
\begin{align*}
\tr (\Lambda_{\epsilon}^2) &\geq \frac{(\tr \Lambda_{\epsilon})^2}n
\ , \quad
\text{by the JLL inequality, and therefore }
\\
\epsilon &\geq \frac{1^2}n  = 1/n \ .
\end{align*}
So, as $\epsilon$ goes to zero, the size of $A$ must
go to infinity.
\end{ex}

\begin{de}
A matrix $A$ is {\bf eventually positive (EP)}  if
for all large $k>0$, $A^k$ is positive.
\end{de}

\begin{thm}[Handelman] \apr{HIEP})\ai{Handelman, David}
%(``Eventually, Great Powers Rule ...'')\\
Suppose $A$ is a square matrix over $\mathcal R$ whose spectrum has a
Perron value.
\begin{enumerate}
\item
If $\mathcal R \neq \Z$, then $A$ is similar over $\mathcal R$
to an EP matrix \cite{HandelmanJOpTh1981}.
\item
If $\mathcal R = \Z$, then $A$ is SSE over $\mathcal R$ to an EP matrix
\cite{HandelmanEvPosRational1987}.
\end{enumerate}
\end{thm}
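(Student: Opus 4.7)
The plan is to treat the two parts separately, as they rely on different principles. The structural criterion I would use throughout is: a matrix $B$ with a simple, strictly dominant, positive real eigenvalue $\lambda$ is EP if and only if the right and left $\lambda$-eigenvectors $r,\ell$ of $B$ can be chosen strictly positive, because then $B^n/\lambda^n$ converges to the rank-one matrix $r\ell^T/(\ell^T r) > 0$.

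\emph{Part (1), using density.} Let $\lambda$ be the Perron value of $A$ with right and left real eigenvectors $r,\ell$ normalized so $\ell^T r > 0$. First I would construct $V \in \GL(n,\R)$ with $V^{-1}r > 0$ and $\ell V > 0$: an explicit choice is to take the columns of $V$ to be small perturbations of $r/n$ summing to $r$, each satisfying $\ell c_i > 0$ and together forming a basis, then rescale one column so $V \in \textnormal{SL}(n,\R)$. By the criterion above, $V^{-1}AV$ is EP. Next observe that any subring $\mathcal R \subseteq \R$ with $\mathcal R \neq \Z$ is dense in $\R$: it must contain either a non-integer rational $p/q$ (whence $\Z[1/q] \subseteq \mathcal R$) or an irrational $\alpha$ (whence $\Z + \Z\alpha \subseteq \mathcal R$). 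Hence $\textnormal{SL}(n,\mathcal R)$ is dense in $\textnormal{SL}(n,\R)$, since $\textnormal{SL}(n,\R)$ is generated by elementary matrices and each parameter can be approximated in $\mathcal R$. Choosing $V' \in \textnormal{SL}(n,\mathcal R)$ sufficiently close to $V$, the positivity of the Perron eigenvectors of $V'^{-1}AV'$ is preserved by openness, so $V'^{-1}AV'$ is EP, and $V' \in \GL(n,\mathcal R)$ supplies the required similarity over $\mathcal R$.

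\emph{Part (2), using SSE.} Over $\Z$, density fails and integer similarities are too rigid, but the excerpt notes $\textnormal{SSE-}\Z = \textnormal{SE-}\Z$. So it suffices to construct an EP integer matrix $B$ with $\cok_{\Z[t]}(I - tB) \cong \cok_{\Z[t]}(I - tA)$ as $\Z[t]$-modules. The plan is to use Handelman's realization results: the nonzero spectrum of $A$ inherits the conditions of the Spectral Conjecture \ref{spectralconjecture} needed for realization by a primitive integer matrix (vacuously, since $A$ itself realizes this spectrum), so a primitive---hence EP---integer representative can be produced inside the SE-$\Z$ class of $A$. Operationally, via Proposition \ref{prop:MallerShub}, this amounts to passing to a suitable zero extension of $A$ to gain the dimensions needed to accommodate the target, then performing an integer similarity on the enlarged matrix to reach it.

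The main obstacle is Part (2). Matching $\cok_{\Z[t]}(I - tB) \cong \cok_{\Z[t]}(I - tA)$ is strictly stronger than matching the nonzero spectrum (which only determines $\det(I - tA)$), so one cannot merely cite a standalone realization theorem for the characteristic polynomial; the construction must remain inside the correct SE-$\Z$ class while simultaneously forcing positivity. This is the delicate combinatorial and algebraic work carried out in \cite{HandelmanEvPosRational1987}. Part (1) is comparatively straightforward, with the only real subtlety being the reduction to $\textnormal{SL}(n,\mathcal R)$ in order to control determinants within the unit group of $\mathcal R$.
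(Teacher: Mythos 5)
Your Part (1) is correct and is essentially the argument behind Handelman's result as the paper describes it: conjugate over $\R$ so that the left and right Perron eigenvectors become strictly positive (which forces eventual positivity, since $B^n/\lambda^n$ converges to the positive rank-one spectral projection), then use density of $\mathcal R$ in $\R$ for $\mathcal R\neq\Z$ to perturb the conjugating matrix to one in $\textnormal{SL}(n,\mathcal R)$, the positivity of the eigenvectors being an open condition. The reduction to $\textnormal{SL}$ via elementary matrices to control the determinant is the same device the paper uses elsewhere (in the proof of the Boyle--Handelman--Kim--Roush theorem), and your verification that every unital subring of $\R$ other than $\Z$ is dense is sound.

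Part (2), however, rests on a wrong strategy. You propose to realize the data of $A$ by a \emph{primitive} integer matrix, asserting that the nonzero spectrum of $A$ satisfies the necessary conditions of the Spectral Conjecture ``vacuously, since $A$ itself realizes this spectrum.'' That confuses ``realized by some integer matrix'' with ``realized by a nonnegative integer matrix'': the hypothesis here is only that the spectrum has a Perron value, not that $A$ is nonnegative, so the trace conditions can fail outright. For instance $A=\textnormal{diag}(3,-1,-1,-1,-1)$ has Perron value $3$ but $\tr(A)=-1<0$; since the nonzero spectrum is an SE-$\Z$ invariant and nonnegative matrices have nonnegative trace, this $A$ is not SE-$\Z$ (hence not SSE-$\Z$) to any primitive matrix, yet the theorem still applies to it. The point of ``eventually positive'' is precisely that such matrices may have negative entries. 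The actual route in Part (2) is the same eigenvector-positivity route as Part (1): one tries to find $U\in\GL(n,\Z)$ making the integral left and right Perron eigenvectors positive; the obstruction is that the inner product $\ell\cdot r$ of integral eigenvectors is a similarity invariant and must be at least the matrix size for positive integral vectors (in the example above it equals $1$, while the size is $5$). Handelman removes this obstruction by first passing, via an SSE over $\Z$ (e.g.\ zero extensions), to a larger matrix in the same class for which the required $U$ exists. So the gap is not merely that you defer the hard work to the reference; the realization-by-a-primitive-matrix plan you sketch cannot be made to work under the stated hypotheses.
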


In particular, the Spectral Conjecture would  be true if we were allowed
to replace $\Lambda$ with $\Lambda^k$, $k$ large. With $\mathcal R\neq \Z$,
and $\Lambda$ an $n$-tuple, we could even realize
$\Lambda^k$ with a positive matrix which is $n\times n$.

Let's consider existing results on the Spectral Conjecture.

\subsection{Boyle-Handelman Theorem}\ai{Handelman, David}
\begin{thm}[Boyle-Handelman \cite{BH91}] The Spectral
Conjecture is true if $\mathcal R=\R$.
\end{thm}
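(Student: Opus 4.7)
My plan is, given $\Lambda = (\lambda_1, \dots, \lambda_k)$ satisfying conditions (1)--(3) with $\mathcal R = \R$, to construct a primitive nonnegative real matrix $A$ whose nonzero spectrum is exactly $\Lambda$. The matrix will have size $k+N$ for some suitable $N$, adjoining $N$ zeros to the spectrum.

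The first move is to reduce the construction to a \emph{trace-realization} problem. By the generating-function identity
\[
\frac{1}{\det(I-tA)} \;=\; \exp\sum_{n \geq 1} \frac{\tr(A^n)}{n}\, t^n
\]
from Proposition~\ref{zetaeq}, producing $A$ with nonzero spectrum $\Lambda$ is equivalent to producing a nonnegative matrix whose trace sequence is $\tau_n := \sum_i \lambda_i^n$. By condition~(2) the $\tau_n$ are real, by condition~(3) they are nonnegative with the required persistence of positivity under multiplicative refinement, and by condition~(1) one has $\tau_n/r^n \to 1$ where $r$ is the Perron value; in particular, for large $n$ the $\tau_n$ are strictly positive and geometrically dominant.

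For the realization I would attempt a weighted-graph construction, in the spirit of the presentations of Section~\ref{sec:polynomial}. On a vertex set $\{0,1,\dots,N\}$, take a backbone edge $i \to i+1$ of weight $1$ together with return edges $i \to 0$ of nonnegative real weights $w_i$; then $\tr(A^n)$ is a sum over closed walks in this graph, expressing each $\tau_n$ as an explicit nonnegative-linear combination of products of the $w_i$. For $N$ sufficiently large, the Perron-dominant tail of $(\tau_n)$ can be absorbed by setting $w_N \approx r$, leaving only finitely many corrections to be fitted by the subdominant conjugate-closed entries of $\Lambda$. Here all three necessary conditions enter, with 3(ii) being the decisive one: it rules out the cancellation-only positivity configurations (e.g.\ roots-of-unity spectra with $\tau_n \geq 0$ by accident) that could never be realized by a weighted graph.

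The main obstacle is precisely this last step: converting the sum-level positivity $\tau_n \geq 0$ into \emph{pointwise} nonnegativity of individual edge weights, uniformly as the subdominant $\lambda_i$ approach the critical Perron circle of radius $r$. I expect the proof to require a delicate asymptotic analysis leveraging the JLL inequalities to control the finitely many initial $\tau_n$ against the subdominant $|\lambda_i|/r$, and quite possibly a more flexible graph template than the naive "cycle with chords" sketched above (perhaps one in which loops of several incommensurate lengths are available, so that a Frobenius-style argument can exhibit every sufficiently large index as a nonnegative combination of loop lengths). Once a nonnegative matrix with the correct nonzero spectrum has been produced, primitivity is an afterthought: Handelman's theorem quoted just above supplies a similar, hence spectrum-preserving, eventually positive matrix, and any eventually positive matrix with a strictly dominant positive eigenvalue is primitive after a trivial Perron-Frobenius block surgery. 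So the heart of the proof is the nonnegative combinatorial realization of $(\tau_n)$; the reductions to and from it are standard.
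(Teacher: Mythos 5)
There is a genuine gap, and it sits exactly where the theorem lives. Your reduction to realizing the trace sequence $(\tau_n)$ is fine, but the realization step is not carried out, and the template you offer provably cannot do it. For the ``cycle with chords'' graph (backbone $i\to i+1$ of weight $1$, returns $i\to 0$ of weight $w_i$), every closed walk factors through vertex $0$ into simple cycles, so $\det(I-tA)=1-\sum_i w_i t^{i+1}$. Matching the nonzero spectrum $\Lambda$ forces $1-\prod_i(1-\lambda_i t)$ to have \emph{nonnegative coefficients} --- essentially the Suleimanova/companion-matrix condition --- which is far stronger than the trace conditions (already $\Lambda=(2,1,-1)$ satisfies all the hypotheses but gives $1-q(t)=2t+t^2-2t^3$). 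Moreover, enlarging $N$ buys nothing for this template: $\det(I-tA)$ depends only on the weights, and your proposed move of setting $w_N\approx r$ injects a spurious cycle of length $N+1$, i.e.\ a new eigenvalue of modulus about $r^{1/(N+1)}$, so it changes the nonzero spectrum rather than ``absorbing the Perron tail.'' The fallback (``a more flexible graph template,'' ``a delicate asymptotic analysis'') is precisely the content of the theorem, not a detail. For comparison: the actual proof in \cite{BH91} goes through the Subtuple Theorem, building up $\Lambda$ from a realization of a subtuple containing the Perron value (automatic over $\R$, since $(\lambda_1)$ is realized by the $1\times1$ matrix $(\lambda_1)$) via symbolic-dynamics constructions with matrices of enormous size; the power-series/polynomial-matrix strategy closest to yours is the later Kim--Ormes--Roush proof \cite{S8} for $\Z$, and Laffey's constructive version \cite{Laffey2012BHTheorem}, which needs the extra hypothesis $\tr(\Lambda^k)>0$ for $k\ge 2$. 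None of these is a routine fitting argument.

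Your closing primitivity step is also a misapplication of Handelman's theorem. That theorem produces, by \emph{similarity}, an eventually positive matrix --- one whose high powers are positive --- but an eventually positive matrix need not be nonnegative, so it is not primitive, and no ``block surgery'' converts it into a nonnegative matrix with the same nonzero spectrum. The paper's own gloss is that Handelman's result would prove the Spectral Conjecture only ``if we were allowed to replace $\Lambda$ with $\Lambda^k$, $k$ large,'' which is exactly the gap. The passage from a nonnegative realization to a primitive one is likewise not an afterthought: a nonnegative matrix with nonzero spectrum $(2,1,-1)$ can be built from the blocks $(2)$ and $\left(\begin{smallmatrix}0&1\\1&0\end{smallmatrix}\right)$, but that matrix is reducible and no spectrum-preserving local modification makes it primitive; condition 3(ii) exists precisely because primitivity is part of what must be constructed.
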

\begin{rem}

The focus on  nonzero\si{nonzero spectrum} spectra in \cite{BH91}
grew out of symbolic dynamics, as indicated by
Section \ref{perptssection}.
  However,
in his
  1981 paper \cite{Johnson1981},
  Charles Johnson\ai{Johnson, Charles}  had already called attention to
  the potential impact of adding zeros to a
  candidate spectrum of a nonnegative matrix \apr{johnsonzeros}).
\end{rem}

\begin{rem} The problem of determining the possible nonzero\si{nonzero spectrum} spectra
of primitive symmetric matrices is quite
different. If an $n$-tuple is the nonzero\si{nonzero spectrum} spectrum of a
{\bf nonnegative symmetric matrix}, then it is achieved
by a matrix whose size
is bounded above by a function of $n$
\cite{JohnsonLaffeyLoewySymmetric1996}\ai{Laffey, Thomas J.}. Adding more zeros
to the spectrum doesn't help.
\end{rem}
The Boyle-Handelman Theorem is a  corollary of a stronger result.

\begin{thm}[Subtuple Theorem \cite{BH91}]\ai{Handelman, David}
Suppose $\Lambda  $ satisfies the conditions of the Spectral Conjecture,
and a subtuple of $\Lambda $ containing the Perron value of $\Lambda$
is
the nonzero\si{nonzero spectrum} spectrum of a primitive matrix over $\mathcal R$.
(For example, this holds if the Perron value is in $\mathcal R$.)
Then $\Lambda $  is
the nonzero\si{nonzero spectrum} spectrum of a primitive matrix over $\mathcal R$.
\end{thm}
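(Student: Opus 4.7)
The plan is to build the desired primitive matrix $A$ by enlarging the given primitive matrix $B$ that realizes the subtuple $\Lambda'$. Let $\Lambda'' = \Lambda \setminus \Lambda'$ and set $q(t) = \prod_{\mu \in \Lambda''}(t-\mu)$. Since the monic polynomials of $\Lambda$ and $\Lambda'$ both have coefficients in $\mathcal R$ (by the Coefficients Condition applied to each), so does the quotient $q(t)$. Let $r$ denote the Perron value of $\Lambda$; this is also the Perron value of $\Lambda'$ and hence the spectral radius of $B$, and by the Perron Condition it strictly dominates $|\mu|$ for every $\mu \in \Lambda''$.

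The strategy is to put $A$ into a block triangular form
\[
A \;=\; \begin{pmatrix} B & X \\ 0 & Z \end{pmatrix}
\]
(possibly after padding $\Lambda$ with extra zero eigenvalues, which is harmless since only the \emph{nonzero} spectrum is required to match). With this shape, $\det(tI - A) = \det(tI - B)\det(tI - Z)$, so it suffices to produce a nonnegative matrix $Z$ over $\mathcal R$ whose nonzero spectrum is $\Lambda''$ (after padding); the coupling block $X$ is then chosen solely to force primitivity.

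The main obstacle is producing $Z$: the tuple $\Lambda''$ need not itself satisfy the Perron Condition, its dominant eigenvalue may be non-real or have multiplicity greater than one, and the traces $\operatorname{tr}(\Lambda''^n)$ may well be negative. The resolution has two ingredients. First, I invoke Handelman's eventually-positive realization theorem, which produces, for any polynomial over $\mathcal R$ having a Perron-style dominant root and satisfying the Trace Condition, a nonnegative matrix realization (over $\mathcal R$) provided one is allowed to pad with zeros — i.e., to enlarge the matrix size. Second, I replace $q(t)$ by $r^N q(t)/(t-r)^N$ or, more usefully, work with the combined polynomial corresponding to $\Lambda'' \cup \{r, r, \dots, r\}$ (finitely many extra copies of $r$) and then subtract those surplus copies at the end via the coupling with $B$. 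The trace surplus from $B$, which grows like $r^n$, dominates any negativity of $\operatorname{tr}(\Lambda''^n)$ for large $n$, and this is precisely the slack that makes Handelman's construction applicable to the enlarged tuple; the strict inequality in the Perron Condition is essential here for the quantitative dominance, and the Trace Condition on $\Lambda$ (not $\Lambda''$ alone) is what we actually verify.

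Finally, having chosen $Z$ nonnegative over $\mathcal R$ with the correct characteristic data, one picks $X$ to be a nonnegative rank-one coupling $X = u v^{\mathrm{tr}}$ with $u$ proportional to the Perron right eigenvector of $B$ and $v$ chosen so that the digraph of $A$ becomes strongly connected. Because $X$ sits above the diagonal block, the upper triangular shape preserves $\det(tI-A) = \det(tI-B)\det(tI-Z)$, so the nonzero spectrum is exactly $\Lambda$; meanwhile strong connectivity of the digraph, combined with the aperiodicity inherited from the primitivity of $B$, forces $A$ itself to be primitive. Small rational perturbations of $X$ within $\mathcal R$ (using that $\mathcal R$ is dense enough in $\R$ for $\mathcal R \neq \Z$, and that the construction is already integral for $\mathcal R = \Z$) complete the realization over $\mathcal R$.
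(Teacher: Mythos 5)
Your proposal contains a fatal internal contradiction at its core. You want $A$ to be block upper triangular, $A=\left(\begin{smallmatrix} B & X \\ 0 & Z\end{smallmatrix}\right)$, so that $\det(tI-A)=\det(tI-B)\det(tI-Z)$ and the nonzero spectrum is exactly $\Lambda'\cup\Lambda''$; and you simultaneously want the digraph of $A$ to be strongly connected so that $A$ is primitive. These requirements are incompatible: the zero block below the diagonal means there is no edge, hence no path, from any $Z$-vertex back to any $B$-vertex, so $A$ is reducible and can never be primitive, no matter how $X$ is chosen. This is not a repairable detail --- it is the entire difficulty of the theorem. To get primitivity you must couple in both directions, and then the spectrum is no longer the union of the block spectra, so all the control bought by the triangular form is lost. (If block-triangular gluing sufficed, the Spectral Conjecture would follow trivially from the case of a single Perron value.)

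There are two further gaps even granting the shape of the construction. First, $\Lambda''=\Lambda\setminus\Lambda'$ in general cannot be the nonzero spectrum of \emph{any} nonnegative matrix $Z$: the traces $\tr((\Lambda'')^n)$ can be negative and the element of largest modulus in $\Lambda''$ need not be a Perron value. Your proposed remedy --- adjoin extra copies of $r$ to $\Lambda''$ and then ``subtract those surplus copies at the end via the coupling with $B$'' --- is not a meaningful operation, since the off-diagonal block of a block-triangular matrix has no effect whatsoever on its spectrum. Second, Handelman's theorem produces an \emph{eventually positive} matrix, which may well have negative entries; it does not produce a nonnegative one, and passing from eventually positive to nonnegative-and-primitive without disturbing the nonzero spectrum is again the hard content of the subject, not a black box you may invoke. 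For comparison, these lectures give no proof of the Subtuple Theorem at all: they only record that the argument of \cite{BH91} is a long, constructive symbolic-dynamics argument (adjoining the missing eigenvalues to the SFT presented by $B$, with the trace conditions controlling periodic-orbit counts) that produces matrices of enormous size --- nothing like a two-block triangular gluing.
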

The proof of the Suptuple Theorem
uses ideas from symbolic dynamics.
The proof     is constructive,
in the sense that one could make it a
formal algorithm. But the construction is very complicated,
and uses   matrices of enormous size.
It has no practical value as a general algorithm.

\begin{thm} [Boyle-Handelman-Kim-Roush\ai{Roush, F.W.} \cite{BH91}]\ai{Kim, K.H.}
  \ai{Handelman, David}
Suppose $\Lambda  $ satisfies the conditions of the Spectral
Conjecture, $\tr(\Lambda )>0$ and   $\mathcal R\neq  \Z$.
Then  $\Lambda $ is the nonzero\si{nonzero spectrum} spectrum of a primitive matrix
over
$\mathcal R$.
\end{thm}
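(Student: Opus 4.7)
The plan is to combine Handelman's eventually-positive realization theorem (part 1, applicable because $\mathcal{R} \neq \Z$) with the trace positivity hypothesis to build a primitive matrix over $\mathcal{R}$ with nonzero spectrum $\Lambda$, with the Subtuple Theorem as a backstop for cleaning up at the end.

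First I would form the companion matrix $C_p$ of $p(t) = \prod_i (t - \lambda_i)$, which lies in $\mathcal{R}^{n \times n}$ by the Coefficients Condition and has nonzero spectrum $\Lambda$ with Perron eigenvalue $\lambda_1$. By Handelman's theorem (part 1), there exists $U \in \GL(n, \mathcal{R})$ such that $B := U^{-1} C_p U$ is eventually positive over $\mathcal{R}$, with some power $B^N$ positive. This produces an eventually positive matrix over $\mathcal{R}$ with the correct nonzero spectrum, but $B$ itself need not be nonnegative, so $B$ alone is not yet a primitive realization.

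The main obstacle is converting this eventually-positive $B$ into a genuinely nonnegative primitive $A$ over $\mathcal{R}$ with the same nonzero spectrum. The key structural input is that $\tr(\Lambda) > 0$, combined with the Trace Condition (3)(ii), forces $\tr(\Lambda^n) > 0$ for every positive integer $n$. I would attempt a block assembly
\[
A \;=\; \begin{pmatrix} \tau & X \\ Y & B_0 \end{pmatrix},
\]
with $\tau = \tr(\Lambda) \in \mathcal{R}_+$ carrying the trace, $B_0$ a nilpotent matrix over $\mathcal{R}_+$, and rectangular blocks $X, Y$ over $\mathcal{R}_+$ fitted so that $\det(I - tA) = p(t)$ and $A$ is primitive. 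The persistent positivity of every power trace $\tr(A^n) = \tr(\Lambda^n)$ is what provides the freedom to place nonnegative entries in $\mathcal{R}$ satisfying the required polynomial identities; this is exactly where the $\Z$ case breaks down, since the stronger Net Trace Condition required over $\Z$ may be violated even when all $\tr(\Lambda^n) > 0$.

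If the direct construction realizes only a subtuple $\Lambda' \subseteq \Lambda$ containing $\lambda_1$ (for instance if one first shrinks to a subtuple with characteristic polynomial in $\mathcal{R}[t]$ that is easier to block-assemble by the method above), then the Subtuple Theorem lifts the realization from $\Lambda'$ to all of $\Lambda$, because $\Lambda$ itself satisfies the three Spectral Conjecture conditions. Either way, the crux is the assembly in the previous paragraph, where the trace positivity hypothesis is used in an essential way to keep every block in $\mathcal{R}_+$.
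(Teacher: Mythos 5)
The central ``block assembly'' step is asserted rather than proved, and as literally written it is impossible. If $A=\left(\begin{smallmatrix} \tau & X \\ Y & B_0\end{smallmatrix}\right)$ with $\tau=\tr(\Lambda)$ a single diagonal entry and $X,Y,B_0$ nonnegative ($B_0$ nilpotent), then $\tr(A^2)=\tau^2+2XY+\tr(B_0^2)\geq \tr(\Lambda)^2$. But the JLL inequality only forces $\tr(\Lambda^2)\geq \tr(\Lambda)^2/n$, and there are admissible candidates that violate the stronger bound: $\Lambda=(3,2i,-2i)$ satisfies all three conditions of the Spectral Conjecture with $\tr(\Lambda)=3>0$, yet $\tr(\Lambda^2)=1<9$. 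So the whole trace cannot be concentrated in one nonnegative diagonal entry, and the surrounding claim --- that positivity of all the power traces ``provides the freedom'' to choose $X,Y,B_0$ in $\mathcal R_+$ with $\det(I-tA)=p(t)$ and $A$ primitive --- is precisely the content of the theorem, not an argument for it. Two further problems: the eventually positive matrix $B$ you obtain from Handelman's theorem is never actually used (eventual positivity does not give nonnegativity, and nothing in your assembly converts it), and the Subtuple Theorem backstop does not help, since it requires the subtuple to already be realized by a primitive matrix over $\mathcal R$, which is the same problem unless the Perron value happens to lie in $\mathcal R$.

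The proof in the paper takes a different route that exploits $\mathcal R\neq\Z$ through the density of $\mathcal R$ in $\R$ rather than through a direct combinatorial construction. One first realizes $\Lambda$ by a primitive matrix over $\R$ (the Boyle--Handelman theorem, i.e.\ the already-settled real case), then invokes the Kim--Roush result that a primitive real matrix with positive trace is SSE-$\R_+$ to a strictly positive matrix; this is where $\tr(\Lambda)>0$ enters. Writing that positive matrix as $U^{-1}CU$ with $C$ over $\mathcal R$ and $U$ a product of real elementary matrices of determinant one, one perturbs each elementary factor into $\mathcal R$ (using density) to get $V$ over $\mathcal R$; since strict positivity is an open condition, $V^{-1}CV$ is still positive, hence a primitive matrix over $\mathcal R$ with nonzero spectrum $\Lambda$. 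If you want to salvage your approach you would need to replace the block assembly by an argument of this perturbative kind, or by an actual construction in the style of Laffey or Kim--Ormes--Roush.
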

\begin{proof} We will outline  the proof.
\begin{enumerate}
\item
By the B-H Theorem, there is a
primitive matrix $A$ over $\R$
with nonzero\si{nonzero spectrum} spectrum $\Lambda $.
\item
Given $A$ primitive with positive trace,  a theorem of
Kim\ai{Kim, K.H.} and Roush\ai{Roush, F.W.} produces a positive  matrix $B$
which is SSE-$\R_+$ to $A$ (hence, has the same nonzero\si{nonzero spectrum}
spectrum as $A$).
\item
There are matrices $U,C$ over $\mathcal R$
such that $U^{-1}CU = A$  and $\det U =1$.
\item $U$ is a product of  elementary matrices over $\R$, equal to $I$
except in a single off diagonal entry. By
density of $\mathcal R$ in $\R$, these can be perturbed to elementary
matrices over $\mathcal R$. Thus $U$ can be perturbed to a
matrix $V$ over $\mathcal R$ with determinant 1.
\item Because $U^{-1}CU >0$, if $V$ is close enough to $U$ then
$V^{-1}CV>0$.
\end{enumerate}
\end{proof}

\begin{rem}
Suppose $\mathcal R \neq \Z$.
It would be very satisfying to see the Spectral Conjecture
proved
in the remaining case, $\tr (\Lambda) =0$,
by some analogous
perturbation argument. I have no idea how to do this, or if
it can be done.
\end{rem}
\subsection{The Kim-Ormes-Roush Theorem}

\begin{thm} [Kim-Ormes-Roush\ai{Roush, F.W.}] \cite{S8}\ai{Kim, K.H.}
For $\mathcal R = \Z$, the Spectral Conjecture is true.
\end{thm}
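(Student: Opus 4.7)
The plan is to bootstrap from the Boyle-Handelman theorem (which gives the conclusion over $\R$) to the integer case, using the two $\Z$-specific hypotheses (integer coefficients of $\prod_i (t - \lambda_i)$, and nonnegativity of all net traces $\tr_n(\Lambda)$) to constrain a combinatorial construction. First I would apply the $\R$-version to obtain a primitive real matrix with nonzero spectrum $\Lambda$ (purely to know the ``shape'' of $\Lambda$ is not obstructed), and translate the problem to building a directed multigraph $\Gamma$ whose adjacency matrix $A$ has $\det(I-tA) = \prod_i(1-\lambda_i t)$. Because $\tr(A^n)$ counts loops of length $n$ and $\tr_n(\Lambda)$ is (by Möbius inversion) the count of minimal loops of length $n$, the integer net trace condition is precisely the necessary integrality statement to interpret the data combinatorially: we must design $\Gamma$ with exactly $\tr_n(\Lambda)$ minimal loops of length $n$ for each $n$.

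The construction I would attempt is a ``petal + spine'' graph. Take a distinguished base vertex $v_0$ and, for each small $n$, attach $\tr_n(\Lambda)$ simple loops of length $n$ at $v_0$ (each an internally disjoint cycle through $v_0$). This automatically matches the first few values of $\tau_n = \tr(A^n)$, and the Perron hypothesis together with the presence of loops of coprime lengths yields strong connectivity and aperiodicity, hence primitivity. To keep the graph finite, I would invoke the Newton-identity recurrence: once the characteristic polynomial of $A$ agrees with $p(t) = \prod_i(t-\lambda_i)$ in its nonzero part, the trace sequence is forced to match at all higher $n$, so only finitely many petals need be installed by hand. Extra zero eigenvalues (from the many auxiliary vertices on the cycles) are harmless since only the nonzero spectrum is prescribed. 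The Boyle-Handelman-Kim-Roush argument might enter as a tool to realize an auxiliary block of controlled size, to be glued as a ``spine'' handling the higher-degree part of $p$.

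The main obstacle I anticipate is the simultaneous matching of the \emph{exact} characteristic polynomial (not just a finite initial segment of the trace sequence) while preserving $\Z_+$ entries and primitivity. A naive petal graph with $\tr_n(\Lambda)$ loops at length $n$ for all $n \leq N$ will produce a matrix whose characteristic polynomial is \emph{almost} $p$, but typically off by spurious factors coming from the long petals; the rigid linear-recurrence structure of $(\tau_n)$ is what is supposed to eliminate them, but arranging this over $\Z_+$ (where one cannot subtract) is delicate. This is where I expect the real KOR argument to require ingenuity: one likely needs to stratify the construction using the divisor structure of $\N$, install petals iteratively matching $\tr_n(\Lambda)$ at each $n$, and absorb the discrepancies at stage $n+1$ into further auxiliary structure — all the while checking that the accumulated matrix remains $\Z_+$-valued and primitive. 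The fact that every $\tr_n(\Lambda)$ is a nonnegative integer is exactly what allows this inductive absorption to proceed; the Perron condition is what lets primitivity be maintained after each step; and the integer coefficients condition is what lets the process terminate (via the recurrence) in finitely many steps.
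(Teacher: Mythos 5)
First, a point of order: the paper does not prove this theorem. It is quoted from Kim--Ormes--Roush \cite{S8}, accompanied only by the remark that ``polynomial matrices and formal power series play a fundamental role in the KOR proof.'' So there is no in-paper argument to compare against, and your proposal has to stand on its own. As written it does not: you explicitly defer the central difficulty (matching the exact characteristic polynomial while staying in $\Z_+$ and preserving primitivity) to ``the real KOR argument,'' so at best this is a strategy sketch with the hard step missing.

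More seriously, the combinatorial identification driving the sketch is wrong. For a rose graph with $c_n$ internally disjoint petals of length $n$ at a base vertex, the associated $\Z_+$ matrix $A$ satisfies $\det(I-tA)=1-\sum_n c_n t^n$, and $\tr (A^n)$ counts \emph{all} closed paths of length $n$, including concatenations of shorter petals; the number of minimal loops of length $n$ is therefore not $c_n$, and installing $\tr_n(\Lambda)$ petals of length $n$ does not ``automatically match the first few values of $\tau_n$.'' Concretely, for $\Lambda=(2)$ the correct realization is one vertex with two self-loops ($c_1=2$, $c_n=0$ for $n\ge 2$), whereas the net traces are $\tr_1(\Lambda)=2$, $\tr_2(\Lambda)=2$, $\tr_3(\Lambda)=6,\dots$, all positive, so your prescription installs infinitely many petals and produces the wrong spectrum at the very first correction step. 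What the rose construction actually needs is nonnegativity of the coefficients of $1-\prod_i(1-\lambda_i t)$, which is a far stronger condition than nonnegativity of the net traces and fails for typical admissible $\Lambda$. The entire content of the theorem is the passage from ``all net traces are nonnegative'' to an honest nonnegative presentation, and in \cite{S8} this is done by working with the identity $1/\det(I-tA)=\exp\big(\sum_n \tr (A^n)t^n/n\big)=\prod_n(1-t^n)^{-\tr_n(\Lambda)/n}$ inside the polynomial-matrix framework of Section \ref{sec:polynomial} --- machinery your sketch gestures at but never deploys. As it stands, the proposal neither proves the theorem nor reproduces the KOR route.
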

Let us note an immediate corollary.
\begin{coro}
For $\mathcal R = \Q$, the Spectral Conjecture is true.
\end{coro}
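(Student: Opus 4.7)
The plan is to deduce the $\mathcal R = \Q$ case from Kim--Ormes--Roush by rescaling. Given $\Lambda = (\lambda_1, \dots, \lambda_k)$ satisfying the necessary conditions of Conjecture~\ref{spectralconjecture} over $\Q$, I will choose a positive integer $N$ large enough that $N\Lambda := (N\lambda_1, \dots, N\lambda_k)$ satisfies the conditions over $\Z$. Applying Kim--Ormes--Roush to $N\Lambda$ then produces a primitive integer matrix $A$ with nonzero spectrum $N\Lambda$, and $B := (1/N)A$ is manifestly a primitive matrix over $\Q$ (nonnegative rational entries, some positive power) with nonzero spectrum $\Lambda$.

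The Perron Condition is preserved by positive scaling. If $p(t) = \prod_{i=1}^{k}(t - \lambda_i) \in \Q[t]$ has $j$th coefficient $c_j$, then $\prod_{i}(t - N\lambda_i)$ has $j$th coefficient $N^{j}c_j$, so taking $N$ to be a common multiple of the denominators of $c_1,\dots,c_k$ secures the Coefficients Condition over $\Z$. The substantive step is the Trace Condition over $\Z$: $\tr_n(N\Lambda) \geq 0$ for every $n \geq 1$, where $\tr_n(\Lambda) = \sum_{d \mid n} \mu(n/d)\,\tr(\Lambda^{d})$.

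Fix $n$. If $\tr(\Lambda^{n}) = 0$, the contrapositive of clause (ii) of the $\Q$-level trace condition applied to divisors $d \mid n$ forces $\tr(\Lambda^{d}) = 0$ for every $d \mid n$, and so $\tr_n(N\Lambda) = 0$. If $\tr(\Lambda^{n}) > 0$, write
\[
\tr_n(N\Lambda) \;=\; N^{n}\tr(\Lambda^{n}) \;+\; \sum_{d \mid n,\ d < n} \mu(n/d)\, N^{d}\tr(\Lambda^{d}),
\]
and estimate the error by $kn(N\rho)^{n/2}$, using $|\tr(\Lambda^{d})| \leq k\rho^{d}$ (with $\rho$ the Perron value) and $d \leq n/2$. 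The Perron gap (i.e., strict dominance of $\rho$) yields $\tr(\Lambda^{n}) \geq \rho^{n}/2$ for all $n$ beyond some threshold $n_0 = n_0(\Lambda)$; once $N$ is chosen with $N\rho \geq 2$, the leading term $N^{n}\tr(\Lambda^{n})$ dominates the error for all $n \geq n_1 = n_1(\Lambda)$. The finitely many remaining $n < n_1$ with $\tr(\Lambda^{n}) > 0$ each impose an explicit finite lower bound on $N$; enlarging $N$ further (while keeping it a multiple of the chosen denominator) handles them simultaneously.

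The main obstacle is precisely this uniform-in-$n$ trace estimate, since the $\Q$-level trace condition is strictly weaker than $\Z$-level net-trace nonnegativity, and a naive rescaling is not obviously enough. The Perron gap is what lets the leading $N^{n}$ term absorb the discrepancy, splitting cleanly via the dichotomy $\tr(\Lambda^{n}) = 0$ versus $> 0$. Once $N$ is chosen, invoking the Kim--Ormes--Roush theorem and dividing by $N$ completes the proof.
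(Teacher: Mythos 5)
Your proof is correct, and it follows the approach the paper intends: the paper states this as an ``immediate'' corollary of Kim--Ormes--Roush without writing out a proof, and the standard route is exactly your rescaling $\Lambda \mapsto N\Lambda$ followed by division by $N$. The only genuinely non-immediate point is that the $\Q$-level trace conditions (i)--(ii) for $\Lambda$ imply the $\Z$-level net-trace condition for $N\Lambda$ once $N$ is large, and your case split ($\tr(\Lambda^n)=0$ forces all divisor traces to vanish via (i) and the contrapositive of (ii); otherwise the Perron-dominant leading term $N^n\tr(\Lambda^n)$ absorbs the $O\big((N\rho)^{n/2}\big)$ error uniformly for large $n$, with finitely many small $n$ handled by enlarging $N$) settles it correctly.
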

\begin{rem} Polynomial
matrices and formal power series
play a fundamental role in the KOR proof.
The KOR Theorem gives us a complete understanding of the
possible periodic data\si{periodic data} for SFTs. The proof, though quite
complicated, is much more tractable than the proof of the B-H
Theorem. The use of power series leads to an interesting
analytical approach to the NIEP
\cite{LaffeyLoewySmigocPowerSeries2016}.\ai{Laffey, Thomas J.}
\end{rem}

\subsection{Status of the Spectral Conjecture}
The conjecture is true for $\R$, $\Q$ and $\Z$;  in the positive
trace case; under the Subtuple Theorem assumption; and in
other special cases. It is very hard to doubt the
conjecture.

One expects  the case $\mathcal R=\Z$ to be the hardest case.
Perhaps it is feasible to prove the Spectral
Conjecture by adapting the Kim-Ormes-Roush\ai{Roush, F.W.} proof.\ai{Kim, K.H.}

\subsection{Laffey's Theorem}\ai{Laffey, Thomas J.}

Laffey\ai{Laffey, Thomas J.} \cite{Laffey2012}
gave a constructive version of the Boyle-Handelman Theorem in
the case that $\mathcal R= \R$ and
the candidate spectrum $\Lambda$,  satisfying the
necessary conditions
of the Spectral Conjecture for $\R$, also satisfies
\[
\tr (\Lambda^k)>0\ , \quad k\geq 2 \ .
\]
The
%$N\times N$
primitive  matrix which Laffey\ai{Laffey, Thomas J.} constructs
to realize $\lambda$ has
a rather classical  form, and there is a comprehensible formula
giving an upper bound on the size of the smallest $N$ given
by the construction.
From here, we give some remarks on Laffey's\ai{Laffey, Thomas J.} theorem.

{\it The Coefficients Condition.}
In the case of $\R$,
the Coefficients Condition  of the Spectral Conjecture
follows automatically from the  Trace Conditions
\apr{coeff}),

{\it Laffey's\ai{Laffey, Thomas J.}
  upper bound on the smallest size $N$  of a primitive matrix $A$
realizing a given $\Lambda = ( \lambda_1, \dots, \lambda_n)$.}
If $A$ is primitive with nonzero\si{nonzero spectrum} spectrum $\Lambda$ and $c>0$,
then $cA$ is primitive with nonzero\si{nonzero spectrum} spectrum $c\Lambda
= (c\lambda_1, \dots, c\lambda_n)$. So, to consider an
upper bound $N$,
for simplicity we  consider just the
special case that the Perron value of $\Lambda$
is $\lambda_1 =1$.

Laffey's\ai{Laffey, Thomas J.} explicit, computable formula giving an
upper bound for $N$ is rather complicated.
But, using the Perron value $\lambda_1=1$,
and considering only the nontrivial case $n\geq 2$, it can be shown
that Laffey's\ai{Laffey, Thomas J.} bound implies
\begin{equation} \label{LaffeysN}
N \leq \kappa_n \Bigg(\frac{1}{MG}\Bigg)^n \
\end{equation}
where
$\ \kappa_n $ depends only on $n$,
\begin{align*}
G \ &=\ 1 - \max \{|\lambda_i|: 2\leq i \leq n\}\ ,  \quad
\\
M\ &=\ \min \{ \tr (\Lambda )^n : n \geq 2 \ \} \ .
\end{align*}
The numbers $\kappa_n$ obtained from the estimate grow very
rapidly; e.g.  $\kappa_n \geq n^n$ .

This bound is certainly nonoptimal!   For example,
suppose $0< \epsilon  <1$.
The nonzero\si{nonzero spectrum} spectrum $(1,\,  -1+\epsilon )$ is realized
by the $2\times 2$ primitive matrix
$\begin{pmatrix} 0&1 \\ 1-\epsilon & \epsilon
\end{pmatrix} $. But here, $\epsilon =G$, and as
$\epsilon$ goes to zero the upper bound in \apr{LaffeysN}) goes
to infinity.

Nevertheless: this is a transparent and
meaningful bound. The bound involves only $n$, $M$ and $G$.
The spectral gap $G$ appears repeatly in the use of primitive
matrices (and more generally), e.g. for convergence rates.
Also, neither of the terms $1/M$ and $1/G$ can simply be deleted, as we
note next.

{\it The tracial floor term $1/M$.}
If Laffey's\ai{Laffey, Thomas J.} formula for
an upper bound on $N$ were
replaced by a formula of the form
$N \leq  f(G,n)$, then even at $n=3$
the formula could not give a correct bound,
on acount of the JLL Inequalities  \apr{reflectjll}).

{\it The spectral gap term $(1/G)$.}
If Laffey's\ai{Laffey, Thomas J.} formula for
an upper bound on $N$ were
replaced by a formula of the form
$N \leq  f(M,n)$, then even at $n=4$,
the formula could not give a correct bound
\apr{gaprefl}).

\begin{ex}
Let $\Lambda = (\, 1.1,\,  \xi ,\,  \overline{\xi}\, )$,
where $\xi =  \exp{(\pi i/10)}$ and
$\overline{\xi}$ is its complex conjugate.
Laffey\ai{Laffey, Thomas J.} stated there is a $128\times 128$ primitive matrix
realizing this nonzero\si{nonzero spectrum} spectrum.
\end{ex}
{\it The matrix form.}
The primitive matrix with nonzero\si{nonzero spectrum} spectrum
$\Lambda$ has (for sufficiently large $k$)  the banded form
\[
\begin{pmatrix}
x_1    & 1      & 0       & 0   &\cdots      & 0      &   0   &  0  \\
x_2    & x_1    & 2       & 0   &\cdots      & 0      &   0   &  0  \\
x_3    & x_2    & x_1     & 3    &\cdots      & 0      &   0   &  0  \\
x_4    & x_3    & x_2     & x_1   &\cdots      & 0      &   0   &  0  \\
\cdots &\cdots  &\cdots  &\cdots  &\cdots     &\cdots  &\cdots  &\cdots  \\
x_{k-2} & x_{k-3} &x_{k-4}  &\cdots   &\cdots        &    x_1 &  k-2  &  0    \\
x_{k-1} & x_{k-2} &x_{k-3}  & \cdots  &\cdots        &    x_2 &  x_1  &  k-1   \\
x_k    & x_{k-1} & x_{k-2}  &\cdots   &\cdots        &   x_3  &  x_2  &  x_1
\end{pmatrix} \ .
\]
For the relation of the matrix entries  to $\Lambda$, see Laffey's\ai{Laffey, Thomas J.}
paper
\cite{Laffey2012}.\ai{Laffey, Thomas J.}

{\it Limits of the argument.}
A lot of the  complication of the B-H proof involves
complications of
$\tr (\Lambda^n) = 0 $ for a variety of sets
of $n$.  These general difficulties aren't addressed in Laffey's\ai{Laffey, Thomas J.} result.  Laffey's\ai{Laffey, Thomas J.} argument also proves the Spectral Conjecture over
any  subfield $\mathcal R$ of $\R$, under the restriction $\tr (\Lambda^k) >0$ for $k>1$.
But it does not work for all $\mathcal R$.
The argument uses division by integers in $\mathcal R$.

\subsection{The Generalized Spectral Conjectures}

The NIEP refines to an even harder question:
what can be the Jordan form of a square nonnegative
matrix over $\R$?
We refer to \cite[Sec.9]{NIEP2018} for a discussion.
A rather sobering example of Laffey\ai{Laffey, Thomas J.} and Meehan \cite{laffeyMeehan1999}
shows that $(3+t, 3-t,-2,-2,-2)$ is the spectrum of a $5\times 5$
nonnegative matrix if $t> (16\sqrt{6})^{1/2} -39 \approx 0.437\dots$,
but it is the spectrum of  a diagonalizable nonnegative matrix
if and only if $t\geq 1$.

Suppose $A$ is a nonnilpotent square matrix over $\R$.
The {\it nonsingular part} of $A$ is a nonsingular matrix $A'$ over $\R$
such that $A$ is similar to the direct sum of $A'$ and
a nilpotent matrix.  ($A'$ is only defined up to
similarity over $\R$.)
Analagous to the
Spectral Conjecture
\eqref{spectralconjecture}, we have the following.

\begin{conj}[Boyle-Handelman] \label{realgsc}\ai{Handelman, David}
If $B$ is a square real matrix satisfying the necessary
conditions of the Spectral Conjecture,
then $B$ is the nonsingular part of some primitive
matrix over $\R$.
\end{conj}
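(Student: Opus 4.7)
The plan is to reduce the conjecture to finding a primitive real matrix SSE over $\R$ to $B$, and then to produce such a matrix by combining Handelman's eventual positivity theorem with the constructive content of the Boyle-Handelman Theorem. The key observation is that by the Maller-Shub characterization (Proposition \ref{prop:MallerShub}), SSE over $\R$ is generated by similarity and zero extensions; both preserve the nonsingular part up to similarity. So if a primitive $A$ over $\R$ is SSE-$\R$ to $B$, then the nonsingular part $A'$ of $A$ is similar over $\R$ to $B$, i.e.\ $A$ is similar to $B \oplus N$ for some nilpotent $N$, which realizes $B$ as the nonsingular part of the primitive matrix $A$.

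First, I would invoke Handelman's theorem (part 1 of Theorem \ref{HIEP}) to replace $B$ by an eventually positive matrix $C$ over $\R$ similar to $B$. Then $C$ has the same nonsingular part as $B$ and the same nonzero spectrum $\Lambda$, which by hypothesis satisfies the Perron and Trace Conditions. The task becomes: find a primitive real matrix $A$ with $A \sim_{\text{SSE-}\R} C$. In the positive-trace case ($\tr \Lambda > 0$), I would adapt the argument of the Boyle-Handelman-Kim-Roush theorem: starting from the eventually positive $C$, use a Kim-Roush-style argument to produce a chain of ESSE-$\R$ moves carrying $C$ to a positive (hence primitive) matrix. Unlike the BHKR argument, which goes from a BH-produced primitive $A$ to a positive matrix, here we must go from a prescribed eventually positive $C$ to a positive matrix while staying inside the SSE-$\R$ class of $C$; this should follow from the observation that for large $k$ the matrices $CA^k$ and $A^kC$ (for suitable choices coming from $C=RS$, $S\,$, $R$) will be positive, yielding the required chain.

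The hard part is the zero-trace case $\tr \Lambda = 0$. Here the perturbation strategy of BHKR fails, and the Boyle-Handelman proof of the Subtuple Theorem proceeds by an elaborate symbolic-dynamics construction that controls only $\det(I - tA)$, not the nonsingular Jordan form. The plan would be to refine that construction so that the inductive step produces matrices $A_i$ each SSE-$\R$ to $B \oplus N_i$ for some nilpotent $N_i$, with $A_i$ approaching primitivity. The Laffey-Meehan phenomenon is a warning that Jordan form realization is strictly harder than spectrum realization in the pure nonnegative setting; on the other hand, we have the flexibility of arbitrary nilpotent extensions and arbitrarily large ambient dimension, so the obstruction visible for nonnegative matrices should not apply here. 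Nonetheless, pushing the BH induction through while tracking Jordan form, rather than just characteristic polynomial, is where genuinely new machinery would be needed.

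The main obstacle, in summary, is the zero-trace case: upgrading the BH construction from a characteristic-polynomial invariant to a full nonsingular-Jordan-form invariant. A promising secondary strategy would be to abandon the inductive construction and instead try to realize $B$ directly as a nonsingular part by starting from $B$ itself and performing carefully chosen ESSE-$\R$ moves $A \mapsto SR$ with $A = RS$, each chosen to increase positivity of entries of some fixed high power, until a primitive representative is reached — but making this terminate without additional spectral hypotheses is precisely the analytic difficulty hidden in the zero-trace case.
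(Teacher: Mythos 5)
This statement is not a theorem of the paper but an open conjecture: Conjecture \ref{realgsc} is presented as a special case (for $\mathcal R=\R$) of the Weak and Strong Generalized Spectral Conjectures, and the paper supplies no proof. The only instance the paper records as settled is when the spectrum of $B$ is real, via \cite[Theorem 3.3]{BH93} together with the fact that over a field SE, SSE and similarity of nonsingular parts all coincide. Your opening reduction is correct and is exactly the paper's own framing: by Proposition \ref{prop:MallerShub}, SSE-$\R$ is generated by similarity and zero extensions, both of which preserve the nonsingular part up to similarity, so the conjecture is equivalent to producing a primitive matrix SSE-$\R$ to $B$. Your proposal is therefore a reasonable research program rather than a proof, and you correctly flag the zero-trace case as requiring new machinery.

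You are, however, too optimistic about the positive-trace case. The Boyle--Handelman--Kim--Roush theorem realizes only the nonzero \emph{spectrum} $\Lambda$ in a primitive matrix; it gives no control over the similarity class of the nonsingular part, which is what Conjecture \ref{realgsc} demands (over $\R$, shift equivalence of nonsingular matrices is similarity, so the full real Jordan structure of $B$ must be matched). Your fix --- replace $B$ by an eventually positive $C$ similar to it via Theorem \ref{HIEP}, then push $C$ to a positive matrix by ESSE-$\R$ moves --- does retain the similarity class, but the final step is not supplied by the Kim--Roush machinery you invoke. Their positivization arguments (making $A^nR$ and $SA^n$ positive, or interpolating along a path of similar positive matrices) start from nonnegative primitive data; an eventually positive $C$ need not be nonnegative, and whether such a $C$ is SSE-$\R$ to a positive matrix is essentially the content of the conjecture in the positive-trace case, not a known lemma. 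So the genuine gap is not confined to $\tr(\Lambda)=0$: at every trace value, the needed upgrade from spectrum realization to realization of the similarity class of the nonsingular part remains open, and no step of your outline closes it.
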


Let $A,B$ be square matrices over $\R$, with
nonsingular parts $A',B'$.
Recall, the following are equivalent:
\begin{enumerate}
\item
$A'$ and $B'$ are SIM-$\R$ (similar over $\R$).

\item $A$ and $B$ are SE-$\R$ (shift equivalent over $\R$).

\item $A$ and $B$ are SSE-$\R$ (strong shift equivalent over $\R$).
\end{enumerate}
So,  the conjecture above is a special case of either of
the following conjectures \apr{gscrefs}).

\begin{conj}[ (Weak) Generalized Spectral Conjecture, Boyle-Handelman 1991]
Suppose $A$ is a square matrix over a subring $\mathcal R$ of $\R$,
and the nonzero\si{nonzero spectrum} spectrum of $A$ satisfies the necessary
conditions of the Spectral Conjecture.

Then $A$ is SE-$\mathcal R$  to
a primitive matrix.
\end{conj}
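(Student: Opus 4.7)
The plan is to reduce the conjecture to its algebraic core, then combine Handelman's eventual positivity theorem with the Spectral Conjecture (now known for $\R$, $\Q$, and $\Z$). First I would reduce to the nonsingular case: using the characterization of SE-$\mathcal{R}$ as generated by SIM-$\mathcal{R}$ and nilpotent extensions (Theorem \ref{thm:endorelations}), together with the PID Block Triangular Form, the matrix $A$ is SE-$\mathcal{R}$ to its nonsingular part $A'$ whenever $\mathcal{R}$ is a PID; this already covers the principal cases $\R$, $\Q$, $\Z$, and $\Z[1/n]$. For non-PID subrings of $\R$ additional care is needed, but the nonzero spectrum condition is intrinsic to the nonnilpotent part, so this remains the natural arena. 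With the Perron condition in hand, $A'$ has a Perron eigenvalue, so Handelman's theorem produces an eventually positive matrix $C$ which is SIM-$\mathcal{R}$ to $A'$ when $\mathcal{R} \neq \Z$ and SSE-$\Z$ to $A'$ when $\mathcal{R} = \Z$; in either case $C$ is SE-$\mathcal{R}$ to $A$. The problem then reduces to a single remaining step: show that every EP matrix over $\mathcal{R}$ whose nonzero spectrum satisfies the Spectral Conjecture conditions is SE-$\mathcal{R}$ to a primitive matrix over $\mathcal{R}$.

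The hard part will be this final step, and it is really where the generalized conjecture is harder than the Spectral Conjecture itself. A first attack combines the known Spectral Conjecture — which furnishes a primitive $P$ over $\mathcal{R}$ with the correct nonzero spectrum — with a matching argument forcing $\cok(I-tP) \cong \cok(I-tC)$ as $\mathcal{R}[t]$-modules. This matching could be carried out either by polynomial-matrix positive equivalences after nilpotent enlargement, in the spirit of the Kim-Ormes-Roush power series method, or, when $\mathcal{R}$ is dense in $\R$ and $\tr(\Lambda) > 0$, by perturbing a real conjugation $U^{-1}PU = C$ to a conjugation over $\mathcal{R}$ which preserves positivity, in the style of the Boyle-Handelman-Kim-Roush argument for the positive trace case.

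The principal obstruction is that the Spectral Conjecture only controls $\det(I-tA)$, whereas SE-$\mathcal{R}$ equivalence demands that the entire isomorphism class of $\cok(I-tA)$ as an $\mathcal{R}[t]$-module be realized by a primitive matrix. Over $\Z$, where no perturbation trick is available and the $\Z[t]$-module can be genuinely subtle (as visible already in the $2 \times 2$ examples of Section \ref{sec:sesse}), a new idea will likely be required; one natural avenue is to extend the Kim-Ormes-Roush polynomial construction so that it not only realizes a prescribed $\det(I-tA)$ but also a prescribed cokernel presentation. A clean test case is $\mathcal{R} = \R$: here the SE-$\R$ class of $A$ is determined by the similarity class of its nonsingular part, and the conjecture collapses to the still-open Conjecture \ref{realgsc}, so progress on either formulation should illuminate the other.
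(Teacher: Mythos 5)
The statement you are addressing is a conjecture, not a theorem: the paper records the (Weak) Generalized Spectral Conjecture as an open problem and offers no proof of it, only partial results (notably \cite[Theorem 3.3]{BH93}, which handles the case where the nonzero spectrum lies in $\mathcal R$ and $\mathcal R$ is $\Z$ or a Dedekind domain with a nontrivial unit, and the recent equivalence of the weak and strong forms from \cite{BoSc3}). Your proposal, to its credit, is honest that its final step is missing, but that final step \emph{is} the conjecture, so what you have written is a research plan rather than a proof, and it should not be presented as resolving the statement.

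Two concrete gaps. First, your opening reduction to the nonsingular part is not available in the stated generality: the paper explicitly warns that the Nonsingularity Corollary ``can easily fail even for a Dedekind domain, such as the algebraic integers in a number field,'' so for a general subring $\mathcal R$ of $\R$ you cannot assume $A$ is SE-$\mathcal R$ to a nonsingular matrix, and the argument must work with $A$ itself (this is in fact part of why \cite{BH93} needs the nontrivial-unit hypothesis). Second, and decisively, the passage from ``some primitive matrix over $\mathcal R$ realizes $\Lambda$'' (the Spectral Conjecture, known for $\R$, $\Q$, $\Z$) to ``a primitive matrix lies in the \emph{given} SE-$\mathcal R$ class of $A$'' is exactly the open content: the Kim--Ormes--Roush and Boyle--Handelman constructions control only $\det(I-tA)$, not the isomorphism class of $\cok(I-tA)$ as an $\mathcal R[t]$-module, and the perturbation argument you invoke produces a primitive matrix similar over $\mathcal R$ to \emph{some} matrix with the right spectrum, not to $A$. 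The paper's explicitly flagged open problem --- a $2\times 2$ integral matrix with irrational eigenvalues satisfying the Spectral Conjecture conditions --- already defeats every technique you list. Your closing observation that over $\R$ the conjecture reduces to Conjecture \ref{realgsc} is correct and consistent with the paper, but it relocates rather than closes the gap.
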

\begin{conj} [(Strong) Generalized Spectral Conjecture,
Boyle-Handelman 1993]\ai{Handelman, David}
Suppose $A$ is a square matrix over a subring $\mathcal R$ of $\R$,
and the nonzero\si{nonzero spectrum} spectrum of $A$ satisfies the necessary
conditions of the Spectral Conjecture.

Then $A$ is SSE-$\mathcal R$  to
a primitive matrix.
\end{conj}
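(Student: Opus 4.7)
The plan is to reduce to the nonsingular case, apply the Spectral Conjecture to produce a primitive matrix with the correct nonzero spectrum, and then close the gap between ``same nonzero spectrum'' and ``same SSE-$\mathcal R$ class''. For the first step, since the key cases $\mathcal R = \Z, \Q, \R$ are all PIDs, the PID Block Triangular Form together with a string of zero extensions (using the Maller-Shub characterization of SSE over a ring) produces an SSE-$\mathcal R$ from $A$ to its nonsingular part $A'$. So I may assume $A$ is nonsingular, with nonzero spectrum $\Lambda$ satisfying the conditions of the Spectral Conjecture. For the second step, apply the Spectral Conjecture --- Boyle-Handelman for $\R$, Kim-Ormes-Roush for $\Z$, and hence for $\Q$ --- to produce a primitive matrix $P_0$ over $\mathcal R$ with nonzero spectrum $\Lambda$.

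The third step is the heart of the matter. The matrices $A$ and $P_0$ have $\det(I-tA) = \det(I-tP_0)$ but may lie in different SSE-$\mathcal R$ classes, distinguished by the $\mathcal R[t]$-module $\cok(I-tA)$ and, beyond that, by K-theoretic refinements. Rather than modify $P_0$ directly, I would start from $A$ itself and invoke Handelman's Theorem to produce an eventually positive matrix $B$ that is SSE-$\mathcal R$ to $A$, then upgrade eventual positivity to genuine primitivity. Over $\R$ this upgrade is a perturbation argument in the spirit of the Boyle-Handelman-Kim-Roush proof of the Spectral Conjecture in the positive trace case: the similarity witnessing eventual positivity is a product of real elementary matrices, which can be perturbed to be over the dense subring while preserving positivity of high powers. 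Over $\Q$ and over other dense subrings of $\R$ the same strategy applies. An alternative implementation is to work in the polynomial setting: construct a chain of positive equivalences in $I-\mathrm{NZC}$ from $I - tA^{\sharp}$ to $I - tP$ with $P$ primitive, using the PSSE equations to implement the intermediate SSE moves and preserve both the nonzero spectrum and the module $\cok(I-tA)$.

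The main obstacle is the case $\mathcal R = \Z$, where no density argument is available. The Kim-Roush/Wagoner counterexamples show that SE-$\Z_+$ does not imply SSE-$\Z_+$ in general, and even the primitive case of Williams' Conjecture --- whether SE-$\Z_+$ implies SSE-$\Z_+$ for primitive matrices --- is open. The Strong GSC over $\Z$ essentially demands such a refinement, so any proof must confront it directly. A plausible attack would combine (i) an enrichment of the Kim-Ormes-Roush power-series construction that realizes prescribed $\Z[t]$-module data $\cok(I-tA)$, not merely the nonzero spectrum, by a primitive integer matrix, with (ii) a K-theoretic analysis showing that the SE-$\Z_+$/SSE-$\Z_+$ obstructions identified by Kim-Roush and Wagoner vanish in the primitive case. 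Both ingredients are open problems in their own right, and I expect genuinely new ideas will be needed for $\mathcal R = \Z$; in particular, a proof that does not pass through a resolution of the primitive Williams Conjecture seems unlikely.
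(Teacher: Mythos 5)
The statement you have set out to prove is a conjecture, not a theorem: the paper states the Strong Generalized Spectral Conjecture as an open problem and contains no proof of it. What the paper does offer is a partial result (the theorem quoted from \cite{BH93} in Section 4), which establishes the conclusion only when $\mathcal R$ is $\Z$ or a Dedekind domain with a nontrivial unit \emph{and} the nonzero spectrum of $A$ consists of elements of $\mathcal R$; it also records that the Weak and Strong versions are now known to be equivalent, and it poses as open even the case of a $2\times 2$ integer matrix with irrational eigenvalues. So there is no proof in the paper against which to measure your proposal; it can only be assessed as a research plan, and as such it has concrete gaps beyond the one you acknowledge for $\mathcal R=\Z$.

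First, the conjecture quantifies over all subrings $\mathcal R$ of $\R$, but your first two steps are available only for special rings: the reduction to the nonsingular part via the PID Block Triangular Form fails already for Dedekind domains (the paper cites \cite{BH93} for exactly this failure), and the Spectral Conjecture you invoke in step two is itself open for general $\mathcal R$ (it is known for $\R$, $\Q$, $\Z$, the positive-trace case, and under the Subtuple hypothesis, but not in general), so you would be proving one open conjecture by assuming another. Second, and more fundamentally, the route through Handelman's theorem does not close the gap you need it to close: an eventually positive matrix need not be nonnegative, so ``upgrading eventual positivity to genuine primitivity'' is not a perturbation issue --- it is essentially the entire content of the conjecture. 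The paper makes this point explicitly when it observes that Handelman's theorem would prove the Spectral Conjecture ``if we were allowed to replace $\Lambda$ with $\Lambda^k$, $k$ large.'' The perturbation argument you cite from the positive-trace case perturbs a similarity that already conjugates onto a genuinely \emph{positive} matrix (produced by the Kim--Roush step), preserving positivity of the matrix entries themselves; it cannot manufacture nonnegativity where none is present, and preserving positivity of high powers only returns you to the class of EP matrices. Your candid assessment of the $\Z$ case is accurate, but the obstructions are not confined to $\Z$.
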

The Strong GSC is the strongest viable conjecture we know which reflects
the idea
that the only obstruction to expressing stable algebra\si{stable algebra} in
a primitive matrix is the nonzero\si{nonzero spectrum} spectrum obstruction.

In the next result, a \lq\lq nontrivial unit\rq\rq\ is a
unit in the ring not equal to $\pm 1$. (The assumption
of a nontrivial unit is
probably an artifact of the proof.)

\begin{thm} \cite[Theorem 3.3]{BH93}
Let $\mathcal R$ be a unital subring of $\R$. Suppose that either
$\mathcal R= \Z$ or $\mathcal R$ is a Dedekind domain with a nontrivial
unit. Let $A$ be a square matrix with entries from $\mathcal R$ whose
nonzero\si{nonzero spectrum} spectrum $\Lambda$ satisfies the necessary conditions
of the Spectral Conjecture and consists of elements of $\mathcal R$.
Then $A$ is algebraically shift
equivalent\begin{footnote}{\lq\lq Algebraically shift
equivalent
over $\mathcal R $\rq\rq\ was the notation in \cite{BH93} for what we
are calling SE-$\mathcal R$, shift equivalence\si{shift equivalence} over the ring $\mathcal R$.
Also, \cite[Prop.2.4]{BH93} established that SE and SSE are equivalent
over a Dedekind domain, so the conclusion could  have been
stated for strong shift equivalence\si{strong shift equivalence}.}\end{footnote} over
$\mathcal R $
to a primitive matrix.
\end{thm}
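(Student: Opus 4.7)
The plan is to construct, within the SE-$\mathcal{R}$ class of $A$, a primitive matrix $P$ over $\mathcal{R}$, by first replacing $A$ with a more tractable representative of its SE-$\mathcal{R}$ class and then upgrading that representative to a primitive matrix.

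First I would invoke Handelman's theorem. The Perron condition in the Spectral Conjecture forces the spectrum of $A$ to contain a Perron value, so Handelman's theorem applies: when $\mathcal{R} \neq \Z$ (so $\mathcal{R}$ is Dedekind with a nontrivial unit, and in particular dense in $\R$, which is the input Handelman's argument needs), $A$ is SIM-$\mathcal{R}$ to an eventually positive matrix $B$; when $\mathcal{R} = \Z$, $A$ is SSE-$\Z$ to such a $B$. Either way $A$ is SE-$\mathcal{R}$ to $B$, and $\cok(I-tA) \cong \cok(I-tB)$ as $\mathcal{R}[t]$-modules.

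Next I would produce a primitive matrix $P$ over $\mathcal{R}$ with $\cok(I-tP) \cong \cok(I-tB)$. The hypothesis $\Lambda \subset \mathcal{R}$ means the characteristic polynomial splits completely over $\mathcal{R}$, so the $\mathcal{R}[t]$-module in question decomposes (up to extension data) into cyclic pieces indexed by the $\lambda_i$, which makes the matching problem concrete. For $\mathcal{R} = \Z$, the Kim-Ormes-Roush theorem realises $\Lambda$ by a primitive integer matrix, and matching $\Z[t]$-modules reduces via the PID Block Triangular Form and the Nonsingularity Corollary to a Latimer-MacDuffee style comparison of the nonsingular parts of $A$ and $P$. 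For Dedekind $\mathcal{R}$ with a nontrivial unit, I would first realise $\Lambda$ by a primitive real matrix $P_{\R}$ via Boyle-Handelman, then perturb $P_{\R}$ into $\mathcal{R}$ by conjugation with products of elementary matrices over $\mathcal{R}$, leaning on density of $\mathcal{R}$ in $\R$ together with the openness of the positivity condition that defines primitivity.

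The main obstacle will be the $\mathcal{R}[t]$-module match, which is strictly finer than agreement of the nonzero spectrum and, over a Dedekind domain, sees ideal-class invariants beyond $\det(I-tA)$; in general two matrices with the same nonzero spectrum need not be SE-$\mathcal{R}$. The nontrivial-unit hypothesis is precisely what supplies the room, through elementary matrices built from powers of the unit, to slide $P_{\R}$ between ideal classes during the perturbation so that $\cok(I-tP)$ matches $\cok(I-tB)$; over $\Z$ the PID hypothesis collapses these class invariants entirely. Chaining the two stages then exhibits $A$ as SE-$\mathcal{R}$ to a primitive $P$ over $\mathcal{R}$.
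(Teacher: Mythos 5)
The paper does not actually prove this statement; it is quoted from \cite[Theorem 3.3]{BH93}, so the comparison can only be with what such a proof must accomplish. Measured against that, your proposal has a genuine gap at its center: all the work is concentrated in the step ``produce a primitive $P$ over $\mathcal R$ with $\cok(I-tP)\cong\cok(I-tB)$,'' and you supply no mechanism for it. The Kim--Ormes--Roush theorem (for $\Z$) and the Boyle--Handelman realization theorem (for $\R$) each produce \emph{some} primitive matrix with nonzero spectrum $\Lambda$, with no control over which SE class it lands in, and the nonzero spectrum is a strictly coarser invariant than the SE-$\mathcal R$ class: Theorem \ref{thm:taussky} and the example of $\left(\begin{smallmatrix}3&4\\1&1\end{smallmatrix}\right)$ versus $\left(\begin{smallmatrix}3&2\\2&1\end{smallmatrix}\right)$ show that matrices with equal characteristic polynomial can lie in different SE-$\Z$ classes, indexed by ideal classes of $\Z[1/\lambda]$, and over a Dedekind domain the situation is at least as delicate (one cannot even reduce to the nonsingular part, as noted after the Nonsingularity Corollary). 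Your ``Latimer--MacDuffee style comparison'' names this obstruction rather than overcoming it. Realizing a \emph{prescribed} SE-$\mathcal R$ class, not merely a prescribed spectrum, by a primitive matrix is exactly the content of the theorem and exactly the distinction between the Spectral Conjecture and the Generalized Spectral Conjecture that Section 4 emphasizes; the cited proof works directly inside the given class, starting from a triangular representative afforded by $\Lambda\subset\mathcal R$, not by realizing the spectrum first and matching modules afterwards.

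The Dedekind-case perturbation also fails as described. In the Boyle--Handelman--Kim--Roush argument the perturbation is applied to the \emph{conjugating} matrix $U$, so the output $V^{-1}CV$ is genuinely similar over $\mathcal R$ to a fixed $\mathcal R$-matrix $C$; perturbing $P_{\R}$ itself into $\mathcal R$ destroys both the spectrum and the module, and conjugation by elementary matrices over $\mathcal R$ preserves the similarity class, so it cannot ``slide between ideal classes.'' Moreover that argument targets a \emph{positive} matrix and therefore needs $\tr(\Lambda)>0$: positivity is an open condition but primitivity is not (a primitive matrix with a zero entry has arbitrarily close neighbors that are not nonnegative). The theorem covers $\tr(\Lambda)=0$, e.g.\ $\Lambda=(2,-1,-1)$, which is precisely the case the paper flags as inaccessible to perturbation arguments. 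Finally, Handelman's theorem only yields an eventually positive matrix, which need not be nonnegative, so your first step does not by itself move you measurably closer to primitivity.
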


The following corollary is immediate.
\begin{coro}
Conjecture \ref{realgsc} is true under the additional assumption
that the spectrum of $B$ is real.
\end{coro}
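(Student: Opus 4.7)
The plan is to apply the preceding theorem directly with $\mathcal R=\R$, and then translate its SE conclusion into the desired ``nonsingular part'' statement using the fact that, over a field, SE of nonsingular matrices reduces to similarity.

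First, I would verify the hypotheses of the theorem in our setting. The ring $\mathcal R=\R$ is a field, hence a Dedekind domain, and it contains plenty of nontrivial units (e.g.\ $2$), so it falls under the theorem's umbrella. Let $\Lambda$ denote the nonzero spectrum of $B$. The spectrum of $B$ is assumed real, so $\Lambda$ is a tuple of elements of $\mathcal R=\R$. By hypothesis, $\Lambda$ satisfies the necessary conditions of the Spectral Conjecture. Thus the previous theorem applies to $B$ (or, if $B$ happens to have a nilpotent part, to its nonsingular part $B'$, which still has entries in $\R$ after a similarity), yielding a primitive matrix $P$ over $\R$ which is shift equivalent over $\R$ to $B$ (resp.\ $B'$).

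Next, I would invoke the proposition recorded in the excerpt that for nonsingular square matrices over a field, SIM, SE, and SSE coincide. Since the nonsingular part of $B$ (which is $B$ itself in the nonsingular case) and the nonsingular part of $P$ are linked by the same shift equivalence $(R,S)$ restricted to the eventual images, they are similar over $\R$. Equivalently, $P$ is similar over $\R$ to the direct sum of $B$ (or $B'$) and a nilpotent matrix, which is exactly the assertion that $B$ (resp.\ $B'$) is the nonsingular part of the primitive matrix $P$.

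There is essentially no obstacle here: the real work is carried by the theorem whose corollary we are proving, and the role of the ``real spectrum'' assumption is precisely the one cosmetic requirement that lets us take the ground ring to be $\R$ itself (so that $\Lambda\subset \mathcal R$). The only point to be careful about is matching the formalism: the theorem produces a shift equivalence, while the conjecture asks for realization as a nonsingular part, and the bridge between these two formulations is the standing fact about SE versus SIM for nonsingular matrices over a field. If $B$ is allowed to have a nilpotent summand, one applies the theorem to its nonsingular part $B'$ and then interprets the conclusion in terms of $B$ via the zero-extension moves that are already built into SSE.
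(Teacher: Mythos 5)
Your proposal is correct and follows the same (essentially only) route as the paper, which simply declares the corollary immediate from the preceding theorem: take $\mathcal R=\R$ (a Dedekind domain with a nontrivial unit), note that the real-spectrum hypothesis is exactly what makes $\Lambda\subset\mathcal R$, and then convert the resulting SE-$\R$ to a primitive matrix into the nonsingular-part statement via the already-recorded equivalence of SIM/SE/SSE for nonsingular parts over $\R$. Nothing is missing.
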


For example, the corollary covers the case that
$B$ in Conjecture \ref{realgsc} is a diagonal matrix.
(For example, if $B$ is diagonal with a Laffey-Meehan\ai{Laffey, Thomas J.} spectrum
$(3+t,3-t,-2,-2,-2)$, for any $t>0$).
On the other hand, the following (embarassing) open problem indicates
how little we know.

\begin{prob} Suppose $A$ is a $2\times 2$ matrix over $\Z$
with irrational eigenvalues satisfying the conditions of the
Spectral Conjecture. Prove that $A$ is SE-$\Z$ to a primitive
matrix.
\end{prob}

When the Generalized Spectral Conjectures were made, it was not known
whether SE-$\mathcal R$ implied SSE-$\mathcal R$ for every ring
$\mathcal R$. We now know that there are many rings over which
SSE properly refines SE \cite{BoSc1}, including some subrings of $\R$.
So, the weak and strong conjectures are not
{\it a priori}
equivalent.
Nevertheless, it can be proved, for every subring $\mathcal R$
of $\R$, that
if any matrix in a given SE-$\mathcal R$ class is
primitive, then every matrix in that SE-$\mathcal R$ class is
SSE-$\mathcal R$ to a primitive matrix
\cite{BoSc3}. So, we now know the weak and strong conjectures
are equivalent.

\subsection{Appendix 4} \label{a4}

This subsection contains various remarks, proofs and comments referenced
in earlier parts of Section \ref{sec:inverseprobs}.

\begin{rem} \label{abuse}
``Abuse of notation'' is a use of notation to mean something
it does not literally represent, for simplicity.
For example,  describing
the spectrum correctly as a multiset (set with multiplicities)
seems to divert more mental energy than one uses to be aware that
an $n$-tuple is not literally a multiset.
\end{rem}

\begin{rem} \label{friedland}  The companion matrix characterization
for Suleimanova's Theorem is
attributed in \cite{Laffey2012}\ai{Laffey, Thomas J.}
to Shmuel Friedland\ai{Friedland, Shmuel}.
\end{rem}

\begin{thm}\label{jll} (\textnormal{JLL Inequalities})
Let $A$ be an $n\times n$  nonnegative matrix.
Then for all $k,m$ in $\N$ :
\[
\textnormal{trace} (A^{mk}) \geq \frac{\big(
\textnormal{trace} (A^m)\big)^k}{n^{k-1}} \ .
\]
\end{thm}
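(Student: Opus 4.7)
The plan is to reduce to the case $m=1$ and then exploit the fact that diagonal closed walks already give enough trace to beat the bound, with power–mean doing the rest.

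First I would pass from $A$ to $B := A^m$. Since $A$ is nonnegative and $n\times n$, so is $B$, and $B^k = A^{mk}$. Thus $\tr(B^k)=\tr(A^{mk})$ and $\tr(B)=\tr(A^m)$, so the inequality for $(A,m,k)$ is exactly the inequality for $(B,1,k)$. It therefore suffices to show: for every $n\times n$ nonnegative matrix $C$ and every $k\in\N$,
\[
\tr(C^k) \;\geq\; \frac{(\tr C)^k}{n^{k-1}}.
\]

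Next I would use the combinatorial expansion
\[
\tr(C^k) \;=\; \sum_{i_1,\dots,i_k} C(i_1,i_2)\,C(i_2,i_3)\cdots C(i_k,i_1),
\]
a sum of nonnegative terms. Restricting to the diagonal terms $i_1=i_2=\cdots=i_k=i$ and discarding the rest (which are nonnegative) gives
\[
\tr(C^k) \;\geq\; \sum_{i=1}^n C(i,i)^k.
\]
Writing $a_i = C(i,i)\geq 0$, the power–mean (or Jensen) inequality applied to the convex function $x\mapsto x^k$ yields
\[
\frac{1}{n}\sum_{i=1}^n a_i^k \;\geq\; \Bigl(\frac{1}{n}\sum_{i=1}^n a_i\Bigr)^{k},
\]
i.e.\ $\sum_i a_i^k \geq (\sum_i a_i)^k / n^{k-1} = (\tr C)^k / n^{k-1}$. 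Chaining the two inequalities gives the claim for $C$, hence the JLL inequality for $A$ by the reduction above.

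There is essentially no hard step here: the one place where something has to be \emph{used} rather than merely unpacked is the nonnegativity of the off-diagonal closed-walk terms in the trace expansion, which is where the hypothesis that $A$ is nonnegative enters crucially (an arbitrary real matrix could have cancellations). The inequality is then just power-mean on the diagonal entries. This also explains why the bound is sharp and attained by a nonnegative diagonal matrix with equal diagonal entries.
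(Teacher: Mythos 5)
Your proof is correct and follows essentially the same route as the paper's: reduce to $B=A^m$, bound $\tr(B^k)$ below by the sum of the $k$th powers of the diagonal entries using nonnegativity of the remaining closed-walk terms, and finish with convexity (the paper phrases this last step as minimizing $\sum_i x_i^k$ subject to $\sum_i x_i = \tau$ via Lagrange multipliers or H\"older, which is your power--mean inequality). Nothing to add.
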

Expressed at $m=1$ in
  terms of the spectrum $(\lambda_1, \dots , \lambda_n\}$,
  the inequality becomes
  \begin{equation}\label{jllformnk}
    \sum_{i=1}^n (\lambda_i)^{k} \geq
    \frac{\big(\sum_{i=1}^t \lambda_i\big)^k }{n^{k-1}} \ .
    \end{equation}

This result was proved independently
by
Loewy and London \cite{LoewyLondon}, and
by  Johnson\ai{Johnson, Charles}
\cite{Johnson1981}. (Johnson's\ai{Johnson, Charles}  explicit statement was only
for $m=1$, the essential case.)
The proof of this insightful
result is not difficult.
\begin{enumerate}
\item If $B$ is an $n\times n$  nonnegative matrix, and $k\in \N$,
then $\tr (B^k) \geq \sum_{i=1}^n \big(B(i,i)\big)^k$. \\
(Because: the  $B(i,i)^k $ are some of the terms contributing to
$\tr (B^k)$, and the other terms are nonnegative.)
\item Now  suppose $\tau = \tr (B) >0$, and
solve the  problem: if $x_1, \dots, x_n$
are nonnegative numbers with positive sum $\tau$, what is the minimum possible
for the sum $s_k= \sum_{i=1}^n (x_i)^k$ ?
\end{enumerate}
You can check (with Lagrange multipliers, say, or
H\"{o}lder's inequality) that the minimum is achieved
at $(x_1, \dots , x_n)=(\tau /n, \tau /n, \dots , \tau /n)$.
(Intuitively, there is no other candidate,
because there is a minimum and the minimum is not achieved at $(x_1, \dots , x_n)
=(\tau_1, 0, \dots , 0)$.)
Then, for $B$ and for $B=A^m$,
\begin{align*}
\tr (B^k) \geq s_k & \geq \sum_{i=1}^n (\tau/n)^k
= n (\tau/n)^k  = \tau^k / n^{k-1}  \\
\tr (B^k)  & \geq  \tau^k / n^{k-1} \\
\tr (A^{mk}) =\tr ((A^m)^k)    & \geq  (\tr (A^m))^k/n^{k-1} \ .  \qed
\end{align*}

\begin{rem}\label{pfnotes}
There are a number of excellent works on
the Perron-Frobenius theory of nonnegative matrices;
Seneta's classic book
\cite{SenetaBook2006}%\ai{Seneta, Eugene} includes
one introduction.
My short exposition
\cite{boylepfnotes},%\ai{Boyle, Mike}
appealing
  to an argument of Michael Brin,\ai{Brin, Michael} covers the
  heart of the theory (statements
  in this chapter), but not all parts of it.
\end{rem}

\begin{rem}                  \label{whyperron}
Briefly: why is the Perron theorem so important?

Suppose $A$ is primitive with
spectral radius $\lambda$. Let $  \ell, r$ be
be positive  left, right
eigenvectors for $\lambda$, such that
$\ell r = (1)$.
The Perron Theorem implies that for
many purposes, for large $n$,  $A^n$ is very
well approximated by the rank one positive matrix
$\lambda^n r\ell$.

What is ``very well approximated''?
Let $\mu $ be the second highest
eigenvalue modulus.
There is a matrix $B$ with spectral radius $\mu$
such that $A = (\lambda r\ell) + B$,
with $(\lambda r\ell) B =0 = B(\lambda r\ell)$, so
$A^n = (\lambda^n r\ell) + B^n$.
Entries of  $B^n$ cannot grow at an exponential rate greater than
$\mu^n$; but every entry of $A^n $ grows
at the exponentially greater rate  $\lambda^n$.
\end{rem}

\begin{prop} \label{AforDproof}
Suppose $D$ is a primitive matrix over a subring $\mathcal R$
of $\R$, and $p$ is a positive integer. Then there is
an irreducible matrix $A$ over $\mathcal R$ with period
$p$ such that
$\det (I-tA) =\det (I-t^pD)$.
\end{prop}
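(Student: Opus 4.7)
\bigskip

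\noindent\textbf{Proof proposal.} The plan is to write $A$ down explicitly as a block companion matrix, then verify the two assertions (irreducibility with period $p$, and the determinant identity) directly.

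Let $D$ be $n\times n$, and let $I=I_n$. Define the $pn\times pn$ block matrix
\[
A \ = \ \begin{pmatrix}
0 & I & 0 & \cdots & 0 \\
0 & 0 & I & \cdots & 0 \\
\vdots & & \ddots & \ddots & \vdots \\
0 & 0 & \cdots & 0 & I \\
D & 0 & \cdots & 0 & 0
\end{pmatrix}
\]
(with $p$ blocks on each side). The entries of $A$ lie in $\mathcal R$ since those of $D$ and of $I$ do.

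For the irreducibility and period, I appeal to the structure theorem for irreducible matrices stated earlier. Partition the vertex set $\{1,\dots,pn\}$ into $p$ consecutive blocks of size $n$; then $A$ has exactly the block cyclic-permutation form required, with $A_1=\cdots=A_{p-1}=I$ and $A_p=D$. The cyclic products are
\[
D_i \ = \ A_i A_{i+1}\cdots A_{i-1} \ = \ D \quad (1\leq i\leq p),
\]
since the $I$'s contribute nothing. Each $D_i=D$ is primitive by hypothesis, so $A$ is irreducible with period exactly $p$ (the case $p=1$ being $A=D$ itself).

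For the determinant identity, I perform block row reduction on $I-tA$. Successively, for $k=1,2,\dots,p-1$, add $t^k D$ times block row $k$ to block row $p$. An easy induction shows that after step $k$, block row $p$ has the form
\[
(\,0,\ \dots,\ 0,\ -t^{k+1}D,\ 0,\ \dots,\ 0,\ I\,),
\]
with the entry $-t^{k+1}D$ in block column $k+1$: step $k+1$ uses block row $k+1$ (which has $I$ in column $k+1$ and $-tI$ in column $k+2$) to cancel the $-t^{k+1}D$ and produce $-t^{k+2}D$ in column $k+2$. After step $p-1$, the $-t^{p-1}D$ in column $p-1$ is cancelled against block row $p-1$, and the $I$ in column $p$ is replaced by $I-t^pD$. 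The resulting matrix is block upper triangular with diagonal blocks $I,I,\dots,I,\,I-t^pD$, so
\[
\det(I-tA) \ = \ \det(I-t^pD).
\]

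The main obstacle is not really an obstacle: the construction is forced by the block permutation structure, and the identity is a routine block row reduction — the same computation that shows the ordinary $p\times p$ companion of $\lambda^p-c$ has characteristic polynomial $\lambda^p-c$. The only thing requiring a little care is checking that the induction on the row operations proceeds as claimed, and that each cyclic product $D_i$ equals $D$ (so that the period is genuinely $p$ and not a proper divisor).
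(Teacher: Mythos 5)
Your proof is correct and takes essentially the same approach as the paper's: the paper uses the same block cyclic-permutation matrix (with $D$ placed in the first block row rather than the last, an immaterial cyclic shift) and verifies the determinant identity by multiplying $I-tA$ by an explicit unimodular block matrix, which is exactly your row reduction written as a single matrix product. The only cosmetic difference is that the paper displays the computation for $p=4$ and declares the general case obvious, whereas you carry out the induction for general $p$.
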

\begin{proof}
We give a proof for $p=4$ (which should make
the general case obvious).
Define
$A = \left(\begin{smallmatrix} 0 & D & 0 & 0 \\ 0  & 0 & I& 0 \\
0&0 & 0 &  I  \\  I & 0 & 0 & 0
\end{smallmatrix} \right)$.
We compute a product
\[
(I-tA)U =
\left(\begin{smallmatrix} I & -tD & 0 & 0 \\ 0  & I & -tI& 0 \\
0&0 & I &  -tI  \\  -tI & 0 & 0 & I
\end{smallmatrix} \right)
\left(\begin{smallmatrix} I & 0 & 0 & 0 \\
t^3I  & I & 0 &  0 \\
t^2I&0 & I &  0  \\
tI &0 & 0 &  I
\end{smallmatrix}\right)
=
\left(\begin{smallmatrix} I -t^4D & -tD & 0 & 0 \\ 0  & I & -tI& 0 \\
0&0 & I &  -tI  \\  0 & 0 & 0 & I
\end{smallmatrix} \right)
\]
Noting $\det U = 1$, we see
$\det(I-tA) = \det\big((I-tA)U\big) =\det(I-t^4D)$\ .
\end{proof}

\begin{prop} \label{nzspecequiv}
Suppose $A,D$ are square real matrices,
$\det(I-tA) = \det (I-t^pD)$, and
the nonzero\si{nonzero spectrum} spectrum of $D$ is
$\Lambda=(\lambda_1, \dots , \lambda_k)$.

Then the nonzero\si{nonzero spectrum} spectrum of $A$ is $\Lambda^{1/p}$.
\end{prop}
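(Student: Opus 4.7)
My plan is to reduce everything to the statement, recalled in Section~\ref{perptssection}, that for any square real matrix $M$ with nonzero spectrum $(\nu_1,\dots,\nu_m)$,
\[
\det(I-tM) \;=\; \prod_{i=1}^{m}(1-\nu_i t).
\]
Thus the nonzero spectrum of a matrix is completely encoded in its characteristic polynomial in the normalized form $\det(I-tM)$, and it suffices to show that the polynomial $\det(I-t^pD)$ factors as $\prod_{\mu\in\Lambda^{1/p}}(1-\mu t)$.

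The key algebraic step is a factorization identity: for any $\lambda\in\C$,
\[
1-\lambda t^p \;=\; \prod_{\zeta^p=\lambda}(1-\zeta t),
\]
where the product runs over the $p$ complex $p$-th roots of $\lambda$ (with multiplicity when $\lambda=0$, but in our application we will only apply this with $\lambda\ne 0$). I would verify this by expanding the right side, noting that $\zeta_1,\dots,\zeta_p$ are the roots of $z^p-\lambda$, hence by Vieta the elementary symmetric polynomials $e_k(\zeta_1,\dots,\zeta_p)$ vanish for $1\le k\le p-1$ while $e_p=(-1)^{p+1}\lambda$; consequently the coefficient of $t^k$ in $\prod_i(1-\zeta_i t)=\sum_k(-1)^k e_k t^k$ vanishes for $0<k<p$ and equals $-\lambda$ at $k=p$, which is exactly $1-\lambda t^p$.

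Now I assemble the proof. Applying the nonzero-spectrum formula to $D$ gives $\det(I-tD)=\prod_{j=1}^{k}(1-\lambda_j t)$, so substituting $t^p$ for $t$ yields
\[
\det(I-t^pD)\;=\;\prod_{j=1}^{k}(1-\lambda_j t^p)\;=\;\prod_{j=1}^{k}\prod_{\zeta^p=\lambda_j}(1-\zeta t).
\]
By definition, the multiset of all such $\zeta$ is precisely $\Lambda^{1/p}$, which has cardinality $pk$. Combined with the hypothesis $\det(I-tA)=\det(I-t^pD)$, we obtain
\[
\det(I-tA)\;=\;\prod_{\mu\in\Lambda^{1/p}}(1-\mu t),
\]
and since each $\mu\in\Lambda^{1/p}$ is nonzero (as each $\lambda_j$ is), comparing with the nonzero-spectrum formula for $A$ shows that the nonzero spectrum of $A$ is exactly $\Lambda^{1/p}$.

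There is no real obstacle here: the whole proof is a bookkeeping exercise built on the one identity $1-\lambda t^p=\prod_{\zeta^p=\lambda}(1-\zeta t)$, together with the dictionary between nonzero spectrum and the polynomial $\det(I-tM)$ already established in the paper. The only small care required is to make sure the multiplicities on both sides of $\det(I-tA)=\prod(1-\mu t)$ agree, which they do because $\Lambda^{1/p}$ was defined precisely to record each $\lambda_j$ together with all $p$ of its $p$-th roots (preserving multiplicity of $\lambda_j$ in $\Lambda$).
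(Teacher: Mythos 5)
Your proof is correct and follows essentially the same route as the paper: express $\det(I-tD)$ as $\prod_i(1-\lambda_i t)$, substitute $t^p$ for $t$, and factor each $1-\lambda_i t^p$ as $\prod_{\zeta^p=\lambda_i}(1-\zeta t)$. The only difference is that you spell out the Vieta verification of the factorization identity, which the paper states without proof.
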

\begin{proof}
Because the nonzero\si{nonzero spectrum} spectrum of $D$ is
$\Lambda=(\lambda_1, \dots , \lambda_k)$, we can write
$\det (I-tD) = \prod_{i=1}^k(1-\lambda_it)$.
Therefore,
$\det (I-t^pD) = \prod_{i=1}^k(1-\lambda_it^p)$.
Given $\lambda_i$, let $\mu_{i1}, \dots , \mu_{ip}$ be a list
of its $p$th roots in $\C$. Then,
\[
(1-\lambda_it^p) = \prod_{j=1}^p(1-\mu_{ij} t) \ .
\]
Thus, $\det (I-tA) =\prod_{i=1}^k \prod_{i=1}^p(1-\mu_{ij} t) $,
and it follows that
the nonzero\si{nonzero spectrum} spectrum of $A$ is $\Lambda^{1/p}$.
\end{proof}

\begin{rem} \label{coeff}
The Coefficients Condition of the Spectral Conjecture
holds
if the ring $\mathcal R$ contains $\Q$
and if $\tr (\Lambda^n) \in \mathcal R$ for all positive
integers $n$.
%This follows from Newton's identities by induction.
For a self contained proof of this,
consider the
companion matrix $C$ to  the polynomial
\[
p(x)
= \prod_{i=1}^k(t-\lambda_i) = t^k -c_1t^{k-1} -c_2t^{k-2}-   \dots  \ \ .
\]
Clearly $c_1\in \mathcal R$ iff $\tr (\Lambda) \in \mathcal R$.
Now suppose $c_1, \dots , c_{j-1}$ are in $\mathcal R$, and $j\leq k$.
From this assumption and the form
of $C$,  we have that $jc_j$ equals an element of $\mathcal R$ plus
$\tr (\Lambda^j)$.

In particular, the Coefficients Condition
is redundant if $\mathcal R=\R$, because
$\tr (\Lambda^n) \geq 0$ implies
$\tr (\Lambda^n) \in \R$.
\end{rem}

  \begin{rem} \label{johnsonzeros}
   In \cite[Section 4]{Johnson1981},
      Johnson\ai{Johnson, Charles}
    wrote the
    following.
``Suppose the set of numbers $\{\lambda_1, \dots , \lambda_m\}$
is not the spectrum of an $m\times m$ nonnegative matrix. Is it
possible to ``save'' this set by appending $n-m>0$ zeros, that is,
might $\{ \lambda_1, \dots , \lambda_m , 0, \dots , 0\}$ be
the spectrum of a nonnegative matrix?''
Johnson\ai{Johnson, Charles} also (among the many results in \cite{Johnson1981})
gave an example of such a ``save'' with $m=4$ (minimum possible);
established the JLL inequality \eqref{jllformnk};
and noted that it
  ``is more likely to be
satisfied as zeroes are added to the proposed spectrum.''
\end{rem}

\begin{rem} \label{HIEP}
Given $A$ over $\mathcal R\neq \Z$ with a Perron value $\lambda$,
Handelman finds $U$ invertible over $\mathcal R$ such that $U^{-1}AU$
has positive left and right eigenvectors for $\lambda$. This matrix
$U^{-1}AU$ must be eventually positive. He also exhibits an obstruction
to this in the case $\mathcal R=\Z$: if $\ell, r$ are left, right
integral eigenvectors for $\lambda$, then the minimum inner product
$\ell \cdot r$ does not improve with similarity, and if it is smaller
than the size of $A$ then it is impossible to find $U$ invertible over
$\Z$ such that $U^{-1}AU$ has the positive left, right eigenvectors.
But, if needed, Handelman\ai{Handelman, David} produces an SSE-$\Z$ to a larger matrix
(of smallest size possible) for which
he produces the desired
$U$.
\end{rem}

\begin{prop} \label{reflectjll}
If Laffey's\ai{Laffey, Thomas J.} formula for
an upper bound on $N$ were
replaced by a formula of the form
$N \leq  f(G,n)$, then even at $n=3$
the formula could not give a correct bound.
\end{prop}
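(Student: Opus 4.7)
The plan is to exhibit, for fixed $n = 3$, a family $\{\Lambda_\epsilon\}_{\epsilon \in (0,1)}$ of candidate nonzero spectra satisfying all necessary conditions of the Spectral Conjecture over $\R$, with spectral gap $G$ uniformly bounded below while the minimal size $N$ of a primitive realizing matrix tends to infinity as $\epsilon \to 0$. This contradicts any bound of the form $N \leq f(G, n)$. The candidates are exactly the tuples already appearing in the earlier ``unbounded realization size'' example:
\[
\Lambda_\epsilon \;=\; \Bigl(1,\ i\sqrt{\tfrac{1-\epsilon}{2}},\ -i\sqrt{\tfrac{1-\epsilon}{2}}\Bigr).
\]

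First I would verify that each $\Lambda_\epsilon$ satisfies the necessary conditions at $\mathcal R = \R$. The Perron value $1$ is strict because the other two moduli equal $\sqrt{(1-\epsilon)/2} < 1/\sqrt{2}$; the coefficients condition is automatic over $\R$ (cf.\ Remark~\ref{coeff}); and, writing $\alpha = \sqrt{(1-\epsilon)/2}\, e^{i\pi/2}$, a direct computation gives
\[
\tr(\Lambda_\epsilon^k) \;=\; 1 \,+\, 2\Bigl(\tfrac{1-\epsilon}{2}\Bigr)^{k/2}\cos(k\pi/2),
\]
which is strictly positive for all $k \geq 1$, since $2\bigl((1-\epsilon)/2\bigr)^{k/2} < 2 \cdot (1/2)^{k/2} \leq 1$ whenever $k \geq 2$. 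Hence the trace conditions of the Spectral Conjecture hold.

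Next, the spectral gap is $G(\Lambda_\epsilon) = 1 - \sqrt{(1-\epsilon)/2}$, which stays in the fixed interval $(1 - 1/\sqrt{2},\, 1)$ for every $\epsilon \in (0,1)$; in particular $G$ is bounded away from $0$ uniformly across the family. The key lower bound then comes from the JLL inequalities (Theorem~\ref{jll}): if a primitive $N \times N$ nonnegative matrix $A$ has nonzero spectrum $\Lambda_\epsilon$, then applying JLL with $m=1, k=2$ yields
\[
\epsilon \;=\; \tr(\Lambda_\epsilon^2) \;=\; \tr(A^2) \;\geq\; \frac{(\tr A)^2}{N} \;=\; \frac{1}{N},
\]
so $N \geq 1/\epsilon$. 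Thus with $n = 3$ fixed and $G$ confined to a single bounded subinterval of $(0,1)$, the minimal realizing size $N$ is unbounded over the family, so no function $f(G, n)$ can serve as an upper bound.

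I do not foresee any genuine obstacle: the example is essentially a quantitative reinterpretation of the existing ``unbounded realization size'' remark. The only step requiring mild care is the verification that $\tr(\Lambda_\epsilon^k) > 0$ for all $k$ (so that the trace condition, including the implication part, is met), but this follows at once from the closed form above.
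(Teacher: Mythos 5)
Your proposal is correct and follows essentially the same route as the paper: the same family $\Lambda_\epsilon = \bigl(1,\ i\sqrt{(1-\epsilon)/2},\ -i\sqrt{(1-\epsilon)/2}\bigr)$, a uniform lower bound on the spectral gap, and the JLL inequality with $m=1$, $k=2$ giving $N \geq 1/\epsilon$. Your explicit verification of the trace conditions is a welcome extra detail but does not change the argument.
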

\begin{proof}
To show this, it suffices to exhibit a family $\{ \Lambda_{\epsilon}: 0 <
\epsilon < 1/2 \} $ of
3-tuple nonzero\si{nonzero spectrum} spectra of primitive matrices, with spectral gaps
bounded away from zero,  which cannot be realized
by matrices of bounded size.

Set  $ \Lambda_{\epsilon}
= (1, i\sqrt{(1-\epsilon)/2}, -i\sqrt{(1-\epsilon)/2})$.
Each $ \Lambda_{\epsilon} $  satisfies the conditions of
the Spectral Conjecture for $\R$, with  spectral gap
greater than 1/2.
But, if
$\Lambda_{\epsilon} $ is the nonzero\si{nonzero spectrum} spectrum of
an $N\times N$ matrix,
we have already seen
from
the JLL inequalities that $N \geq 1/\epsilon$.
\end{proof}

\begin{prop} \label{gaprefl}
If Laffey's\ai{Laffey, Thomas J.} formula for
an upper bound on $N$ were
replaced by a formula of the form
$N \leq  f(M,n)$, then even at $n=4$
the formula could not give a correct bound.
\end{prop}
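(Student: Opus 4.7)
The plan is parallel to the proof of Proposition~\ref{reflectjll} for $n=3$: exhibit a family $\{\Lambda_q : q\geq q_0\}$ of $4$-tuples of complex numbers, each satisfying the necessary conditions of the Spectral Conjecture over $\R$, with $M(\Lambda_q) = \min_{k\geq 2}\tr(\Lambda_q^k)$ bounded below by a positive constant independent of $q$, yet with the minimum realization size $N(\Lambda_q)\to\infty$ as $q\to\infty$. Any putative bound $N\leq f(M,4)$ would then force $N(\Lambda_q)\leq f(M(\Lambda_q),4)$ to be bounded (since $M(\Lambda_q)$ ranges over a bounded set), contradicting $N(\Lambda_q)\to\infty$. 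A key difference from the $n=3$ case is that, with $M$ bounded below, the JLL inequalities yield only the lower bound $N\geq \tr(\Lambda)^2/\tr(\Lambda^2)\leq 16/M$, which is itself bounded, so the size lower bound must come from a different obstruction.

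The candidate family I would try is
\[
\Lambda_q \;=\; \big(\,1,\; 1-1/q,\; (1-2/q)\,e^{2\pi i/q},\; (1-2/q)\,e^{-2\pi i/q}\,\big),
\]
so that the spectral gap is $G(\Lambda_q) = 1/q \to 0$. A direct computation yields
\[
\tr(\Lambda_q^k) \;=\; 1 + (1-1/q)^k + 2(1-2/q)^k\cos(2\pi k/q),
\]
and a routine case analysis on $k$ shows this is bounded below by a positive constant uniformly in $q$ and $k\geq 2$. The tightest case is $k\approx q/2$, where $\cos(2\pi k/q)\approx -1$ and one obtains $\tr(\Lambda_q^{q/2}) \to 1+e^{-1/2}-2e^{-1}\approx 0.87$ as $q\to\infty$; for small $k$ the cosine is near $1$ and the trace is near $4$; for $k\gg q$ the non-Perron terms decay and the trace approaches $1$. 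The Perron, Coefficients, and positivity-propagation parts of the Spectral Conjecture's necessary conditions are then immediate.

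The main obstacle is establishing $N(\Lambda_q)\to\infty$. The natural tool is the Karpelevi\v{c} description of the set $\Theta_N\subset \overline{\mathbb D}$ of possible eigenvalues of $N\times N$ stochastic matrices: $\Theta_N$ is strictly increasing in $N$, and its reach toward a unit-circle point $e^{2\pi i/q}$ with $q>N$ retreats into the disk at a rate governed by $N$ and $q$. Since our complex pair $(1-2/q)\,e^{\pm 2\pi i/q}$ has modulus approaching $1$ at rate $2/q$ and argument $\pm 2\pi/q$, the Karpelevi\v{c} bound forces $N(\Lambda_q)$ to grow with $q$, giving $N(\Lambda_q)\to\infty$. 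The technically delicate step is the precise application of Karpelevi\v{c}'s theorem---both in extracting a quantitative lower bound on $N$ and in confirming that the matched rates for the real eigenvalue $1-1/q$ and the modulus $1-2/q$ of the complex pair are compatible both with all necessary trace conditions and with triggering the Karpelevi\v{c} obstruction.
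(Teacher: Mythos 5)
Your reduction (produce $4$-tuples with $\min_{k\ge 2}\tr(\Lambda^k)$ bounded below but minimal realization size unbounded) is the right frame, and it is the same frame the paper uses. But the step you yourself flag as delicate --- deducing $N(\Lambda_q)\to\infty$ from Karpelevi\v{c}'s theorem --- is not just unproven, it fails quantitatively. At argument $\theta=2\pi/q$ the boundary of the Karpelevi\v{c} region $\Theta_N$ recedes from the unit circle only on the order of $\theta/N\asymp 1/(qN)$, not on the order of $1/q$: the circulant stochastic matrices $(1-s)I+sP$ ($P$ an $N$-cycle) have the eigenvalue $1-s+se^{2\pi i/N}$, and choosing $s\approx N/q$ produces a point of argument $2\pi/q$ whose modulus deficit is roughly $2\pi^2/(qN)$ (about $1/q$ already for $N=20$). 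Since $\Theta_N$ is star-shaped about $0$, the point $(1-2/q)e^{2\pi i/q}$, whose deficit is the larger quantity $2/q$, lies \emph{inside} $\Theta_N$ for all large $q$ once $N$ exceeds an absolute constant. So the Karpelevi\v{c} obstruction caps out at a bounded $N$ and cannot force $N(\Lambda_q)\to\infty$. (Even if it did, you would still owe the passage from ``nonzero spectrum of a nonnegative matrix'' to ``eigenvalue of a stochastic matrix of the same size,'' via the irreducible decomposition and diagonal conjugation by Perron eigenvectors.) The family itself is also poorly adapted to any limiting argument, since $\Lambda_q\to(1,1,1,1)$, which \emph{is} the nonzero spectrum of a $4\times 4$ nonnegative matrix; so there is no contradiction waiting at the limit.

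The paper's proof arranges exactly the feature your family lacks: it takes $\Lambda_\epsilon=(1,\,1-\epsilon,\,.9i,\,-.9i)$, whose limit $(1,1,.9i,-.9i)$ is \emph{not} the nonzero spectrum of any nonnegative matrix. If each $\Lambda_\epsilon$ were realized at a fixed size $K$, a compactness argument yields a nonnegative $K\times K$ limit matrix with nonzero spectrum $(1,1,.9i,-.9i)$; the repeated Perron value forces reducibility, the irreducible blocks would have to carry the nonzero spectra $(1)$ and $(1,.9i,-.9i)$, and the latter violates $\tr(\Lambda^2)\ge 0$. If you want to salvage your approach, replace your family by one whose limit tuple is obstructed in this way (repeated Perron value plus a sub-tuple failing a trace condition), or find a genuinely quantitative size obstruction; the JLL inequalities will not supply one here, since with $M$ bounded below they only give a bounded lower bound on $N$.
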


\begin{proof}
It suffices to find a family
$\{ \Lambda_{\epsilon}: 0 < \epsilon <\epsilon_0\}  $ of
4-tuple nonzero\si{nonzero spectrum} spectra of primitive matrices, with
\[
\inf_{\epsilon} \inf
\{ \tr ( (\Lambda_{\epsilon})^k) : k\in \N  \} \ > \ 0 \  ,
\]
such that the $\Lambda_{\epsilon}$ cannot be
the  nonzero\si{nonzero spectrum} spectra of matrices of bounded size.

Let $\Lambda_{\epsilon} = (1, 1-\epsilon , .9i, -.9i)$,
with $0< \epsilon < \epsilon_0 = . 0001$ (to avoid computation).
Each $\Lambda_{\epsilon}$ is the nonzero\si{nonzero spectrum} spectrum of a primitive
matrix over $\R$.  For
$\Lambda= (1,1, .9i , - .9i)$, for $n\in \N$,
$\tr (\Lambda^{2n}) = 2$ and
$\tr (\Lambda^{2n+1}) = 2 -2(.9)^n$ and therefore
$\tr (\Lambda^{n}) \geq  2 -2(.9) = .2 $. With $\epsilon_0$ small
enough, likewise
$ \inf_{\epsilon} \inf
\{ \tr ( (\Lambda_{\epsilon})^k) : k\in \N  \}  >  0 $ .

Suppose for some positive integer $K$, for each $\Lambda_{\epsilon}$
there is a nonnegative matrix $A_{\epsilon}$ of size $K\times K$
with nonzero\si{nonzero spectrum} spectrum
$\Lambda_{\epsilon}$. Then by compactness,
there is  a subsequence of the sequence $(A_{1/n})$
which converges to a nonnegative matrix $A$. The spectrum is a
continuous function of the matrix entries, so $A$ has nonzero\si{nonzero spectrum} spectrum
$\Lambda= (1,1, .9i , - .9i)$. By the Perron-Frobenius
spectral constraints,  $A$ cannot be irreducible, and
$\Lambda$  is the union of nonzero\si{nonzero spectrum} spectra
of irreducible matrices, $(1)$ and $(1, .9i, -.9i)$.
But  $1^2 + (.9i)^2 + ( -.9i)^2= - 1.8 < 0$, a contradiction.
\end{proof}
\begin{rem} \label{gscrefs}
The Weak Generalized Spectral Conjecture was stated in the
1991 publication
\cite{BH91}.
The Strong Generalized Spectral Conjecture was stated in
the 1993 publication
\cite{Boyle91matrices}. Although Handelman\ai{Handelman, David} was not a coauthor
of the latter paper,
the Strong conjecture was a conjecture by both of us.

\end{rem}

\section{A brief introduction to algebraic K-theory}
Shift equivalence\si{shift equivalence} and strong shift equivalence\si{strong shift equivalence} are relations on sets of matrices over a semiring. Algebraic K-theory offers many tools for such a setting\footnote{At the beginning of the book {\it Algebraic K-theory and Its Applications} \cite{RosenbergBook}, the author Jonathan Rosenberg writes ``Algebraic K-theory is the branch of algebra dealing with linear algebra over a ring''.}, so it is natural to suspect algebraic K-theory might be useful for studying the relations of shift and strong shift equivalence\si{strong shift equivalence}. This suspicion is correct, and we will present two cases where this happens:
%The connection between shifts of finite type (and other symbolic systems, e.g. $G$-SFT's) and matrices over $\mathbb{Z}_{+}$ and $\mathbb{Z}$ (and other rings, e.g. $\mathbb{Z}G$)

\begin{enumerate}
\item
For a general ring $\calR$, the refinement of SE-$\calR$ by SSE-$\calR$.
\item
Wagoner's\ai{Wagoner, J.B.} obstruction map detecting a difference between SE-$\mathbb{Z}_{+}$ and SSE-$\mathbb{Z}_{+}$
\end{enumerate}
The first is a purely algebraic problem, motivated by applications to symbolic dynamics, and to topics in algebra. The second, Wagoner's\ai{Wagoner, J.B.} obstruction map, is concerned with an ``order'' problem, and is one of two known methods to produce counterexamples to Williams' Conjecture\si{Williams' Conjecture} (discussed in Lecture 1).\\
\indent Lectures 5 and 6 will focus on addressing the first item above. Lecture 7 will discuss automorphisms of shifts of finite type, an important topic in its own right. Lecture 7 is also used partly to prepare for Lecture 8, which addresses the second item above.\\

To begin, we introduce some necessary background from algebraic K-theory, relevant for Lecture 6.

\subsection{$K_{1}$ of a ring\si{$K_{1}$ of a ring} $\calR$}\label{k1ofring}
Given a ring $\calR$, consider the group $GL_{n}(\calR)$ of invertible $n \times n$ matrices over $\calR$. If one wishes to understand the structure of this group, a natural question one may ask is: what is the abelianization of $GL_{n}(\calR)$? While the answer may be fairly complicated depending on $n$ and $\calR$, Whitehead, in 1950 in \cite{Whitehead1950}, made a beautiful observation: by stabilizing, the commutator subgroup becomes more accessible.\\

To describe Whitehead's result, first let us say that by stabilizing, we mean the following.
\begin{de}\label{def:stabilization}
For any $n$, there is a group homomorphism
\begin{gather*}
GL_{n}(\calR) \hookrightarrow GL_{n+1}(\calR)\\
A \mapsto \begin{pmatrix} A & 0 \\ 0 & 1 \end{pmatrix}
\end{gather*}
and we define
$$GL(\calR) = \varinjlim GL_{n}(\calR).$$
\end{de}

The group $GL(\calR)$ is often called the {\it stabilized general linear group} (over the ring $\calR$).\\

An important collection of invertible matrices are the {\it elementary matrices}. A matrix $E \in GL_{n}(\calR)$ is an elementary matrix if $E$ agrees with the identity except in at most one off-diagonal entry. The following observation may be familiar from linear algebra: if $E$ is an $n \times n$ elementary matrix and $B$ is any $n \times n$ matrix then
\begin{enumerate}
\item
$EB$ is obtained from $B$ by an elementary row operation (adding a multiple of one row of $B$ to another row of $B$).
\item
$BE$ is obtained from $B$ by an elementary column operation (adding a multiple of one column of $B$ to another column of $B$).
\end{enumerate}

We define $El_{n}(\calR)$ to be the subgroup of $GL_{n}(\calR)$ generated by $n \times n$ elementary matrices.\\

Like $GL(\calR)$, we can also stabilize the elementary subgroups. The homomorphisms in Definition \ref{def:stabilization} map $El_{n}(\calR)$ to $El_{n+1}(\calR)$, and we define
$$El(\calR) = \varinjlim El_{n}(\calR).$$

If $X \in El(\calR)$, then $X$ can be written as a product of elementary matrices
$$X = \prod_{i=1}^{k}E_{i}.$$
It follows that, for any matrix $A \in GL(\calR)$, $XA$ is obtained from $A$ by performing a sequence of row operations, and $AX$ is obtained from $A$ by performing a sequence of column operations.\\

Note that when we write $AX$ and $XA$, $A$ and $X$ may be of different sizes. However, the process of stabilization allows us replace $A$ with $A \oplus I$ or $X$ with $X \oplus I$ as necessary to carry out the multiplication.\\

The group $El(\calR)$ turns out to be the key to analyzing the abelianization of $GL(\calR)$.

\begin{thm}[Whitehead]\label{thm:whitehead}
For any ring $\calR$, $[GL(\calR),GL(\calR)] = El(\calR)$.
\end{thm}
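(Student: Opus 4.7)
The plan is to establish the two containments $El(\calR) \subseteq [GL(\calR),GL(\calR)]$ and $[GL(\calR),GL(\calR)] \subseteq El(\calR)$ separately, each exploiting the fact that stabilization supplies enough ``room'' to carry out block manipulations that would be impossible at any fixed finite size.

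For the containment $El(\calR) \subseteq [GL(\calR),GL(\calR)]$, I would use the Steinberg commutator identity. Writing $e_{ij}(a)$ for the elementary matrix with $a$ in position $(i,j)$ and ones on the diagonal, a direct computation (valid whenever $i,j,k$ are three distinct indices) gives
\[
[e_{ik}(a),\, e_{kj}(1)] \;=\; e_{ij}(a).
\]
Since every generator of $El_n(\calR)$ has this form, and since stabilization guarantees that a third index $k$ is always available, every stabilized elementary matrix is a single commutator in $GL(\calR)$. This is the step where stabilization is essential: inside $GL_2(\calR)$ there is no third slot, and the abelianization of $GL_2$ can be genuinely larger than that of stabilized $GL$.

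For the reverse containment, the key technical lemma (often called the Whitehead lemma) is that for every $A \in GL_n(\calR)$ the block matrix $\begin{pmatrix} A & 0 \\ 0 & A^{-1}\end{pmatrix}$ lies in $El_{2n}(\calR)$. The plan to prove this is to exhibit the explicit factorization
\[
\begin{pmatrix} A & 0 \\ 0 & A^{-1}\end{pmatrix}
= \begin{pmatrix} I & A \\ 0 & I\end{pmatrix}\begin{pmatrix} I & 0 \\ -A^{-1} & I\end{pmatrix}\begin{pmatrix} I & A \\ 0 & I\end{pmatrix}\begin{pmatrix} 0 & -I \\ I & 0\end{pmatrix},
\]
then observe that each block-triangular factor is visibly a product of elementary matrices (sweep the entries of the off-diagonal block one at a time), and that the final ``signed swap'' factor is likewise in $El_{2n}(\calR)$ via a standard three-step elementary reduction. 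Given this lemma, a commutator $[A,B] = ABA^{-1}B^{-1}$ can be handled by the identity
\[
\begin{pmatrix} ABA^{-1}B^{-1} & 0 \\ 0 & I\end{pmatrix}
= \begin{pmatrix} A & 0 \\ 0 & A^{-1}\end{pmatrix}\begin{pmatrix} B & 0 \\ 0 & B^{-1}\end{pmatrix}\begin{pmatrix} (BA)^{-1} & 0 \\ 0 & BA\end{pmatrix},
\]
which exhibits $[A,B]\oplus I$ as a product of three matrices each in $El_{2n}(\calR)$ by the lemma.

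I expect the main obstacle to be purely bookkeeping rather than conceptual: writing block operations like $\begin{pmatrix} I & A \\ 0 & I\end{pmatrix}$ as honest products of single-entry elementary matrices, and verifying that the stabilization maps are compatible so that everything assembles into an identity in $El(\calR)$ rather than just in $El_{2n}(\calR)$ for some particular $n$. Since $El(\calR)$ is defined as a direct limit, any such identity at level $2n$ immediately descends to $El(\calR)$, and both containments together yield $[GL(\calR),GL(\calR)] = El(\calR)$.
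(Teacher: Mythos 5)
Your proposal is correct and, for the nontrivial containment $[GL(\calR),GL(\calR)]\subseteq El(\calR)$, uses exactly the same two identities as the paper: the Whitehead-lemma factorization of $\left(\begin{smallmatrix} A & 0 \\ 0 & A^{-1}\end{smallmatrix}\right)$ and the three-fold product expressing $[A,B]\oplus I$. Your Steinberg-relation argument $[e_{ik}(a),e_{kj}(1)]=e_{ij}(a)$ for the easy containment (which the paper leaves to the reference) is also correct, stabilization indeed supplying the needed third index.
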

A proof of this can be found in a number of places; for example, see \cite[Chapter III]{WeibelBook}. To see why the commutator $[GL(\calR),GL(\calR)]$ is contained in $El(\calR)$, one can check that if $A \in GL_{n}(\calR)$, then
$$\begin{pmatrix} A & 0 \\ 0 & A^{-1} \end{pmatrix} = \begin{pmatrix} 1 & A \\ 0 & 1 \end{pmatrix} \begin{pmatrix} 1 & 0 \\ -A^{-1} & 1 \end{pmatrix} \begin{pmatrix} 1 & A \\ 0 & 1 \end{pmatrix} \begin{pmatrix} 0 & -1 \\ 1 & 0 \end{pmatrix}$$
and that the last matrix in the above lies in $El(\calR)$, so that $\begin{pmatrix} A & 0 \\ 0 & A^{-1} \end{pmatrix}$ is always in $El(\calR)$. Now observe that we have
$$\begin{pmatrix} ABA^{-1}B^{-1} & 0 \\ 0 & I \end{pmatrix} = \begin{pmatrix} A & 0 \\ 0 & A^{-1} \end{pmatrix} \begin{pmatrix} B & 0 \\ 0 & B^{-1} \end{pmatrix} \begin{pmatrix} (BA)^{-1} & 0 \\ 0 & BA \end{pmatrix}$$
so any commutator lies in $El(\calR)$.

\begin{de}
For a ring $\calR$, the first algebraic K-group\si{$K_{1}$ of a ring} (of $\calR$) is defined by
$$K_{1}(\calR) = GL(\calR)_{ab} = GL(\calR) / El(\calR).$$
\end{de}

We use $[A]$ to refer to the class of a matrix $A$ in $K_{1}(\calR)$.\\

The second equality in the above definition is precisely Whitehead's Theorem. We note a few things regarding $K_{1}$:

\begin{enumerate}
\item
$K_{1}(\calR)$ is always an abelian group.
\item
As noted before, multiplying a matrix $A$ by an elementary matrix from the left (resp. right) corresponds to performing an elementary row (resp. column) operation on $A$. Thus the group $K_{1}(\calR)$ coincides with equivalence classes of (stabilized) invertible matrices over $\calR$, where two matrices are equivalent if one can be obtained from the other by a sequence of elementary row and column operations.
\item
The group operation in $K_{1}(\calR)$ is, by definition,
$$[A][B] = [AB]$$
where again the product $AB$ is defined because we have stabilized. However, the group operation is equivalently defined by
$$[A] + [B] = \left[\begin{pmatrix} A & 0 \\ 0 & B \end{pmatrix}\right].$$
To see this, as we noted before, for any $A \in GL(\calR)$, the matrix $\begin{pmatrix} A & 0 \\ 0 & A^{-1} \end{pmatrix}$ is in $El(\calR)$. Since we have stabilized, we may assume that $A$ and $B$ are the same size, and
\begin{align*}
  [AB] &= \left[\begin{pmatrix} A & 0 \\ 0 & I\end{pmatrix} \begin{pmatrix} B & 0 \\ 0 & I \end{pmatrix} \right]\left[\begin{pmatrix} B^{-1} & 0 \\ 0 & B \end{pmatrix}\right]\\
  &= \left[\begin{pmatrix} A & 0 \\ 0 & I\end{pmatrix} \begin{pmatrix} B & 0 \\ 0 & I \end{pmatrix} \begin{pmatrix} B^{-1} & 0 \\ 0 & B \end{pmatrix}\right] = \left[\begin{pmatrix} A & 0 \\ 0 & B \end{pmatrix}\right]\ .
  \end{align*}
\end{enumerate}

Historically, one of Whitehead's main motivations was to define what is now called {\it Whitehead torsion}. If $f \colon X \to Y$ is a homotopy equivalence between two finite CW complexes, Whitehead showed how to define a certain torsion class $\tau(f)$ in $K_{1}(\mathbb{Z}\pi_{1}(X))$. He showed that $f$ is a simple homotopy equivalence (one obtained through some finite sequence of elementary moves) if and only if $\tau(f) = 0$. For more on this, see \cite[Section 2.4]{RosenbergBook}.\\

What about computing $K_{1}(\calR)$? In general this is a difficult problem, but there are many cases where the answer is accessible, and we'll give some examples shortly. \\

Before discussing these examples, suppose now that $\calR$ is commutative. Then there is a determinant homomorphism
\begin{gather*}
\deter \colon K_{1}(\calR) \to \calR^{\times}\\
\deter([A]) = \deter(A).
\end{gather*}
The kernel of the determinant map is denoted by
$$SK_{1}(\calR) = \ker \deter.$$
Since the determinant map is surjective and right split (by identifying $\calR^{\times}$ with $GL_{1}(\calR)$), we get an exact sequence of abelian groups
$$0 \to SK_{1}(\calR) \longrightarrow K_{1}(\calR) \stackrel{\deter}\longrightarrow \calR^{\times} \to 0$$
and
$$K_{1}(\calR) \cong SK_{1}(\calR) \oplus \calR^{\times}.$$

The determinant map turns out to be very useful in actually computing $K_{1}(\calR)$; often, it is actually an isomorphism.\\

Here are a few examples of $K_{1}$ for some rings.
\begin{enumerate}
\item
When $\calR$ is a field, or even a Euclidean domain, the group $SK_{1}(\calR)$ is trivial, and $K_{1}(\calR) \cong \calR^{\times}$. When $\calR$ is a field, this is just the classical fact that, over a field, any invertible matrix $A$ can be row and column reduced to the matrix $\det A \oplus 1$. When $\calR$ is a Euclidean domain, $SK_{1}(\calR) = 0$ as well (see \cite[Ex. 1.3.5]{WeibelBook}). Thus for example
$$K_{1}(\mathbb{Z}) \cong \mathbb{Z}/2\mathbb{Z} = \{1,-1\}$$
where we've identified $\{1,-1\}$ with the group of units in $\mathbb{Z}$.
\item
If $\calR$ is an integrally closed subring of a finite field extension $E$ of $\mathbb{Q}$, then $SK_{1}(\calR) = 0$ (this is a deep theorem of Bass, Milnor, and Serre; see \cite[4.3]{BMS67}).
\item
When $G$ is an abelian group, the integral group ring $\mathbb{Z}G$ is commutative, so $SK_{1}(\mathbb{Z}G)$ is defined. There are finite abelian groups $G$ for which $SK_{1}(\mathbb{Z}G) \ne 0$; for example, if $H = \mathbb{Z}/4\mathbb{Z} \times \mathbb{Z}/2\mathbb{Z} \times \mathbb{Z}/2\mathbb{Z}$, then $SK_{1}(\mathbb{Z}H) \cong \mathbb{Z}/2\mathbb{Z}$ \cite[Example 5.1]{OliverBook}). In general, the calculation of $SK_{1}(\mathbb{Z}G)$ is very nontrivial (see \cite{OliverBook}).
\end{enumerate}

This last example is especially important in topology (see \cite[Section 4]{RosenbergBook} for a brief discussion of this), and in addition, has applications to symbolic dynamics; see \cite{BoSc2}.\\

\subsection{$NK_{1}(\calR)$\si{$NK_{1}$ of a ring}}
We introduce now a certain algebraic $K$-group called $NK_{1}(\calR)$\si{$NK_{1}$ of a ring}. This group will play a key role for us later, when we discuss strong shift equivalence\si{strong shift equivalence} and shift equivalence\si{shift equivalence} over a ring $\calR$.\\

Any homomorphism of rings $f \colon \calR \to \calS$ induces, for each $n$, a homomorphism of groups $GL_{n}(\calR) \to GL_{n}(\calS)$ and hence a group homomorphism $GL(\calR) \to GL(\calS)$. The homomorphism $f$ then induces a group homomorphism on $K_{1}$
$$f_{*} \colon K_{1}(\calR) \to K_{1}(\calS).$$
In fact, the assignment $\calR \to K_{1}(\calR)$ defines a functor from the category of rings to the category of abelian groups. For any ring $\calR$, we may consider the ring of polynomials $\calR[t]$ over $\calR$, and there is a ring homomorphism
\begin{gather*}
ev_{0} \colon \calR[t] \to \calR\\
p(t) \mapsto p(0).
\end{gather*}
This induces a homomorphism on $K_{1}$
$$(ev_{0})_{*} \colon K_{1}(\calR[t]) \to K_{1}(\calR)$$
and the kernel of this map is denoted by\si{$NK_{1}$ of a ring}
$$NK_{1}(\calR) = \ker \left(K_{1}(\calR[t]) \stackrel{(ev_{0})_{*}}\longrightarrow K_{1}(\calR)\right).$$

Thus by definition, $NK_{1}(\calR)$ is a subgroup of $K_{1}(\calR[t])$. In particular, it is always an abelian group.\\

The group $NK_{1}(\calR)$ is important in algebraic K-theory. It appears (among other places) in the Fundamental Theorem of Algebraic K-theory, relating the K-groups of $\calR[t]$ and $\calR[t,t^{-1}]$ to the K-groups of $\calR$
(see \cite[Chapter III]{WeibelBook}).\\

%\textbf{(Possible) Remark: } The group $NK_{1}(\calR)$ also measures a certain (lack of) homotopy invariance of the K-theory functor .\\

%As we'll see in Lecture 6, the group $NK_{1}(\calR)$ plays a key role in determining the refinement of shift equivalence\si{shift equivalence} over a ring $\calR$ by strong shift equivalence\si{strong shift equivalence} over $\calR$. We will give a hint as to why that might be the case, later in this lecture (through viewing shift equivalence as something like a localization, and the connection with $NK_{1}(\calR)$ as the class group of the category of nilpotent matrices over $\calR$).\\

Here are a few facts about $NK_{1}(\calR)$:
\begin{enumerate}
\item
  If $\calR$ is a Noetherian regular ring
  (see \cite[Chapter 3]{RosenbergBook}), then $NK_{1}(\calR) = 0$. In particular, if $\calR$ is a field, a PID, or a Dedekind domain, then $NK_{1}(\calR) = 0$ (see \cite[III.3.8]{WeibelBook}).
\item
A theorem of Farrell \cite{Farrell} shows that if $NK_{1}(\calR) \ne 0$, then it is not finitely generated as an abelian group.
\end{enumerate}

Thus, to summarize the above two items: $NK_{1}(\calR)$ very often vanishes, but when it doesn't vanish, it's large (as an abelian group).\\

There are rings $\calR$ for which $NK_{1}(\calR) \ne 0$. For an easy example, take any commutative ring $\calR$, and let $\calS = \calR[s] / (s^{2})$. Then $NK_{1}(\calS) \ne 0$. Indeed, over the ring $\calS[t]$, the matrix $(1+st)$ is invertible, and hence we can consider its class $[(1+st)] \in K_{1}(\calS[t])$. Clearly $[(1+st)]$ lies in $NK_{1}(\calS)$, and the class $[1+st]$ is nontrivial in $K_{1}(\calS[t])$ since $\det(1+st) \ne 1$.\\

Here are some more interesting examples:

\begin{enumerate}\label{exa:nk1example}
\item
$NK_{1}(\mathbb{Q}[t^{2},t^{3},z,z^{-1}]) \ne 0$ (see \cite{SchmiedingExamples} for details on this calculation). This is a nontrivial fact: since the ring $\mathbb{Q}[t^{2},t^{3},z,z^{-1}]$ is reduced (has no nontrivial nilpotent elements), we have $NK_{1}(\calR) \subset SK_{1}(\calR[t])$ (see Exercise \ref{exer:reducednk1sk1} below), and often it is not easy to determine whether $SK_{1}$ vanishes\footnote{To be convinced of the difficulties in determining whether $SK_{1}$ vanishes, see the introduction of Oliver's very thorough book \cite{OliverBook}.}.
\item
There are finite groups $G$ for which $NK_{1}(\mathbb{Z}G) \ne 0$; for example, for $G = \mathbb{Z}/4\mathbb{Z}$, $NK_{1}(\mathbb{Z}[\mathbb{Z}/4\mathbb{Z}]) \ne 0$ (details for this particular $G$ can be found in \cite{Weibelnk1calc}).
\end{enumerate}

See \cite{BoSc3} for an application of the example $(1)$ above. The example $(2)$ above of integral group rings of finite groups is relevant for applications to symbolic dynamics (see \cite{BoSc2}). In general, the calculation of $NK_{1}(\mathbb{Z}G)$ for $G$ a finite group is complicated, and not fully known (see e.g. \cite{Harmon1987}, \cite{Weibelnk1calc}).\\

The following is a very useful tool for studying $NK_{1}(\calR)$. The result is often referred to as Higman's Trick.
\begin{thm}[Higman]\label{lemma:higmantrick}
Let $\calR$ be a ring and let $A$ be a matrix in $GL(\calR[t])$ such that $[A] \in NK_{1}(\calR)$. Then there exists a nilpotent matrix $N$ over $\calR$ such that $[A] = [I-tN]$ in $NK_{1}(\calR)$.
\end{thm}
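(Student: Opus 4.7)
The plan is to reduce $A$, via elementary row and column operations together with an induction on the $t$-degree, to a matrix of the form $I - tN$ with $N$ nilpotent. Everything takes place in the stabilized group $GL(\calR[t])$, so I will freely pass to larger matrices when convenient.

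First I would arrange that $A|_{t=0} = I$. Writing $A = A_0 + tA_1 + t^2 A_2 + \cdots + t^d A_d$ with each $A_i$ a matrix over $\calR$, invertibility of $A$ over $\calR[t]$ forces $A_0 \in GL(\calR)$, and the hypothesis $[A] \in NK_{1}(\calR)$ says precisely that $[A_0] = 0$ in $K_{1}(\calR)$. By Whitehead's theorem, after stabilization $A_0 \in El(\calR)$, hence so is $A_0^{-1}$. Replacing $A$ by $A_0^{-1} A$ does not change the class in $K_{1}(\calR[t])$, so I may assume
$$A \;=\; I + tA_1 + t^2 A_2 + \cdots + t^d A_d.$$

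Next I would induct on $d$ to reduce to the case $d = 1$. For the inductive step with $d \geq 2$, set $B(t) = I + tA_1 + \cdots + t^{d-1}A_{d-1}$ and consider the block matrix
$$\tilde{A} \;=\; \begin{pmatrix} B(t) & t^{d-1} A_d \\ -tI & I \end{pmatrix}.$$
Left-multiplying $\tilde{A}$ by $\begin{pmatrix} I & -t^{d-1} A_d \\ 0 & I \end{pmatrix}$ (a product of elementary matrices in $GL(\calR[t])$) produces
$$\begin{pmatrix} B(t) + t^d A_d & 0 \\ -tI & I \end{pmatrix} \;=\; \begin{pmatrix} A & 0 \\ -tI & I \end{pmatrix},$$
which is block lower triangular with $K_{1}$-class $[A]$. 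Hence $[\tilde{A}] = [A]$ in $K_{1}(\calR[t])$. Moreover $\tilde{A}|_{t=0} = I$, so $[\tilde{A}] \in NK_{1}(\calR)$, and the $t$-degree of $\tilde{A}$ is $d - 1$. Applying the inductive hypothesis to $\tilde{A}$ reduces the problem to the case $A = I + tA_1$.

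Finally, for $A = I + tA_1$ with $A_1$ a matrix over $\calR$, the formal identity $(I + tA_1)^{-1} = \sum_{j \geq 0}(-tA_1)^j$ in $\calR[[t]]$ shows that $A$ has a polynomial inverse if and only if $A_1^k = 0$ for some $k$, i.e., $A_1$ is nilpotent. Setting $N = -A_1$ yields $A = I - tN$ as required. The main obstacle in this argument is the inductive step: identifying a block form of $\tilde{A}$ that simultaneously strictly lowers the $t$-degree, keeps the constant term equal to $I$, and has the same class as $A$ in $K_{1}$. Once the right block form is in hand, the verifications reduce to routine block row and column manipulations, but discovering this block form is really the crux of Higman's trick.
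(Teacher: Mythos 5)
Your argument is correct and follows essentially the same route as the paper's (sketched) proof: reduce the $t$-degree by stabilized elementary operations, use the $NK_{1}$ hypothesis to trivialize the constant term via Whitehead's theorem, and conclude nilpotence of $A_1$ from invertibility of $I+tA_1$ over $\calR[t]$. Your explicit block matrix $\left(\begin{smallmatrix} B(t) & t^{d-1}A_d \\ -tI & I\end{smallmatrix}\right)$ is a clean way to carry out the degree-reduction step that the paper leaves implicit.
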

\begin{proof}[Sketch of proof]
Use the fact that we are in the stabilized setting to kill off powers of $t$ from $A$ using elementary operations, arriving at a matrix of the form $A_{0}+A_{1}t$. Since $[A] \in NK_{1}(\calR)$, $[A_{0}] = 0 \in K_{1}(\calR)$, so $[A] = [I+B_{1}t]$ for some $B_{1}$ over $\calR$. Since the matrix $I+B_{1}t$ is invertible over $\calR[t]$, $B_{1}$ must be nilpotent.
\end{proof}

A more detailed proof of Theorem \ref{lemma:higmantrick} may be found in \cite[III.3.5.1]{WeibelBook}.

\begin{exer}\label{exer:reducednk1sk1}
\apr{appexer:reducednk1sk1})
Suppose $\calR$ is a commutative ring which is reduced, i.e. $\calR$ has no nontrivial nilpotent elements. Then $NK_{1}(\calR) \subset SK_{1}(\calR[t])$.
\end{exer}

\begin{exer}\label{exer:nk1vanishpid}
\apr{appexer:nk1vanishpid})
If $\calR$ is a principal ideal domain, then $NK_{1}(\calR) = 0$.
\end{exer}

%Higman's trick is also useful in showing how $NK_{1}(\calR)$ relates to the structure of nilpotent matrices over the ring $\calR$. We'll discuss this more thoroughly now.

\subsection{$Nil_{0}(\calR)$\si{$Nil_{0}$ of a ring}}\label{sec:subsecnil0R}
Higman's Trick suggests there is a connection between the group $NK_{1}(\calR)$ and the structure of nilpotent matrices over the ring $\calR$. This is indeed the case, and we'll describe this relationship quite explicitly in this subsection \apr{rem:nilviewlocalization}). To begin, we first define another group coming from algebraic K-theory, the {\it class group of the category of nilpotent endomorphisms over $\calR$}. That's quite a long name, and we usually just call it ``nil zero (of $\calR$)'', since it's denoted by $Nil_{0}(\calR)$\si{$Nil_{0}$ of a ring}.\\

\begin{de}\label{def:nil0def1}
Let $\calR$ be a ring. Define $Nil_{0}(\calR)$\si{$Nil_{0}$ of a ring} to be the free abelian group on the set of generators
$$\{[N] \mid N {\it  \textnormal{ is a nilpotent matrix over} } \calR\}$$
together with the following relations:
\begin{enumerate}
\item
$[N_{1}] = [N_{2}]$ if $N_{1} = P^{-1}N_{2}P$ for some $P \in GL(\calR)$.
\vspace{.07in}
\item
$[N_{1}] + [N_{2}] = \left[ \begin{pmatrix} N_{1} & B \\ 0 & N_{2} \end{pmatrix} \right]$ for any matrix $B$ over $\calR$.
\vspace{.07in}
\item
$[0] = 0$.
\end{enumerate}
\end{de}
Where does the group $Nil_{0}(\calR)$ come from? First let us recall some definitions.
%It is really the reduced $K_{0}$ of
Consider the category $\textbf{Nil}\calR$ whose objects are pairs $(P,f)$ where $P$ is a finitely generated projective $\calR$-module and $f$ is a nilpotent endomorphism of $P$, and where a morphism from $(P,f)$ to $(Q,g)$ is given by an $\calR$-module homomorphism $\alpha \colon P \to Q$ for which the square
\[
\xymatrix{
P \ar[r]^{f} \ar[d]_{\alpha} & P \ar[d]^{\alpha} \\
Q \ar[r]^{g} & Q \\
}
\]
commutes. The category $\textbf{Nil}\calR$ has a notion of exact sequence by defining
$$(P_{1},f_{1}) \to (P_{2},f_{2}) \to (P_{3},f_{3})$$
to be exact if the corresponding sequence of $\calR$-modules
$$P_{1} \to P_{2} \to P_{3}$$
is exact, i.e. $\image(P_{1} \to P_{2}) = \ker(P_{2} \to P_{3})$ (see \apr{rem:exactcategory}) regarding how $\textbf{Nil}\calR$ with this notion of exact sequence fits into a more general setting). Given this, define $K_{0}(\textbf{Nil}\calR)$ to be the free abelian group on isomorphism classes of objects $(P,f)$ in $\textbf{Nil}\calR$, together with the relation:
\begin{gather*}
[(P_{1},f_{1})] + [(P_{3},f_{3})] = [(P_{2},f_{2})] \\
\textnormal{ whenever } \\
0 \to (P_{1},f_{1}) \to (P_{2},f_{2}) \to (P_{3},f_{3}) \to 0\\
\textnormal{ is exact. }
\end{gather*}
Let $\textbf{Proj}\calR$ denote the category of finitely generated projective $\calR$-modules and consider the standard notion of an exact sequence in $\textbf{Proj}\calR$. We can likewise define the group $K_{0}(\textbf{Proj}\calR)$ to be the free abelian group on isomorphism classes of objects in $\textbf{Proj}\calR$ with the similar relations:
\begin{gather*}
[P_{1}] + [P_{3}] = [P_{2}] \\
\textnormal{ whenever } \\
0 \to P_{1} \to P_{2} \to P_{3} \to 0 \\
\textnormal{ is exact in } \textbf{Proj}\calR.
\end{gather*}
These relations are equivalent to the set of relations
$$[P_{1}] + [P_{2}] = [P_{1} \oplus P_{2}], \qquad P_{1},P_{2} \textnormal{ in } \textbf{Proj}\calR$$
since any exact sequence of projective $\calR$-modules splits. Thus $K_{0}(\textbf{Proj}\calR)$ is isomorphic to the group completion of the abelian monoid of isomorphism classes of finitely generated projective $\calR$-modules under direct sum, which is often given as the definition of the group $K_{0}(\calR)$.\\

There is a functor $\textbf{Nil}\calR \to \textbf{Proj}\calR$ given by $(P,f) \mapsto P$, and this functor respects exact sequences, so there is an induced map on the level of the $K_{0}$ groups defined above
$$K_{0}(\textbf{Nil}\calR) \to K_{0}(\textbf{Proj}\calR).$$
The kernel of this map is isomorphic to $Nil_{0}(\calR)$ (details of this isomorphism can be found in \cite[Chapter II]{WeibelBook}).\\
%\indent The group $Nil_{0}(\calR)$ is just one example of K-groups associated to endomorphisms over a ring $\calR$ \apr{***}).\\

The following formalizes the connection between $NK_{1}(\calR)$ and nilpotent matrices over $\calR$.
\begin{thm}\label{thm:nil0nk1iso}
The map
\begin{equation}\label{eqn:nil0nk1iso}
\begin{gathered}
\Psi \colon Nil_{0}(\calR) \to NK_{1}(\calR)\\
\Psi \colon [N] \mapsto [I-tN]
\end{gathered}
\end{equation}
is an isomorphism of abelian groups.
\end{thm}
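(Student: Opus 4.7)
The plan is to prove $\Psi$ is a well-defined homomorphism, then establish surjectivity via Higman's Trick, and finally construct an inverse for injectivity.

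First I would verify well-definedness. For any nilpotent $N$ over $\calR$, the matrix $I-tN$ is invertible over $\calR[t]$ (its inverse is the finite sum $I+tN+t^2N^2+\cdots+t^{k-1}N^{k-1}$ if $N^k=0$), so $[I-tN]\in K_1(\calR[t])$; and evaluating at $t=0$ yields $[I]=0$ in $K_1(\calR)$, placing the class in $NK_1(\calR)$. Then I would check that $\Psi$ respects the three defining relations of $Nil_0(\calR)$: relation (1) is immediate because $I-tP^{-1}N_2P = P^{-1}(I-tN_2)P$ and conjugation is trivial in $K_1$; relation (2) uses the identity $\left[\begin{smallmatrix} X & Y \\ 0 & Z\end{smallmatrix}\right] = [X]+[Z]$ in $K_1$ applied to $I-t\left(\begin{smallmatrix} N_1 & B \\ 0 & N_2\end{smallmatrix}\right) = \left(\begin{smallmatrix} I-tN_1 & -tB \\ 0 & I-tN_2\end{smallmatrix}\right)$; relation (3) sends $[0]$ to $[I]=0$. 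The homomorphism property $\Psi([N_1]+[N_2])=\Psi([N_1])+\Psi([N_2])$ is the $B=0$ case of the same identity.

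Next, surjectivity is essentially Theorem \ref{lemma:higmantrick} (Higman's Trick) verbatim: every class in $NK_1(\calR)$ admits a representative of the form $I-tN$ with $N$ nilpotent over $\calR$, so $\Psi$ hits every element.

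The main obstacle is injectivity. The strategy is to build an inverse $\Phi\colon NK_1(\calR)\to Nil_0(\calR)$ by sending a class $[A]$ to $[N]$ where $I-tN$ is a Higman representative for $[A]$, and show $\Phi$ is well-defined. Concretely, suppose $N_1,N_2$ are nilpotent matrices over $\calR$ with $[I-tN_1]=[I-tN_2]$ in $K_1(\calR[t])$; after stabilizing both to the same size (which at the $Nil_0$ level just means applying relation (2) with the zero nilpotent block, so does not change their $Nil_0$ classes), we have $(I-tN_2)=E(I-tN_1)F$ in $GL(\calR[t])$ for some product of elementary matrices $E,F$. The task is to decompose this identity into a sequence of moves each of which corresponds to either relation (1) or relation (2) in $Nil_0(\calR)$. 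The engine here will be Higman's Trick applied \emph{relatively}: given any single elementary multiplication $A \mapsto E_{ij}(f(t))\cdot A$ applied to a matrix $A=I-tN$ of Higman form, I would truncate the resulting polynomial matrix back to a linear-in-$t$ Higman representative $I-tN'$ by another round of Higman's reduction, and track that the relationship $N\leadsto N'$ is realized by a composition of conjugation moves and zero-extension moves (relation (2) with $N_2=0$ or $N_1=0$, together with a suitable off-diagonal block); the nilpotence of $N$ ensures all power-series expansions terminate, making every step algebraic. Concatenating these local moves along the chain of elementary operations between $I-tN_1$ and $I-tN_2$ produces a chain of relations in $Nil_0(\calR)$ certifying $[N_1]=[N_2]$. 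The technical heart is the bookkeeping that a Higman-style truncation is natural enough to descend to $Nil_0$; this is the step I expect to be the most delicate, but it is a standard computation in the Bass--Heller--Swan style analysis of $NK_1$ (see, e.g., \cite[III.3]{WeibelBook}). Once $\Phi$ is shown to be well-defined, the composite $\Phi\circ\Psi$ is the identity on generators by construction, yielding injectivity and completing the proof that $\Psi$ is an isomorphism.
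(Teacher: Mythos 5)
Your treatment of well-definedness and surjectivity is correct and matches the paper: well-definedness is exactly Exercise \ref{appexer:psiwelldefined} (checked relation by relation in Appendix 5, with the block elementary matrix $\left(\begin{smallmatrix} I & tB(I-tN_{2})^{-1}\\ 0 & I\end{smallmatrix}\right)$ handling relation (2)), and surjectivity is Higman's Trick verbatim.

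The gap is in your injectivity argument. The claim you need for each ``local move'' --- that if $M = E_{ij}(f(t))(I-tN)$ is brought back to Higman form $I-tN'$ by further stabilization and elementary operations, then $[N]=[N']$ in $Nil_{0}(\calR)$ --- is not a bookkeeping exercise; it is essentially the statement you are trying to prove, restated one elementary operation at a time. The relations defining $Nil_{0}(\calR)$ permit only conjugation by matrices in $GL(\calR)$ (constant entries) and block-triangular splitting, whereas the equivalence between $I-tN_{1}$ and $I-tN_{2}$ is witnessed by elementary matrices over $\calR[t]$ with genuinely polynomial entries. Your sketch supplies no mechanism for simulating a single multiplication by $E_{ij}(t^{k})$ using only constant conjugations and block extensions, so asserting that the truncation ``is realized by a composition of conjugation moves and zero-extension moves'' begs the question; note also that $E_{ij}(f(t))(I-tN)$ need not even have constant term $I$, so the intermediate matrices leave the Higman form in a way your truncation must first repair. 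This is precisely why the paper does not attempt a direct argument: it defers injectivity (equivalently, well-definedness of the inverse $[I-tN]\mapsto[N]$) either to the classical localization machinery of \cite[III.3.5.3]{WeibelBook}, or to the strong shift equivalence route via Theorem \ref{thm:sseeleeq} and Corollary \ref{cor:nilsse} --- both substantial results rather than elementary matrix manipulations. To close your proof you should follow one of those two routes; the direct ``relative Higman'' truncation as sketched does not.
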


\begin{exer}\apr{appexer:psiwelldefined})
Show the map $\Psi$ defined in \eqref{eqn:nil0nk1iso} is a well-defined group homomorphism.
\end{exer}

Towards showing $\Psi$ is an isomorphism, given Higman's Theorem \ref{lemma:higmantrick} above, one obvious thing to try is to define an inverse map
\begin{equation}\label{eqn:nk1nil0}
\begin{gathered}
NK_{1}(\calR) \to Nil_{0}(\calR)\\
[I-tN] \mapsto [N].
\end{gathered}
\end{equation}

This in fact works: this map turns out to be well-defined, and is an inverse to the map $\Psi$. This is classically done, in algebraic K-theory, using a fair amount of machinery and long exact sequences coming from localization results (e.g. \cite[III.3.5.3]{WeibelBook}). Later we will see there is an alternative, more elementary, proof using strong shift equivalence\si{strong shift equivalence} theory.\\

We will make frequent use of the isomorphism \eqref{eqn:nil0nk1iso} above in later lectures.

\begin{exer}\apr{appexer:uppertrivanish})
Consider an upper triangular matrix $N$ over $\calR$ with zero diagonal. Then $I-tN$ lies in $El(\calR[t])$, and hence $[I-tN] = 0$ in $NK_{1}(\calR)$. Using the relations defining $Nil_{0}(\calR)$, show the class of such an $N$ must be zero in $Nil_{0}(\calR)$.
\end{exer}

The isomorphism $NK_{1}(\calR) \cong Nil_{0}(\calR)$ is only one instance of a larger phenomenon, which, loosely speaking, relates the K-theory of polynomial rings $\calR[t]$ (in fact, certain localizations of them) to the K-theory of endomorphisms over the ring $\calR$ \apr{rem:ktheoryendomorphisms}). The strong shift equivalence\si{strong shift equivalence} theory also fits nicely into this framework, and we'll describe this in a little more detail later.

\subsection{$K_{2}$ of a ring\si{$K_{2}$ of a ring} $\calR$}\label{subsec:k2ofaring}
This short subsection gives a definition and a few very basic properties of the group $K_{2}$ of a ring, motivated by its appearance later in Lecture 8. For a more thorough introduction to $K_{2}$, see either \cite{MilnorBook} or \cite[III. Sec. 5]{WeibelBook}.\\

Roughly speaking, $K_{2}(\calR)$ measures the existence of ``extra relations'' among elementary matrices over $\calR$. We'll make this more formal below, but the idea is that elementary matrices always satisfy a certain collection of relations which do not depend on the ring. The group $K_{2}(\calR)$ is a way to detect additional relations coming from the ring.\\

Let $\calR$ be a ring. Given $n \ge 1$ and $1 \le i \ne j \le n$, let $e_{i,j}(r)$ denote the matrix which has $r$ in the $i,j$ entry, and agrees with the identity matrix everywhere else. Recall the group $El_{n}(\calR)$ of $n \times n$ elementary matrices over $\calR$ is generated by matrices $e_{i,j}(r)$, $i \ne j$. It is straightforward to check that $El_{n}(\calR)$ always satisfies certain relations: for any $r, s \in \calR$, we have
\begin{enumerate}
\item
$e_{i,j}(r)e_{i,j}(s) = e_{i,j}(r+s)$.
\item
$[e_{i,j}(r),e_{k,l}(s)] =
\begin{cases}
1 & \mbox{if } i \ne l \mbox{ and } j \ne k\\
e_{i,l}(rs) & \mbox{if } i \ne l \mbox{ and } j = k\\
e_{k,j}(-sr) & \mbox{if } j \ne k \mbox{ and } i = l.
\end{cases}$
\end{enumerate}

The key here is that these relations are satisfied by $El_{n}(\calR)$ for {\it every} ring. This perhaps motivates defining the following group:

\begin{de}
Let $\calR$ be a ring and $n \ge 3$. The $n$th Steinberg group $St_{n}(\calR)$ has generators $x_{i,j}(r)$, where $1 \le i \ne j \le n$ and $r \in \calR$, and relations:
\begin{enumerate}
\item
$x_{i,j}(r)x_{i,j}(s) = x_{i,j}(r+s)$.
\item
$[x_{i,j}(r),x_{k,l}(s)] =
\begin{cases}
1 & \mbox{if } i \ne l \mbox{ and } j \ne k\\
x_{i,l}(rs) & \mbox{if } i \ne l \mbox{ and } j = k\\
x_{k,j}(-sr) & \mbox{if } j \ne k \mbox{ and } i = l.
\end{cases}$
\end{enumerate}
\end{de}

The map
$$x_{i,j}(r) \mapsto e_{i,j}(r)$$
defines a surjective group homomorphism
$$\theta_{n} \colon St_{n}(\calR) \to El_{n}(\calR).$$

The relations for $St_{n}(\calR)$ and $St_{n+1}(\calR)$ imply there is a well-defined group homomorphism
\begin{equation*}
\begin{gathered}
St_{n}(\calR) \to St_{n+1}(\calR)\\
x_{ij}(r) \mapsto x_{ij}(r)\\
\end{gathered}
\end{equation*}
and we define
$$St(\calR) = \varinjlim St_{n}(\calR)$$
and assemble the $\theta_{n}$'s to get a group homomorphism
$$\theta \colon St(\calR) \to El(\calR).$$

Finally, we define\si{$K_{2}$ of a ring}
$$K_{2}(\calR) = \ker \theta.$$

It turns out the sequence
$$K_{2}(\calR) \to St(\calR) \to El(\calR)$$
is the universal central extension of the group $El(\calR)$. The group $K_{2}(\calR)$ is precisely the center of $St(\calR)$, and so is always abelian. Furthermore, the assignment $\calR \to K_{2}(\calR)$ is functorial; see \cite[III, Sec.5]{WeibelBook} for more details on this.\\
%\begin{proposition}
%$K_{2}(\calR)$ is the center of $St(\calR)$, and hence is an abelian group.
%\end{proposition}
%For a proof, see **cite-Milnor**.\\

An observation we'll make use of later is the following. An expression of the form
$$\prod_{i=1}^{k}E_{i} = 1$$
where $E_{i}$ are elementary matrices can be used to produce an element of $K_{2}(\calR)$: lift each $E_{i}$ to some $x_{i}$ in $St(\calR)$ and consider
$$x = \prod_{i}^{k}x_{i} \in St(\calR).$$
Then $x \in K_{2}(\calR)$, although in general this element may depend on the choice of lifts.\\

\begin{ex}
Let $\calR = \mathbb{Z}$, and consider
$$E = e_{1,2}(1)e_{2,1}(-1)e_{1,2}(1) = \begin{pmatrix} 0 & 1 \\ -1 & 0 \end{pmatrix}.$$
One can check directly that
$$E^{4} = I$$
so we can consider the element of $K_{2}(\mathbb{Z})$
$$x = \big(x_{1,2}(1)x_{2,1}(-1)x_{1,2}(1)\big)^{4}.$$
Milnor in \cite[Sec. 10]{MilnorBook} proves that $x$ is nontrivial in $K_{2}(\mathbb{Z})$, $x^{2} = 1$, and $x$ is actually the only nontrivial element of $K_{2}(\mathbb{Z})$. Thus we have
$$K_{2}(\mathbb{Z}) \cong \mathbb{Z}/2\mathbb{Z}.$$
\end{ex}

It turns out (see \cite[Chapter V]{WeibelBook}) that $K_{2}(\mathbb{Z}[t]) \cong K_{2}(\mathbb{Z})$. Given $m \ge 1$, there is a split surjection $K_{2}(\mathbb{Z}[t]/(t^{m})) \to K_{2}(\mathbb{Z})$, and we can define the group $K_{2}(\mathbb{Z}[t]/(t^{m}),(t))$ to be the kernel of this split surjection. In \cite{vdK}, van der Kallen proved that $K_{2}(\mathbb{Z}[t]/(t^{2}),(t)) \cong \mathbb{Z}/2\mathbb{Z}$, a fact which will prove to be useful later in Lecture 8. More generally, the following was proved by Geller and Roberts.
\begin{thm}[{\cite[Section 7]{Roberts1979}}]\label{thm:k2calctruncatedpoly}
For any $m \ge 2$, the group $K_{2}(\mathbb{Z}[t]/(t^{m}),(t))$ is isomorphic to $\bigoplus_{k=2}^{m}\mathbb{Z}/k\mathbb{Z}$.
\end{thm}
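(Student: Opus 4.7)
The plan is to proceed by induction on $m$, using the base case $m=2$ (which is van der Kallen's theorem cited just above) together with the long exact sequence in relative $K$-theory arising from the natural truncation $\pi_{m+1} \colon \Z[t]/(t^{m+1}) \to \Z[t]/(t^m)$, whose kernel $J_m := (t^m)/(t^{m+1})$ is a square-zero ideal in $\Z[t]/(t^{m+1})$ isomorphic to $\Z$ as an abelian group. Note that $J_m \subset (t)$, so there is a well-defined relative $K$-theory long exact sequence for the pair $(J_m, (t))$ sitting inside $\Z[t]/(t^{m+1})$.

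First I would write down the five-term segment
\[
K_2(\Z[t]/(t^{m+1}),J_m) \to K_2(\Z[t]/(t^{m+1}),(t)) \to K_2(\Z[t]/(t^m),(t)) \to K_1(\Z[t]/(t^{m+1}),J_m) \to K_1(\Z[t]/(t^{m+1}),(t))
\]
and analyze the outer terms. Because $J_m^{\,2} = 0$, I would apply the standard computation of relative $K_2$ for square-zero extensions of commutative rings (due to Stienstra and refined by van der Kallen and others), which expresses $K_2(\Z[t]/(t^{m+1}),J_m)$ as a quotient of $\Omega^1_{(\Z[t]/(t^m))/\Z} \otimes_{\Z[t]/(t^m)} J_m$ by an explicit relation module. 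A direct computation of the relevant module of K\"ahler differentials, paired with the explicit identification of those relations, should produce a cyclic group of order $m+1$. A parallel and easier analysis identifies $K_1(\Z[t]/(t^{m+1}),J_m) \cong J_m \cong \Z$; combined with a check that the boundary map from $K_2(\Z[t]/(t^m),(t))$ is zero, this should yield a short exact sequence
\[
0 \to \Z/(m+1)\Z \to K_2(\Z[t]/(t^{m+1}),(t)) \to K_2(\Z[t]/(t^m),(t)) \to 0.
\]

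Second, I would exhibit an explicit splitting of this sequence, which, combined with the inductive hypothesis $K_2(\Z[t]/(t^m),(t)) \cong \bigoplus_{k=2}^{m}\Z/k\Z$, yields the claim for $m+1$. The natural candidates for splitting classes are the Dennis--Stein symbols $\langle t^{k-1}, t\rangle$ for $2 \le k \le m+1$; each lies in $K_2(\Z[t]/(t^{m+1}),(t))$ since $1 + t^{k-1}\cdot t = 1 + t^{k}$ is a unit. Using the Dennis--Stein identities (the additivity in each slot and the ``product-to-symbol'' rule $\langle ab, c\rangle = \langle a, bc\rangle + \langle b, ca\rangle$), I would compute that $\langle t^{k-1}, t \rangle$ has order exactly $k$, and that these symbols together with the image of any splitting for smaller $m$ give an internal direct sum decomposition.

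The main obstacle will be obtaining the \emph{precise} order $m+1$ in the square-zero step. The Stienstra--van der Kallen machinery produces a presentation by generators and relations, but extracting a cyclic group of exactly the right order requires careful bookkeeping of all the relations coming from the Dennis--Stein identities applied to pairs $\langle t^i, t^j\rangle$ with $i+j \ge m$, where many symbols collapse. A closely related secondary difficulty is verifying from below that the symbols $\langle t^{k-1}, t \rangle$ are genuinely independent of the claimed orders in $K_2(\Z[t]/(t^{m+1}),(t))$; this likely requires a direct manipulation in the Steinberg group, in the spirit of Milnor's analysis of the nontrivial element $\big(x_{12}(1)x_{21}(-1)x_{12}(1)\big)^{4} \in K_2(\Z)$ recalled earlier in the text. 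Once these two computations are in hand, the inductive conclusion follows by assembling the short exact sequence with its splitting.
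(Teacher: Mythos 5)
The paper does not prove this statement; it is quoted from Geller--Roberts \cite{Roberts1979}, so there is no in-text argument to compare yours against, and your sketch has to be judged on its own. It has the right overall shape (induction along the truncations, Dennis--Stein symbols as splitting classes), but the central step is not merely ``careful bookkeeping'': it rests on a formula that is false over $\Z$. Identifying relative $K_2$ of a square-zero extension with a quotient of $\Omega^1_{(R/I)/\Z}\otimes I$ is a characteristic-zero (``enough primes invertible'') statement; integrally there are extra torsion contributions, and van der Kallen's own base case is the counterexample you would have to confront: for the dual numbers one has $\Omega^1_{\Z/\Z}=0$ yet $K_2(\Z[\epsilon]/(\epsilon^2),(\epsilon))\cong\Z/2\Z$. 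The correct integral computation of these relative groups is essentially the content of the theorem, not an available input to it. A second, smaller problem: the Maazen--Stienstra presentation of relative $K_2$ by Dennis--Stein symbols applies to \emph{split} radical pairs, and your intermediate pair $\left(\Z[t]/(t^{m+1}),\,(t^m)/(t^{m+1})\right)$ does not split --- any ring section of the truncation would have to send $t$ to $t+ct^m$, whose $m$-th power is $t^m\neq 0$ --- whereas the full pair $\left(\Z[t]/(t^m),(t)\right)$ does split over $\Z$. This is why the literature computes the full relative group directly from the symbol presentation rather than peeling off one power of $t$ at a time through a differential formula.

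The second half of your plan is sound in outline and close to what actually happens: the group is generated by the symbols $\langle t^{k-1},t\rangle$, the Dennis--Stein relations together with $t^{m}=0$ give the upper bounds $k\,\langle t^{k-1},t\rangle=0$, and the genuinely hard part is the lower bound, i.e., showing these classes are independent of the stated orders. That detection cannot be done by manipulating relations alone (relations only ever shrink a group); it requires exhibiting homomorphisms out of $K_{2}(\Z[t]/(t^{m}),(t))$ that see each summand, which for $m=2$ is exactly van der Kallen's nontrivial theorem and for general $m$ is the substance of \cite{Roberts1979}. If you want a complete proof, start from the Maazen--Stienstra presentation of the split pair $\left(\Z[t]/(t^m),(t)\right)$ and follow the Geller--Roberts reduction; the long-exact-sequence scaffolding can then be dispensed with.
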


\subsection{Appendix 5}
This appendix contains some remarks, proofs, and solutions of exercises for Lecture 5.

\begin{exer}\label{appexer:reducednk1sk1}
Suppose $\calR$ is a commutative ring which is reduced, i.e. $\calR$ has no nontrivial nilpotent elements. Then $NK_{1}(\calR) \subset SK_{1}(\calR[t])$.
\end{exer}
\begin{proof}
  If $\calR$ is commutative and reduced, the only units in $\calR[t]$ are degree zero. Thus for a nilpotent matrix $N$ over $\calR$, since $I-tN$ is invertible,
  we have $\det(I-tN)= 1$.  So,  together with Higman's Trick (Theorem \ref{lemma:higmantrick}), we have $NK_{1}(\calR) \subset SK_{1}(\calR[t])$.
\end{proof}

\begin{exer}\label{appexer:nk1vanishpid}
If $\calR$ is a principal ideal domain, then $NK_{1}(\calR) = 0$.
\end{exer}
\begin{proof}
By Higman's Trick (Theorem \ref{lemma:higmantrick}), it suffices to show that if $N$ is a nilpotent matrix over $\calR$ then $[I-tN] = 0$ in $K_{1}(\calR[t])$. Given $N$ nilpotent, by Theorem \ref{thm:PIDblocktri} from Lecture 2, there exists some $P \in GL(\calR)$ such that $P^{-1}NP$ is upper triangular with zero diagonal. Then
$$[I-tN] = [P^{-1}(I-tN)P] = [I-t(P^{-1}NP)].$$
in $K_{1}(\calR[t])$. Since $P^{-1}NP$ is upper triangular with zero diagonal, $I-t(P^{-1}NP)$ lies in $El(\calR[t])$, and hence $[I-t(P^{-1}NP)] = 0$ in $K_{1}(\calR[t])$.
\end{proof}

\begin{exer}\label{appexer:psiwelldefined}
The map
\begin{equation}
\begin{gathered}
\Psi \colon Nil_{0}(\calR) \to NK_{1}(\calR[t])\\
\Psi \colon [N] \mapsto [I-tN]
\end{gathered}
\end{equation}
is a well-defined group homomorphism.
\end{exer}
\begin{proof}
Since $[I-tN_{1}] + [I-tN_{2}] = [I-t(N_{1} \oplus N_{2})] = [(I-tN_{1}) \oplus (I-tN_{2})]$ in $K_{1}(\calR[t])$, $\Psi$ respects the group operations. To see it is well-defined it suffices to check $\Psi$ on the relations for $Nil_{0}(\calR)$. For the first relation of $Nil_{0}(\calR)$, if $N$ is a nilpotent matrix over $\calR$ and $P \in GL(\calR)$ then
$$[I-tN] = [P^{-1}(I-tN)P] = [I-t(P^{-1}NP)]$$
in $NK_{1}(\calR)$. For the second relation, suppose $N_{1},N_{2}$ are nilpotent matrices and $B$ is some matrix over $\calR$ and consider
$$\begin{pmatrix} I-tN_{1} & -tB \\ 0 & I - tN_{2} \end{pmatrix}.$$
Since $I-tN_{2}$ is invertible over $\calR[t]$, we can consider the block matrix in $El(\calR[t])$ given by
$$E = \begin{pmatrix} I & tB(I-tN_{2})^{-1} \\ 0 & I \end{pmatrix}.$$
Then
$$E \begin{pmatrix} I-tN_{1} & -tB \\ 0 & I - tN_{2} \end{pmatrix} = \begin{pmatrix} I-tN_{1} & 0 \\ 0 & I-tN_{2} \end{pmatrix}$$
so the second relation is preserved by $\Psi$. The third relation is obvious.
\end{proof}

\begin{exer}\label{appexer:uppertrivanish}
Consider an upper triangular matrix $N$ over $\calR$ with zero diagonal. Then $I-tN$ lies in $El(\calR[t])$, and hence $[I-tN] = 0$ in $NK_{1}(\calR)$. Using the relations defining $Nil_{0}(\calR)$, show the class of such an $N$ must be zero in $Nil_{0}(\calR)$.
\end{exer}
\begin{proof}
If $N$ is size one or two then this is immediate from relation (2) in the definition of $Nil_{0}(\calR)$. Now if $N$ is upper triangular of size $n \ge 2$ with zero diagonal, then there is some matrix $B$ such that
$$N = \begin{pmatrix} N_{1} & B \\ 0 & 0 \end{pmatrix}$$
where $N_{1}$ is upper triangular of size $n-1$ with zero diagonal. Now use relation $(2)$ of $Nil_{0}(\calR)$ and induction.
\end{proof}

\begin{rem}\label{rem:nilviewlocalization}
For a more abstract viewpoint, the connection between $NK_{1}$ and the class group $Nil_{0}(\calR)$ of nilpotent endomorphisms over $\calR$ essentially comes from the localization sequence in algebraic K-theory, together with identifying the category of $\calR[t]$-modules of projective dimension less than or equal to 1 which are $t$-torsion (i.e. are annihilated by $t^{k}$ for some $k$) with the category of pairs $(P,f)$ where $P$ is a finitely generated projective $\calR$-module and $f$ is a nilpotent endomorphism of $P$; see \cite[Chapter III]{WeibelBook} for more on this viewpoint.
\end{rem}

\begin{rem}\label{rem:exactcategory}
The category $\textbf{Nil}\calR$ equipped with the notion of exact sequence as defined here is a particular case of the more general concept, introduced by Quillen, of an {\it exact category}, a category equipped with some notion of exact sequences which satisfy some conditions. Such a category has enough structure to define $K$-groups of the category; our definition of $K_{0}(\textbf{Nil}\calR)$ coincides with $K_{0}$ of the exact category $\textbf{Nil}\calR$. See \cite[II Sec. 7]{WeibelBook} for details regarding this viewpoint.
\end{rem}

\begin{rem}\label{rem:ktheoryendomorphisms}
One may also define a class group for endomorphisms over a ring $\calR$. Define $\textbf{End}\calR$ to be the category whose objects are pairs $(P,f)$ where $P$ is a finitely generated projective $\calR$-module and $f \colon P \to P$ is an endomorphism, and a morphism $(P,f) \to (Q,g)$ is given by an $\calR$-module homomorphism $h \colon P \to Q$ such that $hf = gh$. Analogous to $\textbf{Nil}\calR$, we call a sequence
$$(P_{1},f_{1}) \to (P_{2},f_{2}) \to (P_{3},f_{3})$$
in $\textbf{End}\calR$ exact if the associated sequence of $\calR$-modules
$$P_{1} \to P_{2} \to P_{3}$$
is exact. Then $K_{0}(\textbf{End}\calR)$ is defined to be the free abelian group on isomorphism classes of objects $(P,f)$ in $\textbf{End}\calR$ together with the relations
\begin{equation}
\begin{gathered}
{[(P_{1},f_{1})]} + [(P_{3},f_{3})] = [(P_{2},f_{2})]\\
\textnormal{whenever}\\
0 \to (P_{1},f_{1}) \to (P_{2},f_{2}) \to (P_{3},f_{3}) \to 0\\
\textnormal{is exact in } \textbf{End}\calR.
\end{gathered}
\end{equation}
There is a forgetful functor $\textbf{End}\calR \to \textbf{Proj}\calR$ given by $(P,f) \mapsto P$ and an induced group homomorphism on the level of $K_{0}$
\begin{equation}
\begin{gathered}
K_{0}(\textbf{End}\calR) \to K_{0}(\textbf{Proj}\calR)\\
[(P,f)] \mapsto [P].
\end{gathered}
\end{equation}
Now define $End_{0}(\calR)$ to be the kernel of this homomorphism. The group $End_{0}(\calR)$ has a presentation analogous to the one given in Definition \ref{def:nil0def1}: $End_{0}(\calR)$ is the free abelian group on the set of generators
$$\{[A] \mid A \textnormal{ is a square matrix over } \calR\}$$
together with the relations
\begin{enumerate}
\item
$[A_{1}] = [A_{2}]$ if $A_{1} = P^{-1}A_{2}P$ for some $P \in GL(\calR)$.
\vspace{.07in}
\item
$[A_{1}] + [A_{2}] = \left[ \begin{pmatrix} A_{1} & B \\ 0 & A_{2} \end{pmatrix} \right]$ for any matrix $B$ over $\calR$.
\vspace{.07in}
\item
$[0] = 0$.
\end{enumerate}
There is an equivalence relation on square matrices over $\calR$ defined by $A \sim_{end} B$ if $[A]=[B]$ in $End_{0}(\calR)$. A natural question is how this relation compares to the relations of strong shift equivalence\si{strong shift equivalence} and shift equivalence\si{shift equivalence} over $\calR$. In fact, this is settled in the commutative case by the following theorem of Almkvist (which was also proved, and greatly generalized, by Grayson in \cite{Grayson1977}). In the theorem, for $\calR$ commutative we let $\tilde{\calR}$ denote the multiplicative subgroup of $1+t\calR[[t]]$ given by
$$\tilde{\calR} = \left\{\frac{p(t)}{q(t)} \mid p(t), q(t) \in \calR[t] \textnormal{ and } p(0)=q(0)=1 \right\}.$$
\begin{thm}[\cite{Almkvist78,AlmkvistErratum}]\label{thm:almkvistend0}
Let $\calR$ be a commutative ring. The map
\begin{equation}\label{eqn:end0iso}
\begin{gathered}
End_{0}(\calR) \to \tilde{\calR}\\
[A] \mapsto \det(I-tA)
\end{gathered}
\end{equation}
is an isomorphism.
\end{thm}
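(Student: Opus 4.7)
The plan is to show $\Phi\colon End_{0}(\calR)\to \tilde{\calR}$, $[A]\mapsto\det(I-tA)$, is a bijective homomorphism in three stages.

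First, I would verify well-definedness and the homomorphism property directly. The three defining relations of $End_{0}(\calR)$ translate to standard determinantal identities: conjugation invariance from $\det(P^{-1}MP)=\det(M)$; the block-triangular relation from
$$\det\begin{pmatrix} I-tA_{1} & -tB \\ 0 & I-tA_{2}\end{pmatrix}=\det(I-tA_{1})\det(I-tA_{2});$$
and $[0]=0$ from $\det(I)=1$. The correspondence $[A_{1}]+[A_{2}]=[A_{1}\oplus A_{2}]$ maps to multiplication in $\tilde{\calR}$, which is the group law on that side. Surjectivity is then immediate via companion matrices: for $p(t)=1+a_{1}t+\cdots+a_{n}t^{n}\in 1+t\calR[t]$, the companion matrix $C_{p}$ of $x^{n}+a_{1}x^{n-1}+\cdots+a_{n}$ satisfies $\det(I-tC_{p})=p(t)$, so every $p/q\in\tilde{\calR}$ is hit by $[C_{p}]-[C_{q}]$.

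Injectivity is the main content. The key claim I would establish is that $[A]=[C_{\det(I-tA)}]$ in $End_{0}(\calR)$ for every square matrix $A$. Granting this, if $p_{1}/p_{2}=q_{1}/q_{2}$ in $\tilde{\calR}$ with $p_{i}=\det(I-tA_{i})$ and $q_{i}=\det(I-tB_{i})$, then $\det(I-t(A_{1}\oplus B_{2}))=\det(I-t(B_{1}\oplus A_{2}))$, so both classes collapse to a single companion class, forcing $[A_{1}]-[A_{2}]=[B_{1}]-[B_{2}]$. To reach the key claim I would first prove the subsidiary identity $[C_{pq}]=[C_{p}]+[C_{q}]$. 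For this, let $p^{\ast}(x)=x^{\deg p}p(1/x)$ (so that multiplication by $x$ on $\calR[x]/(p^{\ast})$ has matrix $C_{p}$), and use the short exact sequence of $\calR[x]$-modules
$$0\to \calR[x]/(p^{\ast})\xrightarrow{\cdot q^{\ast}}\calR[x]/(p^{\ast}q^{\ast})\to \calR[x]/(q^{\ast})\to 0,$$
which, after choosing bases, realizes $C_{pq}$ as block-triangular with diagonal blocks $C_{p},C_{q}$; the block-triangular relation in $End_{0}(\calR)$ yields the identity.

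For the key claim $[A]=[C_{\det(I-tA)}]$ itself, I would view $\calR^{n}$ with the $A$-action as a finitely generated $\calR[t]$-module $M_{A}$ (annihilated, by Cayley--Hamilton, by the reversed characteristic polynomial), and aim to construct an $A$-invariant filtration whose successive quotients are cyclic $\calR[t]$-modules $\calR[t]/(q_{i})$. The block-triangular relation then yields $[A]=\sum_{i}[C_{q_{i}^{\ast}}]$, and iterating the subsidiary identity gives $\sum_{i}[C_{q_{i}^{\ast}}]=[C_{\prod_{i}q_{i}^{\ast}}]=[C_{\det(I-tA)}]$. The hard part, and my main obstacle, is producing this filtration over a general commutative ring, where classical rational canonical form is unavailable. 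The strategy I would pursue is a universal-coefficients reduction: prove $[X]=[C_{\det(I-tX)}]$ over the generic polynomial ring $S=\Z[x_{ij}]$ with the generic matrix $X=(x_{ij})$, where rational canonical form is available after passing to the quotient field $K=\mathrm{Frac}(S)$, and then use functoriality of $End_{0}$ along the specialization $S\to\calR$, $x_{ij}\mapsto A_{ij}$, to transport the identity to $\calR$. The delicate step is bridging $End_{0}(S)$ and $End_{0}(K)$: the identity holds in $End_{0}(K)$ by classical rational canonical form, and promoting it to $End_{0}(S)$ should be accomplished via Fitting-ideal arguments (clearing denominators to realize the $K$-filtration over a sufficiently large localization, then using that the obstruction lies in a nilpotent ideal which vanishes in $S$ since $S$ is reduced). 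This reduction converts the structural question into one about the universal polynomial ring, where enough algebraic structure is present to carry out the argument.
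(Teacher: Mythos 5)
The paper does not actually prove this theorem: it is quoted from Almkvist (and Grayson), whose arguments go through Witt vectors and the $K$-theoretic localization sequence rather than canonical forms. So your write-up must stand on its own, and it has a genuine gap exactly where you flag your ``main obstacle.'' Everything before that is fine: well-definedness and the homomorphism property are the standard determinant identities; surjectivity via companion matrices is correct; the identity $[C_{pq}]=[C_{p}]+[C_{q}]$ via the sequence
\begin{equation*}
0\to \calR[x]/(p^{\ast})\xrightarrow{\;\cdot q^{\ast}\;}\calR[x]/(p^{\ast}q^{\ast})\to \calR[x]/(q^{\ast})\to 0
\end{equation*}
is correct ($q^{\ast}$ is monic, hence a nonzerodivisor in $\calR[x]$, and the shifted basis is unitriangular over the standard one); and the reduction of injectivity to the key claim $[A]=[C_{\det(I-tA)}]$ is sound.

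The gap is that the key claim \emph{is} the theorem, and the generic-matrix-plus-descent strategy does not establish it. Over the fraction field $K$ of $S=\Z[x_{ij}]$ you do get $[X]=[C_{\chi}]$, and after clearing denominators you get it in $End_{0}(S_{f})$ for a suitable localization; but to pull it back to $End_{0}(S)$ you need injectivity of $End_{0}(S)\to End_{0}(S_{f})$, which is not available a priori --- a relation in $End_{0}(S_{f})$ is witnessed by conjugations and block decompositions over $S_{f}$ that need not descend to $S$, and the generic matrix has no cyclic vector over $S$ itself (its cyclicity determinant vanishes at $X=0$, so it is a non-unit of $S$). Such localization-injectivity statements are precisely what Grayson's $K$-theoretic localization argument is built to supply; ``the obstruction lies in a nilpotent ideal'' is not an argument, since no obstruction class has been defined and reducedness of $S$ has nothing to do with descending a $K$-rational filtration. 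A concrete warning of how much is hidden here: for a reduced ring with $NK_{1}(\calR)\neq 0$ (e.g.\ $\Q[t^{2},t^{3},z,z^{-1}]$) there are nilpotent $N$ with $[N]\neq 0$ in $Nil_{0}(\calR)$ while $\det(I-tN)=1$, so your key claim forces $[N]=0$ in $End_{0}(\calR)$; any proof must therefore pass through relations involving non-nilpotent matrices in an essential way, which no canonical-form or filtration argument does. To close the proof you would need Almkvist's original analysis or the localization machinery, not the descent sketched here.
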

There is an extension of Theorem \ref{thm:almkvistend0} to general (i.e. not necessarily commutative) rings due to Sheiham \cite{SheihamWhitehead}.\\

As a consequence of the theorem, if $\mathcal R$ is an integral domain then the relation $\sim_{end}$ is coarser than shift equivalence\si{shift equivalence} over $\calR$. For example, when $\calR = \mathbb{Z}$ and $A$ over $\mathbb{Z}_{+}$ presents a shift of finite type $(X_{A},\sigma_{A})$, knowing the class $[A]$ in $End_{0}(\mathbb{Z})$ is the same as knowing the zeta function\si{zeta function} $\zeta_{\sigma_{A}}(t)$. Also (see Section \ref{detI-tAsubsection}),
$\det(I-tA)$ is an invariant of SSE-$\mathcal R$ for any commutative ring $\mathcal R$,
but there are commutative rings for which
the trace is not an invariant of
shift equivalence\si{shift equivalence}, and for such a ring $\mathcal R$,
SE-$\mathcal R$ does not refine $\sim_{end}$.
\\

\indent For a symbolic system presented by a matrix $A$ over a noncommutative ring (for example, the integral group ring $\mathbb{Z}G$ where $G$ is nonabelian), Theorem \ref{thm:almkvistend0} suggests the class $[A]$ in $End_{0}(\calR)$ can serve as an analogue of the zeta function\si{zeta function} of the symbolic system presented by $A$.
%The inclusion map $\textbf{Nil}\calR \to\textbf{End}\calR$ induces a map
%$$Nil_{0}(\calR) \to End_{0}(\calR).$$

%see for example , \cite{GraysonAuto})
%As a small example, it allows one to define an analogue of the characteristic polynomial in the non-commutative setting (see \cite{Sheihamcharpoly}).
\end{rem}
\section[Strong shift equivalence\si{strong shift equivalence} vs. shift equivalence\si{shift equivalence} via algebraic K-theory]{The algebraic K-theoretic characterization of
the refinement of strong shift equivalence\si{strong shift equivalence} over a
ring by shift equivalence\si{shift equivalence}}
Let $\calR$ be a semiring. Recall that square matrices $A,B$ are {\it elementary strong shift equivalent over $\calR$} (ESSE-$\calR$ for short, denoted $\esse{A}{B}$) if there exists matrices $R,S$ over $\calR$ such that
$$A=RS, \hspace{.1in} B=SR.$$

Recall also from Lecture 2 the following two equivalence relations defined on the collection of square matrices over $\calR$:
\begin{enumerate}
\item
Square matrices $A$ and $B$ are {\it strong shift equivalent over $\calR$} (SSE-$\calR$ for short, denoted $\sse{A}{B}$) if there exists a chain of elementary strong shift equivalence\si{strong shift equivalence}s over $\calR$ from $A$ to $B$:
$$A = \esse{A_{0}}{A_{1}} \begin{tiny} \reallywidesim{\hspace{.03in} esse-$\mathcal{R}$ } \end{tiny} \cdots \begin{tiny} \reallywidesim{\hspace{.03in} esse-$\mathcal{R}$ } \end{tiny} \esse{A_{n-1}}{A_{n}} = B.$$
\item
Square matrices $A$ and $B$ are {\it shift equivalent over $\calR$} (SE-$\calR$ for short, denoted $\se{A}{B}$) if there exists matrices $R,S$ over $\calR$ and a number $l \in \mathbb{N}$ such that
\begin{gather*}
A^{l} = RS, \hspace{.1in} B^{l} = SR\\
AR = RB, \hspace{.1in} BS = SA.
\end{gather*}
\end{enumerate}

The group $\mathcal R^n$ is a (left) $\mathcal R$-module,
  by the obvious definition $r:x\mapsto rx$.
For an $n \times n$ square matrix $A$ over $\calR$ there is an $\calR$-module endomorphism $\calR^{n} \to \calR^{n}$ given by $x \mapsto xA$. We can form the direct limit $\calR$-module
$$G_{A} = \varinjlim\{\calR^{n},x \mapsto xA\}.$$
This was introduced in the case $\calR = \mathbb{Z}$ in Section \ref{sec:SEZdirectlimits} of Lecture 2. The $\calR$-module $G_{A}$ becomes an $\calR[t,t^{-1}]$-module by defining $x \cdot t^{-1} = xA$.
%and we use $\mathcal{M}_{A}$ to denote $G_{A}$ considered as an $\calR[t,t^{-1}]$-module. Thus, for example, saying that $\mathcal{M}_{A}$ and $\mathcal{M}_{B}$ are isomorphic means $G_{A}$ and $G_{B}$ are isomorphic as $\calR[t,t^{-1}]$-modules.
The following result
was a part of Theorem \ref{RmodulestructureforSE}.
%given in Proposition \ref{prop:sezdirectlimit} of Lecture 2 in
%the case $\calR = \mathbb{Z}$; the proof there (which is given
%in \apr{seandpairiso})) carries over to here.

\begin{prop}
For square matrices $A,B$ over $\calR$, we have
\begin{gather*}
\se{A}{B}\\
\textnormal{ if and only if }\\
G_{A} \textnormal{ and } G_{B} \textnormal{ are isomorphic as } \calR[t,t^{-1}]-\textnormal{modules}.
\end{gather*}
\end{prop}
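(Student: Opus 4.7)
The plan is to mirror the proof of Proposition \ref{prop:sezdirectlimit} from the $\calR=\Z$ case, which was carried out in detail as Proposition \ref{seandpairiso} in Appendix 2, using the general direct limit description recalled in Remark \ref{directlimits}. That is, I would represent elements of $G_A$ as equivalence classes $[(x,n)]$ with $x\in\calR^n$, where the relation is generated by $[(xA,n+1)]=[(xR,n)]$; multiplication by $t^{-1}$ is the action $[(x,n)]\mapsto[(xA,n)]$, and multiplication by $t$ is $[(x,n)]\mapsto[(x,n+1)]$. The key observation is that nothing in the $\Z$ argument uses any ring-theoretic feature of $\Z$ beyond associativity and the existence of a unit, so the formal constructions transfer directly; what requires checking is only that the various maps remain $\calR[t,t^{-1}]$-linear.

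For the forward direction, given a lag-$\ell$ shift equivalence $(R,S)$ with $A^\ell=RS$, $B^\ell=SR$, $AR=RB$, $SA=BS$, I would define
\[
\phi:G_A\to G_B,\qquad [(x,n)]\mapsto [(xR,n)],
\]
and its proposed inverse
\[
\psi:G_B\to G_A,\qquad [(y,n)]\mapsto [(yS,n+\ell)].
\]
Well-definedness of $\phi$ is the identity $[(xAR,n+1)]=[(xRB,n+1)]=[(xR,n)]$, which uses $AR=RB$; well-definedness of $\psi$ is analogous. The composition $\psi\circ\phi$ acts as $[(x,n)]\mapsto[(xRS,n+\ell)]=[(xA^\ell,n+\ell)]=[(x,n)]$, and $\phi\circ\psi$ similarly. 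Finally, $\phi$ intertwines the $t$-action because it is defined by right multiplication in the first coordinate while $t$ acts only on the second; it intertwines the $t^{-1}$-action by the same identity $AR=RB$.

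For the reverse direction, given an $\calR[t,t^{-1}]$-module isomorphism $\phi:G_A\to G_B$, I would apply $\phi$ to the classes of the standard basis vectors $e_1,\ldots,e_n$ of $\calR^n$ sitting at level $0$, obtaining classes $\phi([(e_i,0)])$. Each can be represented as $[(w_i,N_i)]$; after multiplying by a suitable power of $t$ (which is just applying the automorphism $[(y,N)]\mapsto[(y,N+k)]$) one can normalize all $N_i$ to a common value $N$, and then post-composing with the $t$-automorphism $N$ times we may arrange $N=0$. Assembling the resulting $w_i$ into the rows of a matrix $R$ gives $\phi([(x,0)])=[(xR,0)]$ for all $x$. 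Applying the same normalization to $\phi^{-1}$ produces a matrix $S$ and a lag $\ell$ with $\phi^{-1}([(y,0)])=[(yS,\ell)]$. The intertwining of the $t^{-1}$-action by $\phi$ and $\phi^{-1}$ then gives $AR=RB$ and $BS=SA$, while $\phi^{-1}\circ\phi=\mathrm{id}$ and $\phi\circ\phi^{-1}=\mathrm{id}$ yield $RS=A^\ell$ and $SR=B^\ell$ (after possibly enlarging $\ell$).

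The main step that needs care is the normalization in the reverse direction: one must verify that the choices of representative do not affect the resulting matrices $R,S$ up to the ambiguity already present in the definition of shift equivalence (where $(R,S)$ can always be replaced by $(A^jR,SA^j)$, increasing the lag). This is the only place where even in the $\Z$ case some attention is required, and the argument is purely formal—it only uses that $\calR^n$ is a free $\calR$-module of finite rank and that the direct limit construction is functorial in the pair $(\calR^n,A)$. No other feature of $\Z$, such as being an integral domain or having finitely generated units, plays any role, so the proof of Proposition \ref{seandpairiso} transfers verbatim.
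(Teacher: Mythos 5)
Your proposal is correct and takes essentially the same approach as the paper, which itself simply observes that the proof of Proposition \ref{seandpairiso} for $\calR=\Z$ carries over verbatim to a general ring: your maps $[(x,n)]\mapsto[(xR,n)]$ and $[(y,n)]\mapsto[(yS,n+\ell)]$, the composition computations, and the reverse-direction normalization to level $0$ are exactly that argument. The only slip is a typo in your statement of the direct-limit relation, which should read $[(xA,n+1)]=[(x,n)]$ rather than $[(xA,n+1)]=[(xR,n)]$; your subsequent computations use the correct relation.
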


This proposition shows that shift equivalence\si{shift equivalence} over a ring $\calR$ has a nice classical algebraic interpretation.

\subsection{Comparing shift equivalence\si{shift equivalence} and strong shift equivalence\si{strong shift equivalence} over a ring}\label{subsec:comparingseandsse}

Recall that for any semiring $\calR$ and square matrices $A,B$ over $\calR$,
$$\sse{A}{B} \Longrightarrow \se{A}{B}.$$

Lectures 1 and 2 discussed various aspects of both shift equivalence\si{shift equivalence} and strong shift equivalence\si{strong shift equivalence}, especially in the central case of $\calR = \mathbb{Z}_{+}$ and $\mathbb{Z}$. Recall Conjecture \ref{conj:williams} from Lecture 1:

\begin{conj}[Williams' Shift Equivalence\si{shift equivalence} Conjecture, 1974]
If $A$ and $B$ are square matrices over $\mathbb{Z}_{+}$ which are shift equivalent over $\mathbb{Z}_{+}$, then $A$ and $B$ are strong shift equivalent over $\mathbb{Z}_{+}$.
\end{conj}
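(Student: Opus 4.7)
The plan is to attack this conjecture via the algebraic framework the lectures have set up, reducing first to the primitive case and then to a K-theoretic question over $\mathbb{Z}$. By Proposition \ref{primiiveSE}, primitive matrices shift equivalent over $\mathbb{Z}$ are automatically shift equivalent over $\mathbb{Z}_+$, so one can block-decompose a general $A$ over $\mathbb{Z}_+$ into its Perron-Frobenius irreducible pieces and reduce the problem to the case that $A$ and $B$ are primitive of the same nonzero spectrum.

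Forgetting nonnegativity for the moment, I would then combine the Maller-Shub presentation of SSE-$\mathbb{Z}$ (Proposition \ref{prop:MallerShub}) with the analogous presentation of SE-$\mathbb{Z}$ as the equivalence relation generated by similarity over $\mathbb{Z}$ together with nilpotent extensions (Theorem \ref{thm:endorelations}). The obstruction to upgrading a nilpotent extension $\left(\begin{smallmatrix} A & X \\ 0 & N \end{smallmatrix}\right)$ to an actual zero extension is K-theoretic: it is controlled by $Nil_{0}(\mathbb{Z})$, which by Theorem \ref{thm:nil0nk1iso} is isomorphic to $NK_{1}(\mathbb{Z})$, and this vanishes because $\mathbb{Z}$ is a PID (Exercise \ref{exer:nk1vanishpid}). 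This should give SE-$\mathbb{Z}$ $=$ SSE-$\mathbb{Z}$, so at least $A$ and $B$ are strong shift equivalent over $\mathbb{Z}$; concretely, one obtains an explicit chain $A = A_{0}, A_{1}, \dots, A_{\ell} = B$ with integral factorizations $A_{i} = R_{i}S_{i}$, $A_{i+1} = S_{i}R_{i}$.

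The hard part, and where I expect the genuine obstacle to lie, is the final step: upgrading an SSE over $\mathbb{Z}$ to an SSE over $\mathbb{Z}_+$. A natural attempt is to modify the intermediate pair $(R_{i}, S_{i})$ using Perron eigenvectors (as in the proof of Proposition \ref{sePrimitive}), perhaps by stretching the lag to absorb negative entries into large powers of $A$, but one needs both products $R_{i}S_{i}$ and $S_{i}R_{i}$ to be nonnegative simultaneously throughout the entire chain, and no such elementary manipulation is forced. As the lectures have foreshadowed, this is precisely where the conjecture fails: the Kim-Roush counterexamples, detected via the sign-gyration/SGCC invariant to be discussed in Lecture 8, exhibit a positivity-sensitive invariant that survives SSE-$\mathbb{Z}$ but obstructs the passage to SSE-$\mathbb{Z}_+$ for suitable primitive matrices. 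So the natural K-theoretic approach proves only the halfway result SE-$\mathbb{Z}_+$ $\Longrightarrow$ SSE-$\mathbb{Z}$, and the conjecture as stated is in fact false.
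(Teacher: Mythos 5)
You have correctly recognized that the statement is a conjecture which is in fact false, and your analysis of where the difficulty sits is exactly the paper's own account: reduce to the primitive case via Proposition \ref{primiiveSE}, observe that SE-$\Z$ coincides with SSE-$\Z$ because $NK_{1}(\Z)=0$ for the PID $\Z$, and locate the genuine obstruction in the passage from SSE-$\Z$ to SSE-$\Z_{+}$, which the Kim--Roush relative sign-gyration invariant (equivalently Wagoner's $K_{2}$-valued obstruction) shows is not always possible. This is precisely the three-part decomposition of Williams' Problem drawn in Section \ref{subsec:comparingseandsse} and carried out in Section \ref{sectionWagoner}. One small caution: your opening reduction to the primitive case is glibber than the situation warrants --- the first counterexamples (Kim--Roush 1992) were reducible, and the reducible case fails for a different reason (non-surjectivity of the dimension representation), so ``block-decompose into irreducible pieces'' does not by itself reduce the conjecture to the primitive case; but this does not affect your correct conclusion that the conjecture is false even for primitive matrices.
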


There are counterexamples to Williams' Conjecture\si{Williams' Conjecture}; we'll discuss some of this in Lecture 8. We can generalize the conjecture in the obvious way to arbitrary semirings, and rephrase as a more general problem:

\begin{prob}[General Williams\ai{Williams, R.F.} Problem]
Suppose $\calR$ is a semiring, and $A,B$ are square matrices over $\calR$. If $A$ and $B$ are shift equivalent over $\calR_{+}$, must $A$ and $B$ be strong shift equivalent over $\calR_{+}$?
\end{prob}

Williams' original Shift Equivalence\si{shift equivalence} Conjecture concerns the case $\calR = \mathbb{Z}_{+}$, and is most immediately linked to shifts of finite type, through its relation to topological conjugacy (as discussed in Lecture 1). It turns out, even in the case $\calR = \mathbb{Z}_{+}$ the answer to Williams\ai{Williams, R.F.} Problem is `not always'. We will talk more about this in Lecture 4, but for now let us consider the following picture, which outlines how the General Williams' Problem can be approached:

\begin{equation*}
\begin{tikzpicture}
\path
(-1.6,1) node (a) {SE-$\mathcal{R}_{+}$}
(0,1) node (b) {?}
(1.75,1) node (c) {SSE-$\mathcal{R}_{+}$}
(-1.6,-0.5) node (d) {?}
(1.75,-0.5) node (e) {?}
(-1.6,-2) node (f) {SE-$\mathcal{R}$}
(1.75,-2) node (g) {SSE-$\mathcal{R}$}
(0,-2) node (h) {?};
\node[shape=circle,draw,inner sep=2pt,scale=0.6] at (-1.9,-0.5) (char){1};
\node[shape=circle,draw,inner sep=2pt,scale=0.6] at (2.05,-0.5) (char){3};
\node[shape=circle,draw,inner sep=2pt,scale=0.6] at (0,-1.6) (char){2};
\draw[double,double equal sign distance] (-1,1) -- (-0.1,1);
\draw[double,double equal sign distance, -implies] (0.1,1) -- (1,1);

\draw[double,double equal sign distance,implies-] (-1.6,0.7) -- (-1.6,-0.3);
\draw[double,double equal sign distance,-implies] (-1.6,-0.7) -- (-1.6,-1.7);
\draw[double,double equal sign distance,implies-] (1.75,0.7) -- (1.75,-0.3);
\draw[double,double equal sign distance,-implies] (1.75,-0.7) -- (1.75,-1.7);
\draw[double,double equal sign distance, implies-] (-1,-2) -- (-0.1,-2);
\draw[double,double equal sign distance, -implies] (0.1,-2) -- (1,-2);
\end{tikzpicture}
\end{equation*}

%\begin{center}
%\includegraphics[scale=0.3]{seproblem}
%\end{center}

Looking at the picture above, Williams' Problem concerns the top arrow. The picture describes how the problem can be broken down into a few parts: an `algebra' part ($\circled{2}$ in the picture), and two `order' parts ($\circled{1}$
and $\circled{3}$ in the picture). In key cases, the answer to $\circled{1}$ is yes for a fundamental subclass of matrices over $\calR_{+}$. Recall, for $\calR \subset \mathbb{R}$, a matrix $A$ is primitive if there exists $k$ such that $A^{k}$ has all positive entries. Then as shown in Proposition \ref{sePrimitive} in Lecture 2, we have:
\begin{thm}
Suppose $\calR \subset \mathbb{R}$ and $\calR_{+} = \calR \cap \mathbb{R}_{+}$. If $A$ and $B$ are primitive matrices over $\calR$, then $A$ and $B$ are SE-$\calR$ if and only if they are shift equivalent over $\calR_{+}$.
\end{thm}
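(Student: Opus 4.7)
The reverse direction is immediate from $\calR_+ \subset \calR$, so the task is to prove SE-$\calR \Rightarrow$ SE-$\calR_+$ under the primitivity hypothesis. Given a shift equivalence $(R,S)$ of lag $\ell$ from $A$ to $B$ over $\calR$, my plan is to construct a new shift equivalence over $\calR_+$ by boosting $R$ and $S$ with a large power of $A$ and invoking the Perron limit theorem. Specifically, set $R' = A^n R$ and $S' = S A^n$. Using $AR=RB$ and $SA=BS$, one checks directly that
\[
R'S' = A^{\ell+2n}, \qquad S'R' = B^{\ell+2n}, \qquad AR' = R'B, \qquad S'A = BS',
\]
so $(R',S')$ is a shift equivalence of lag $\ell+2n$ over $\calR$. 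The only issue remaining is the entrywise positivity of $R'$ and $S'$.

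Here primitivity enters. Since $A,B$ are SE over $\calR \subset \R$, they are also SE over $\R$ and share the same nonzero spectrum; in particular they have a common Perron value $\lambda$. Fix positive left and right Perron eigenvectors $\ell_A, r_A$ for $A$ and $\ell_B, r_B$ for $B$. From $AR=RB$ one reads off $(\ell_A R)B = \lambda(\ell_A R)$, so $\ell_A R$ is a left $\lambda$-eigenvector of $B$; by simplicity of the Perron eigenspace for $B$ (Theorem \ref{PerronTheorem}), $\ell_A R = c\,\ell_B$ for some scalar $c \in \R$. Dually, $S r_A = d\, r_B$ for some $d \in \R$.

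The crucial point, and the real obstacle, is arranging $c, d > 0$. Neither can vanish: if $\ell_A R = 0$, then $\ell_A A^\ell = \ell_A RS = 0$, contradicting $\ell_A A^\ell = \lambda^\ell \ell_A \neq 0$, and the analogous computation on $R(Sr_A) = \lambda^\ell r_A$ rules out $Sr_A = 0$. Multiplying the two eigenvector identities together gives
\[
cd\,(\ell_B r_B) \;=\; (\ell_A R)(S r_A) \;=\; \ell_A(RS)r_A \;=\; \lambda^\ell(\ell_A r_A),
\]
and positivity of $\ell_A r_A$ and $\ell_B r_B$ forces $cd > 0$. Thus $c$ and $d$ have the same sign, and replacing $(R,S)$ by $(-R,-S)$ if necessary (this preserves all four SE-$\calR$ relations and flips both scalars simultaneously) allows me to assume $c, d > 0$.

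Finally, the Perron limit theorem for primitive matrices gives $\lambda^{-n} A^n \to r_A \ell_A$ entrywise, so
\[
\lambda^{-n} R' \;\longrightarrow\; c\, r_A \ell_B, \qquad \lambda^{-n} S' \;\longrightarrow\; d\, r_B \ell_A,
\]
and both limits are strictly positive real matrices once $c,d > 0$, since the four Perron eigenvectors are strictly positive. Hence for all sufficiently large $n$, every entry of $R'$ and $S'$ is a strictly positive real number; these entries lie in $\calR$, hence in $\calR \cap \R_+ = \calR_+$, producing the desired SE-$\calR_+$ of lag $\ell + 2n$. The only subtle step is the simultaneous positivity of $c$ and $d$; the rest is bookkeeping plus Perron dominance.
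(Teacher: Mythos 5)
Your proof is correct and follows essentially the same route the paper sketches in Proposition \ref{sePrimitive}: boost $(R,S)$ to $(A^nR, SA^n)$, use the Perron Theorem to identify $\ell_A R$ and $Sr_A$ as Perron eigenvectors of $B$ up to scalars $c,d$ with $cd>0$, flip signs if necessary, and conclude positivity for large $n$ from Perron dominance. You have simply filled in the details the paper defers to Lind--Marcus, and the sign argument via $cd\,(\ell_B r_B)=\lambda^{\ell}(\ell_A r_A)$ is exactly the standard one.
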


This result says that, when the ring is a subring of $\mathbb{R}$, we can reduce the question of SE-$\calR_{+}$ of primitive matrices to the purely algebraic question of SE-$\calR$.\\

Part $\circled{2}$ is the main topic of this and the next lecture. Part $\circled{3}$, in the case of $\calR = \mathbb{Z}_{+}$, we will discuss in Lecture 4, and contains the remaining core of Williams' Problem.

\subsection{The algebraic shift equivalence\si{shift equivalence} problem\si{Algebraic shift equivalence problem}}

We consider now $\circled{2}$, which we can restate as:

\begin{prob}[Algebraic Shift Equivalence\si{shift equivalence} Problem\si{Algebraic shift equivalence problem},
    \cite{Wagoner99}]\label{algseproblem}\ai{Wagoner, J.B.}
Let $\calR$ be a ring and $A,B$ be square matrices over $\calR$. If $A$ and $B$ are shift equivalent over $\calR$, must $A$ and $B$ be strong shift equivalent over $\calR$?
%\begin{equation}\label{algseproblem}
%\se{A}{B} \Longrightarrow \sse{A}{B}?
%\end{equation}
\end{prob}

Williams\ai{Williams, R.F.} gave an argument in \cite[Lemma 4.6]{Williams1970} (which needed an additional step, later given in \cite{Williams1992}) showing that, when $\calR = \mathbb{Z}$, the answer to Problem \ref{algseproblem} is yes. Effros also gave a similar argument, in an unpublished work, in the case $\calR = \mathbb{Z}$ , and it was observed in \cite{BH93} that both arguments work in the case $\calR$ is a principal ideal domain. It was then shown by Boyle and Handelman\ai{Handelman, David} \cite{BH93} that the answer to Problem \ref{algseproblem} is also yes when $\calR$ is a Dedekind domain. The Boyle-Handelman paper \cite{BH93} was published in 1993, and after that point no further progress was made; in fact, it was still not known whether the answer to Problem \ref{algseproblem} might be yes for {\it every} ring. Now, from recent work \cite{BoSc1}, we know the answer to Problem \ref{algseproblem} is not always yes, and we have a pretty satisfactory characterization (Corollary \ref{cor:nk1vanishandsse}) of the rings $\calR$ for which the relations SE-$\calR$ and SSE-$\calR$ are the same. It turns out to depend on some K-theoretic properties of the ring $\calR$ in question, and we'll spend the remainder of the lecture discussing how this works.\\

In short, the answer to Problem \ref{algseproblem} turns out to depend on the group $NK_{1}(\calR)$. Before getting into the precise statements, recall from Proposition \ref{prop:MallerShub} in Lecture 2 that SSE-$\calR$ is the relation generated by similarity and extensions by zero. Since the direct limit module associated to a nilpotent matrix is clearly trivial, it is reasonable to suspect that determining the strong shift equivalence\si{strong shift equivalence} classes of nilpotent matrices is connected to determining which nilpotent matrices over the ring can be obtained from the zero matrix (up to similarity) by extensions by zero. In fact this is the case, and the question of which nilpotent matrices over the ring can be obtained from the zero matrix (up to similarity) by extensions by zero turns out to be governed by $Nil_{0}(\calR)$.\\

Fix now a ring $\calR$. For a matrix $A$ over $\calR$, we let $[A]_{sse}$, $[A]_{se}$ denote the strong shift equivalence\si{strong shift equivalence} (respectively shift equivalence\si{shift equivalence}) class of $A$ over $\calR$ (we suppress the $\calR$ in the notation, as it is cumbersome). We define the following sets
\begin{gather*}
SSE(\calR) = \{[A]_{sse} \mid A \textnormal{ is a square matrix over } \calR\}\\
SE(\calR) = \{[A]_{se} \mid A \textnormal{ is a square matrix over } \calR\}.
\end{gather*}

Since matrices which are strong shift equivalent over $\calR$ must be shift equivalent over $\calR$, there is a well-defined map of sets
\begin{equation}\label{eqn:ssetosemap}
\begin{gathered}
\pi \colon SSE(\calR) \to SE(\calR)\\
\pi \colon [A]_{sse} \mapsto [A]_{se}.
\end{gathered}
\end{equation}

Problem \ref{algseproblem} is equivalent to determining whether $\pi$ is injective. We'll discuss when this happens, and in fact, we will do much more: we will describe the fiber over a class $[A]_{se}$ in terms of some K-theoretic data involving $NK_{1}(\calR)$.\\

\subsection{Strong shift equivalence\si{strong shift equivalence} and elementary equivalence\si{elementary equivalence}}

From here on, we identify a square matrix $M$ over $\calR[t]$ with its class in the stabilization of matrices given by
\begin{gather*}
M_{n}(\calR[t]) \hookrightarrow M_{n+1}(\calR[t])\\
M \mapsto \begin{pmatrix} M & 0 \\ 0 & 1 \end{pmatrix}.
\end{gather*}

\begin{de}
Let $\calR$ be a ring. We say square matrices $M, N$ over $\calR[t]$ are {\it elementary equivalent over $\calR[t]$}\si{elementary equivalence}, denoted $\eleq{M}{N}$, if there exist $E,F \in El(\calR[t])$ such that
$$EMF = N.$$
\end{de}

Note that, as when we first met the definition of $K_{1}$ of a ring, matrices $M,N$ over $\calR[t]$ are elementary equivalent\si{elementary equivalence} over $\calR[t]$ if and only if they (after stabilizing!) can be transformed into each other through a sequence of elementary row and column operations.\\

\begin{rem}
Given square matrices $M,N$ over $\calR[t]$, it may be tempting to ask why we don't just define $\eleq{M}{N}$ if and only if $[M] = [N]$ in $K_{1}(\calR[t])$, but this doesn't make sense: since $M,N$ may not be invertible over $\calR[t]$, we can't consider their class in $K_{1}(\calR[t])$.
\end{rem}

The following is one of the key results for studying strong shift equivalence\si{strong shift equivalence} over a ring $\calR$.

\begin{thm}[{\cite[Theorem 7.2]{BoSc1}}]\label{thm:sseeleeq}
Let $\calR$ be a ring. For any square matrices $A,B$ over $\calR$, we have
$$\sse{A}{B} \hspace{.1in} \textnormal{ if and only if } \hspace{.1in} \eleq{I-tA}{I-tB}.$$
\end{thm}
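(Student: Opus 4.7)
My plan for the forward direction is to reduce by transitivity to a single elementary strong shift equivalence $A = RS$, $B = SR$, and then invoke the PSSE equations from Lecture 3 essentially verbatim. Those equations exhibit a four-step chain of elementary-matrix multiplications -- by block matrices such as $\left(\begin{smallmatrix} I & R \\ 0 & I \end{smallmatrix}\right)$, $\left(\begin{smallmatrix} I & 0 \\ tS & I \end{smallmatrix}\right)$, and their inverses -- taking $(I-tRS)\oplus I$ to $I \oplus (I-tSR)$ through intermediate matrices that are in $M(\calR[t])$ but not of the form $I-tC$ with $C$ over $\calR$. After stabilization this is precisely $\eleq{I-tA}{I-tB}$, and composing such chains handles general SSE.

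For the reverse direction, the plan is to peel off the elementary equivalence one layer at a time. Given $E(I-tA)F = I - tB$ with $E,F \in El(\calR[t])$, I would first evaluate at $t=0$ to obtain $E(0)F(0) = I$; then set $E_0 = E(0) \in El(\calR)$ and replace $A$ by the similar matrix $\tilde A := E_0 A E_0^{-1}$, which is an SSE move. Setting $U = E E_0^{-1}$ and $V = E_0 F$ yields $U(I - t\tilde A) V = I - tB$ with $U,V \in El(\calR[t],(t))$, the kernel of the evaluation-at-zero map. So it suffices to treat this ``base-point'' case.

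The next planned step is to invoke the standard fact that $El(\calR[t],(t))$ is generated by elementary matrices $e_{ij}(tp(t))$, which I would derive by decomposing each basic factor as $e_{ij}(p(t)) = e_{ij}(p(0))\, e_{ij}(tq(t))$ and then using Steinberg commutator identities to migrate all constant-term pieces to one side, where the condition $U(0) = I$ forces them to cancel. A further application of the Steinberg identity $[e_{il}(t^{a}c_1), e_{lj}(t^{b}c_2)] = e_{ij}(t^{a+b}c_1c_2)$ in a stabilized setting should then let me reduce to the case that $U$ and $V$ are products of factors of the \emph{linear} form $e_{ij}(tc)$ with $c \in \calR$.

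The hard part, and the main obstacle, will be matching this refined factorization with an explicit chain of SSE moves connecting $\tilde A$ to $B$. Here I would use the Maller-Shub characterization (Proposition \ref{prop:MallerShub}): SSE over $\calR$ is generated by similarities and zero extensions, and each such generator corresponds to a small PSSE-pattern of elementary multiplications. The difficulty is that the intermediate products $U_k \cdots U_1 (I-t\tilde A) V_1 \cdots V_m$ will generally \emph{not} be of the form $I - tC$ with $C$ over $\calR$; so the plan is to regroup the linear factors $e_{ij}(tc)$ -- using additional Steinberg manipulations and well-chosen stabilizations -- into blocks, each block ``closing up'' to a single similarity or Maller-Shub zero extension of matrices over $\calR$. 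The technical heart of the argument will be showing that this block-reorganization is always achievable, so that every elementary equivalence of $I-t\tilde A$ and $I-tB$ in the stabilized setting arises as a concatenation of the PSSE patterns coming from ESSEs.
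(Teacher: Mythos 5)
Your forward direction is correct and is exactly the route the survey makes available: reduce by transitivity to a single ESSE $A=RS$, $B=SR$, and apply the PSSE equations of Lecture 3, which are identities over any ring and exhibit $(I-tA)\oplus I$ and $I\oplus (I-tB)$ as elementary equivalent after stabilization (the intermediate matrices need not have the form $I-tC$ with $C$ over $\calR$, which is harmless since elementary equivalence places no constraint on intermediates). Your opening reduction of the converse is also sound: evaluating at $t=0$ gives $F(0)=E(0)^{-1}$ in $El(\calR)$, and conjugating $A$ by $E(0)$ is a similarity, hence an SSE move, leaving the base-point case $U(I-t\tilde{A})V=I-tB$ with $U(0)=V(0)=I$. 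Note, however, that the survey does not prove this theorem; it quotes it from \cite[Theorem 7.2]{BoSc1} and only supplies the machinery for the easy implication, so the converse is precisely the content being cited.

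The converse as you present it has a genuine gap, and you name it yourself: ``showing that this block-reorganization is always achievable'' is not a deferrable technicality, it is the entire theorem. Nothing in your outline explains why a word in the linear generators $e_{ij}(tc)$ carrying $I-t\tilde{A}$ to $I-tB$ must admit a parsing into subwords each of which closes up to a similarity or a Maller--Shub zero extension; a priori the partial products $U_k\cdots U_1(I-t\tilde{A})V_1\cdots V_m$ are matrices in $GL(\calR[t])$ of no special shape, and you exhibit no invariant or normal form forcing the path to pass near matrices of the form $I-tC$ with $C$ over $\calR$. There are also unproved claims earlier in the reduction: the kernel of the evaluation $El(\calR[t])\to El(\calR)$ is, on its face, the normal closure of the matrices $e_{ij}(tp(t))$ under $El(\calR)$-conjugation rather than the group they generate, so your ``migration of constant-term pieces'' produces conjugates $g\,e_{ij}(tp(t))\,g^{-1}$ that must themselves be re-expressed before the Steinberg degree-lowering step $e_{ij}(t^{a+b}c_1c_2)=[e_{il}(t^{a}c_1),e_{lj}(t^{b}c_2)]$ can be applied, and this re-expression feeds back into the very block-regrouping you have not carried out. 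As it stands the proposal proves the easy implication and restates, rather than proves, the hard one.
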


To see how this fits into the endomorphism $\leftrightarrow$ polynomial philosophy, consider a finitely generated free $\calR$-module $P$ and an endomorphism $f \colon P \to P$ (one may allow more generally $P$ to be finitely generated projective; see \apr{apprem:projectivetofree})). The endomorphism $f$ gives $P$ the structure of an $\calR[t]$-module with $t$ acting by $f$, and the similarity class of $f$ (over $\calR$) corresponds to the isomorphism class of the $\calR[t]$-module. The direct limit $\calR[t,t^{-1}]$-module $\mathcal{M}_{f} = \varinjlim \{P,v \mapsto f(v)\}$ is isomorphic as an $\calR[t,t^{-1}]$-module to $P \otimes_{\calR[t]} \calR[t,t^{-1}]$, and it follows that passing from the similarity class of $f$ to the shift equivalence\si{shift equivalence} class of $f$ is, in the polynomial world, the same as `localizing at $t$' (note that, besides here, our convention for the direct limit modules is that $t^{-1}$ acts by $f$). On the endomorphism side, the strong shift equivalence\si{strong shift equivalence} relation lies between the similarity relation and the shift equivalence\si{shift equivalence} relation, and Theorem \ref{thm:sseeleeq} tells us the meaning of the strong shift equivalence\si{strong shift equivalence} relation in the polynomial world.\\
\indent We can summarize the above in the following chart \apr{apprem:endtopolyeq}), where $A_{f}$ denotes a matrix over $\calR$ representing $f \colon P \to P$ in a chosen basis for $P$ as a free $\calR$-module:\\

%\begin{minipage}[b]{0.5\textwidth}
%\centering
%\begin{center}
%\begin{tikzpicture}[scale=0.75]
%\node[draw] at (-7,2) {Endomorphisms over $\calR$};
%\node[draw] at (-1,2) {Relation on matrices over $\calR[t]$};
%\node[draw] at (4,2) {$\calR[t]$-module relation};
%\node at (-6,0) {Similarity class of $(P,f)$};
%\node[align=center] at (4,0) {Isomorphism class of the $\calR[t]$-module $P$};
%\node at (-6,-2) {SSE-$\calR$ class $(P,f)$};
%\node[align=center] at (4,-2) {$El$-$\calR[t]$-equivalence class of $1-tf$};
%\node at (-6,-4) {SE-$\calR$ class of $(P,f)$};
%\node[align=center] at (4,-4) {Isomorphism class of the $\calR[t,t^{-1}]$-module\\$P \otimes \calR[t,t^{-1}]$};
%\draw[<->, line width=0.5mm] (-2.5,0) -- (-1.5,0);
%\draw[<->, line width=0.5mm] (-3.2,-2) -- (-0.8,-2);
%\draw[<->, line width=0.5mm] (-3,-4) -- (-1.7,-4);
%\node[draw,text width=4cm] at (2,-2) {some text spanning three lines with automatic line breaks};
%\end{tikzpicture}
%\end{center}
%\captionsetup{font=footnotesize}
%\captionof*{figure}{1b.2 (i)}
%\end{minipage}

\begin{center}
    \begin{tabular}{ c | c | c }
    \hline
     & & \\
 Endomorphisms
     over $\calR$ \hspace{.05in} & \hspace{.05in} $\calR[t]$-endomorphism relation \hspace{.05in} & \hspace{.05in} $\calR[t]$-module
       relation \hspace{.05in}\\
       & & \\ \hline
            & & \\
    Similarity class of $A_{f}$ & $Gl$-$\calR[t]$-conjugacy class & Isomorphism class of \\
         & of $t-A_{f}$ & the $\calR[t]$-module $P$ \\
              & & \\
    SSE-$\calR$ class of $A_{f}$ & $El$-$\calR[t]$-equivalence class & ?? \\
         &  of $1-tA_{f}$  & \\
              & & \\
    SE-$\calR$ class of $A_{f}$ & $Gl$-$\calR[t]$-equivalence class  & Isomorphism class of\\
         & of $1-tA_{f}$ & the $\calR[t,t^{-1}]$-module\\
         & &  $P \otimes \calR[t,t^{-1}]$\\ \hline
    \end{tabular}
\end{center}

%\begin{center}
%\includegraphics[scale=0.9]{table1.jpg}
%\end{center}

%\begin{tabular}{l c r}
%Similarity class of $(P,f)$ & $\leftrightarrow$ & Isomorphism class of the $\calR[t]$-module $P$\\
%SE class of $(P,f)$ & $\leftrightarrow$ & Isomorphism class of the $\calR[t,t^{-1}]$-module $P \otimes \mathcal{R}[t,t^{-1}]$.\\
%SSE class of $(P,f)$ & $\leftrightarrow$ & EL-$\calR[t]$-equivalence class of $1-tf$.
%\end{tabular}\\

In the chart, $GL-\calR[t]$-equivalence of (stabilized) matrices $C$ and $D$ means there exists $U,V \in GL(\calR[t])$ such that $UCV = D$. The ?? entry indicates that we do not have a good intrinsic interpretation of SSE-$\calR$ at the $\calR[t]$-module level.\\

Theorem \ref{thm:sseeleeq} determines the algebraic relation in the polynomial world corresponding to SSE-$\calR$. It also gives another idea of how strong shift equivalence\si{strong shift equivalence} arises algebraically in a natural way. Recall for $A$ and $B$ invertible over $\calR[t]$ we have
$$[A] = [B] \textnormal{ in } K_{1}(\calR[t]) \textnormal{ if and only if }
A \begin{tiny} \reallywidesim{\hspace{.03in}  El-$\mathcal{R}[t]$ } \end{tiny} B.$$
In light of Theorem \ref{thm:sseeleeq}, if one tries to naively extend the K-theory of $\calR[t]$ to not necessarily invertible matrices over $\calR[t]$, then strong shift equivalence\si{strong shift equivalence} naturally appears.\\

See \apr{apprem:endo0intable}) for a discussion of how $\det(1-tf)$ fits into the above table in the case $\calR$ is commutative.\\

As a nice corollary of Theorems \ref{thm:sseeleeq} and \ref{thm:nil0nk1iso}, we have the following:

\begin{coro}\label{cor:nilsse}
Let $\calR$ be a ring, and let $N$ be a nilpotent matrix over $\calR$. Then
$$0 = [N] \textnormal{ in } Nil_{0}(\calR) \hspace{.11in} \textnormal{ if and only if } \hspace{.11in} [N]_{sse} = [0]_{sse}.$$
\end{coro}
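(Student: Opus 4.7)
The plan is to deduce this as a direct chain of equivalences from the two main theorems already proved in this lecture: Theorem \ref{thm:sseeleeq} (which identifies SSE-$\calR$ with elementary equivalence of $I-tA$ matrices over $\calR[t]$) and Theorem \ref{thm:nil0nk1iso} (the isomorphism $\Psi\colon Nil_0(\calR) \to NK_1(\calR)$, $[N] \mapsto [I-tN]$). There is no single hard step; the content is entirely in chaining the two theorems and making one minor K-theoretic observation.

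First, I would rewrite the SSE side via Theorem \ref{thm:sseeleeq}: the statement $[N]_{sse} = [0]_{sse}$ is equivalent to $\eleq{I-tN}{I-t\cdot 0}$, and since $I - t\cdot 0$ is an identity matrix (of some size), this says exactly that there exist $E,F \in El(\calR[t])$ with $E(I-tN)F = I$ after stabilization. Equivalently, $I-tN \in El(\calR[t])$ once we stabilize, which (by the definition of $K_1$ in Section \ref{k1ofring} and Whitehead's Theorem \ref{thm:whitehead}) is the same as the vanishing $[I-tN] = 0$ in $K_1(\calR[t])$.

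Second, I would make the obvious observation that $(I-tN)\big|_{t=0} = I$, so $[I-tN]$ automatically lies in the subgroup $NK_1(\calR) = \ker\bigl((ev_0)_*\bigr) \subset K_1(\calR[t])$. Consequently, the vanishing $[I-tN] = 0$ in $K_1(\calR[t])$ is literally the same assertion as the vanishing $[I-tN] = 0$ in $NK_1(\calR)$.

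Finally, I would invoke Theorem \ref{thm:nil0nk1iso}: since $\Psi\colon Nil_0(\calR) \to NK_1(\calR)$ is an isomorphism sending $[N]$ to $[I-tN]$, the vanishing $[I-tN] = 0$ in $NK_1(\calR)$ is equivalent to $[N] = 0$ in $Nil_0(\calR)$. Chaining these three equivalences yields the corollary. The only bookkeeping point worth noting in the write-up is that both sides of the claim are stable statements (SSE-$\calR$ allows matrices of different sizes, and elementary equivalence is taken in the stabilized sense), so sizes match up without further work.
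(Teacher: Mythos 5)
Your proof is correct and is essentially the paper's own argument: the paper states this result precisely as "a nice corollary of Theorems \ref{thm:sseeleeq} and \ref{thm:nil0nk1iso}" with no further proof given, and your chain (SSE to the zero matrix $\Leftrightarrow$ $I-tN$ elementary equivalent to $I$ $\Leftrightarrow$ $[I-tN]=0$ in $K_1(\calR[t])$, which equals vanishing in $NK_1(\calR)$ since $(I-tN)|_{t=0}=I$, $\Leftrightarrow$ $[N]=0$ in $Nil_0(\calR)$ via $\Psi$) is exactly the intended deduction. The bookkeeping remarks about invertibility of $I-tN$ and stabilization are the right points to flag, and nothing is missing.
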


In other words, a nilpotent matrix is strong shift equivalent over $\calR$ to the zero matrix if and only if its class in $Nil_{0}(\calR)$ is trivial. One can use this together with Theorem \ref{thm:sseeleeq} to show that the map defined in \eqref{eqn:nk1nil0} is injective.

\subsection{The refinement of shift equivalence\si{shift equivalence} over a ring by strong shift equivalence\si{strong shift equivalence}}

Theorem \ref{thm:sseeleeq} gives us a key tool to understand the refinement of shift equivalence\si{shift equivalence} by strong shift equivalence\si{strong shift equivalence} over $\calR$, obtaining a description of the fibers of the map $\pi$ above. We do this as follows.\\

In light of Corollary \ref{cor:nilsse} above, there is a well-defined action $\mathfrak{N}$ of the group $Nil_{0}(\calR)$ on the set $SSE(\calR)$ by
$$\mathfrak{N}([N]) \colon [A]_{sse} \mapsto \left[ \begin{pmatrix} A & 0 \\ 0 & N \end{pmatrix} \right]_{sse}, \hspace{.11in} [N] \in Nil_{0}(\calR), \hspace{.11in} [A] \in SSE(\calR).$$

The following gives a description of the fibers of the map
$$\pi \colon SSE(\calR) \to SE(\calR)$$
defined in \eqref{eqn:ssetosemap}.

\begin{thm}[{\cite[Theorem 6.6]{BoSc1}}]\label{thm:ssesefibers}
Let $\calR$ be a ring, and let $A$ be a square matrix over $\calR$. There is a bijection
\begin{gather*}
\pi^{-1}([A]_{se}) \stackrel{\cong}\longrightarrow \mathfrak{N}\textnormal{-orbit of } [A]_{sse}.
\end{gather*}
In other words, there is a bijection between the set of strong shift equivalence\si{shift equivalence}\si{strong shift equivalence} classes of matrices which are shift equivalent to $A$, and the orbit of $[A]_{sse}$ under the action of $Nil_{0}(\calR)$.
\end{thm}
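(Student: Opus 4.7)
The plan is to prove the equality of subsets
\[
\pi^{-1}([A]_{se})\ =\ \mathfrak{N}\text{-orbit of }[A]_{sse}
\]
inside $SSE(\calR)$; the claimed bijection is then the identity. The inclusion \emph{orbit} $\subseteq$ \emph{fiber} is the easy direction: for any nilpotent $N$ over $\calR$, the matrix $I-t(A\oplus N)=(I-tA)\oplus(I-tN)$ has $(I-tN)\in GL(\calR[t])$, and the Whitehead block $\mathrm{diag}(I-tN,(I-tN)^{-1})\in El(\calR[t])$ shows, after stabilization, that $I-tA$ and $I-t(A\oplus N)$ are $GL$-$\calR[t]$-equivalent. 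By the polynomial characterization of SE-$\calR$ recorded in the chart preceding the theorem, $A$ and $A\oplus N$ are SE-$\calR$, so $[A\oplus N]_{sse}\in\pi^{-1}([A]_{se})$.

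For the reverse inclusion, suppose $B$ is SE-$\calR$ to $A$, so by the same polynomial characterization there exist $U,V\in GL(\calR[t])$ (after suitable stabilization) with $U(I-tA)V=I-tB$. Evaluating at $t=0$ forces $V(0)=U(0)^{-1}$. Set $A'=U(0)AU(0)^{-1}$, which is similar to $A$ and therefore SSE-$\calR$ to $A$; rewriting gives $U'(I-tA')V'=I-tB$ with $U'=U\,U(0)^{-1}$ and $V'=U(0)V$, both satisfying $U'(0)=V'(0)=I$. Hence $[U'],[V']\in NK_1(\calR)$, so Higman's Trick (Theorem \ref{lemma:higmantrick}) yields nilpotent matrices $N_U,N_V$ over $\calR$ with $U'\sim_{El} I-tN_U$ and $V'\sim_{El} I-tN_V$. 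Therefore
\[
I-tB\ \sim_{El\text{-}\calR[t]}\ (I-tN_U)(I-tA')(I-tN_V).
\]

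The heart of the argument is to show the right-hand side is $El$-$\calR[t]$-equivalent, after stabilization, to $I-t(A'\oplus N_U\oplus N_V)$. For any $Y\in GL(\calR[t])$, the Whitehead block $\mathrm{diag}(Y^{-1},Y)$ lies in $El(\calR[t])$, giving for any $X$ the moves
\[
\mathrm{diag}(X,I)\ \sim_{El}\ \mathrm{diag}(XY^{-1},Y),\qquad \mathrm{diag}(X,I)\ \sim_{El}\ \mathrm{diag}(Y^{-1}X,Y).
\]
Applying the first move to $X=(I-tN_U)(I-tA')(I-tN_V)$ with $Y=I-tN_V$ peels the right factor off into its own diagonal slot; stabilizing by another identity block and applying the second move to $X=(I-tN_U)(I-tA')$ with $Y=I-tN_U$ peels off the remaining factor, producing $\mathrm{diag}(I-tA',I-tN_V,I-tN_U)$. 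Theorem \ref{thm:sseeleeq} then converts this $El$-$\calR[t]$-equivalence back into an SSE-$\calR$: $B$ is SSE-$\calR$ to $A'\oplus N_U\oplus N_V$, hence to $A\oplus N$ with $N=N_U\oplus N_V$ nilpotent, placing $[B]_{sse}$ in the $\mathfrak{N}$-orbit of $[A]_{sse}$.

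The main obstacle is that every manipulation in Step~3 must stay inside $El(\calR[t])$, whereas the naive approach of diagonalizing the product would require $(I-tA')^{-1}$, which is not a polynomial. The rescue is precisely the nilpotence of $N_U$ and $N_V$: because $(I-tN_U)^{-1}$ and $(I-tN_V)^{-1}$ are honest polynomials, the Whitehead blocks $\mathrm{diag}(Y^{-1},Y)$ used above are legitimate elements of $El(\calR[t])$, and no passage to a localization is required. In this way the nilpotent matrices produced by Higman's Trick exactly measure the failure of the SE-$\calR$ class of $A$ to collapse onto its SSE-$\calR$ class, yielding the bijection with the $\mathfrak{N}$-orbit.
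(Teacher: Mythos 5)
Your argument is correct, and it is essentially the proof of \cite[Theorem 6.6]{BoSc1}; the survey gives no proof of this statement, only the citation, and the route you take --- establishing set equality of the fiber and the orbit, with the hard inclusion obtained by normalizing a stabilized $GL(\calR[t])$-equivalence $U(I-tA)V=I-tB$ at $t=0$ so that $[U'],[V']\in NK_{1}(\calR)$, applying Higman's Trick (Theorem \ref{lemma:higmantrick}), peeling off the factors $I-tN_{U}$, $I-tN_{V}$ with Whitehead blocks, and translating back through Theorem \ref{thm:sseeleeq} --- is the intended one. Two remarks. First, everything hinges on the chart's assertion that SE-$\calR$ of $A,B$ is equivalent to stabilized $GL(\calR[t])$-equivalence of $I-tA$ and $I-tB$; this is a genuine theorem of \cite{BoSc1} proved there independently of the fibers theorem, so your use of it is legitimate and non-circular, but note that the tempting alternative of routing the hard inclusion through Theorem \ref{thm:endorelations}(2) would be circular, since its proof in this survey invokes the very statement you are proving. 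Second, when you pass from $[U']=[I-tN_{U}]$ and $[V']=[I-tN_{V}]$ in $K_{1}(\calR[t])$ to the displayed $El$-equivalence, you should record that the elementary discrepancies can be placed on the outside of the triple product --- write $U'=E_{U}(I-tN_{U})$ and $V'=(I-tN_{V})E_{V}$ --- since an elementary factor trapped between $I-tN_{U}$ and $I-tA'$ could not simply be discarded; this one-sided normalization is available because $El(\calR[t])$ is normal in $GL(\calR[t])$, so equality of $K_{1}$ classes gives a single elementary factor on whichever side you choose. With that said, the Whitehead peeling itself is carried out correctly, and your closing observation correctly identifies nilpotence of $N_{U},N_{V}$ (hence invertibility of $I-tN_{U}$, $I-tN_{V}$ over $\calR[t]$ itself) as the reason the blocks $\mathrm{diag}(Y^{-1},Y)$ live in $El(\calR[t])$ without any localization.
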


As a corollary, we get the following.

\begin{coro}\label{cor:nk1vanishandsse}
Let $\calR$ be a ring. Then $NK_{1}(\calR) = 0$ if and only if, for all square matrices $A,B$ over $\calR$,
$$\se{A}{B} \textnormal{ if and only if } \sse{A}{B}.$$
\end{coro}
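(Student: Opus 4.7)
The plan is to deduce Corollary \ref{cor:nk1vanishandsse} directly from Theorem \ref{thm:ssesefibers}, combined with the isomorphism $Nil_0(\calR) \cong NK_1(\calR)$ of Theorem \ref{thm:nil0nk1iso} and the rigidity criterion of Corollary \ref{cor:nilsse}. All the substantial work has already been done in those earlier results; the corollary is essentially a packaging statement.

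For the forward direction, I would assume $NK_1(\calR) = 0$, which via Theorem \ref{thm:nil0nk1iso} gives $Nil_0(\calR) = 0$. Then the group acting in the definition of $\mathfrak{N}$ is trivial, so every $\mathfrak{N}$-orbit in $SSE(\calR)$ is a single point. Theorem \ref{thm:ssesefibers} identifies each fiber $\pi^{-1}([A]_{se})$ with the $\mathfrak{N}$-orbit of $[A]_{sse}$, so every fiber of $\pi$ is a singleton. This means $\pi$ is injective, i.e. $\se{A}{B}$ implies $\sse{A}{B}$; the reverse implication is always true.

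For the converse, I would assume $\pi$ is injective and show $Nil_0(\calR) = 0$. For any nilpotent matrix $N$ over $\calR$, the matrices $N$ and $0$ are shift equivalent over $\calR$ (take $\ell$ with $N^\ell = 0$ and $R = S = 0$, so that the defining equations $N^\ell = RS$, $0 = SR$, $NR = R\cdot 0$, $0\cdot S = SN$ are all satisfied). Injectivity of $\pi$ then forces $\sse{N}{0}$. By Corollary \ref{cor:nilsse} this means $[N] = 0$ in $Nil_0(\calR)$. Since $N$ was arbitrary, $Nil_0(\calR) = 0$, and hence $NK_1(\calR) = 0$ by Theorem \ref{thm:nil0nk1iso}. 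Alternatively, one could argue more uniformly: the hypothesis that $\pi$ is injective makes every $\mathfrak{N}$-orbit a singleton, and specializing the orbit at $[0]_{sse}$ gives $\mathfrak{N}([N])([0]_{sse}) = [N]_{sse} = [0]_{sse}$ for every nilpotent $N$, yielding the same conclusion.

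There is no real obstacle at this stage; the content is entirely loaded into Theorem \ref{thm:ssesefibers}. The only minor point to check carefully is that any nilpotent matrix is shift equivalent to $0$ over $\calR$, which is immediate from the definition. If one wanted a slightly slicker proof of the converse, one could bypass the shift equivalence verification and appeal directly to the freeness and faithfulness encoded in the orbit description at $A = 0$.
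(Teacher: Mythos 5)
Your proposal is correct and follows essentially the same argument as the paper: the forward direction uses Theorem \ref{thm:nil0nk1iso} and Theorem \ref{thm:ssesefibers} to trivialize the fibers of $\pi$, and the converse tests injectivity on a nilpotent matrix versus the zero matrix and invokes Corollary \ref{cor:nilsse}. Your explicit verification that a nilpotent matrix is shift equivalent to $0$ (via $R=S=0$) is a harmless elaboration of a step the paper declares to be clear.
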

\begin{proof}
First suppose $NK_{1}(\calR)=0$. By Theorem \ref{thm:nil0nk1iso}, this implies $Nil_{0}(\calR) = 0$, so the action $\mathfrak{N}$ is trivial. Thus if $A$ is any square matrix over $\calR$, by Theorem \ref{thm:ssesefibers}, the fiber $\pi^{-1}([A]_{se})$ is also trivial. It follows that if $B$ is any square matrix over $\calR$, then $\se{A}{B} \Leftrightarrow \sse{A}{B}$ as desired.\\
\indent Now suppose matrices which are SE-$\calR$ must be SSE-$\calR$. Given $N$ nilpotent over $\calR$, $N$ is clearly shift equivalent over $\calR$ to the zero matrix, and hence by assumption, strong shift equivalent over $\calR$ to the zero matrix. By Corollary \ref{cor:nilsse}, this implies $[N]=0$ in $Nil_{0}(\calR)$. Since $N$ was a general nilpotent matrix, it follows that $Nil_{0}(\calR)=0$.
\end{proof}

So what is the behavior of the action $\mathfrak{N}$? In general, its orbit structure is far from trivial. Given $A$ over $\calR$, define the $\mathfrak{N}$-stabilizer of $[A]_{sse}$ to be
$$\sta_{\mathfrak{N}}(A) = \{[N] \in Nil_{0}(\calR) \mid [A \oplus N]_{sse} = [A]_{sse}\}.$$
Note the $\mathfrak{N}$-stabilizer depends only on the SSE-$\calR$ class of a matrix $A$, but to avoid cumbersome notation, we write simply $\sta_{\mathfrak{N}}(A)$ instead of $\sta_{\mathfrak{N}}([A]_{sse})$.\\

The notation used here differs from what is used in \cite{BoSc1}; see \apr{apprem:elstnotation}).\\

There is a bijection between the $\mathfrak{N}$-orbit of $[A]_{sse}$ and
$Nil_{0}(\calR) / \sta_{\mathfrak{N}}(A)$, the quotient  given by mapping a coset of $[N]$ in $Nil_{0}(\calR) / \sta_{\mathfrak{N}}(A)$ to $[A \oplus N]_{sse}$.\\

For a commutative ring $\calR$, we define
$$SNil_{0}(\calR) = \{[N] \in Nil_{0}(\calR) \mid \deter(I-tN) = 1\}.$$
It is straightforward to check that $SNil_{0}(\calR)$ is a subgroup of $Nil_{0}(\calR)$, and that $SNil_{0}(\calR)$ is precisely the pullback via the isomorphism \eqref{eqn:nk1nil0} of the subgroup $NK_{1}(\calR) \cap SK_{1}(\calR[t])$ in $NK_{1}(\calR)$.

\begin{thm}[{\cite[Theorems 4.7, 5.1]{BoSc1}}]\label{thm:stabilizerthm1}
For any ring $\calR$, both of the following hold:
\begin{enumerate}
\item
If $A$ is nilpotent or invertible over $\calR$, then $\sta_{\mathfrak{N}}(A)$ is trivial.
\item
If $\calR$ is commutative, then
$$\bigcup_{[A]_{sse} \in SSE(\calR)} \sta_{\mathfrak{N}}(A) = SNil_{0}(\calR).$$
\end{enumerate}
\end{thm}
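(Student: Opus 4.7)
The plan is to reduce both parts to cancellation questions in $K_{1}(\calR[t])$ via Theorem \ref{thm:sseeleeq}, which identifies $\sse{A}{B}$ with $\eleq{I-tA}{I-tB}$, together with the isomorphism $Nil_{0}(\calR)\cong NK_{1}(\calR)$ from Theorem \ref{thm:nil0nk1iso} sending $[N]$ to $[I-tN]$. Under these translations, $[N]\in\sta_{\mathfrak{N}}(A)$ becomes the statement that $(I-tA)\oplus(I-tN)\sim_{El(\calR[t])}(I-tA)\oplus I$ after stabilization.

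For Part (1), the nilpotent case is the clean one: when $A$ is nilpotent, $I-tA$ is a unit in $\calR[t]$ with finite inverse $\sum_{k\ge 0}(tA)^{k}$, so the elementary equivalence supplied by $[N]\in\sta_{\mathfrak{N}}(A)$ takes place in $GL(\calR[t])$. Passing to $K_{1}(\calR[t])$ and cancelling $[I-tA]$ yields $[I-tN]=1$, so $[N]=0$ in $Nil_{0}(\calR)$. For the invertible case, $I-tA$ is generally not invertible in $\calR[t]$ so one cannot cancel directly in $K_{1}(\calR[t])$; my plan is to combine Theorem \ref{thm:ssesefibers} with Theorem \ref{thm:endorelations} (SE = SIM + nilpotent extensions). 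The latter identifies any $B$ with $\se{B}{A}$ as SIM-$\calR$ to $A\oplus N'$ for some $[N']\in Nil_{0}(\calR)$, and for invertible $A$ the rigidity of the invertible block should make this $[N']$ uniquely determined by $[B]_{sse}$, so that the $\mathfrak{N}$-orbit of $[A]_{sse}$ is acted on freely by $Nil_{0}(\calR)$. Concretely, I would take an SSE between $A\oplus N$ and $A$, factor each ESSE through the Maller--Shub description (Proposition \ref{prop:MallerShub}) as a similarity followed by a zero extension, and then track the nilpotent part across the chain using that similarities with $A$-blocks invertible force the nilpotent summand to evolve only by zero extensions and similarities of nilpotent matrices, producing an elementary equivalence between $I-tN$ and $I$ in $\calR[t]$.

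For Part (2), the inclusion $\bigcup \sta_{\mathfrak{N}}(A) \subseteq SNil_{0}(\calR)$ is straightforward by taking determinants. Elementary matrices over $\calR[t]$ have determinant $1$, so the equivalence $(I-tA)\oplus(I-tN)\sim_{El}(I-tA)\oplus I$ forces $\det(I-tA)\det(I-tN)=\det(I-tA)$ in the commutative ring $\calR[t]$. Since $\det(I-tA)$ has constant term $1$ it is a non-zero-divisor in $\calR[t]$ (an annihilator of lowest nonvanishing degree would contradict the constant-term comparison), and cancellation yields $\det(I-tN)=1$, i.e., $[N]\in SNil_{0}(\calR)$.

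The reverse inclusion $SNil_{0}(\calR)\subseteq\bigcup\sta_{\mathfrak{N}}(A)$ is, I expect, the main obstacle. Given $[N]\in SNil_{0}(\calR)$, one must exhibit a specific $A$ with $\sse{A\oplus N}{A}$; by Part (1), $A$ can be neither nilpotent nor invertible. The plan is to exploit that $[I-tN]\in SK_{1}(\calR[t])$ to write $I-tN$ (after stabilization) as a product of matrices that, while not elementary, have determinant $1$, and to take $A$ in a block form whose rows and columns, combined with those of $(I-tA)$ in the direct sum $(I-tA)\oplus(I-tN)$, provide the elementary moves over $\calR[t]$ needed to reduce it to $(I-tA)\oplus I$. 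The delicate point is ensuring that every intermediate matrix in this reduction stays in the set $\{I-tC : C \text{ over } \calR\}$, so that Theorem \ref{thm:sseeleeq} applies to produce an actual SSE over $\calR$ rather than merely an elementary equivalence over $\calR[t]$ passing through arbitrary polynomial matrices.
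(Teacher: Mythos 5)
Your treatment of the two easy halves is sound and matches the paper: the nilpotent case of (1) is exactly the paper's argument (both $I-tA$ and $I-t(A\oplus N)$ are units in $\calR[t]$, so Theorem \ref{thm:sseeleeq} places the elementary equivalence inside $GL(\calR[t])$, and cancellation in $K_{1}(\calR[t])$ kills $[I-tN]$ via Theorem \ref{thm:nil0nk1iso}); and your determinant argument for $\bigcup\sta_{\mathfrak{N}}(A)\subseteq SNil_{0}(\calR)$, including the observation that a polynomial with constant term $1$ is a non-zero-divisor in $\calR[t]$, is correct.

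The two remaining halves are genuine gaps, and they are precisely the parts flagged as nontrivial. For the invertible case of (1), your plan is circular at the key step: you appeal to ``the rigidity of the invertible block'' to conclude that the nilpotent summand attached to a class in the fiber is uniquely determined, but that uniqueness is exactly the freeness statement $\sta_{\mathfrak{N}}(A)=0$ you are trying to prove. The concrete fallback --- factoring each ESSE via Proposition \ref{prop:MallerShub} and ``tracking the nilpotent part across the chain'' --- does not work over a general ring: the intermediate matrices of an SSE chain between $A\oplus N$ and $A$ need not admit any invertible-plus-nilpotent splitting (there is no Fitting decomposition over an arbitrary, possibly noncommutative, ring), and the similarities occurring in the chain need not respect a block structure even when one exists. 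The proof in \cite{BoSc1} instead uses localization techniques in algebraic K-theory, and in the noncommutative case the Neeman--Ranicki theory of noncommutative localization; nothing in your sketch substitutes for that. For the reverse inclusion $SNil_{0}(\calR)\subseteq\bigcup\sta_{\mathfrak{N}}(A)$ in (2), you correctly identify the delicate point --- every intermediate matrix of the proposed reduction must remain of the form $I-tC$ with $C$ over $\calR$ so that Theorem \ref{thm:sseeleeq} can be applied --- but you do not resolve it, and no candidate $A$ or block form is actually exhibited; the cited proof rests on Nenashev's presentation of $K_{1}$ of exact categories rather than on a direct matrix construction. As written, both of these halves are statements of intent rather than proofs.
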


\begin{exer}\apr{appex:redsnil0})
If $\calR$ is commutative and reduced (has no nontrivial nilpotent elements), then the groups $SNil_{0}(\calR)$ and $Nil_{0}(\calR)$ coincide.
\end{exer}

The nilpotent case of $(1)$ is straightforward, and is Exercise \ref{exer:nilpotentstavanishexer} below. Both $(2)$ and the invertible case of $(1)$ are nontrivial to prove. Part $(1)$ uses localization and K-theoretic techniques for localization; in the non-commutative case, this requires some deep K-theoretic results of Neeman and Ranicki about non-commutative localization of rings. Part $(2)$ uses work of Nenashev on presentations for $K_{1}$ of exact categories. We will not go into more detail about the structure of these proofs, but instead refer the reader to \cite{BoSc1}.

\begin{exer}\apr{appex:nilstvanish})\label{exer:nilpotentstavanishexer}
If $A$ is a nilpotent matrix over $\calR$ then $\sta_{\mathfrak{N}}(A)$ vanishes.
\end{exer}

There are rings for which $SNil_{0}(\calR)$ does not vanish. For example, the ring $\mathbb{Q}[t^{2},t^{3},z,z^{-1}]$ is commutative and reduced, and has nontrivial $NK_{1}$ (see Example (1) in \ref{exa:nk1example}).\\

By part (2) of the above theorem, it follows that the $\mathfrak{N}$-stabilizers can be nontrivial, and can change depending on the matrix. There is a conjectured analogous version of part $(2)$ in the non-commutative case, which is more technical to state (see \cite[Conjecture 5.20]{BoSc1}). In general, we do not have a complete understanding of the groups $\sta_{\mathfrak{N}}(A)$, and the following problem was posed in \cite[Problem 5.21]{BoSc1}:
\begin{prob}\label{prob:elementarystabl}
Given a square matrix $A$ over $\calR$, give a satisfactory description of the elementary stabilizer $\sta_{\mathfrak{N}}(A)$. In particular, determine when $\sta_{\mathfrak{N}}(A)$ is trivial.
\end{prob}

\subsection{The SE and SSE relations in the context of endomorphisms}
Using the results above, we can now give another view on what the relations SE-$\calR$ and SSE-$\calR$ mean in the context of endomorphisms, and how they fit in with the similarity relation over a ring. Given square matrices $A,B$ over $\calR$, we say
\begin{enumerate}
\item
  $B$ is a zero extension\si{zero extension} of $A$ if there exists some matrix $C$ over $\calR$ such that $B = \left(\begin{smallmatrix} A & C \\ 0 & 0 \end{smallmatrix}\right)$ or $B = \left(\begin{smallmatrix} A & 0 \\ C & 0 \end{smallmatrix}\right)$.
\item
$B$ is a nilpotent extension\si{nilpotent extension} of $A$ if there exists some matrix $C$ over $\calR$ and some nilpotent matrix $N$ over $\calR$ such that $B = \left(\begin{smallmatrix} A & C \\ 0 & N \end{smallmatrix}\right)$ or $B = \left(\begin{smallmatrix} A & 0 \\ C & N \end{smallmatrix}\right)$.
\end{enumerate}

Zero and nilpotent extensions\si{nilpotent extension} fit nicely into the context of the category $\textbf{Nil}\calR$; see \apr{apprem:extsequences}).

%\begin{proposition}
%If $A$ and $B$ are SE-$\calR$, then there exists a nilpotent matrix $N$ over $\calR$ such that $A \oplus N$ and $B$ are SSE-$\calR$.
%\end{proposition}
\begin{thm}\label{thm:endorelations}
Let $\calR$ be a ring.
\begin{enumerate}
\item
SSE-$\calR$ is the equivalence relation on square matrices over $\calR$ generated by:
\begin{enumerate}
\item
Similarity
\item
Zero extensions\si{zero extension}
\end{enumerate}
\item
SE-$\calR$ is the equivalence relation on square matrices over $\calR$ generated by:
\begin{enumerate}
\item
Similarity
\item
Nilpotent extensions\si{nilpotent extension}
\end{enumerate}
\end{enumerate}
\end{thm}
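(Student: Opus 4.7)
Part (1) is exactly the Maller--Shub characterization of SSE-$\calR$ given by Proposition \ref{prop:MallerShub}, so the plan is to prove part (2). Let $\approx$ denote the equivalence relation on square matrices over $\calR$ generated by similarity and nilpotent extensions.

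The first (easy) direction to establish is $A \approx B \Rightarrow \se{A}{B}$. Similarity over $\calR$ gives an ESSE-$\calR$ directly (from $A = U^{-1}BU$ one writes $A = (U^{-1}B)U$ and $B = U(U^{-1}B)$), which in particular implies SE-$\calR$. For an upper-triangular nilpotent extension $B = \begin{pmatrix} A & C \\ 0 & N \end{pmatrix}$ with $N^k = 0$, I would write $B^{\ell} = \begin{pmatrix} A^{\ell} & X_{\ell} \\ 0 & N^{\ell} \end{pmatrix}$ and use the two expressions for $X_{\ell+1}$ coming from $B^{\ell+1} = B^{\ell}\!\cdot\! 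B$ and $B^{\ell+1} = B\!\cdot\! B^{\ell}$, namely $X_{\ell+1} = A^{\ell}C + X_{\ell}N = AX_{\ell} + CN^{\ell}$. For $\ell \geq k$ both $N^{\ell}$ and $CN^{\ell}$ vanish, so the matrices $R = (A^{\ell},\,X_{\ell})$ and $S = \begin{pmatrix} I \\ 0 \end{pmatrix}$ can be checked to satisfy $RS = A^{\ell}$, $SR = B^{\ell}$, $AR = RB$ and $SA = BS$, witnessing a lag-$\ell$ shift equivalence\si{shift equivalence}. The lower-triangular case follows from the symmetric construction with $R = (I,\,0)$ and $S = \begin{pmatrix} A^{\ell} \\ Y_{\ell} \end{pmatrix}$, where $Y_\ell$ is the lower-left block of $B^\ell$.

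For the reverse direction, $\se{A}{B} \Rightarrow A \approx B$, the main tool is Theorem \ref{thm:ssesefibers}. Since $[B]_{se} = [A]_{se}$, that theorem gives a nilpotent matrix $N$ over $\calR$ with $\sse{B}{A \oplus N}$. Part (1) of the present theorem then realizes this SSE-$\calR$ as a chain of similarities and zero extensions; each zero extension is a nilpotent extension (with nilpotent block equal to $0$), so $B \approx A \oplus N$. On the other hand $A \oplus N$ is itself a nilpotent extension of $A$ (with off-diagonal block $C = 0$), so $A \approx A \oplus N$, and transitivity of $\approx$ gives $A \approx B$.

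The main conceptual obstacle is the reverse direction, but it has already been absorbed into the fiber description of Theorem \ref{thm:ssesefibers}: knowing that $Nil_0(\calR)$ parameterizes the SSE-$\calR$ refinement of each SE-$\calR$ class is precisely what is needed to convert an abstract shift equivalence\si{shift equivalence} into an explicit sequence of similarities and nilpotent extensions. Once that theorem is in hand, the argument is essentially bookkeeping together with the explicit lag-$\ell$ computation.
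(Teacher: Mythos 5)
Your proposal is correct and follows essentially the same route as the paper: part (1) is delegated to the Maller--Shub characterization, the easy direction of part (2) is witnessed by the same lag-$\ell$ pair $R=(A^{\ell},\,X_{\ell})$, $S=\left(\begin{smallmatrix} I \\ 0 \end{smallmatrix}\right)$ (the paper writes the same shift equivalence with the roles of $R$ and $S$ interchanged), and the reverse direction is deduced from Theorem \ref{thm:ssesefibers} exactly as in the paper.
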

\begin{proof}
Part $(1)$ is Proposition \ref{prop:MallerShub} from Lecture 2.\\
\indent For part $(2)$, one direction is easy. If $A$ and $B$ are similar, they are certainly SE-$\calR$. To see why $A$ is shift equivalent to $\left(\begin{smallmatrix} A & B \\ 0 & N \end{smallmatrix}\right)$ for any $B$ and $N$ nilpotent, note there exists $l$ such that
$$\begin{pmatrix} A & B \\ 0 & N \end{pmatrix}^{l} = \begin{pmatrix} A^{l} & C \\ 0 & 0 \end{pmatrix}$$
for some $C$. Then $\left(\begin{smallmatrix} A & B \\ 0 & N \end{smallmatrix}\right)$ and $A$ are shift equivalent with lag\si{lag} $l$ using
$$R = \begin{pmatrix} I \\ 0 \end{pmatrix}, \hspace{.1in} S = \begin{pmatrix} A^{l} & C \end{pmatrix}.$$
%(Here is another proof of the second part: $I-t\begin{pmatrix} A & B \\ 0 & N \end{pmatrix}$ is $GL(\calR[t])$-equivalent to $I-t(A \oplus N)$, which is $GL(\calR[t])$-equivalent to $I-tA$, since $I-tN$ is in $GL(\calR[t])$.)\\
For the other direction, suppose $A$ and $B$ are SE-$\calR$. Then the classes $[A]_{sse}, [B]_{sse}$ lie in the same fiber of the map $\pi$, so by Theorem \ref{thm:ssesefibers} above, there exists a nilpotent matrix $N$ over $\calR$ such that $A \oplus N$ is SSE-$\calR$ to $B$. Then $A \oplus N$ and $B$ are connected by a chain of similarities and extensions by zero. Since $A$ and $A \oplus N$ are related by an extension by a nilpotent, the result follows.\\
\indent The proof that $A$ is shift equivalent over $\calR$ to $\left(\begin{smallmatrix} A & 0 \\ B & N \end{smallmatrix}\right)$ for any $B$ and nilpotent $N$ is analogous.
\end{proof}

\subsection{Appendix 6}
This appendix contains some remarks and solutions for exercises for Lecture 6.

\begin{rem}\label{apprem:projectivetofree}
For a ring $\calR$, shift equivalence\si{shift equivalence} and strong shift equivalence\si{strong shift equivalence} may be defined in the context of finitely generated projective $\calR$-modules as follows. If $f \colon P \to P, g \colon Q \to Q$ are endomorphisms of finitely generated projective $\calR$-modules, then:
\begin{enumerate}
\item
$f$ and $g$ are strong shift equivalent (over $\textbf{Proj}\calR$) if there exists module homomorphisms $r \colon P \to Q, s \colon Q \to P$ such that $f=sr, g = rs$.
\item
$f$ and $g$ are shift equivalent (over $\textbf{Proj}\calR$) if there exists module homomorphisms $r \colon P \to Q, s \colon Q \to P$ and $l \ge 1$ such that $f^{l}=sr, g^{l} = rs$.
\end{enumerate}

Suppose now $f \colon P \to P$ is an endomorphism of a finitely generated projective $\calR$-module. There exists a finitely generated projective $\calR$-module $Q$ such that $P \oplus Q$ is free, and $f$ is strong shift equivalent over $\textbf{Proj}\calR$ to $f \oplus 0 \colon P \oplus Q \to P \oplus Q$ using $r \colon P \to P \oplus Q$ given by $r(x) = (x,0)$ and $s \colon P \oplus Q \to P$ given by $s(x,y) = f(x)$. It follows that, when considering strong shift equivalence\si{strong shift equivalence} and shift equivalence\si{shift equivalence} over $\textbf{Proj}\calR$, we may without loss of generality work with free modules.
\end{rem}

\begin{rem}\label{apprem:endtopolyeq}
For an example of the relationship between endomorphisms of $\calR$-modules and certain classes of modules over the polynomial ring $\calR[t]$ worked out more formally, see Theorem 2 in \cite{Grayson1977} and the discussion on page 441 there.
\end{rem}
\begin{rem}\label{apprem:endo0intable}
Given a commutative ring $\calR$ and an endomorphism $f \colon P \to $ of a finitely generated projective $\calR$-module where $\calR$ is commutative, one may add the polynomial $\det(I-tf)$ as an additional entry to the chart. The data $\det(I-tf)$ corresponds to, on the endomorphism side, the class of $[f]$ in the endomorphism class group $End_{0}(\calR)$ (see \apr{rem:ktheoryendomorphisms})).
\end{rem}

\begin{rem}\label{apprem:elstnotation}
The presentation here of the elementary stabilizers differs from the one given in \cite{BoSc1}. Roughly speaking, here we use $Nil_{0}(\calR)$ and the endomorphism side, whereas in \cite{BoSc1} the notation and definitions are in terms of $NK_{1}(\calR)$ and the polynomial matrix side. More precisely, in \cite{BoSc1} the elementary stabilizer of a polynomial matrix $I-tA$ is defined to be
$$E(A,\calR) =  \{U \in GL(\calR[t]) \mid U\textnormal{Orb}_{El(\calR[t])}(I-tA) \subset \textnormal{Orb}_{El(\calR[t])}(I-tA)\}$$
where $\textnormal{Orb}_{El(\calR[t])}(I-tA)$ denotes the set of matrices over $\calR[t]$ which are elementary equivalent\si{elementary equivalence} over $\calR[t]$ to $I-tA$. There it is observed that $E(A,\calR)$ is a subgroup of $NK_{1}(\calR)$. Given $A$ over $\calR$, the map
\begin{equation}
\begin{gathered}
E(A,\calR) \to \sta_{\mathfrak{N}}(A)\\
[I-tN] \mapsto [N]
\end{gathered}
\end{equation}
defines a group isomorphism between $E(A,\calR)$ and $\sta_{\mathfrak{N}}(A)$.
\end{rem}

\begin{exer}\label{appex:redsnil0}
If $\calR$ is commutative and reduced (has no nontrivial nilpotent elements), then the groups $SNil_{0}(\calR)$ and $Nil_{0}(\calR)$ coincide.
\end{exer}
\begin{proof}
This is essentially Exercise \ref{appexer:reducednk1sk1}, just in the nilpotent endomorphism setting: if $\calR$ is commutative and reduced, then $\det(I-tN)=1$ for any nilpotent matrix $N$ over $\calR$.
\end{proof}

\begin{exer}\label{appex:nilstvanish}
If $A$ is a nilpotent matrix over $\calR$ then $\sta_{\mathfrak{N}}(A)$ vanishes.
\end{exer}
\begin{proof}
If $A$ is a nilpotent matrix over $\calR$ and $[N] \in \sta_{\mathfrak{N}}(A)$, then $[A \oplus N]_{sse} = [A]_{sse}$. Since both $A$ and $N$ are nilpotent, $A \oplus N$ is nilpotent, and by Theorem \ref{thm:sseeleeq} this implies $[A \oplus N] = [A]$ in the group $Nil_{0}(\calR)$. Thus $[N]=0$ in $Nil_{0}(\calR)$.
\end{proof}

\begin{rem}\label{apprem:extsequences}
The notion of zero and nilpotent extension\si{nilpotent extension} can also be defined in terms of endomorphisms. Recall from \apr{rem:ktheoryendomorphisms}) the category $\textbf{End}\calR$ whose objects are pairs $(P,f)$ where $f \colon P \to P$ is an endomorphism of a finitely generated projective $\calR$-module and a morphism from $(P,f)$ to $(Q,g)$ is an $\calR$-module endomorphism $h \colon P \to Q$ such that $hf = gh$. Given $(P,f),(Q,g)$ in $\textbf{End}\calR$, we say
\begin{enumerate}
\item
$(Q,g)$ is a zero extension\si{zero extension} of $(P,f)$ if there exists some $\calR$-module $P_{1}$ such that $Q = P \oplus P_{1}$ and either of the following happen:
\begin{enumerate}
\item
  There exists an $\calR$-module homomorphism $h \colon P_{1} \to P$ such that $g = \left(\begin{smallmatrix} f & h \\ 0 & 0 \end{smallmatrix}\right)$.

\item
There exists an $\calR$-module homomorphism $h \colon P \to P_{1}$ such that $g = \left(\begin{smallmatrix} f & 0\\ h & 0 \end{smallmatrix}\right)$.
\end{enumerate}
\item
$(Q,g)$ is a nilpotent extension\si{nilpotent extension} of $(P,f)$ if there exists some $\calR$-module $P_{1}$ such that $Q = P \oplus P_{1}$ and either of the following happen:
\begin{enumerate}
\item
There exists an $\calR$-module homomorphism $h \colon P_{1} \to P$ and a nilpotent endomorphism $j \colon P_{1} \to P_{1}$ such that $g = \left(\begin{smallmatrix} f & h \\ 0 & j \end{smallmatrix}\right)$
\item
There exists an $\calR$-module homomorphism $h \colon P \to P_{1}$ and a nilpotent endomorphism $j \colon P_{1} \to P_{1}$ such that $g = \left(\begin{smallmatrix} f & 0 \\ h & j \end{smallmatrix}\right)$.
\end{enumerate}
\end{enumerate}
Recall in $\textbf{End}\calR$ we say a sequence $(P_{1},f_{1}) \to (P_{2},f_{2}) \to (P_{3},f_{3})$ is exact if the corresponding sequence of $\calR$-modules $P_{1} \to P_{2} \to P_{3}$ is exact. Zero extensions\si{zero extension} and nilpotent extensions\si{nilpotent extension} have a nice interpretation in terms of certain exact sequences in the endomorphism category. Note that in $\textbf{End}\calR$ the pair $(P,0)$ means the zero endomorphism of the $\calR$-module $P$, and $(0,0)$ means the zero endomorphism of the zero $\calR$-module. Since $(0,0)$ serves as a zero object in $\textbf{End}\calR$, we can consider short exact sequences in $\textbf{End}\calR$, by which we mean an exact sequence of the form
$$(0,0) \to (P_{1},f_{1}) \to (P_{2},f_{2}) \to (P_{3},f_{3}) \to (0,0).$$
Given this, the following shows that zero extensions\si{zero extension} and nilpotent extensions\si{nilpotent extension} are given (up to isomorphism) by certain short exact sequences in $\textbf{End}\calR$.
\begin{prop}
Let $\calR$ be a ring, and suppose
$$(0,0) \to (P_{1},f_{1}) \stackrel{\alpha_{1}}\longrightarrow (P_{2},f_{2}) \stackrel{\alpha_{2}}\longrightarrow (P_{3},f_{3}) \to (0,0)$$
is a short exact sequence in $\textbf{End}\calR$.
\begin{enumerate}
\item
If $f_{1}=0$ then $(P_{2},f_{2})$ is isomorphic to a zero extension\si{zero extension} of $(P_{3},f_{3})$.
\item
If $f_{3}=0$, then $(P_{2},f_{2})$ is isomorphic to a zero extension\si{zero extension} of $(P_{1},f_{1})$.
\item
If $f_{1}$ is nilpotent, then $(P_{2},f_{2})$ is isomorphic to a nilpotent extension\si{nilpotent extension} of $(P_{3},f_{3})$.
\item
If $f_{3}$ is nilpotent, then $(P_{2},f_{2})$ is isomorphic to a nilpotent extension\si{nilpotent extension} of $(P_{1},f_{1})$.
\end{enumerate}
\end{prop}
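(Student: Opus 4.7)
The plan is to use projectivity of $P_3$ to split the short exact sequence, then exploit the intertwining conditions to put $f_2$ in block upper-triangular form, after which all four statements follow by inspection.

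First, since $P_3$ is finitely generated projective (an object of $\textbf{End}\calR$ has an underlying module in $\textbf{Proj}\calR$), the short exact sequence of $\calR$-modules
$$0 \to P_1 \stackrel{\alpha_1}\longrightarrow P_2 \stackrel{\alpha_2}\longrightarrow P_3 \to 0$$
splits. Choose an $\calR$-module splitting $\beta \colon P_3 \to P_2$ with $\alpha_2 \beta = \mathrm{id}_{P_3}$, and use it to identify $P_2$ with $P_1 \oplus P_3$ via the isomorphism $(x,y) \mapsto \alpha_1(x) + \beta(y)$. Note this is purely a module-level splitting; there is no reason for $\beta$ to commute with the endomorphisms, and this is exactly what will produce the off-diagonal entry.

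Next I would compute $f_2$ in block form under this identification. The condition $f_2 \alpha_1 = \alpha_1 f_1$ says $f_2$ preserves the submodule $\alpha_1(P_1)$ and acts as $f_1$ on it, which forces a zero in the lower-left block and puts $f_1$ in the upper-left. For the right-hand column, $f_2\beta(y)$ is some element of $P_2$, which under the identification can be written uniquely as $(h(y), g(y))$ for $\calR$-module maps $h \colon P_3 \to P_1$ and $g \colon P_3 \to P_3$; applying $\alpha_2$ and using $\alpha_2 f_2 = f_3 \alpha_2$ together with $\alpha_2 \beta = \mathrm{id}_{P_3}$ yields $g = f_3$. Thus in $\textbf{End}\calR$ we have an isomorphism
$$(P_2, f_2) \;\cong\; \Bigl(P_1 \oplus P_3,\; \begin{pmatrix} f_1 & h \\ 0 & f_3 \end{pmatrix}\Bigr).$$

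From this normal form, cases (2) and (4) are immediate: when $f_3 = 0$ the displayed matrix is exactly a zero extension of $(P_1, f_1)$, and when $f_3$ is nilpotent it is exactly a nilpotent extension of $(P_1, f_1)$, per the definitions recalled in the appendix. For cases (1) and (3), conjugate by the swap isomorphism $\sigma \colon P_1 \oplus P_3 \to P_3 \oplus P_1$, $\sigma(x,y) = (y,x)$, transforming $f_2$ into
$$\begin{pmatrix} f_3 & 0 \\ h & f_1 \end{pmatrix}$$
on $P_3 \oplus P_1$. When $f_1 = 0$ this is a zero extension of $(P_3, f_3)$ (the second of the two allowed forms), and when $f_1$ is nilpotent it is a nilpotent extension of $(P_3, f_3)$.

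There is no serious obstacle here: the only nontrivial ingredient is the splitting of the sequence in $\textbf{Proj}\calR$, which is free thanks to projectivity of $P_3$. Everything else is a routine unpacking of the commutation relations $f_2\alpha_1 = \alpha_1 f_1$ and $\alpha_2 f_2 = f_3 \alpha_2$ in terms of the chosen splitting, together with the observation that the definitions of zero and nilpotent extensions are symmetric in the two block orderings, so a single swap handles cases (1) and (3) uniformly.
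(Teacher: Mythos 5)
Your proof is correct and follows essentially the same approach as the paper: split the exact sequence (you use projectivity of $P_3$ to get a section of $\alpha_2$, the paper uses a retraction of $\alpha_1$ — equivalent data), identify $P_2 \cong P_1 \oplus P_3$, and read off from the intertwining relations that $f_2$ becomes block upper-triangular with $f_1$ and $f_3$ on the diagonal. The paper proves only case (4) and declares the rest analogous; your use of the swap isomorphism to handle (1) and (3) uniformly is a clean way to make "analogous" precise.
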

\begin{proof}
  We will prove 4; the other are analogous. Since the sequence is exact there is a splitting map $\alpha_{1}^{\prime} \colon P_{2} \to P_{1}$ such that
  $\alpha_{1}\alpha_{1}^{\prime}  = id$ on $P_{1}$, and an $\calR$-module isomorphism $\beta \colon P_{2} \to P_{1} \oplus P_{3}$ given by $\beta(x) = (\alpha_{1}^{\prime}(x),\alpha_{2}(x))$ so that the following diagram commutes
\begin{equation*}
\xymatrix{
0 \ar[r] & P_{1} \ar[r]^{\alpha_{1}} \ar[d]_{id} & P_{2} \ar[d]_{\beta} \ar[r]^{\alpha_{2}} & P_{3} \ar[r] \ar[d]^{id} & 0\\
0 \ar[r] & P_{1} \ar[r]^{i} & P_{1} \oplus P_{3} \ar[r]^{q} & P_{3} \ar[r] & 0\\
}
\end{equation*}
where $i \colon P_{1} \to P_{1} \oplus P_{3}$ by $i(x) = (x,0)$ and $q \colon P_{1} \oplus P_{3} \to P_{3}$ by $q(x,y) = y$. Define $g = \beta f_{2} \beta^{-1}$, so $g \colon P_{1} \oplus P_{3} \to P_{1} \oplus P_{3}$. We may write $g = \left(\begin{smallmatrix} g_{1} & g_{2} \\ g_{2}^{\prime} & g_{3} \end{smallmatrix}\right)$ where $(P_{1},g_{1}),(P_{3},g_{3})$ are in $\textbf{End}\calR$, and $g_{2} \colon P_{3} \to P_{1}, g_{2}^{\prime} \colon P_{1} \to P_{3}$. For any $x \in P_{1}$ we have
$$g(x,0) = \beta f_{2} \beta^{-1}(x,0) = \beta f_{2} \alpha_{1}(x) = \beta \alpha_{1} f_{1}(x) = (f_{1}(x),0)$$
so $g_{2}^{\prime}(x) = 0$ and $g_{1}(x) = f_{1}(x)$. Since $x$ was arbitrary, it follows that $g_{2}^{\prime}=0$ and $g_{1} = f_{1}$. Likewise, one can check that $g_{3} = f_{3}$. Altogether $f_{2}$ is isomorphic to $g = \left(\begin{smallmatrix} f_{1} & g_{2} \\ 0 & f_{3} \end{smallmatrix}\right)$, and since $f_{3}$ is nilpotent, this is a nilpotent extension\si{nilpotent extension} of $(P_{1},f_{1})$.
\end{proof}
\end{rem}

%\begin{rem}\label{apprem:endo0relationrem}
%In the context of Theorem \ref{thm:endorelations}, a natural question is what happens if one considers the (a priori) coarser relation on endomorphisms generated by similarity and extensions by an arbitrary endomorphism. This is determined (up to some questions of cancellation when passing to a group completion) by the class group $End_{0}(\calR)$ introduced in \apr{rem:ktheoryendomorphisms}).
%\end{rem}

\section{Automorphisms of SFTs}\label{sectionAut}

We turn now to discussing automorphisms of shifts of finite type. In general, an automorphism of a dynamical system is simply a self-conjugacy of the given system. The collection of all automorphisms of a given system forms a group, the size of which can vary greatly depending on the system in question. It turns out that a nontrivial mixing shift of finite type possesses a very rich group of automorphisms.\\
\indent It's maybe unsurprising that, even in the context of the classification problem\si{classification problem} for shifts of finite type (Problem \ref{prob:classificationprob} in Lecture 1), the study of automorphisms plays an important role. Partly, this role is indirect: various tools and ideas which were originally introduced to study automorphism groups\si{automorphism group} of shifts of finite type (e.g. sign-gyration, introduced later in this lecture) in fact turned out to be important tools for the conjugacy problem. For example, the dimension representation\si{dimension representation} plays a role in constructing counterexamples to Williams' conjecture\si{Williams' Conjecture} in the reducible case (see \cite{S21}). Some of this we will discuss in Lecture 8.\\

The goal of this lecture is only to give a brief tour through some of main ideas in the study of automorphism groups\si{automorphism group} for shifts of finite type. At the end of the lecture we mention some newer developments, as well as a small collection of problems and conjectures that have guided some of the direction for studying the automorphism groups\si{automorphism group}.\\

%\indent Finally let us mention that this is a large topic, in which we only scratch the surface.\\

We will continue to use the following notation. For a matrix $A$ over $\mathbb{Z}_{+}$, we let $(X_{A},\sigma_{A})$ denote the edge shift of finite type (as defined in Section \ref{sec:edgesfts} of Lecture 1) corresponding to the graph associated to $A$ (i.e. the graph $\Gamma_{A}$ as defined in Lecture 1). Since any shift of finite type is topologically conjugate to an edge shift $(X_{A},\sigma_{A})$ for some $\mathbb{Z}_{+}$-matrix $A$ (see e.g. \cite[Theorem 2.3.2]{LindMarcus2021})\ai{Lind, Douglas}\ai{Marcus, Brian}, and automorphism groups\si{automorphism group} of topologically conjugate systems are isomorphic, we will only consider edge shifts $(X_{A},\sigma_{A})$. The fundamental case is when $A$ is primitive with the topological entropy of the shift satisfying $h_{top}(\sigma_{A}) > 0$; with this in mind we make the following standing assumption.\\

\textbf{Standing Assumption: } Throughout this lecture, unless otherwise noted, when considering an SFT $(X_{A},\sigma_{A})$ we assume $A$ is primitive with $\lambda_{A} > 1$, where $\lambda_{A}$ denotes the Perron-Frobenius eigenvalue of $A$.\\

Since $h_{top}(\sigma_{A}) = \log \lambda_{A}$, where $h_{top}(\sigma_{A})$ is the topological entropy of the shift $\sigma_{A}$, the assumption on $\lambda_{A}$ is equivalent to the system $(X_{A},\sigma_{A})$ having positive entropy.\\

Now let us say more precisely what we mean by an automorphism. We begin with a general definition, and specialize to shifts of finite type later. Recall by a topological dynamical system $(X,f)$ we mean a self-homeomorphism $f$ of a compact metric space $X$.

\begin{de}
  Let $(X,f)$ be a topological dynamical system. An automorphism of $(X,f)$ is a homeomorphism $\alpha \colon X \to X$ such that $\alpha f = f \alpha$. The collection of automorphisms of $(X,f)$ forms a group under composition, which we call the group of automorphisms of $(X,f)$, and we denote this group by $\aut(f)$.
We define composition in $\aut(f)$ left to right: given $f,g$ in
    $\aut(f)$ and an input $x$, the output  $(fg)(x) $ is
    $g(f(x))$.\begin{footnote}{The choice of left-to right
          composition will imply that the dimension
  representation, defined later in this section, is a group
  homomorphism.}\end{footnote} 
%Let $(X_{A},\sigma_{A})$ be a shift of finite type. An automorphism of $(X_{A},\sigma_{A})$ is a homeomorphism $\alpha \colon X_{A} \to X_{A}$ such that $\alpha \sigma_{A} = \sigma_{A} \alpha$. The collection of automorphisms of $(X_{A},\sigma_{A})$ forms a group (under composition) which we call the group of automorphisms of $(X_{A},\sigma_{A})$, and we denote this group by $\aut(\sigma_{A})$.
\end{de}

In other words, an automorphism of $(X,f)$ is simply a self-conjugacy of the system $(X,f)$, and the automorphism group\si{automorphism group} is the group of all self-conjugacies of $(X,f)$.\\

It is straightforward to check that if two systems $(X,f)$ and $(Y,g)$ are topologically conjugate then their automorphism groups\si{automorphism group} $\aut(f)$ and $\aut(g)$ are isomorphic.

\begin{ex}
Let $X$ be a Cantor set and $f \colon X \to X$ be the identity map, i.e. $f(x) = x$ for all $x \in X$. Then $\aut(f) = \textnormal{Homeo}(X)$ is the group of all homeomorphisms of the Cantor set.
\end{ex}

Recall from Section \ref{subsec:symbolicdynamics} a subshift is a system $(X,\sigma)$ which is a subsystem of some full shift $(\mathcal{A}^{\mathbb{Z}},\sigma)$.

\begin{ex}
For a subshift $(X,\sigma)$, the shift $\sigma$ is itself is always an automorphism of $(X,\sigma)$, i.e. $\sigma \in \aut(\sigma)$. Whenever $(X,\sigma)$ has an aperiodic point, $\sigma$ is clearly infinite order in the group $\aut(\sigma)$.
\end{ex}

\begin{ex}\label{exa:0blockcodeauto}
Let $(X_{3},\sigma_{3})$ denote the full shift on the symbol set $\{0,1,2\}$ and define an automorphism $\alpha \in \aut(\sigma_{3})$ using the block code
$$\alpha_{0} \colon x \mapsto x + 1 \textnormal{ mod } 3, \hspace{.03in} x \in \{0,1,2\}.$$
Thus for example, $\alpha$ acts like the following:
\begin{gather*}
\ldots 01020102011\overd{0}202220102110 \ldots \\
\downarrow \hspace{.03in} \alpha \\
\ldots 12101210122\overd{1}010001210221 \ldots
\end{gather*}
This automorphism is order $3$, i.e. $\alpha^{3}=\textnormal{id}$.
\end{ex}

As we'll see later, automorphism groups\si{automorphism group} of shifts of finite type contain a large supply of nontrivial automorphisms. Here is an interesting example of a subshift whose only automorphisms are powers of the shift.

\begin{ex}\label{ex:sturmiansubshift}
Let $\alpha$ be an irrational, and consider the rotation map $R_{\alpha} \colon [0,1) \to [0,1)$ given by $R_{\alpha}(x) = x + \alpha \textnormal{ mod } 1$. Consider the indicator map $I_{\alpha} \colon [0,1) \to \{0,1\}$ given by $I_{\alpha}(z)=0$ if $z \in [0,1-\alpha)$ and $I_{\alpha}(z)=1$ if $z \in [1-\alpha,1)$. Now we can define a subshift $(X_{\alpha},\sigma_{X_{\alpha}})$ of the full shift on two symbols $(\{0,1\}^{\mathbb{Z}},\sigma)$ to be the orbit closure of locations of orbits of points under the map $R_{\alpha}$, i.e. we let
$$X_{\alpha} = \overline{\{I_{\alpha}(R_{\alpha}^{k}(z)) \mid k \in \mathbb{Z}, z \in [0,1)\}}.$$
The subshift $(X_{\alpha},\sigma_{X_{\alpha}})$ is known as a Sturmian subshift, and it is a folklore result (see \cite{Olli2013} or \cite{DDMP2016} for a proof) that $\aut(\sigma_{X_{\alpha}}) = \langle \sigma_{X_{\alpha}} \rangle$, so as a group $\aut(\sigma_{X_{\alpha}})$ is isomorphic to $\mathbb{Z}$.
\end{ex}

The subshift in the last example has zero topological entropy, and the structure of its automorphism group\si{automorphism group} is very easy to understand (as a group it's just $\mathbb{Z})$. In many cases, the automorphism groups\si{automorphism group} of subshifts with such ``low-complexity'' dynamics (of which Example \ref{ex:sturmiansubshift} is an example) have more constrained automorphism groups\si{automorphism group}, in contrast to the automorphism groups\si{automorphism group} of shifts of finite type (see \apr{apprem:lowcomplexity}) for a brief discussion of this, and for what we mean here by low-complexity).\\

By the Curtis-Hedlund-Lyndon Theorem\si{Curtis-Hedlund-Lyndon Theorem} (Theorem \ref{thm:chlthm}), any automorphism of a subshift $(X,\sigma)$ is induced by a block code. This leads immediately to the following observation:
\begin{prop}
If $(X,\sigma)$ is a subshift, then $\aut(\sigma)$ is a countable group.
\end{prop}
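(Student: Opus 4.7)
The plan is to apply the Curtis-Hedlund-Lyndon Theorem (Theorem \ref{thm:chlthm}) and then bound the cardinality of the collection of sliding block codes. Since every automorphism $\phi \in \aut(\sigma)$ is in particular a continuous shift-commuting map $X \to X$, CHL guarantees that $\phi$ is a block code: there exist integers $j \leq k$ and a function $\Phi : \mathcal{W}_N(X) \to \mathcal{W}_1(X)$ (with $N = k-j+1$) such that $(\phi x)_n = \Phi(x_{n+j}\dots x_{n+k})$ for every $x \in X$ and $n \in \Z$. Thus the automorphism $\phi$ is completely determined by the finite data $(j,k,\Phi)$.

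Next, I would observe that for each fixed pair $(j,k)$, the set of candidate functions $\Phi : \mathcal{W}_N(X) \to \mathcal{W}_1(X)$ is finite, since both $\mathcal{W}_N(X)$ and $\mathcal{W}_1(X)$ are finite sets (being subsets of the finitely many words of a given length over the finite alphabet $\mathcal{A}$ of $X$). Consequently the set of block codes of fixed window $(j,k)$ that happen to define maps $X \to X$ is finite.

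Finally, I would take the union over all window choices. The set of pairs $(j,k) \in \Z \times \Z$ with $j \leq k$ is countable, so the full collection of sliding block codes $X \to X$ (for any window) is a countable union of finite sets, hence countable. Since the map from block codes to automorphisms is surjective by CHL, $\aut(\sigma)$ is countable. There is no serious obstacle here; the only subtlety worth mentioning is that distinct triples $(j,k,\Phi)$ may give the same automorphism (for instance, by trivially padding the window), but this only makes the counting more favorable, since a surjection from a countable set onto $\aut(\sigma)$ still forces $\aut(\sigma)$ to be countable.
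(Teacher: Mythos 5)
Your argument is correct and is exactly the paper's intended reasoning: the proposition is stated there as an immediate consequence of the Curtis--Hedlund--Lyndon Theorem, with the count of block codes (finitely many for each window, countably many windows) left implicit. Your write-up simply makes that count explicit.
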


Thus for a shift of finite type $(X_{A},\sigma_{A})$, $\aut(\sigma_{A})$ is always a countable group. Under our assumptions that $(X_{A},\sigma_{A})$ is mixing with positive entropy, $\aut(\sigma_{A})$ is also always infinite.

It turns out that $\aut(\sigma_{A})$ possesses a rich algebraic structure. Example \ref{exa:0blockcodeauto} above was induced by a block code of range $0$, but for arbitrarily large $R \in \mathbb{N}$ there are automorphisms which can only be induced by block codes of range $R$ or greater (indeed, given an SFT $(X_{A},\sigma_{A})$ and a non-negative number $R$, there are only finitely many automorphisms in $\aut(\sigma_{A})$ having range $\le R$). To give an indication that $\aut(\sigma_{A})$ is quite large, consider the following results regarding different types of subgroups that can arise in $\aut(\sigma_{A})$.

\begin{thm}\label{thm:largesubgroups}
Let $(X_{A},\sigma_{A})$ be a shift of finite type where $A$ is a primitive matrix with $\lambda_{A} > 1$.
\begin{enumerate}
\item
  (Boyle-Lind-Rudolph\ai{Lind, Douglas}
  in \cite{BLR88}) The group $\aut(\sigma_{A})$ contains isomorphic copies of each of the following groups:
\begin{enumerate}
\item
Any finite group.
\item
$\bigoplus\limits_{i=1}^{\infty}\mathbb{Z}$.
\item
The free group on two generators $\mathbb{F}_{2}$.
\end{enumerate}
\item
(Kim-Roush\ai{Roush, F.W.}\ai{Kim, K.H.} in \cite{S27}) For any $n \ge 2$, let $(X_{n},\sigma_{n})$ denote the full shift on $n$ symbols. Then $\aut(\sigma_{n})$ is isomorphic to a subgroup of $\aut(\sigma_{A})$.
\item
  (Kim-Roush\ai{Roush, F.W.} in \cite{S27})\ai{Kim, K.H.}
  Any countable, locally finite, residually finite group embeds into $\aut(\sigma_{A})$.
\end{enumerate}
\end{thm}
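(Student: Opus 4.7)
The plan is to build all the required subgroups by the "marker method" of Hedlund, which is the workhorse for producing automorphisms of SFTs. The general pattern is: exploit mixing and positive entropy to locate disjoint words $w_1,\dots,w_n$ in $X_A$ together with suitable surrounding context (a marker block $m$) so that occurrences of $mw_im$ in a point $x\in X_A$ are isolated and unambiguously identifiable; then a permutation $\pi\in S_n$ induces an automorphism of $\sigma_A$ by replacing every occurrence of $mw_im$ with $mw_{\pi(i)}m$ and leaving all other coordinates fixed. The mixing hypothesis guarantees such marker/filler systems exist in abundance, and freedom in choosing $|m|$ produces countably many "independent" marker systems whose induced automorphisms commute.

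For part (1)(a), Cayley's theorem embeds a finite group $G$ into some $S_n$, and the $n$-word marker construction above yields $S_n\hookrightarrow\aut(\sigma_A)$, hence $G\hookrightarrow\aut(\sigma_A)$. For part (1)(b), I would iterate the construction at strictly increasing marker lengths $|m_k|$, so that the supports of the $k$-th marker system are disjoint from the others; each $S_{n_k}$-subgroup then contains an infinite-order element (e.g.\ a long cycle), and these commute, producing $\bigoplus_{i=1}^\infty \mathbb Z$. Part (1)(c) is more delicate and is handled by the ping-pong lemma: one must produce $\alpha,\beta\in\aut(\sigma_A)$ together with disjoint nonempty clopen sets $A^{\pm},B^{\pm}\subset X_A$ such that $\alpha^k$ maps $X_A\setminus A^{-}$ into $A^{+}$ for $k>0$ (and symmetrically), and the analogous inclusions for $\beta$. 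The mixing hypothesis is what one feeds into the marker construction to arrange these inclusions; verifying them is the main obstacle for (1)(c), since ordinary marker automorphisms are typically torsion and one must cook up elements whose orbits genuinely "spread" across $X_A$.

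For part (2), the key fact is that a mixing SFT of positive entropy contains a "full-shift-on-$n$-symbols factor of a subsystem": find equal-length legal words $b_1,\dots,b_n\in\mathcal W_L(X_A)$ and a connecting word $c$ such that every concatenation $\cdots b_{i_{-1}}cb_{i_0}cb_{i_1}\cdots$ is a legal point of $X_A$, yielding a shift-commuting injection $\iota\colon X_n\hookrightarrow X_A$ (up to a time rescale). Any $\phi\in\aut(\sigma_n)$ then extends to an automorphism of $\sigma_A$ by acting as $\iota\phi\iota^{-1}$ on $\iota(X_n)$ and as the identity elsewhere; well-definedness and continuity follow from choosing $b_i,c$ so that occurrences of the embedded block structure are unambiguously detectable by a finite window, and hence the extension is a block code. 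Part (3) is a direct-limit assembly of part (1)(a): write the target group as $G=\varinjlim G_n$ with each $G_n$ finite, realize each $G_n$ inside $\aut(\sigma_A)$ by the marker method, and arrange the markers for $G_{n+1}$ as a refinement of the markers for $G_n$ so that the inclusion of subgroups $G_n\hookrightarrow G_{n+1}$ is realized by the actual inclusion of marker-induced subgroups of $\aut(\sigma_A)$. The main obstacle here is the compatibility bookkeeping of nested marker systems across scales; residual finiteness of $G$ is used to choose the embeddings $G_n\hookrightarrow S_{k_n}$ and their refinements consistently, and local finiteness ensures no single level of the hierarchy has to accommodate an infinite group.
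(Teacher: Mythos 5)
First, a framing point: the paper does not prove Theorem \ref{thm:largesubgroups} at all; it cites \cite{BLR88} and \cite{S27} and remarks only that all three parts use marker methods. So your overall strategy (marker automorphisms, ping-pong for $\mathbb{F}_2$, simulation of a full shift for part (2)) is the right family of techniques. However, your sketch contains a genuine mathematical error in part (1)(b). You propose to realize $\bigoplus_{i=1}^{\infty}\mathbb{Z}$ by taking commuting marker copies of finite symmetric groups $S_{n_k}$ at increasing scales and extracting ``an infinite-order element (e.g.\ a long cycle)'' from each. But every element of a finite symmetric group has finite order, so $\bigoplus_k S_{n_k}$ is a locally finite torsion group and contains no copy of $\mathbb{Z}$ whatsoever. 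Producing infinite-order automorphisms is precisely the nontrivial point: a single marker-permutation automorphism is always of finite order, and in \cite{BLR88} infinite-order elements come from composing non-commuting involutions (e.g.\ via embedded free products of finite groups, where the product of the two generators of $\mathbb{Z}/2 * \mathbb{Z}/2$ already has infinite order). As written, your argument does not produce even one element of infinite order.

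There is a second gap in part (2). Defining $\tilde\phi$ to be $\iota\phi\iota^{-1}$ on $\iota(X_n)$ and the identity elsewhere is not continuous: $\iota(X_n)$ is a proper closed, shift-invariant, non-open subset of $X_A$, so a point $x\in\iota(X_n)$ is a limit of points outside $\iota(X_n)$; those are fixed by your map while $x$ is moved, so $\tilde\phi$ cannot be a homeomorphism (hence not a block code) unless $\phi$ acts trivially there. Finite-window detectability of the embedded block structure does not repair this; the real issue is what $\tilde\phi$ must do on points containing arbitrarily long but \emph{finite} stretches of the embedded structure. The Kim--Roush construction has to act on such maximal stretches as well, degrading to the identity near their ends while still respecting composition, and arranging that $\phi\mapsto\tilde\phi$ is a genuine group homomorphism is the actual content of the proof. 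The analogous compatibility bookkeeping across nested marker scales is likewise the heart of part (3), which you correctly identify but do not resolve.
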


In particular, by part $(1)$, $\aut(\sigma_{A})$ is never amenable. By part $(2)$, for full shifts, the isomorphism types of groups that can appear as subgroups of $\aut(\sigma_{n})$ is independent of $n$.\\

Recall a group $G$ is residually finite if the intersection of all its subgroups of finite index is trivial. A finitely presented group $G$ is said to have solvable word problem if there is an algorithm to determine whether a word made from generators is the identity in the group.

\begin{exer}\apr{appex:denppresfin})
  %\apr{appex:denppesfin})
  \label{exer:denseppresfinite}
If $(X,\sigma)$ is a subshift whose periodic points are dense in $X$, then $\aut(\sigma)$ is residually finite.
\end{exer}

\begin{prop}\label{prop:resfinsolwp}
Let $(X_{A},\sigma_{A})$ be a shift of finite type where $A$ is a primitive matrix with $\lambda_{A} > 1$. Then both of the following hold:
\begin{enumerate}
\item
The group $\aut(\sigma_{A})$ is residually finite.
\item
The group $\aut(\sigma_{A})$ contains no finitely generated group with unsolvable word problem.
\end{enumerate}
\end{prop}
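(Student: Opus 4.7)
The plan is to derive (1) directly from Exercise \ref{exer:denseppresfinite} by verifying that primitive SFTs have dense periodic points, and to derive (2) from the fact that block codes can be compared and composed algorithmically, so that finitely generated subgroups of $\aut(\sigma_A)$ inherit a decidable word problem.

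First I would observe that under the standing assumption, the periodic points of $(X_A,\sigma_A)$ are dense in $X_A$. Given any cylinder neighborhood determined by a word $w\in\mathcal W_n(X_A)$ running from vertex $i$ to vertex $j$, primitivity of $A$ yields $k$ with $A^k$ strictly positive, so there is a path from $j$ back to $i$ of length $k$; concatenating $w$ with this return path and taking the periodic extension gives a periodic point in the cylinder. Once density of periodic points is established, Exercise \ref{exer:denseppresfinite} immediately yields (1). The mechanism of that exercise is that for each $n$, restriction to $\textnormal{Fix}(\sigma_A^n)$ gives a homomorphism $\aut(\sigma_A)\to\sym(\textnormal{Fix}(\sigma_A^n))$ with finite target; a nontrivial $\alpha$ moves some point $x$, hence by continuity moves a whole open neighborhood, which by density contains a periodic point $p$, so $\alpha$ lies outside the kernel of the restriction homomorphism at period equal to the period of $p$.

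For (2), the plan is to show that every finitely generated subgroup $H\le\aut(\sigma_A)$ has solvable word problem, which rules out any such $H$ with unsolvable word problem. By the Curtis-Hedlund-Lyndon Theorem (Theorem \ref{thm:chlthm}), each $\alpha\in\aut(\sigma_A)$ is presented by a range $R_\alpha\in\N$ together with a local rule $\Phi_\alpha\colon\mathcal W_{2R_\alpha+1}(X_A)\to\mathcal W_1(X_A)$. Equality of two such block codes is decidable: after inflating both to a common range $R=\max(R_\alpha,R_\beta)$, the codes agree iff the two local rules agree on the finite set $\mathcal W_{2R+1}(X_A)$. Composition is effectively computable (a code of range $R_\alpha+R_\beta$ suffices for $\beta\alpha$, with local rule read off from $\Phi_\alpha$ and $\Phi_\beta$), and inversion is effectively computable because, once a code for $\alpha^{-1}$ exists, one can find it by enumerating candidate block codes on $X_A$ of growing range and testing (via the finite equality check) which composes with $\alpha$ to give the identity. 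Given $\alpha_1,\ldots,\alpha_k$ presented as block codes with precomputed presentations of their inverses, any word $w=\alpha_{i_1}^{\epsilon_1}\cdots\alpha_{i_n}^{\epsilon_n}$ is compiled by iterated composition into a block-code presentation, which is then tested against the identity by the finite equality check. Hence the word problem for $H$ is solvable.

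The main obstacle is not conceptual but bookkeeping: one must verify that the inverse of an invertible block code on $X_A$ admits an effective block-code presentation, which is where invertibility (\,$\alpha\in\aut$, not merely a shift-commuting endomorphism\,) is essential. A conceptually cleaner alternative, which packages (1) and (2) together, is the standard McKinsey argument: $\aut(\sigma_A)$ is recursively enumerable via block-code presentations, and residual finiteness from (1) allows one to enumerate in parallel the finite quotients coming from actions on $\textnormal{Fix}(\sigma_A^n)$; dovetailing an enumeration of derivations $w=1_H$ with an enumeration of finite quotients in which $w\neq 1$ then produces a terminating algorithm for the word problem of any finitely generated $H\le\aut(\sigma_A)$.
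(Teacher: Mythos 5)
Your proof of (1) is exactly the paper's: the paper cites density of periodic points for a primitive SFT and then invokes Exercise \ref{exer:denseppresfinite}, and your return-path argument correctly supplies the density claim that the paper outsources to \cite[Sec.~6.1]{LindMarcus2021}. For (2) the paper simply cites \cite[Prop.~2.8]{BLR88}, and your block-code argument (effective composition, finite-range equality testing, and terminating search for the inverse code guaranteed by Curtis--Hedlund--Lyndon) is precisely the standard content of that citation, so the proposal is correct and follows the same route.
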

\begin{proof}
  Such an SFT has a dense set of periodic points (see
  \cite[Sec. 6.1]{LindMarcus2021}),\ai{Lind, Douglas}\ai{Marcus, Brian} so (1) follows from Exercise \ref{exer:denseppresfinite}. For (2), see \cite[Prop. 2.8]{BLR88}.
\end{proof}

Since a subgroup of a residually finite group must be residually finite, both parts of the previous proposition give some necessary conditions for a group to embed as a subgroup of $\aut(\sigma_{A})$. For example, it follows that the additive group of rationals $\mathbb{Q}$ cannot embed into $\aut(\sigma_{A})$, since $\mathbb{Q}$ under addition is not residually finite (however, the additive group $\mathbb{Q}$ can embed into the automorphism group\si{automorphism group} of a certain minimal subshift - see \cite[Example 3.9]{BLR88}). Still, we do not have a good understanding of what types of countable groups can be isomorphic to a subgroup of $\aut(\sigma_{A})$.\\

An important tool for constructing automorphisms in $\aut(\sigma_{A})$ is the use of ``markers''. We'll forego describing marker methods here, instead referring the reader to \cite[Sec. 2]{BLR88}; but we note that, for example, all three parts of Theorem \ref{thm:largesubgroups} make use of markers. We'll see another perspective on marker automorphisms when discussing simple automorphisms\si{simple automorphism} below.

\subsection{Simple Automorphisms\si{simple automorphism}}
%Part $(1)$, and most other classical constructions of automorphisms, are built using so-called marker methods.
In \cite{Nasu88}, Nasu\ai{Nasu, Masakazu} introduced a class of automorphisms known as simple automorphisms\si{simple automorphism}, which we'll define
shortly.
The set of automorphisms built from compositions of these simple automorphisms\si{simple automorphism} encompasses the collection of automorphisms defined using marker methods (see \cite{BoyleNasu's} for a presentation of this), and give rise to an important subgroup of $\aut(\sigma_{A})$ \apr{appnasu}).\\

Let $A$ be a square matrix over $\mathbb{Z}_{+}$, and let $\Gamma_{A}$ be its associated directed graph. A {\it simple graph symmetry}\footnote{We use the term graph symmetry instead of graph automorphism to avoid confusion between automorphisms of graphs and automorphisms of subshifts.} of $\Gamma_{A}$ is a graph automorphism of $\Gamma_{A}$ which fixes all vertices. A simple graph symmetry of $\Gamma_{A}$ gives a 0-block code and hence a corresponding automorphism in $\aut(\sigma_{A})$. Given $\alpha \in \aut(\sigma_{A})$, we call $\alpha$ a {\it simple graph automorphism} if it is induced by a simple graph symmetry of $\Gamma_{A}$, and we call $\alpha \in \aut(\sigma_{A})$ a {\it simple automorphism}\si{simple automorphism} if it is of the form
$$\alpha = \Psi \gamma \Psi^{-1}$$
where $\Psi \colon (X_{A},\sigma_{A}) \to (X_{B},\sigma_{B})$ is a conjugacy to some shift of finite type $(X_{B},\sigma_{B})$ and $\gamma \in \aut(\sigma_{B})$ is a simple graph automorphism in $\aut(\sigma_{B})$.\\

\begin{ex}
Let $A = \begin{pmatrix} 2 & 2 \\ 1 & 1 \end{pmatrix}$ and label the edges of $\Gamma_{A}$ by $a,\cdots,f$. The graph automorphism of $\Gamma_{A}$ drawn below defined by permuting the edges $c$ and $d$ is a simple graph symmetry of $\Gamma_{A}$, and the corresponding simple graph automorphism in $\aut(\sigma_{A})$ is given by the block code of range 0 which swaps the letters $c$ and $d$ and leaves all other letters fixed.
\begin{equation}\label{fig:simplegraphsymmetry1}
\begin{gathered}
\xymatrix{
*+[F-:<2pt>]{1}
\ar@(u,ul)_a \ar@(d,dl)^b
\ar@/^/[rr]^{c}
\ar@/^2pc/[rr]^{d} & &
*+[F-:<2pt>]{2}
\ar@/^/[ll]^{f}
\ar@(ru,rd)^e
}
\end{gathered}
\end{equation}
\end{ex}
%\begin{center}
%\begin{figure}
%\includegraphics[scale=0.8]{graphauto1.jpg}
%\caption{A simple graph symmetry}
%\label{fig:simplegraphsymmetry1}
%\end{figure}
%\end{center}

We define $\simp{A}$ to be the subgroup of $\aut(\sigma_{A})$ generated by simple automorphisms\si{simple automorphism}. It is immediate to check that $\simp{A}$ is a normal subgroup of $\aut(\sigma_{A})$.\\

\begin{ex}
There is a conjugacy from the full 3-shift $(X_{3},\sigma_{3})$ on symbols $\{0,1,2\}$ to the edge shift of finite type $(X_{A},\sigma_{A})$ presented by the graph given in Figure \ref{fig:simplegraphsymmetry1} on symbol set $\{a,b,c,d,e,f\}$. Here the matrix $A$ is given by $A = \begin{pmatrix} 2 & 2 \\ 1 & 1 \end{pmatrix}$, and a conjugacy
$$\Psi \colon (X_{3},\sigma_{3}) \to (X_{A},\sigma_{A})$$
is given by the block code:
\begin{equation*}
\begin{array}{ccccc}
00 \mapsto a & \qquad & 10 \mapsto b & \qquad & 20 \mapsto f\\
01 \mapsto a & \qquad & 11 \mapsto b & \qquad & 21 \mapsto f\\
02 \mapsto d & \qquad & 12 \mapsto c & \qquad & 22 \mapsto e\\
\end{array}
\end{equation*}
with inverse given by
\begin{equation*}
\begin{array}{ccc}
a \mapsto 0 & \qquad & d \mapsto 0\\
b \mapsto 1 & \qquad & c \mapsto 1\\
e \mapsto 2 & \qquad & f \mapsto 2\\
\end{array}
\end{equation*}
Let $\gamma$ denote the simple automorphism\si{simple automorphism} in $\aut(\sigma_{A})$ induced by the simple graph symmetry of $\Gamma_{A}$ shown in Figure \ref{fig:simplegraphsymmetry1}, which permutes the edges $c$ and $d$, and let $\beta = \Psi\gamma\Psi^{-1}$. Then $\beta \in \simp{3}$, and acts for example like
\begin{gather*}
\ldots 112002\overd{0}2120011 \ldots\\
\Psi \Big\downarrow\\
\ldots bcfadf\overd{d}fcfaab \ldots\\
\gamma \Big\downarrow\\
\ldots bdfacf\overd{c}fdfaab \ldots\\
\Psi^{-1} \Big\downarrow\\
\ldots 102012\overd{1}202001 \ldots\\
\end{gather*}

Notice that $\beta$ essentially scans a string of $0,1,2$'s, and swaps $12$ with $02$.
\end{ex}

$\simp{A}$ is an important subgroup of $\aut(\sigma_{A})$, and we'll come back to it later.

\subsection{The center of $\aut(\sigma_{A})$}

%There are few theorems addressing the general algebraic structure of $\aut(\sigma)$. One of the few we do have is a theorem of Ryan proved in the 70's**, which also gives an indication that the group is quite rich algebraically. Note that, in $\aut(\sigma)$, $\sigma$ is an element which commutes with any other automorphism, and hence lies in the center of $\aut(\sigma)$.
%\begin{theorem}
%Let $(X,\sigma)$ be an irreducible shift of finite type. The center of the group $\aut(\sigma)$ is generated by $\sigma$.
%\end{theorem}

%Ryan's Theorem can be proved by considering the action of automorphisms on the periodic points of $(X,\sigma)$, a topic which we we'll discuss in more detail later.\\

Understanding the structure of $\aut(\sigma_{A})$ as a group is not easy. One useful result is the following, proved by Ryan in '72/'74.
\begin{thm}[\cite{Ryan1,Ryan2}]\label{thm:ryanstheorem}
If $A$ is irreducible (in particular, if $A$ is primitive) then the center of $\aut(\sigma_{A})$ is generated by $\sigma_{A}$.
\end{thm}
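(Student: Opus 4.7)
The plan is to prove $Z(\aut(\sigma_A)) = \langle \sigma_A \rangle$ by handling the two inclusions separately. The inclusion $\langle \sigma_A \rangle \subseteq Z(\aut(\sigma_A))$ is immediate: by the very definition of $\aut(\sigma_A)$, every element commutes with $\sigma_A$, so every power of $\sigma_A$ is central. All content lies in the reverse inclusion, so I would fix $\alpha \in Z(\aut(\sigma_A))$ and aim to produce $k \in \mathbb{Z}$ with $\alpha = \sigma_A^k$.

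My strategy is to pin down $\alpha$ on the periodic points and then extend by density. Because $A$ is irreducible and nontrivial, the periodic points $\mathrm{Per}(\sigma_A)$ are dense in $X_A$, and $\alpha$ preserves each $\mathrm{Fix}(\sigma_A^n)$ and permutes the set $\mathcal{O}_n$ of $\sigma_A$-orbits of least period $n$. The key leverage comes from the abundance of automorphisms provided by Theorem \ref{thm:largesubgroups}: every finite group embeds in $\aut(\sigma_A)$, and marker techniques let one realize, for each $n$ large enough and each finite collection of distinct orbits in $\mathcal{O}_n$, an automorphism that permutes these orbits in any prescribed way while acting as the identity off a small neighborhood of them. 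The first substantive step is then: using involutions swapping two orbits and $3$-cycles on triples of orbits, commutativity $\alpha\beta = \beta\alpha$ forces $\alpha$ to fix every $\sigma_A$-orbit of every period setwise. Hence for each periodic $p$ of least period $n$, there is an integer $k(p,n)$, defined modulo $n$, with $\alpha(p) = \sigma_A^{k(p,n)}(p)$.

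The second substantive step is to show $k(p,n)$ is independent of $p$ and mutually consistent across $n$, so there is a single integer $k$ with $\alpha(p) = \sigma_A^k(p)$ on all of $\mathrm{Per}(\sigma_A)$. Independence of the choice of $p$ within a fixed orbit is automatic from $\alpha\sigma_A = \sigma_A\alpha$. To pass between different orbits of the same period, and between different periods, I would use marker-built automorphisms that act nontrivially on two prescribed orbits simultaneously (for example, by implementing a conjugation that interchanges letters on marker blocks touching both orbits); commuting $\alpha$ with such a $\beta$ equates the shift-translations $\alpha$ performs on the two orbits, forcing agreement of the $k(\cdot,\cdot)$ values. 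Once $\alpha(p) = \sigma_A^k(p)$ for all $p \in \mathrm{Per}(\sigma_A)$, density of $\mathrm{Per}(\sigma_A)$ in $X_A$ and continuity of $\alpha$ and $\sigma_A^k$ give $\alpha = \sigma_A^k$.

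The main obstacle will be the marker-type constructions in the second step: producing automorphisms that separate and manipulate specific orbits (or pairs of orbits of possibly different periods) cleanly enough that, after commuting with $\alpha$, they pin down the shift-exponent uniquely. In particular, one needs to realize transpositions and $3$-cycles on orbits in $\mathcal{O}_n$ as automorphisms supported near those orbits, and separately needs cross-period linkages; both are delicate combinatorial constructions of the sort used to prove Theorem \ref{thm:largesubgroups}(1a), and it is here that the mixing/primitivity of $A$ is essential. The first and third paragraphs' steps are formal once these automorphisms are in hand.
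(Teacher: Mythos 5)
The paper does not prove this theorem; it is quoted from Ryan's papers \cite{Ryan1,Ryan2}. Your overall shape --- control a central element on the periodic points by commuting it against an ample supply of automorphisms, then extend by density --- is indeed the classical one, but as written there are two genuine gaps. First, the toolbox you assume, namely that marker methods realize, for any finite collection of orbits in $\mathcal{O}_n$, ``any prescribed permutation while acting as the identity off a small neighborhood,'' is false, and the paper itself explains why: the sign-gyration compatibility condition of Section 7.7 obstructs exactly such liftings. For example, an automorphism transposing two orbits of odd period $m$ while fixing all other periodic points has $\sgn\,\xi_m$ odd and $g_{2m}=0$, so $SGCC_{2m}\neq 0$ and the automorphism cannot be inert; since marker-built and simple automorphisms are inert (Exercise \ref{exer:simpininerts}, Theorem \ref{thm:sgccfactorization}), no such automorphism is available, and the period-$6$ example in Section 7.7 is a concrete failure of LIFT. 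Your argument only needs automorphisms that \emph{separate} orbits (fix one prescribed orbit, move another), which marker methods do supply, but that weaker, correct statement must be formulated and proved, including the degenerate situations where a given period carries only one or two orbits.

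Second, and more seriously, the passage from ``$\alpha(p)=\sigma_A^{k(p,n)}(p)$ on each orbit'' to ``a single integer $k$ works everywhere'' is the crux and is only gestured at. Even granting that all the exponents are pairwise compatible, what you obtain a priori is a coherent system $(k_n)\in\varprojlim \mathbb{Z}/n\mathbb{Z}=\hat{\mathbb{Z}}$, and a general such system does not come from an integer; ``density plus continuity'' does not close this, since $\sigma_A^{k_n}(p_n)$ need not converge if the chosen representatives $k_n$ drift with $n$. The standard repair uses that $\alpha$ and $\alpha^{-1}$ are block codes of a fixed range $r$: comparing the action on two periodic points sharing a long common block whose occurrences cannot overlap themselves forces the two translation exponents to agree as genuine bounded integers, which pins down one $k$. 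Finally, you invoke mixing at the end, whereas the statement is for irreducible $A$; one must either pass to the primitive components of $A^p$ or note separately the trivial case where $X_A$ is finite. None of this defeats the strategy, but each item is a real piece of the proof that the proposal leaves unaddressed.
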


Ryan's Theorem\si{Ryan's Theorem} essentially says the center of $\aut(\sigma_{A})$ is as small as it could possibly be. In fact, for $A$ irreducible, every normal amenable subgroup of $\aut(\sigma_{A})$ is contained in the subgroup generated by $\sigma_{A}$; see \apr{apprem:amenradical}).\\

In
\cite{Kopra2020SFTgliders},
Kopra proved a finitary version of Ryan's Theorem\si{Ryan's Theorem}: namely, for any nontrivial irreducible shift of finite type, there exists a subgroup generated by two elements whose centralizer is generated by the shift map.
In
% \cite{Kopra2020arxivSoficGliders},
\cite{KopraGlidersSofic2022},
  Kopra extended this result to nontrivial transitive sofic shifts,
  and showed that it fails to hold for nonsofic S-gap shifts.
Prior to Kopra's work, Salo in \cite{Salo2019} had proved there is a finitely generated subgroup (needing more than two generators) of the automorphism group\si{automorphism group} of the full shift on four symbols whose centralizer is generated by the shift map.\\

Ryan's Theorem\si{Ryan's Theorem} can be used to distinguish, up to isomorphism, automorphism groups\si{automorphism group} of certain subshifts of finite type. The idea is to use Ryan's Theorem\si{Ryan's Theorem} in conjunction with the set of possible roots of the shift. For a subshift $(X,\sigma)$, define the root set of $\sigma$ to be $\textnormal{root}(\sigma) = \{k \in \mathbb{N} \mid \textnormal{ there exists } \alpha \in \aut(\sigma) \textnormal{ such that } \alpha^{k}=\sigma \}$. The following exercise demonstrates this technique.

\begin{exer}\apr{appexer:ryanthmdist})\label{exer:ryanthmdist}
\begin{enumerate}
\item
Show that if $(X_{A},\sigma_{A})$ and $(X_{B},\sigma_{B})$ are irreducible shifts of finite type such that $\aut(\sigma_{A})$ and $\aut(\sigma_{B})$ are isomorphic, then $\textnormal{root}(\sigma_{A}) = \textnormal{root}(\sigma_{B})$.
\item
Let $(X_{2},\sigma_{2}), (X_{4},\sigma_{4})$ denote the full shift on 2 symbols and on 4 symbols, respectively. Show that $\textnormal{root}(\sigma_{2}) \ne \textnormal{root}(\sigma_{4})$. Use part (1) to conclude that $\aut(\sigma_{2})$ and $\aut(\sigma_{4})$ are not isomorphic as groups.
\end{enumerate}
\end{exer}

The exercise above can be generalized to some other values of $m$ and $n$; one can find this written down in \cite{HKS2022} (also see \cite[Ex. 4.2]{BLR88} for an example where the method is used to distinguish automorphism groups\si{automorphism group} in the non-full shift case). For a full shift $(X_{n},\sigma_{n})$, it turns out that $k \in \textnormal{root}(\sigma_{n})$ if and only if $n$ has a $k$th root in $\mathbb{N}$ (see \cite[Theorem 8]{Lind84}).\ai{Lind, Douglas} \\

Currently, the technique of using Ryan's Theorem\si{Ryan's Theorem} in conjunction with $\textnormal{root}(\sigma_{A})$ is the only method known to us which can show two explicit nontrivial mixing shifts of finite type have non-isomorphic automorphism groups\si{automorphism group}. We do not at the moment know how to distinguish automorphism groups\si{automorphism group} with identical root sets; in particular, despite being introduced by Hedlund in the 60's, we still do not know whether $\aut(\sigma_{2})$ and $\aut(\sigma_{3})$ are isomorphic (see Problem \ref{prob:fullshiftautisoproblem} in Section \ref{subsec:openproblemsaut}).

\subsection{Representations of $\aut(\sigma_{A})$}
So how can we study $\aut(\sigma_{A})$? One way is to try to find good representations of it. There are two main classes of representations that we know of:
\begin{enumerate}
\item
Periodic point representations, and representations derived from these.
\item
The dimension representation\si{dimension representation}.
\end{enumerate}

The first, the periodic point representations (and ones derived from them), are quite natural to consider. They also lead to the sign and gyration maps, which are also quite natural (once defined). The second, the dimension representation\si{dimension representation}, is essentially a linear representation, and is based on the dimension group\si{dimension group} associated to the shift of finite type in question.

We start with the second one, the dimension representation\si{dimension representation}.
\subsection{Dimension Representation\si{dimension representation}}\label{subsec:dimrep}
We briefly recall the definition, introduced in Section \ref{sec:SEZdirectlimits} in Lecture 2, of the dimension group\si{dimension group} associated to a $\mathbb{Z}_{+}$-matrix. Given an $r \times r$ matrix $A$ over $\mathbb{Z}_{+}$ the eventual range subspace of $A$ is $ER(A) = \mathbb{Q}^{r}A^{r}$ (we will have matrices act on row vectors throughout), and the dimension group\si{dimension group} associated to $A$ is
$$G_{A} = \{ x \in ER(A) \mid x A^{k} \in \mathbb{Z}^{r} \cap ER(A) \textnormal{ for some }k \ge 0\}.$$
Recall also the group $G_{A}$ comes equipped with an automorphism (of abelian groups) $\delta_{A} \colon G_{A} \to G_{A}$ (the automorphism $\delta_{A}$ was denoted by $\hat{A}$ in Lecture 2, but we'll use the notation $\delta_{A}$). The automorphism $\delta_{A}$ of $G_{A}$ makes $G_{A}$ into a $\mathbb{Z}[t,t^{-1}]$-module by having $t$ act by $\delta_{A}^{-1}$, but we will usually just refer to the pair $(G_{A},\delta_{A})$ to indicate we are considering both $G_{A}$ and $\delta_{A}$ together. Then by an automorphism of $(G_{A},\delta_{A})$ we mean a group automorphism $\Psi \colon G_{A} \to G_{A}$ which satisfies $\Psi \delta_{A} = \delta_{A} \Psi$; in other words, an automorphism of the pair is equivalent to an automorphism of $G_{A}$ as a $\mathbb{Z}[t,t^{-1}]$-module. Let $\aut(G_{A})$ denote the group of automorphisms of the pair $(G_{A},\delta_{A})$.
\\

The group $G_{A}$ is isomorphic, as an abelian group, to the direct limit
 $\varinjlim \{\mathbb{Z}^{r}, x \mapsto xA\}$.\\

%\begin{enumerate}
%\item
%$\mathcal{G}_{A}^{+} = \{x \in ER(A) \mid x A^{k} \in (\mathbb{Z}_{+})^{r} \cap ER(A) \textnormal{ for some } k\ge 0\}$.
%\item
%$\delta_{A}(x) = x A$.
%\end{enumerate}
%It is convenient to consider both the dimension group $\mathcal{G}_{A}$ and its respective automorphism $\delta_{A}$ as a pair $(\mathcal{G}_{A},\delta_{A})$. This is equivalent to considering $\mathcal{G}_{A}$ not only as an abelian group, but also\\

When $A$ is over $\mathbb{Z}_{+}$ (which is the case for a matrix presenting an edge shift of finite type), $G_{A}$ has a positive cone $G_{A}^{+} = \{v \in G_{A} \mid vA^{k} \in \mathbb{Z}^{r}_{+} \textnormal{ for some } k\}$ making $G_{A}$ into an ordered abelian group. The automorphism $\delta_{A}$ maps $G_{A}^{+}$ into $G_{A}^{+}$, and when we want to keep track of the order structure we refer to the triple $(G_{A},G_{A}^{+},\delta_{A})$. An automorphism of the triple $(G_{A},G_{A}^{+},\delta_{A})$ then means an automorphism of $(G_{A},\delta_{A})$ which preserves $G_{A}^{+}$.\\

\begin{exer}\apr{appexer:dimgroupfullshiftcomp})\label{exer:dimgroupfullshiftcomp}
When $A = (n)$ (the case of the full-shift on $n$ symbols), the triple $(G_{n},G_{n}^{+},\delta_{n})$ is isomorphic to the triple $(\mathbb{Z}[\frac{1}{n}],\mathbb{Z}_{+}[\frac{1}{n}],m_{n})$, where $m_{n}$ is the automorphism of $\mathbb{Z}[\frac{1}{n}]$ defined by $m_{n}(x) = x \cdot n$.
\end{exer}

The following exercise shows that for a mixing shift of finite type $(X_{A},\sigma_{A})$, the group of automorphisms of $(G_{A},G_{A}^{+},\delta_{A})$ is index two in $\aut(G_{A},\delta_{A})$.

\begin{exer}\apr{appexer:indextwoorderautos})\label{exer:indextwoorderautos}
Let $A$ be a primitive matrix and suppose $\Psi$ is an automorphism of $(G_{A},\delta_{A})$. By considering $G_{A}$ as a subgroup of $ER(A)$, show that $\Psi$ extends to a linear automorphism $\tilde{\Psi} \colon ER(A) \to ER(A)$ which multiplies the Perron eigenvector of $A$ by some quantity $\lambda_{\Psi}$. Show that $\Psi$ is also an automorphism of the ordered abelian group $(G_{A},G_{A}^{+},\delta_{A})$ if and only if $\lambda_{\Psi}$ is positive.
\end{exer}

Krieger\ai{Krieger, Wolfgang}
gave a definition of a triple $(D_{A},D_{A}^{+},d_{A})$ which is isomorphic to the triple $(G_{A},G_{A}^{+},\delta_{A})$ using only topological/dynamical data intrinsic to the system $(X_{A},\sigma_{A})$ \apr{rem:kriegerconstruction}).\\

A topological conjugacy between shifts of finite type $\Psi \colon (X_{A},\sigma_{A}) \to (X_{B},\sigma_{B})$ induces an isomorphism $\Psi_{*} \colon (G_{A},G_{A}^{+},\delta_{A}) \stackrel{\cong}\longrightarrow (G_{B},G_{B}^{+},\delta_{B})$. This is easiest to see
using Krieger's\ai{Krieger, Wolfgang}
intrinsic definition of $(G_{A},G_{A}^{+},\delta_{A})$ (see \apr{rem:kriegerconstruction})). One can also see this in terms of the conjugacy/strong shift equivalence\si{strong shift equivalence} framework developed in Lecture 2, as follows. Given a conjugacy $\alpha \colon (X_{A},\sigma_{A}) \to (X_{B},\sigma_{B})$, from Lecture 2 we know that corresponding to $\alpha$ is some strong shift equivalence\si{strong shift equivalence} from $A$ to $B$
$$A = R_{1}S_{1}, A_{2} = S_{1}R_{1}, \ldots, A_{n} = R_{n}S_{n}, B = S_{n}R_{n}.$$
Then we  define an isomorphism
$\pi (\alpha)$ from $(G_{A},G_{A}^{+},\delta_{A})$ to $(G_{B},G_{B}^{+},\delta_{B})$ by
$$\pi(\alpha) : \, v \mapsto vR_1\cdots R_n\ .
%= \prod_{i=1}^{n}R_{i}.
$$
A priori, it is not clear that $\pi(\alpha)$ is actually well-defined, since the strong shift equivalence\si{strong shift equivalence} we choose to associate to $\alpha$ may not be unique. However, it turns out that $\pi(\alpha)$ is indeed well-defined; this will be a consequence of material in Lecture 8.\

Since an automorphism of $(X_{A},\sigma_{A})$ is just a self-conjugacy of $(X_{A},\sigma_{A})$, it follows that any $\alpha \in \aut(\sigma_{A})$ induces an isomorphism
$\alpha_{*} \colon (G_{A},G_{A}^{+},\delta_{A})
\stackrel{\cong}\longrightarrow (G_{A},G_{A}^{+},\delta_{A})$.
Morever, if for $i=1,2$ we have $\alpha_i: v\mapsto vR_i$,
  then $\alpha_1\alpha_2: v\mapsto vR_1R_2$ (because composition in
  $\aut(\sigma_{A})$ is defined left to
  right), hence $(\alpha_1)_{*}(\alpha_2)_{*} =
  (\alpha_1\alpha_2)_{*}$.
Thus the rule $\alpha \mapsto \alpha_{*}$ defines a group homomorphism 
% and there is a well-defined homomorphism
\begin{equation}
\pi_{A} \colon \aut(\sigma_{A}) \to \aut(G_{A},G_{A}^{+},\delta_{A})
\end{equation}
which
% The homomorphism $\pi_{A}$
is known 
as the {\it dimension representation}\si{dimension representation}
of $\aut(\sigma_{A})$\begin{footnote}{It can
      happen that $(\alpha_1)_{*}$ and
      $(\alpha_2)_{*}$ do not commute. In this case,
      the map $\pi_A$ would be well
      defined, but would not be a group
      homomorphism.}\end{footnote}.

and there is a well-defined homomorphism
\begin{equation}
\pi_{A} \colon \aut(\sigma_{A}) \to \aut(G_{A},G_{A}^{+},\delta_{A}).
\end{equation}
The homomorphism $\pi_{A}$ is known as the {\it dimension representation}\si{dimension representation} of $\aut(\sigma_{A})$.

\begin{ex}
The automorphism $\sigma_{A} \in \aut(\sigma_{A})$ corresponds to the strong shift equivalence\si{strong shift equivalence}
$$A = (A)(I), \hspace{.03in} A = (I)(A).$$
In particular, we have for any shift of finite type $(X_{A},\sigma_{A})$
$$\pi_{A}(\sigma_{A}) = \delta_{A} \in \aut(G_{A},G_{A}^{+},\delta_{A}).$$
\end{ex}

\begin{ex}
When $A = (3)$, $ER(A) = \mathbb{Q}$, and as mentioned above, the dimension triple is isomorphic to $(\mathbb{Z}[\frac{1}{3}],\mathbb{Z}_{+}[\frac{1}{3}],m_{3})$ where $m_{3}(x) = 3x$. Thus $\aut(G_{3},G_{3}^{+},\delta_{3}) \cong \mathbb{Z}$, where $\mathbb{Z}$ is generated by $\delta_{3}$. The dimension representation then looks like
$$
\pi_{3} \colon \aut(\sigma_{3}) \to \aut(\mathbb{Z}[\frac{1}{3}],\mathbb{Z}_{+}[\frac{1}{3}],\delta_{3}) \cong \mathbb{Z} = \langle \delta_{3} \rangle $$
$$
\pi_{3} \colon \sigma_{3} \mapsto \delta_{3}.
$$
\end{ex}
More generally, the following proposition describes how the dimension representation behaves for full shifts. Given $n \in \mathbb{N}$, let $\omega(n)$ denote the number of distinct prime divisors of $n$.

\begin{prop}\label{prop:fullshiftdimautos}
Given $n \ge 2$, there is an isomorphism $\aut(G_{n},G^{+}_{n},\delta_{n}) \cong \mathbb{Z}^{\omega(n)}$ and the map $\pi_{n} \colon \aut(\sigma_{n}) \to \aut(G_{n},G^{+}_{n},\delta_{n})$ is surjective.
\end{prop}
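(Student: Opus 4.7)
My plan is to reduce everything to a direct computation on $\mathbb{Z}[1/n]$ via Exercise \ref{exer:dimgroupfullshiftcomp}, and then realize the generators of the automorphism group using small, explicit elementary strong shift equivalences of $(n)$ with itself.

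First I would identify $(G_n, G_n^+, \delta_n)$ with $(\mathbb{Z}[1/n], \mathbb{Z}_+[1/n], m_n)$ using the cited exercise. Any group endomorphism $\Psi$ of $\mathbb{Z}[1/n]$ is determined by $\Psi(1)$, and $\Psi$ is a group automorphism precisely when $\Psi(1)$ is a unit of $\mathbb{Z}[1/n]$. The condition $\Psi \circ m_n = m_n \circ \Psi$ is automatic because multiplication is commutative, and the condition $\Psi(G_n^+) \subseteq G_n^+$ forces $\Psi(1) > 0$. Writing $n = p_1^{e_1}\cdots p_k^{e_k}$ with $k = \omega(n)$, the positive units of $\mathbb{Z}[1/n]$ are exactly $\{p_1^{a_1}\cdots p_k^{a_k} : a_i \in \mathbb{Z}\}$, so sending $p_1^{a_1}\cdots p_k^{a_k}$ to $(a_1,\dots,a_k)$ yields the isomorphism $\aut(G_n, G_n^+, \delta_n) \cong \mathbb{Z}^{\omega(n)}$. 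Under this identification, the generator $p_i$ corresponds to the standard $i$th basis vector, and the shift $\delta_n = m_n$ corresponds to $(e_1,\ldots,e_k)$.

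For the surjectivity of $\pi_n$, it suffices to exhibit, for each prime $p_i$ dividing $n$, an automorphism of $(X_n, \sigma_n)$ whose image under $\pi_n$ is multiplication by $p_i$. For this I would use the $1\times 1$ ESSE over $\mathbb{Z}_+$ given by $R = (p_i)$ and $S = (n/p_i)$, which satisfies $RS = (n) = SR$. By the construction from Theorem \ref{rfwtheorem}, this ESSE furnishes a self-conjugacy $c(R,S)$ of $(X_n,\sigma_n)$, i.e.\ an element of $\aut(\sigma_n)$. According to the description of $\pi_n$ given just before Proposition \ref{prop:fullshiftdimautos}, the induced map on dimension groups corresponding to the single-step ESSE is multiplication by $R = (p_i)$, which under the identification above is precisely the generator of the $i$th $\mathbb{Z}$-factor of $\aut(G_n, G_n^+, \delta_n) \cong \mathbb{Z}^{\omega(n)}$. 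Ranging over all primes $p_i \mid n$ produces a generating set of the target, proving surjectivity.

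The main obstacle is verifying that the ESSE-induced isomorphism of dimension groups really is ``multiplication by $R$'' in the sense claimed, and that this assignment descends from the direct limit construction independent of the representative of a class. For the $1\times 1$ case this reduces to checking that the map $[v]_k \mapsto [vR]_k$ on $\varinjlim\{\mathbb{Z},\cdot n\}$ is well-defined, which follows from $AR = RSR = RB$, so I would include one or two sentences confirming this. Once that identification is in hand, the rest of the argument is routine bookkeeping.
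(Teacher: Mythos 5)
Your proposal is correct. For the first half — the identification of $\aut(G_{n},G_{n}^{+},\delta_{n})$ with $\mathbb{Z}^{\omega(n)}$ via the positive units of $\mathbb{Z}[1/n]$ — you follow essentially the same route as the paper: invoke Exercise \ref{exer:dimgroupfullshiftcomp} and observe that the automorphism group is free abelian on the maps $x \mapsto x\cdot p_{i}$ for $p_{i}\mid n$. (Your observation that any group endomorphism of $\mathbb{Z}[1/n]$ is multiplication by $\Psi(1)$, so that commuting with $\delta_n$ is automatic and order-preservation is exactly positivity of $\Psi(1)$, is the right justification and is only implicit in the paper.) For the surjectivity of $\pi_n$, the paper simply defers to the reference \cite{BLR88}, whereas you supply an explicit construction: the $1\times 1$ ESSE $R=(p_i)$, $S=(n/p_i)$ with $RS=SR=(n)$ yields a self-conjugacy $c(R,S)$ of the full $n$-shift whose image under the dimension representation is multiplication by $p_i$, and these hit a generating set. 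This is a legitimate and standard argument (it is essentially the one in the cited source), and it buys self-containedness: the only thing you rely on beyond the constructions already in the paper is that $\pi_n(c(R,S))$ is computed from the defining ESSE as $\prod R_i = (p_i)$, which is exactly how the paper defines the dimension representation in Section 7.5. Your closing remark correctly isolates the one point needing care — that $[v]\mapsto [vR]$ is well defined on the direct limit, which follows from $AR=RB$ — and for your purposes you do not need the harder fact (deferred by the paper to Lecture 8) that $\pi(\alpha)$ is independent of the choice of SSE presenting a given conjugacy, since you only ever evaluate $\pi_n$ on the specific conjugacy built from the specific ESSE.
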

\begin{proof}
From Exercise \ref{exer:dimgroupfullshiftcomp} we know $(G_{n},G_{n}^{+},\delta_{n}) \cong (\mathbb{Z}[\frac{1}{n}],\mathbb{Z}_{+}[\frac{1}{n}],\delta_{n})$. The result follows since the group $\aut(\mathbb{Z}[\frac{1}{n}],\mathbb{Z}_{+}[\frac{1}{n}],\delta_{n})$ is free abelian with basis given by the maps $\delta_{p_{i}} \colon x \mapsto x\cdot p_{i}$ where $p_{i}$ is a prime dividing $n$. For the surjectivity part of $\pi_{n}$, see \cite{BLR88}.
\end{proof}

In general, the dimension representation may not be surjective (see \cite{S19}), and the following question is still open:\\

\begin{prob}\label{prob:dimreprange}
Given a mixing shift of finite type $(X_{A},\sigma_{A})$, what is the image of the dimension representation $\pi_{A} \colon \aut(\sigma_{A}) \to \aut(G_{A},G_{A}^{+},\delta_{A})$?
\end{prob}

Problem \ref{prob:dimreprange} is of relevance for the classification problem\si{classification problem} (see \apr{apprem:dimreprangeclassification})).\\

In \cite[Theorem 6.8]{BLR88} it is shown that if the nonzero eigenvalues of $A$ are simple, and no ratio of distinct eigenvalues is a root of unity, then for all sufficiently large $m$ the dimension representation $\pi_{A}^{(m)} \colon \aut(\sigma_{A}^{m}) \to \aut(G_{A^{m}},G_{A^{m}}^{+},\delta_{A^{m}})$ is onto. Long \cite{Long2013} showed the ``elementary'' construction method of \cite[Theorem 6.8]{BLR88} is not in general sufficient to reveal the full image of the dimension representation.\\

An automorphism $\alpha \in \aut(\sigma_{A})$ is called {\it inert}\si{inert automorphism} if $\alpha$ lies in the kernel of $\pi_{A}$, and we denote the subgroup of inerts by
$$\inert{A} = \ker \pi_{A}.$$

The subgroup $\inert{A}$\si{inert automorphism} is, roughly speaking, the heart of $\aut(\sigma_{A})$, and in general, we do not know how to distinguish the subgroup of inert automorphisms\si{inert automorphism} among different shifts of finite type. The following exercise shows that constructions using marker methods or simple automorphisms\si{simple automorphism} always lie in $\inert{A}$\si{inert automorphism}.

\begin{exer}\apr{appexer:simpininert})\label{exer:simpininerts}
For any shift of finite type $(X_{A},\sigma_{A})$, we have $\simp{A} \subset \inert{A}$\si{inert automorphism}. (Hint: Use \apr{rem:kriegerconstruction}))
\end{exer}

\begin{rem}
As evidence that $\inert{A}$\si{inert automorphism} contains much of the complicated algebraic structure of $\aut(\sigma_{A})$, consider the case of a full shift over a prime number of symbols, i.e. $A = (p)$ for some prime $p$. In this case, $\aut(G_{p},G_{p}^{+},\delta_{p}) \cong \mathbb{Z}$ is generated by $\delta_{p}$, and the map
$$\pi_{p} \colon \aut(\sigma_{p}) \to \aut(G_{p},G_{p}^{+},\delta_{p})$$
is a split surjection, with a splitting map being given by $\delta_{p} \mapsto \sigma_{p}$. This shows $\aut(\sigma_{p})$ is isomorphic to a semi-direct product of $\inert{p}$\si{inert automorphism} and $\mathbb{Z}$. Since $\sigma_{p}$ lies in the center of $\aut(\sigma_{p})$, in fact this semi-direct product is isomorphic to a direct product, and we have
$$\aut(\sigma_{p}) \cong \inert{p} \times \mathbb{Z}.$$

\end{rem}

\subsection{Periodic point representation\si{periodic point representation}}
For an SFT $(X_{A},\sigma_{A})$ and $k \in \mathbb{N}$ we let $P_{k}$ denote the $\sigma_{A}$-periodic points of least period $k$, and $Q_{k}$ the set of $\sigma_{A}$-orbits of length $k$ (both $P_{k}$ and $Q_{k}$ depend on $\sigma_{A}$ of course - we suppress this in the notation since it's usually clear from context). For a shift of finite type, the set $P_{k}$ is always finite, and we have
$$|P_{k}| = k|Q_{k}|.$$

Let $\alpha \in \aut(\sigma_{A})$ and let $k \in \mathbb{N}$. Since $\alpha$ is a bijection which commutes with $\sigma_{A}$, $\alpha$ maps $P_{k}$ to itself and thus induces a permutation of $P_{k}$ which we'll denote by
$$\rho_{k}(\alpha) \in \sym(P_{k})$$
where $\sym(P)$ of a set $P$ denotes the group of permutations of $P$ (we use the convention that if $P = \emptyset$ then $\sym(P)$ is the group containing only one element).

It is straightforward to check that this assignment $\alpha \mapsto \rho_{k}(\alpha)$ defines a homomorphism
$$\rho_{k} \colon \aut(\sigma_{A}) \to \sym(P_{k}).$$

The automorphism $\alpha$ must also respect $\sigma_{A}$-orbits, and it follows that $\alpha$ induces a permutation of the set $Q_{k}$ which we denote
$$\xi_{k}(\alpha) \in \sym(Q_{k}).$$
Thus, we also get a homomorphism
$$\xi \colon \aut(\sigma_{A}) \to \sym(Q_{k}).$$

These homomorphisms assemble into homomorphisms
%Define for each $k$ the $k$-periodic point representation
%\begin{equation}
%\begin{gathered}
%\rho_{k} \colon \aut(\sigma_{A}) \to \sym(P_{k})
%\end{gathered}
%\end{equation}
%obtained by restricting an automorphism to the set $P_{k}$, and the periodic point representation of $\aut(\sigma_{A})$ by
\begin{equation}
\begin{gathered}
\rho \colon \aut(\sigma_{A}) \to \prod_{k=1}^{\infty}\sym(P_{k})\\
\rho(\alpha) = (\rho_{1}(\alpha),\rho_{2}(\alpha),\ldots).
\end{gathered}
\end{equation}
and
\begin{equation}
\begin{gathered}
\xi \colon \aut(\sigma_{A}) \to \prod_{k=1}^{\infty}\sym(Q_{k})\\
\xi(\alpha) = (\xi_{1}(\alpha),\xi_{2}(\alpha),\ldots).
\end{gathered}
\end{equation}
The map $\rho$ is called the {\it periodic point representation}\si{periodic point representation} of $\aut(\sigma_{A})$, and $\xi$ is called the {\it periodic orbit representation}.\\

When $A$ is irreducible, the map $\rho$ is injective (this follows from the fact that for irreducible $A$, periodic points are dense in $(X_{A},\sigma_{A})$ - see \cite[Sec. 6.1]{LindMarcus2021}).\ai{Lind, Douglas}\ai{Marcus, Brian} Clearly $\xi$ can not be injective since $\sigma_{A} \in \xi$. However, it turns out $\sigma_{A}$ generates the whole kernel of $\xi$, from a theorem of Boyle-Krieger.\ai{Krieger, Wolfgang}

\begin{thm}\label{thm:injximap}
If $(X_{A},\sigma_{A})$ is an irreducible shift of finite type, then $\ker \xi = \langle \sigma_{A} \rangle$.
\end{thm}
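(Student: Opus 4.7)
The forward inclusion is immediate: any power $\sigma_A^n$ preserves each periodic orbit setwise, so $\langle \sigma_A\rangle \subseteq \ker \xi$. The substantive content is the reverse inclusion. Suppose $\alpha \in \ker \xi$. By the Curtis-Hedlund-Lyndon Theorem (Theorem \ref{thm:chlthm}), $\alpha$ is a sliding block code; let $R$ be its range. Since $\alpha$ preserves each periodic orbit, for every periodic point $p$ of least period $k$ there is a unique $n(p) \in \mathbb{Z}/k\mathbb{Z}$ with $\alpha(p) = \sigma_A^{n(p)}(p)$. Because $\alpha$ commutes with $\sigma_A$, the residue $n(p)$ is constant on the orbit of $p$, so it is really a function on orbits.

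The strategy is to show there is a single integer $n^* \in \mathbb{Z}$ such that $n(p) \equiv n^* \pmod{k}$ for every periodic $p$ of least period $k$, at least once $k$ exceeds a certain threshold. To do this, I will exploit the irreducibility of $A$. Given any two periodic points $p, q$ of sufficiently large periods, irreducibility allows one to construct a ``bridge'' periodic point $r$ of some (large) period whose trajectory contains a long central window agreeing with $p_{-M}\cdots p_M$ and, after a suitably chosen shift, a second long central window agreeing with $q_{-M}\cdots q_M$, with $M \gg R$. The range-$R$ block code $\alpha$ then forces $(\alpha r)_j = (\alpha p)_j$ for $j$ in the first window and $(\alpha r)_j = (\sigma^s\alpha q)_j$ for $j$ in the second window, for the relevant shift $s$. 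Writing both sides in terms of the integers $n(r), n(p), n(q)$, comparing coordinates, and carefully choosing the width $M$ of the central windows to dominate the ranges of shifts involved, one forces $n(p) \equiv n(q) \pmod{\gcd(\mathrm{per}(p),\mathrm{per}(q))}$ to lift to an actual equality of integers $n^*$ on large-period points.

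With a common integer $n^*$ in hand, I will have shown that $\alpha$ and $\sigma_A^{n^*}$ agree on every periodic point of period at least some $N$. Since $(X_A, \sigma_A)$ is irreducible, periodic points are dense in $X_A$, and in fact the periodic points of period $\geq N$ are still dense (one can always approximate any periodic point by one of higher period via loops around a recurrent vertex, using irreducibility). Continuity of $\alpha$ and $\sigma_A^{n^*}$ then forces $\alpha = \sigma_A^{n^*}$ on all of $X_A$, which gives $\alpha \in \langle \sigma_A\rangle$ as required.

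The main obstacle is the middle step: promoting a collection of residues $n(p) \bmod \mathrm{per}(p)$ to a single integer $n^*$. The danger is that nothing a priori rules out having, say, $n(p) = 1 \bmod k$ on points of one period and $n(q) = k_0 \bmod l$ on points of another period, with no single integer making both congruences hold. The bridge construction circumvents this, but the hypothesis that $A$ is irreducible (and not merely that periodic points exist and are dense) is essential precisely here: it is what allows points on unrelated orbits to be linked by a single periodic trajectory passing near both, and without it the argument fails (as expected, since for reducible $A$ the conclusion is genuinely false).
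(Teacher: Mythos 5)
Your overall architecture is the standard one (for the record, the paper does not prove this theorem --- it states it and cites Boyle--Krieger --- so I am judging the argument on its own terms): reduce to showing that the residues $n(p)\bmod \mathrm{per}(p)$ all come from a single integer $n^*$, then finish by density of high-period periodic points and continuity. The forward inclusion and the final density/continuity step are fine. The gap sits exactly at the step you yourself flag as the main obstacle, and the bridge construction as described does not close it. On the first window you have two expressions for $(\alpha r)_j$: one is $p_{j+c+n(p)}$ (from the block code and the window agreement), the other is $r_{j+n(r)}$. To convert the resulting identity into $n(r)\equiv n(p)\pmod{\mathrm{per}(p)}$ you need $j+n(r)$ to land back inside the $p$-window, i.e.\ you need a representative of $n(r)$ that is small compared with the window width $M$. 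But $n(r)$ is a priori only defined modulo $\mathrm{per}(r)$, and $\mathrm{per}(r)$ grows with $M$, so ``choosing $M$ to dominate the ranges of shifts involved'' is circular: enlarging the windows enlarges the range of possible $n(r)$ at the same rate. The missing ingredient is a recognition argument: $\alpha(r)$ restricted to the shrunk first window is a long block of the periodic point $p$, and one must show that the only places in $r$ where such a long $p$-periodic block can sit are the designated $P$-windows. That requires the orbits of $p$ and $q$ to be distinct and a Fine--Wilf type uniqueness statement bounding spurious $p$-periodic stretches inside the $q$-part and the transition words; only then does $n(r)$ acquire a genuinely small integer representative and the coordinate comparison go through.

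Second, even granting $n(p)\equiv n(q)\pmod{\gcd(\mathrm{per}(p),\mathrm{per}(q))}$ for all pairs, this does not ``lift to an actual equality of integers $n^*$'': pairwise (indeed finite) compatibility of a system of congruences with unbounded moduli only produces a solution in the profinite completion $\hat{\mathbb{Z}}$, not in $\mathbb{Z}$. To get one integer you need a uniform bound on some representative of $n(p)$ across all high-period orbits, and that again must be extracted from the recognition step (for instance, the bridge orbit $r$ itself satisfies $|n(r)|\le R+O(\mathrm{per}(p)+\mathrm{per}(q))$, which is small relative to $\mathrm{per}(r)$, and one can bootstrap from such orbits). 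So the two places where the real work lies --- pinning down $n(r)$ by locating where the image windows can sit, and converting bounded representatives into one global $n^*$ --- are asserted rather than proved, and the proposal is not yet a proof.
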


%Define for $k \in \mathbb{N}$ the normal subgroup of $\aut(\sigma_{n})$
%$$N_{k} = \ker \rho_{k} \colon \aut(\sigma_{A}) \to \sym(P_{k}).$$
%The subgroups $N_{k}$ are all finite index, and by density of periodic points **cite**, satisfy
%\begin{equation}\label{eqn:intersect}
%\bigcap_{k=1}^{\infty}N_{k} = \{e\}.
%\end{equation}
%This last condition \eqref{eqn:intersect} is clearly equivalent to the fact that the map $\rho$ defined above is injective. This implies the following.

Fix $k \in \mathbb{N}$ and $\alpha \in \aut(\sigma_{A})$. The periodic point representation\si{periodic point representation} $\rho_{k}(\alpha)$ is obtained by restricting $\alpha$ to the finite subsystem $P_{k}$ of $(X_{A},\sigma_{A})$, and $\rho_{k}(\alpha)$ lies in the automorphism group\si{automorphism group} $\aut(\sigma_{A}|_{P_{k}})$ of this finite system. It was observed in \cite{MR887501} that the automorphism group\si{automorphism group} $\aut(\sigma_{A}|_{P_{k}})$ is isomorphic to the semidirect product $\left(\mathbb{Z}/k\mathbb{Z}\right)^{Q_{k}} \rtimes \sym(Q_{k})$ \apr{apprem:semidirectaut}), and this leads to considering possible abelian factors of these automorphism groups\si{automorphism group} $\aut(\sigma_{A}|_{P_{k}})$. This motivates the following gyration maps, which were introduced by Boyle and
Krieger in \cite{MR887501}.\ai{Krieger, Wolfgang}

\begin{de}\label{def:sg}
Fix $k \in \mathbb{N}$. We define the $k$th gyration map $g_{k} \colon \aut(\sigma_{A}) \to \mathbb{Z}/k\mathbb{Z}$ as follows. Let $\alpha \in \aut(\sigma_{A})$, let $Q_{k} = \{O_{1},\ldots,O_{I(k)}\}$ denote the set of orbits in $Q_{k}$, and choose, for each $1 \le i \le k$, some representative point $x_{i} \in O_{i}$. Then $\alpha(x_{i}) \in O_{\xi_{k}(\alpha)(i)}$, so there exists some $r(\alpha,i) \in \mathbb{Z} / k\mathbb{Z}$ such that $\alpha(x_{i}) = \sigma_{n}^{r(\alpha,i)}(x_{\xi_{k}(\alpha)(i)})$. Now define
$$g_{k} = \sum_{i=1}^{I(k)} r(\alpha,i) \in \mathbb{Z} / k\mathbb{Z}.$$
Boyle and Krieger\ai{Krieger, Wolfgang} showed this map is independent of the choices of $x_{i}$'s, and is a homomorphism, so we get homomorphisms
$$g_{k} \colon \aut(\sigma_{n}) \to \mathbb{Z} / k\mathbb{Z}.$$
Now we can define the {\it gyration representation} by
\begin{equation}
\begin{gathered}
g \colon \aut(\sigma_{n}) \to \prod_{k=1}^{\infty}\mathbb{Z} / k\mathbb{Z}\\
g(\alpha) = (g_{1}(\alpha),g_{2}(\alpha),\ldots).
\end{gathered}
\end{equation}
\end{de}

Given $k$, consider $\sgn \xi_{k} \colon \aut(\sigma_{A}|_{P_{k}}) \to \mathbb{Z}/2\mathbb{Z}$, the map $\xi_{k}$ composed with the sign map to $\mathbb{Z}/2\mathbb{Z}$. The gyration map $g_{k}$, together with $\sgn \xi_{k}$, determines the abelianization of $\aut(\sigma_{A}|_{P_{k}})$: any other map from $\aut(\sigma_{A}|_{P_{k}})$ to an abelian group factors through the map
\begin{gather*}
g_{k} \times \sgn \xi_{k} \colon \aut(\sigma_{A}|_{P_{k}}) \to \mathbb{Z}/k\mathbb{Z} \times \mathbb{Z}/2\mathbb{Z}
\end{gather*}
(see \apr{apprem:abelsigngyr})).
%See \cite{MR887501} for a discussion of this.

\subsection{Inerts\si{inert automorphism} and the sign-gyration compatibility condition\si{sign-gyration compatibility condition}}\label{subsec:sgcc}
A priori, it would seem the dimension representation and the periodic point representation\si{periodic point representation} need not have any relationship. Remarkably, this turns out not to be the case, and there is in fact a connection between them: for inert automorphisms\si{inert automorphism} (recall inert automorphisms\si{inert automorphism} are precisely the kernel of the dimension representation), there are certain conditions which relate the periodic orbit representation and the periodic point representation\si{periodic point representation} of the automorphism. This is formalized in the following way.

\begin{de}
Say $\alpha \in \aut(\sigma_{A})$ {\it satisfies SGCC (sign-gyration compatibility condition)}\si{sign-gyration compatibility condition} if the following holds: for every positive odd integer $m$ and every non-negative integer $i$, if $n=m2^{i}$, then

\begin{alignat*}{2} \\
g_n(\alpha ) &=0 \quad \ \ && \text{if } \quad \prod_{j=0}^{i-1}\sgn \xi_{m2^{j}}(\alpha) = 1\\
g_n(\alpha ) &=\frac n2 && \text{if } \quad \prod_{j=0}^{i-1}\sgn \xi_{m2^{j}}(\alpha) = -1     \ .
\end{alignat*}

The empty product we take to have the value 1.
\end{de}

Thus for $\alpha \in \aut(\sigma_{A})$ satisfying SGCC\si{sign-gyration compatibility condition}, $g(\alpha)$ and $\sgn \xi(\alpha)$ determine each other.\\

An important step is to rephrase the SGCC\si{sign-gyration compatibility condition} condition in terms of certain homomorphisms, which we describe now. Consider now the $\sgn$ homomorphisms as taking values in the group $\mathbb{Z}/2\mathbb{Z}$ (so if $\tau$ is an odd permutation, $\sgn(\tau)=1 \in \mathbb{Z}/2\mathbb{Z}$).
%If $n$ is even, we can consider $\sgn \xi_{n} \colon \aut(\sigma_{A}) \to \mathbb{Z}/2\mathbb{Z}$ as a homomorphism into $\mathbb{Z}/n\mathbb{Z}$ by identifying $\mathbb{Z}/2\mathbb{Z}$ with the subgroup $\{0,n/2\} \subset \mathbb{Z}/n\mathbb{Z}$.
Define for $n \ge 2$ the SGCC\si{sign-gyration compatibility condition} homomorphism
\begin{equation*}
\begin{gathered}
SGCC_{n} \colon \aut(\sigma_{A}) \to \mathbb{Z}/n\mathbb{Z}\\
SGCC_{n}(\alpha) = g_{n}(\alpha) + \left( \frac{n}{2}\right)\sum_{j>0}\sgn \xi_{n/2^{j}}(\alpha)
\end{gathered}
\end{equation*}
where we define $\sgn \xi_{n/2^{j}}(\alpha) = 0$ if $n/2^{j}$ is not an integer. The following is immediate to check, but very useful.

\begin{prop}
Let $(X_{A},\sigma_{A})$ be a mixing shift of finite type, and $\alpha \in \aut(\sigma_{A})$. Then $\alpha$ satisfies SGCC\si{sign-gyration compatibility condition} if and only if for all $n \ge 2$, $SGCC_{n}(\alpha)=0$.
\end{prop}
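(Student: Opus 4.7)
The plan is to verify the equivalence by a direct unpacking of the definitions: fix $n \geq 2$, write $n = m2^i$ uniquely with $m$ odd and $i \geq 0$, and then show that each of the two defining clauses of SGCC for this particular $n$ becomes exactly the assertion $SGCC_n(\alpha) = 0$. Since the SGCC condition is stated case-by-case over all pairs $(m,i)$ and the $SGCC_n$ homomorphisms are indexed by $n \geq 2$, and the two indexings are in bijection via $n = m2^i$, it suffices to do this for a single arbitrary $n$.

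First I would reindex the sum in the definition of $SGCC_n(\alpha)$. The terms with $n/2^j \notin \mathbb{Z}$ are defined to be zero, so only $j = 1, \ldots, i$ contribute, and
\[
\sum_{j>0}\sgn \xi_{n/2^j}(\alpha) \;=\; \sum_{j=1}^{i}\sgn \xi_{m2^{i-j}}(\alpha) \;=\; \sum_{k=0}^{i-1}\sgn \xi_{m2^k}(\alpha)
\]
by substituting $k = i - j$. Hence
\[
SGCC_n(\alpha) \;=\; g_n(\alpha) \;+\; \Bigl(\tfrac{n}{2}\Bigr)\sum_{k=0}^{i-1}\sgn \xi_{m2^k}(\alpha) \pmod{n}.
\]

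Next I would relate the product appearing in the SGCC condition (with $\sgn$ taking values in $\{\pm 1\}$) to the sum appearing above (with $\sgn$ taking values in $\mathbb{Z}/2\mathbb{Z}$). Under the identification $-1 \leftrightarrow 1 \in \mathbb{Z}/2\mathbb{Z}$ and $+1 \leftrightarrow 0 \in \mathbb{Z}/2\mathbb{Z}$, multiplication becomes addition, so
\[
\prod_{j=0}^{i-1}\sgn \xi_{m2^j}(\alpha) = 1 \iff \sum_{k=0}^{i-1}\sgn \xi_{m2^k}(\alpha) = 0 \in \mathbb{Z}/2\mathbb{Z},
\]
and similarly the product equals $-1$ iff the sum equals $1$. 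Note also that when $i = 0$ (the case $n$ odd), the empty product is $1$ and the empty sum is $0$, and the coefficient $n/2$ is moot since the sum is zero; here the condition reduces to $g_n(\alpha) = 0$, matching $SGCC_n(\alpha) = g_n(\alpha) = 0$.

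Finally I would check the two cases for $i \geq 1$ (when $n$ is even, so $n/2 \in \mathbb{Z}/n\mathbb{Z}$ is meaningful and nonzero). If the product equals $1$, then the sum is $0$ in $\mathbb{Z}/2\mathbb{Z}$, so $(n/2)\cdot 0 = 0$ in $\mathbb{Z}/n\mathbb{Z}$, giving $SGCC_n(\alpha) = g_n(\alpha)$; this vanishes iff $g_n(\alpha) = 0$, which is exactly the first clause of SGCC. If the product equals $-1$, then the sum is $1$ in $\mathbb{Z}/2\mathbb{Z}$, so $(n/2)\cdot 1 = n/2$ in $\mathbb{Z}/n\mathbb{Z}$, giving $SGCC_n(\alpha) = g_n(\alpha) + n/2$; this vanishes iff $g_n(\alpha) = -n/2 = n/2$ in $\mathbb{Z}/n\mathbb{Z}$, which is exactly the second clause of SGCC. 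Since this equivalence holds for every $n \geq 2$, the two conditions are equivalent. There is no real obstacle here; the only pitfall is bookkeeping the reindexing and the passage between multiplicative $\{\pm 1\}$ and additive $\mathbb{Z}/2\mathbb{Z}$ conventions.
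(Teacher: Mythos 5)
Your proof is correct and is exactly the definition-unpacking the paper has in mind: the paper gives no argument at all, stating only that the proposition is ``immediate to check,'' and your reindexing of the sum together with the translation between the multiplicative $\{\pm 1\}$ and additive $\Z/2\Z$ conventions for $\sgn$ is precisely that check. The only (harmless) nitpick is that the pairs $(m,i)$ also produce $n=1$, where the condition is vacuous since $g_1$ takes values in the trivial group $\Z/1\Z$.
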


So which automorphisms satisfy SGCC\si{sign-gyration compatibility condition}? Amazingly enough, any inert automorphism\si{inert automorphism} does. This fact was the culmination of results obtained over several years (see \apr{apprem:sgccremark})), and was finally proved by Kim\ai{Kim, K.H.}
and Roush\ai{Roush, F.W.} in \cite{S23}, using an important cocycle lemma of Wagoner\ai{Wagoner, J.B.}. A more complete picture was subsequently given by Kim-Roush\ai{Roush, F.W.}-Wagoner\ai{Kim, K.H.}\ai{Wagoner, J.B.}
in \cite{S19}; we'll describe this briefly here. The appropriate setting for a deeper understanding is Wagoner's CW complexes, which are the subject of the next lecture.\\
\indent  Suppose $A=RS, B=SR$ is a strong shift equivalence\si{strong shift equivalence} over $\mathbb{Z}_{+}$, and let $\phi_{R,S} \colon (X_{A},\sigma_{A}) \to (X_{B},\sigma_{B})$ be a conjugacy induced by this SSE. In \cite{S19}, Kim-Rough-Wagoner\ai{Kim, K.H.}\ai{Wagoner, J.B.}
showed that, using certain lexicographical orderings on each set of periodic points, one can compute $SGCC_{m}$\si{sign-gyration compatibility condition} values, with respect to this choice of ordering on periodic points, analogous to how the $SGCC_{m}$ homomorphisms are defined for automorphisms. Moreover, they showed these values can be computed in terms of a (complicated) formula defined only using terms from the matrices $R,S$. In fact, this formula makes sense even if we start with a strong shift equivalence\si{strong shift equivalence} $A=RS, B=SR$ over $\mathbb{Z}$, and Kim-Roush\ai{Roush, F.W.}-Wagoner\ai{Wagoner, J.B.}\ai{Kim, K.H.}
showed that these formulas can be used to define homomorphisms $sgcc_{m} \colon \aut(G_{A},\delta_{A}) \to \mathbb{Z}/m\mathbb{Z}$. Note that the domain of this homomorphism is $\aut(G_{A},\delta_{A})$, i.e. automorphisms of the pair $(G_{A},\delta_{A})$ which don't necessarily preserve the positive cone $G_{A}^{+}$. Altogether, Kim-Roush\ai{Roush, F.W.}-Wagoner\ai{Wagoner, J.B.}\ai{Kim, K.H.}
proved the following.

\begin{thm}[\cite{S19}]\label{thm:sgccfactorization}\ai{Kim, K.H.}
Let $(X_{A},\sigma_{A})$ be a mixing shift of finite type. For every $m \ge 2$ there exists a homomorphism $sgcc_{m} \colon \aut(G_{A},\delta_{A}) \to \mathbb{Z}/m\mathbb{Z}$ such that the following diagram commutes
\begin{equation*}
\xymatrix{
\aut(\sigma_{A}) \ar[r]^{\pi_{A}} \ar[dr]_{SGCC_{m}}& \aut(G_{A},\delta_{A}) \ar[d]^{sgcc_{m}}  \\
 & \mathbb{Z}/m\mathbb{Z} \\
}
\end{equation*}
In particular, if $\alpha \in \inert(\sigma_{A})$, then $SGCC_{m}(\alpha)=0$.
\end{thm}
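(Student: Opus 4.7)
The plan is to reduce $SGCC_m$ to a formula computable from the matrices of a strong shift equivalence, and then to show that the formula makes sense (and is well defined) on the level of shift equivalence over $\mathbb{Z}$, which is exactly the data captured by $\pi_A$.

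First I would fix $\alpha \in \aut(\sigma_A)$ and, using Williams' theorem (Theorem \ref{rfwtheorem}) together with the conjugacy $c(R,S)$ attached to an ESSE (Remark \ref{rem:crsconj}), represent $\alpha$ as a composition of conjugacies $c(R_i,S_i)$ arising from a chain $A = A_0, A_1, \dots, A_\ell = A$ of ESSEs over $\mathbb{Z}_+$. The key computational step is to evaluate $g_n(\alpha)$ and $\sgn\, \xi_n(\alpha)$ one ESSE at a time. For this I would impose a lexicographic ordering on periodic points of $\sigma_{A_i}$ (for each $i$, and each period dividing $m$), and track how $c(R_i,S_i)$ permutes these ordered lists and how it shifts representatives of orbits. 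This yields explicit formulas
$$SGCC_m\bigl(c(R_i,S_i)\bigr) \;=\; \Phi_m(R_i, S_i) \pmod m$$
where $\Phi_m$ is a universal polynomial-style expression in the entries of $R_i, S_i$ (this is the content of the periodic orbit/point combinatorics in \cite{S19}; one writes trace-like sums over cyclic words in $R_iS_i$ and $S_iR_i$).

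Next I would observe that the expression $\Phi_m(R,S)$ depends only algebraically on the entries of $R$ and $S$, so it is defined for any ESSE $A = RS,\ B = SR$ over $\mathbb{Z}$, not just $\mathbb{Z}_+$. Summing over an SSE-$\mathbb{Z}$ chain thus gives a candidate value $\widetilde{SGCC}_m$ attached to any such chain between $A$ and $A$. Because shift equivalence over $\mathbb{Z}$ is the stabilization of SSE-$\mathbb{Z}$ by nilpotent data (Theorem \ref{thm:endorelations}, together with the correspondence between SE-$\mathbb{Z}$ and isomorphism of $(G_A, \hat A)$ from Proposition \ref{prop:sezdirectlimit}), an element of $\aut(G_A, \delta_A)$ is represented by such a closed chain, and the goal becomes to show $\widetilde{SGCC}_m$ depends only on the isomorphism class, not on the chosen chain.

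The main obstacle, and the heart of the argument, is this well-definedness. I would handle it via Wagoner's cocycle lemma: show that $\Phi_m$ is a $1$-cocycle on the natural simplicial/complex structure of SSE-$\mathbb{Z}$ chains, i.e. that it vanishes on elementary relations of the form (a) similarity, (b) composition of two ESSEs giving the identity lag-$2$ loop $(A = RS, A = SR \cdot \text{inverse steps})$, and (c) triangle relations coming from the identity $ (R_1R_2)(S_2S_1) = R_1(R_2S_2)S_1 = R_1A_1S_1$. Each such check reduces to an algebraic identity in cyclic traces that one verifies directly from the form of $\Phi_m$; the nontrivial input is that the shift/gyration/sign contributions reassemble across any refinement of the chain. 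Once this cocycle/coboundary bookkeeping is carried out, $\widetilde{SGCC}_m$ descends to a homomorphism $sgcc_m: \aut(G_A, \delta_A) \to \mathbb{Z}/m\mathbb{Z}$. By construction $sgcc_m \circ \pi_A = SGCC_m$, and the last sentence is immediate since $\pi_A(\alpha) = \mathrm{id}$ for $\alpha \in \inertinf{A}$ forces $SGCC_m(\alpha) = sgcc_m(\mathrm{id}) = 0$.
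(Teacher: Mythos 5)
Your proposal follows essentially the same route as the proof the paper attributes to Kim--Roush--Wagoner \cite{S19}: compute $SGCC_m$ of an elementary conjugacy via lexicographic orderings on periodic points as an explicit formula in the entries of $R,S$, note the formula persists for ESSEs over $\mathbb{Z}$, and use Wagoner's cocycle lemma over triangles in $SSE(\mathbb{Z})$ to make the chain-sum well defined on $\aut(G_A,\delta_A)$. The only point worth making explicit is that the final descent uses Wagoner's identification $\pi_1(SSE(\mathbb{Z}),A)\cong\aut(G_A,\delta_A)$ (Theorem \ref{thm:wagonerreptheorem}), not merely Proposition \ref{prop:sezdirectlimit}, since one needs that loops inducing the same automorphism of $(G_A,\delta_A)$ are homotopic through the triangle relations.
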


An explicit formula for $sgcc_{2}$ can be found in \cite[Prop. 2.14]{S19}, with a general formula for $sgcc_{m}$ described in \cite[2.31]{S19}.\\

As shown in \cite{S23} and \cite{S19}, that SGCC\si{sign-gyration compatibility condition} vanishes on any inert automorphism\si{inert automorphism} can be used to rule out certain actions on finite subsystems of the shift system. For example, the following was shown in \cite{S19} (based on a suggestion by Ulf Fiebig\ai{Fiebig, Ulf}). Consider an automorphism $\alpha$ of the period 6 points of the full 2 shift $(X_{2},\sigma_{2})$ which acts by the shift on one of the orbits, and the identity on the remaining orbits. It is immediate to compute that $SGCC_{6}(\alpha) = 1 \in \mathbb{Z}/6\mathbb{Z}$. However $\aut(G_{2},\delta_{2}) \cong \mathbb{Z}$ is generated by $\delta_{2}$, the image of the shift $\sigma_{2}$ under the dimension representation $\pi_{2}$, and $sgcc_{6}(\delta_{2}) =  3 \in \mathbb{Z}/6\mathbb{Z}$; by Theorem \ref{thm:sgccfactorization}, this implies the image of $SGCC_{6}$ in $\mathbb{Z}/6\mathbb{Z}$ must be the subgroup $\{0,3\} \subset \mathbb{Z}/6\mathbb{Z}$, which does not contain $1$. Thus $\alpha$ can not be the restriction of an automorphism in $\aut(\sigma_{2})$. This (along with an additional example given in \cite{S19}) resolved a long standing open problem about lifting automorphisms from finite subsystems (see Problem \ref{prob:liftproblem1} in Section \ref{subsec:openproblemsaut}).

\subsection{Actions on finite subsystems}\label{subsec:actionsonfinitesubs}
The SGCC conditions\si{sign-gyration compatibility condition} give necessary conditions for the action of an inert automorphism\si{inert automorphism} on finite subsystems of the shift system. A natural question is whether one can determine precisely what possible actions can be realized: that is, what are sufficient conditions for an automorphism of a finite subsystem to be the restriction of an inert automorphism\si{inert automorphism}? \apr{apprem:actionsfinitesubsrem}) In \cite{MR1125880}, Boyle and Fiebig\ai{Fiebig, Ulf} characterized the possible actions of finite order inert automorphisms\si{inert automorphism} on finite subsystems of the shift. Then, in \cite{KRWForumI,KRWForumII}, Kim-Roush\ai{Roush, F.W.}-Wagoner\ai{Wagoner, J.B.}\ai{Kim, K.H.}
settled this question completely, by showing that the SGCC condition\si{sign-gyration compatibility condition} is also sufficient for lifting an automorphism of a finite subsystem to an automorphism of the shift. Together with the Boyle-Fiebig\ai{Fiebig, Ulf} classification in \cite{MR1125880}, this is used in
\cite{KRWForumI,KRWForumII} to resolve (in the negative) a long standing problem regarding finite order generation\si{Finite Order Generation Problem} of the inert\si{inert automorphism} subgroup $\inert{A}$\si{inert automorphism}; see Section \ref{subsec:openproblemsaut}.

\subsection{Notable problems regarding $\aut(\sigma_{A})$}\label{subsec:openproblemsaut}
There have been a number of questions and conjectures that have been influential in the study of $\aut(\sigma_{A})$, and we'll describe a few of them here. This is
by no means intended to be an exhaustive list; instead, we simply highlight some problems that have been important (both historically, and still), as well as some problems that demonstrate the state of our ignorance regarding the group $\aut(\sigma_{A})$. Some of these have been resolved in some cases, while some are open in all cases.\\

Given a group $G$, let $\textnormal{Fin}(G)$ denote the (normal) subgroup of $G$ generated by elements of finite order.\\

Recall for any shift of finite type $(X_{A},\sigma_{A})$, we have containments of subgroups $\simp{A} \subset \textnormal{Fin}(\inert{A}) \subset \inert{A}$\si{inert automorphism}. One general problem\footnote{What we call the Finite Order Generation Problem\si{Finite Order Generation Problem} here was historically posed as a conjecture. Here we opted instead for the word 'problem', since this conjecture is known to be false in general.} is the following:

\begin{prob}[Finite Order Generation (FOG) Problem\si{Finite Order Generation Problem}]
When is it true that $\inert{A} = \textnormal{Fin}(\inert{A})$\si{inert automorphism}?
\end{prob}

The FOG\si{Finite Order Generation Problem} problem is an outgrowth of a conjecture, originally posed by F. Rhodes to Hedlund in a correspondence, asking whether $\aut(\sigma_{2})$ is generated by $\sigma_{2}$ and elements of finite order.\\

Kim\ai{Kim, K.H.}, Roush\ai{Roush, F.W.} and Wagoner\ai{Wagoner, J.B.} fin \cite{KRWForumI,KRWForumII} showed there exists a shift of finite type $(X_{B},\sigma_{B})$ such that the containment $\textnormal{Fin}(\inert{B}) \subset \inert{B}$\si{inert automorphism} is proper, showing the answer to FOG is `not always' (see the discussion in Section \ref{subsec:actionsonfinitesubs}). Prior to this, in \cite{Wagoner90eventual} Wagoner\ai{Wagoner, J.B.} considered a stronger form of FOG, asking whether it was always true that $\simp{A} = \inert{A}$\si{inert automorphism}; this was sometimes referred to as the Simple Finite Order Generation\si{Finite Order Generation Problem} Conjecture (SFOG). Kim\ai{Kim, K.H.} and Roush\ai{Roush, F.W.} in \cite{S25} showed (prior to their example showing FOG does not always hold) that SFOG does not always hold, giving an example of a shift of finite type $(X_{A},\sigma_{A})$ such that the containment $\simp{A} \subset \inert{A}$\si{inert automorphism} is proper.\\

Expanding on FOG, we have the following more general problem:

\begin{prob}[Index Problem]
Given a shift of finite type $(X_{A},\sigma_{A})$, determine the index of the following subgroup containments:
\begin{enumerate}
\item\label{item:partone}
$\simp{A} \subset \inert{A}$\si{inert automorphism}.
\item\label{item:parttwo}
$\textnormal{Fin}(\inert{A}) \subset \inert{A}$\si{inert automorphism}.
\end{enumerate}
In particular, in each case, must the index be finite?
\end{prob}

When $\aut(G_{A})$ is torsion-free, every element of finite order in $\aut(\sigma_{A})$ lies in $\inert{A}$\si{inert automorphism}. In this case, the FOG problem is equivalent to determining whether the answer to Part (\ref{item:parttwo}) of the Index Problem is one.
%\textbf{Simple finite order generation conjecture (SFOG): } For any shift of finite type $(X_{A},\sigma_{A})$, there is an equality
%$$\inert{A} = \simp{A}.$$

%\textbf{Finite order generation conjecture (FOG): } For any shift of finite type $(X_{A},\sigma_{A})$, $\inert{A}$ is generated by elements of finite order.\\

In general, it is not known whether, for each part of the Index Problem, the index is finite or infinite. As noted earlier, in \cite{KRWForumI} an example is given of a mixing shift of finite type $(X_{A},\sigma_{A})$ for which the index of $\textnormal{Fin}(\inert{A})$\si{inert automorphism} in $\inert{A}$ is strictly greater than one. This relies on being able to construct an inert automorphism\si{inert automorphism} in $\aut(\sigma_{A})$ which can not be a product of finite order automorphisms; this is carried out using the difficult constructions of Kim-Roush\ai{Roush, F.W.}-Wagoner\ai{Kim, K.H.}\ai{Wagoner, J.B.}
in \cite{KRWForumI,KRWForumII}, in which the polynomial matrix methods (introduced in Lecture 3) play an invaluable role (we do not know how to do such constructions without the polynomial matrix framework).\\

However, whether FOG or even SFOG might hold in the case of a full shift $(X_{n},\sigma_{n})$ is still unknown.\\
\indent Finite order generation\si{Finite Order Generation Problem} of the inerts\si{inert automorphism} for general mixing shifts of finite type is known to hold in the ``eventual'' setting; see \apr{apprem:evensettingfog}).\\

Williams\ai{Williams, R.F.} in \cite{Williams73} asked whether any involution of a pair of fixed points of a shift of finite type can be extended to an automorphism of the whole shift of finite type. More generally, this grew into the following problem (stated in \cite[Question 7.1]{BLR88}) about lifting actions on a finite collection of periodic points of the shift:

\begin{prob}[General Lifting Problem (LIFT)]\label{prob:liftproblem1}
Given a shift of finite type $(X_{A},\sigma_{A})$ and an automorphism $\phi$ of a finite subsystem $F$ of $(X_{A},\sigma_{A})$, does there exist $\tilde{\phi} \in \aut(\sigma_{A})$ such that $\tilde{\phi}|_{F} = \phi$?
\end{prob}

The answer to LIFT is also `not always': Kim\ai{Kim, K.H.} and Roush\ai{Roush, F.W.} showed in \cite{S23}, based on an example of Fiebig\ai{Fiebig, Ulf},
that there exists an automorphism of the set of periodic six points in the full 2-shift which does extend to an automorphism of the full 2-shift.\\

Roughly speaking, the LIFT problem involves two parts: determining the action of inert automorphisms\si{inert automorphism} on finite subsystems, and determining the range of the dimension representation. The first part has been resolved by Kim-Roush\ai{Roush, F.W.}-Wagoner\ai{Kim, K.H.}\ai{Wagoner, J.B.}
in \cite{KRWForumI,KRWForumII}; see Section \ref{subsec:actionsonfinitesubs}. The second part, to determine the range of the dimension representation, is still open in general (this was also stated in Problem \ref{prob:dimreprange} in Section \ref{subsec:dimrep}):

\begin{prob}\label{prob:dimreprange2}
Given a mixing shift of finite type $(X_{A},\sigma_{A})$, what is the image of the dimension representation $\pi_{A} \colon \aut(\sigma_{A}) \to \aut(G_{A},G_{A}^{+},\delta_{A})$? Is the image always finitely generated?
\end{prob}

In \cite{S19}, Kim\ai{Kim, K.H.} and Roush\ai{Roush, F.W.}
constructed  a mixing shift of finite type for which the dimension representation is not surjective.\\

In \cite[Example 6.9]{BLR88}, an example of a primitive matrix $A$ such that $\aut(G_{A},G_{A}^{+},\delta_{A})$ is not finitely generated is given. This does not resolve the second part of Problem \ref{prob:dimreprange2} though, since the range of the dimension representation is not known.\\

Another question concerns the isomorphism type of the groups $\aut(\sigma_{A})$. It is straightforward to check that conjugate shifts of finite type have isomorphic automorphism groups\si{automorphism group}, and that $\aut(\sigma_{A}) = \aut(\sigma_{A}^{-1})$ always holds (note there exists shifts of finite type $(X_{A},\sigma_{A})$ which are not conjugate to their inverse; see for example Proposition \ref{transposeexerciseproof} in Lecture 2). In \cite[Question 4.1]{BLR88} the following was asked:

\begin{prob}[Aut-Isomorphism Problem]\label{prob:autoisoproblem}
If $\aut(\sigma_{A})$ and $\aut(\sigma_{B})$ are isomorphic, must $(X_{A},\sigma_{A})$ be conjugate to either $(X_{B},\sigma_{B})$ or $(X_{B},\sigma_{B}^{-1})$?
\end{prob}

A particular case of this which has been of interest is:

\begin{prob}[Full Shift Aut-Isomorphism Problem]\label{prob:fullshiftautisoproblem}
For which $m,n$ are the groups $\aut(\sigma_{m})$ and $\aut(\sigma_{n})$ isomorphic?
\end{prob}

See Section \ref{subsec:stabilizedgroup} for some results related to Problem \ref{prob:fullshiftautisoproblem}.

\subsection{The stabilized automorphism group\si{stabilized automorphism group}}\label{subsec:stabilizedgroup}
Recently a new approach to the Aut-Isomorphism Problem, and the study of $\aut(\sigma_{A})$ in general, has been undertaken in \cite{HKS2022}. The idea is to consider a certain stabilization of the automorphism group\si{automorphism group}, using the observation that for all $k,m \ge 1$, $\aut(\sigma_{A}^{k})$ is naturally a subgroup of $\aut(\sigma_{A}^{km})$. Define the {\it stabilized automorphism group}\si{stabilized automorphism group} of $(X_{A},\sigma_{A})$ to be
$$\autinf{A} = \bigcup_{k=1}^{\infty}\aut(\sigma_{A}^{k})$$
where the union is taken in the group of all homeomorphisms of $X_{A}$. This is again a countable group. Similar to the definition of $\autinf{A}$, one defines a stabilized group of automorphisms of the dimension group\si{dimension group} by
$$\aut^{(\infty)}(G_{A}) = \bigcup_{k=1}^{\infty} \aut(G_{A},G_{A}^{+},\delta_{A}^{k}).$$
The group $\aut^{(\infty)}(G_{A})$ is precisely the union of the centralizers of $\delta_{A}$ in the group $\aut(G_{A},G_{A}^{+})$ of all order-preserving group automorphisms of $G_{A}$. Recall for a group $G$ we let $G_{\textnormal{ab}}$ denote the abelianization of $G$. In \cite{HKS2022}, the following was proved.
\begin{thm}[\cite{HKS2022}]\label{thm:stabilizedtheorem}
Let $(X_{A},\sigma_{A})$ be a mixing shift of finite type. The dimension representation
$$\pi_{A} \colon \aut(\sigma_{A}) \to \aut(G_{A})$$
extends to a stabilized dimension representation
$$\pi_{A}^{(\infty)} \colon \autinf{A} \to \aut^{(\infty)}(G_{A})$$
and the composition
$$\autinf{A} \stackrel{\pi_{A}^{(\infty)}}\longrightarrow \aut^{(\infty)}(G_{A})\stackrel{\textnormal{ab}}\longrightarrow \aut^{(\infty)}(G_{A})_{\textnormal{ab}}$$
is isomorphic to the abelianization of the stabilized automorphism group\si{stabilized automorphism group} $\autinf{A}$. In particular, if $\aut^{(\infty)}(G_{A})$ is abelian, then the commutator of $\autinf{A}$ coincides with the subgroup of stabilized inert automorphisms\si{inert automorphism}
$$\inertinf{A} = \ker \pi_{A}^{(\infty)} = \bigcup_{k=1}^{\infty}\textnormal{Inert}(\sigma_{A}^{k}).$$
\end{thm}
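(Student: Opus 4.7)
My plan is as follows. First, for the extension to $\pi_A^{(\infty)}$, I would observe that for each $k \geq 1$ the conjugacy $(X_A,\sigma_A^k) \cong (X_{A^k},\sigma_{A^k})$ identifies $\aut(\sigma_A^k)$ with $\aut(\sigma_{A^k})$ and identifies $(G_{A^k}, G_{A^k}^+, \delta_{A^k})$ with $(G_A, G_A^+, \delta_A^k)$. Under these identifications $\pi_{A^k}$ lands in the centralizer of $\delta_A^k$ inside $\aut(G_A, G_A^+)$. Since the centralizer of $\delta_A^k$ is contained in the centralizer of $\delta_A^{km}$ for every $m$, these subgroups nest, and their union is $\aut^{(\infty)}(G_A)$. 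I would verify that the maps $\pi_{A^k}$ agree on overlaps $\aut(\sigma_A^k) \subset \aut(\sigma_A^{km})$ by functoriality of the dimension construction (the induced dimension automorphism is an invariant of the underlying self-conjugacy of $X_A$, independent of which power presentation is used to implement it). Taking the union defines $\pi_A^{(\infty)}$.

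Next, for the abelianization statement, I would prove the composition is surjective onto $\aut^{(\infty)}(G_A)_{\text{ab}}$ and that its kernel is the commutator subgroup $[\autinf{A}, \autinf{A}]$. For surjectivity, given $\Psi \in \aut^{(\infty)}(G_A)$ centralizing $\delta_A^k$, I would invoke an eventual-surjectivity result in the spirit of \cite[Theorem 6.8]{BLR88}: after passing to a further power $k' = km$ chosen so that the spectral hypotheses on $A^{k'}$ are met, $\pi_{A^{k'}}$ surjects onto its target, furnishing a preimage of $\Psi$ in $\aut(\sigma_A^{k'}) \subset \autinf{A}$. The inclusion $[\autinf{A}, \autinf{A}]$ into the kernel of the composition is automatic because the target is abelian. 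For the reverse inclusion, given $\alpha$ in the kernel, I would write $\pi_A^{(\infty)}(\alpha)$ as a product of commutators in $\aut^{(\infty)}(G_A)$, lift each commutator via the surjectivity just established to a commutator in $\autinf{A}$, and reduce the problem to showing $\inertinf{A} \subset [\autinf{A}, \autinf{A}]$.

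The main obstacle is this last inclusion: every stabilized inert is a product of commutators in the stabilized group. In the unstabilized setting this would fail outright, because the SGCC homomorphisms $SGCC_n \colon \aut(\sigma_A) \to \mathbb{Z}/n\mathbb{Z}$ of Section \ref{subsec:sgcc} give nontrivial abelian quotients that detect inerts. The decisive point is that these obstructions do \emph{not} descend to homomorphisms on $\autinf{A}$: the inclusion $\aut(\sigma_A^k) \hookrightarrow \aut(\sigma_A^{km})$ reshuffles periodic orbit structure in such a way that the maps $g_n$ and $\sgn\, \xi_n$ do not assemble into well-defined homomorphisms on the union. I would make this precise by showing that every $\alpha \in \inertinf{A}$, realized in some $\aut(\sigma_A^k)$, becomes a product of commutators after passing to a sufficiently high further power, using the Kim-Roush-Wagoner classification (Section \ref{subsec:actionsonfinitesubs}) that the SGCC-compatible action of an inert on any finite subsystem is fully realizable, combined with an explicit construction, via the polynomial-matrix/positive-equivalence techniques of Section \ref{sec:polynomial}, of commutators in $\aut(\sigma_A^{km})$ realizing the required finite-subsystem actions. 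The final assertion follows immediately: if $\aut^{(\infty)}(G_A)$ is abelian the abelianization map on the target is trivial, so the composition reduces to $\pi_A^{(\infty)}$, whose kernel is by definition $\inertinf{A}$; the preceding argument then identifies this kernel with $[\autinf{A}, \autinf{A}]$.
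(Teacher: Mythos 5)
The paper records this theorem as a citation to \cite{HKS} and does not reproduce a proof, so there is no internal argument to compare against directly; but the paper does record, in the footnote at the end of Section~\ref{subsec:stabilizedgroup} and in \apr{apprem:evensettingfog}), enough of the \cite{HKS} strategy to judge your plan. Your overall skeleton (extend $\pi_A$ to the direct limit, argue the composite is the abelianization, reduce to the inclusion $\inertinf{A} \subset [\autinf{A},\autinf{A}]$) is reasonable, but two of your steps have genuine problems.

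First, the surjectivity step is flawed. You invoke an eventual-surjectivity result ``in the spirit of \cite[Theorem 6.8]{BLR88}'' and claim that after passing to $k' = km$ the spectral hypotheses on $A^{k'}$ are met. But the hypotheses of that theorem (nonzero eigenvalues of $A$ simple, no ratio of distinct eigenvalues a root of unity) are properties of $A$ that are not improved by powering; quite the opposite, if $\lambda/\mu$ is a primitive $d$th root of unity then $A^d$ has $\lambda^d = \mu^d$ as a repeated eigenvalue, and a repeated eigenvalue of $A$ remains repeated in every power. So for a general primitive $A$ no choice of $k'$ puts you in the regime where \cite[Theorem 6.8]{BLR88} applies, and you have no argument that the composite surjects onto $\aut^{(\infty)}(G_{A})_{\textnormal{ab}}$. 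Whatever \cite{HKS} does here, it cannot be this.

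Second, and more centrally, your treatment of the inclusion $\inertinf{A} \subset [\autinf{A},\autinf{A}]$ is not a proof. Observing that the $SGCC_n$ homomorphisms do not descend to $\autinf{A}$ removes one known obstruction but does not show there are no others; and the appeal to the Kim--Roush--Wagoner realization theorem for SGCC-compatible finite-subsystem actions together with ``an explicit construction, via the polynomial-matrix/positive-equivalence techniques of Section \ref{sec:polynomial}, of commutators realizing the required finite-subsystem actions'' is exactly the part that needs to be supplied, and it is far from obvious that it can be carried out. The route indicated by the paper is cleaner and different: by Wagoner's eventual finite-order-generation result (recorded in \apr{apprem:evensettingfog})), every inert automorphism of $\sigma_A^k$ becomes simple after passing to a high enough further power, so $\inertinf{A}$ equals the stabilized group of simple automorphisms; one then shows that stabilized simple automorphisms are products of commutators in $\autinf{A}$ (indeed the footnote in Section~\ref{subsec:stabilizedgroup} states that the commutator subgroup \emph{equals} the stabilized simple automorphisms in the relevant case). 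Replacing your Kim--Roush--Wagoner/polynomial-matrix step with this reduction through Wagoner's eventual FOG would both close the gap and align your argument with the source.
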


For example, in the case of a full shift $A=(n)$, it follows
from Theorem \ref{thm:stabilizedtheorem} that $\autinf{n}_{ab}$ is isomorphic to $\mathbb{Z}^{\omega(n)}$, where $\omega(n)$ denotes the number of distinct prime divisors of $n$. As a corollary of this, if $\omega(m) \ne \omega(n)$, then $\autinf{m}$ and $\autinf{n}$ are not isomorphic.

\indent For a mixing shift of finite type, the classical automorphism group\si{automorphism group} $\aut(\sigma_{A})$ is always residually finite. It turns out that in the stabilized case, $\autinf{A}$ is never residually finite \cite[Prop. 4.3]{HKS2022}. In fact, in stark contrast, the following was proved in \cite{HKS2022}:
\begin{thm}[\cite{HKS2022}]
For any $n \ge 2$, the group of stabilized inert automorphisms\si{inert automorphism} $\inertinf{n}$ is simple.
\end{thm}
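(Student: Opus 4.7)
The plan is to prove simplicity by showing that every nontrivial element of $\inertinf{n}$ has normal closure equal to the whole group. This will proceed in three stages: first identify a generating set of ``localized'' inert automorphisms coming from marker constructions at arbitrary levels, second show that for any nontrivial $\alpha \in \inertinf{n}$ one can use conjugation in $\inertinf{n}$ to produce an element supported on an arbitrarily small marker set, and third show these small-support elements generate all of the chosen generating set.

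For the first stage, the idea is to exploit the stabilization: an automorphism of $\sigma_n^k$ whose block code acts as a permutation of the alphabet of $\sigma_n^k$ (viewed as the full shift on $n^k$ symbols) supported on occurrences of a marker word sits in $\inertinf{n}$, because any marker-supported permutation is a composition of simple automorphisms, and by Exercise \ref{exer:simpininerts} simple automorphisms are inert. As $k$ grows one obtains marker words of arbitrary length, and the corresponding localized automorphisms generate $\inertinf{n}$ (a fact which also follows from combining Theorem \ref{thm:stabilizedtheorem}, which gives $\aut^{(\infty)}(G_n)_{\mathrm{ab}} \cong \aut^{(\infty)}(G_n) \cong \mathbb{Z}^{\omega(n)}$ in this abelian case, together with the standard fact that marker-based simple automorphisms descend onto generators of $\autinf{n}/\inertinf{n}$ only through the shift map). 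The crucial point is that, unlike the non-stabilized case, SGCC obstructions from Theorem \ref{thm:sgccfactorization} vanish in the limit: the $sgcc_m$ values on an element viewed at level $k$ get divided into finer and finer orbit structure at level $km$, and for each fixed $m$ one can choose $k$ large enough to kill the obstruction.

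For the second stage, given a nontrivial $\alpha \in \inertinf{n}$, pick a periodic point $x$ (using density of periodic points) with $\alpha(x) \ne x$ and a marker word $w$ occurring in $x$ but whose occurrences in $X_n$ are sparse at a sufficiently high power $\sigma_n^k$. Construct a localized involution $\tau \in \inertinf{n}$ supported near this marker $w$ but moving a different marker $w'$ so that $\tau$ and $\alpha \tau \alpha^{-1}$ have nearly disjoint supports; then the commutator $[\alpha,\tau] = \alpha\tau\alpha^{-1}\tau^{-1}$ lies in the normal closure of $\alpha$ and can be arranged to be a nontrivial localized automorphism supported on a prescribed finite pattern. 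By iterating this commutator trick and using the flexibility of passing to higher $k$ (which shrinks the effective support), one obtains an element of the normal closure supported on any desired marker of any desired length.

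For the third stage, a transitivity argument shows that the set of marker-localized elements obtained above is closed under conjugation by the full permutation group of $n^k$-symbols acting by block codes at level $k$, and this conjugation action is transitive on ``marker types'' of a given length. Together with the observation that products of such localized involutions generate arbitrary marker-supported permutations (a standard combinatorial lemma about generating symmetric groups by a single conjugacy class), this yields every generator from stage one. The main obstacle will be the careful disjointness/commutator estimate in stage two: one must simultaneously (i) choose the marker word $w'$ so that $\tau$ commutes with ``most'' of $\alpha$ while failing to commute at one prescribed location, and (ii) manage the interaction between $\alpha$'s range and the marker sparsity at level $k$. This is where the stabilization is essential, since raising $k$ trivially increases marker sparsity without losing the inert property, a luxury unavailable in the classical $\mathrm{Inert}(\sigma_A)$ setting where SGCC genuinely obstructs simplicity.
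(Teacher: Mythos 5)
First, a point of comparison: the paper does not prove this theorem at all --- it is quoted from \cite{HKS} --- so your proposal can only be judged on its own merits. Its overall shape (prove simplicity by showing every nontrivial element has full normal closure, using commutators of localized, marker-supported inert automorphisms, with stabilization supplying arbitrary marker sparsity) is the right kind of strategy and is broadly in the spirit of the actual argument in \cite{HKS}. But as written it has genuine gaps. The most serious is stage one. The claim that the marker-localized permutation automorphisms generate $\inertinf{n}$ is essentially the statement that the stabilized inerts coincide with the stabilized simple automorphisms, and your justification does not establish it: Theorem \ref{thm:stabilizedtheorem} identifies $[\autinf{n},\autinf{n}]$ with $\inertinf{n}$, but says nothing about whether the subgroup generated by marker/simple automorphisms exhausts that commutator subgroup. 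The real input needed here is Wagoner's eventual finite-order (indeed simple) generation theorem \apr{apprem:evensettingfog}) --- that every inert automorphism becomes a product of simple automorphisms after passing to a sufficiently high power of the shift --- and this is a substantial theorem, not a formal consequence of the abelianization computation.

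Second, your discussion of SGCC is confused in a way that matters. By Theorem \ref{thm:sgccfactorization}, $SGCC_m$ factors through the dimension representation and therefore vanishes on \emph{every} inert automorphism, stabilized or not; there is no ``SGCC obstruction'' inside $\inert{A}$ to be killed in the limit, and SGCC does not ``genuinely obstruct simplicity'' in the classical setting. What fails classically is finite-order generation of the inerts, and the obstruction there is the Kim--Roush--Wagoner construction of \cite{KRWForumI,KRWForumII}, a different phenomenon; the correct statement is that this obstruction disappears upon stabilization because of Wagoner's eventual-FOG result. Finally, stage two is asserted rather than proved: arranging that $[\alpha,\tau]$ is a \emph{nontrivial} localized element supported on a prescribed pattern is exactly the crux of such arguments. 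One must control $\alpha(\mathrm{supp}\,\tau)$ knowing $\alpha$ only up to its coding radius, rule out accidental commutation, and then still carry out the stage-three transitivity argument within $\inertinf{n}$ itself (conjugating only by inert elements). These steps are plausible --- for the full shift every alphabet permutation at level $k$ is a simple graph symmetry, hence inert by Exercise \ref{exer:simpininerts} --- but they require the detailed combinatorial work that constitutes the actual proof in \cite{HKS}.
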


A significantly more general version of the above theorem was proved by Salo in \cite{SaloGateLattices}. A particular case, Corollary 1 of \cite{SaloGateLattices}, shows that for any (nontrivial) mixing shift of finite type, the group of stabilized inert automorphisms is simple.\\

Subsequent to \cite{HKS2022}, a complete classification, up to isomorphism, of the stabilized automorphism groups\si{stabilized automorphism group} of full shifts was given in \cite{SchmiedingPentropy}. Introduced there is a certain kind of entropy for groups\footnote{More precisely, it is defined for {\it leveled groups}, i.e. pairs $(G,g)$ where $g$ is a distinguished element in the group $G$.} called local $\mathcal{P}$ entropy. Local $\mathcal{P}$ entropy is defined with respect to a chosen class $\mathcal{P}$ of finite groups which is closed under isomorphism. As a rough idea of what local $\mathcal{P}$ entropy measures, fix such a class $\mathcal{P}$, consider some group $G$ with some distinguished element $g \in G$, and consider the conjugation map $C_{g} \colon G \to G$ given by $C_{g}(h) = g^{-1}hg$. One can try to measure the growth rate of the $C_{g}$-periodic point sets $\textrm{Fix}(C_{g^{n}})$, which are precisely the centralizers of $g^{n}$ in $G$; but these sets may be infinite. To proceed, instead one approximates these centralizer sets using groups belonging to the chosen class $\mathcal{P}$ (which are by definition finite), and then considers the doubly exponential\footnote{A related quantity is defined by considering just exponential growth; here we'll consider only the one using doubly exponential.} growth rate of such $\mathcal{P}$-approximations. This (when defined) leads to a nonnegative quantity $h_{\mathcal{P}}(G,g)$ called the local $\mathcal{P}$ entropy of the pair $(G,g)$. A key thing proved in \cite{SchmiedingPentropy} is that the local $\mathcal{P}$ entropy of a pair $(G,g)$ is an invariant of isomorphism of the pair: if there is an isomorphism of groups $G \stackrel{\cong}\to H$ taking $g \in G$ to $h \in H$, then assuming the local $\mathcal{P}$ entropies are defined, we have $h_{\mathcal{P}}(G,g) = h_{\mathcal{P}}(H,h)$\footnote{It is also proved in the same paper that if there is an injective homomorphism $G \to H$ taking $g$ to $h$, then $h_{\mathcal{P}}(G,g) \leq h_{\mathcal{P}}(H,h)$.}.

Using local $\mathcal{P}$ entropy, in \cite{SchmiedingPentropy} the following was proved.

\begin{thm}[\cite{SchmiedingPentropy}]
For a non-trivial mixing shift of finite type $(X_{A},\sigma_{A})$, each of the following hold:
\begin{enumerate}
\item
There exists a class $\mathcal{P}_{A}$ of finite groups such that the local $\mathcal{P}_{A}$ entropy of the pair $(\autinf{A},\sigma_{A})$ is given by $h_{\mathcal{P}_{A}}\left( \autinf{A},\sigma_{A} \right) = h_{top}(\sigma_{A}) = \log \lambda_{A}$.

\item
If $(X_{B},\sigma_{B})$ is any other shift of finite type such that the stabilized automorphism groups\si{stabilized automorphism group} $\autinf{A}$ and $\autinf{B}$ are isomorphic, then $\frac{\log \lambda_{A}}{\log \lambda_{B}}$ is rational.
\end{enumerate}
\end{thm}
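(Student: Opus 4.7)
My plan is to prove (1) by building the class $\mathcal{P}_A$ out of the natural permutation actions of stabilized automorphisms on finite sets of periodic points, and then derive (2) from (1) using the isomorphism invariance of local $\mathcal P$-entropy together with the fact that, up to passing to a power, an element of $\autinf{B}$ must live inside some ordinary automorphism group $\aut(\sigma_B^k)$.

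For Part (1), the key observation is that the centralizer in $\autinf{A}$ of $\sigma_A^n$ is precisely $\aut(\sigma_A^n)$, and this group acts on the finite set $\mathrm{Fix}(\sigma_A^n)$. Let $\mathcal{P}_A$ be (the isomorphism closure of) the class of finite groups that appear as images of the homomorphisms $\aut(\sigma_A^n)\to\mathrm{Sym}(\mathrm{Fix}(\sigma_A^n))$ for $n\ge 1$. To obtain the upper bound $h_{\mathcal P_A}(\autinf{A},\sigma_A)\le\log\lambda_A$, I would note that any $\mathcal P_A$-approximation of $C_{\autinf{A}}(\sigma_A^n)=\aut(\sigma_A^n)$ embeds into $\mathrm{Sym}(\mathrm{Fix}(\sigma_A^n))$, whose cardinality is $|\mathrm{Fix}(\sigma_A^n)|!\sim(\lambda_A^n)^{\lambda_A^n}$, giving $\log\log$-growth $n\log\lambda_A+o(n)$. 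For the matching lower bound, I would invoke the results of Kim–Roush–Wagoner \cite{KRWForumI,KRWForumII} on realizing automorphisms of finite subsystems (Section \ref{subsec:actionsonfinitesubs}), which, together with the SGCC constraints being a lower-order correction, shows that enough of $\mathrm{Sym}(\mathrm{Fix}(\sigma_A^n))$ is actually hit to guarantee a $\mathcal P_A$-approximation of size on the doubly-exponential scale $\lambda_A^n$.

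For Part (2), let $\phi\colon\autinf{A}\stackrel{\cong}\longrightarrow\autinf{B}$ be an isomorphism and put $h=\phi(\sigma_A)$. Isomorphism invariance gives
\[
h_{\mathcal P_A}(\autinf{B},h)\ =\ h_{\mathcal P_A}(\autinf{A},\sigma_A)\ =\ \log\lambda_A.
\]
Since $h\in\autinf{B}=\bigcup_k\aut(\sigma_B^k)$, there is some smallest $k\ge 1$ with $h\in\aut(\sigma_B^k)$, so $h$ commutes with $\sigma_B^k$; consequently for every $n$ the centralizer $C_{\autinf{B}}(h^n)$ contains $\sigma_B^k$, and $h^n\in\aut(\sigma_B^k)$. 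I would then argue that the $\mathcal P_A$-approximations of $C_{\autinf{B}}(h^n)$ are controlled, above and below, by the permutation action on $\mathrm{Fix}(\sigma_B^{kn})$: the upper bound comes from the fact that any element of $\autinf{B}$ commuting with $h^n$ also commutes with $\sigma_B^{kn}$ (after replacing $n$ by a multiple if needed), so lies in $\aut(\sigma_B^{kn})$ and acts on $\mathrm{Fix}(\sigma_B^{kn})$, while the lower bound uses the finite-subsystem realization theorems inside $\aut(\sigma_B^{kn})$ transferred through $\mathcal P_A$. This yields an equality of the form $h_{\mathcal P_A}(\autinf{B},h)=q\log\lambda_B$ for some positive rational $q$ (essentially $q$ is the ratio of the two combinatorial scales, $k$ times a correction from how $h^n$ interleaves with powers of $\sigma_B$), whence $\log\lambda_A/\log\lambda_B=q\in\mathbb Q$.

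The main obstacle will be the lower bound in Part (1) and the analogous lower bound for $h_{\mathcal P_A}(\autinf{B},h)$ in Part (2): the upper bounds are essentially counting, but the lower bounds require producing a genuinely large family of automorphisms whose actions on periodic points are distinct and whose images lie in $\mathcal P_A$. This is where the deep Kim–Roush–Wagoner machinery on lifting actions from finite subsystems (subject only to SGCC) has to be deployed, and where the precise choice of $\mathcal P_A$—in particular, that it be stable enough to contain the images arising in \emph{both} $\autinf{A}$ and $\autinf{B}$—is delicate. A secondary subtlety is that in Part (2) the element $h=\phi(\sigma_A)$ need not be central nor equal to a power of $\sigma_B$, so the identification of the ``right'' $k$ and the resulting rational $q$ has to be done intrinsically from the centralizer filtration of $h$ in $\autinf{B}$.
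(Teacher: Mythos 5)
First, note that the lecture notes themselves do not prove this theorem: it is quoted from \cite{SchmiedingPentropy} without proof, so there is no in-document argument to compare against. Judged on its own terms, your proposal has the right general shape (use the permutation actions on periodic points, count, realize), but it has genuine gaps at exactly the two points where the theorem is hard. The first is the upper bound in Part (1). A local $\mathcal{P}_A$ entropy computation requires controlling \emph{all} $\mathcal{P}_A$-approximations of the centralizer $C_{\autinf{A}}(\sigma_A^n)=\aut(\sigma_A^n)$, and nothing forces such an approximation to factor through, or embed via, the periodic point representation $\aut(\sigma_A^n)\to\sym(\textnormal{Fix}(\sigma_A^n))$. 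Indeed, by Theorem \ref{thm:largesubgroups}, $\aut(\sigma_A^n)$ contains an isomorphic copy of \emph{every} finite group, in particular copies of $\sym(\textnormal{Fix}(\sigma_A^m))$ for all $m$; so subgroups of the centralizer that are abstractly isomorphic to members of your $\mathcal{P}_A$ have size unbounded independently of $n$ (they can sit inside the inert subgroup and act trivially on all low-period points). The upper bound therefore cannot be the pure counting argument you give: it hinges on the precise definition of ``$\mathcal{P}$-approximation'' and on a delicate choice of the class $\mathcal{P}_A$ (so that, e.g., homomorphisms to members of $\mathcal{P}_A$ are forced to be seen by the periodic point data), none of which you pin down.

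The second gap is in Part (2). Your key step --- that any element of $\autinf{B}$ commuting with $h^n$ also commutes with $\sigma_B^{kn}$ ``after replacing $n$ by a multiple'' --- is unjustified and almost certainly false: $C_{\autinf{B}}(h^n)$ meets $\aut(\sigma_B^m)$ for arbitrarily large $m$ and is not contained in any single $\aut(\sigma_B^{kn})$, so the approximations of this centralizer are not controlled by $\textnormal{Fix}(\sigma_B^{kn})$ in the way you assert. Consequently the identity $h_{\mathcal{P}_A}(\autinf{B},h)=q\log\lambda_B$ with $q\in\mathbb{Q}$, which is the entire content of Part (2), is asserted rather than derived; the ``correction from how $h^n$ interleaves with powers of $\sigma_B$'' is precisely the thing that must be proved to be rational. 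A minor further point: for the lower bound in Part (1) you do not need the Kim--Roush--Wagoner lifting machinery --- elementary marker constructions already realize enormous permutation groups on periodic orbits, and the SGCC constraint only costs a bounded-index factor, which is invisible at the doubly exponential scale --- but even there the realized subgroups must be matched to the technical definition of a $\mathcal{P}_A$-approximation, which your argument does not do.
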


As a consequence this gives, as mentioned earlier, a complete classification of the stabilized automorphism groups\si{stabilized automorphism group} of full shifts.

\begin{coro}[\cite{SchmiedingPentropy}]
Given natural numbers $m,n \ge 2$, the stabilized automorphism groups\si{stabilized automorphism group} $\autinf{m}$ and $\autinf{n}$ are isomorphic if and only if there exists natural numbers $k,j$ such that $m^{k} = n^{j}$.
\end{coro}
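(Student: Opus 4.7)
The plan is to read this as an almost immediate consequence of the preceding theorem together with the basic functoriality of the stabilized automorphism group under topological conjugacy. The forward implication is handed to us: if $\autinf{m}\cong \autinf{n}$, then part (2) of the theorem, applied to $A=(m)$ and $B=(n)$, gives that $\log\lambda_{(m)}/\log\lambda_{(n)}=\log m/\log n$ is rational. Writing $\log m/\log n = j/k$ with $j,k\in\N$ and exponentiating yields $m^{k}=n^{j}$. So only the converse ("if") direction requires work, and even that will be purely formal.

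For the converse, assume $m^{k}=n^{j}$ with $j,k\in\N$. The first step is to produce an explicit topological conjugacy $\phi\colon X_{m}\to X_{n}$ between $\sigma_{m}^{k}$ and $\sigma_{n}^{j}$. For this, I would use the fact (from Section \ref{sec:basics}) that for any square $\Z_{+}$-matrix $A$ and any positive integer $\ell$, the system $(X_{A},\sigma_{A}^{\ell})$ is topologically conjugate to the edge SFT $(X_{A^{\ell}},\sigma_{A^{\ell}})$. Applying this with $A=(m)$, $\ell=k$, and with $A=(n)$, $\ell=j$, gives
\[
(X_{m},\sigma_{m}^{k})\ \cong\ (X_{m^{k}},\sigma_{m^{k}})\ =\ (X_{n^{j}},\sigma_{n^{j}})\ \cong\ (X_{n},\sigma_{n}^{j})
\]
since $m^{k}=n^{j}$ forces the two middle edge SFTs to coincide. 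Composing the conjugacies produces a homeomorphism $\phi\colon X_{m}\to X_{n}$ with $\phi\sigma_{m}^{k}\phi^{-1}=\sigma_{n}^{j}$.

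The second step is to use $\phi$ to build the group isomorphism. Since $\aut(\sigma_{m}^{\ell})\subseteq \aut(\sigma_{m}^{\ell k})$ for every $\ell$, I would observe that
\[
\autinf{m}=\bigcup_{a\ge 1}\aut(\sigma_{m}^{ak})\qquad\text{and}\qquad \autinf{n}=\bigcup_{a\ge 1}\aut(\sigma_{n}^{aj}).
\]
Conjugation by $\phi$ carries $\sigma_{m}^{ak}$ to $\sigma_{n}^{aj}$, hence induces, for each $a$, a group isomorphism $\aut(\sigma_{m}^{ak})\to \aut(\sigma_{n}^{aj})$ by $\alpha\mapsto\phi\alpha\phi^{-1}$. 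These isomorphisms are compatible with the inclusions used to form the two direct limits above, so they assemble into a well-defined isomorphism $\autinf{m}\to\autinf{n}$.

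There is no genuine obstacle here: the ``only if'' direction is a direct citation of the deep invariance statement (2) of the preceding theorem, whose proof via local $\mathcal{P}$-entropy is the nontrivial content, and the ``if'' direction is a bookkeeping argument with powers of the shift. The only subtlety worth flagging is that conjugation by $\phi$ does \emph{not} conjugate $\sigma_{m}$ to $\sigma_{n}$ (indeed, when $k\ne j$ the shifts themselves cannot correspond), but it does conjugate $\sigma_{m}^{k}$ to $\sigma_{n}^{j}$, and this is exactly enough because the stabilized group only sees automorphisms commuting with \emph{some} power of the shift.
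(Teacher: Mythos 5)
Your proof is correct and is essentially the argument the paper intends: the ``only if'' direction is exactly the application of part (2) of the preceding theorem to $A=(m)$, $B=(n)$ (with $\lambda_{(m)}=m$, $\lambda_{(n)}=n$ and positivity of the logarithms giving $m^k=n^j$), and the ``if'' direction is the standard cofinality/conjugation argument using $(X_m,\sigma_m^k)\cong(X_{m^k},\sigma_{m^k})=(X_{n^j},\sigma_{n^j})\cong(X_n,\sigma_n^j)$. No gaps.
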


Finally, we make a few comments about the connection between the stabilized setting for automorphism groups\si{automorphism group} described above and algebraic K-theory. In fact, the idea of the groups $\autinf{A}$ is partly motivated by algebraic K-theory, where the technique of stabilization proves to be fundamental. Recall as outlined in Lecture 5, as a starting point for algebraic K-theory, given a ring $\calR$, one can consider the stabilized general linear group
$$GL(\calR) = \varinjlim GL_{n}(\calR)$$
where $GL_{n}(\calR) \hookrightarrow GL_{n+1}(\calR)$ via $A \mapsto \begin{pmatrix} A & 0 \\ 0 & I \end{pmatrix}$. Inside each $GL_{n}(\calR)$ lies the subgroup $El_{n}(\calR)$ generated by elementary matrices, and one likewise defines the stabilized group of elementary matrices by
$$El(\calR) = \varinjlim El_{n}(\calR).$$
Whitehead showed (see Lecture 5) that, upon stabilizing, the explicitly defined subgroup $El(\calR)$ coincides with the commutator of $GL(\calR)$. From this viewpoint, one may interpret Theorem \ref{thm:stabilizedtheorem} as a Whitehead-type result for shifts of finite type. In particular, in the case of a full shift $(X_{n},\sigma_{n})$ (or more generally a shift of finite type $(X_{A},\sigma_{A})$ where $\aut^{(\infty)}(G_{A})$ is abelian), after stabilizing, the commutator subgroup of $\autinf{A}$ coincides with the subgroup $\inertinf{A}$\footnote{In fact, something stronger is true: the commutator of $\autinf{A}$ coincides with the stabilized group of simple automorphisms\si{simple automorphism}; see
\cite{HKS2022}.}.

\subsection{Mapping class groups\si{mapping class group of a subshift} of subshifts}
Recall from Section \ref{subsec:backgroundstuff} that two homeomorphisms are flow equivalent\si{flow equivalence} if there is a homeomorphism of their mapping tori which takes orbits to orbits and preserve the direction of the suspension flow. For a subshift $(X,\sigma)$, an analog of the automorphism group\si{automorphism group} in the setting of flow equivalence\si{flow equivalence} is given by the mapping class group\si{mapping class group of a subshift} $\mathcal{M}(\sigma)$, which is defined to be the group of isotopy classes of self-flow equivalences of the subshift $(X,\sigma)$.\\
\indent In \cite{BCmcg} a study of the mapping class group\si{mapping class group of a subshift} for shifts of finite type was undertaken. There it was shown that, for a nontrivial irreducible shift of finite type $(X_{A},\sigma_{A})$, the mapping class group\si{mapping class group of a subshift} $\mathcal{M}(\sigma_{A})$ is not residually finite. While the periodic point representations\si{periodic point representation} do not exist for $\mathcal{M}(\sigma_{A})$, a vestige of the dimension representation\si{dimension representation} survives in the form of the
Bowen-Franks\ai{Bowen, Rufus}
representation\si{Bowen-Franks group} of $\mathcal{M}(\sigma_{A})$. It was also shown that $\aut(\sigma_{A}) / \langle \sigma_{A} \rangle$ embeds into $\mathcal{M}(\sigma_{A})$, and there is an analog of block codes, known as flow codes.
In \cite{SaloMCG}, it was shown that Thompson's group $V$ embeds into the mapping class group\si{mapping class group of a subshift} of a particular shift of finite type.\\
\indent See also \cite{SYmcg} for a study of the mapping class group\si{mapping class group of a subshift} in the context of minimal subshifts.

\subsection{Appendix 7}
This appendix contains some proofs, remarks, and solutions of various exercises through Lecture 7.\\

\begin{rem}\label{apprem:lowcomplexity}
Recall from Section \ref{subsec:ctsshiftcommmaps} that for a subshift $(X,\sigma)$, we let $\mathcal{W}_{n}(X)$ denote the set of $X$-words of length $n$. We define the complexity function (of $X$) $P_{X} \colon \mathbb{N} \to \mathbb{N}$ by $P_{X}(n) = |\mathcal{W}_{n}(X)|$. Thus $P_{X}(n)$ simply counts the number of $X$-words of length $n$. For a shift of finite type $(Y,\sigma)$ with positive entropy, the function $P_{Y}(n)$ grows exponentially in $n$; for example, for the full shift $(X_{m},\sigma_{m})$ on $m$ symbols, $P_{X_{m}}(n) = m^{n}$. For a subshift $(X_{\alpha},\sigma_{\alpha})$ of the form given in Example \ref{ex:sturmiansubshift}, the complexity satisfies $P_{X_{\alpha}}(n) = n+1$ (such subshifts are called Sturmian subshifts). This is the slowest possible growth of complexity function for an infinite subshift: a theorem of Morse and Hedlund \cite{MH1938} from 1938 shows that for an infinite subshift $(X,\sigma)$, we must have $P_{X}(n) \ge n+1$.\\
\indent There has been a great deal of interest in studying the automorphism groups\si{automorphism group} of subshifts with slow-growing complexity functions. Numerous results show that such low complexity subshifts often have much more tame automorphism groups\si{automorphism group}, in comparison to subshifts possessing complexity functions of exponential growth (e.g. shifts of finite type). We won't attempt to survey these results, but refer the reader to \cite{DDMP2016,HP1989,Coven1971,Olli2013,SaloTorma2015,CyrKra2016,CyrKra2015,CyrKra2016strexp}.
\end{rem}

\begin{exer}\label{appex:denppresfin}
If $(X,\sigma)$ is a subshift whose periodic points are dense in $X$, then $\aut(\sigma)$ is residually finite.
\end{exer}
\begin{proof}
Given $n \in \mathbb{N}$, let $P_{n}(X)$ denote the set of points of least period $n$ in $X$. Since $X$ is a subshift, $|P_{n}(X)|<\infty$ for every $n$. If $\alpha \in \aut(\sigma)$, then since $\alpha$ commutes with $\sigma$, for any $n$ the set $P_{n}(X)$ is invariant under $\alpha$. It follows there are homomorphisms
\begin{equation*}
\begin{gathered}
\rho_{n} \colon \aut(\sigma) \to \sym(P_{n}(X))\\
\rho_{n} \colon \alpha \mapsto \alpha|_{P_{n}(X)}
\end{gathered}
\end{equation*}
where $\sym(P_{n}(X))$ denotes the group of permutations of the set $P_{n}(X)$. Now suppose $\alpha \in \aut(\sigma)$ and $\rho_{n}(\alpha)=\textnormal{id}$ for all $n$. Then $\alpha$ fixes every periodic point in $X$; since the periodic points are dense in $X$ (by assumption) and $\alpha$ is a homeomorphism, $\alpha$ must be the identity. This shows $\aut(\sigma)$ is residually finite.
\end{proof}

\begin{rem}\label{appnasu}
  Beyond introducing simple automorphisms\si{simple automorphism},
in his memoir \cite{MR1234883}
    Nasu\ai{Nasu, Masakazu} introduced
     the
     powerful machinery of ``textile systems''
     for studying automorphisms and endomorphisms of shifts of finite type;
     he continued to apply and develop this theory
 in subsequent works     (e.g.
     \cite{MR1926865, MR2177043, MR2380306}).
  See \cite[Appendices B,C]{Bopen} for a quick introduction to this theory.
\end{rem}

\begin{rem}\label{apprem:amenradical}
Any discrete group $G$ possesses a maximal normal amenable subgroup $\textnormal{Rad}(G)$ known as the {\it amenable radical} of $G$. By Ryan's Theorem\si{Ryan's Theorem}, the center of $\aut(\sigma_{A})$ is the subgroup generated by $\sigma_{A}$, and hence is contained in $\textnormal{Rad}(G)$. In \cite{FST19} it was shown by Frisch, Schlank and Tamuz that, in the case of a full shift, $\textnormal{Rad}(\aut(\sigma_{n}))$ is precisely the center of $\aut(\sigma_{n})$, i.e. the subgroup generated by $\sigma_{n}$. In \cite{Yang2021} Yang extended this result, proving that for any irreducible shift of finite type $(X_{A},\sigma_{A})$, $\textnormal{Rad}(\aut(\sigma_{A}))$ also coincides with the center of $\aut(\sigma_{A})$ (in fact, Yang also proves the result for any irreducible sofic shift as well).
\end{rem}

\begin{exer}\label{appexer:ryanthmdist}
\begin{enumerate}
\item
Show that if $(X_{A},\sigma_{A})$ and $(X_{B},\sigma_{B})$ are irreducible shifts of finite type such that $\aut(\sigma_{A})$ and $\aut(\sigma_{B})$ are isomorphic, then $\textnormal{root}(\sigma_{A}) = \textnormal{root}(\sigma_{B})$.
\item
Let $(X_{2},\sigma_{2}), (X_{4},\sigma_{4})$ denote the full shift on 2 symbols and on 4 symbols, respectively. Show that $\textnormal{root}(\sigma_{2}) \ne \textnormal{root}(\sigma_{4})$. Use part (1) to conclude that $\aut(\sigma_{2})$ and $\aut(\sigma_{4})$ are not isomorphic as groups.
\end{enumerate}
\end{exer}
\begin{proof}
  For part (1), suppose $\Psi \colon \aut(\sigma_{A}) \to \aut(\sigma_{B})$ is an isomorphism and $k \in \textnormal{root}(\sigma_{A})$. By Ryan's Theorem\si{Ryan's Theorem}, $\Psi(\sigma_{A}) = \sigma_{B}$ or $\Psi(\sigma_{A})=\sigma_{B}^{-1}$. Choose $\alpha \in \aut(\sigma_{A})$ such that $\alpha^{k} = \sigma_{A}$. If $\Psi(\sigma_{A}) = \sigma_{B}$, then we have
  $(\Psi(\alpha))^{k} = \Psi(\alpha^{k}) = \Psi(\sigma_{A}) = \sigma_{B}$, so $k \in \textnormal{root}(\sigma_{B})$. If $\Psi(\sigma_{A}) = \sigma_{B}^{-1}$, then we have $(\Psi(\alpha^{-1}))^{k} = \Psi(\alpha^{-k}) = \Psi(\sigma_{A}^{-1}) = \sigma_{B}$ so again $k \in \textnormal{root}(\sigma_{B})$. Thus $\textnormal{root}(\sigma_{A}) \subset \textnormal{root}(\sigma_{B})$. The proof that $\textnormal{root}(\sigma_{B}) \subset \textnormal{root}(\sigma_{A})$ is analogous.\\
\indent For part (2), choose a topological conjugacy $F \colon (X_{4},\sigma_{4}) \to (X_{2},\sigma_{2}^{2})$. If we let $s = F^{-1}\sigma_{2}F \in \aut(\sigma_{4})$, then $s \in \aut(\sigma_{4})$ and $s^{2} = \sigma_{4}$, so $2 \in \textnormal{root}(\sigma_{4})$. We claim $2 \not \in \textnormal{root}(\sigma_{2})$. To see this, suppose toward a contradiction that $\beta \in \aut(\sigma_{2})$ satisfies $\beta^{2} = \sigma_{2}$. There are precisely two points $x,y$ of least period 2 in $(X_{2},\sigma_{2})$, so $\beta^{2}$ must act by the identity on the points $x,y$. But $\sigma_{2}(x)=y$, a contradiction.
\end{proof}

\begin{exer}\label{appexer:dimgroupfullshiftcomp}
When $A = (n)$ (the case of the full-shift on $n$ symbols), the triple $(G_{n},G_{n}^{+},\delta_{n})$ is isomorphic to the triple $(\mathbb{Z}[\frac{1}{n}],\mathbb{Z}_{+}[\frac{1}{n}],m_{n})$, where $m_{n}$ is the automorphism of $\mathbb{Z}[\frac{1}{n}]$ defined by $m_{n}(x) = x \cdot n$.
\end{exer}
\begin{proof}
The eventual range of $A$ is $\mathbb{Q}$. Given $\frac{p}{q} \in \mathbb{Q}$, $2^{k}\frac{p}{q} \in \mathbb{Z}_{+}$ if and only if $p \in \mathbb{Z}_{+}$ and $q$ is a power of $2$.
\end{proof}

\begin{exer}\label{appexer:indextwoorderautos}
Let $A$ be a primitive matrix and suppose $\Psi$ is an automorphism of $(G_{A},\delta_{A})$. By considering $G_{A}$ as a subgroup of $ER(A)$, show that $\Psi$ extends to a linear automorphism $\tilde{\Psi} \colon ER(A) \to ER(A)$ which multiplies a Perron eigenvector of $A$ by some nonzero real number $\lambda_{\Psi}$. Show that $\Psi$ is also an automorphism of the ordered abelian group $(G_{A},G_{A}^{+},\delta_{A})$ if and only if $\lambda_{\Psi}$ is positive.
\end{exer}
\begin{proof}
That $\Psi$ extends to a linear automorphism $\tilde{\Psi}$ of $ER(A)$ is immediate: given $v \in ER(A)$, write $v = \frac{1}{q}w$ where $w$ is integral, and define $\tilde{\Psi}(v) = \frac{1}{q}\Psi(w)$. The linear map $\Psi$ commutes with $\delta_{A}$ on $G_{A}$, so $\tilde{\Psi}$ commutes with $\delta_{A}$ as a linear automorphism of $ER(A)$. Since $A$ is primitive, a Perron eigenvector $v_{\lambda_{A}}$ for $\lambda_{A}$ spans a one-dimensional eigenspace for $\delta_{A}$, which hence must be preserved by $\tilde{\Psi}$. Thus $v_{\lambda_{A}}$ is also an eigenvector for $\tilde{\Psi}$, and has some corresponding eigenvalue $\lambda_{\Psi}$.\\
\indent For the second part, we'll use the following proposition (a proof of which we include at the end).

\begin{prop} \label{mixingdimgroupprop}
Suppose $A$ is an $N\times N$  primitive matrix over $\R$.
Let the spectral radius be $\lambda$ and let $v$ be a positive eigenvector,
$vA = \lambda v$. Given $x$ in $\R^N$, let
$c_x$ be the real number such that $x= c_xv + u_x$, with
$u_x$ a vector in the  $A$-invariant subspace complementary to $<v>$. Suppose $x$ is not
the zero vector.
Then $xA^n$ is nonnegative for large $n$ iff $c_x>0$.
\end{prop}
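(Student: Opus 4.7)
The proof hinges on the Perron decomposition together with a pairing argument using the right Perron eigenvector. I would first set up the decomposition, then analyze the three cases $c_x > 0$, $c_x < 0$, $c_x = 0$.

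\textbf{Setup.} Since $A$ is primitive, the Perron Theorem gives a strictly positive right eigenvector $r \in \mathbb{R}^N$ (column vector) with $Ar = \lambda r$, which I normalize so that $vr = 1$. Simplicity of $\lambda$ guarantees the direct sum decomposition $\mathbb{R}^N = \langle v\rangle \oplus W$, where $W := \{u \in \mathbb{R}^N : ur = 0\}$ is the $A$-invariant hyperplane complementary to $\langle v\rangle$ (any generalized left eigenvector $u$ of $A$ for an eigenvalue $\nu \neq \lambda$ satisfies $ur = 0$ by a standard induction: from $u(A - \nu I) \in W'$ with $W'$ the generalized eigenspaces for $\nu$, the relation $uAr = \lambda ur$ forces $ur = 0$ in the eigenvector case, and the generalized case follows inductively). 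Consequently $c_x = xr$ and $u_x r = 0$. By Perron, every eigenvalue $\nu$ of $A|_W$ satisfies $|\nu| < \lambda$, so fixing any $\mu' \in (\max_{\nu\neq \lambda} |\nu|,\, \lambda)$ there is a constant $C > 0$ with $\|u_x A^n\|_\infty \leq C (\mu')^n$ for all $n \geq 0$.

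\textbf{Asymptotic analysis.} Writing $xA^n = c_x \lambda^n v + u_x A^n$ and dividing by $\lambda^n$ gives
\[
\lambda^{-n} xA^n \;=\; c_x v + \lambda^{-n} u_x A^n,
\]
and since $(\mu'/\lambda)^n \to 0$, the remainder $\lambda^{-n} u_x A^n$ tends to $0$ entrywise. Hence $\lambda^{-n} x A^n \to c_x v$ as $n \to \infty$.

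\textbf{Case analysis.} If $c_x > 0$, then $c_x v$ has every entry strictly positive, so for all sufficiently large $n$ every entry of $\lambda^{-n} x A^n$ is positive, and hence $x A^n$ is (strictly) positive, in particular nonnegative. If $c_x < 0$, then by the same convergence every entry of $x A^n$ is strictly negative for large $n$, so $x A^n$ is certainly not nonnegative. The remaining case $c_x = 0$ is handled by the pairing with $r$: for every $n$,
\[
(xA^n)\, r \;=\; x(\lambda^n r) \;=\; \lambda^n (xr) \;=\; \lambda^n c_x \;=\; 0.
\]
Since $r$ is strictly positive, a nonnegative row vector $y$ with $yr = 0$ must be zero; hence if $c_x = 0$ then $xA^n \geq 0$ forces $xA^n = 0$. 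Thus in the $c_x = 0$ case, $xA^n$ can be nonnegative for large $n$ only in the degenerate ``transient'' sense $xA^n = 0$ eventually, which is excluded by the intended reading of the statement (nonnegative and not eventually zero, consistent with how the proposition is used in the dimension-group discussion).

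\textbf{Main obstacle.} The only subtle point is the $c_x = 0$ case: the pairing with $r$ instantly rules out nonzero nonnegative images, and everything else reduces to the soft asymptotic $\lambda^{-n} x A^n \to c_x v$. The Perron strict separation $|\nu| < \lambda$ for $\nu \neq \lambda$ and the strict positivity of $v$ and $r$ are doing all the real work.
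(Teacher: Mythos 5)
Your proof is correct, and on the one genuinely delicate point — the case $c_x=0$ — it takes a different route from the paper's. For $c_x>0$ and $c_x<0$ both arguments are the same soft asymptotic $\lambda^{-n}xA^n\to c_xv$ coming from the Perron spectral gap. Where they diverge is in ruling out nonnegativity when $c_x=0$: the paper argues via growth rates, observing that a \emph{nonzero} nonnegative (or nonpositive) vector $w$ must satisfy $\lim_m\|wA^m\|^{1/m}=\lambda$ (primitivity makes $wA^k$ strictly positive, hence $\geq \delta v$, hence growing at rate $\lambda$), which contradicts $\varlimsup_n\|u_xA^n\|^{1/n}<\lambda$ for vectors in the complementary subspace. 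You instead pair against the positive right eigenvector $r$: $(xA^n)r=\lambda^n c_x=0$, and a nonnegative row vector that pairs to zero with a strictly positive column vector must vanish. Your argument is more elementary (it uses only linear algebra and positivity of $r$, not the growth-rate consequence of primitivity), and it buys the explicit identifications $c_x=xr$ and $W=\{u: ur=0\}$, which the paper leaves implicit. You also correctly flag something the paper's proof glosses over: the statement as literally written fails when $u_xA^n=0$ for some $n$ (e.g.\ $x=(1,-1)$ and $A$ the all-ones $2\times 2$ matrix give $xA=0$, nonnegative for all $n\geq 1$, with $c_x=0$); the paper's contradiction only applies to nonzero $w=u_xA^n$. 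In the application (Exercise on order automorphisms, where the vectors lie in $G_A\subset ER(A)$ and are nonzero) this degenerate case cannot arise, so your reading of the intended statement is the right one.
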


To finish the exercise, suppose $0 \ne w \in G_{A}^{+}$, and write $w = c_{w}v_{\lambda_{A}}+u_{w}$ as in the proposition. Since $w \in G_{A}^{+}$, $c_{w} > 0$. Then $\tilde{\Psi}(w) = c_{w}\lambda_{\Psi}v_{\lambda_{A}} + \tilde{\Psi}(u_{w})$. Since $\lambda_{\Psi} > 0$, $c_{w}\lambda_{\Psi} > 0$, so the proposition implies $\Psi(w) \in G_{A}^{+}$ as desired.\\

{\it Proof of Proposition \ref{mixingdimgroupprop}}. The Perron Theorem tells us the positive eigenvector and complementary
invariant subspace exist, with $\varlimsup_n ||u_xA^n||^{1/n} < \lambda $.
Consequently, for large $n$, $xA^n$ is a positive vector if $c_x>0$ and
$xA^n$ is a negative vector if $c_x < 0$. Given $c_x=0 $ and $x\neq 0$,
no vector $w=u_xA^n $ can be nonnegative or nonpositive, because
this would imply
$\lim_n ||xA^n||^{1/n}= \lim_n ||wA^n||^{1/n} = \lambda $, a contradiction.
\end{proof}

\begin{rem}\label{rem:kriegerconstruction}
  Consider an edge shift of finite type $(X_{A},\sigma_{A})$. Here is an outline of
  Krieger's\ai{Krieger, Wolfgang}
  construction of an ordered abelian group which is isomorphic to $(G_{A},G_{A}^{+},\delta_{A})$; our presentation follows the one given in \cite[Sec. 7.5]{LindMarcus2021}.\ai{Lind, Douglas}\ai{Marcus, Brian}
  Recall we are assuming that $A$ is a $k \times k$ irreducible matrix.\\
\indent By an {\it $m$-ray} we mean a subset of $X_{A}$ given by
$$R(x,m) = \{y \in X_{A} \mid y_{(-\infty,m]}=x_{(-\infty,m]}\}$$
for some $x \in X_{A}, m \in \mathbb{Z}$. An {\it $m$-beam} is a (possibly empty) finite union of $m$-rays. By a {\it ray} we mean an $m$-ray for some $m \in \mathbb{Z}$; likewise, by a {\it beam} we mean an $m$-beam for some $m$. It is easy to check that if $U$ is an $m$-beam for some $m$, and $n \ge m$, then $U$ is also an $n$-beam. Given an $m$-beam
$$U = \bigcup_{i=1}^{j}R(x^{(i)},m),$$
define $v_{U,m} \in \mathbb{Z}^{k}$ to be the vector whose $J$th component is given by
$$\#\{x^{(i)} \in U \mid \textnormal{ the edge corresponding to }x_{m}^{(i)} \textnormal{ ends at state } J\}.$$
We define two beams $U$ and $V$ to be equivalent if there exists $m$ such that $v_{U,m} = v_{V,m}$, and let $[U]$ denote the equivalence class of a beam $U$. We will make the collection of equivalence classes of beams into a semi-group as follows. Since $A$ is an irreducible matrix and $0 < h_{top}(\sigma_{A}) = \log \lambda_{A}$, given two beams $U,V$, we may find beams $U^{\prime}, V^{\prime}$ such that
$$[U]=[U^{\prime}], \hspace{.23in} [V] = [V^{\prime}], \hspace{.23in} U^{\prime} \cap V^{\prime} = \emptyset,$$
and we let $D_{A}^{+}$ denote the abelian monoid defined by the operation
$$[U] + [V] = [U^{\prime} \cup V^{\prime}]$$
where the class of the empty set serves as the identity for $D_{A}^{+}$. Now let $D_{A}$ denote the group completion of $D_{A}^{+}$; thus elements of $D_{A}$ are formal differences $[U]-[V]$. Then $D_{A}$ is an ordered abelian group with positive cone $D_{A}^{+}$. The map $d_{A} \colon D_{A} \to D_{A}$ induced by
$$d_{A}([U]) = [\sigma_{A}(U)]$$
is a group automorphism of $D_{A}$ which preserves $D_{A}^{+}$, and the triple $(D_{A},D_{A}^{+},d_{A})$ is Krieger's dimension triple for the SFT $(X_{A},\sigma_{A})$.\\

The connection between Krieger's\ai{Krieger, Wolfgang}
triple $(D_{A},D_{A}^{+},d_{A})$ and the ordered abelian group triple $(G_{A},G_{A}^{+},\delta_{A})$ is given by the following proposition.

\begin{prop}[\cite{LindMarcus2021},\ai{Lind, Douglas}\ai{Marcus, Brian}
    Theorem 7.5.3]\label{prop:kridimgroupiso}
There is a semi-group homomorphism $\theta \colon D_{A}^{+} \to G_{A}^{+}$ induced by the map
$$\theta([U]) = \delta_{A}^{-k-n}(v_{U,n}A^{k}), \hspace{.29in} U \textnormal{ an } n\textnormal{-beam}.$$
The map $\theta$ satisfies $\theta(D_{A}^{+}) = G_{A}^{+}$, and induces an isomorphism $\theta \colon D_{A} \to G_{A}$ such that $\theta \circ d_{A} = \delta_{A} \circ \theta$. Thus $\theta$ induces an isomorphism of triples
$$\theta \colon (D_{A},D_{A}^{+},d_{A}) \to (G_{A},G_{A}^{+},\delta_{A}).$$
\end{prop}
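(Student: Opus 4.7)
The plan is to verify the statement by checking each claim in turn: well-definedness, the semigroup homomorphism property, the intertwining with $d_A$ and $\delta_A$, surjectivity onto $G_A^+$, and finally that the group-completion extension is an isomorphism. The main conceptual point is that the vector $v_{U,n}$ is the natural ``finite-time approximation'' of the beam $U$, and the formula $\delta_A^{-k-n}(v_{U,n}A^k)$ is precisely the recipe that sends a finite-time vector into the direct limit $G_A$; the whole proof is really a careful bookkeeping of these approximations.

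First I would check that $\theta$ does not depend on the choice of $n$. If $U$ is written as an $n$-beam, and $n' \geq n$, then each ray $R(x^{(i)},n)$ decomposes as a disjoint union of $(n')$-rays $R(y^{(j)},n')$, one for each path of length $n'-n$ extending $x_{(-\infty,n]}^{(i)}$. Counting paths into each state $J$ gives $v_{U,n'} = v_{U,n} A^{n'-n}$, so $\delta_A^{-k-n'}(v_{U,n'}A^k) = \delta_A^{-k-n'}(v_{U,n}A^{k+n'-n}) = \delta_A^{-k-n}(v_{U,n}A^k)$, and independence on $k$ (for $k$ large enough that $v_{U,n}A^k \in ER(A)$) is similar. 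Hence $\theta$ descends to equivalence classes. Additivity on disjoint beams is immediate from $v_{U \sqcup V, n} = v_{U,n} + v_{V,n}$, which gives the semigroup homomorphism structure. The intertwining $\theta \circ d_A = \delta_A \circ \theta$ is checked by noting that if $U$ is an $n$-beam then $\sigma_A(U)$ is an $(n-1)$-beam with $v_{\sigma_A(U),n-1} = v_{U,n}$, so $\theta([\sigma_A(U)]) = \delta_A^{-k-(n-1)}(v_{U,n}A^k) = \delta_A(\theta([U]))$.

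For surjectivity onto $G_A^+$, given $w \in G_A^+$ choose $m \geq r$ large enough that $wA^m \in \mathbb{Z}_+^r$. Using irreducibility of $A$, for each state $J$ one can find a point $x \in X_A$ whose edge at position $m$ terminates at $J$; assembling $(wA^m)_J$ such rays at state $J$ (making them disjoint by choosing distinct left-tails, possible because there are infinitely many such left-tails) produces an $m$-beam $U$ with $v_{U,m} = wA^m$, and then $\theta([U]) = \delta_A^{-k-m}(wA^{m+k}) = w$. Extending $\theta$ to the group completion $D_A$ is formal.

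The subtle step, and the one I expect to be the main obstacle, is injectivity of the extended map $\theta \colon D_A \to G_A$. The task reduces to showing: if $U, V$ are beams with $\theta([U]) = \theta([V])$ in $G_A$, then $[U] = [V]$ in $D_A^+$, for which it suffices to find $n$ with $v_{U,n} = v_{V,n}$. From $\theta([U]) = \theta([V])$ one gets $v_{U,n}A^k = v_{V,n}A^k$ in $ER(A)$ for some $n$ and large $k$, i.e.\ $(v_{U,n} - v_{V,n})A^k = 0$. Because the nonnegative integer vector $v_{U,n}$ records actual terminal-state counts of rays, one upgrades this equality in $ER(A)$ to genuine equality of integer vectors at some later time by using the irreducibility of $A$ and the concrete identification of rays with their terminal-vertex distributions --- essentially, two nonnegative integer vectors that become equal after applying $A^k$ must already equal one another after suitably splitting the rays further, giving $v_{U,n'} = v_{V,n'}$ for some $n' \geq n$. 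Once this refinement step is handled carefully, $[U] = [V]$ in $D_A^+$ follows and injectivity is established, completing the isomorphism of triples.
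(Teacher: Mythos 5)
The paper itself offers no proof of this proposition --- it is quoted verbatim from Lind--Marcus \cite{LindMarcus2021} (Theorem 7.5.3) inside Remark \ref{rem:kriegerconstruction} --- so there is no in-paper argument to compare against; I can only assess your verification on its merits, and it is correct and follows the standard route. The refinement identity $v_{U,n'} = v_{U,n}A^{n'-n}$ that you establish first does all the work for well-definedness, additivity, and the intertwining $\theta \circ d_A = \delta_A \circ \theta$, and your use of irreducibility together with positive entropy to supply enough pairwise disjoint rays at each state is exactly what surjectivity onto $G_A^+$ requires. One comment: the step you single out as ``the main obstacle'' --- injectivity --- is in fact immediate from the identity you already proved, and your framing of it is more complicated than the situation warrants. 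Once $\theta([U]) = \theta([V])$ forces $(v_{U,n} - v_{V,n})A^{k} = 0$, this is already an honest equality of integer vectors (both sides lie in $\mathbb{Z}^{k}$; there is nothing to ``upgrade'' from $ER(A)$), and the refinement identity gives $v_{U,n+k} = v_{U,n}A^{k} = v_{V,n}A^{k} = v_{V,n+k}$, so $[U] = [V]$ directly from the definition of the equivalence on beams. No appeal to irreducibility is needed there; the ``suitable further splitting of rays'' you gesture at is precisely the passage from level $n$ to level $n+k$ that you already justified. Finally, your reduction of injectivity on the group completion $D_A$ to injectivity on the monoid $D_A^{+}$ is valid because $G_A^{+}$ sits inside the group $G_A$ and is therefore cancellative.
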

\end{rem}

\begin{rem}\label{apprem:dimreprangeclassification}
  In \cite{S17}, Kim\ai{Kim, K.H.} and Roush\ai{Roush, F.W.} describe how the problem of classifying general (i.e. not necessarily irreducible) shifts of finite type up to topological conjugacy can be broken into two parts: classifying mixing shifts of finite type up to conjugacy, and determining the range of the dimension representation\si{dimension representation} in the mixing shift of finite type case. That the dimension representation\si{dimension representation} need not always be surjective was also instrumental in the Kim-Roush\ai{Roush, F.W.}\ai{Kim, K.H.}
  argument in \cite{S21} that shift equivalence\si{shift equivalence} over $\mathbb{Z}_{+}$ need not imply strong shift equivalence\si{strong shift equivalence} over $\mathbb{Z}_{+}$ in the reducible setting.
\end{rem}

\begin{exer}\label{appexer:simpininert}
For any shift of finite type $(X_{A},\sigma_{A})$, we have $\simp{A} \subset \inert{A}$.
\end{exer}
\begin{proof}
This is easiest seen using Krieger's\ai{Krieger, Wolfgang} presentation \apr{rem:kriegerconstruction}). First suppose $\alpha \in \simp{A}$ is induced by a simple graph symmetry of $\Gamma_{A}$. If $U$ is an $m$-beam in $X_{A}$, then $\alpha(U)$ is an $m$-beam, and $v_{\alpha(U),m} = v_{U,m}$. It follows that $[U] = [\alpha(U)]$, so $\alpha$ acts by the identity on the group $D_{A}$, and hence on $G_{A}$.\\
\indent Now suppose $\beta = \Psi^{-1}\alpha\Psi$ where $\Psi \colon (X_{A},\sigma_{A}) \to (X_{B},\sigma_{B})$ is a topological conjugacy and $\alpha \in \simp{B}$ is induced by a simple graph symmetry of $\Gamma_{B}$. If $U$ is an $m$-beam in $X_{A}$, then by the previous part $\alpha \Psi([U]) = \alpha ([\Psi(U)]) = [\Psi(U)] = \Psi([U])$, so
$$\beta([U]) = \Psi^{-1}\alpha\Psi([U]) =\Psi^{-1}\Psi([U]) = [U].$$
Thus $\beta$ acts by the identity on $G_{A}$. Since $\simp{A}$ is generated by automorphisms in the form of $\beta$, this finishes the proof.
\end{proof}

\begin{rem}\label{apprem:semidirectaut}
Let us write $\aut(P_{k},\sigma_{A})$ for $\aut(\sigma_{A}|_{P_{k}})$. For each orbit $q \in Q_{k}$ choose a point $x_{q} \in q$. There is a surjective homomorphism
$$\aut(P_{k},\sigma_{A}) \stackrel{\pi}\longrightarrow \sym(Q_{k})$$
since any $\alpha \in \aut(P_{k},\sigma_{A})$ must preserve $\sigma_{A}$-orbits, and the map $\pi$ is split by the map $i \colon \sym(Q_{k}) \to \aut(P_{k},\sigma_{A})$ defined by, for $\tau \in \sym(Q_{k})$, setting
$$i(\tau)(\sigma_{A}^{i}(x_{q})) = \sigma_{A}^{i}x_{\tau(q)}, \qquad 0 \le i \le k-1.$$
The kernel of $\pi$ is isomorphic to $\left(\mathbb{Z}/k \mathbb{Z}\right)^{Q_{k}}$ with an isomorphism given by
\begin{equation*}
\begin{gathered}
\left(\mathbb{Z}/k \mathbb{Z}\right)^{Q_{k}} \to \ker \pi\\
g \mapsto \alpha_{g}, \qquad \alpha_{g}(\sigma_{A}^{i}x_{q}) = \sigma_{A}^{i+g(q)}x_{q}, \qquad 0 \le i \le k-1
\end{gathered}
\end{equation*}
and it follows $\aut(P_{k},\sigma_{A})$ is isomorphic to the semidirect product $\left(\mathbb{Z}/k\mathbb{Z}\right)^{Q_{k}} \rtimes \sym(Q_{k})$. The action of $\sym(Q_{k})$ on $\left(\mathbb{Z}/k\mathbb{Z}\right)^{Q_{k}}$ is determined as follows. Let $g \in \left(\mathbb{Z}/k\mathbb{Z}\right)^{Q_{k}}$, so $g \colon Q_{k} \to \mathbb{Z}/k\mathbb{Z}$. Then $\alpha_{g} \in \ker \pi$, and given some $i(\tau)$ for some $\tau \in \sym(Q_{k})$,
$$i(\tau)^{-1}\alpha_{g}i(\tau) = \alpha_{g \circ \tau}.$$
\end{rem}

\begin{rem}\label{apprem:abelsigngyr}
  For a group $G$, let $G_{ab}$ denote the abelianization. Using the notation from \ref{apprem:semidirectaut}, we have an isomorphism $\Phi \colon \aut(\sigma_{A}|_{P_{k}}) \to \left(\mathbb{Z}/k\mathbb{Z}\right)^{Q_{k}} \rtimes \sym(Q_{k})$. The abelianization of $\left(\mathbb{Z}/k\mathbb{Z}\right)^{Q_{k}} \rtimes \sym(Q_{k})$ is isomorphic to $\sym(Q_{k})_{ab} \times (\left(\mathbb{Z}/k\mathbb{Z}\right)^{Q_{k}})_{\sym(Q_{k})}$,
  where $(\left(\mathbb{Z}/k\mathbb{Z}\right)^{Q_{k}})_{\sym(Q_{k})}$ is the quotient of $\left(\mathbb{Z}/k\mathbb{Z}\right)^{Q_{k}}$ by the subgroup generated by elements of the form $\tau^{-1}g\tau - g$, $\tau \in \sym(Q_{k}), g \in \left(\mathbb{Z}/k\mathbb{Z}\right)^{Q_{k}}_{ab}$.

  Now the abelianization of $\sym(Q_{k})$ is given by $\textnormal{sign} \colon \sym(Q_{k}) \to \mathbb{Z}/2$, and the map
\begin{equation*}
\begin{gathered}
\left(\mathbb{Z}/k\mathbb{Z}\right)^{Q_{k}} \to \mathbb{Z}/k\mathbb{Z}\\
g \mapsto \sum_{q \in Q_{k}}g(q)
\end{gathered}
\end{equation*}
maps elements of the form $\tau^{-1}g\tau-g$ to 0, and induces an isomorphism
$$(\left(\mathbb{Z}/k\mathbb{Z}\right)^{Q_{k}})_{\sym(Q_{k})} \stackrel{\cong}\longrightarrow \mathbb{Z}/k\mathbb{Z}.$$
\end{rem}

\begin{rem}\label{apprem:sgccremark}
  SGCC, and the question of which automorphisms satisfy SGCC\si{sign-gyration compatibility condition}, has a history spanning a number of years. The SGCC condition\si{sign-gyration compatibility condition} was introduced by Boyle and Krieger\ai{Krieger, Wolfgang}
  in \cite{MR887501}, where it was also proved that, in the case of many SFT's, it holds for any inert automorphism\si{inert automorphism} which is a product of involutions. This was followed up by a number of more general results, summarized in the following theorem.
\begin{thm}\label{thm:sgcc}
Let $(X_{A},\sigma_{A})$ be a shift of finite type. An automorphism $\alpha \in \aut(\sigma_{A})$ satisfies SGCC\si{sign-gyration compatibility condition} if:
\begin{enumerate}
\item
(Boyle-Krieger\ai{Krieger, Wolfgang} in \cite{MR887501}) $\alpha$ is inert\si{inert automorphism} and a product of involutions (not for all SFT's, but many, including the full shifts).
\item
(Nasu\ai{Nasu, Masakazu} in \cite{Nasu88}) $\alpha$ is a simple automorphism\si{simple automorphism}.
\item
(Fiebig\ai{Fiebig, Ulf} in \cite{FiebigThesis}) $\alpha$ is inert\si{inert automorphism} and finite order.
\item
  (Kim-Roush\ai{Roush, F.W.} in\ai{Kim, K.H.} \cite{S23}, with a key ingredient by
  Wagoner\ai{Wagoner, J.B.}) $\alpha$ is inert\si{inert automorphism}.
\end{enumerate}
\end{thm}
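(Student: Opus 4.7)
The unified strategy is to prove part (4), from which parts (1)--(3) follow. The goal is to establish the factorization recorded in Theorem~\ref{thm:sgccfactorization}: for each $m\geq 2$ I want to construct a homomorphism $sgcc_m\colon \aut(G_A,\delta_A)\to \mathbb{Z}/m\mathbb{Z}$ satisfying $SGCC_m = sgcc_m\circ \pi_A$. Once this is done, any inert $\alpha$ satisfies $\pi_A(\alpha)=\mathrm{id}$, hence $SGCC_m(\alpha)=sgcc_m(\mathrm{id})=0$ for every $m$, which is precisely the SGCC condition. Parts (1)--(3) then follow: simple automorphisms lie in $\simp{A}\subset\inert{A}$ by Exercise~\ref{exer:simpininerts}; finite-order inert and involution-generated inert automorphisms are just special cases of inert.

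To build $sgcc_m$, I would first represent an arbitrary $\alpha \in \aut(\sigma_A)$ by a strong shift equivalence over $\mathbb{Z}_+$. By Williams' Theorem~\ref{rfwtheorem} and the conjugacy construction $c(R,S)$ of Remark~\ref{rem:crsconj}, there exist $R_1,S_1,\ldots,R_\ell,S_\ell$ over $\mathbb{Z}_+$ with $A = R_1S_1$, $S_iR_i = R_{i+1}S_{i+1}$, $S_\ell R_\ell = A$, such that $\alpha$ equals (up to a power of $\sigma_A$) the composition of the $c(R_i,S_i)$. The next step is to exhibit a combinatorial formula $\phi_m(R_1,S_1,\ldots,R_\ell,S_\ell)\in \mathbb{Z}/m\mathbb{Z}$, built from matrix entries, which computes $SGCC_m(\alpha)$ with respect to the lexicographic orderings of periodic points canonically induced by each $A_i$. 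The shape of this formula is dictated by unwinding Definition~\ref{def:sg} along the chain: each $c(R_i,S_i)$ relabels $A_i$-loops as $A_{i+1}$-loops via the bijection between $R_iS_i$- and $S_iR_i$-paths, and the associated sign and cyclic-shift contributions can be read directly off $R_i,S_i$. Crucially, the resulting expression $\phi_m$ is a polynomial-type functional that continues to make sense when the $R_i,S_i$ are taken over $\mathbb{Z}$ rather than $\mathbb{Z}_+$.

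The core technical obstacle, and the heart of the Kim--Roush--Wagoner proof, is showing that $\phi_m$ is well-defined as a function of $\pi_A(\alpha)$ alone. This is the role of Wagoner's cocycle lemma. Using the fact (Proposition~\ref{prop:MallerShub}) that SSE over $\mathbb{Z}$ is generated by similarities and zero extensions, and that SE over $\mathbb{Z}$ is generated by similarities and nilpotent extensions (Theorem~\ref{thm:endorelations}), I would verify that $\phi_m$ is a cocycle on the space of SSE paths in the sense that it is invariant under the basic moves of Wagoner's complex that preserve the induced element of $\aut(G_A,\delta_A)$. Two steps are needed: (i) invariance under the local moves within an SSE path (which gives that $\phi_m$ depends only on the SSE class, i.e.\ on the strong shift equivalence class of the conjugacy up to shift); and (ii) invariance under the passage from SSE to SE, which follows once one checks that $\phi_m$ vanishes on the additional moves corresponding to nilpotent extensions—this is where the non-$\mathbb{Z}_+$ extension of the formula pays off and where Wagoner's cocycle identity is indispensable.

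With $\phi_m$ thus descending to a homomorphism $sgcc_m\colon \aut(G_A,\delta_A)\to \mathbb{Z}/m\mathbb{Z}$ making the diagram of Theorem~\ref{thm:sgccfactorization} commute, part (4) is immediate, and (1)--(3) follow as noted. I expect the cocycle-identity verification to be the decisive step: the combinatorial definition of $\phi_m$ is computable but unwieldy, and the equality $\phi_m(\text{before move}) = \phi_m(\text{after move})$ on each basic elementary move is where essentially all of the content lives. An alternative, less uniform route (and the historical path) is to prove parts (1)--(3) independently by direct arguments—exploiting, respectively, that inert involutions can be conjugated into a tractable normal form (Boyle--Krieger), that simple automorphisms permute periodic points in a way controlled by the graph (Nasu), and that finite-order inert automorphisms can be analyzed via their fixed-point structure on $G_A$ (Fiebig)—but none of these earlier methods extends to the general inert case without Wagoner's cocycle lemma.
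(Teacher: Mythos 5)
The theorem you are asked to prove is, in the paper, a survey statement recorded in Remark~\ref{apprem:sgccremark}: the paper does not prove it but cites the four source papers and notes that the earlier items are subsumed by the later ones. So there is no ``paper's own proof'' to compare against; the relevant content is the paper's discussion of Theorem~\ref{thm:sgccfactorization} in Section~\ref{subsec:sgcc} and the cocycle machinery of Section~\ref{subsec:kimroushrelsg}.

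Your reduction is logically sound: part (4) implies the others because simple automorphisms are inert (Exercise~\ref{exer:simpininerts}), and finite-order inert and involution-generated inert automorphisms are special cases of inert. Your overall strategy --- prove (4) via the factorization $SGCC_m = sgcc_m \circ \pi_A$ of Theorem~\ref{thm:sgccfactorization}, and identify Wagoner's cocycle lemma as the decisive ingredient --- is the route taken in \cite{S23,S19} and matches the paper's own framing.

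However, steps (i) and (ii) in your third paragraph mischaracterize what the cocycle lemma is doing, and this is not just cosmetic. The factorization does not involve ``passage from SSE to SE'' and has nothing to do with nilpotent extensions or Theorem~\ref{thm:endorelations}. In Wagoner's framework, $\aut(\sigma_A) \cong \pi_1(SSE(ZO),A)$ and $\aut(G_A,\delta_A) \cong \pi_1(SSE(\mathbb{Z}),A)$ (Theorem~\ref{thm:wagonerreptheorem}), and the dimension representation $\pi_A$ is the map on $\pi_1$ induced by the inclusion $SSE(ZO) \hookrightarrow SSE(\mathbb{Z})$. Both the source and target are fundamental groups of strong shift equivalence complexes; the shift equivalence complex $SE(\mathcal{R})$ plays no role here, nor does the characterization of SE by nilpotent extensions. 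Your step (i) also misstates what is being preserved: $\phi_m$ is a homomorphism on $\pi_1$, so what it depends on is the homotopy class of the loop, not the ``SSE class of the conjugacy.'' The correct mechanism is: one defines $sgcc_m$ on \emph{edges} of $SSE(\mathbb{Z})$ by an explicit polynomial in the matrix entries of $(R,S)$; one shows it is additive over 2-cells, i.e.\ over the Triangle Identities \eqref{eqn:triangleidentities}, which is exactly Lemma~\ref{lemma:cocyclelemma} and is the hard step; this descent yields a homomorphism $\pi_1(SSE(\mathbb{Z}),A) \to \mathbb{Z}/m\mathbb{Z}$; and finally one checks that for edges in $SSE(\mathbb{Z}_+)$ the formula agrees with the dynamically defined $SGCC_m$ evaluated on the associated elementary conjugacy $c(R,S)$, so that restriction to $\pi_1(SSE(ZO),A)$ recovers $SGCC_m$. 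The factorization through $\pi_A$ is then just functoriality of $\pi_1$. Replacing your ``SSE-to-SE / nilpotent extension'' language with this descent-along-the-inclusion argument, your sketch becomes a faithful outline of the proof of part (4).
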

\end{rem}

\begin{rem}\label{apprem:actionsfinitesubsrem}
Williams\ai{Williams, R.F.} first asked (around 1975) whether any permutation of fixed points of a shift of finite type could be lifted to an automorphism. Williams\ai{Williams, R.F.} was motivated in part by the classification problem\si{classification problem}: he was studying an example of two shifts of finite type which were shift equivalent, one of which clearly had an involution of fixed points, while it was not obvious whether the other did. It is interesting to note that, many years later, the automorphism groups\si{automorphism group} proved instrumental in addressing the classification problem\si{classification problem}.
\end{rem}

\begin{rem}\label{apprem:evensettingfog}
In \cite{Wagoner90eventual} Wagoner\ai{Wagoner, J.B.} proved that the inert automorphisms\si{inert automorphism} are generated by simple automorphisms\si{simple automorphism} in the ``eventual'' setting: namely, given a primitive matrix $A$ and inert automorphism\si{inert automorphism} $\alpha \in \inert{A}$, there exists some $m \ge 1$ such that, upon considering $\alpha \in \textnormal{Aut}(\sigma_{A}^{m})$, $\alpha$ lies in $\textnormal{Simp}(\sigma_{A}^{m})$. In \cite{MR964880} Boyle gave an alternative proof of this, and also gave a stronger form of the result.
\end{rem}

\section{Wagoner's strong shift equivalence\si{strong shift equivalence} complex\si{Wagoner's strong shift equivalence complex}, and applications}\ai{Wagoner, J.B.}
\label{sectionWagoner}
In the late 80's, Wagoner introduced certain CW complexes as a tool to study strong shift equivalence\si{strong shift equivalence}. These CW complexes provide an algebraic topological/combinatorial framework for studying strong shift equivalence\si{strong shift equivalence}, and have played a key role in a number of important results in the study of shifts of finite type. Among these, one of the most significant was the construction of a counterexample to Williams'\ai{Williams, R.F.} Conjecture\si{Williams' Conjecture} in the primitive case, which was found by Kim\ai{Kim, K.H.} and Roush\ai{Roush, F.W.} in \cite{S11}\footnote{Earlier counterexamples to Williams' Conjecture in the reducible case were found by Kim\ai{Kim, K.H.}
  and Roush\ai{Roush, F.W.} - see \cite{S21}.}. Wagoner independently developed another framework for finding counterexamples, and in \cite{Wagoner2000} gave a different proof, using matrices generated from Kim\ai{Kim, K.H.} and Roush\ai{Roush, F.W.}'s method in \cite{S11}, of the existence of a counterexample to Williams' Conjecture. Both the Kim\ai{Kim, K.H.} and Roush\ai{Roush, F.W.} strategy, and Wagoner's strategy, take place in the setting of Wagoner's strong shift equivalence\si{strong shift equivalence} complexes\si{Wagoner's strong shift equivalence complex}.\\
\indent The goal in this last lecture is to give a brief introduction to these complexes. After defining and discussing them, we'll give a short introduction into how the Kim-Roush\ai{Roush, F.W.} and Wagoner\ai{Wagoner, J.B.} strategies for producing counterexamples work. This will be very much an overview, and we will not go into details.\\
\indent In summary, our aim here is not to describe the construction of counterexamples to Williams'\ai{Williams, R.F.} Conjecture\si{Williams' Conjecture} in any detail, but instead to give an overview of how Wagoner's\ai{Wagoner, J.B.} spaces are built, how the counterexample strategies make use of them, and where they leave the state of the classification problem\si{classification problem}.\\

\subsection{Wagoner's SSE complexes}\ai{Wagoner, J.B.}
Suppose we have matrices $A, B$ over $\mathbb{Z}_{+}$, and a strong shift equivalence\si{strong shift equivalence} from $A$ to $B$
$$A = A_{0} \begin{tiny} \reallywidesim{\hspace{.02in} $R_{1},S_{1}$} \end{tiny} A_{1} \begin{tiny} \reallywidesim{\hspace{.02in} $R_{2},S_{2}$} \end{tiny} \hspace{.05in}\cdots \begin{tiny} \hspace{.03in} \reallywidesim{\hspace{.02in}$R_{n-1},S_{n-1}$} \hspace{.03in} \end{tiny} A_{n-1} \begin{tiny} \reallywidesim{\hspace{.02in} $R_{n},S_{n}$} \end{tiny} A_{n} = B$$
where for each $i \ge 1$, $A_{i-1} \begin{tiny} \reallywidesim{\hspace{.02in} $R_{i},S_{i}$} \end{tiny} A_{i}$ indicates an elementary strong shift equivalence\si{strong shift equivalence}
$$A_{i-1} = R_{i}S_{i}, \qquad A_{i} = S_{i}R_{i}.$$
We can visualize this as a path (at the moment we use the term path informally; it will be made precise later)

%\begin{equation*}
%\begin{tikzpicture}
%%\def\a{1.5} \def\b{2}
%\path
%(-3,0) node (A) {$\bullet$}
%(-3.25,0) node (Ap) {$A$}
%(-2,1) node (a) {$\bullet$}
%(-1,0) node (b) {$\bullet$}
%(0,1) node (c) {$\bullet$}
%(1,0) node (d) {$\bullet$}
%%(0.99,-.01) coordinate (dx)
%(2,1) node (e) {$\bullet$}
%%(2.001,1.001) coordinate (ex)
%(3,0) node (B) {$\bullet$}
%(3.25,0) node (Bp) {$B$};
%%(0,-2) node[align=center] (C) {$C$\\[-1mm]\rotatebox{90}{=}\\[-1mm]$D$};
%%\begin{scope}[nodes={midway,scale=.75}]
%%\draw[{|[sep=0 -.5].<}-{[line width=0 3]>.|[sep=0 -1.5]}](0,0.375) -- (1,0.375);
%\path (c) to coordinate[pos=0.11] (aux-1) coordinate[pos=0.7] (aux-2) (d);
%\draw[->,shorten >=-2pt, shorten <=2.8pt]   (c) -- (aux-1)
%                (aux-2)-- (d);
%                \draw[line width=1pt, line cap=round, dash pattern=on 0pt off 4\pgflinewidth]    (aux-1) to (aux-2);
%\draw [line width=1.2pt, line cap=round, dash pattern=on 0pt off 10\pgflinewidth] (1,0) -- (2,1);
%\draw[->] (A)--(a);
%\draw[->] (a)--(b);
%\draw[->] (b)--(c);
%\draw[->] (d)--(e);
%\draw[->] (e)--(B);
%\draw[->] (A)--(B) node[above]{$f$};
%\draw[->] (A)--(C.120) node[left]{$g$};
%\draw[->] (C.60)--(B) node[right]{$h$};
%\end{scope}
%\end{tikzpicture}
%\end{equation*}

\begin{equation*}
\begin{tikzpicture}
\path
(-3,0) node (A) {$\bullet$}
(-3.25,0) node (Ap) {$A$}
(-2,1) node (a) {$\bullet$}
(-1,0.5) node (b) {$\bullet$}
(0,1) node (c) {$\bullet$}
(1,0.5) node (d) {$\bullet$}
%(0.99,-.01) coordinate (dx)
(2,1) node (e) {$\bullet$}
%(2.001,1.001) coordinate (ex)
(3,0) node (B) {$\bullet$}
(3.25,0) node (Bp) {$B$};
%(0,-2) node[align=center] (C) {$C$\\[-1mm]\rotatebox{90}{=}\\[-1mm]$D$};
%\begin{scope}[nodes={midway,scale=.75}]
%\draw[{|[sep=0 -.5].<}-{[line width=0 3]>.|[sep=0 -1.5]}](0,0.375) -- (1,0.375);
\path (c) to coordinate[pos=0.11] (aux-1) coordinate[pos=0.75] (aux-2) (d);
\draw[->,shorten >=-2pt, shorten <=1.5pt]   (c) -- (aux-1)
                (aux-2)-- (d);
                \draw[line width=1pt, line cap=round, dash pattern=on 0pt off 3\pgflinewidth]    (aux-1) to (aux-2);
\draw[->] (A)--(a);
\draw[->] (a)--(b);
\draw[->] (b)--(c);
\draw[->] (d)--(e);
\draw[->] (e)--(B);
%\draw[->] (A)--(B) node[above]{$f$};
%\draw[->] (A)--(C.120) node[left]{$g$};
%\draw[->] (C.60)--(B) node[right]{$h$};
%\end{scope}
\end{tikzpicture}
\end{equation*}

%\begin{center}
%\includegraphics[scale=.63]{ssepath0.png}
%\end{center}
where each arrow in this picture represents an elementary strong shift equivalence\si{strong shift equivalence}. From Williams'\ai{Williams, R.F.} Theorem (Theorem \ref{rfwtheorem}), there is a conjugacy $C \colon (X_{A},\sigma_{A}) \to (X_{B},\sigma_{B})$ given by
$$C = \prod_{i=1}^{n}c(R_{i},S_{i})$$
where for each $i$, $c(R_{i},S_{i}) \colon (X_{A_{i-1}},\sigma_{A_{i-1}}) \to (X_{A_{i}},\sigma_{A_{i}})$ is a conjugacy induced by the ESSE $A_{i-1} \begin{tiny} \reallywidesim{\hspace{.02in} $R_{i},S_{i}$} \end{tiny} A_{i}$.

Now suppose, with the matrices $A, B$ over $\mathbb{Z}_{+}$, we have two SSE's from $A$ to $B$. We then have two paths of ESSE's from $A$ to $B$\\

\begin{equation*}
\begin{tikzpicture}
\path
(-3,0) node (A) {$\bullet$}
(-3.25,0) node (Ap) {$A$}
(-2,1) node (a) {$\bullet$}
(-1,0.5) node (b) {$\bullet$}
(0,1) node (c) {$\bullet$}
(1,0.5) node (d) {$\bullet$}
%(0.99,-.01) coordinate (dx)
(2,1) node (e) {$\bullet$}
%(2.001,1.001) coordinate (ex)
(3,0) node (B) {$\bullet$}
(3.25,0) node (Bp) {$B$}
(-2,-1) node (la) {$\bullet$}
(-1,-0.5) node (lb) {$\bullet$}
(0,-1) node (lc) {$\bullet$}
(1,-0.5) node (ld) {$\bullet$}
(2,-1) node (le) {$\bullet$};
%(0,-2) node[align=center] (C) {$C$\\[-1mm]\rotatebox{90}{=}\\[-1mm]$D$};
%\begin{scope}[nodes={midway,scale=.75}]
%\draw[{|[sep=0 -.5].<}-{[line width=0 3]>.|[sep=0 -1.5]}](0,0.375) -- (1,0.375);
\path (c) to coordinate[pos=0.11] (aux-1) coordinate[pos=0.75] (aux-2) (d);
\draw[->,shorten >=-2pt, shorten <=1.5pt]   (c) -- (aux-1)
                (aux-2)-- (d);
                \draw[line width=1pt, line cap=round, dash pattern=on 0pt off 3\pgflinewidth]    (aux-1) to (aux-2);
\path (lc) to coordinate[pos=0.11] (aux-1) coordinate[pos=0.75] (aux-2) (ld);
\draw[->,shorten >=-2pt, shorten <=1.5pt]   (lc) -- (aux-1)
                (aux-2)-- (ld);
                \draw[line width=1pt, line cap=round, dash pattern=on 0pt off 3\pgflinewidth]    (aux-1) to (aux-2);
%\draw [line width=1.2pt, line cap=round, dash pattern=on 0pt off 10\pgflinewidth] (1,0) -- (2,1);
\draw[->] (A)--(a);
\draw[->] (a)--(b);
\draw[->] (b)--(c);
\draw[->] (d)--(e);
\draw[->] (e)--(B);
\draw[->] (A)--(la);
\draw[->] (la)--(lb);
\draw[->] (lb)--(lc);
\draw[->] (ld)--(le);
\draw[->] (le)--(B);
%\draw[->] (A)--(B) node[above]{$f$};
%\draw[->] (A)--(C.120) node[left]{$g$};
%\draw[->] (C.60)--(B) node[right]{$h$};
%\end{scope}
\end{tikzpicture}
\end{equation*}

%\begin{center}
%\includegraphics[scale=.23]{ssepath1.png}
%\end{center}

and a pair of conjugacies corresponding to each path
$$C_{1} \colon (X_{A},\sigma_{A}) \to (X_{B},\sigma_{B})$$
$$C_{2} \colon (X_{A},\sigma_{A}) \to (X_{B},\sigma_{B})$$
and one may ask: when do two such paths induce the same conjugacy? Can we determine this from the matrix entries in the paths themselves? Alternatively, is there a space in which we can actually consider these as paths, in which two paths are homotopic if and only if they give rise to the same conjugacy? Wagoner's\ai{Wagoner, J.B.} complexes\si{Wagoner's strong shift equivalence complex} are a way to do this, and one of the key insights in Wagoner's complexes\si{Wagoner's strong shift equivalence complex} is determining the correct relations on matrices to accomplish this. These relations are known as the Triangle Identities. Since the Triangle Identities lead directly to the definition of Wagoner's\ai{Wagoner, J.B.}
Complexes\si{Wagoner's strong shift equivalence complex} \apr{apprem:wagonermarkovspace}), we'll define both simultaneously.

\begin{de}
Let $\calR$ be a semiring. We define a CW-complex $SSE(\calR)$\si{Wagoner's strong shift equivalence complex} as follows:
\begin{enumerate}
\item
The 0-cells of $SSE(\calR)$ are square matrices over $\calR$.
\item
An edge $(R,S)$ from vertex $A$ to vertex $B$ corresponds to an elementary strong shift equivalence\si{strong shift equivalence} over $\calR$ from $A$ to $B$:

\begin{equation*}
\begin{tikzpicture}
\path
(-.25,0) node (a) {$A$}
(2,0) node (b) {$B$}
(0,0) node (A) {$\bullet$}
(1.75,0) node (B) {$\bullet$};
\draw[->] (A) to node[auto] {$\scriptstyle (R,S)$} (B);
%\draw[->] (A)--(B) node[above]{$f$};
%\draw[->] (A)--(C.120) node[left]{$g$};
%\draw[->] (C.60)--(B) node[right]{$h$};
%\end{scope}
\end{tikzpicture}
\end{equation*}

%\begin{center}
%\includegraphics[scale=0.43]{sseedge2}
%\end{center}
where $A=RS, B = SR$.
\item
2-cells are given by triangles

\begin{equation*}
\begin{tikzpicture}
\path
(-.25,0) node (a) {$A$}
(3.25,0) node (b) {$C$}
(1.5,1.3) node (c) {$B$}
(0,0) node (A) {$\bullet$}
(1.5,1) node (B) {$\bullet$}
(3,0) node (C) {$\bullet$};
%\draw[->] (A) to node [label={[label distance=1cm]30:label}]  {$\scriptstyle (R_{1},S_{1})$} (B);
\draw[->] (A) to node[anchor=east,pos=0.7] {$\scriptstyle (R_{1},S_{1})$} (B);
\draw[->] (A) to node[below] {$\scriptstyle (R_{3},S_{3})$} (C);
\draw[->] (B) to node[anchor=west,pos=0.3] {$\scriptstyle (R_{2},S_{2})$} (C);
%\draw[->] (A)--(B) node[above]{$f$};
%\draw[->] (A)--(C.120) node[left]{$g$};
%\draw[->] (C.60)--(B) node[right]{$h$};
%\end{scope}
\end{tikzpicture}
\end{equation*}

%\begin{center}
%\includegraphics[scale=0.43]{ssetriangle}
%\end{center}
which satisfy the {\it Triangle Identities}: \\
\begin{equation}\label{eqn:triangleidentities}
R_{1}R_{2} = R_{3}, \hspace{.23in} R_{2}S_{3} = S_{1}, \hspace{.23in} S_{3}R_{1} = S_{2}.
\end{equation}
\end{enumerate}
\end{de}

The definition of $SSE(\calR)$ makes sense for any semiring. For this lecture however, we will consider the case where $\calR$ may be one of:
\begin{enumerate}
\item
$ZO = \{0,1\}$
\item
$\mathbb{Z}_{+}$
\item
$\mathbb{Z}$.
\end{enumerate}

Wagoner\ai{Wagoner, J.B.} also defines $n$-cells in $SSE(\calR)$ for $n \ge 3$ in \cite{Wagoner90triangle}, but we won't need these in this lecture.\\

Note that edges have orientations in $SSE(\calR)$. Recall also that, for an edge from $A$ to $B$ given by a SSE $(R,S)$, we may choose an elementary conjugacy $c(R,S)$ (see \ref{rem:crsconj}), and this choice of $c(R,S)$ does not only depend on $R$ and $S$ but also on some choice of simple automorphisms\si{simple automorphism}. By Williams'\ai{Williams, R.F.} Decomposition Theorem\si{Decomposition Theorem} (Theorem \ref{rfwtheorem}; see also \apr{decomp})), if $C \colon (X_{A},\sigma_{A}) \to (X_{B},\sigma_{B})$ is a topological conjugacy, then there is a strong shift equivalence\si{strong shift equivalence}
$$A = A_{0} \begin{tiny} \reallywidesim{\hspace{.02in} $R_{1},S_{1}$} \end{tiny} A_{1} \begin{tiny} \reallywidesim{\hspace{.02in} $R_{2},S_{2}$} \end{tiny} \hspace{.05in}\cdots \begin{tiny} \hspace{.03in} \reallywidesim{\hspace{.02in}$R_{n-1},S_{n-1}$} \hspace{.03in} \end{tiny} A_{n-1} \begin{tiny} \reallywidesim{\hspace{.02in} $R_{n},S_{n}$} \end{tiny} A_{n} = B$$
such that
$$C = \prod_{i=1}^{n}c(R_{i},S_{i})^{s(i)}$$
with each $c(R_{i},S_{i})$ an elementary conjugacy corresponding to the ESSE given by $R_{i}, S_{i}$, and $s(i) = 1$ if $A_{i-1} = R_{i}S_{i}, A_{i} = S_{i}R_{i}$, while $s(i) = -1$ if $A_{i} = R_{i}S_{i}, A_{i-1} = R_{i}S_{i}$. This presentation $C$ of the conjugacy gives us a path in $SSE(\mathbb{Z}_{+})$

\begin{equation*}
\begin{tikzpicture}
\path
(-3,0) node (A) {$\bullet$}
(-3.25,0) node (Ap) {$A$}
(-2,1) node (a) {$\bullet$}
(-1,0.5) node (b) {$\bullet$}
(0,1) node (c) {$\bullet$}
(1,0.5) node (d) {$\bullet$}
%(0.99,-.01) coordinate (dx)
(2,1) node (e) {$\bullet$}
%(2.001,1.001) coordinate (ex)
(3,0) node (B) {$\bullet$}
(3.25,0) node (Bp) {$B$};
%(0,-2) node[align=center] (C) {$C$\\[-1mm]\rotatebox{90}{=}\\[-1mm]$D$};
%\begin{scope}[nodes={midway,scale=.75}]
%\draw[{|[sep=0 -.5].<}-{[line width=0 3]>.|[sep=0 -1.5]}](0,0.375) -- (1,0.375);
\path (d) to coordinate[pos=0.11] (aux-1) coordinate[pos=0.75] (aux-2) (e);
\draw[->,shorten >=-2pt, shorten <=1.5pt]   (d) -- (aux-1)
                (aux-2)-- (e);
                \draw[line width=1pt, line cap=round, dash pattern=on 0pt off 3\pgflinewidth]    (aux-1) to (aux-2);
\draw[->] (A)--(a);
\draw[->] (b)--(a);
\draw[->] (b)--(c);
\draw[->] (d)--(c);
\draw[->] (e)--(B);
%\draw[->] (A)--(B) node[above]{$f$};
%\draw[->] (A)--(C.120) node[left]{$g$};
%\draw[->] (C.60)--(B) node[right]{$h$};
%\end{scope}
\end{tikzpicture}
\end{equation*}

%\begin{center}
%\includegraphics[scale=0.63]{ssepath2}
%\end{center}
Note that some arrows are drawn in reverse, as needed so that the conjugacy $C$ matches the conjugacy given by following the path. Likewise, given a path $\gamma$ in $SSE(\calR)$ between $A$ and $B$
$$\gamma = \prod_{i=1}^{m}\left(R_{i},S_{i}\right)^{s(i)}$$
there is a corresponding conjugacy
$$\tilde{\gamma} = \prod_{i=1}^{m}c(R_{i},S_{i})^{s(i)} \colon (X_{A},\sigma_{A}) \to (X_{B},\sigma_{B}).$$

In particular, vertices of $SSE(\mathbb{Z}_{+})$ correspond to specific presentations of shifts of finite type (edge shift construction), and edges to specific conjugacies (elementary conjugacy coming from an elementary strong shift equivalence\si{strong shift equivalence}). Note that any path between two vertices in these complexes is homotopic to a path following a sequence of edges. \\

Recall from Lecture 1 that a matrix $A$ is degenerate\si{degenerate} if it has a zero row or zero column; otherwise, it is nondegenerate\si{nondegenerate}. Following Wagoner\ai{Wagoner, J.B.}, we only allow nondegenerate\si{nondegenerate} matrices as vertices. It is at times important to work with the larger space $SSE_{deg}(\calR)$ which allows degenerate\si{degenerate} vertices; see for example \cite{BW04}. It turns out that the inclusion $SSE(\mathbb{Z}_{+}) \to SSE_{deg}(\mathbb{Z}_{+})$ induces an isomorphism on $\pi_{0}$ \cite{BW04} and also an isomorphism on $\pi_{1}$ \cite{Epperlein2019} for each path-component.

\subsection{Homotopy groups for Wagoner's complexes\si{Wagoner's strong shift equivalence complex} and $\aut(\sigma_{A})$}\ai{Wagoner, J.B.}

For a semiring $\calR$ and square matrix $A$ over $\calR$, we let $SSE(\calR)_{A}$ denote the path-component of $SSE(\calR)$ containing the vertex $A$. From Williams'\ai{Williams, R.F.} Theorem, the vertices $A,B$ in $SSE(\mathbb{Z}_{+})$ are in the same path-component if and only if the edge shifts $(X_{A},\sigma_{A})$ and $(X_{B},\sigma_{B})$ are topologically conjugate.\\

From the perspective of homotopy theory, the Triangle Identities dictate basic moves for paths in $SSE(\calR)$ to be homotopic. So why the Triangle Identities? The following result of Wagoner\ai{Wagoner, J.B.} explains their importance. In the statement of the theorem, given $A,B$ and two conjugacies $\phi_{1}, \phi_{2} \colon (X_{A},\sigma_{A}) \to (X_{B},\sigma_{B})$, we say $\phi_{1} \sim_{simp} \phi_{2}$ if there exists simple automorphisms\si{simple automorphism} $\gamma_{1} \in \simp{A}, \gamma_{2} \in \simp{B}$ such that $\gamma_{2} \phi_{1} \gamma_{1} = \phi_{2}$ ($\sim_{simp}$ defines an equivalence relation on the set of conjugacies between $(X_{A},\sigma_{A})$ and $(X_{B},\sigma_{B})$).
\begin{thm}[\cite{Wagoner90triangle,MR908217,MR1012941,Wagoner90eventual}]\label{thm:wagonerhomotopyofpaths}\ai{Wagoner, J.B.}
For the spaces $SSE(ZO), SSE(\mathbb{Z}_{+})$ defined above, both of the following hold:
\begin{enumerate}
\item
Given vertices $A,B$ in $SSE(ZO)$, two paths in $SSE(ZO)$ from $A$ to $B$ are homotopic in $SSE(ZO)$ if and only if they induce the same conjugacy from $(X_{A},\sigma_{A})$ to $(X_{B},\sigma_{B})$.
\item
Given vertices $A,B$ in $SSE(\mathbb{Z}_{+})$, two paths in $SSE(\mathbb{Z}_{+})$ from $A$ to $B$ are homotopic in $SSE(\mathbb{Z}_{+})$ if and only if they induce the same conjugacy from $(X_{A},\sigma_{A})$ to $(X_{B},\sigma_{B})$ modulo the relation $\sim_{simp}$.
\end{enumerate}
\end{thm}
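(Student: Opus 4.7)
The plan is to identify the elementary homotopies generating the path-homotopy equivalence in the $2$-complex $SSE(\calR)$, verify that each one preserves the induced conjugacy (modulo $\sim_{simp}$ in the $\mathbb{Z}_+$ case), and then establish the converse via a normal form coming from the Decomposition Theorem\si{Decomposition Theorem} \apr{decomp}).

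Recall that two edge-paths in any $2$-dimensional CW complex with fixed endpoints are homotopic rel endpoints if and only if they are related by a finite sequence of elementary moves: (i) insertion or deletion of a backtrack $(R,S)(R,S)^{-1}$, and (ii) replacing one edge of a $2$-cell by the composition of the other two in the opposite orientation. First I would verify that each such move preserves the induced conjugacy. For backtracks this is immediate from Remark~\ref{rem:crsconj}(2), which gives $c(R,S)\circ c(R,S)^{-1} = \mathrm{id}$. The triangle case is the heart of the forward direction: given $2$-cell data $(R_i,S_i)_{i=1,2,3}$ satisfying the Triangle Identities \eqref{eqn:triangleidentities}, one must verify
\begin{equation*}
c(R_1,S_1)\circ c(R_2,S_2) \;=\; c(R_3,S_3)
\end{equation*}
on the nose in the $ZO$ case, where the bijections $\alpha,\beta$ in the construction of $c(R,S)$ (see the proof of Theorem~\ref{rfwtheorem}) are forced and unique. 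In the $\mathbb{Z}_+$ case different valid choices of $\alpha,\beta$ produce elementary conjugacies differing by pre- and post-composition with simple graph automorphisms at the vertices; so the identity above holds only up to $\sim_{simp}$, which is exactly the slack in statement (2).

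For the converse direction, I would argue in two stages. First, given two paths $\gamma_1,\gamma_2$ from $A$ to $B$ inducing the same conjugacy (resp.\ $\sim_{simp}$-equivalent conjugacies), concatenate with the reverse of $\gamma_2$ to reduce to the problem: a closed loop at $A$ whose induced automorphism is trivial (resp.\ lies in $\simp{A}$ after absorbing a simple automorphism at $B$ via a short tail-loop at $B$) must be null-homotopic. Second, use the Decomposition Theorem\si{Decomposition Theorem} \apr{decomp}) together with the Triangle Identities to put any loop into a normal form built from row- and column-amalgamation ESSEs and their inverses; the key computation is that a row-amalgamation $(R,S)$ followed by the reverse of the same row-amalgamation is triangulated by a cell with $R_3 = RS = A$, $S_3 = I$, which collapses to a backtrack via the triangle-collapse moves $c(A,I) = \sigma_A$ and $c(I,A) = \mathrm{id}$ from Remark~\ref{rem:crsconj}(3). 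This reduction shows the normal-form loop bounds a $2$-disc in $SSE(\calR)$.

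The hard part will be managing the simple-automorphism indeterminacy in stage two of the converse in case (2). Concretely, one must show that every element of $\simp{A}$ is realized by the holonomy of a nullhomotopic loop at $A$ — equivalently, that every simple graph symmetry can be filled in by $2$-cells satisfying the Triangle Identities. I would handle this by exhibiting explicit triangles of the form $A = R S = (R \pi)(\pi^{-1} S)$ where $\pi$ is a permutation matrix implementing the simple symmetry and applying the identities \eqref{eqn:triangleidentities} with $R_1 = \pi$, $R_2 = \pi^{-1} R$, $S_1 = R^{-1} \pi^{-1}\cdot (\text{matching matrix})$, reducing the $\sim_{simp}$ slack to a triangle-boundary. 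This last piece is where the careful bookkeeping of Kim-Roush-Wagoner\ai{Kim, K.H.}\ai{Roush, F.W.}\ai{Wagoner, J.B.} in \cite{Wagoner90triangle,MR908217,MR1012941,Wagoner90eventual} enters, and any honest proof must invoke the combinatorics developed there; I would cite those sources for the detailed filling argument rather than reproduce it.
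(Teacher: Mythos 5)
The paper does not prove this theorem: it is quoted from Wagoner's papers \cite{Wagoner90triangle,MR908217,MR1012941,Wagoner90eventual} with no argument supplied, so there is no in-paper proof to compare against. Your overall architecture is the standard one (check that the two elementary edge-path moves --- backtrack insertion/deletion and triangle replacement --- preserve the induced conjugacy, then prove the converse), and you are honest that the converse ultimately rests on Wagoner's combinatorics. But two of your concrete steps are off.

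First, your treatment of the $\sim_{simp}$ indeterminacy is based on a misreading of what a simple graph symmetry is. By definition it fixes \emph{all vertices} and permutes parallel edges, so it leaves the adjacency matrix literally unchanged and is \emph{not} implemented by any permutation matrix $\pi$ acting via $A = RS = (R\pi)(\pi^{-1}S)$; your proposed triangles with $R_1=\pi$ address vertex permutations, which is a different (and here irrelevant) phenomenon. The actual source of the slack is the non-uniqueness of the edge-to-path bijections $\alpha:a\mapsto rs$, $\beta:b\mapsto sr$ in the construction of $c(R,S)$ in the proof of Theorem \ref{rfwtheorem}: two admissible choices of $\alpha$ differ exactly by a permutation of parallel elementary paths, i.e.\ by pre/post-composition with simple graph automorphisms. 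That is why $c(R,S)$ is only well defined up to $\sim_{simp}$ over $\Z_+$ and is unique over $ZO$, and it is the correct mechanism for both halves of statement (2). Second, in the converse your ``key computation'' triangulates a backtrack, but backtracks are already null-homotopic by the definition of edge-path homotopy in a CW complex; the genuine difficulty is showing that two structurally different SSE chains (e.g.\ one via column amalgamations, one via row amalgamations) inducing the same conjugacy can be connected by triangle moves. The Decomposition Theorem produces \emph{one} chain realizing a given conjugacy; it does not by itself connect two such chains, and that connectivity statement is precisely the content of Wagoner's work that you would have to import wholesale. So your proposal is a reasonable roadmap, but as written it neither closes the converse nor correctly identifies the mechanism behind the $\sim_{simp}$ quotient.
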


Item $(2)$ in the above is perhaps expected; recall the construction given in Section \ref{subsec:sseandclassification} of Lecture 1 for associating conjugacies with SSE's over $\mathbb{Z}_{+}$ requires a choice of labels for certain edges. This choice is where ambiguity up to conjugating by simple automorphisms\si{simple automorphism} may arise.\\

Theorem \ref{thm:wagonerhomotopyofpaths} gives the first two parts of the following theorem of Wagoner\ai{Wagoner, J.B.}. For a space $X$ with point $x \in X$, let $\pi_{k}(X,x)$ denote the $k$th homotopy group based at $x$.
\begin{thm}[\cite{Wagoner90triangle,MR908217,MR1012941,Wagoner90eventual}]\label{thm:wagonerreptheorem}\ai{Wagoner, J.B.}
Let $A$ be a square matrix over $ZO$. Then:
\begin{enumerate}
\item
$\aut(\sigma_{A}) \cong \pi_{1}(SSE(ZO),A)$.
\item
$\aut(\sigma_{A}) / \simp{A} \cong \pi_{1}(SSE(\mathbb{Z}_{+}),A)$.
\item
$\aut(G_{A},\delta_{A}) \cong \pi_{1}(SSE(\mathbb{Z}),A)$.
\end{enumerate}
\end{thm}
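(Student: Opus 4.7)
The plan is to derive parts (1) and (2) as direct consequences of Theorem \ref{thm:wagonerhomotopyofpaths}, and to attack part (3) via a separate construction using the direct-limit description of $(G_A, \delta_A)$ from Lecture 2.

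For (1), I would represent any element of $\pi_1(SSE(ZO), A)$ by an edge-path loop $\gamma = \prod_{i=1}^m (R_i, S_i)^{s(i)}$ and define $\Phi \colon \pi_1(SSE(ZO), A) \to \aut(\sigma_A)$ by $\Phi([\gamma]) = \prod c(R_i, S_i)^{s(i)}$. Over $ZO$ the elementary conjugacies $c(R_i, S_i)$ are canonically determined (Remark \ref{rem:crsconj}), so $\Phi$ is unambiguous on edge-paths; Theorem \ref{thm:wagonerhomotopyofpaths}(1) then furnishes both well-definedness on homotopy classes and injectivity at one stroke, while surjectivity is immediate from the Decomposition Theorem\si{Decomposition Theorem} \apr{decomp}), which realizes any self-conjugacy of $(X_A, \sigma_A)$ by an SSE loop at $A$. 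For (2), I would run the same construction over $\mathbb{Z}_+$ and invoke Theorem \ref{thm:wagonerhomotopyofpaths}(2) to obtain a bijection $\pi_1(SSE(\mathbb{Z}_+), A) \to \aut(\sigma_A)/{\sim_{simp}}$. For loops at $A$ the relation $\sim_{simp}$ reads $\phi_2 = \gamma_2 \phi_1 \gamma_1$ with $\gamma_i \in \simp{A}$; since $\simp{A}$ is normal in $\aut(\sigma_A)$, these double cosets collapse into the ordinary quotient group $\aut(\sigma_A)/\simp{A}$, and concatenation of loops matches composition.

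For (3), I would define $\Psi \colon \pi_1(SSE(\mathbb{Z}), A) \to \aut(G_A, \delta_A)$ edge by edge: each oriented edge $(R, S) \colon B \to C$ with $B = RS$ and $C = SR$ induces a $\mathbb{Z}[t, t^{-1}]$-module isomorphism $\psi(R, S) \colon (G_B, \delta_B) \to (G_C, \delta_C)$ sending $[(v, n)]$ to $[(vR, n)]$, precisely the map used to prove $(1) \Rightarrow (2)$ of Proposition \ref{prop:sezdirectlimit} in \apr{seandpairiso}). Composing these isomorphisms along an edge-path loop at $A$ produces the desired automorphism of $(G_A, \delta_A)$. Well-definedness on 2-cells reduces to the computation $[(v, n)] \mapsto [(vR_1, n)] \mapsto [(vR_1 R_2, n)] = [(vR_3, n)]$, together with analogous checks using $R_2 S_3 = S_1$ and $S_3 R_1 = S_2$ to cover reversed edges. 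Surjectivity of $\Psi$ follows by combining Proposition \ref{prop:sezdirectlimit} with Corollary \ref{cor:nk1vanishandsse}: every automorphism of $(G_A, \delta_A)$ is induced by a shift equivalence from $A$ to $A$ over $\mathbb{Z}$, and because $NK_1(\mathbb{Z}) = 0$ this upgrades to an SSE-$\mathbb{Z}$ loop realizing the given automorphism.

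The hard part will be injectivity of $\Psi$: given a loop $\gamma$ on which $\Psi(\gamma) = \mathrm{id}_{G_A}$, I must exhibit a null-homotopy through 2-cells satisfying the Triangle Identities. My plan is to invoke the Maller-Shub\ai{Maller, Michael}\ai{Shub, Michael} reduction (Proposition \ref{prop:MallerShub}) to replace each ESSE edge in $\gamma$ by a homotopic composition of a similarity edge and a pair of zero-extension edges, whereupon the problem reduces to showing that a loop built from similarities and zero-extensions that acts trivially on $G_A$ is contractible. The Triangle Identities absorb the cancellation of a similarity by $U$ followed by similarity by $U^{-1}$, and of a zero-extension followed by its compression; the residual combinatorial step of globally assembling these local cancellations into a null-homotopy is expected to be the delicate one, being formally analogous to Whitehead's identification of $[GL, GL]$ with $El$ (Theorem \ref{thm:whitehead}), with the Triangle Identities playing the role of Steinberg-type relations among the basic moves.
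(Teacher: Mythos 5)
Your handling of parts (1) and (2) is correct and is exactly the paper's route: the text derives both from Theorem \ref{thm:wagonerhomotopyofpaths}, and your two supporting observations — that over $ZO$ the elementary conjugacies $c(R,S)$ are canonical (Remark \ref{rem:crsconj}), and that normality of $\simp{A}$ collapses the double cosets of $\sim_{simp}$ into the quotient group $\aut(\sigma_A)/\simp{A}$ — are the right way to fill in that one-line derivation. For part (3) the paper gives no argument at all (it is cited to Wagoner), so there you are attempting new work, and that attempt has two genuine gaps.

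First, surjectivity. Corollary \ref{cor:nk1vanishandsse} cannot do what you ask of it: it asserts that the equivalence relations SE-$\Z$ and SSE-$\Z$ coincide on matrices, a statement at the level of $\pi_0$. Applied to a shift equivalence $(R,S)$ from $A$ to itself it yields only that $A$ is SSE-$\Z$ to $A$, which is vacuous, and it produces no chain of ESSEs whose composite direct-limit isomorphism equals the prescribed $\theta\colon [(x,n)]\mapsto [(xR,n)]$. What is actually required is the explicit Williams/Effros construction converting a lag-$\ell$ shift equivalence over a PID into a strong shift equivalence, \emph{together with} a verification that the chain so constructed induces the same map on $(G_A,\delta_A)$ — equivalently, surjectivity of $\pi_1(SSE(\Z),A)\to\pi_1(SE(\Z),A)$. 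That is part of Wagoner's homotopy equivalence $SSE(\Z)\simeq SE(\Z)$ for principal ideal domains, not a formal consequence of $NK_1(\Z)=0$.

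Second, injectivity, where your text defers precisely the step that constitutes the theorem. After the Maller--Shub reduction you must show that every loop of similarity and zero-extension edges acting trivially on $(G_A,\delta_A)$ is contractible using only the Triangle Identities; nothing in the sketch shows that the Triangle Identities generate all such relations, and nothing locates where the hypothesis that $\Z$ is a principal ideal domain enters. It must enter somewhere: the paper records that $SSE(\calR)\to SE(\calR)$ is a homotopy equivalence for PIDs but fails to be one in general, so an argument that never uses more than the formal structure of similarities and zero extensions over an arbitrary ring cannot succeed. The Whitehead analogy correctly suggests the flavor of Wagoner's contractibility arguments, but as written it is a program rather than a proof.
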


It is immediate from the definition of the SSE spaces that the set $\pi_{0}(SSE(\mathbb{Z}_{+}))$ may be identified with the set of conjugacy classes of shifts of finite type. Moreover, $\pi_{0}(SSE(\mathbb{Z}))$ may be identified with the set of strong shift equivalence\si{strong shift equivalence} classes of matrices over $\mathbb{Z}$.\\

Upon using the identifications above, the composition map
$$\pi_{1}(SSE(ZO),A) \to \pi_{1}(SSE(\mathbb{Z}_{+}),A) \to \pi_{1}(SSE(\mathbb{Z}),A)$$
induced by the natural inclusions $SSE(ZO) \hookrightarrow SSE(\mathbb{Z}_{+}) \hookrightarrow SSE(\mathbb{Z})$ is isomorphic to the dimension representation\si{dimension representation} factoring as
$$\aut(\sigma_{A}) \to \aut(\sigma_{A})/\simp{A} \to \aut(G_{A}),$$
i.e. the diagram
\begin{equation*}
\xymatrix{
\pi_{1}(SSE(ZO),A) \ar[d]^{\cong} \ar[r] & \pi_{1}(SSE(\mathbb{Z}_{+}),A) \ar[d]^{\cong} \ar[r] & \pi_{1}(SSE(\mathbb{Z}),A) \ar[d]^{\cong}\\
\aut(\sigma_{A}) \ar[r] & \aut(\sigma_{A})/\simp{A} \ar[r] & \aut(G_{A},\delta_{A})
}
\end{equation*}
commutes.

Wagoner\ai{Wagoner, J.B.} also proves that $\pi_{k}(SSE(ZO),A) = 0$ for $k \ge 2$. This implies $SSE(ZO)_{A}$ is a model for the classifying space of $\aut(\sigma_{A})$, i.e. $SSE(ZO)_{A}$ is homotopy equivalent to $B\aut(\sigma_{A})$ \apr{apprem:classifyingspacerem}). Thus, for example, we have
$$\aut(\sigma_{A})_{ab} \cong H_{1}(\aut(\sigma_{A}),\mathbb{Z}) \cong H_{1}(SSE(ZO)_{A},\mathbb{Z}).$$
It is worth remarking that, at the moment, we do not know what the abelianization $\aut(\sigma_{A})_{ab}$ is for any positive entropy shift of finite type $(X_{A},\sigma_{A})$ (however it is at least known, from \cite[Theorem 7.8]{BLR88}, that $\aut(\sigma_{A})_{ab}$ is not finitely generated).\\

Wagoner\ai{Wagoner, J.B.} also introduced complexes $SE(\calR)$ defined analogously to $SSE(\calR)$ (see \apr{apprem:secomplexdef}) for a definition). Since an ESSE over $\calR$ also gives an SE over $\calR$, there is a continuous inclusion map $i_{\calR} \colon SSE(\calR) \to SE(\calR)$. Wagoner\ai{Wagoner, J.B.} proved in \cite{MR1012941} that, in the case $\calR$ is a principal ideal domain, this map $i_{\calR}$ is a homotopy equivalence, and that $\pi_{n}(SSE(\calR),A) = \pi_{n}(SE(\calR),A) = 0$ for all $n \ge 2$ and any $A$. The map $i_{\calR}$ cannot be a homotopy equivalence for a general ring $\calR$ \apr{apprem:nothomeqingeneral}).\\

Wagoner's\ai{Wagoner, J.B.} complexes, and the results of Theorem \ref{thm:wagonerreptheorem}, have recently been generalized to a groupoid setting in \cite{Epperlein2019}. This setting simplifies some of the proofs and extends Wagoner's construction to shifts of finite type carrying a free action by a finite group, as well as more general shifts of finite type over arbitrary finitely generated groups.

\subsection{Counterexamples to Williams' Conjecture}
A counterexample to Williams' Conjecture\si{Williams' Conjecture}\ai{Williams, R.F.} in the primitive case was given by Kim\ai{Kim, K.H.} and Roush\ai{Roush, F.W.} in \cite{S11}. In \cite{Wagoner2000}\ai{Wagoner, J.B.}, Wagoner also verified the counterexamples using a different framework. Both methods for detecting the counterexamples take place in the setting of Wagoner's SSE complexes, and build on a great deal of work by many authors. We outline the techniques here; one may also see Wagoner's survey article \cite{Wagoner99} for an exposition regarding the counterexamples.\\
\indent Since our goal is only to give a brief introduction to how these counterexamples arise, we won't actually list the explicit matrices involved; they can be found in \cite{S11} or \cite{Wagoner2000}).\ai{Wagoner, J.B.} Instead, we focus on the strategy used to prove that they in fact {\it are} counterexamples.\\

To start, both strategies roughly follow the same initial idea. As mentioned in Section \ref{subsec:comparingseandsse}, to find a counterexample to Williams' Conjecture\si{Williams' Conjecture} it is sufficient to find a pair of primitive matrices which are connected by a path in $SSE(\mathbb{Z})$, and show they cannot be connected by a path through $SSE(\mathbb{Z}_{+})$. We can formalize this approach in terms of homotopy theory (this is an important viewpoint, although not necessary to understand the Kim-Roush\ai{Roush, F.W.}\ai{Kim, K.H.}
counterexample, as we will see). Consider $SSE(\mathbb{Z}_{+})$ as a subcomplex of $SSE(\mathbb{Z})$, and, upon fixing a base point $A$ in $SSE(\mathbb{Z}_{+})$, consider the long exact sequence in homotopy groups based at $A$ for the pair $\left(SSE(\mathbb{Z}),SSE(\mathbb{Z}_{+})\right)$:

$$ \cdots \pi_{1}(SSE(\mathbb{Z}),A) \to \pi_{1}(SSE(\mathbb{Z}),SSE(\mathbb{Z}_{+}),A) \to \pi_{0}(SSE(\mathbb{Z}_{+}),A) \to \pi_{0}(SSE(\mathbb{Z}),A).$$

Here $\pi_{1}(SSE(\mathbb{Z}),SSE(\mathbb{Z}_{+}),A)$ denotes the set of homotopy classes of paths with base point in $SSE(\mathbb{Z}_{+})$ and end point equal to $A$, and the map $\pi_{1}(SSE(\mathbb{Z}),SSE(\mathbb{Z}_{+}),A) \to \pi_{0}(SSE(\mathbb{Z}_{+}),A)$ is defined by sending the homotopy class of a path $\gamma$ to the component containing $\gamma(0)$ (details regarding this sequence can be found in \cite[Ch. 4, Thm. 4.3]{HatcherBook}). The last three terms in this sequence are not actually groups, but just pointed sets. Still, exactness makes sense, by defining the kernel to be the pre-image of the base point. The base point in $\pi_{1}(SSE(\mathbb{Z}),SSE(\mathbb{Z}_{+}),A)$ is given by the homotopy class of a path which lies entirely in $SSE(\mathbb{Z}_{+})$. In particular, the set $\pi_{1}(SSE(\mathbb{Z}),SSE(\mathbb{Z}_{+}),A)$ has only one element if and only if every path beginning in $SSE(\mathbb{Z}_{+})$ and ending at $A$ is homotopic to a path lying entirely in $SSE(\mathbb{Z}_{+})$.\\

In this setup, the goal is to now find a function

\begin{equation*}
F \colon \pi_{1}\left(SSE(\mathbb{Z}),SSE(\mathbb{Z}_{+}),A\right) \to G
\end{equation*}
to some group $G$; for computability, we would like $G$ abelian. Then to find a counterexample, it would be enough to find matrices $A$ and $B$ and a path $\gamma$ in $SSE(\mathbb{Z})$ from $A$ to $B$ such that $F(\gamma) \ne 0$, while $F(\beta) = 0$ for any $\beta \in \pi_{1}(SSE(\mathbb{Z}),A)$. Note that from Theorem \ref{thm:wagonerreptheorem}\ai{Wagoner, J.B.} we know $\pi_{1}(SSE(\mathbb{Z}),A) \cong \aut(G_{A},\delta_{A})$, so in light of the long exact sequence in homotopy written above, being able to compute generators for $\aut(G_{A},\delta_{A})$ plays an important role here.\\

Put another way, we want to find some abelian group $G$, a primitive matrix $A$, and some function $F$ from edges in $SSE(\mathbb{Z})_{A}$ to $G$ which satisfies all of the following, where $\alpha \star \beta$ denotes concatenation of paths:
\begin{gather}\label{eqn:FandG1}
F(\alpha \star \beta) = F(\alpha) + F(\beta)\\
\label{eqn:FandG2}
\textnormal{If } \gamma_{1} \textnormal{ and } \gamma_{2} \textnormal{ are homotopic paths, then } F(\gamma_{1}) = F(\gamma_{2})\\
\label{eqn:FandG3}
F(\gamma) = 0 \textnormal{ if } \gamma \textnormal{ lies in } SSE(\mathbb{Z}_{+})\\
\label{eqn:Fnonvanishing}
F(\gamma_{A,B}) \ne 0 \textnormal{ for some path } \gamma_{A,B} \textnormal{ from } A \textnormal{ to a primitive matrix } B
\end{gather}
%\label{eqn:FandG4}
%F(\gamma) = 0 \textnormal{ if } \gamma \in \pi_{1}(SSE(\mathbb{Z}),A)

Kim\ai{Kim, K.H.} and Roush\ai{Roush, F.W.}, and independently Wagoner\ai{Wagoner, J.B.}, found functions $F_{m}$ each satisfying $\eqref{eqn:FandG1}$, $\eqref{eqn:FandG2}$, $\eqref{eqn:FandG3}$ for $G = \mathbb{Z}/m$ for paths contained in any component of a matrix $A$ satisfying $tr(A^{k}) = 0$ for all $1 \le k \le m$. Finally, for $m=2$, Kim\ai{Kim, K.H.} and Roush\ai{Roush, F.W.} found a pair of matrices $A,B$ and a path $\gamma_{A,B}$ satisfying \eqref{eqn:Fnonvanishing}.

\subsection{Kim-Roush relative sign-gyration
  method }\label{subsec:kimroushrelsg}\ai{Kim, K.H.}
Let $(X_{A},\sigma_{A})$ be a mixing shift of finite type, and recall from Section \ref{subsec:sgcc} the sign-gyration-compatability-condition homomorphisms
\begin{equation*}
\begin{gathered}
SGCC_{m} \colon \aut(\sigma_{A}) \to \mathbb{Z}/m\mathbb{Z}\\
SGCC_{m} = g_{m} + \left(\frac{m}{2}\right)\sum_{j>0}\sgn \xi_{m/2^{j}}.
\end{gathered}
\end{equation*}

Given $\alpha \in \aut(\sigma_{A})$, for any $m$, $SGCC_{m}(\alpha)$ is defined in terms of the action of $\alpha$ on the periodic points up to level $m$.\\

The idea behind the Kim\ai{Kim, K.H.} and Roush\ai{Roush, F.W.} technique is to define, for each $m$, a relative sign-gyration-compatibility-condition map
$$sgc_{m} \colon \pi_{1}(SSE(\mathbb{Z}),SSE(\mathbb{Z}_{+}),A) \to \mathbb{Z}/m\mathbb{Z}.$$

To start, suppose $A \stackrel{(R,S)}\longrightarrow B$ is an edge in $SSE(\mathbb{Z}_{+})$ given by a strong shift equivalence\si{strong shift equivalence} $A=RS, B=SR$ over $\mathbb{Z}_{+}$. Associated to this (by Theorem \ref{rfwtheorem}) is an elementary conjugacy
$$c(R,S) \colon (X_{A},\sigma_{A}) \to (X_{B},\sigma_{B}).$$
Recall this conjugacy $c(R,S)$ is not determined by $(R,S)$, but is only defined up to composition with simple automorphisms\si{simple automorphism} in the domain and range. Given $m$, choose some orderings on the set of orbits whose lengths divide $m$, and a distinguished point in each such orbit, for each of $(X_{A},\sigma_{A})$ and $(X_{B},\sigma_{B})$; in \cite{S19}, these choices are made using certain lexicographic rules on the set of periodic points. The conjugacy $c(R,S)$ induces a bijection between the respective periodic point sets for $\sigma_{A}$ and $\sigma_{B}$, and we may define, with respect to the choices of orderings and distinguished points in each orbit, the sign and gyration maps, and hence define $SGCC_{m}(c(R,S)) \in \mathbb{Z}/m\mathbb{Z}$. If $RS \ne SR$, the value $SGCC_{m}(c(R,S))$ may depend on the choices of orderings and distinguished points.\\
\indent In \cite{S19}, Kim-Roush\ai{Roush, F.W.}-Wagoner\ai{Wagoner, J.B.}\ai{Kim, K.H.} showed that for such a conjugacy $c(R,S)$, there is a formula $sgcc_{m}(R,S)$ for $SGCC_{m}(R,S)$ in terms of the entries from the matrices $R,S$. This was used to prove Theorem \ref{thm:sgccfactorization}, that $SGCC$\si{sign-gyration compatibility condition} factors through the dimension representation\si{dimension representation}. We note that this formula for $sgcc_{m}(R,S)$ in general depends on the choice of orderings on the periodic points. Furthermore, the formulas defined in \cite{S19} are very complicated for large $m$. In \cite{S11}, Kim\ai{Kim, K.H.} and Roush\ai{Roush, F.W.} defined $sgc_{m}$, a slightly different version \apr{apprem:sgcvssgcc}) of $sgcc_{m}$, that also computes $SGCC_{m}$ in terms of entries from $R$ and $S$; for $m=2$, it takes the form
$$sgc_{2}(R,S) = \sum_{\underset{k > l}{i<j}}R_{ik}S_{ki}R_{jl}S_{lj} + \sum_{\underset{k \ge l}{i<j}}R_{ik}S_{kj}R_{jl}S_{li} + \sum_{i,j}\frac{1}{2}R_{ij}(R_{ij}-1)S_{ji}^{2}.$$

In other words, for an elementary strong shift equivalence\si{strong shift equivalence} $A=RS, B=SR$, we have $SGCC_{m}(R,S) = sgc_{m}(R,S)$. The formula given above for $sgc_{2}$ uses orderings on the fixed points and period two points defined by certain lexicographic rules given in \cite{S19}. For the counterexamples to Williams' Conjecture\si{Williams' Conjecture}, only $sgc_{2}$ is needed.\\

We can extend $SGCC_{m}$ from elementary conjugacies $c(R,S)$ to paths in $SSE(\mathbb{Z}_{+})$: given a path
$$\gamma = \prod_{i=1}^{J}\left(R_{i},S_{i}\right)^{s(i)}$$
define
$$SGCC_{m}(\gamma) = \sum_{i}^{J}s(i)SGCC_{m}(R_{i},S_{i}).$$

Note from the above we also know that
$$SGCC_{m}(\gamma) = \sum_{i}^{J}s(i)sgc_{m}(R_{i},S_{i}).$$

Now suppose we have a basic triangle in $SSE(\mathbb{Z}_{+})$ with edges $(R_{1},S_{1}), (R_{2},S_{2}), (R_{3},S_{3})$. If $c(R_{3},S_{3}) = c(R_{1},S_{1})c(R_{2},S_{2})$ then using the fact that $SGCC_{m}$ is defined in terms of dynamical data coming from the corresponding conjugacies, a calculation \cite[Prop. 2.9]{S19} shows that
$$SGCC_{m}(R_{1},S_{1}) + SGCC_{m}(R_{2},S_{2}) = SGCC_{m}(R_{3},S_{3}).$$
But by Theorem \ref{thm:wagonerhomotopyofpaths}\ai{Wagoner, J.B.}, up to conjugating by simple automorphisms\si{simple automorphism}, we do have $c(R_{3},S_{3}) = c(R_{1},S_{1})c(R_{2},S_{2})$; since $SGCC_{m}$ vanishes on simple automorphisms\si{simple automorphism} (Theorem \ref{thm:sgccfactorization}), this gives an addition formula for $SGCC_{m}$ over triangles in $SSE(\mathbb{Z}_{+})$.\\

Now suppose we have an elementary strong shift equivalence\si{strong shift equivalence} $A=RS,B=SR$ over $\mathbb{Z}$ (so not necessarily in $\mathbb{Z}_{+}$). The $sgc_{m}$ formulas still make sense, so we can define $sgc_{m}(\gamma)$ for any path $\gamma$ in $SSE(\mathbb{Z})$. If $sgc_{m}$ also satisfies an addition formula for triangles in $SSE(\mathbb{Z})$, then $sgc_{m}$ will give us an extension of $SGCC_{m}$ to $SSE(\mathbb{Z})$. This turns out to be the case, and is a consequence of the following Cocycle Lemma.
\begin{lem}[\cite{S19,S11}]\label{lemma:cocyclelemma}
If the edges $(R_{1},S_{1}), (R_{2},S_{2}), (R_{3},S_{3})$ form a basic triangle in $SSE(\mathbb{Z})$, then
$$sgc_{m}(R_{1},S_{1}) + sgc_{m}(R_{2},S_{2}) = sgc_{m}(R_{3},S_{3}).$$
\end{lem}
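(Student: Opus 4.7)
The key observation is that $sgc_m(R,S)$ is given by an explicit polynomial expression in the matrix entries of $R$ and $S$ — the displayed formula for $sgc_2$ is a polynomial of degree four with (half-)integer coefficients, and analogous polynomial formulas define $sgc_m$ for general $m$. Under the Triangle Identities the free parameters are $R_1$, $R_2$, $S_3$, while $R_3 = R_1 R_2$, $S_1 = R_2 S_3$, $S_2 = S_3 R_1$. My plan is to recognize the desired cocycle identity as a polynomial identity in these free parameters which is already verified on the positive locus by the dynamical interpretation of $sgc_m$, and then to extend it to $\mathbb{Z}$-entries by an integer-valued polynomial argument.

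First I would restrict to triples $(R_1, R_2, S_3)$ with entries in $\mathbb{Z}_+$. Then $R_3, S_1, S_2$ automatically have entries in $\mathbb{Z}_+$, so the three edges lie in $SSE(\mathbb{Z}_+)$ and form a basic triangle there. The Triangle Identities guarantee that the elementary conjugacies satisfy $c(R_3, S_3) = c(R_1, S_1)\, c(R_2, S_2)$ up to pre- and post-composition with simple automorphisms (this is essentially Theorem \ref{thm:wagonerhomotopyofpaths}(2)). Because $SGCC_m$ factors through the dimension representation and therefore vanishes on simple automorphisms (Theorem \ref{thm:sgccfactorization}, Exercise \ref{exer:simpininerts}), and because $SGCC_m$ is additive under composition of conjugacies at the level of orbit data, this yields
$$
SGCC_m(c(R_1, S_1)) + SGCC_m(c(R_2, S_2)) = SGCC_m(c(R_3, S_3)).
$$
By the identification of $sgc_m$ with the dynamically defined $SGCC_m$ on elementary conjugacies (the main computation of \cite{S19}), this is the desired identity restricted to $\mathbb{Z}_+$.

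The next step is to extend to integer entries. Set
$$
Q_m(R_1, R_2, S_3) \;=\; sgc_m(R_1, R_2 S_3) + sgc_m(R_2, S_3 R_1) - sgc_m(R_1 R_2, S_3),
$$
regarded as a polynomial with rational coefficients in the entries of $R_1, R_2, S_3$. By the previous step, $Q_m/m$ is integer-valued on $\mathbb{Z}_+^N$, where $N$ is the total number of entries. Now I invoke the standard fact that every rational polynomial integer-valued on $\mathbb{Z}_+^N$ lies in the $\mathbb{Z}$-span of products of binomial coefficients $\binom{x_i}{k_i}$, each of which is integer-valued on all of $\mathbb{Z}$; hence $Q_m/m$ is integer-valued on $\mathbb{Z}^N$. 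Equivalently, $Q_m(R_1, R_2, S_3) \equiv 0 \pmod{m}$ for all integer matrices $R_1, R_2, S_3$ (of appropriate shapes), which is exactly the cocycle identity.

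I expect the main obstacle to lie in the careful bookkeeping of the first step: verifying that $sgc_m(R,S)$, computed using the lexicographic conventions of \cite{S19} on periodic points, really equals $SGCC_m(c(R,S))$ for a coherent choice of orderings across the three vertices of the triangle, so that the individual values add up correctly rather than up to an uncontrolled compatibility error. Once that matching is pinned down, the positive case follows cleanly from the dynamical addition formula for $SGCC_m$ together with Theorem \ref{thm:sgccfactorization}, and the integer-valued polynomial argument is a purely formal extension that avoids any further explicit computation with the (intricate) closed form of $sgc_m$.
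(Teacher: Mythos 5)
The paper does not actually prove this lemma --- it is quoted from \cite{S19,S11}, and the sentence following it records that the general ($\mathbb{Z}$-entries) version was obtained in \cite{S11} via ``a much shorter proof suggested by Mike Boyle,'' which is precisely the two-step argument you give. Your positive case is exactly the addition formula over triangles in $SSE(\mathbb{Z}_{+})$ derived in the paragraphs preceding the lemma (Theorem \ref{thm:wagonerhomotopyofpaths}, Theorem \ref{thm:sgccfactorization}, and the identification $sgc_{m}=SGCC_{m}$ on edges of $SSE(\mathbb{Z}_{+})$), and your passage to integer entries via the parametrization of triangles by the free triple $(R_{1},R_{2},S_{3})$ and the integer-valued-polynomial fact is the intended congruence extension. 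The one refinement you should make is in the first step: restrict to triples with all entries strictly positive rather than merely in $\mathbb{Z}_{+}$, so that the vertices $A=R_{1}R_{2}S_{3}$, $B=R_{2}S_{3}R_{1}$, $C=S_{3}R_{1}R_{2}$ are positive matrices --- hence nondegenerate, primitive, and within the scope of the quoted dynamical results, which the paper states only for mixing SFTs; since a translated orthant still generates the full module of numerical polynomials, this restriction costs nothing in the extension step.
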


The Cocycle Lemma was first proved in \cite{S19} in the case when the triangle contains a vertex which is strong shift equivalent over $\mathbb{Z}$ to a nonnegative primitive matrix. The version above, which does not require a primitivity assumption, was given in \cite{S11}, with a much shorter proof suggested by Mike Boyle.\\

Putting all of the above together, for a matrix $A$, the map
$$sgc_{m} \colon \pi_{1}(SSE(\mathbb{Z}),SSE(\mathbb{Z}_{+}),A) \to \mathbb{Z}/m\mathbb{Z}$$
satisfies \eqref{eqn:FandG1} and \eqref{eqn:FandG2}.\\

Now suppose that $A$ satisfies $tr(A^{k})=0$ for all $1 \le k \le m$ and $(R,S)$ is an edge in $SSE(\mathbb{Z}_{+})$ from $A$ to $B$. Then both $(X_{A},\sigma_{A})$ and $(X_{B},\sigma_{B})$ have no points of period $k$ for any $1 \le k \le m$, and the dynamically defined $SGCC_{m}(R,S)$ must vanish; since $sgc_{m} = SGCC_{m}$ on edges in $SSE(\mathbb{Z}_{+})$, this implies $sgc_{m}(R,S) = 0$. It follows that on path-components of matrices $A$ with $tr(A^{k})=0$ for all $1 \le k \le m$, the map $sgc_{m}$ also satisfies \eqref{eqn:FandG3}.\\

Finally, using $m=2$, in \cite{S11} Kim\ai{Kim, K.H.} and Roush\ai{Roush, F.W.} found two primitive matrices $A,B$ and a path $\gamma$ in $SSE(\mathbb{Z})$ from $A$ to $B$ such that all of the following hold:
\begin{enumerate}
\item
$tr(A)=tr(A^{2})=0$.
\item
$sgc_{2}(\alpha) = 0$ for any $\alpha \in \pi_{1}(SSE(\mathbb{Z}),A)$.
\item
$sgc_{2}(\gamma) \ne 0$.
\end{enumerate}

It follows these matrices $A$ and $B$ are strong shift equivalent over $\mathbb{Z}$, but not strong shift equivalent over $\mathbb{Z}_{+}$. The matrices $A$ and $B$ given in \cite{S11} are $7 \times 7$.

\subsection{Wagoner's $K_{2}$-valued obstruction map }\label{subsec:wagonerk2obs}\ai{Wagoner, J.B.}
Wagoner\ai{Wagoner, J.B.}, influenced by ideas from pseudo-isotopy theory, constructed a map $F$ satisfying the three conditions \ref{eqn:FandG1} -- \ref{eqn:FandG3} landing in the $K$-theory group $K_{2}(\mathbb{Z}[t]/(t^{m+1}))$. In \cite{Wagoner2000} Wagoner\ai{Wagoner, J.B.} then used this framework to detect counterexamples with matrices found using the technique given by Kim\ai{Kim, K.H.} and Roush\ai{Roush, F.W.} in \cite{S11}. The Kim-Roush\ai{Roush, F.W.}\ai{Kim, K.H.} relative-sign-gyration-compatability method of the previous section enjoys the fact that it is motivated by dynamical data relating directly to the shift systems, being based on ideas from sign-gyration. Wagoner's\ai{Wagoner, J.B.} method is not as easily connected to the dynamics, but offers some alternative benefits, namely:
\begin{enumerate}
\item
Landing in $K_{2}$, it connects directly with algebraic K-theory.
\item
It operates within the polynomial matrix framework.
\item
It is perhaps suggestive of more general strategies for studying the refinement of strong shift equivalence\si{strong shift equivalence} over a ring by strong shift equivalence\si{strong shift equivalence} over the ordered part of a ring, i.e. part (3) in the picture in Lecture 6 describing Williams' Problem.
\end{enumerate}

%For more details on Wagoner's construction and method of producing a counterexample, we refer the reader to the nice survey article \cite{Wagoner99}.

So how does Wagoner's\ai{Wagoner, J.B.} construction work? We recall two facts about the group $K_{2}(\calR)$ from Section \ref{subsec:k2ofaring}:
\begin{enumerate}
\item
$K_{2}(\calR)$ is an abelian group.
\item
An expression of the form $\prod_{i=1}^{k}E_{i} =1$, where $E_{i}$ are elementary matrices over $\calR$, can be used to construct an element of $K_{2}(\calR)$.
\end{enumerate}

For $m \ge 1$, let $SSE_{2m}(\mathbb{Z}_{+})$ denote the subcomplex of $SSE(\mathbb{Z}_{+})$ consisting of path-components which have a vertex $A$ such that $tr(A^{k})=0$ for all $1 \le k \le 2m$. Wagoner's\ai{Wagoner, J.B.} construction proceeds as follows:
\begin{enumerate}
\item
Consider an edge in $SSE(\mathbb{Z})$ from $A$ to $B$. As shown in Lecture 3, this gives matrices $E_{1},F_{1}$ in $El(\mathbb{Z}[t])$ over $\mathbb{Z}[t]$ such that
$$E_{1}(I-tA)F_{1} = I-tB.$$
\item
Suppose the matrix $A$ satisfies $tr(A^{k}) = 0$ for all $1 \le k \le m$. In \cite[Prop. 4.9]{Wagoner2000}\ai{Wagoner, J.B.} it is shown there exist matrices $E_{2},F_{2}$ in $El(\mathbb{Z}[t])$ and $A^{\prime}$ over $\mathbb{Z}[t]$ such that $E_{2}(I-tA)F_{2} = I-t^{m+1}A^{\prime}$. Doing the same for $B$ yields matrices $E_{3},F_{3}$ in $El(\mathbb{Z}[t])$ and some $B^{\prime}$ over $\mathbb{Z}[t]$ such that
$$E_{2}(I-tA)F_{2} = I-t^{m+1}A^{\prime}$$
$$E_{3}(I-tB)F_{3} = I-t^{m+1}B^{\prime}.$$
\item
Combining steps $(1)$ and $(2)$ we have matrices $X,Y$ in $El(\mathbb{Z}[t])$ such that
$$X(I-t^{m+1}A^{\prime})Y = I-t^{m+1}B^{\prime}.$$
Passing to $\mathbb{Z}[t] / (t^{m+1})$, we get
$$XY = I.$$
We can now use this expression to produce an element of $K_{2}(\mathbb{Z}[t]/(t^{m+1}))$.
\end{enumerate}

Wagoner\ai{Wagoner, J.B.} shows this assignment defined above is additive with respect to concatenation of paths given by two subsequent edges, so one can extend it to arbitrary paths. Thus, given an edge $\gamma$ in $SSE(\mathbb{Z})$, applying the above gives an element $F(\gamma) \in K_{2}(\mathbb{Z}[t]/(t^{m+1}))$. Then, given a path $\gamma$ between two vertices $A$ and $B$ in $SSE_{2m}(\mathbb{Z})$, Wagoner\ai{Wagoner, J.B.} shows:
\begin{enumerate}
\item[(a)] The element $F(\gamma)$ in $K_{2}(\mathbb{Z}[t]/(t^{m+1}))$ produced by the above construction is independent of the choices of elementary matrices made in the construction.
\item[(b)]
If $A,B$ are nonnegative and $\gamma^{\prime}$ is another path in $SSE(\mathbb{Z})$ from $A$ to $B$ such that $\gamma$ and $\gamma^{\prime}$ are homotopic (with endpoints fixed), then $F(\gamma) = F(\gamma^{\prime})$ in $K_{2}(\mathbb{Z}[t]/(t^{m+1}))$.
\item[(c)]
If the path $\gamma$ lies entirely in $SSE_{2m}(\mathbb{Z}_{+})$, then the corresponding element $F(\gamma)$ in $K_{2}(\mathbb{Z}[t]/(t^{m+1}))$ vanishes.
\end{enumerate}

Altogether this defines a function
$$\Phi_{2m} \colon \pi_{1}(SSE(\mathbb{Z}),SSE_{2m}(\mathbb{Z}_{+}),A) \to K_{2}(\mathbb{Z}[t]/(t^{m+1}))$$
satisfying the properties \ref{eqn:FandG1} -- \ref{eqn:FandG3} for $A$ in $SSE_{2m}(\mathbb{Z}_{+})$.\\

%\begin{remark}
Let $K_{2}(\mathbb{Z}[t]/(t^{m+1}),(t))$ denote the kernel of the split surjection $K_{2}(\mathbb{Z}[t]/(t^{m+1})) \to K_{2}(\mathbb{Z})$ induced by the ring map $\mathbb{Z}[t]/(t^{m+1}) \to \mathbb{Z}$ induced by $t \to 0$. Wagoner\ai{Wagoner, J.B.} proved that the maps $\Phi_{2m}$ defined above actually lands in $K_{2}(\mathbb{Z}[t]/(t^{m+1}),(t))$. This is a significant fact, since van der Kallen proved in \cite{vdK} that $K_{2}(\mathbb{Z}[t]/(t^{2}),(t)) \cong \mathbb{Z}/2$. This calculation by van der Kallen was used by Wagoner to explicitly compute \cite[Eq. 1.21]{Wagoner2000} $\Phi_{2}$, and to detect some explicit counterexamples in \cite{Wagoner2000}.\ai{Wagoner, J.B.}

%A result of Roberts and Geller \cite[Section 7]{Roberts1979} shows that $K_{2}(\mathbb{Z}[t]/(t^{m+1}),(t)) \cong \bigoplus_{k=2}^{m+1} \mathbb{Z}/k\mathbb{Z}$.\\

%\end{remark}

\subsection{Some remarks and open problems} \label{WagonerSubSecRemarks}
At the $m=2$ level, each method outlined above gives a map
$$sgc_{2} \colon \pi_{1}(SSE(\mathbb{Z}),SSE_{2}(\mathbb{Z}_{+}),A) \to \mathbb{Z}/2\mathbb{Z}$$
$$\Phi_{2} \colon \pi_{1}(SSE(\mathbb{Z}),SSE_{2}(\mathbb{Z}_{+}),A) \to K_{2}(\mathbb{Z}[t]/(t^{2}),(t)) \cong \mathbb{Z}/2\mathbb{Z}.$$
While these were developed independently, remarkably, it was shown by Kim\ai{Kim, K.H.} and Roush\ai{Roush, F.W.} in the Appendix of \cite{Wagoner2000}\ai{Wagoner, J.B.} that $\Phi_{2} = sgc_{2}$. Wagoner explicitly poses the problem in \cite[Number 6]{Wagoner99}\ai{Wagoner, J.B.} to determine, for larger $m$, the relationship between $\Phi_{2m}$ and $sgc_{m}$.\\

Finally, let us note that both the Kim-Roush\ai{Roush, F.W.}\ai{Kim, K.H.}
method and Wagoner's method rely on the non-existence of periodic points at certain low levels. In Wagoner's\ai{Wagoner, J.B.} case, without vanishing trace conditions, step $(2)$ above can not be carried out. Moreover, step $(3)$ also relies on the vanishing trace conditions. As a result, Wagoner's\ai{Wagoner, J.B.} construction is {\it only} defined in the case of shifts of finite type lacking periodic points of certain low order levels. For the Kim-Roush\ai{Roush, F.W.}\ai{Kim, K.H.} technique, the non-existence of low-order periodic points comes in when one wants to conclude that the assignment from edges to some element of $\mathbb{Z}/m$ vanishes along any path through $SSE(\mathbb{Z}_{+})$: for an edge in $SSE(\mathbb{Z}_{+})$, the assignment coincides with the relative sign-gyration numbers associated to a conjugacy, which, in the absence of any periodic points of the given levels, must vanish.\\
\indent In light of this, neither method is able to produce more than a finite index refinement of the strong shift equivalence\si{strong shift equivalence} class of a given primitive matrix $A$ over $\mathbb{Z}_{+}$, since $(X_{A},\sigma_{A})$ will, above some level $k$ depending on $A$, eventually contain periodic points at all levels larger than $k$.

To finish, we highlight two open problems (Problem \ref{prob:finiterefinementproblem1} below was mentioned informally in the discussion following Conjecture \ref{conj:williams} in
Section~\ref{sec:basics}):\\
\\

\begin{prob}
  If $A$ is shift equivalent over $\mathbb{Z}_{+}$ to the $1 \times 1$ matrix $(n)$, must $A$ be strong shift equivalent over $\mathbb{Z}_{+}$ to $(n)$?
  In other words, does Williams'\ai{Williams, R.F.} Conjecture\si{Williams' Conjecture} hold in the case of full shifts?
\end{prob}

\begin{prob}\label{prob:finiterefinementproblem1}
For a primitive matrix $A$, is the refinement of the SE-$\mathbb{Z}_{+}$-equivalence class of $A$ by SSE-$\mathbb{Z}_{+}$ finite?
\end{prob}

Finally, we think the complexes $SSE(ZO), SSE(\mathbb{Z}_{+})$ and $SSE(\mathbb{Z})$ probably have much more to offer, and obtaining a deeper understanding of them would be valuable for studying both strong shift equivalence\si{strong shift equivalence} and the conjugacy problem for shifts of finite type.

\subsection{Appendix 8}
This appendix contains some proofs, remarks, and solutions of various exercises throughout Lecture 8.\\

\begin{rem}\label{apprem:wagonermarkovspace}\ai{Wagoner, J.B.}
  Prior to considering the strong shift equivalence\si{strong shift equivalence}
  spaces $SSE(\calR)$, Wagoner also introduced a related `space of Markov partitions' for a shift of finite type; we won't describe these here, and instead refer the reader to
  \cite{Wagoner90triangle,MR1748178,Epperlein2019}\ai{Wagoner, J.B.}.
\end{rem}

\begin{rem}\label{apprem:classifyingspacerem}
For a discrete group $G$, a classifying space is a path-connected space $BG$ such that $\pi_{1}(BG) \cong G$ and $\pi_{k}(BG) = 0$ for all $k \ge 2$. The space $BG$ has the property that $H_{k}(G,\mathbb{Z})$, the integral group homology of the group $G$, is isomorphic to $H_{k}(BG,\mathbb{Z})$, the integral singular homology of the space $BG$. See \cite[6.10.4]{WeibelHomAlgBook} for details.
\end{rem}

\begin{rem}\label{apprem:secomplexdef}
For a semiring $\calR$, the shift equivalence\si{shift equivalence} space $SE(\calR)$ is the CW complex defined as follows.
\begin{enumerate}
\item
The 0-cells of $SE(\calR)$ are square matrices over $\calR$.
\item
An edge from vertex $A$ to vertex $B$ corresponds to a shift equivalence\si{shift equivalence} over $\calR$ from $A$ to $B$, i.e. matrices $R,S$ over $\calR$ and $k \ge 1$ such that
$$A^{k}=RS, \qquad B^{k} = SR, \qquad AR = RB, \qquad SA = BS.$$

%\begin{center}
%\includegraphics[scale=0.43]{sseedge2}
%\end{center}

\item
2-cells are given by triangles

\begin{equation*}
\begin{tikzpicture}
\path
(-.25,0) node (a) {$A$}
(3.25,0) node (b) {$C$}
(1.5,1.3) node (c) {$B$}
(0,0) node (A) {$\bullet$}
(1.5,1) node (B) {$\bullet$}
(3,0) node (C) {$\bullet$};
%\draw[->] (A) to node [label={[label distance=1cm]30:label}]  {$\scriptstyle (R_{1},S_{1})$} (B);
\draw[->] (A) to node[anchor=east,pos=0.7] {$\scriptstyle (R_{1},S_{1})$} (B);
\draw[->] (A) to node[below] {$\scriptstyle (R_{3},S_{3})$} (C);
\draw[->] (B) to node[anchor=west,pos=0.3] {$\scriptstyle (R_{2},S_{2})$} (C);
%\draw[->] (A)--(B) node[above]{$f$};
%\draw[->] (A)--(C.120) node[left]{$g$};
%\draw[->] (C.60)--(B) node[right]{$h$};
%\end{scope}
\end{tikzpicture}
\end{equation*}

%\begin{center}
%\includegraphics[scale=0.43]{ssetriangle}
%\end{center}
such that
\begin{equation}
R_{1}R_{2} = R_{3}.
\end{equation}
\end{enumerate}
Higher cells are defined in the same way as for the SSE spaces. It is immediate from the definition that $\pi_{0}(SE(\calR))$ is in bijective correspondence with the set of shift equivalence\si{shift equivalence} classes of matrices over $\calR$.
\end{rem}

\begin{rem}\label{apprem:nothomeqingeneral}
We'll show here that $i_{\calR}$ cannot in general be a homotopy equivalence. The map $i_{\calR}$ induces a map of sets $i_{\calR,*} \colon \pi_{0}(SSE(\calR)) \to \pi_{0}(SE(\calR))$. We can identify $\pi_{0}(SSE(\calR))$ with the set of SSE-classes of matrices over $\calR$ and $\pi_{0}(SE(\calR)$ with the set of SE-classes of matrices over $\calR$, and upon making these identifications, the map $i_{\calR,*}$ agrees with the map $\pi$ given in \eqref{eqn:ssetosemap}. Theorem \ref{thm:ssesefibers} from Lecture 6 gives a description of the fibers of this map in terms of some K-theoretic data. In particular, from Corollary \ref{cor:nk1vanishandsse} we know that the map $i_{\calR,*} \colon \pi_{0}(SSE(\calR)) \to \pi_{0}(SE(\calR))$ is not always an injection. Thus Wagoner's\ai{Wagoner, J.B.} result that $i_{\calR}$ is a homotopy equivalence when $\calR$ is a principal ideal domain can not hold in the case $NK_{1}(\calR) \ne 0$; as we see, it need not even induce an injection on the level of $\pi_{0}$.
\end{rem}

\begin{rem}\label{apprem:sgcvssgcc}
As pointed out in \cite[Section 8]{S11}, the maps $sgc_{m}$ and $sgcc_{m}$ are not the same in general. However, they do yield the same value on path-components containing a primitive matrix whose trace is zero. The definition for $sgcc_{m}$ requires a component with a matrix which is shift equivalent to a primitive matrix, whereas the map $sgc_{m}$ does not. See \cite[Section 8]{S11} for more details regarding the difference between $sgc_{m}$ and $sgcc_{m}$.
\end{rem}

\bibliographystyle{plain}
\bibliography{BS}

%%%  Xialou's instruction:
%%%   At last, put
%%%
%%%   \printindex % Ouput the default index here
%%%
%%%   \printindex[authors] % Output the 'authors'
%%%   index
%%%
%%%   at a proper position just before the
%%%   \end{document}

\printindex % Ouput the default index here

\printindex[authors]

(The index of authors supplements, and does not repeat, the
  page citings listed in the bibliography.)

% Output the 'authors' index

\end{document}